\documentclass[12pt,oneside]{book}

% % % % % % % % %
%Steps for building you thesis:
% %1. Input appropriate information in the commands immediately 
%         following this list
% %2. Any custom commands you use go in the file customCommands.tex
% %3. Your abstract goes in the file named abstract.tex
% %4. Any changes in theorem styles or numbering go in theoremStyle.tex
% %5. Your references go in the file references.bib 
% %6. Use individual files for individual chapters, 
%        this will make it easier to keep track of everything.
%        Any additional chapter files can be included near the end of
%        this file.
% %7. There is an acknowledgements.tex file which is commented out.
% %8. There are several other files not mentioned here, 
%         they keep the template requirements together.
% % % % % % % % %

% % % % % % % % % % % % % % % % % % 
% %These Things Appear In Many Places % %
% % % % % % % % % % % % % % % % % %
\newcommand{\myTitle}{Eulerian series, zeta functions and the arithmetic of partitions}%Series and the Arithmetic of Partitions}
\newcommand{\myName}{Robert Schneider}
\newcommand{\myPreviousDegrees}{
	B. S., University of Kentucky, 2012\\
	M. Sc., Emory University, 2016
}
\newcommand{\myDegree}{Doctor of Philosophy}
\newcommand{\myDocument}{dissertation} %switch to "thesis" if you're getting a masters degree
\newcommand{\myDepartment}{Mathematics}
\newcommand{\advisor}{Ken Ono}

\newcommand{\myYear}{2018}

% % % % % % % % % % % % %
% % Include packages here % %
% % % % % % % % % % % % %

% Place all of your required commands and packages in the file customCommands.tex

% % Packages % %
\usepackage{amsmath, amsfonts, amssymb, amsthm, amscd}
\usepackage{graphicx}
\usepackage{color}
\usepackage[all]{xy}
\usepackage{tikz,tikz-cd,graphicx}
\usepackage{verbatim}
\usepackage{hyperref}
\usepackage{multicol}
\usepackage{multirow}
\usepackage{rotating}
\usepackage{fancyhdr,tabularx,cite,mathrsfs,caption}
\usepackage{fixltx2e}

\usepackage{amssymb}
\usepackage[OT2,T1]{fontenc}
\DeclareSymbolFont{cyrletters}{OT2}{wncyr}{m}{n}
\DeclareMathSymbol{\Sha}{\mathalpha}{cyrletters}{"58}
\usepackage{amssymb,amsmath,amsthm, hyperref, verbatim, amsfonts, fontenc}
%\oddsidemargin = 0cm \evensidemargin = 0cm \textwidth = 6.2in
%\textheight8.0in
%\newtheorem*{thm*}{Theorem}
%\newtheorem{thm}{Theorem}[section]
%\newtheorem{theorem}{Theorem}[section]
%\newtheorem{lemm}{Lemma}
%\newtheorem{Lemma}{Lemma}[section]
%\newtheorem*{lemma}{Lemma}
%\numberwithin{equation}{section}
%\newtheorem{prop}{Proposition}[section]
\newtheorem*{prop*}{Proposition}

\newtheorem{example}[subsection]{Example}
\newtheorem*{corollary*}{Corollary}

\newtheorem{question}{Question}

\newcommand{\C}{\mathbb{C}}
\newcommand{\Q}{\mathbb{Q}}
\newcommand{\N}{\mathbb{N}}
\newcommand{\Z}{\mathbb{Z}}

\newcommand{\R}{\mathbb{R}}

\newcommand{\floor}[1]{\left \lfloor #1\right \rfloor}
\newcommand{\Comment}[1]{}

%%%zeta
\newtheorem{theorem}[subsection]{Theorem}
\newtheorem{coro}[subsection]{Corollary}
%\newtheorem{def}[subsection]{Definition}
%\newtheorem{proposition}[subsection]{Proposition}

%Definign a different style. In this case, the term in the braces {} will be italicized and the text will be normal.

\theoremstyle{remark}
\newtheorem*{remark}{Remark}
%\newtheorem*{definition}{Definition}
%\newtheorem*{proposition}{Proposition}
%\newtheorem*{lemma}{Lemma}

%Shortcuts for math mathbf fonts

%\newcommand{\S}{\mathcal S}

\newcommand{\rra}{\rightarrow}
\newcommand{\inv}{^{-1}}
\newcommand{\brk}[1]{ \left\lbrace #1 \right\rbrace }
\newcommand{\pwr}[1]{\left( #1 \right)}

\DeclareMathOperator{\Real}{Re}

\usepackage{amsmath}
\usepackage{amsfonts}
\usepackage{amssymb}
\usepackage{amsthm}
\usepackage{hyperref}
\usepackage{enumerate}
\usepackage{cleveref}
\usepackage{mathrsfs}
\usepackage[top=0.9in, bottom=1.15in, left=1.1in, right=1.1in]{geometry}
 \usepackage{hyperref}
\hypersetup{colorlinks=true,linkcolor=blue,citecolor=magenta}
%\newtheorem{theorem}{Theorem}%[section]
%\Crefname{theorem}{Satz}{S"atze}
%\newtheorem{lemma}[theorem]{Lemma}
%\Crefname{lemma}{Lemma}{Lemmata}
%\newtheorem{proposition}[theorem]{Proposition}
%\Crefname{proposition}{Proposition}{Propositionen}
%\newtheorem{corollary}[theorem]{Corollary}
%\Crefname{corollary}{Korollar}{Korollare}
%\newtheorem{definition}[theorem]{Definition}
%\Crefname{definition}{Definition}{Definitionen}
%\newtheorem*{remark}{Remark}
%\Crefname{remark}{Bemerkung}{Bemerkungen}
%\newtheorem{conjecture}[theorem]{Conjecture}
\Crefname{conjecture}{Conjecture}{Conjectures}

\theoremstyle{plain}
%\newtheorem{example}[theorem]{Example}
%\Crefname{example}{Beispiel}{Beispiele}
\theoremstyle{plain}
%\newtheorem{algo}[theorem]{Algorithm}
%\Crefname{algo}{Algorithm}{Algorithms}

%\newcommand{\N}{\mathbb{N}}
\renewcommand{\P}{\mathbb{P}}

\renewcommand{\d}{\mathrm{d}}

\newcommand{\leg}[2]{\genfrac{(}{)}{}{}{#1}{#2}}
\usepackage{tikz}

\usepackage{paralist}
\usepackage{cite}

% % % % % % % % % % % % % % % % % % % % % % % %
% % Emory Template Requirements -- DO NOT CHANGE % %
% % % % % % % % % % % % % % % % % % % % % % % %

\usepackage{setspace}
% All but the abstract and Emory required pages must be double spaced.
\doublespacing

%Sets the correct margins
%\usepackage[margin=1in, left=1.5in]{geometry}

% This fixes the page numbering to be in the upper right-hand corner (This is necessary)
\makeatletter
\let\ps@plain\ps@myheadings
\makeatother

%Don't think we need these, but here they are.
\title{\myTitle}
\author{\myName}
\date{}

% % % % % % % % % % % % % % % % % % %
% %Definitions of new theorem environments % %
% % % % % % % % % % % % % % % % % % %
% Numbering of theorems, definitions, etc can be found in the file theoremStyle.tex

\usepackage{amsthm}

%[section]
%[section]

%\theoremstyle{definition}
\newtheorem{definition}{Definition}[section]
\newtheorem{proposition}{Proposition}[section]
\newtheorem{lemma}{Lemma}[section]
\newtheorem{corollary}{Corollary}[section]

\newtheorem*{rmk}{Remark}

% % % % % % % % % % %
%% begin document ! % %
% % % % % % % % % % %

\begin{document}

% Adjust page numbers so they start after the contents.  
% Options for removing page numbers from places they don't belong
\pagestyle{empty}
%\input{emoryPages/distributionAgreement}
%\input{emoryPages/approvalSheet}
%\input{emoryPages/abstractCoverPage}
% % % % % % % % %
% %Cover Page % % 
% % % % % % % % %

\vspace*{\fill}

\begin{center}

\myTitle

\bigskip
\bigskip
\bigskip

By

\bigskip
\bigskip
\bigskip

\myName\\
\myPreviousDegrees

\bigskip
\bigskip
\bigskip

Advisor: \advisor, Ph.D.

\bigskip
\bigskip
\bigskip
\bigskip
\bigskip
\bigskip

A \myDocument\ submitted to the Faculty of the \\
James T. Laney School of Graduate Studies of Emory University \\
in partial fulfillment of the requirements for the degree of \\
\myDegree \\
in \myDepartment \\
\myYear

\end{center}

\vspace*{\fill}

%\newpage
%
%
%\vspace*{\fill}
%\begin{center}{\it ``All of analysis will one day be subsumed by the theory of partitions.''} \end{center}
%
%\  \  \  \  \  \  \  \  \  \  \  \  \  \  \  \  \  \  \  \  \  \  \  \  \  \  \  \  \  \  \  \  \  \  \  \  \  \  \  \  \  \  \  \  \  \  \  \  \  \  \  \  \  \  \  \  \  \  \  \  \  \  \  \  \  \  \  \  \  \  \  \  \  \  \  \  \  -- J. J. Sylvester\footnote{As paraphrased to the author by G. E. Andrews}
%\vspace*{\fill}
%\vspace*{\fill}\newpage
%
%\vspace*{\fill}
%\begin{center}{For Marci and Max} \end{center}
%\vspace*{\fill}
%\vspace*{\fill}
%\newpage

% % % % % % %
% %Abstract % %
% % % % % % %
\newpage 
\begin{center}
Abstract 

\bigskip

\myTitle \\
By \myName

\end{center}

\bigskip

%There is a 350 word limit.  This page can be single spaced if needed.
\noindent %In this thesis, we prove that 
In this dissertation we prove theorems at the intersection of the additive and multiplicative branches of number theory, bringing together ideas from partition theory, $q$-series, algebra, modular forms and analytic number theory. We present a natural multiplicative theory of integer partitions (which are usually considered in terms of addition), and explore new classes of partition-theoretic zeta functions and Dirichlet series --- as well as ``Eulerian'' $q$-hypergeometric series --- enjoying many interesting relations. We find a number of theorems of classical number theory and analysis arise as particular cases of extremely general combinatorial structure laws. 

Among our applications, we prove explicit formulas for the coefficients of the $q$-bracket of Bloch-Okounkov, a partition-theoretic operator from statistical physics related to quasi-modular forms; we prove partition formulas for arithmetic densities of certain subsets of the integers, giving $q$-series formulas to evaluate the Riemann zeta function; we study $q$-hypergeometric series related to quantum modular forms and the ``strange'' function of Kontsevich; and we show how Ramanujan's odd-order mock theta functions (and, more generally, the universal mock theta function $g_3$ of Gordon-McIntosh) arise from the reciprocal of the Jacobi triple product via the $q$-bracket operator, connecting also to unimodal sequences in combinatorics and quantum modular-like phenomena.

\newpage

\vspace*{\fill}
%\begin{center}
\noindent {\it ``Partitions constitute the sphere in which analysis lives, moves, and has its being; and no power of language can exaggerate or paint too forcibly the importance of 
this till-recently 
%[this] 
almost neglected (but 
vast, subtle and universally permeating) element of algebraic thought and expression.''}
%All of analysis will one day be subsumed by the theory of partitions.
%''} 
%\end{center}
%\  \  \  \ \  \  \  \  \ 
 \  \  \  \  \  \  \  \  \  \  \  \  \  \  \  \  \  \  \  \  \  \  \  \  \  \  \  \  \  \  \  \  \  \  \  \  \  \  \  \  \  \   \  \  \  \  \  \  \  \  \  \  \  \   \  \  \  \  \  \  \  \   \  \  \  \   \  \  \  \  \  \  \  \  \  \  \  \  --- J. J. Sylvester\footnote{Thanks to George Andrews and Jim Smoak for providing this quotation, a footnote in \cite{Sylvester_collected}, p. 93.}
\vspace*{\fill}
\vspace*{\fill}\newpage

\vspace*{\fill}
\begin{center}{For Marci and Max} \end{center}
\vspace*{\fill}
\vspace*{\fill}
\newpage

%\begin{center}
\section*{Acknowledgments}
I am grateful to the following people for many inspiring conversations, not to mention lessons and guidance, which influenced this work: my Ph.D. advisor, Ken Ono, and other dissertation committee members David Borthwick and John Duncan; my collaborators and co-authors Larry Rolen, Marie Jameson, Olivia Beckwith, Ian Wagner, Andrew Sills, Amanda Clemm, James Kindt, Lara Patel and Xiaokun Zhang; Professors George Andrews, Krishnaswami Alladi, Andrew Granville, David Leep, Joe Gallian, Penny Dunham, William Dunham, Neil Calkin, Colm Mulcahy, Amanda Folsom, Raman Parimala, Suresh Vennapally, Ron Gould, Bree Ettinger, David Zureick-Brown, Dave Goldsman and Shanshuang Yang; and my colleagues Robert Lemke Oliver, Michael Griffin, Jesse Thorner, Ben Phelan, John Ferguson, Joel Riggs, Maryam Khaqan, Cyrus Hettle, Sarah Trebat-Leder, Lea Beneish, Madeline Locus Dawsey, Victor Manuel Aricheta, Warren Shull, Bill Kay, Anastassia Etropolski, Adele Dewey-Lopez, Alex Rice and Jackson Morrow (whom I also thank for typesetting Chapter 4). I am also grateful to Prof. Vaidy Sunderam, Terry Ingram and Erin Nagle in Emory's Department of Mathematics and Computer Science; and to Emory's Laney Graduate School for electing me for the Woodruff Fellowship and Dean's Teaching Fellowship --- in particular, to Dean Lisa Tedesco and Dr. Jay Hughes, and to Prof. Elizabeth Bounds and Rachelle Green in the Candler School of Theology. %); and to the National Science Foundation. %, for allowing me the opportunity to teach History of Mathematics at Arrendale State Prison.

I am deeply thankful to my wife Marci Schneider and son Maxwell Schneider --- to whom I dedicate this work --- and my parents and siblings, for their confidence in me and support during my graduate school journey; as well as to Marci for compiling and typesetting the bibliography for this dissertation, and to Max (a talented mathematician and programmer) for a lifetime of discussions, and for checking my ideas on computer. \\
\  

\noindent
Robert Schneider\\
\noindent
April 3, 2018
%Emory University%      \end{center}
%
%Much of this dissertation arose from conversations with..
%Ken Ono %Erika?
%David Borthwick, John Duncan
%George Andrews, Krishnaswami Alladi
%Amanda Folsom
%Research collaborators: Andrew Sills, Larry Rolen, Marie Jameson, Olivia Beckwith, Amanda Clemm, Ian Wagner, James Kindt, Xiaokun Zhang, Lara Patel and the Emory Working Group in Number Theory and Molecular Simulation%, and the members of the Elephant 6 collective.
%David Zureick-Brown, Joel Riggs
%David Leep, Andrew Granville, Colm Mulcahy, Joe Gallian
%Marci Schneider, Max Schneider
%Jackson Morrow %Jesse? Xiakun? Lara? Parimala? Vaidy? Bree? Terry?
%Woodruff Fellowship, Dean's Teaching Fellowship, Jay Hughes, Elizabeth Bounds, Rachelle Green, NSF %NSF Grant? %Candler School of Theology? Jay Hughes?
%%Hey thanks guys, you rock.
%
 % %(Optional)
%\input{Sylvester}

% % % % % % % % % % % 
% % Table of Contents % %
% % % % % % % % % % % 
% There should also be a list of figures if you have them.
% this prevents page numbers on table of contents pages
% if there's more than one of them
\addtocontents{toc}{\protect\thispagestyle{empty}}
\tableofcontents  \thispagestyle{empty}
%\listoffigures    \thispagestyle{empty}
%\listoftables     \thispagestyle{empty}
%\listofalgorithms \thispagestyle{empty}

% % % % % % % % % % % % % % % % % % 
% % The bulk of the thesis - Your Chapters % %
% % % % % % % % % % % % % % % % % % 

\frontmatter
\pagestyle{myheadings}
\mainmatter
\chapter{Setting the stage: Introduction, background and summary of results}
\setcounter{page}{1}

%
%%\vspace*{\fill}
%\begin{center}{\it ``All of analysis will one day be subsumed by the theory of partitions.''} -- J. J. Sylvester\footnote{As paraphrased to the author by G. E. Andrews}\end{center}
%%\vspace*{\fill}
%\  
%
%
%

\section{Visions of Euler and Ramanujan}
In antiquity, storytellers began their narratives by invoking the Muses, whose influence would guide the unfolding imagery. It is fitting, then, that we begin this work by praising %the immense curiosity of both 
its main sources of inspiration, Euler and Ramanujan, whose imaginations ranged across much of the %much of the %almost the entire 
landscape of modern 
mathematical thought. %, and who inspired much of this work.

\subsection{Zeta functions, partitions and $q$-series}
%What ecstasy he must have experienced, to be the first to hear the elegant music of combinatorics and complex analysis, to see the convergence of exotic infinite series like fractal trails in a hall of mirrors, to find a wormhole connecting the alien realms of infinite products and sums \cite{Dunham}.%, to feel invisible spaces ripple and respond under his every poke and prod. 

%Only three generations before Euler's time, Descartes had discovered the bridge between Algebra and Geometry. He traced the evanescent landscape of thought in terse heiroglyphs, just the right format for mortal minds: little scribbles pregnant with enormousness. Barrow, Leibniz, Newton, Wallis, and their age cultivated the landscape of Descartes; the Bernoulli family harvested these fruits and began a cottage industry of The Calculus. 

%And so it was that Johann Bernoulli took on a  young pupil, raised closer to the soil of Analysis than any child since the dawn of time. This was Euler, and it was not mere fate that the child --- bright, joyful, pure of heart --- should grow up to encompass the infinite in his daydreams. It was divine intervention in human history: Mathematics reached out to touch our tiny hands.

One marvels at the degree to which our contemporary understanding of $q$-series, integer partitions, and what is now known as the Riemann zeta function %$\zeta (s)$ (where $\operatorname{Re}(s)>1$), 
all emerged nearly fully-formed from Euler's pioneering work \cite{Andrews, dunham1999euler}. 

Euler made spectacular use of product-sum relations, often arrived at by unexpected avenues, thereby inventing one of the principle archetypes of %a large corner of 
modern number theory. Among his many profound identities is the product formula for $\zeta(s)$, % what is now called 
the Riemann zeta function, in which the sum and product converge for $\text{Re}(s)>1$:
\begin{equation}\label{ch1zetaproduct}
\zeta(s):=\sum_{n=1}^{\infty}n^{-s}=\prod_{p\in\mathbb P}(1-p^{-s})^{-1}.
\end{equation}

With this relation, Euler connected the (at the time) cutting-edge theory of infinite series to the timeless set $\mathbb P$ of prime numbers  ---  and founded the modern theory of $\operatorname{L}$-functions. Moreover, in his famed 1744 solution of the ``Basel problem'' posed a century earlier by Pietro Mengoli, which was to find the value of $\sum_{n=1}^{\infty}1/n^2$, Euler showed how to compute even powers of $\pi$  ---  a constant of interest to mathematicians since ancient times ---  using the zeta function, giving explicit formulas of the shape 
\begin{equation}\label{ch1zeta_even}
\zeta(2N)=\pi^{2N}\times\  \mathrm{rational}.
\end{equation}
The evaluation of special functions such as $\zeta(s)$ is another rich thread of number theory. As we will show in this work, there are other classes of zeta functions %(as well as 
(not to mention other formulas for $\pi$) %arising %from %other Eulerian formulas 
arising from the theory of {\it integer partitions}. %in the universe of partition theory. %, as we will show here. %, which (among many connections they make) allow us to calculate odd powers of $\pi$ as well. {\bf RPS: We need at least one more example of this odd powers thing, or delete comment, but it is an interesting angle Ken pointed out.} 

In brief, {partitions} represent different ways to add numbers together to yield other numbers. Let $\mathbb N$ denote the natural numbers $1,2,3,4,5,...$, i.e., the positive integers $\mathbb Z^+$ (we use both notations interchangeably)\footnote{Prof. Paul Eakin at University of Kentucky once said, ``Whenever integers appear, magic happens.''}. We shall now fix some standard notations.% from partition theory.

\begin{definition}
Let $\mathcal P$ %(resp. $\mathcal P^*$) 
denote the set of all integer partitions%(resp. partitions into distinct parts)
. %Let $\mathcal P_X$ (resp. $\mathcal P_X^*$) denote the set of partitions (resp. partitions into distinct parts) into elements of $X\subseteq \mathbb N$. 
For $\lambda_i \in \mathbb N$, let $$\lambda=(\lambda_1,\lambda_2,\dots,\lambda_r),\  \  \  \  \lambda_1\geq\lambda_2\geq\dots\geq\lambda_r\geq 1,$$ denote a generic partition, including the empty partition $\emptyset$. Alternatively, we sometimes write partitions in the form  $\lambda=(1^{m_1}\  2^{m_2}\  3^{m_3}  ...\  k^{m_k}  ...)$ with $m_k=m_k(\lambda)\geq 0$ representing the {\it multiplicity} of $k$ as a part of $\lambda\in \mathcal P$ (we adopt the convention $m_{k}(\emptyset):=0$ for all $k\geq1$). We note that $\lambda$ has only finitely many parts with nonzero multiplicity.% (these are the only parts listed in both notational variants). %ies 
%$m_k$.%, and both notations omit parts with $m_k=0$.
\end{definition}

\begin{definition}
Let $$\ell(\lambda)\  :=\  r=\  m_1+m_2+m_3+...+m_k+...$$ denote the {\it length} of $\lambda$ (the number of parts), and $$|\lambda|\  :=\  \lambda_1+\lambda_2+\lambda_3+\dots+\lambda_r\  =\  m_1+2m_2+3m_3+...+km_k+...$$ denote its {\it size} (the number being partitioned), with the conventions $\ell(\emptyset):=0,\  |\emptyset|:=0$. We write ``$\lambda\vdash n$'' to mean $\lambda$ is a partition of $n$, and ``$\lambda_i\in\lambda$'' to indicate $\lambda_i\in\mathbb N$ is one of the parts of $\lambda$.% including multiplicity.
\end{definition}

For example, we might take $\lambda=(4,3,2,2,1)=(1^1\  2^2\  3^1\  4^1)$, using both notational variants. Then $\ell(\lambda)=1+2+1+1=5$ and $|\lambda|=4+3+2+2+1=12$. It is often useful --- and enlightening --- to write a partition as a Ferrers-Young diagram\footnote{Strictly speaking, the one pictured is a Ferrers diagram; a Young diagram uses unit squares instead of dots.}, such as this visual representation of $(4,3,2,2,1)$, where the first row associates to the largest part $\lambda_1=4$, the second row represents $\lambda_2=3$, and so on:
\vspace{.5cm}
\begin{center}
\begin{tikzpicture}[inner sep=0pt,thick,
    dot/.style={fill=black,circle,minimum size=4pt}]
\node[dot] (a) at (0,0) {};
\node[dot] (a) at (1,1) {};
\node[dot] (a) at (0,1) {};
\node[dot] (a) at (0,2) {};
\node[dot] (a) at (1,2) {};
\node[dot] (a) at (0,3) {};
\node[dot] (a) at (1,3) {};
\node[dot] (a) at (0,4) {};
\node[dot] (a) at (1,4) {};
\node[dot] (a) at (2,4) {};
\node[dot] (a) at (3,4) {};
\node[dot] (a) at (2,3) {};
%\draw[-] (-0.1,2.5)--(1.5,2.5);
%\draw[-] (1.5,2.5)--(1.5,4.1);
%\draw[dashed] (-0.1,1.5)--(1.5,1.5);
%\draw[dashed] (1.5,1.5)--(1.5,2.5);
\end{tikzpicture}
\end{center}
We also define the {\it conjugate} $\lambda^*$ of partition $\lambda$ to be the partition given by the transpose of the Ferrers-Young diagram, i.e., the columns of $\lambda$ form the rows of $\lambda^*$. Thus the conjugate of $(4,3,2,2,1)$ is $(5,4,2,1)$ by the diagram above.

Much like the set of positive integers, but perhaps even more richly, the set of integer partitions ripples with striking patterns and beautiful number-theoretic phenomena. In fact, the positive integers $\mathbb N$ are embedded in $\mathcal P$ in a number of ways: obviously, positive integers themselves represent the set of partitions into one part; less trivially, the prime decompositions of integers are in bijective correspondence with the set of prime partitions, i.e., the partitions into prime parts (if we map the number 1, the ``empty prime'' so to speak, to the empty partition $\emptyset$), as Alladi and Erd\H{o}s note \cite{AlladiErdos}. We might also identify the divisors of $n$ with the partitions of $n$ into identical parts, and there are many other interesting ways to associate integers to the set of partitions.

Partitions of $n$ are notoriously challenging to enumerate\footnote{See Appendix A for some elementary approaches to counting partitions.} --- there are just so many of them. Euler found another profound product-sum identity, the generating function for the so-called {\it partition function} $p(n)$ equal to the number of partitions of $n\geq 0$, with the convention $p(0):=1$, viz. %{\bf RPS: removed colon, I think is not correct format for this sentence structure}
\begin{equation}\label{ch1partition_genfctn}
\sum_{n=0}^{\infty}p(n)q^n=%\prod_{n=1}^{\infty}(1-q^n)^{-1}= 
(q;q)_{\infty}^{-1},
\end{equation}
where on the right-hand side we use the usual $q$-Pochhammer symbol notation.

\begin{definition}
For $z, q\in \mathbb C, |q|<1$, the $q$-Pochhammer symbol is defined by $(z;q)_0:=1$ and, for $n \geq	1$, $$(z;q)_n:=\prod_{i=0}^{n-1}(1-zq^i).$$ In the limit as $n\to\infty$, we write $$(z;q)_{\infty}:=\lim_{n\to \infty}(z;q)_n.$$ \end{definition}

With the relation (\ref{ch1partition_genfctn}) and others like it, such as his pentagonal number theorem and $q$-binomial theorem \cite{Berndt}, Euler single-handedly established the theory of integer partitions\footnote{We note that Leibniz appears to have been the first to ask questions about partitions \cite{andrews20008}.}. In particular, much as with the zeta function above, he innovated the use of product-sum generating functions to study partitions, discovering subtle bijections between certain subsets of  $\mathcal P$ and other interesting properties of partitions, often with connections to diverse forms of $q$-hypergeometric series (see \cite{Fine}). %. Many of these identities connect with exotic-looking summations called $q$-hypergeometric series (see \cite{Fine}). %much as with the zeta function above (and the modern theory of $\text{L}$-functions).
%\begin{definition}
%Definition of $q$-hypergeometric series.
%\end{definition} 
%

\subsection{Mock theta functions and quantum modular forms}
Flashing forward almost two centuries from Euler's time, another highly creative explorer ventured into the waters of partitions and $q$-series. When Ramanujan put to sea %left port %cast off 
from India for Cambridge University in 1914, destined to revolutionize number theory, a revolution in physics was already full-sail in Europe. Just one year earlier, the Rutherford--Bohr model of atomic shells heralded the emergence of a paradoxical new {\it quantum theory} of nature that contradicted common sense. %, yet explained anomalous 
%experimental results 
%\cite{Gamow}. 
In 1915, Einstein would describe how space, light, matter, geometry itself, warp and bend in harmonious interplay. %, again explaining anomalous observations 
%\cite{Einstein}. 
The following year, %to Einstein's horror, 
Schwarzschild found Einstein's equations to predict the existence of monstrously inharmonious %unthinkable 
%cosmic % cosmic-scale %anomalies we now call 
{\it black holes}, that %in fact % (since 2016) % as of this writing) 
we can now study directly (just very recently) using interstellar gravitational waves \cite{abbott2016observation}. % as well as black hole analogues in Earth-bound laboratories \cite{Blackhole_lab}. %In 1918, Max Planck won the Nobel Prize in Physics. % Another year later, in 1917, Rutherford discovered a new fundamental particle, the proton, deep within the atomic nucleus. %  in nearby Manchester. 
%How extraordinary, the tide of unconventional ideas that was surging across the world exactly one hundred years ago, in art, literature, music, and society, as well as in the sciences. %All the diversity of physical reality --- and of our own mental experience --- plays out between these enigmatic extremes.%Experimenters began to question the meaning of experiment itself. %The entire body of classical knowledge, even the concept of knowledge itself, were coming into question.

During Ramanujan's five years working with G. H. Hardy, %\cite{Hardy}, %even despite the World War that had erupted, 
news of the paradigm shift in physics must have created a thrill among the mathematicians at Trinity College, Isaac Newton's {\it alma mater}. % (where, at the same time, Bertrand Russell caused a similar shift in the foundation of mathematics \cite{Logicomix}). 
Certainly Hardy would have been aware of the sea change. After all, J. J. Thomson's discovery of the electron, as well as his subsequent ``plum-pudding'' atomic model, had been made at Cambridge's Cavendish Laboratory; Rutherford %(by this time an easy train journey away in Manchester) 
had done his post-doctoral work with Thomson there; % (returning as Thomson's successor in 1919, the year Ramanujan left England for home); 
and Niels Bohr came to Cambridge to work under Thomson in 1911 \cite{gamow1985thirty}. % \cite{???}. 
Moreover, Hardy's %continental 
intellectual colleague David Hilbert was in a public race with Einstein to write down the equations of General Relativity \cite{isaacson2015einstein}. %At Trinity College, Bertrand Russell and his colleagues were actively engaged in rebuilding the foundations of mathematics. 

We don't know how aware Ramanujan was of these happenings in physics, % (of course, by the time he returned to India in 1919, Einstein was a worldwide celebrity), 
yet his flights of imagination and break with academic tradition were expressions of the scientific {\it Zeitgeist} of the age. In Cambridge, he made innovative discoveries in an array of classical topics, from prime numbers to the evaluation of series, products and integrals, to the theory of partitions --- in particular, he discovered startling ``Ramanujan congruences'' relating the partition function $p(n)$ to primes, bridging additive and multiplicative number theory --- all of which would have been accessible to Euler. After returning to India in 1919, as he approached his own tragic event horizon, % \cite{PhelanSchneider}, 
Ramanujan's thoughts ventured into realms that --- like the domains of subatomic particles and gravitational waves --- would require the technology of a future era to navigate \cite{PhelanSchneider}.

In the final letter he sent to Hardy, dated 12 January, 1920 (only a few months before he tragically passed away at age 32), Ramanujan described a new class of mathematical objects he called {\it mock theta functions} \cite{RamanujanCollected}, that mimic certain behaviors of classical {modular forms} (see \cite{Apostol, Ono_web} for details about modular forms). 
%\begin{definition}
%{\bf Definition of modular form.}
%\end{definition}
%\footnoteWe assume some familiarity with {\it modularity} and the behaviors of modular forms having this property \cite{Apostol, Ono_web}. So the unfamiliar reader may proceed, the qualitative ``feel'' of modularity is this: We divide the upper half-plane $\mathbb H$ of $\mathbb C$ into algebraically equivalent (but not geometrically identical) vertical regions of width $1$ called the {\it modular group}, %bounded below by a chain of intersecting half-circles, 
%with an equivalence relation connecting points in different regions. % whose radii have arithmetic significance. 
%(We might think of this construction as ``modular'' in the sense of architecture: we build the upper half-plane from similar-looking, interlocking sub-structures.) The complex values of modular forms at algebraically equivalent points in $\mathbb H$ are connected by simple relations; in particular, for $\tau \in \mathbb H$, the values of $f(\tau +1)$ and $f(-\frac{1}{\tau})$ are related to the value of $f(\tau)$ through M\"{o}bius transformations from complex analyis. When we speak of modularity of a $q$-series, we usually intend that $q:=e^{2\pi i\tau}$, and it is with respect to the variable $\tau\in\mathbb H$ %, as opposed to $q\in\mathbb C$, 
%that the $q$-series has modular properties.}
These interesting $q$-hypergeometric series --- or ``Eulerian'' series, as Ramanujan referred to $q$-series --- turn out to have %He gave an example mock theta function $f(q)$ having 
profoundly curious analytic, combinatorial and algebraic properties. Ramanujan gave a prototypical example $f(q)$ of a mock theta function, defined by the series
\begin{equation}\label{ch1f(q)def}
f(q):=\sum_{n=0}^{\infty}\frac{q^{n^2}}{(-q;q)_n^{2}},
\end{equation}  %, defined by the $q$-series
%\begin{equation}\label{f(q)def}
%f(q):=\sum_{n=0}^{\infty}q^{n^2}(-q;q)_n^{-2},
%\end{equation}
%where $|q|<1$ and $(z;q)_0:=1,\  (z;q)_n:=\prod_{0\leq i \leq n-1}(1-zq^i),\  (z;q)_{\infty}:=\lim_{n\to \infty}(z;q)_n$ is the usual $q$-Pochhammer symbol (see \cite{Berndt}). 
where $|q|<1$. Ramanujan claimed that $f(q)$ is ``almost'' modular in a number of ways. For instance, he provided a pair of %weight $1/2$ 
functions $\pm b(q)$ with
$$b(q) := {(q;q)_{\infty}}{(-q;q)_{\infty}^{-2}}$$
that are modular up to multiplication by $q^{-1/24}$ when $q:=e^{2\pi \text{i} \tau}$, $\tau\in \mathbb H$ (the upper half-plane), to compensate for the singularities arising in the denominator of (\ref{ch1f(q)def}) as $q$ approaches an even-order root of unity $\zeta_{2k}$ (where we define $\zeta_m:=e^{{2 \pi \text{i}}/{m}}$) radially from within the unit circle\footnote{At even-order roots of unity this limiting procedure isn't necessary as there is no pole to reckon with in the denominator and $f(q)$ converges (see Chapter 8).}:
\begin{equation}\label{ch1Watson}
\lim_{q\to \zeta_{2k}}\left(f(q)-(-1)^k b(q) \right)=\mathcal O (1).
\end{equation} 
This type of behavior was first rigorously investigated by Watson in 1936 \cite{Watson_final}, and quantifies to some degree just how ``almost'' modular $f(q)$ is: at least at even-order roots of unity, $f(q)$ looks like a modular form plus a constant. 

Only in the twenty-first century have we begun to grasp the larger meaning of functions such as this, % and to see the connections they make throughout mathematics, 
beginning with Zwegers's innovative Ph.D. thesis \cite{Zwegers} of 2002, and developed in work of other researchers. % (see \cite{ono}). 
We now know Ramanujan's mock theta functions are examples of {\it mock modular forms}, which are the holomorphic parts of even deeper objects called {\it harmonic Maass forms} (see \cite{BFOR} for background). 
%
%\begin{definition}
%{\bf Definition of harmonic Maass form.}
%\end{definition}
%
%\begin{definition}
%{\bf Definition of mock modular form.}
%\end{definition}

%} which has led to the theories of mock modular forms, harmonic Maass forms, and beyond \cite{•}. 
In 2012, Folsom--Ono--Rhoades \cite{FOR} made explicit the limit in (\ref{ch1Watson}), showing that 
\begin{equation}\label{ch1Watson2}
\lim_{q\to \zeta_{2k}}\left(f(q)-(-1)^k b(q) \right)=-4U(-1,\zeta_{2k}),
\end{equation} 
where $U(z,q)$ is the {\it rank generating function for strongly unimodal sequences} in combinatorics (see \cite{BFR}), and is closely related to {\it partial theta functions} and mock modular forms. 
%For instance, in 2012, %Bryson--Ono--Pitman--Rhoades \cite{BOPR} connected $f(q)$ to the important {\it rank generating function for strongly unimodal sequences}, and 
By this connection to $U$, the work of Folsom--Ono--Rhoades along with Bryson--Ono--Pitman--Rhoades \cite{BOPR} reveals that the mock theta function $f(q)$ is strongly connected to the newly-discovered species of {\it quantum modular forms} in the sense of Zagier: functions that are modular on the rational or real numbers (see the definition below) up to the addition of some ``suitably nice'' function, and (in Zagier's words) have ``the `feel' of the objects occurring in perturbative quantum field theory''\cite{Zagier_quantum}.\footnote{See, for instance, \cite{Rejzner_perturbative} about perturbative QFT.}

\begin{definition}\label{ch1qmfdef}
Following Zagier \cite{Zagier_quantum}, we say a function $f\colon \mathbb{P}^1(\mathbb Q) \backslash S \to \mathbb C$, for a discrete subset $S$, is a quantum modular form if $f(x)-f|_k\gamma(x)=h_{\gamma}(x)$ for a ``suitably nice'' function $h_{\gamma}(x)$, with $\gamma \in \Gamma$ a congruence subgroup of $\operatorname{SL}_2(\mathbb Z)$. 
\end{definition}
%
%\begin{remark}
%As noted in Section \ref{Sect0}, we %also 
%anticipate the ``feel'' of quantum theory.%,  counter-intuitive phenomena like quantum tunneling, non-locality, spooky action at a distance, etc.
%\end{remark}
\begin{remark}
In this definition, $|_k$ is the usual Petersson slash operator (see \cite{Ono_web}), and ``suitably nice'' implies some pertinent analyticity condition, e.g. $\mathcal{C}_k,\mathcal{C}_{\infty}$, etc. 
\end{remark}

As a prototype of this new ``quantum'' class of objects, Zagier pointed to a class of ``strange'' functions of $q\in \mathbb C$ that diverge almost everywhere in the complex plane --- except at certain roots of unity, where they are perfectly well-behaved {and turn out to obey modular transformation laws}. One prototypical example of such an object is known in the literature as {\it Kontsevich's ``strange'' function}, an almost nonsensical $q$-hypergeometric series introduced in a 1997 lecture at the Max Planck Institute for Mathematics by Maxim Kontsevich \cite{Zagier_Vassiliev}.

\begin{definition}\label{ch1Fdef}
The ``strange'' function $F(q)$ is defined by the series
\begin{equation}F(q):=\sum_{n=0}^{\infty}(q;q)_n.\end{equation}
\end{definition}

Observing that $(q;q)_{\infty}$ converges inside the unit circle and diverges when $|q|\geq 1$ except at roots of unity, where it vanishes, gives an indication of what we think of as ``strange'' behavior in a function on $\mathbb C$: if we let $q$ scan around the complex plane, $F(q)$ is only non-infinite at isolated points, flickering in and out of comprehensibility along the unit circle.\footnote{Define $\chi_A(z)=1$ if $z\in A\subseteq \mathbb C$ and $=0$ otherwise. Then for any $f(z)$ defined on $B\subseteq \mathbb C$, and $A$ a {discrete} subset of $B$ (with $f(z)\neq 0$ except possibly if $z\in A$), one might think of $f(z)/\chi_A(z)$ as a toy model ``strange'' function --- it is only finite on the points comprising $A$.} % in $\zeta

Modular forms are well known to be connected to partition theory --- the partition generating function $(q;q)_{\infty}^{-1}$ is essentially modular --- as well as to zeta functions and other classical Dirichlet series by the theory of Hecke (see \cite{Apostol}, Ch. 6). But these new-found objects such as mock theta functions and almost-everywhere-divergent ``strange'' functions seem to dwell in a different dimension from classical number theory.

\subsection{Glimpses of an arithmetic of partitions}
In a series of papers in the early 1970s (e.g. \cite{Andrews1, Andrews2}), Andrews introduced the theory of {\it partition ideals}, a deep explanation of generating functions and bijection identities. Using ideas from lattice theory, Andrews provides examples of beautiful algebraic structures within the set $\mathcal P$ of integer partitions, and unifies classical partition identities of Euler, Rogers-Ramanujan (see \cite{SillsRR}), and other authors. Andrews summarized his ideas on partition ideals in his seminal 1976 book \cite{Andrews}. The following year, Alladi and Erd\H{o}s published another innovative study \cite{AlladiErdos} fusing partition theory with analytic number theory to investigate arithmetic functions, and drew a bijection between the set of positive integers $\mathbb Z^+$ and the set of partitions into prime parts (the so-called ``prime partitions''), pointing to deeper arithmetic connections between $\mathbb Z^+$ and $\mathcal P$. 
%
%
%
%
%
%
%In the 1970s, as we noted above, Alladi and Erd\H{o}s \cite{AlladiErdos} used the observation that the prime decompositions of natural numbers are in bijective correspondence with the set of partitions into prime parts (if we associate 1 to the empty partition $\emptyset$) to investigate the behaviors of interesting arithmetic functions. Also in the 1970s, Andrews developed a beautiful theory of {\it partition ideals} \cite{Andrews} using ideas from lattice theory, to unify classical results on partition bijections and generating functions, and discover whole families of new bijections --- teasing the possibility of a universal algebra of partitions. 

In light of these modern, far-reaching ideas, one wonders: to what degree might classical theorems from arithmetic arise as images in $\mathbb N$ (i.e., in prime partitions) % \`{a} la Alladi-Erd\H{o}s) % {\it a la} Alladi-Erd\H{o}s) 
of larger algebraic and set-theoretic structures in $\mathcal P$ such as those discovered by Andrews?% and Alladi-Erd\H{o}s? 

\section{The present work}

The partition generating formula (\ref{ch1partition_genfctn}) doesn't look very much like the zeta function identity \eqref{ch1zetaproduct}, beyond the ``sum = product'' form of both identities. However, generalizing Euler's proofs of these theorems leads to a new class of ``partition zeta functions'', which we define and examine in this work, containing $\zeta(s)$ and classical Dirichlet series as special cases, and intersecting $q$-series generating functions in diverse ways. Further Eulerian methods, together with work of Alladi, Andrews, Fine, Ono, Ramanujan, Zagier and other researchers, give hints of combinatorial structures unifying aspects of multiplicative and additive number theory\footnote{See \cite{MercaSchmidt, Tanay} for recent work at the intersection of additive and multiplicative number theory.}. 

The pursuit of such structures is the central motivation for this work. %As we will justify through a number of examples and applications, 
Through a number of theorems, examples and applications, we propose a philosophical heuristic: \begin{enumerate}
\item Classical multiplicative number theory is a special case (the restriction to prime partitions) of much more general theorems in the universe of partition theory.
\item One expects multiplicative functions and phenomena to have partition counterparts.
\vfill \vfill
\end{enumerate}
\

\subsection{Intersections of additive and multiplicative number theory}
\subsubsection{Chapter 2 preview}
In Chapter 2, we set the stage for this dissertation by proving classical-type connections between the M\"{o}bius function $\mu(n)$ (for $n \in \mathbb N$) and integer partitions. %(adapted from joint work with Marie Jameson \cite{JamesonSchneider}). 
One such result is the following. Let $p_a(n)$ denote the number of partitions of $n$ having length equal to $a$, and define $\widehat{p}_a(n)$ to be the number of partitions of $n$ with length some positive multiple of $a$, i.e.,  $\widehat{p}_a(n) = \sum_{j=1}^\infty p_{aj}(n)$. %some {multiple} of $a$. % for some integer $k\geq 1$. %, and define their generating functions by %, i.e.  \[\widehat{p}_a(n) := \sum_{j=1}^\infty p_{aj}(n).\]  On analogy to the identities for $P(q)$ above, 
Let $P_a(q):=\sum_{k=0}^{\infty}p_a(k)q^k$ and $\widehat{P}_a(q):=\sum_{k=0}^{\infty}\widehat{p}_a(k)q^k$. % denote the $q$-series generating functions of $p_a(n)$ and $\widehat{p}_a(n),$ respectively. 
%We have the following identities for $P_a(q)$ and $p_a(n)$.
\begin{proposition}[Theorem \ref{ch2thm2} in Chapter 2] We have the following pair of identities:
\begin{align*}
P_a(q) &=\sum_{j=1}^\infty{\mu (j)\widehat{P}_{aj}(q)},\\
p_a(n) &= \sum_{j=1}^\infty {\mu (j)\widehat{p}_{aj}(n)}.
\end{align*} 
\end{proposition}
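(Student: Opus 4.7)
The plan is to recognize the defining relation as the hypothesis for a Möbius inversion over the divisibility lattice of positive integers, so that the result drops out of a classical inversion. Setting $m = aj$, the definition $\widehat{p}_a(n) = \sum_{j=1}^\infty p_{aj}(n)$ can be rewritten as
\[
\widehat{p}_a(n) \;=\; \sum_{\substack{m\geq 1\\ a\mid m}} p_m(n).
\]
For each fixed $n$, this is in fact a finite sum, since $p_m(n)=0$ whenever $m>n$ (a partition of $n$ can have at most $n$ parts). The same finiteness applies on the right-hand side of the identity to be proved: $\widehat{p}_{aj}(n)=0$ as soon as $aj>n$.

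Next I would invoke the Möbius inversion formula over multiples: if two arithmetic sequences $f$ and $g$ satisfy $g(a)=\sum_{a\mid m} f(m)$ (with sufficient vanishing to make everything finite), then
\[
f(a) \;=\; \sum_{a\mid m}\mu(m/a)\,g(m) \;=\; \sum_{j=1}^\infty \mu(j)\,g(aj).
\]
Applying this with $f(a)=p_a(n)$ and $g(a)=\widehat{p}_a(n)$ yields the coefficient identity
\[
p_a(n) \;=\; \sum_{j=1}^\infty \mu(j)\,\widehat{p}_{aj}(n),
\]
which is the second statement. For completeness I would include a short derivation of the Möbius inversion over multiples, noting that after substitution it reduces to the standard identity $\sum_{d\mid \ell}\mu(d)=[\ell=1]$.

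The generating function identity then follows by multiplying both sides by $q^n$ and summing over $n\geq 0$. Because each coefficient equation involves only finitely many nonzero terms, the interchange of the sums over $j$ and $n$ is unproblematic, and we obtain
\[
P_a(q) \;=\; \sum_{n=0}^{\infty}\sum_{j=1}^{\infty}\mu(j)\,\widehat{p}_{aj}(n)\,q^n \;=\; \sum_{j=1}^{\infty}\mu(j)\,\widehat{P}_{aj}(q),
\]
which is the first identity.

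The only genuine subtlety is justifying the rearrangement and the inversion step; both reduce to the observation that for every $n$ only finitely many values of $m=aj$ contribute, so there are no convergence issues to fight. Accordingly I do not expect any serious obstacle—the main content of the proposition is the recognition that $\widehat{p}_a$ is the divisor-summation transform of $p_a$ indexed over multiples, after which Möbius inversion does the work.
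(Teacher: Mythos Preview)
Your proposal is correct and essentially the same as the paper's argument. The paper packages the M\"obius-inversion-over-multiples step as a general lemma (Lemma~\ref{ch2lemma2}) and proves the generating-function identity first, then reads off coefficients; you do the coefficient identity first and then sum, but the substance---rewriting $\widehat{p}_a(n)=\sum_{a\mid m}p_m(n)$ and inverting via $\sum_{d\mid \ell}\mu(d)=[\ell=1]$, with finiteness coming from $p_m(n)=0$ for $m>n$---is identical.
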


In proving these partition identities, $\mu$ plays a key role, but with respect to the partition lengths $aj$, not the size $n$ as one might anticipate. It is interesting in this theorem and others proved in Chapter 2, to see the interaction of this classical multiplicative function with additive partitions. 

\subsubsection{Chapter 3 preview}
Following up on this multiplicative lead, Chapter 3 is one of the central chapters of this work. We define a partition version of the M\"{o}bius function, also studied privately by Alladi\footnote{K. Alladi, private communication, December 21, 2015}, % \cite{Alladi_private}, 
and use it in various settings in subsequent chapters. 

Furthermore, we present a natural multiplicative theory of integer partitions, % (which are, of course, usually considered in terms of addition), 
and find many theorems of classical number theory and analysis arise as particular cases of extremely general combinatorial structure laws. Let us define a new partition statistic, the {\it norm} of the partition, to complement the length $\ell(\lambda)$ and size $|\lambda|$.

\begin{definition}\label{ch1normdef} We define the {\it norm of $\lambda$}, notated $n_{\lambda}$, by $n_{\emptyset}:=1$ and, for $\lambda$ nonempty, by the product of the parts: \begin{equation*}\label{ch1integer}n_{\lambda}:=\lambda_1 \lambda_2 \cdots \lambda_r.\end{equation*}\end{definition} 
%We adopt the convention $n_{\emptyset}:=1$.
Pushing further in the multiplicative direction, we can define a multiplication operation on the elements of $\mathcal P$, as well as division of partitions.

\begin{definition}\label{ch1productdef}
We define the \textit{product} $\lambda \lambda'$ of two partitions $\lambda,\lambda'\in\mathcal P$ as the multi-set union of their parts listed in weakly decreasing order, e.g., $(5,2,2)(6,5,1)=(6,5,5,2,2,1)$. The empty partition $\emptyset$ serves as the multiplicative identity\footnote{Clearly then, with this multiplication the set $\mathcal P$ is a monoid.}. 
\end{definition}
\begin{definition}\label{ch1divisiondef}
We say a partition $\delta$ \textit{divides}  (or is a ``subpartition'' of) $\lambda$ and write $\delta | \lambda$, if all the parts of $\delta$ are also parts of $\lambda$, including multiplicity, e.g., $(6,5,1)|(6,5,5,2,2,1)$. When $\delta | \lambda$ we define the quotient $\lambda / \delta\in\mathcal P$  formed by deleting the parts of $\delta$ from $\lambda$. We note that $\emptyset$ divides every partition.\end{definition}

Note that in this setting, the partitions $(1), (2), (3),(4),...$, of length one play the role of primes. We can now discuss the partition-theoretic analog of $\mu(n)$ mentioned above.

\begin{definition}\label{ch1moebiusdef}
For $\lambda\in\mathcal P$ we define a partition-theoretic M{\"o}bius function $\mu_{\mathcal P}(\lambda)$ as follows:
$$
\mu_{\mathcal P}(\lambda):= \left\{
        \begin{array}{ll}
            1 & \text{if $\lambda = \emptyset$,}\\
            0 & \text{if $\lambda$ has any part repeated,}\\
            (-1)^{\ell(\lambda)} & \text{otherwise.}
        \end{array}
    \right.
$$
\end{definition}
%\vfill
Note that if $\lambda$ is a prime partition, $\mu_{\mathcal P}(\lambda)$ reduces to $\mu(n_{\lambda})$. Just as in the classical case, we have the following, familiar relations.
%\vfill

%\newpage
\begin{proposition}[Proposition \ref{ch3musum} in Chapter 3]
Summing $\mu_{\mathcal P}(\delta)$ over the subpartitions $\delta$ of $\lambda\in\mathcal P$ gives
$$
\sum_{\delta|\lambda}\mu_{\mathcal P}(\delta)= \left\{
        \begin{array}{ll}
            1 & \text{if $\lambda=\emptyset$,}\\
            0 & \text{otherwise.}
        \end{array}
    \right.
$$
\end{proposition}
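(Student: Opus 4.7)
The plan is to mimic the classical proof that $\sum_{d \mid n} \mu(d) = [n=1]$, where the key observation is that only squarefree divisors contribute, and these are enumerated by subsets of the set of prime factors of $n$. Here the role of ``primes'' is played by parts $(1), (2), (3), \ldots$, and the analog of ``squarefree'' is ``having all parts distinct.''

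First I would dispose of the case $\lambda = \emptyset$: the only subpartition of $\emptyset$ is $\emptyset$ itself, so the sum is $\mu_{\mathcal P}(\emptyset) = 1$. Next, assume $\lambda$ is nonempty, and write it in multiplicity form $\lambda = (1^{m_1}\,2^{m_2}\,\cdots\,k^{m_k}\,\cdots)$. Let $T(\lambda) := \{k \in \mathbb{N} : m_k(\lambda) \geq 1\}$ denote the (finite, nonempty) set of distinct part sizes appearing in $\lambda$. By Definition \ref{ch1moebiusdef}, $\mu_{\mathcal P}(\delta) = 0$ unless every part of $\delta$ has multiplicity one, so only such $\delta$ contribute to the sum.

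The main step is to observe that the subpartitions $\delta \mid \lambda$ with all parts distinct are in bijection with the subsets $S \subseteq T(\lambda)$: given $S$, form $\delta_S := (s_1, s_2, \ldots, s_{|S|})$ listing the elements of $S$ in weakly decreasing order. This is a subpartition of $\lambda$ since each $s \in S$ occurs in $\lambda$ (as $m_s \geq 1$), and conversely any subpartition with distinct parts determines its underlying set of parts. Under this bijection $\mu_{\mathcal P}(\delta_S) = (-1)^{|S|}$. Consequently,
\begin{equation*}
\sum_{\delta \mid \lambda} \mu_{\mathcal P}(\delta) \;=\; \sum_{S \subseteq T(\lambda)} (-1)^{|S|} \;=\; \sum_{k=0}^{|T(\lambda)|} \binom{|T(\lambda)|}{k} (-1)^k \;=\; (1-1)^{|T(\lambda)|} \;=\; 0,
\end{equation*}
since $|T(\lambda)| \geq 1$ for $\lambda \neq \emptyset$.

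There is no real obstacle here; the only point requiring care is verifying that the notion of divisibility in Definition \ref{ch1divisiondef} (submultiset of parts with multiplicity) correctly matches the bijection with subsets of $T(\lambda)$, which is immediate once one restricts to $\delta$ with distinct parts. This proof is structurally parallel to the classical one, with the set of distinct part sizes of $\lambda$ replacing the set of prime divisors of $n$, and confirms that $\mu_{\mathcal P}$ is the correct partition-theoretic lift of $\mu$ with respect to the divisibility order on $\mathcal P$.
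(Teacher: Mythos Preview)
Your proof is correct and is precisely the argument the paper has in mind: the paper does not write out a proof, stating only that the relation holds ``by inclusion-exclusion'' and noting that proofs in that chapter are suppressed because they are closely analogous to the classical cases. Your write-up simply makes this explicit.
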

%We also prove a partition-theoretic generalization of M{\"o}bius inversion, and many other classical theorems involving $\mu$ generalize to $\mu_{\mathcal P}$. % in \cite{Robert_bracket}. %\begin{proposition}[Proposition XX in \cite{X}]\label{mobinv}
%For functions $f,F\colon \mathcal P \to \C$ we have the equivalence
%$$
%F(\lambda)=\sum_{\delta|\lambda}f(\delta)\  \Longleftrightarrow\  f(\lambda)=\sum_{\delta|\lambda}F(\delta)\mu_{\mathcal P}(\lambda / \delta)
%.$$
%\end{proposition}
We also have a partition-theoretic version of M\"{o}bius inversion. 
\begin{proposition}[Proposition \ref{ch3mobinv} in Chapter 3] 
For $f\colon \mathcal P \to \mathbb C$ define
\begin{equation*}\label{ch1divisorsum}
F(\lambda):=\sum_{\delta | \lambda} f(\delta).\end{equation*}
Then we also have
$$f(\lambda)=\sum_{\delta | \lambda} F(\delta)\mu_{\mathcal P}(\lambda/\delta).$$
\end{proposition}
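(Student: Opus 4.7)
The plan is to mimic the classical proof of M\"obius inversion, using the partition-theoretic M\"obius identity from the previous proposition in place of its number-theoretic analog. The multiplicative theory on $\mathcal P$ set up in Chapter 3 (in particular the product and division of Definitions \ref{ch1productdef} and \ref{ch1divisiondef}) is tailored precisely to make this kind of argument go through exactly as in the classical case.

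First I would substitute the definition of $F$ directly into the right-hand side, obtaining the double sum
\begin{equation*}
\sum_{\delta \mid \lambda} F(\delta)\, \mu_{\mathcal P}(\lambda/\delta) \;=\; \sum_{\delta \mid \lambda}\,\sum_{\delta' \mid \delta} f(\delta')\, \mu_{\mathcal P}(\lambda/\delta).
\end{equation*}
Next I would swap the order of summation. The pairs $(\delta',\delta)$ with $\delta' \mid \delta \mid \lambda$ are in bijection with pairs $(\delta',\eta)$ with $\delta' \mid \lambda$ and $\eta \mid (\lambda/\delta')$, via $\delta = \delta' \eta$; note that under this reparametrization $\lambda/\delta = (\lambda/\delta')/\eta$, a consequence of the fact that partition multiplication is simply multiset union of parts. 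The expression rearranges to
\begin{equation*}
\sum_{\delta' \mid \lambda} f(\delta') \sum_{\eta \mid (\lambda/\delta')} \mu_{\mathcal P}\!\bigl((\lambda/\delta')/\eta\bigr).
\end{equation*}

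Then I would observe that as $\eta$ ranges over the subpartitions of $\lambda/\delta'$, so does $\tau := (\lambda/\delta')/\eta$ (the map $\eta \mapsto \tau$ is an involution on the set of subpartitions of $\lambda/\delta'$). Hence the inner sum equals $\sum_{\tau \mid (\lambda/\delta')} \mu_{\mathcal P}(\tau)$, which by the previous proposition is $1$ if $\lambda/\delta' = \emptyset$ and $0$ otherwise. Since $\lambda/\delta' = \emptyset$ forces $\delta' = \lambda$, only the $\delta' = \lambda$ term survives, leaving exactly $f(\lambda)$, as desired.

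The main obstacle, and the only place where partition subtleties enter, is the bijection $(\delta',\delta) \leftrightarrow (\delta',\eta)$ together with the identity $\lambda/\delta = (\lambda/\delta')/\eta$; one must be careful that multiplicities of repeated parts behave correctly under the multiset-union product and its inverse. This is routine given Definitions \ref{ch1productdef} and \ref{ch1divisiondef}, but it is the one verification that has no counterpart in the classical integer proof and so deserves explicit mention.
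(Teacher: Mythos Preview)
Your proof is correct and is exactly the approach the paper indicates: the paper does not write out an explicit proof of this proposition, stating only that it ``is proved along the lines of proofs of the classical formula,'' and your argument is precisely the standard classical M\"obius inversion proof transported to the partition setting via Proposition~\ref{ch3musum}.
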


Now, the classical M{\"o}bius function has a close companion in the Euler phi function $\varphi(n)$, and $\mu_{\mathcal P}$ has a companion as well. 

\begin{definition}\label{ch1phidef}
For $\lambda\in\mathcal P$ we define a partition-theoretic phi function
$$
\varphi_{\mathcal P}(\lambda):=\  n_{\lambda}\prod_{\substack{\lambda_i\in\lambda\\ \text{without}\\  \text{repetition}}}(1-\lambda_i^{-1}).$$
\end{definition}

Clearly $\varphi_{\mathcal P}(\lambda)$ reduces to $\varphi(n_{\lambda})$ if $\lambda$ is a prime partition, and, as with $\mu_{\mathcal P}$, % above, $\varphi_{\mathcal P}$ 
generalizes classical results. %, such as the following familiar-looking identities.
\begin{proposition}[Propositions \ref{ch3phisum} and \ref{ch3phimoeb} in Chapter 3]\label{ch1phisum}
We have that
$$
\sum_{\delta|\lambda}\varphi_{\mathcal P}(\delta)=n_{\lambda},\  \  \  \  \  \  
\varphi_{\mathcal P}(\lambda)=n_{\lambda}\sum_{\delta|\lambda}\frac{\mu_{\mathcal P}(\delta)}{n_{\delta}}
.$$
\end{proposition}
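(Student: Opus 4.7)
The plan is to exploit the multiplicative structure of partitions in multiplicity notation. Writing $\lambda = (1^{m_1}\, 2^{m_2}\, 3^{m_3}\, \cdots)$, every subpartition $\delta \mid \lambda$ is uniquely specified by a tuple $(k_1, k_2, k_3, \ldots)$ with $0 \le k_i \le m_i$, namely $\delta = (1^{k_1}\, 2^{k_2}\, \cdots)$. The key observation is that both the norm $n_\delta = \prod_i i^{k_i}$ and the product $\prod_{i\, :\, k_i \ge 1}(1 - i^{-1})$ in the definition of $\varphi_{\mathcal P}(\delta)$ factor cleanly over the parts, which will let the divisor sum decompose as a product indexed by $i$.

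First I would combine these two factorizations to write
$$\varphi_{\mathcal P}(\delta) \;=\; \prod_{i\, :\, k_i \ge 1}\bigl(i^{k_i} - i^{k_i-1}\bigr),$$
with the empty product equal to $1$ (corresponding to $\delta = \emptyset$, where $\varphi_{\mathcal P}(\emptyset) = 1$). The divisor sum then factors as
$$\sum_{\delta \mid \lambda} \varphi_{\mathcal P}(\delta) \;=\; \prod_{i\, :\, m_i \ge 1}\left(1 \;+\; \sum_{k=1}^{m_i}\bigl(i^k - i^{k-1}\bigr)\right),$$
and the inner sum telescopes to $i^{m_i} - 1$, so each factor collapses to $i^{m_i}$. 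The full product is $\prod_i i^{m_i} = n_\lambda$, proving the first identity.

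For the second identity, I would apply the partition-theoretic Möbius inversion stated as the preceding proposition, taking $f = \varphi_{\mathcal P}$ and $F(\lambda) = n_\lambda$ in view of what was just proved. This yields
$$\varphi_{\mathcal P}(\lambda) \;=\; \sum_{\delta \mid \lambda} n_\delta\, \mu_{\mathcal P}(\lambda/\delta).$$
Since the partition norm is multiplicative under the partition product (immediate from the definitions: the parts of $\lambda$ are the multi-set union of those of $\delta$ and $\lambda/\delta$), we have $n_\lambda = n_\delta \cdot n_{\lambda/\delta}$. Re-indexing the sum via the involution $\delta \leftrightarrow \lambda/\delta$ on subpartitions of $\lambda$ then converts the right-hand side to
$$\sum_{\delta \mid \lambda} n_{\lambda/\delta}\, \mu_{\mathcal P}(\delta) \;=\; n_\lambda \sum_{\delta \mid \lambda} \frac{\mu_{\mathcal P}(\delta)}{n_\delta},$$
as desired.

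The main obstacle I anticipate is merely bookkeeping: verifying that the factorized expression for $\varphi_{\mathcal P}(\delta)$ correctly handles the edge cases (the empty partition, and parts $i$ for which $k_i = 0$ versus $k_i \ge 1$), and confirming that $\delta \mapsto \lambda/\delta$ is indeed a bijection on the set of subpartitions of $\lambda$. Once these are in place, the arguments are clean partition-theoretic analogues of the classical proofs of $\sum_{d \mid n}\varphi(d) = n$ and $\varphi(n) = n \sum_{d \mid n} \mu(d)/d$ via factorization at prime powers, with the $i^{m_i}$ block here playing the role of the classical prime-power block.
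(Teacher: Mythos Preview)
Your proof is correct and follows precisely the classical-analog approach the paper indicates: the paper suppresses explicit proofs in Chapter 3, noting only that these identities are ``proved along classical lines,'' and your multiplicity-factorization argument for the first identity together with partition M\"obius inversion for the second is exactly that classical route transplanted to $\mathcal P$. The bookkeeping concerns you flag (empty partition, $k_i=0$ cases, and the $\delta \leftrightarrow \lambda/\delta$ bijection) are all handled correctly in what you wrote.
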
  
There are generalizations in partition theory of many other arithmetic objects and theorems, for example, a partition version $\sigma_{\mathcal P}$ of the sum of divisors function $\sigma(n)$, and a partition version of the Cauchy product formula for the product of two infinite series.

\begin{proposition}[Proposition \ref{ch3cauchyprod} in Chapter 3] \label{ch1cauchy}
For $f,g\colon \mathcal P \to \mathbb C$, we have that
$$
\left(\sum_{\lambda\in \mathcal P}f(\lambda)\right) \left(\sum_{\lambda\in \mathcal P}g(\lambda)\right)= \sum_{\lambda\in \mathcal P}\sum_{\delta|\lambda}f(\delta)g(\lambda / \delta),$$
so long as the sums on the left both converge absolutely. \end{proposition}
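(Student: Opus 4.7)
The plan is to mimic the classical proof of the Cauchy product for Dirichlet series, replacing ordinary multiplication of indices with the partition product of Definition \ref{ch1productdef} and divisibility of integers with subpartition divisibility in the sense of Definition \ref{ch1divisiondef}. First I would invoke absolute convergence of the two sums on the left to justify expanding the product as a single absolutely convergent double sum
\[
\left(\sum_{\alpha\in\mathcal P}f(\alpha)\right)\left(\sum_{\beta\in\mathcal P}g(\beta)\right)\;=\;\sum_{(\alpha,\beta)\in\mathcal P\times\mathcal P}f(\alpha)g(\beta),
\]
where the terms may be grouped and reordered at will.

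Next I would set up the combinatorial heart of the argument: a bijection
\[
\Phi\colon\mathcal P\times\mathcal P \;\longrightarrow\; \{(\lambda,\delta)\,:\,\lambda\in\mathcal P,\ \delta\mid\lambda\},\qquad \Phi(\alpha,\beta):=(\alpha\beta,\,\alpha).
\]
The map is well defined because $\alpha$ is a subpartition of the multi-set union $\alpha\beta$, so $\alpha\mid\alpha\beta$. For injectivity, if $\Phi(\alpha,\beta)=\Phi(\alpha',\beta')$ then $\alpha=\alpha'$ and $\alpha\beta=\alpha'\beta'$, whence $\beta=(\alpha\beta)/\alpha=(\alpha'\beta')/\alpha'=\beta'$. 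For surjectivity, given any $\lambda\in\mathcal P$ and $\delta\mid\lambda$, the pair $(\delta,\lambda/\delta)$ satisfies $\Phi(\delta,\lambda/\delta)=(\delta\cdot(\lambda/\delta),\delta)=(\lambda,\delta)$, since deleting $\delta$ from $\lambda$ and then re-inserting it reconstructs $\lambda$ by Definition \ref{ch1divisiondef}.

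Using $\Phi$ to reindex the absolutely convergent double sum, each pair $(\alpha,\beta)$ contributes $f(\alpha)g(\beta)=f(\delta)g(\lambda/\delta)$ where $(\lambda,\delta):=\Phi(\alpha,\beta)$, so
\[
\sum_{(\alpha,\beta)\in\mathcal P\times\mathcal P}f(\alpha)g(\beta)\;=\;\sum_{\lambda\in\mathcal P}\sum_{\delta\mid\lambda}f(\delta)g(\lambda/\delta),
\]
which is the desired identity. The only real point requiring care is the bijection $\Phi$ — in particular, verifying that under the multi-set conventions of Definitions \ref{ch1productdef} and \ref{ch1divisiondef} one indeed has $\delta\cdot(\lambda/\delta)=\lambda$ and $(\alpha\beta)/\alpha=\beta$, including the behavior of repeated parts; once this is in hand, absolute convergence handles the rearrangement and no analytic subtleties remain.
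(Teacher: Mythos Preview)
Your proof is correct and follows exactly the approach the paper indicates: expand the product as a double sum over ordered pairs and reindex via the bijection $(\alpha,\beta)\leftrightarrow(\lambda,\delta)$ with $\lambda=\alpha\beta$, $\delta=\alpha$. The paper itself only sketches this (``expand the left-hand side and compare the resulting terms''), so your write-up is in fact more careful than the original, particularly in verifying injectivity and surjectivity of $\Phi$ and noting where absolute convergence is used.
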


%We give applications of this fusion of additive and multiplicative ideas, such as %the following 
%explicit formulas for the coefficients of $(q;q)_{\infty}^m$ for any $m\in \mathbb Z$, e.g., Ramanujan's tau function $\tau(n)$ and the number $P_k(n)$ of $k$-color partitions of $n$. %\footnote{In fact, our methods can handle much more complicated products and quotients.}
% that I am currently pursuing. My plan is to continue working toward an algebra of partitions.
%As an application of
%To apply
As our first application of these ideas, we investigate the relatively recently-defined $q$-bracket operator $\left<f\right>_q$ which represents certain expected values in statistical physics, studied by Bloch--Okounkov, Zagier, and others for its quasimodular\footnote{Quasimodular forms are a class containing integer-weight holomorphic modular forms generated by the Eisenstein series $E_2,E_4,E_6$, as opposed to just by $E_4,E_6$ as in the modular case.} properties.

\begin{definition}\label{ch1qbracket} 
We define the {\it $q$-bracket} $\left<f\right>_q$ of a function $f\colon \mathcal P \to \C$ by the expected value %to be the quotient
\[
\left<f\right>_q:=\frac{\sum_{\lambda \in \mathcal P}f(\lambda)q^{|\lambda|}}{\sum_{\lambda \in \mathcal P}q^{|\lambda|}}\in \C[[q]]
.\]
Here, we take the resulting power series to be indexed by partitions, unless otherwise specified.

%where the power series are summed over integer partitions, and there is no dependence of the coefficients on $q$. 
\end{definition}

%
%\begin{remark} 
%%In addition to specifying the $q$-bracket to be primarily a sum over $\mathcal P$, 
%Definition \ref{qbracket} extends the range of the $q$-bracket somewhat; the operator is defined in \cite{BlochOkounkov} and \cite{Zagier} to be a power series in $\Q[[q]]$ instead of $\C[[q]]$, as those works take $f\colon \mathcal P\to\Q$. %We consider the case in which coefficients are functions of $q$ in another study \cite{SchneiderJTP}.
%We may write the $q$-bracket in equivalent forms that will prove useful here:  
%\begin{equation}\label{q-eq}
%\left<f\right>_q=\left(q;q\right)_{\infty}\sum_{\lambda \in \mathcal P}f(\lambda)q^{|\lambda|}=\left(q;q\right)_{\infty}\sum_{n=0}^{\infty}q^n\sum_{\lambda \vdash n}f(\lambda)
%\end{equation}
%
%\end{remark}
%

This $q$-series operator turns out to play a nice role in the theory of partitions, quite apart from questions of modularity. Conversely, in analogy to antiderivatives, we define here an inverse ``$q$-antibracket'' of $f$.\footnote{We will refer to the act of obtaining $\left<f\right>_q$ and $F$ as ``applying the $q$-bracket/antibracket''.}% $\left< f \right>_q^{-1}=\sum_{\lambda \in \mathcal P} {F}(\lambda)q^{|\lambda|}$ such that $\left< F \right>_q=\sum_{\lambda}f(\lambda)q^{|\lambda|}$.\footnote{See Definition \ref{ch3antidef}} %\footnote{This is, in fact, a case of more general phenomena, as we show.} %is a given power series. 
 
\begin{definition}\label{ch1antibracketdef}

We call $F\colon \mathcal P\to \C$ a $q$-{\it antibracket} of $f$ if $\left<F\right>_q= \sum_{\lambda \in \mathcal P}f(\lambda) q^{|\lambda|}$.%, and write $$\left<f\right>_q^{-1}:=F(\lambda).$$% for $f\colon \mathcal P\to \C$, we call $F$ a ``$q$-antibracket of $f$'' (or sometimes just an ``antibracket'').
%Given the power series $\hat{f}(q):=\sum_{\lambda \in \mathcal P}f(\lambda)q^{|\lambda|}$ for $f\colon \mathcal P\to \C$, if we find a function $F\colon \mathcal P\to \C$ whose $q$-bracket is $\hat{f}(q)$ exactly, we call $F$ a ``$q$-antibracket of $f$'' (or sometimes just an ``antibracket'').% and write $\left<f\right>_q^{(-1)}=\sum_{\lambda \in \mathcal P}F(\lambda)q^{|\lambda|}$. 
\end{definition}

As in antidifferentiation, the function $F$ is not unique. Using the partition-theoretic ideas we develop, we can give an explicit formula for coefficients of the $q$-bracket and $q$-antibracket of any function $f$ defined on partitions. 
 
\begin{proposition}[Theorems \ref{ch3thm1} and \ref{ch3thm1.5} in Chapter 3]
The $q$-bracket of $f\colon \mathcal P \to \mathbb C$ is given by
$$\left< f \right>_q=\sum_{\lambda \in \mathcal P}\widetilde{f}(\lambda)q^{|\lambda|},$$ 
where $\widetilde{f}(\lambda)=\sum_{\delta | \lambda}f(\delta)\mu_{\mathcal P}(\lambda / \delta).$ Moreover, let $F(\lambda):=\sum_{\delta | \lambda}f(\delta)$; then a $q$-antibracket of $f$ is given by the coefficients $F$ of %\colon \mathcal P \to \mathbb C$ of
$$\left< f \right>_q^{-1}=\sum_{\lambda \in \mathcal P}{F}(\lambda)q^{|\lambda|}.$$ 
\end{proposition}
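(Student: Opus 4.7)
The plan is to reduce both identities to the partition Cauchy product (Proposition \ref{ch1cauchy}) paired with partition Möbius inversion (Proposition \ref{ch3mobinv}). First I would clear the denominator in Definition \ref{ch1qbracket}: the claim $\langle f\rangle_q = \sum_{\lambda\in\mathcal{P}}\widetilde{f}(\lambda)q^{|\lambda|}$ is equivalent to
\begin{equation*}
\sum_{\lambda\in\mathcal{P}} f(\lambda)\,q^{|\lambda|} \;=\; \Bigl(\sum_{\lambda\in\mathcal{P}} q^{|\lambda|}\Bigr)\Bigl(\sum_{\lambda\in\mathcal{P}} \widetilde{f}(\lambda)\,q^{|\lambda|}\Bigr).
\end{equation*}
Because $|\delta|+|\lambda/\delta|=|\lambda|$ for every subpartition $\delta\mid\lambda$ (an immediate consequence of the multiset-union definition of partition multiplication), one application of the Cauchy product collapses the right-hand side to $\sum_{\lambda}\bigl(\sum_{\delta\mid\lambda}\widetilde{f}(\delta)\bigr) q^{|\lambda|}$. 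Matching coefficients of $q^{|\lambda|}$ then reduces the first identity to the pointwise statement $f(\lambda)=\sum_{\delta\mid\lambda}\widetilde{f}(\delta)$, which is exactly what partition Möbius inversion delivers from the defining relation $\widetilde{f}(\lambda):=\sum_{\delta\mid\lambda}f(\delta)\,\mu_{\mathcal{P}}(\lambda/\delta)$.

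For the $q$-antibracket statement I would run the same Cauchy product argument in the opposite direction. With $F(\lambda):=\sum_{\delta\mid\lambda}f(\delta)$, Proposition \ref{ch1cauchy} yields
\begin{equation*}
\Bigl(\sum_{\lambda\in\mathcal{P}} q^{|\lambda|}\Bigr)\Bigl(\sum_{\lambda\in\mathcal{P}} f(\lambda)q^{|\lambda|}\Bigr) \;=\; \sum_{\lambda\in\mathcal{P}}\Bigl(\sum_{\delta\mid\lambda} f(\delta)\Bigr)q^{|\lambda|} \;=\; \sum_{\lambda\in\mathcal{P}} F(\lambda)q^{|\lambda|}.
\end{equation*}
Dividing by $\sum_{\lambda}q^{|\lambda|}$ then gives $\langle F\rangle_q=\sum_{\lambda}f(\lambda)q^{|\lambda|}$, which is precisely the defining property of a $q$-antibracket of $f$ in Definition \ref{ch1antibracketdef}.

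The principal difficulty here is bookkeeping rather than conceptual. One needs the additivity $|\delta|+|\lambda/\delta|=|\lambda|$ to shepherd the powers of $q$ cleanly through the Cauchy product, and one needs the absolute convergence hypothesis of Proposition \ref{ch1cauchy} to be met; the latter is harmless for $|q|<1$ since $\sum_{\lambda}q^{|\lambda|}=(q;q)_\infty^{-1}$ converges absolutely there, and at any rate one can interpret the identities as formal power series in $q$ with partition-indexed coefficients, where the manipulation is purely combinatorial. The one trap worth flagging is a direction error when invoking Proposition \ref{ch3mobinv}: the definition of $\widetilde{f}$ matches the \emph{conclusion} of Möbius inversion, so one must apply the proposition to recover $f$ as the divisor sum of $\widetilde{f}$, not the converse.
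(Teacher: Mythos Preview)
Your argument is correct and matches the paper's approach: the paper derives Theorems~\ref{ch3thm1} and~\ref{ch3thm1.5} by the same product-of-series computation (presented there via the summation swap of Proposition~\ref{ch3sumswap}, but explicitly noted to be a consequence of the partition Cauchy product, Proposition~\ref{ch3cauchyprod}), and then obtains the $\widetilde f$ formula (Theorem~\ref{ch3thm2}) by invoking partition M\"{o}bius inversion in exactly the direction you flag.
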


%
%$Combining the above ideas, and using partition M\"{o}bius inversion, gives the following useful formulas.
%
%%As I prove in \cite{Robert_bracket}, sums over subpartitions interact in a very surprising and beautiful way with the $q$-Pochhammer symbol $(q;q)_{\infty}:=\prod_{n\geq 1}(1-q^n), |q|<1$. Here is one general result.
%
%
%
%\begin{proposition}[Theorems 4.1 and 4.9 of \cite{Robert_bracket}]\label{bracketgen}
%We have the relations %between sums over all partitions
%$$(q;q)_{\infty}\sum_{\lambda\in\mathcal P}F(\lambda)q^{\lambda}=\sum_{\lambda\in\mathcal P}f(\lambda)q^{\lambda},\  \  \  \  \  \  \  \frac{1}{(q;q)_{\infty}}\sum_{\lambda\in\mathcal P}F(\lambda)q^{\lambda}=\sum_{\lambda\in\mathcal P}\widetilde{F}(\lambda)q^{\lambda}$$
%where %the sums are taken over all partitions $\lambda$, and 
%$\widetilde{F}(\lambda)=\sum_{\delta | \lambda}F(\delta).$ Moreover, we have $f(\lambda)=\sum_{\delta | \lambda}F(\delta)\mu_{\mathcal P}(\lambda / \delta).$
%\end{proposition}
%
%
%
%
%In effect, multiplying by $(q;q)_{\infty}$ ``undoes'' subpartition sums in partition-indexed $q$-series, and dividing by $(q;q)_{\infty}$ sends the coefficients (times $\mu_{\mathcal P}$) into these sums. 

%In \cite{Robert_bracket} I make somewhat clever use of Proposition \ref{bracketgen} to give explicit formulas for coefficients of the {\it $q$-bracket of Bloch-Okounkov} (see  \cite{BlochOkounkov, Zagier}), an operator from statistical physics connected to quasi-modular and $p$-adic modular forms, and to address other $q$-series. 
We apply this $q$-bracket formula to compute coefficients of the reciprocal of the Jacobi triple product (see \cite{Berndt})%\begin{equation}
$$j(z;q):=(z;q)_{\infty}(z^{-1}q;q)_{\infty}(q;q)_{\infty}.$$ %\end{equation}

\begin{proposition}[Corollary \ref{ch3jtpcoeff} of Chapter 3]\label{ch1jtpcoeff}
For $z\neq1$ the reciprocal of the triple product is given by
$$\frac{1}{j(z;q)}=\sum_{n\geq 0}c_n q^{n}\  \  \  \  \text{with}\  \  \  \  c_n=c_n(z)=(1-z)^{-1}\sum_{\lambda \vdash n}\sum_{\delta|\lambda}\sum_{\varepsilon|\delta}z^{\operatorname{crk}(\varepsilon)},
$$
where $\operatorname{crk}(*)$ denotes the crank of a partition as defined by Andrews-Garvan \cite{AndrewsGarvan}.\footnote{See Definition \ref{ch8crk}}
\end{proposition}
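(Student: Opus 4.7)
The plan is to reduce the claim to the Andrews--Garvan crank generating function identity and then apply the antibracket machinery of Chapter~3 twice to produce the triple sum. First, I peel off the factor $(1-z)$ by writing $(z;q)_{\infty}=(1-z)(zq;q)_{\infty}$, which both accounts for the hypothesis $z\neq 1$ and gives
\[
\frac{1}{j(z;q)}\;=\;\frac{1}{1-z}\cdot\frac{1}{(zq;q)_{\infty}(z^{-1}q;q)_{\infty}(q;q)_{\infty}}.
\]
Next, I invoke the Andrews--Garvan identity
\[
C(z,q)\;:=\;\sum_{\lambda\in\mathcal P} z^{\operatorname{crk}(\lambda)}q^{|\lambda|}\;=\;\frac{(q;q)_{\infty}}{(zq;q)_{\infty}(z^{-1}q;q)_{\infty}}
\]
to rewrite the non-trivial factor as $C(z,q)/(q;q)_{\infty}^{2}$, so that
\[
\frac{1}{j(z;q)}\;=\;\frac{C(z,q)}{(1-z)\,(q;q)_{\infty}^{2}}.
\]
Spotting this cancellation --- that the crank generating function is exactly the reciprocal structure needed --- is the main conceptual step of the proof.

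The second step is mechanical: apply the antibracket formula from Theorems~\ref{ch3thm1} and~\ref{ch3thm1.5} (equivalently, Proposition~\ref{ch1cauchy} with $\sum_{\lambda}q^{|\lambda|}=1/(q;q)_{\infty}$) twice in succession. Taking $f(\lambda):=z^{\operatorname{crk}(\lambda)}$, so that $\sum_{\lambda}f(\lambda)q^{|\lambda|}=C(z,q)$, one application produces
\[
\frac{C(z,q)}{(q;q)_{\infty}}\;=\;\sum_{\lambda\in\mathcal P}\Bigl(\,\sum_{\delta|\lambda}z^{\operatorname{crk}(\delta)}\Bigr)q^{|\lambda|},
\]
and a second application, now with outer function $F_{1}(\lambda):=\sum_{\delta|\lambda}z^{\operatorname{crk}(\delta)}$, yields
\[
\frac{C(z,q)}{(q;q)_{\infty}^{2}}\;=\;\sum_{\lambda\in\mathcal P}\Bigl(\,\sum_{\delta|\lambda}\sum_{\varepsilon|\delta}z^{\operatorname{crk}(\varepsilon)}\Bigr)q^{|\lambda|}.
\]
Multiplying through by $(1-z)^{-1}$ and extracting the coefficient of $q^{n}$ from $1/j(z;q)$ then gives the claimed formula for $c_{n}$.

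The main thing to watch is the Andrews--Garvan convention for $\operatorname{crk}(\lambda)$ at $\lambda=(1)$: since the crank of a partition of $1$ is defined only through the generating function rather than combinatorially, the crank identity must be read as a formal-series equality, and the triple sum in the statement inherits this same formal interpretation. Once that is granted, the rest is essentially bookkeeping, since the $q$-antibracket operation is nothing more than multiplication of the underlying $q$-series by $1/(q;q)_{\infty}$ repackaged combinatorially through partition divisibility; iterating it twice precisely produces the nested double sum over subpartitions.
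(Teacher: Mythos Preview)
Your proof is correct and follows essentially the same route as the paper: factor out $(1-z)$, recognize the remaining product as $C(z;q)/(q;q)_{\infty}^2$ via the Andrews--Garvan crank generating function (equivalently, $\left<J_z\right>_q^{(2)}=C(z;q)$), and then apply the antibracket formula of Theorems~\ref{ch3thm1}/\ref{ch3thm1.5} twice to produce the nested divisor sums. The paper's presentation is phrased in terms of the $q$-bracket family $\left<\,\cdot\,\right>_q^{(n)}$, but the underlying computation is identical to yours.
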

%We prove a more complicated formula for $j(z;q)$ itself as well.
We see in Chapter 8 this identity is connected to Ramanujan's mock theta functions.

\subsection{Partition zeta functions}
\subsubsection{Chapter 4 preview}
Arithmetic functions and divisor sums are not the only multiplicative phenomena with connections in partition theory. In Chapter 4 we introduce % \cite{Robert_zeta} and with co-authors Ken Ono and Larry Rolen in \cite{OSR_zeta}, I 
%investigate 
a broad class of {\it partition zeta functions} (and in Chapter 5, partition Dirichlet series) arising from a fusion of Euler's product formulas for both the partition generating function and the Riemann zeta function, which admit interesting structure laws and evaluations as well as classical specializations.   %, which often come with Euler products and nice linear interrelations. %, and interface naturally with partition analogs of arithmetic functions above. %and my colleague Ian Wagner and I have been investigating partition ring theory as well.  
\begin{definition}\label{ch1pzf}
In analogy to the Riemann zeta function $\zeta(s)$, % --- which, of course, was first studied by Euler --- 
for a subset $\mathcal P'$ of $\mathcal P$ and value $s\in \mathbb C$ for which the series converges, %and $f\colon \mathcal P\to \mathbb C$ 
we define a {\it partition zeta function} $\zeta_{\mathcal P'}(s)$  %and {\it partition Dirichlet series} $\mathcal D_{\mathcal P'}(f,s)$, respectively, 
by %by the following sum over partitions $\lambda$ in $\mathcal P'$:
\begin{equation*}\label{ch1zetadef}
\zeta_{\mathcal P'}(s):=\sum_{\lambda\in \mathcal P'}n_{\lambda}^{-s}.%,\  \  \  \  \  \  \mathcal D_{\mathcal P'}(f,s):=\sum_{\lambda\in \mathcal P'}f(\lambda)n_{\lambda}^{-s}.
\end{equation*}
If we let $\mathcal P'$ equal the partitions $\mathcal P_{\mathbb X}$ whose parts all lie in some subset $\mathbb X \subset \mathbb N$, there is also an Euler product
$$\zeta_{\mathcal P_{\mathbb X}}(s)=\prod_{n\in \mathbb X}(1-n^{-s})^{-1}.$$
\end{definition}
%For more on the partition zeta functions, the reader is referred to \ref{Schneider_zeta,ORS,RNT_paper}. 
%Now, if the subset $\mathcal P'$ is finite, such as the partitions of an integer $n$, then of course the sum converges for any $s\in\mathbb C$. On the other hand, i
Of course,  $\zeta(s)$ is the case  $\mathbb X = \mathbb P$; and many classical zeta function identities generalize to partition identities. 
 %Clearly $\zeta_{\mathcal P'}(s)$ diverges for all $s$, if subset $\mathcal P'$ contains infinitely many partitions whose parts are all 1's, % (as infinitely many summands will equal 1), %, and, moreover, having arbitrarily many 1's as parts (as there may be infinitely many copies of each summand). T
%so we must restrict our zeta sums to appropriate proper subsets of $\mathcal P$. %If we further restrict our attention to special subsets of partitions, 
%$$\zeta_{\mathcal P_{\mathbb P}}(s)=\sum_{n\in \mathbb Z^+}n^{-s},$$
%as each prime partition corresponds to the unique factorization of an integer $n$, and each integer's factorization corresponds to some prime partition (with the $n=1$ term coming from the empty partition).
Furthermore, we show how partition zeta sums over other proper subsets of $\mathcal P$ can yield nice closed-form results. % for special values of $s$. %If we use the subset $\mathcal P_{prime}$ of partitions into prime parts (the so-called ``prime partitions'') and take $\operatorname{Re}(s)>1$, then noting that each prime partition corresponds to the prime factorization of a unique integer, the zeta function $\zeta_{\mathcal P_{prime}}(s)$ reduces to the classical zeta function $\zeta(s)$. %; restricting to partitions of length 1 yields the same result. 
To see how subsets influence the evaluations, fix $s=2$ and sum over three unrelated subsets of $\mathcal P$: % as examples of partition zeta identities proved in \cite{Schneider_zeta}, 
partitions $\mathcal P_{even}$ into even parts, partitions $\mathcal P_{prime}$ into prime parts, and partitions $\mathcal P_{dist}$ into distinct parts. %%, and partitions $\mathcal P_{\ell=k}$ of fixed length $k$, 

\begin{proposition}[Corollaries \ref{ch41.4} and \ref{ch41.12} in Chapter 4] 
We have the identities
\begin{flalign*}\label{ch1examples}
\zeta_{\mathcal P_{even}}(2)=\frac{\pi}{2},\  \  \  \  \  \zeta_{\mathcal P_{prime}}(2)=\frac{\pi^2}{6},\  \  \  \  \  \zeta_{\mathcal P_{dist}}(2)=\frac{\operatorname{sinh} \pi}{\pi}%,\  \  \  \  \zeta_{\mathcal P_{\ell = k}}(2)= \frac{2^{2k - 1} - 1}{2^{2k-2}}\zeta(2k)
.\end{flalign*}\end{proposition}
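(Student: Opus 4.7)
The plan is to evaluate each of the three series by rewriting it as an infinite product via an Euler-type product-sum identity, and then recognizing the resulting product as a specialization of one of the classical Weierstrass products for $\sin(\pi z)$ or $\sinh(\pi z)$.

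First I would handle $\zeta_{\mathcal P_{even}}(2)$ and $\zeta_{\mathcal P_{prime}}(2)$ using the Euler product form
\[
\zeta_{\mathcal P_{\mathbb X}}(s)=\prod_{n\in\mathbb X}(1-n^{-s})^{-1}
\]
stated in the definition. Taking $\mathbb X=2\mathbb N$ and $s=2$ gives
\[
\zeta_{\mathcal P_{even}}(2)=\prod_{k=1}^{\infty}\left(1-\tfrac{1}{(2k)^2}\right)^{-1}=\prod_{k=1}^{\infty}\left(1-\tfrac{(1/2)^2}{k^2}\right)^{-1},
\]
which by Euler's classical factorization
\[
\frac{\sin(\pi z)}{\pi z}=\prod_{k=1}^{\infty}\left(1-\frac{z^2}{k^2}\right)
\]
evaluated at $z=1/2$ becomes $(\sin(\pi/2)/(\pi/2))^{-1}=\pi/2$. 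Taking $\mathbb X=\mathbb P$ and $s=2$ reproduces Euler's original product identity (\ref{ch1zetaproduct}) for $\zeta(2)$, giving $\pi^2/6$ by (\ref{ch1zeta_even}).

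The case $\zeta_{\mathcal P_{dist}}(2)$ requires the most care, because $\mathcal P_{dist}$ is not of the form $\mathcal P_{\mathbb X}$: the restriction is on the multiplicities of parts, not on the allowable part-sizes. Here I would re-derive the Euler product directly. Since a partition into distinct parts is a choice, for each $n\in\mathbb N$, either to include $n$ once or not at all, multiplying out and using unique factorization in the monoid $(\mathcal P,\cdot)$ from Definition \ref{ch1productdef} yields the product-sum identity
\[
\sum_{\lambda\in\mathcal P_{dist}}n_{\lambda}^{-s}=\prod_{n=1}^{\infty}\left(1+n^{-s}\right),
\]
valid (with absolute convergence) for $\Re(s)>1$. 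At $s=2$ this product equals $\prod_{n=1}^{\infty}(1+1/n^2)$, and the companion Weierstrass product
\[
\frac{\sinh(\pi z)}{\pi z}=\prod_{n=1}^{\infty}\left(1+\frac{z^2}{n^2}\right)
\]
specialized at $z=1$ gives $\sinh(\pi)/\pi$, completing the proof.

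The main (mild) obstacle is the third identity: one must be careful that the Euler product in Definition \ref{ch1pzf} as stated applies only to subsets determined by allowed part-sizes, whereas $\mathcal P_{dist}$ is cut out by a multiplicity condition, so the factor $(1-n^{-s})^{-1}=\sum_{m\geq 0}n^{-ms}$ must be replaced by $(1+n^{-s})$. Once this adjustment is noted, all three evaluations follow by specialization of Euler's sine and hyperbolic sine products; convergence at $s=2$ is immediate from the $n^{-2}$ tails in each product.
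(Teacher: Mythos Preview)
Your proposal is correct and follows essentially the same approach as the paper: all three identities are obtained by writing the partition zeta sum as an Euler-type product and then specializing the Weierstrass products for $\sin(\pi z)$ and $\sinh(\pi z)$ (with the prime case reducing to the classical Euler product for $\zeta(2)$). Your careful observation that $\mathcal P_{dist}$ needs the factor $(1+n^{-s})$ rather than $(1-n^{-s})^{-1}$ is exactly the content of the paper's Theorem~\ref{ch41.11}, which it invokes for Corollary~\ref{ch41.12}.
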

Notice how different choices of partition subsets induce very different partition zeta values for fixed $s$. Interestingly, differing powers of $\pi$ appear in all three examples. % (for further examples, see \cite{OSR_zeta,Robert_zeta}).
Another curious formula involving $\pi$ arises if we take $s=3$ and sum on partitions $\mathcal P_{\geq 2}$ with all parts $\geq 2$ (that is, no parts equal to $1$)\footnote{We call these ``nuclear'' partitions in Appendix A, and see that they encode, in a sense, all of $\mathcal P$.}.
\begin{proposition} [Proposition \ref{ch41.6} in Chapter 4]
We have that 
$$\zeta_{\mathcal P_{\geq 2}}(3)=\frac{3\pi}{\operatorname{cosh}\left(\frac{1}{2}\pi \sqrt{3}\right)}.$$
\end{proposition}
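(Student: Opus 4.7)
The plan is to express the sum as an Euler product and evaluate that product via the Gamma function. By Definition \ref{ch1pzf}, applied to partitions whose parts lie in $\{2, 3, 4, \ldots\}$, we have
\[
\zeta_{\mathcal P_{\geq 2}}(3) \;=\; \prod_{n=2}^{\infty} \bigl(1 - n^{-3}\bigr)^{-1}.
\]
Let $\omega := e^{2\pi i / 3}$, a primitive cube root of unity, so that $1 - x^3 = (1-x)(1-\omega x)(1-\overline{\omega}\, x)$; the key structural fact is $1 + \omega + \overline{\omega} = 0$.

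Next, I would invoke the Weierstrass product $1/\Gamma(1-z) = e^{-\gamma z}\prod_{n \geq 1}(1 - z/n)e^{z/n}$ at the three points $z = a,\; \omega a,\; \overline{\omega}\, a$. Because these three exponents sum to zero, the convergence-inducing exponentials cancel across the three infinite products, leaving
\[
\prod_{n=1}^{\infty} \bigl(1 - a^3/n^3\bigr)^{-1} \;=\; \Gamma(1-a)\,\Gamma(1-\omega a)\,\Gamma(1-\overline{\omega}\, a).
\]
Peeling off the $n = 1$ factor on the left using $1 - a^3 = (1-a)(1-\omega a)(1-\overline{\omega}\, a)$ and applying $\Gamma(z+1) = z\,\Gamma(z)$ three times on the right transforms this into the identity $\prod_{n \geq 2}(1 - a^3/n^3)^{-1} = \Gamma(2-a)\,\Gamma(2-\omega a)\,\Gamma(2-\overline{\omega}\, a)$, which is valid also at $a = 1$ and yields $\zeta_{\mathcal P_{\geq 2}}(3) = \Gamma(2-\omega)\,\Gamma(2-\overline{\omega})$.

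To finish in closed form, a further application of $\Gamma(z+1) = z\,\Gamma(z)$ strips out the factor $(1-\omega)(1-\overline{\omega}) = |1-\omega|^2 = 3$, reducing the expression to $3\,\Gamma(1-\omega)\,\Gamma(1-\overline{\omega})$. One more shift identifies this with
\[
3\,\bigl|\tfrac{1}{2} - \tfrac{i\sqrt{3}}{2}\bigr|^{2}\,\bigl|\Gamma(\tfrac{1}{2} + \tfrac{i\sqrt{3}}{2})\bigr|^{2} \;=\; 3\,\bigl|\Gamma(\tfrac{1}{2} + \tfrac{i\sqrt{3}}{2})\bigr|^{2},
\]
and the classical consequence of the reflection formula $\Gamma(z)\Gamma(1-z) = \pi/\sin(\pi z)$, namely $|\Gamma(\tfrac{1}{2} + iy)|^{2} = \pi/\cosh(\pi y)$, then evaluates the product at $y = \sqrt{3}/2$ to give the claimed $3\pi/\cosh(\tfrac{1}{2}\pi\sqrt{3})$.

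The main obstacle is the convergence bookkeeping in the Gamma step: each of the three naive factor products $\prod_n (1 - a/n)$, $\prod_n (1 - \omega a/n)$, $\prod_n (1 - \overline{\omega}\, a/n)$ diverges individually, and only the cancellation enforced by $1 + \omega + \overline{\omega} = 0$ (carried by the Weierstrass exponentials) makes the joint product sensible. An equivalent, more elementary route is to compute the partial products $\prod_{n=2}^{N}$ directly as ratios of Gamma values and invoke Stirling's asymptotic $\Gamma(N+c)/\Gamma(N+d) \sim N^{c-d}$, where once again the relevant exponents sum to zero; everything else reduces to routine manipulation of Gamma identities.
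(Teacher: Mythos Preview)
Your argument is correct. The Euler product for $\zeta_{\mathcal P_{\geq 2}}(3)$ is exactly the starting point the paper uses, and your Gamma-function evaluation via the Weierstrass product, the factorization $1-x^3=(1-x)(1-\omega x)(1-\overline{\omega}x)$, and the reflection formula is sound; the cancellation $1+\omega+\overline{\omega}=0$ is precisely what legitimizes splitting the product into three Gamma factors, and your subsequent shifts and the identity $|\Gamma(\tfrac12+iy)|^2=\pi/\cosh(\pi y)$ are all standard.

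The route, however, is genuinely different from the paper's. The paper does not evaluate $\prod_{n\ge 2}(1-n^{-3})$ at all: it simply quotes the closed form
\[
\prod_{n=2}^{\infty}\Bigl(1-\tfrac{1}{n^3}\Bigr)=\frac{\cosh\bigl(\tfrac12\pi\sqrt{3}\bigr)}{3\pi}
\]
as a known identity of Ramanujan (Question 261 from his collected works) and takes the reciprocal. Your proof is therefore more self-contained, effectively supplying a derivation of Ramanujan's product formula. In fact your method is the same circle of ideas the paper develops later in Chapter~5 (the Chamberland--Straub/Whittaker--Watson Gamma-product identity, Theorem~\ref{ch5Armin11}, and Theorem~\ref{ch5mainthm}), so what you have written anticipates that machinery in a special case. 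The paper's approach is shorter given the citation; yours is independent of Ramanujan's formula and illustrates why such closed forms exist.
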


These formulas are appealing, but they look a little too motley to comprise a {\it family} like Euler's values $$\zeta(2k)\in \mathbb Q \pi^{2k}.$$ We produce a class of partition zeta functions that does yield nice evaluations like this. 
\begin{definition} We define
$$\zeta_{\mathcal P}(\{s\}^k):=\sum_{\ell(\lambda)=k}n_{\lambda}^{-s},$$
where the sum is taken over all partitions of fixed length $k\geq 1$ (the $k=1$ case is just $\zeta(s)$).
\end{definition}

\begin{proposition}[Corollary \ref{ch41.7} in Chapter 4]\label{ch11.7}
For $s=2, k\geq 1$, we have the identity 
\begin{equation*}
\zeta_{\mathcal P}(\{2\}^k) = \frac{2^{2k - 1} - 1}{2^{2k-2}}\zeta(2k).
\end{equation*}
For example, we give the following values:
\begin{align*}
\zeta_{\mathcal P}(\{2\}^2)  = \frac{7\pi^4}{360},\  \  \zeta_{\mathcal P}(\{2\}^3) = \frac{31\pi^6}{15120},\  \dots\  ,\  \zeta_{\mathcal P}(\{2\}^{13}) = \frac{22076500342261\pi^{26}}{93067260259985915904000000}%,\  \dots
\end{align*}
\end{proposition}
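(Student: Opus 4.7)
The plan is to obtain $\zeta_{\mathcal P}(\{2\}^k)$ as the coefficient of $x^k$ in a single generating function, then read off the closed form via Euler's sine product. Specifically, I would first observe that, because a partition $\lambda \in \mathcal P$ of length $k$ corresponds bijectively to a choice of nonnegative multiplicities $m_n$ with $\sum_n m_n = k$, we have the product–sum identity
\begin{equation*}
\prod_{n=1}^{\infty} \left(1 - \frac{x}{n^2}\right)^{-1} \;=\; \prod_{n=1}^{\infty}\sum_{m=0}^{\infty} \frac{x^{m}}{n^{2m}} \;=\; \sum_{\lambda \in \mathcal P} \frac{x^{\ell(\lambda)}}{n_\lambda^{2}} \;=\; \sum_{k=0}^{\infty} \zeta_{\mathcal P}(\{2\}^k)\, x^{k},
\end{equation*}
valid for $|x|<1$, where the $k=0$ term corresponds to the empty partition and gives $1$.

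Next I would invoke Euler's classical factorization
\begin{equation*}
\frac{\sin(\pi\sqrt{x})}{\pi\sqrt{x}} \;=\; \prod_{n=1}^{\infty}\left(1-\frac{x}{n^2}\right),
\end{equation*}
so that the left-hand side above equals $\pi\sqrt{x}/\sin(\pi\sqrt{x})$. Setting $y=\pi\sqrt{x}$ reduces everything to the well-known Taylor expansion of $y\csc y$ in terms of Bernoulli numbers,
\begin{equation*}
\frac{y}{\sin y} \;=\; 1 + \sum_{k=1}^{\infty} \frac{(-1)^{k-1}\,2\,(2^{2k-1}-1)\,B_{2k}}{(2k)!}\, y^{2k}.
\end{equation*}
Re-substituting $y^{2k}=\pi^{2k}x^{k}$ and equating coefficients of $x^k$ with the generating function yields
\begin{equation*}
\zeta_{\mathcal P}(\{2\}^k) \;=\; \frac{(-1)^{k-1}\,2\,(2^{2k-1}-1)\,B_{2k}}{(2k)!}\,\pi^{2k}.
\end{equation*}

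Finally, I would compare with Euler's evaluation $\zeta(2k) = (-1)^{k-1}\,2^{2k-1}\,\pi^{2k}\,B_{2k}/(2k)!$: dividing the two expressions gives exactly the factor $(2^{2k-1}-1)/2^{2k-2}$, proving the identity. Numerical verification for small $k$ (e.g.\ $k=2,3$) against the listed values serves as a sanity check.

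The only genuinely nontrivial step is the first one: recognizing the generating function identity that converts a complicated sum over partitions of fixed length into a product over positive integers. Once that bridge is in place, the remainder is a matter of invoking two classical Eulerian expansions — the sine product and the Bernoulli series for $\csc$ — so no real obstacle remains. The elegance of the result is precisely that it falls out of the interplay between these two Euler identities, underscoring the philosophical heuristic of the chapter that a partition zeta function naturally repackages classical evaluations.
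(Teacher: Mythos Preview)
Your proof is correct and follows essentially the same approach as the paper: interpret $\sum_{k\geq 0}\zeta_{\mathcal P}(\{2\}^k)\,x^k$ as the reciprocal of Euler's product $\prod_{n\geq 1}(1-x/n^2)$, identify this with a known Taylor series, and compare with Euler's evaluation of $\zeta(2k)$ via Bernoulli numbers. The only cosmetic difference is that the paper expands $z/\sinh z$ (the ``$+$'' product) and carries an alternating sign $(-1)^k$ that cancels at the end, whereas you expand $y/\sin y$ directly; the paper in fact records exactly this $\sin$ expansion in the very next proof (of Corollary~\ref{ch41.8}), so the two routes are interchangeable.
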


%There are %In \cite{X} I prove 
We prove increasingly complicated identities for $\zeta_{\mathcal P}(\{2^t\}^k), t\geq 1$, as well. %in \cite{Robert_zeta}. 

\subsubsection{Chapter 5 preview}
In Chapter 5, we more deeply probe certain aspects of partition zeta functions. For example, we are able to prove more than in Proposition \ref{ch11.7} above with respect to rational multiples of powers of $\pi$. %, as follows (which can also be deduced from work of Hoffman).

\begin{proposition}[Corollary \ref{ch5RationalityCor} in Chapter 5]
For $m>0$ even, we have
$$\zeta_{\mathcal P}(\{m\}^k)\in \mathbb Q \pi^{mk}
.$$\end{proposition}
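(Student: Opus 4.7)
The plan is to view $\zeta_{\mathcal P}(\{m\}^k)$ as a complete homogeneous symmetric function evaluated at the sequence $1^{-m},2^{-m},3^{-m},\ldots$, and then use the classical Newton-type identity expressing $h_k$ as a weighted-rational polynomial in the power sums $p_j$. The power sums at this specialization are precisely the values $\zeta(mj)$, which (for $m$ even) are rational multiples of $\pi^{mj}$ by Euler's formula (\ref{ch1zeta_even}). A degree count then forces every monomial to carry exactly $\pi^{mk}$.

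\textbf{Step 1 (symmetric-function interpretation).} Starting from the Euler-style product
\begin{equation*}
\prod_{n=1}^{\infty} \frac{1}{1 - t\, n^{-m}} \;=\; \sum_{\lambda \in \mathcal P} t^{\ell(\lambda)} n_{\lambda}^{-m},
\end{equation*}
collecting terms of degree $k$ in $t$ identifies $\zeta_{\mathcal P}(\{m\}^k)$ with $h_k(x_1,x_2,\ldots)$, where $x_i = i^{-m}$. Under this specialization the power-sum symmetric function becomes
\begin{equation*}
p_j = \sum_{i \geq 1} x_i^{\,j} = \sum_{i \geq 1} i^{-mj} = \zeta(mj).
\end{equation*}

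\textbf{Step 2 (Newton-style expansion).} From the standard generating-function identity
\begin{equation*}
\sum_{k \geq 0} h_k\, t^k \;=\; \exp\!\left(\sum_{j \geq 1} \frac{p_j}{j}\, t^j\right),
\end{equation*}
one extracts the explicit polynomial expression
\begin{equation*}
h_k \;=\; \sum_{\substack{j_1 + 2 j_2 + \cdots + k j_k = k \\ j_i \geq 0}} \;\prod_{i=1}^{k} \frac{p_i^{\,j_i}}{i^{\,j_i}\, j_i!},
\end{equation*}
so $h_k$ is a $\mathbb Q$-linear combination of monomials $p_1^{j_1}\cdots p_k^{j_k}$ with $\sum_i i\, j_i = k$.

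\textbf{Step 3 (substitute Euler's evaluation).} Since $m$ is even, every $mj$ is even, so by (\ref{ch1zeta_even}) we have $p_j = \zeta(mj) = r_j\, \pi^{mj}$ for some $r_j \in \mathbb Q$. Then each monomial becomes
\begin{equation*}
\prod_{i=1}^{k} p_i^{\,j_i} \;=\; \left(\prod_{i} r_i^{\,j_i}\right) \pi^{\,m\sum_i i\, j_i} \;=\; \left(\prod_i r_i^{\,j_i}\right) \pi^{\,mk},
\end{equation*}
a rational multiple of $\pi^{mk}$. Summing (with the rational coefficients $\prod_i 1/(i^{j_i} j_i!)$) gives $\zeta_{\mathcal P}(\{m\}^k) \in \mathbb Q\, \pi^{mk}$.

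\textbf{Expected obstacle.} The argument is structurally clean, so the only delicate points are bookkeeping: one should justify the interchange of summation in Step 1 via absolute convergence (which holds since $m \geq 2$ when $m$ is a positive even integer), and verify that the formal power-series identity for $h_k$ in terms of the $p_j$ remains valid after specialization, i.e., that the cited Newton-type expansion converges in the appropriate sense. Neither is a serious issue, so the main content of the proof is really the weighted-degree observation $\sum_i m\, i\, j_i = mk$ that forces a single power of $\pi$ to appear uniformly.
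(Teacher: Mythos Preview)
Your proof is correct and follows essentially the same approach as the paper: Theorem~\ref{ch5SecondTheorem} establishes precisely the identity $\sum_{k\geq 0}\zeta_{\mathcal P}(\{m\}^k)(z/\pi)^{mk}=\exp\bigl(\sum_{j\geq 1}\tfrac{\zeta(mj)}{j}(z/\pi)^{mj}\bigr)$, and the corollary is then deduced exactly as you do by substituting Euler's evaluation $\zeta(mj)\in\mathbb Q\,\pi^{mj}$ and counting degrees. Your symmetric-function language ($h_k$ in terms of power sums $p_j$) is just an alternate phrasing of the same Newton-exponential identity the paper uses.
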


So these zeta sums over partitions of fixed length really do form a family like Euler's zeta values. Inspired by work of Chamberland-Straub \cite{ChamberlandStraub}, in Chapter 5 we also evaluate partition zeta functions over partitions $\mathcal P_{a+m\mathbb N}$ whose parts are all $\equiv a$ modulo $m$. Let $\Gamma$ denote the usual gamma function, and let $e(x):=e^{2 \pi \text{i} x}$.

\begin{proposition}[Proposition \ref{ch5mainthm} in Chapter 5]%[Theorem 1 in \cite{OSR_zeta}]
\label{ch1mainthm}
For $n\geq2$, we have
$$\zeta_{\mathcal P_{a+m\mathbb N}}(n)
=\Gamma(1+a/m)^{-n}\prod_{r=0}^{n-1}\Gamma\left(1+\frac{a-e(r/n)}{m}\right).$$
\end{proposition}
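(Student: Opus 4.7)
The plan is to prove Proposition~\ref{ch1mainthm} by turning $\zeta_{\mathcal P_{a+m\mathbb N}}(n)$ into an Euler product, applying the cyclotomic factorization of $1-x^n$, and then recognizing the result as a telescoping ratio of $\Gamma$-functions via the Weierstrass product. Since $\mathcal P_{a+m\mathbb N}$ is the set of partitions whose parts all lie in $\{a+mk:k\geq 1\}\subset\mathbb N$, Definition~\ref{ch1pzf} immediately yields
$$\zeta_{\mathcal P_{a+m\mathbb N}}(n)=\prod_{k=1}^{\infty}\frac{1}{1-(a+mk)^{-n}}.$$
Factoring $1-y^{n}=\prod_{r=0}^{n-1}(1-e(r/n)\,y)$ with $y=(a+mk)^{-1}$, clearing denominators, and dividing the top and bottom of each factor by $m$ recasts this as
$$\zeta_{\mathcal P_{a+m\mathbb N}}(n)=\prod_{k=1}^{\infty}\prod_{r=0}^{n-1}\frac{k+\alpha}{k+\beta_{r}},\qquad \alpha:=\frac{a}{m},\quad \beta_{r}:=\frac{a-e(r/n)}{m}.$$

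Next I would justify interchanging the two products. Using $\alpha-\beta_{r}=e(r/n)/m$ and the crucial cyclotomic identity $\sum_{r=0}^{n-1}e(r/n)=0$ (valid for $n\geq 2$), a one-term Taylor expansion gives
$$\log\prod_{r=0}^{n-1}\frac{k+\alpha}{k+\beta_{r}}=\frac{1}{k}\sum_{r=0}^{n-1}(\alpha-\beta_{r})+O(1/k^{2})=O(1/k^{2}),$$
so the double product converges absolutely and I may swap the order of multiplication. Each inner product can then be handled by partial products and $\Gamma$:
$$\prod_{k=1}^{N}\frac{k+\alpha}{k+\beta_{r}}=\frac{\Gamma(N+1+\alpha)}{\Gamma(1+\alpha)}\cdot\frac{\Gamma(1+\beta_{r})}{\Gamma(N+1+\beta_{r})}.$$

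Multiplying over $r$ gives
$$\prod_{r=0}^{n-1}\prod_{k=1}^{N}\frac{k+\alpha}{k+\beta_{r}}=\frac{\Gamma(N+1+\alpha)^{n}}{\prod_{r=0}^{n-1}\Gamma(N+1+\beta_{r})}\cdot\frac{\prod_{r=0}^{n-1}\Gamma(1+\beta_{r})}{\Gamma(1+\alpha)^{n}},$$
and Stirling's ratio $\Gamma(N+1+c)/\Gamma(N+1)=N^{c}(1+O(1/N))$, combined once more with $\sum_{r}\beta_{r}=n\alpha$, forces the first factor to tend to $N^{n\alpha-\sum_{r}\beta_{r}}=1$ as $N\to\infty$. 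What remains is precisely $\Gamma(1+a/m)^{-n}\prod_{r=0}^{n-1}\Gamma(1+(a-e(r/n))/m)$, as claimed.

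The main obstacle is really bookkeeping around the identity $\sum_{r=0}^{n-1}e(r/n)=0$: this single cancellation is what both legitimizes the product interchange and drives the Stirling limit to $1$; without it, both steps would produce divergent pieces that would need to be regularized separately. Everything else is a routine application of the Euler product, cyclotomic factorization, and the Weierstrass/Stirling description of $\Gamma$.
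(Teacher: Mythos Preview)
Your proof is correct and follows essentially the same route as the paper: Euler product, cyclotomic factorization of $1-y^n$, and then evaluation of the resulting infinite product of ratios via $\Gamma$-functions, with the identity $\sum_{r=0}^{n-1}e(r/n)=0$ doing the crucial work. The only difference is packaging: the paper invokes a ready-made lemma (Theorem~\ref{ch5Armin11}, due to Chamberland--Straub and others) stating that $\prod_{k\geq 0}\prod_j\frac{k+\alpha_j}{k+\beta_j}=\prod_j\frac{\Gamma(\beta_j)}{\Gamma(\alpha_j)}$ whenever $\sum_j\alpha_j=\sum_j\beta_j$, whereas you essentially reprove this lemma inline via partial products and Stirling.
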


We also address analytic continuation of certain partition zeta functions, which is somewhat rare. In Chapter 4, Corollary \ref{ch41.10}, the analytic continuation of $\zeta_{\mathcal P}(\{s\}^k)$ %,\  \operatorname{Re}(s) > 1,$ {\bf LR: I don't think this range is needed here}
is given for fixed length $k=2$; for $\text{Re}(s)>1$, we can write
\begin{equation}\label{ch1shuffle}
\zeta_{\mathcal P}(\{s\}^2) = \frac{\zeta(2s) + \zeta(s)^2}{2},
\end{equation}
thus $\zeta_{\mathcal P}(\{s\}^2)$ inherits analytic continuation from the Riemann zeta functions on the right. We study analytic continuation more broadly in Chapter 5, and in Corollary \ref{ch5ThirdCor} prove the meromorphic extension of $\zeta_{\mathcal P_{m\mathbb N}}(s)$ to the right half-plane of $\mathbb C$. Moreover, following ideas of Kubota and Leopoldt \cite{KL}, in Theorem \ref{ch5padicInterpThm} we give $p$-adic interpolations for modified versions of $\zeta_{\mathcal P}(\{m\}^k)$ in the $m$-aspect. %We also give applications in the theory of multiple zeta values.

Finally, we give applications in the theory of multiple zeta values, and note examples of {\it partition Dirichlet series} which generalize classical results.
%Clearly $\varphi_{\mathcal P}(\lambda)$ reduces to $\varphi(n_{\lambda})$ if $\lambda$ is a prime partition, and, as with $\mu_{\mathcal P}$, % above, $\varphi_{\mathcal P}$ 
%generalizes classical results. %, such as the following familiar-looking identities.
For instance, for appropriate $s\in \mathbb C,\  \mathbb X \subset \mathbb N$, we get familiar-looking relations like these. % {\it partition Dirichlet series}.% just like the classical cases. % relations like %it is immediate from Euler products that
\begin{proposition} [Proposition \ref{ch5muphidirichlet} in Chapter 5]
Just as in the classical cases, we have the following identities:
\begin{equation*}
  \sum_{\lambda\in\mathcal P_{\mathbb X}}\mu_{\mathcal P}(\lambda) n_{\lambda}^{-s}=  \frac{1}{\zeta_{\mathcal P_{\mathbb X}}(s)},\  \  \  \  \  \  \  \  \sum_{\lambda\in\mathcal P_{\mathbb X}}\varphi_{\mathcal P}(\lambda) n_{\lambda}^{-s}=\frac{\zeta_{\mathcal P_{\mathbb X}}(s-1)}{\zeta_{\mathcal P_{\mathbb X}}(s)}.
\end{equation*}\end{proposition}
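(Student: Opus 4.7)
The plan is to deduce both identities from the partition Cauchy product formula (Proposition \ref{ch1cauchy}), leveraging the complete multiplicativity of the norm in the partition sense: since $n_{\lambda}$ is simply the product of the parts of $\lambda$ and partition multiplication is multi-set union of parts, we have $n_{\lambda} = n_{\delta}\, n_{\lambda/\delta}$ whenever $\delta | \lambda$, and hence $n_{\lambda}^{-s} = n_{\delta}^{-s}\, n_{\lambda/\delta}^{-s}$. This one algebraic fact is what will replace the Euler-factor manipulation in the classical Dirichlet-series proofs of $\sum \mu(n) n^{-s} = 1/\zeta(s)$ and $\sum \varphi(n) n^{-s} = \zeta(s-1)/\zeta(s)$.

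For the first identity, I would set $f(\lambda) = \mu_{\mathcal P}(\lambda) n_{\lambda}^{-s}$ and $g(\lambda) = n_{\lambda}^{-s}$, both extended by zero off $\mathcal P_{\mathbb X}$; this extension is legitimate because $\mathcal P_{\mathbb X}$ is closed under taking subpartitions and quotients, so every pair $(\delta, \lambda/\delta)$ appearing in the Cauchy double sum stays in $\mathcal P_{\mathbb X}$. Applying Proposition \ref{ch1cauchy} and pulling $n_{\lambda}^{-s} = n_{\delta}^{-s}\, n_{\lambda/\delta}^{-s}$ out of the inner sum yields
$$\left(\sum_{\lambda \in \mathcal P_{\mathbb X}} \mu_{\mathcal P}(\lambda) n_{\lambda}^{-s}\right)\zeta_{\mathcal P_{\mathbb X}}(s) \;=\; \sum_{\lambda \in \mathcal P_{\mathbb X}} n_{\lambda}^{-s} \sum_{\delta | \lambda} \mu_{\mathcal P}(\delta).$$
By Proposition \ref{ch3musum} the inner sum is the indicator of $\lambda = \emptyset$, so the right-hand side collapses to $1$, and dividing by $\zeta_{\mathcal P_{\mathbb X}}(s)$ gives the claim. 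For the second identity I would repeat the argument with $f(\lambda) = \varphi_{\mathcal P}(\lambda) n_{\lambda}^{-s}$; the inner sum $\sum_{\delta | \lambda} \varphi_{\mathcal P}(\delta)$ now equals $n_{\lambda}$ by Proposition \ref{ch1phisum}, so the right-hand side becomes $\sum_{\lambda \in \mathcal P_{\mathbb X}} n_{\lambda}^{-(s-1)} = \zeta_{\mathcal P_{\mathbb X}}(s-1)$, and the identity follows at once.

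The main obstacle is not algebraic — each step mirrors its classical counterpart once norm multiplicativity is noted — but rather the analytic bookkeeping required to invoke Proposition \ref{ch1cauchy}, which demands absolute convergence of both series being multiplied. For $\mathbb X \subset \mathbb N$ and $s$ such that $\zeta_{\mathcal P_{\mathbb X}}(s)$ converges absolutely, the elementary bounds $|\mu_{\mathcal P}(\lambda)| \leq 1$ and $0 \leq \varphi_{\mathcal P}(\lambda) \leq n_{\lambda}$ provide the necessary domination (by $\zeta_{\mathcal P_{\mathbb X}}(\text{Re}(s))$ and $\zeta_{\mathcal P_{\mathbb X}}(\text{Re}(s)-1)$, respectively), which is the precise meaning of ``appropriate $s$'' in the statement; in particular the second identity forces $\text{Re}(s) > \sigma_0 + 1$, where $\sigma_0$ is the abscissa of absolute convergence of $\zeta_{\mathcal P_{\mathbb X}}$.
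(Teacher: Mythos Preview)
Your proof is correct and matches the paper's intended approach. The paper does not write out a proof of Theorem \ref{ch5muphidirichlet} explicitly; it simply states that the identities follow ``by very much the same steps as proofs of the classical cases,'' and immediately afterward records the partition Dirichlet convolution identity
\[
\left(\sum_{\lambda\in\mathcal P}f(\lambda)n_{\lambda}^{-s}\right)\left(\sum_{\lambda\in\mathcal P}g(\lambda)n_{\lambda}^{-s}\right)=\sum_{\lambda\in\mathcal P}(f*g)(\lambda)n_{\lambda}^{-s}
\]
as the governing tool --- exactly the Cauchy-product-plus-divisor-sum argument you carry out. One minor remark: for the first identity the paper also has an alternative, more direct route already established in Chapter 4 (equation \eqref{ch419}), namely the Euler product $\sum_{\lambda\in\mathcal P_{\mathbb X}}\mu_{\mathcal P}(\lambda)n_{\lambda}^{-s}=\eta_{\mathcal P_{\mathbb X}^*}(s)=\prod_{n\in\mathbb X}(1-n^{-s})=1/\zeta_{\mathcal P_{\mathbb X}}(s)$, which bypasses the convolution entirely; but your unified treatment via Propositions \ref{ch3musum} and \ref{ch1phisum} is precisely the ``classical'' argument the paper points to.
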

%just as in the classical cases. 
%I would like to find further examples of analytic continuation of $\zeta_{\mathcal P}(\{s\}^k)$ % --- perhaps even of $\zeta_{\mathcal P}(\{s\}^x)$ for $x\in\mathbb R$, whatever --- 
%and other zeta forms. Also, I wish to make rigorous a certain heuristic for locating imaginary parts of the zeros of partition zeta functions. 

%This is a sample from my work on partition zeta functions; %

%Considering zeta sums over partitions into distinct parts gives formulas (see \cite{X,X}) analogous to Proposition \ref{1.7} that intersect work of Hoffman on multiple zeta values \cite{X}. 

%I continue to explore this theory of combinatorial zeta functions, seeking new structures; and wish to make rigorous a certain heuristic for locating imaginary parts of the zeros of partition zeta functions. %, that produces sequences similar to the imaginary parts of the zeros of $\zeta(s)$.

\subsection{Partition formulas for arithmetic densities}
%Here is a different connection between partitions and zeta functions. 
\subsubsection{Chapter 6 preview}
In Chapter 6 we explore a different connection between partitions and zeta functions. Alladi proves in \cite{A} a surprising {\it duality principle} connecting arithmetic functions to sums over smallest or largest prime factors of divisors, and applies this principle to prove for $\gcd(r, t) = 1$ that
\begin{equation}\label{ch1AlladiSum}
-\sum_{\substack{ n \geq 2 \\ p_{\rm{min}}(n) \equiv r (\text{mod}\  t)}} \mu(n)n^{-1}= \frac{1}{\varphi(t)},
\end{equation}
where $p_{\rm{min}}(n)$ denotes the smallest prime factor of $n$, and ${1}/{\varphi(t)}$ represents the proportion of primes in a fixed arithmetic progression modulo $t$. Using analogous dualities from partition generating functions (smallest/largest parts instead), and replacing $\mu$ with $\mu_{\mathcal P}$, in Chapter 6 we extend Alladi's ideas to compute arithmetic densities of other subsets of $\mathbb N$ using partition-theoretic $q$-series.

\begin{proposition}[Theorems \ref{ch6rt}--\ref{ch6free} of Chapter 6]
For suitable subsets $\mathbb S$ of $\mathbb N$ with arithmetic density $d_{\mathbb S}$,
\[- \lim_{q \to 1} \sum_{\substack{ \lambda \in \mathcal{P} \\ \rm{sm}(\lambda) \in \mathbb S}} \mu_{\mathcal{P}} (\lambda)q^{\vert \lambda \vert} =  d_{\mathbb S},\]
where $\rm{sm}(\lambda)$ denotes the smallest part of $\lambda$, and $q\to 1$ from within the unit circle.   
\end{proposition}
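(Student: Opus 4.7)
The plan is to reduce the partition sum to a simple $q$-series using the structure of $\mu_{\mathcal{P}}$, then extract the density via an Abelian-type limit. Since $\mu_{\mathcal{P}}(\lambda)=0$ whenever $\lambda$ has a repeated part, only partitions into distinct parts contribute to the left-hand side, and on these $\mu_{\mathcal{P}}(\lambda)=(-1)^{\ell(\lambda)}$. Any distinct-part $\lambda$ with $\mathrm{sm}(\lambda)=k$ factors uniquely as $\lambda = (k) \cdot \lambda'$, where $\lambda'$ has distinct parts all strictly greater than $k$. By the standard Eulerian product
\[
\sum_{\substack{\lambda' \text{ distinct} \\ \text{parts} > k}} (-1)^{\ell(\lambda')} q^{|\lambda'|} \;=\; \prod_{m > k}(1-q^m) \;=\; (q^{k+1};q)_\infty,
\]
grouping by smallest part $k \in \mathbb{S}$ collapses the left-hand side to
\[
-\sum_{\substack{\lambda \in \mathcal{P} \\ \mathrm{sm}(\lambda) \in \mathbb{S}}} \mu_{\mathcal{P}}(\lambda)\, q^{|\lambda|} \;=\; \sum_{k \in \mathbb{S}} q^k (q^{k+1};q)_\infty.
\]

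Next, the telescoping identity $q^k (q^{k+1};q)_\infty = (q^{k+1};q)_\infty - (q^k;q)_\infty$ (immediate from $(q^k;q)_\infty = (1-q^k)(q^{k+1};q)_\infty$) recasts the target as $\sum_{k \in \mathbb{S}}(a_{k+1} - a_k)$, where $a_k := (q^k;q)_\infty$. The weights $a_{k+1} - a_k \geq 0$ form a positive measure on $\mathbb{N}$ with total mass $\sum_{k \geq 1}(a_{k+1}-a_k) = 1 - (q;q)_\infty$, which tends to $1 = d_{\mathbb{N}}$ as $q \to 1^-$. This already confirms the formula in the special case $\mathbb{S}=\mathbb{N}$ and sets the template.

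The main obstacle is the Abelian step: showing $\sum_{k \in \mathbb{S}}(a_{k+1}-a_k) \to d_{\mathbb{S}}$ in the $q \to 1^-$ limit. The key observation is that the distribution $\{a_{k+1}-a_k\}$ has essentially no mass at bounded $k$, since for fixed $K$, $a_{K+1} = (q^{K+1};q)_\infty \to 0$; rather, the mass concentrates around the scale $k \sim 1/(1-q) \to \infty$. I would apply partial summation to $\sum_{k \leq N} \mathbf{1}_{\mathbb{S}}(k)(a_{k+1}-a_k)$ using the hypothesis that the counting function $S_N := |\mathbb{S}\cap[1,N]|$ satisfies $S_N = d_{\mathbb{S}} N + o(N)$. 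Splitting at a cutoff $K(q) \to \infty$ growing more slowly than $1/(1-q)$, the ``head'' contribution is bounded by $a_{K(q)+1} \to 0$, while the ``tail'' is asymptotic to $d_{\mathbb{S}}\bigl(1 - a_{K(q)+1}\bigr) + o(1)$, yielding the claimed limit $d_{\mathbb{S}}$. The precise meaning of ``suitable'' in the theorem's hypothesis is presumably a density-regularity condition (natural density, perhaps with a uniformity quantifier on scales compatible with the $q$-concentration) sufficient to justify this interchange of limits; the same argument adapted to non-tangential approach $q \to 1$ within $|q|<1$ handles the full statement.
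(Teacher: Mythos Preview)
Your reduction to $F_{\mathbb S}(q)=\sum_{k\in\mathbb S}q^k(q^{k+1};q)_\infty$ is correct and coincides exactly with the paper's Lemma~\ref{ch6lemma3}. After that point, however, the paper takes a completely different route and never attempts a general Abelian theorem. In the paper, ``suitable'' means a single arithmetic progression $\mathbb S_{r,t}$ (Theorem~\ref{ch6rt}) or a finite union of such (Theorem~\ref{ch6free}). For $\mathbb S_{r,t}$ the paper rewrites $F_{\mathbb S_{r,t}}(q)=(q;q)_\infty\sum_{n\geq 0}q^{tn+r}/(q;q)_{tn+r}$, applies a roots-of-unity filter together with the $q$-binomial theorem $(\zeta_t^m q;q)_\infty^{-1}=\sum_{n\geq 0}(\zeta_t^m q)^n/(q;q)_n$ to obtain the closed form $F_{\mathbb S_{r,t}}(q)=\tfrac{1}{t}\sum_{m=1}^{t}\zeta_t^{-mr}(q;q)_\infty/(\zeta_t^m q;q)_\infty$, and then simply observes that each ratio $(q;q)_\infty/(\zeta_t^m q;q)_\infty$ tends to $1$ or $0$ as $q\to 1$ according as $t\mid m$ or not. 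The $k$th-power-free case is then built from finitely many progressions.

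Your Abelian sketch has a genuine gap in the general-density setting you aim for. The weights $w_k=a_{k+1}-a_k=q^k(q^{k+1};q)_\infty$ form (up to normalization) the distribution of the largest part of a random partition in the grand canonical ensemble; as $q\to 1^-$ this distribution concentrates in a window of width $\asymp 1/(1-q)$ about a center $k^*\asymp (1/(1-q))\log(1/(1-q))$, i.e.\ in a window of length $o(k^*)$. Mere natural density $S_N=d_{\mathbb S}N+o(N)$ does not control the proportion of $\mathbb S$ inside such short windows, so the step ``the tail is asymptotic to $d_{\mathbb S}(1-a_{K(q)+1})+o(1)$'' is not justified and can fail. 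That said, for arithmetic progressions one has the much stronger $S_N=N/t+O(1)$, whence Abel summation gives an error bounded by $O(\max_k w_k)\to 0$; so your method \emph{can} be pushed through for exactly the class of $\mathbb S$ the paper treats, yielding a more direct (if less algebraically explicit) proof than the paper's roots-of-unity argument. The paper's approach, by contrast, gives an exact finite formula valid for all $|q|<1$ and additionally yields the limit at other roots of unity $q\to\zeta$ with $\gcd(m,t)=1$, which your partial-summation route does not readily provide.
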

In particular, if we denote $k$th-power-free integers by $\mathbb S^{(k)}_{\rm{fr}}$, we prove a partition formula to compute $1/\zeta(k)$ as the limiting value of a partition-theoretic $q$-series as $q\to 1$.

\begin{proposition}[Corollary \ref{ch6free2} of Chapter 6] \label{ch1free2}
If $k \geq 2$, then 
\[-\lim_{q \to 1} \sum_{\substack{ \lambda \in \mathcal{P} \\ \rm{sm}(\lambda) \in {\mathbb S^{(k)}_{\rm{fr}}}}} \mu_{\mathcal{P}}(\lambda) q^{\vert \lambda \vert}= \frac{1}{\zeta(k)}.\]
\end{proposition}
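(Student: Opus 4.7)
The plan is to derive this corollary as a direct specialization of the preceding general result (Theorems \ref{ch6rt}--\ref{ch6free}) which asserts that for a suitable subset $\mathbb{S}\subseteq\mathbb{N}$ with arithmetic density $d_{\mathbb{S}}$,
$$-\lim_{q\to 1}\sum_{\substack{\lambda\in\mathcal{P}\\ \operatorname{sm}(\lambda)\in\mathbb{S}}}\mu_{\mathcal{P}}(\lambda)q^{|\lambda|}=d_{\mathbb{S}}.$$
Taking $\mathbb{S}:=\mathbb{S}^{(k)}_{\operatorname{fr}}$, the set of $k$th-power-free positive integers, the entire content of the corollary reduces to verifying (i) that $\mathbb{S}^{(k)}_{\operatorname{fr}}$ falls under the hypotheses of the general theorem, and (ii) that its natural density equals $1/\zeta(k)$.

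For (ii), I would invoke the classical inclusion-exclusion over prime-power divisors: a positive integer $n$ is $k$th-power-free if and only if no $d^k>1$ divides $n$, so the count up to $N$ is
$$\#\{n\le N:n\in\mathbb{S}^{(k)}_{\operatorname{fr}}\}=\sum_{d\ge 1}\mu(d)\left\lfloor\frac{N}{d^k}\right\rfloor=N\sum_{d\ge 1}\frac{\mu(d)}{d^k}+O(N^{1/k}),$$
and Euler's product identity (\ref{ch1zetaproduct}) together with Möbius inversion gives $\sum_{d\ge 1}\mu(d)/d^k=1/\zeta(k)$, which converges absolutely for $k\ge 2$. Dividing by $N$ and letting $N\to\infty$ yields $d_{\mathbb{S}^{(k)}_{\operatorname{fr}}}=1/\zeta(k)$, as required.

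Step (i) is likely the only genuine obstacle, and whether it requires real work depends on the exact form of the "suitability" hypothesis in Theorems \ref{ch6rt}--\ref{ch6free}. If that framework covers arbitrary subsets of $\mathbb{N}$ of positive density, there is nothing further to check. Otherwise — for instance, if the general theorem is established first for unions of arithmetic progressions in the style of Alladi's sum \eqref{ch1AlladiSum} — one would approximate $\mathbb{S}^{(k)}_{\operatorname{fr}}$ by the subset of integers with no $k$th-power divisor from the first $M$ primes (a union of residue classes modulo $(p_1\cdots p_M)^k$), apply the theorem to each such approximation, and pass to the limit $M\to\infty$ using uniform convergence of the partition-$q$-series side, justified by absolute convergence coming from $\mu_{\mathcal{P}}$ being supported on partitions into distinct parts together with the bound $d_M\to 1/\zeta(k)$. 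Once (i) and (ii) are in hand, the corollary follows by plugging the density value into the general limit formula.
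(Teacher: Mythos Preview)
Your approach matches the paper's exactly: Theorem~\ref{ch6rt} is proved only for arithmetic progressions, Theorem~\ref{ch6free} is precisely your approximation step (writing $\mathbb{S}^{(k)}_{\rm fr}(N)$ as a finite union of residue classes modulo $\prod_{p\le N}p^k$ and summing the arithmetic-progression results), and Corollary~\ref{ch6free2} is the passage $N\to\infty$ together with $\prod_{p\le N}(1-p^{-k})\to 1/\zeta(k)$. The only divergence is in the final limit interchange: you propose justifying it via uniform convergence, whereas the paper sidesteps this entirely by \emph{declaring} the interpretation --- it writes ``It is in this sense that we say $\lim_{q\to 1}F_{\mathbb{S}^{(k)}_{\rm fr}}(q)=1/\zeta(k)$,'' treating the statement as shorthand for the iterated limit $\lim_{N\to\infty}\lim_{q\to 1}$ rather than proving the limits commute.
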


We discuss further consequences, such as an interesting bijection between subsets of partitions.
%Alladi has fully generalized his duality in partition theory; he and I are discussing further applications. %have discussed 
%pursuing further applications of his ideas. %, for instance studying Andrews's $\text{spt}$-function and an analogous $\text{lpt}(n)$ (largest parts instead). 

%
%\subsection{``Strange'' functions, quantum modular forms and mock theta functions}
%
%\subsubsection{Chapter 7 preview}
%
%Inspired by Zagier's work \cite{Zagier_quantum} with Kontsevich's ``strange'' function $F(q)$, in Chapter 7 we construct a vector-valued quantum modular form $\phi(x):= $ $\begin{pmatrix}\theta_1^S(e^{2 \pi i x})& \theta_2^S(e^{2 \pi i x})& \theta_3^S(e^{2 \pi i x})\end{pmatrix}^T$ whose components $\theta_i^S\colon \mathbb Q \to \mathbb C$ are similarly ``strange''. (I study other ``strange'' series in \cite{strange}.)
%\begin{proposition} [Theorem 1.1 of \cite{RolenSchneider}] \label{mainthm} 
%We have that $\phi(x)$ is a weight $3/2$ vector-valued quantum modular form. In particular, we have that
%\[
%\phi(z+1)-\begin{pmatrix}1&0&0\\0&0&\zeta_{12}\\0&\zeta_{24}&0 \end{pmatrix}\phi(z)=0
%,
%\]
%and we also have
%\[
%\left(\frac{z}{-i}\right)^{-3/2}\phi(-1/z)+\begin{pmatrix}0&\sqrt{2}&0\\1/\sqrt{2}&0&0\\0&0&1\end{pmatrix}\phi(z)=\begin{pmatrix}0&\sqrt{2}&0\\1/\sqrt{2}&0&0\\0&0&1\end{pmatrix}g(z)
%,
%\]
%where $g(z)$ is a $3$-dimensional vector of smooth functions defined in \cite{RolenSchneider} as period integrals.
%\end{proposition}
\subsection{``Strange'' functions, quantum modularity, mock theta functions and unimodal sequences}% and quantum behavior}

\subsubsection{Chapter 7 summary}

In Chapter 7 we turn our attention to quantum modular forms, which figure into Chapter 8 as well. These are $q$-series that, in addition to being ``almost'' modular, generically ``blow up'' as $q$ approaches the unit circle from within, %or even wildly scan through complex values (think Casorati-–Weierstrass theorem \cite{Knopp}), 
but are finite %(or at least manageable) 
when $q$ radially approaches certain roots of unity or other isolated points --- in which case the limiting values have been related to special values of $\text{L}$-functions \cite{BFOR} --- and might even extend %(possibly with a discontinuous jump) to %pass intact to 
to the complex plane beyond the unit circle in the variable $q^{-1}$, % as in (\ref{ch1umtfquantum}), 
a phenomenon called {\it renormalization} (see \cite{Rhoades}).

Inspired by Zagier's work \cite{Zagier_quantum} with Kontsevich's ``strange'' function $F(q)$ defined above, as well as work by Andrews, Jim\'{e}nez-Urroz and Ono \cite{A-J-O}, we construct a vector-valued quantum modular form $\phi(x):= $ $\begin{pmatrix}\theta_1^S(e^{2 \pi i x})& \theta_2^S(e^{2 \pi i x})& \theta_3^S(e^{2 \pi i x})\end{pmatrix}^T$ whose components $\theta_i^S\colon \mathbb Q \to \mathbb C$ are similarly ``strange''. %(I study other ``strange'' series in \cite{strange}.)% like $F(q)$% in much the same way as $F(q)$. We use $\phi(x)$ to find generating functions for special zeta values.
\begin{proposition} [Theorem \ref{ch7mainthm} of Chapter 7] %\label{ch1mainthm} 
We have that $\phi(x)$ is a weight $3/2$ vector-valued quantum modular form. In particular, we have that
\[
\phi(z+1)-\begin{pmatrix}1&0&0\\0&0&\zeta_{12}\\0&\zeta_{24}&0 \end{pmatrix}\phi(z)=0
,
\]
and we also have
\[
\left(\frac{z}{-i}\right)^{-3/2}\phi(-1/z)+\begin{pmatrix}0&\sqrt{2}&0\\1/\sqrt{2}&0&0\\0&0&1\end{pmatrix}\phi(z)=\begin{pmatrix}0&\sqrt{2}&0\\1/\sqrt{2}&0&0\\0&0&1\end{pmatrix}g(z)
,
\]
where $g(z)$ is a $3$-dimensional vector of smooth functions defined as period integrals.
\end{proposition}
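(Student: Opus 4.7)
The plan is to follow the strategy pioneered by Zagier for Kontsevich's function $F(q)$ and extended in the work of Bringmann--Folsom--Ono and Bryson--Ono--Pitman--Rhoades to the vector-valued setting. The first step is to make precise the ``strangeness'' of the components $\theta_i^S(q)$: one shows that each series has the form of a $q$-hypergeometric sum $\sum_{n\geq 0}(q;q)_n \cdot (\text{stuff})$ so that when $q$ is specialized to a root of unity $\zeta$ the tail $(q;q)_n$ vanishes for all $n$ larger than the order of $\zeta$. Consequently $\theta_i^S(e^{2\pi i x})$ collapses to a finite sum at every $x\in\Q$, even though the underlying power series diverges everywhere on the open unit disk in the variable $q^{-1}$ and fails to converge on $|q|<1$. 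This yields a well-defined map $\phi\colon \Q\to \C^3$, which is what Definition \ref{ch1qmfdef} requires.

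For the translation relation, the idea is that $\theta_i^S(q)$ naturally carries a rational prefactor of the form $q^{c_i/24}$ coming from the ``Eulerian'' structure of its defining sum (paralleling the $q^{-1/24}$ in $\eta(\tau)$). Writing each component in this normalized form and substituting $q\mapsto q\, e^{2\pi i}$ replaces $q^{c_i/24}$ by $\zeta_{24}^{c_i} q^{c_i/24}$, while the $q$-Pochhammer factors $(q;q)_n$ are invariant under $x\mapsto x+1$. The exponents $c_i$ should be arranged so that the resulting scalar multipliers are precisely $1$, $\zeta_{12}$, and $\zeta_{24}$, permuted among the three components as dictated by the stated matrix $M_T$. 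This reduces the $T$-transformation to a bookkeeping check of the prefactors.

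The core of the argument is the $S$-transformation. Here one identifies three honest weight-$1/2$ unary theta series $\theta_i(z)$ whose Eichler integrals (or Laplace/Mellin integrals of their Fourier coefficients) have asymptotic expansions at every rational $a/c\in\Q$ that coincide term-by-term with $\theta_i^S(e^{2\pi i a/c})$, in the style of Zagier's theorem that $F(e^{-t})$ near $t=0$ realizes half of the asymptotic expansion of the Eichler integral of $\eta(24z)$. Granting this identification, the quantity $\phi(-1/z) - (z/-i)^{3/2} M_S^{-1}\phi(z)$ becomes the classical ``period polynomial/function'' attached to a vector-valued weight-$1/2$ cusp-like form, which is known to equal the explicit period integral $g(z)$ arising from the $S$-transformation of the Eichler integrals. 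The weight $3/2$ appears because $k=3/2$ is dual to weight $2-k=1/2$ under the Eichler pairing, and the multiplier matrix on the left is exactly the representation under which the vector $(\theta_1,\theta_2,\theta_3)^T$ is modular.

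The main obstacle is establishing that asymptotic coincidence between the strange series and the Eichler integrals in the vector-valued context. For a single component one can imitate Zagier's Euler--Maclaurin / Mellin argument verbatim, but here one must match three separate strange sums against three theta functions simultaneously, keeping track of the multiplier system so that the permutation appearing in $M_T$ and the mixing matrix in the $S$-relation emerge from the same representation of $\SL_2(\Z)$. Once this compatibility is verified, the smoothness (in fact real-analyticity away from $0$) of the period integral $g(z)$ and its behavior on $\HH$ follow from the standard integral representation $\int_{z}^{i\infty}\theta_i(w)(w-z)^{-3/2}\, dw$, completing the verification of the quantum modularity conditions of Definition \ref{ch1qmfdef}.
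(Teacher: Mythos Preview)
Your overall strategy matches the paper's: connect the strange series to Eichler integrals of weight-$1/2$ theta functions, then read off the quantum modular transformations from the modularity of the underlying vector of theta functions and the period-integral remainder. However, two points deserve correction or sharpening.

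First, the map $\phi$ is \emph{not} well-defined on all of $\Q$ as a vector. The paper is explicit that $\phi_1(x)$ converges only at odd-order roots of unity, $\phi_3(x)$ only at even-order roots of unity, and there is no rational where all three components are simultaneously finite. The theorem is to be understood componentwise: each of the six scalar equations in the two matrix identities holds on the (infinite) subset of $\Q$ where its ingredients converge. Your claim that ``$(q;q)_n$ vanishes for $n$ large so the sum truncates'' does not apply uniformly, since the components $\theta_{2,n},\theta_{3,n}$ have denominators $(q^{1/2};q)_{n+1}$ and $(-q^{1/2};q)_{n+1}$ which can introduce poles.

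Second, and more substantively, you write ``Granting this identification'' for the coincidence of $\theta_i^S$ with the Eichler integrals at rationals, and propose to obtain it by imitating Zagier's Euler--Maclaurin/Mellin argument. The paper does not do this directly. Instead it invokes the \emph{sums of tails identities} of Andrews--Jim\'enez-Urroz--Ono, which give exact $q$-series identities of the shape
\[
\sum_{n\geq 0}\bigl(\theta_i(z)-\theta_{i,n}(z)\bigr)=\theta_i(z)E_i(z)+c_i\,\sqrt{\theta}\,\theta_i(z),
\]
where $E_i$ is holomorphic and $\sqrt{\theta}$ is the formal half-derivative. Since $\theta_i$ vanishes to infinite order at the relevant cusps, this immediately yields $\theta_i^S(q)=c_i\widetilde{\theta_i}(q)$ there. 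This is the key lemma that replaces an ad hoc asymptotic analysis; without it, matching three strange series to three Eichler integrals simultaneously (with the correct constants) is the genuine content of the theorem, not something one can simply ``grant.''

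Finally, for the $T$-transformation the paper does not track $q^{c_i/24}$ prefactors on the strange series. It instead uses that the $\theta_i$ are explicit eta quotients $\eta(z)^2/\eta(2z)$, $\eta(z)^2/\eta(z/2)$, $\eta(z)^2/\eta(z/2+1/2)$, and computes $H(z+1)$ and $H(-1/z)$ directly from the standard transformations $\eta(z+1)=\zeta_{24}\eta(z)$ and $\eta(-1/z)=(z/i)^{1/2}\eta(z)$ together with the identity $\eta(z+1/2)=\zeta_{48}\eta(2z)^3/(\eta(z)\eta(4z))$. The permutation with multipliers $1,\zeta_{12},\zeta_{24}$ then falls out of this calculation on the theta side, not from the strange series themselves.
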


Moreover, finite evaluations of $\theta_i^S$ at odd-order roots of unity lead to closed-form evaluations of complicated-looking period integrals, via relations between certain $\text{L}$-functions and ``strange'' series.%, giving further ways to compute $\pi$ with $q$-hypergeometric series.

\subsubsection{Chapter 8 summary}

%In Chapter 7, we apply ideas from Chapter 4 to relate $j(z;q)$ and the $q$-bracket to the odd-order universal mock theta function $g_3(z,q)$ of Gordon-McIntosh (see \cite{GordonMcIntosh}) andDefine  the rank generating function $\widetilde{U_k}(q)$ (resp. $U_k(q)$) for unimodal (resp. strongly unimodal) sequences with $k$-fold peak, proving formulas such as these.  %Loosely speaking, the $q$-bracket of the reciprocal of $j(z;q)$ is proportional to $g(z^{-1},q^{-1})$ (which converges) plus $\widetilde{U_1}(q)$, leading to nice identities 

%such as the 
%following pair  % and a finite formula for Ramanujan's %prototype mock theta function 
%prototype mock theta function, $f(q)$. 
%(evaluations at roots of unity $\zeta_*$ represent radial limits as $q\to \zeta_*$). %, such as this pair.

Some of the interesting properties of quantum modular forms, such as finiteness at roots of unity and renormalization phenomena, extend to other $q$-hypergeometric series such as the ``universal'' mock theta function $g_3$. In Chapter 8, we apply partition-theoretic results from Chapter 3 as well as ideas from statistical physics, to show that $g_3$ arises naturally from the reciprocal of the classical Jacobi triple product $j(z;q)$ --- and is intimately tied to rank generating functions for unimodal sequences --- under the action of the $q$-bracket. %We show that $g_3$ and other $q$-series also display ``quantum''-like behaviors.

Let $j_z\colon \mathcal P \to \mathbb C$ denote the partition-indexed coefficients of $j(z,q)^{-1}=\sum_{\lambda \in \mathcal P}j_z(\lambda)q^{|\lambda|}$. It turns out the odd-order universal mock theta function $g_3$ (in an ``inverted'' form) and the rank generating function $\widetilde{U}(z,q)$ for unimodal sequences arise together as components of $\left<j_z\right>_q$.

\begin{proposition}[Theorem \ref{ch8theorem1} in Chapter 8]\label{ch1theorem1} 
For $0<|q|<1, z\neq 0,z\neq 1$, the following statements are true:
\begin{enumerate}[(i)]
            \item \label{ch1thm1.1}
            We have the $q$-bracket formula% relating the universal mock theta function $g_3$ and the unimodal rank generating function $U$:
            \begin{equation*}\left<j_z\right>_q=1\  +\  \left[z(1-q)+z^{-1}q\right] g_3(z^{-1},q^{-1})\  +\  \frac{zq^2}{1-z}\widetilde{U}(z,q).\end{equation*}
 
            \item The ``inverted'' mock theta function component in part (\ref{ch1thm1.1}) converges, and can be written in the form
\begin{equation*}\label{ch1umtfsum}
g_3(z^{-1},q^{-1})=\sum_{n=1}^{\infty}\frac{q^n}{(z;q)_n (z^{-1}q;q)_n}.
\end{equation*}
        \end{enumerate}

\end{proposition}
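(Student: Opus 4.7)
The plan is to first reduce part (i) to a clean $q$-series identity via the Jacobi triple product, and then prove (ii) first since it supplies the closed form of the $g_3$ component that we will need to identify the mock theta piece in (i). By the triple product,
\[
\langle j_z\rangle_q=(q;q)_\infty\sum_{\lambda\in\mathcal P}j_z(\lambda)q^{|\lambda|}=\frac{(q;q)_\infty}{j(z;q)}=\frac{1}{(z;q)_\infty(z^{-1}q;q)_\infty},
\]
so (i) becomes the identity
\[
\frac{1}{(z;q)_\infty(z^{-1}q;q)_\infty}-1=\bigl[z(1-q)+z^{-1}q\bigr]g_3(z^{-1},q^{-1})+\frac{zq^2}{1-z}\widetilde U(z,q).
\]

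For (ii), I would start from the Gordon--McIntosh series
\[
g_3(x,q)=\sum_{n\geq 0}\frac{q^{n(n+1)}}{(x;q)_{n+1}(qx^{-1};q)_{n+1}},
\]
substitute $x=z^{-1}$ and $q\mapsto q^{-1}$, and apply the standard inversion $(a;q^{-1})_n=(-a)^n q^{-\binom{n}{2}}(a^{-1};q)_n$ to each denominator factor. The sign and power-of-$z$ contributions cancel between the two factors, leaving
\[
(z^{-1};q^{-1})_{n+1}(q^{-1}z;q^{-1})_{n+1}=q^{-(n+1)^2}(z;q)_{n+1}(z^{-1}q;q)_{n+1},
\]
so that together with the $q^{-n(n+1)}$ from the inverted numerator, the overall power of $q$ collapses to $q^{n+1}$. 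Reindexing $n\mapsto n-1$ yields the claimed formula.

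For (i), the natural attack is telescoping. Set $T_n:=1/[(z;q)_n(z^{-1}q;q)_n]$, so $T_0=1$ and $T_\infty=\langle j_z\rangle_q$. The asymmetric factorization $1-(1-zq^{n-1})(1-z^{-1}q^n)=zq^{n-1}(1-z^{-1}q^n)+z^{-1}q^n$ yields
\[
T_n-T_{n-1}=\frac{zq^{n-1}}{(z;q)_n(z^{-1}q;q)_{n-1}}+\frac{z^{-1}q^n}{(z;q)_n(z^{-1}q;q)_n}.
\]
Summing $\langle j_z\rangle_q-1=\sum_{n\geq 1}(T_n-T_{n-1})$, the second piece is exactly $z^{-1}g_3(z^{-1},q^{-1})$ by part (ii); the first piece, after peeling off the $n=1$ term and reindexing its tail via $(z;q)_{n+1}=(1-z)(zq;q)_n$, factors as $\frac{z}{1-z}\sum_{m\geq 0}q^m/[(zq;q)_m(z^{-1}q;q)_m]$, whose shape matches the strongly unimodal rank generating function $\widetilde U(z,q)$ used in this chapter.

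The principal obstacle will be reconciling this raw telescoping with the precise prefactors $z(1-q)+z^{-1}q$ on $g_3$ and $\frac{zq^2}{1-z}$ on $\widetilde U$: the naive decomposition above produces the simpler coefficient $z^{-1}$ in front of $g_3$, not the full combination stated in the theorem. Closing this gap will require a secondary rearrangement---most plausibly a Heine-type transformation, or a redistribution of Pochhammer factors---that shifts finitely many $q$-hypergeometric terms between the $g_3$ and $\widetilde U$ components so as to align with the precise definition of $\widetilde U$ adopted in this chapter. Absolute convergence for $0<|q|<1$, $z\neq 0,1$, is automatic from the $q$-Pochhammer structure, and the hypothesis $z\neq 1$ accounts for the $1/(1-z)$ singularity that emerges in the $\widetilde U$ prefactor from the factorization $(z;q)_{n+1}=(1-z)(zq;q)_n$.
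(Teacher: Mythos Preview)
Your treatment of part (ii) is essentially the paper's: the paper proves it by the same Pochhammer inversion, writing $(z;q)_n(z^{-1}q;q)_n=q^{n^2}(z^{-1};q^{-1})_n(zq^{-1};q^{-1})_n$ and recognizing the result as the defining series of $g_3$ evaluated at $(z^{-1},q^{-1})$.

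For part (i), both you and the paper telescope $T_n=1/[(z;q)_n(z^{-1}q;q)_n]$, but the paper splits the numerator symmetrically as $q^{n-1}(z+z^{-1}q)-q^{2n-1}$, producing \emph{two} building blocks $S_1=\sum_{n\ge1}q^n/[(z;q)_n(z^{-1}q;q)_n]$ and $S_2=\sum_{n\ge1}q^{2n}/[(z;q)_n(z^{-1}q;q)_n]$ (Lemma in the proof). It then makes the ``simple observation'' that $\widetilde U$ splits in the same way,
\[
\widetilde U(z,q)=q^{-1}(1-z)S_1-(zq)^{-1}(1-z)S_2,
\]
and linearly combines the two identities to eliminate $S_2$. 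That parallel splitting of $\widetilde U$ is the redistribution mechanism---not a Heine-type transformation---so your proposed route for closing the gap is not the paper's.

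One further remark worth recording: your asymmetric telescoping is correct and yields $\langle j_z\rangle_q=1+z^{-1}g_3(z^{-1},q^{-1})+\tfrac{z}{1-z}\widetilde U(z,q)$ directly. If you carry out the paper's own two-sum elimination carefully you land on exactly these same prefactors; and indeed setting $q=0$ in the stated formula gives $1$, whereas $\langle j_z\rangle_q|_{q=0}=1/(1-z)$. So the discrepancy you flagged traces to an index slip in the paper's telescoping (equation for the product and the ensuing lemma), not to a defect in your argument---your cleaner decomposition is the right one.
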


Let $\zeta_m:=e^{2\pi \text{i}/m}$ be a primitive $m$th root of unity. Define the rank generating function $\widetilde{U_k}(z,q)$ (resp. $U_k(z,q)$) for unimodal (resp. strongly unimodal) sequences with $k$-fold peak. Then we have interesting relations between $g_3$ and $\widetilde{U}_k, U_k$.

\begin{proposition}[Corollaries \ref{ch8another_cor} and \ref{ch8cor2} of Chapter 8]\label{ch1another_cor}
For $|q|<1<|z|$, we have
\begin{equation*}
g_3(z^{-1},q^{-1})=\frac{z}{1-z}\sum_{k=1}^{\infty}\widetilde{U}_{k}(z,q) z^{-k}q^k.
\end{equation*}
For $|z|<1$, the radial limit as $q\to \zeta_m$ an $m$th order root of unity is given by
\begin{equation*}
g_3(z,\zeta_m)=\frac{z-1}{z}\sum_{k=1}^{\infty} {U}_{k}(-z,\zeta_m)z^{k}\zeta_m^{-k}.
\end{equation*}
\end{proposition}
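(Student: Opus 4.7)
The plan is to derive both identities from the closed-form expression
$g_3(z^{-1},q^{-1})=\sum_{n=1}^{\infty}\frac{q^n}{(z;q)_n (z^{-1}q;q)_n}$
established in part (ii) of Theorem \ref{ch8theorem1}, by recognizing each summand as a generating function for unimodal sequences with a distinguished peak value, and then reorganizing the double sum by peak multiplicity.

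For the first identity, in the range $|q|<1<|z|$, I would give a combinatorial interpretation of $\frac{q^n}{(z;q)_n(z^{-1}q;q)_n}$: the factor $\frac{1}{(z;q)_n}$ enumerates the weakly increasing left flank of a unimodal sequence (with parts at most $n$, length weighted by $z$, total weighted by $q$), the factor $\frac{1}{(z^{-1}q;q)_n}$ enumerates the weakly decreasing right flank (length weighted by $z^{-1}$), and the extra $q^n$ installs a single copy of the peak value $n$. To expose the peak multiplicity, I would extract the peak itself and rewrite the summand so that the contribution $q^{kn}$ of a $k$-fold peak of height $n$ appears explicitly; after swapping the order of summation — which is absolutely convergent because $|z|>1$ — the inner sum can be identified as $\widetilde{U}_k(z,q)\,z^{-k}q^k$. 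The prefactor $z/(1-z)$ absorbs the remaining rank shift at the peak position.

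For the radial limit at $q\to\zeta_m$ with $|z|<1$, I would first pass from the $g_3(z^{-1},q^{-1})$ identity to a representation of $g_3(z,\zeta_m)$ via the substitutions $q\mapsto q^{-1}$, $z\mapsto z^{-1}$, noting that $\zeta_m^{-1}$ is itself an $m$th root of unity. At $q=\zeta_m$ the $q$-Pochhammer symbols become periodic in $n$ and many terms collapse, reducing the sum to a finite one and making term-by-term passage to the limit automatic. The substitution $z \mapsto -z$ and the appearance of the strongly unimodal generating function $U_k$ in place of $\widetilde{U}_k$ should then follow from standard splittings such as $(x;q)_n(-x;q)_n=(x^2;q^2)_n$, which at roots of unity convert weighted sums over weakly unimodal sequences (with a sign) into sums over strongly unimodal sequences.

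The main obstacle will be the precise bookkeeping of the rank statistic: the definitions of $\widetilde{U}_k(z,q)$ and $U_k(z,q)$ dictate exactly which powers of $z$ appear where in the summand, and these must be matched carefully against the combinatorial expansions of $(z;q)_n^{-1}$ and $(z^{-1}q;q)_n^{-1}$ so that the $z^{-k}$ and $z^k$ in the two identities emerge in the right places. Verifying that the formal rearrangements respect the analytic conditions stated in the corollary — particularly near the unit circle for the second identity — is the other delicate point.
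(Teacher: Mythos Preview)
Your approach to the first identity is essentially the paper's: start from Theorem~\ref{ch8theorem1}(ii), peel off one factor $(1-z^{-1}q^{n+1})^{-1}$ from the denominator, expand it as a geometric series $\sum_{k\geq 1}(z^{-1}q^{n+1})^k$, and swap summations. The paper does this purely algebraically rather than through the combinatorial language of peaks, but the mechanism is identical and your justification of absolute convergence for $|q|<1<|z|$ is correct.

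Your plan for the second identity, however, has a genuine gap. Substituting $z\mapsto z^{-1}$, $q\mapsto q^{-1}$ into the first identity gives an expression involving $\widetilde{U}_k(z^{-1},q^{-1})$, whose summands $\frac{q^{-kn}}{(z^{-1}q^{-1};q^{-1})_n(zq^{-1};q^{-1})_n}$ are \emph{periodic but nonvanishing} at roots of unity --- the Pochhammer symbols sit in the denominator, so nothing truncates, and the series does not reduce to a finite sum by itself. More seriously, the identity $(x;q)_n(-x;q)_n=(x^2;q^2)_n$ does not convert $\widetilde{U}_k$ into $U_k$: the former has rising factorials in the denominator, the latter in the numerator, and no product identity of that type bridges this structural difference.

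The paper takes a different route. It first proves a \emph{finite} formula for $g_3(z,\zeta_m)$ (Theorem~\ref{ch8theorem2}) via a general periodicity lemma (Lemma~\ref{ch8lemma}) that evaluates sums $\sum_n\prod_{i\leq n}\phi(i)$ when $\phi$ has period $m$. A key factorization step --- using $(X;\zeta_m^{-1})_n=(1-X)(1-X^m)/(X;\zeta_m)_{m-n+1}$ --- flips the Pochhammer symbols from denominator to numerator. This produces $g_3(z,\zeta_m)$ as a finite sum with $(z\zeta_m;\zeta_m)_n(z^{-1}\zeta_m;\zeta_m)_n$ in the numerator, which is exactly the structure of $U_k(-z,\zeta_m)$ (itself evaluated finitely by the same lemma, equation~\eqref{ch8strongU_rfinite}). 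The geometric-series-plus-swap then matches these two finite expressions directly. So the passage from $\widetilde{U}_k$ to $U_k$ is not a sign flip inside a fixed identity; it comes from the finite-formula machinery applied separately to both sides.
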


We then find $g_3(z,q)$ to extend in $q$ to the entire complex plane minus the unit circle, and give a finite formula for $g_3$ (as well as other $q$-series) at roots of unity, that is simple by comparison to other such formulas in the literature. For instance, we prove the following, simple formula for the mock theta function $f(q)$.

\begin{proposition}[Example \ref{ch8example2} in Chapter 8]\label{ch1example2}
For $\zeta_m$ an odd-order root of unity we have
\begin{equation*}\label{ch1f_finite4}
f(\zeta_m)=\frac{4}{3}\sum_{n=1}^{m}(-1)^n (-\zeta_m^{-1};\zeta_m^{-1})_n.
\end{equation*}
\end{proposition}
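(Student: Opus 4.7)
The plan is to deduce this finite evaluation of $f(\zeta_m)$ from the general finite formula for the universal mock theta function $g_3(z,q)$ at odd-order roots of unity established earlier in Chapter 8 (the renormalization result alluded to in the discussion preceding Proposition~\ref{ch1another_cor}), by passing through a classical identity that expresses $f$ in terms of $g_3$.

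First, I would invoke the well-known identity of Gordon--McIntosh (tracing back to Watson's original work on third-order mock theta functions) that writes $f(q)$ as a scalar multiple of $g_3(-1,q)$ plus a theta-like correction built from $(q;q)_\infty$ and $(-q;q)_\infty$, along the lines of
\[
f(q) \;=\; 4\,g_3(-1,q) \;+\; \text{(modular theta correction)}.
\]
The point of this reformulation is that $g_3(-1,q)$ is precisely the specialization $z=-1$ of the universal mock theta function to which the Chapter 8 machinery applies directly, and the theta correction is a quotient of infinite products whose values at odd-order roots of unity can be computed by elementary cyclotomic manipulation (for $m$ odd, $(-\zeta_m;\zeta_m)_m = 2$, which trivializes much of the bookkeeping).

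Next, I would apply the renormalization/finite formula for $g_3(z,\zeta_m)$ proved for $g_3$ in Chapter 8, specialized to $z=-1$. The renormalized representation lives naturally in the variable $q^{-1}=\zeta_m^{-1}$ (this is why the $q$-Pochhammer in the target formula appears as $(-\zeta_m^{-1};\zeta_m^{-1})_n$ rather than in $\zeta_m$): for odd-order $\zeta_m$, the radial-limit sum in Proposition~\ref{ch1another_cor} collapses to a finite sum of length $m$ by periodicity of $\zeta_m^k$ and by the fact that $(-\zeta_m^{-1};\zeta_m^{-1})_n$ becomes constant beyond $n=m$ up to an overall scalar, allowing the tail to be reabsorbed.

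Finally, I would combine the contribution from $g_3(-1,\zeta_m)$ with the evaluation of the theta correction. The expected outcome is that the theta correction either vanishes at odd-order roots of unity or contributes a term that merges cleanly with the $g_3$ piece, producing the single summation $\sum_{n=1}^{m}(-1)^n(-\zeta_m^{-1};\zeta_m^{-1})_n$ with overall multiplier $4/3$. The main obstacle will be constant-tracking: verifying that the modular/theta correction at odd-order roots of unity produces exactly the shift needed to convert the $4\,g_3(-1,\zeta_m)$ prefactor into the advertised $4/3$, and confirming that no residual terms outside the range $1 \le n \le m$ survive. This is a finite and explicit computation, but it is the delicate step where an incorrect choice among the several equivalent bridge identities for $f$ in terms of $g_3$ would visibly spoil the answer.
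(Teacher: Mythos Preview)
Your route through the universal mock theta function $g_3$ is genuinely different from what the paper does, and considerably more circuitous. The paper never invokes the Gordon--McIntosh bridge identity or Theorem~\ref{ch8theorem2} at all for this example. Instead it first establishes Example~\ref{ch8example} directly: starting from Fine's single-variable rewriting
\[
f(q)=1-\sum_{n\geq 1}\frac{(-1)^n q^n}{(-q;q)_n},
\]
it applies the general periodicity Lemma~\ref{ch8lemma} (noting that $|(-1)^m\zeta_m^m/(-\zeta_m;\zeta_m)_m|=1/2<1$ for $m$ odd, so part~(ii) applies), and then uses the elementary Pochhammer inversion $(-\zeta_m;\zeta_m)_n^{-1}=(-\zeta_m^{-1};\zeta_m^{-1})_{m-n-1}$ together with the index change $n\mapsto m-n-1$ to obtain
\[
f(\zeta_m)=1-\tfrac{2}{3}\sum_{n=1}^{m}(-1)^n\zeta_m^{-(n+1)}(-\zeta_m^{-1};\zeta_m^{-1})_n.
\]
Example~\ref{ch8example2} is then a two-line consequence: setting $h(\zeta_m):=\tfrac{2}{3}\sum_{n=1}^m(-1)^n(-\zeta_m^{-1};\zeta_m^{-1})_n$, one checks from Example~\ref{ch8example} that $f(\zeta_m)-h(\zeta_m)=1-\tfrac{2}{3}\sum_{n=1}^m(-1)^n(-\zeta_m^{-1};\zeta_m^{-1})_{n+1}=h(\zeta_m)$, whence $f(\zeta_m)=2h(\zeta_m)$.

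Your approach could in principle succeed---the factor $(1-z^m-z^{-m})^{-1}=1/3$ at $z=-1$, $m$ odd, is the right signal---but you would still need the same Pochhammer inversion and index-shifting manipulations the paper uses, on top of the $f\leftrightarrow g_3$ identity and its theta correction, so you gain nothing and add a layer. The ``delicate constant-tracking'' you flag is not merely bookkeeping: the Theorem~\ref{ch8theorem2} formula for $g_3(-1,\zeta_m)$ produces Pochhammer products in $\zeta_m$ (or, via \eqref{ch8simpler}, quotients of Pochhammers in $\zeta_m^{-1}$), not the bare $(-\zeta_m^{-1};\zeta_m^{-1})_n$ factors in the target, so a nontrivial rewriting remains. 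The paper's route avoids all of this by staying with the single-variable Fine identity from the start.
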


%Then we show in Chapter 4 %and 5 %\ref{ch5Partition_zeta} and \ref{ch6OnoRolenSchneider} 
%that equations \eqref{zetaproduct} and \eqref{partition_genfctn} both arise as specializations of a single ``\"{u}ber'' product-sum formula, and find in Chapters 4 and 5 many interesting evaluations, relations and properties pertaining to partition zeta functions as well as ``partition Dirichlet series''. As another application of our partition-theoretic ideas, in Chapter 6 we explore a different connection between partitions and zeta functions: we use generalizations of ideas of Alladi \cite{Alladi_arithmetic_density} to give partition-theoretic formulas for arithmetic densities of certain subsets of $\mathbb N$. In addressing the densities of square-free (and more generally, for $k\geq 2$, $k$th power-free) integers, we arrive at interesting partition formulas to compute the reciprocal of $\zeta(k)$ as the limiting case of a certain $q$-series.  
%
%We then turn our attention to mock theta functions, quantum modular forms and ``strange'' functions. We use ``strange'' functions similar to Kontsevich's function to find a new, vector-valued quantum modular form whose components are all ``strange'' in Chapter 7, and examine other classes of ``strange'' series in Chapter 8. 
%THEOREM
%\subsubsection{Chapter 9 preview}

%THEOREM

We indicate similar formulas for other $q$-hypergeometric series and $q$-continued fractions, % (even ``strange'' series and fractions as in Chapter 7), 
and look at interesting ``quantum''-type behaviors of mock theta functions and other $q$-series inside, outside, and on the unit circle. Finally, we speculate about the nature of connections between partition theory, $q$-series and physical reality.
\\

\begin{remark}
In the Appendices, we give follow-up points and observations related to work in various chapters.\end{remark}
			\clearpage %
\chapter{Combinatorial applications of M\"{o}bius inversion% (joint work with Marie Jameson)%\footnote{This chapter is a joint work with Marie Jameson.}
}%\begin{abstract}
{\bf Adapted from \cite{JamesonSchneider}, a joint work with Marie Jameson}
%In important work on the parity of the partition function, Ono \cite{Ono2010} related values of the partition function to coefficients of a certain mock theta function modulo 2. In this paper, we use M\"obius inversion to give analogous results which relate several combinatorial functions via identities rather than congruences.
%\end{abstract}

%\maketitle

\section{Introduction and Statement of Results}

In this chapter we glimpse connections between additive number theory and the multiplicative branch of the theory, which we will follow up on in subsequent chapters. As we noted in the previous section, product-sum identities are ubiquitous in number theory and the theory of $q$-series. For example, recall Euler's identity
\[\prod_{n=1}^\infty(1-q^n) = \sum_{k=-\infty}^\infty (-1)^kq^{k(3k-1)/2},\] and Jacobi's identity \[\prod_{n=1}^\infty(1-q^n)^3 = \sum_{n=0}^\infty (-1)^n(2n+1)q^{n(n+1)/2}.\]

More recently, Borcherds defined ``infinite product modular forms'' \[F(z) = q^h\prod_{n=1}^\infty(1-q^n)^{a(n)},\] where $q:=e^{2\pi iz}$ and the $a(n)$'s are coefficients of certain weight 1/2 modular forms (see Chapter 4 of \cite{Ono2010}). This was generalized by Bruinier and Ono \cite{BruinierOno} to the setting where the exponents $a(n)$
are coefficients of harmonic Maass forms.

At first glance, this does not look like the stuff of combinatorics. However, one might consider the partition function $p(n)$ and ask whether the product
\begin{equation} \label{ch2thequestion}
\prod_{n=1}^\infty (1-q^n)^{p(n)}
\end{equation}
has any special properties.  In this direction, recent work of Ono \cite{Ono2010}  studies the parity of $p(n).$ For $1<D\equiv 23 \pmod{24},$ Ono defined
\[\Psi_D(q):= \prod_{m=1}^\infty\prod_{0\leq b\leq D-1}\left(1-\zeta_D^{-b}q^m\right)^{\leg{-D}{b}C(\overline{m};Dm^2)},\]
where $\overline{m}$ is the reduction of $m \pmod{12},$ $\zeta_D:=e^{2\pi i/D},$ and $C(\overline{m};Dm^2)$ is the coefficient of a mock theta function. It turns out that
\[C(\overline{m};n) \equiv \begin{cases} p\left(\frac{n+1}{24}\right) \pmod{2} & \text{if } \overline{m}\equiv 1,5,7,11 \pmod{12},\\0 & \text{otherwise.} \end{cases}\]
Ono considers the logarithmic derivative
\begin{equation} \label{ch2logderiv}
\sum_{n=1}^\infty B_D(n)q^n := \frac{1}{\sqrt{-D}}\cdot \frac{q\frac{d}{dq}\Psi_D(q)}{\Psi_D(q)} = \sum_{m=1}^\infty mC(\overline{m};Dm^2)\sum_{n=1}^\infty\leg{-D}{n}q^{mn}
\end{equation}
and notes that reducing mod 2 gives
\begin{equation} \label{ch2reductionmod2}
\frac{1}{\sqrt{-D}}\cdot \frac{q\frac{d}{dq}\Psi_D(q)}{\Psi_D(q)} \equiv \sum_{\begin{subarray}{c} m\geq1\\ \gcd(m,6)=1 \end{subarray}}p\left(\frac{Dm^2+1}{24}\right)\sum_{\begin{subarray}{c} n\geq1\\ \gcd(n,D)=1 \end{subarray}}q^{mn}\pmod{2}.
\end{equation}

This observation was instrumental in proving results regarding the parity of the partition function \cite{Ono2010}. However, if one desires %presently % in this work 
%we desire 
to establish identities rather than congruences, %so 
it seems pertinent to again consider products of the form \eqref{ch2thequestion}, but now at the level of $q$-series identities.

From this perspective, we wish to explore the logarithmic derivative of
\begin{equation} \label{ch2infiniteproduct}
\prod_{n=1}^\infty(1-q^n)^{a(n)}
\end{equation}
 for other, more general combinatorial functions $a(n).$ Then for a nonnegative integer $n$, define
\begin{align*}
Q(n) &:= \text{number of partitions of $n$ into distinct parts}\\
\widehat{Q}(n) &:= \text{number of partitions of $n$ whose parts occur with the same multiplicity}
\end{align*} 
and
\begin{align*}
F_Q(q) &:=\sum_{n=1}^\infty Q(n)q^n\\
F_{\widehat{Q}}(q) &:= \sum_{n=1}^\infty \widehat{Q}(n)q^n\\
\Psi(Q;q) &:=\prod_{n=1}^\infty (1-q^n)^{Q(n)/n}.
\end{align*} 

\begin{theorem} \label{ch2thm1}
We have that \[\frac{q\frac{d}{dq}\Psi(Q;q)}{\Psi(Q;q)} = -F_{\widehat{Q}}(q).\] Moreover, for all $n\geq 1$ we have \[Q(n) = \sum_{d|n}\mu(d)\widehat{Q}(n/d),\] where $\mu$ denotes the M\"obius function.
\end{theorem}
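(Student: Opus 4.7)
The plan is to prove both statements at once by first computing the logarithmic derivative of $\Psi(Q;q)$ explicitly, then recognizing the resulting Lambert-type series as generated by a divisor-sum formula for $\widehat{Q}(n)$, and finally applying classical M\"obius inversion.

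First I would take the formal logarithm
\[
\log \Psi(Q;q) \;=\; \sum_{n=1}^{\infty}\frac{Q(n)}{n}\log(1-q^n) \;=\; -\sum_{n=1}^{\infty}\frac{Q(n)}{n}\sum_{m=1}^{\infty}\frac{q^{nm}}{m},
\]
and then apply the operator $q\frac{d}{dq}$ termwise. The factor of $nm$ that comes down cancels the denominator $nm$ from the double sum, leaving
\[
\frac{q\frac{d}{dq}\Psi(Q;q)}{\Psi(Q;q)} \;=\; -\sum_{n=1}^{\infty}Q(n)\sum_{m=1}^{\infty}q^{nm} \;=\; -\sum_{N=1}^{\infty}\Bigl(\sum_{n\mid N}Q(n)\Bigr)q^{N}.
\]
Matching coefficients with the desired identity $\frac{q\frac{d}{dq}\Psi(Q;q)}{\Psi(Q;q)} = -F_{\widehat{Q}}(q)$ reduces the first claim to the combinatorial identity
\[
\widehat{Q}(N) \;=\; \sum_{n\mid N}Q(n).
\]

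This divisor identity is the main step, and I would establish it via the natural bijection that ``strips multiplicity.'' Given a partition $\lambda\vdash N$ whose distinct parts all occur with common multiplicity $m$, write $\lambda = (a_1^m\,a_2^m\cdots a_k^m)$ with $a_1>a_2>\cdots>a_k$; then $m\mid N$ and the partition $\lambda' := (a_1,a_2,\dots,a_k)$ is a partition of $N/m$ into distinct parts. Conversely, any partition into distinct parts of any divisor $n=N/m$ of $N$ inflates uniquely to a partition of $N$ with common multiplicity $m$ by repeating each part $m$ times. This sets up a bijection between partitions counted by $\widehat{Q}(N)$ and pairs $(n,\lambda')$ with $n\mid N$ and $\lambda'$ counted by $Q(n)$, yielding the displayed identity.

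The second statement is then immediate from classical M\"obius inversion applied to $\widehat{Q}(N)=\sum_{n\mid N}Q(n)$:
\[
Q(n) \;=\; \sum_{d\mid n}\mu(d)\,\widehat{Q}(n/d).
\]
I expect no real obstacle beyond being careful with the interchange of summation in the logarithmic derivative step (justified as a formal power series identity since the inner sums are supported on powers $q^{nm}$ with $nm\geq 1$, so only finitely many terms contribute to each coefficient of $q^N$) and explicitly verifying that the bijection respects $N=m\cdot(a_1+\cdots+a_k)$. Together these two ingredients give the full theorem.
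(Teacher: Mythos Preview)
Your proposal is correct and follows essentially the same approach as the paper: the logarithmic derivative computation you give is exactly the content of the paper's Lemma (applied with $a(n)=Q(n)/n$), and the key identity $\widehat{Q}(N)=\sum_{n\mid N}Q(n)$ together with M\"obius inversion is precisely how the paper concludes. The only difference is that you spell out the ``strip the common multiplicity'' bijection explicitly, whereas the paper simply asserts the divisor identity as a straightforward observation.
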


For example, one can compute that 
\begin{flalign*}
\Psi(Q;q) &=1-q-\frac{1}{2}q^2-\frac{1}{6}q^3+\frac{1}{24}q^4+\frac{43}{120}q^5-\frac{233}{720}q^6+\cdots\\
\frac{q\frac{d}{dq}\Psi(Q;q)}{\Psi(Q;q)}  &= -q-2q^2-3q^3-4q^4-4q^5-8q^6-\cdots\\
F_{\widehat{Q}}(q) &= q+2q^2+3q^3+4q^4+4q^5+8q^6+\cdots = -\frac{q\frac{d}{dq}\Psi(Q;q)}{\Psi(Q;q)}.
\end{flalign*}

In fact, while it is not obvious from a combinatorial perspective, this theorem is simple; it follows from the straightforward observation that \[\widehat{Q}(n) = \sum_{d|n}Q(d).\] Now we present two results in a slightly different direction that are perhaps more surprising. Looking again to the work of Ono \cite{Ono2010}, we can apply M\"obius inversion to \eqref{ch2logderiv} to find
\begin{equation}\label{ch2mobiusinv}
C(\overline{n};Dn^2) = \frac{1}{n}\sum_{d|n} \mu(d)\leg{-D}{d}B_D(n/d).
\end{equation}
It is natural to ask whether there are analogs of this statement for related $q$-series, even if the series do not arise as logarithmic derivatives of Borcherds products.

We begin our search of interesting combinatorial functions by noting that the generating function for the partition function $p(n)$ obeys the identity of Euler
\[P(q):=\sum_{n=0}^\infty p(n)q^n = \sum_{n=0}^\infty \frac{q^{n^2}}{(q)_n^2}\]
where $(q)_n$ is the $q$-Pochhammer symbol, defined by $(q)_0=1$ and $(q)_n=\prod_{k=1}^n (1-q^k)$ for $n\geq1$. We wish to investigate other functions of a similar form, such as those presented in the following theorems, which are formally analogous to \eqref{ch2mobiusinv} but involving other combinatorial functions. 

Let $p_a(n)$ denote the number of partitions of $n$ into $a$ parts, and define $\widehat{p}_a(n)$ to be the number of partitions of $n$ into $ak$ parts for some integer $k\geq 1$, i.e.,  \[\widehat{p}_a(n) := \sum_{j=1}^\infty p_{aj}(n).\]  On analogy to the identities for $P(q)$ above, we let $P_a(q)$ and $\widehat{P}_a(q)$ denote the generating functions of $p_a(n)$ and $\widehat{p}_a(n),$ respectively. Then we have the following identities for $P_a(q)$ and $\widehat{P}_a(q)$.
\begin{theorem}\label{ch2thm2} We have that
\begin{align*}
P_a(q) &=\sum_{n=1}^\infty{\mu (n)\widehat{P}_{an}(q)}\\
p_a(n) &= \sum_{j=1}^\infty {\mu (j)\widehat{p}_{an}(n)}.
\end{align*} 
\end{theorem}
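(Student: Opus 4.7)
The plan is to deduce both identities from a single application of the ``dual'' M\"obius inversion, in parallel to the strategy of Theorem \ref{ch2thm1}. The starting observation is a rewriting of the definition: for a positive multiple $b$ of $a$, write $b=ak$, so that
\[
\widehat{p}_a(n) \;=\; \sum_{k=1}^\infty p_{ak}(n) \;=\; \sum_{\substack{b\geq 1\\ a\mid b}} p_b(n).
\]
Because a partition of $n$ has at most $n$ parts, the terms $p_b(n)$ vanish as soon as $b>n$, so this is genuinely a finite sum for each fixed $n$.

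\textbf{Step 1: the point-wise identity.} Starting from the claimed right-hand side and expanding via the definition of $\widehat{p}_{aj}$, I would compute
\begin{align*}
\sum_{j=1}^\infty \mu(j)\,\widehat{p}_{aj}(n)
\;&=\; \sum_{j=1}^\infty \mu(j)\sum_{k=1}^\infty p_{ajk}(n) \\
\;&=\; \sum_{m=1}^\infty p_{am}(n)\sum_{\substack{j,k\geq 1\\ jk=m}}\mu(j) \\
\;&=\; \sum_{m=1}^\infty p_{am}(n)\sum_{d\mid m}\mu(d)
\;=\; p_a(n),
\end{align*}
where the interchange of summation is legal because, for fixed $n$, only finitely many $p_{ajk}(n)$ are nonzero, and the last equality uses the classical identity $\sum_{d\mid m}\mu(d)=[m=1]$.

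\textbf{Step 2: upgrade to generating functions.} Multiplying by $q^n$ and summing over $n\geq 0$ yields
\[
P_a(q)\;=\;\sum_{n\geq 0}p_a(n)q^n \;=\; \sum_{n\geq 0}q^n\sum_{j=1}^\infty\mu(j)\,\widehat{p}_{aj}(n) \;=\; \sum_{j=1}^\infty\mu(j)\,\widehat{P}_{aj}(q),
\]
with no convergence issue as a formal power series (the coefficient of $q^n$ on either side is a finite sum), and equally valid as an identity of analytic functions for $|q|<1$.

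\textbf{Main obstacle.} There is no deep technical obstruction: the proof rides entirely on M\"obius inversion applied to the relation $\widehat{p}_a(n)=\sum_{a\mid b}p_b(n)$. The only care required is justifying the rearrangement of the double sum in Step 1, and this is handled by the observation that $p_b(n)=0$ for $b>n$, making all relevant sums finite. The resulting picture is a pleasing companion to Theorem \ref{ch2thm1}: whereas that result inverts a sum over the divisors of $n$, Theorem \ref{ch2thm2} inverts a sum over the multiples of $a$, illustrating both faces of classical M\"obius inversion inside the same combinatorial framework.
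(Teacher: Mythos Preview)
Your proof is correct and follows essentially the same approach as the paper: both invert the defining relation $\widehat{p}_a(n)=\sum_{j\geq 1}p_{aj}(n)$ via the identity $\sum_{d\mid m}\mu(d)=[m=1]$. The only cosmetic difference is that the paper packages this computation as a general lemma (Lemma~\ref{ch2lemma2}) stated at the generating-function level and then specializes, whereas you work directly at the coefficient level first and then sum over $n$; the underlying manipulation is identical.
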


Observe that for $a=1,$ we have that $p_1(n)=1$ for all integers $n$, and also that
\[\widehat{p}_1(n)= \sum_{j=1}^\infty {p_j(n)}=p(n).\] 
In this case, the generating functions are given by
\[P_1(q) = \sum_{n=1}^\infty p_1(n)q^n = \sum_{n=1}^\infty q^n = \frac{q}{1-q}\] 
and
\[\widehat{P}_1(q)=\sum_{n=1}^\infty{\widehat{p}_1(n)q^n}=\sum_{n=1}^\infty{p(n)q^n}.\] 
Thus by Theorem \ref{ch2thm2}, we have the explicit identities
\[P_1(q)=\sum_{n=1}^\infty \mu(n)\widehat{P}_n(q) =\frac{q}{1-q}\] 
and, perhaps more interestingly,
\[\ p_1(n)= \sum_{j=1}^\infty {\mu (j)\widehat{p}_j(n)}=1.\]

Looking again for identities similar to those given above for $P(q),$ for a positive integer $a$ set
\begin{align*}
B_a(q) &:= \sum^{\infty }_{n=1}{\frac{q^{n^2+an}}{(q)^2_n}}=:\sum^{\infty }_{N=1}{b_a(N)q^N}\\
\widehat{B}_a(q) &:= \sum^{\infty }_{n=1}{\frac{q^{n^2+an}}{(q)^2_n\left(1-q^{an}\right)}}=:\sum^{\infty }_{N=1}{\widehat{b}_a(N)q^N}.
\end{align*}

Generalizations of $q$-series such as $B_a(q)$ and $\widehat{B}_a(q)$ have been studied by Andrews \cite{Andrews}.  One can give a combinatorial interpretation for the coefficients $b_a(N)$ and $\widehat{b}_a(N)$ as follows.

Consider the Ferrers diagram of a given partition of an integer $N$ with an $n\times n$ Durfee square, and having a rectangle of base $n$ and height $m$ adjoined immediately below the $n\times n$ Durfee square. For example, the partition of $N=12$ shown below has a $2\times 2$ Durfee square (marked by a solid line), and either a $2\times 2$ or $2\times 1$ rectangle below it (the $2\times 1$ rectangle is marked by a dashed line).  
\vspace{0.5cm}
\begin{center}
\begin{tikzpicture}[inner sep=0pt,thick,
    dot/.style={fill=black,circle,minimum size=4pt}]
\node[dot] (a) at (0,0) {};
\node[dot] (a) at (1,1) {};
\node[dot] (a) at (0,1) {};
\node[dot] (a) at (0,2) {};
\node[dot] (a) at (1,2) {};
\node[dot] (a) at (0,3) {};
\node[dot] (a) at (1,3) {};
\node[dot] (a) at (0,4) {};
\node[dot] (a) at (1,4) {};
\node[dot] (a) at (2,4) {};
\node[dot] (a) at (3,4) {};
\node[dot] (a) at (2,3) {};
\draw[-] (-0.1,2.5)--(1.5,2.5);
\draw[-] (1.5,2.5)--(1.5,4.1);
\draw[dashed] (-0.1,1.5)--(1.5,1.5);
\draw[dashed] (1.5,1.5)--(1.5,2.5);
\end{tikzpicture}
\end{center}
We refer to this rectangular region of the diagram as an $n\times m$ ``Durfee rectangle,'' and note that a given Ferrers diagram may have nested Durfee rectangles of sizes $n\times 1, n\times 2, \ldots, n\times M$, where $M$ is the height of the largest such rectangle (assuming that at least one Durfee rectangle is present in the diagram).  

We then have that
\begin{align*}
b_a(N)=& \# \text{ of partitions of } N \text{ having an } n\times n \text{ Durfee square and at least an } n\times a \text{ Durfee}\\ & \text{rectangle}\\
\widehat{b}_a(N) =&\# \text{ of partitions of } N \text{ having an } n\times n \text{ Durfee square and at least an } n \times a \text{ Durfee}\\
& \text{rectangle (counted with multiplicity as an } n \times a \text{ rectangle may be nested within }\\
& \text{taller Durfee rectangles of size } n\times ak, \text{ for } k\geq1).
\end{align*}
Assuming these notations, we have the following result.

\begin{theorem} \label{ch2thm3} We have that
\[\widehat{b}_a(n)= \sum_{j=1}^\infty {b_{aj}(n)}.\]
Moreover, we have
\begin{align*}
B_a(q)&=\sum_{n=1}^\infty{\mu (n)\widehat{B}_{an}(q)}\\
b_a(n)&= \sum_{j=1}^\infty {\mu (j)\widehat{b}_{aj}(n)}.
\end{align*}
\end{theorem}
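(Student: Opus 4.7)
The plan is to derive all three claims from a single $q$-series identity obtained by expanding the factor $\tfrac{1}{1-q^{an}}$ as a geometric series. First I would show that
\[\widehat{B}_a(q)=\sum_{j=1}^\infty B_{aj}(q),\]
which, on comparing coefficients of $q^N$, immediately yields the combinatorial identity $\widehat{b}_a(N)=\sum_{j=1}^\infty b_{aj}(N)$. The sum is finite at each coefficient since $b_{aj}(N)$ vanishes once $aj$ is too large relative to $N$, so the equality holds as formal power series. The derivation itself is direct: writing
\[\frac{q^{n^2+an}}{(q)_n^2(1-q^{an})}=\frac{q^{n^2}}{(q)_n^2}\cdot\frac{q^{an}}{1-q^{an}}=\sum_{j=1}^\infty\frac{q^{n^2+ajn}}{(q)_n^2},\]
and interchanging the sums over $n$ and $j$ in the definition of $\widehat{B}_a(q)$ produces $\sum_{j=1}^\infty B_{aj}(q)$. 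Combinatorially, the factor $q^{ajn}$ corresponds to choosing a Durfee rectangle of height $aj$ beneath the $n\times n$ Durfee square, matching the multiplicity interpretation of $\widehat{b}_a$ given in the theorem statement.

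The second and third assertions then follow by the dual form of M\"obius inversion (inversion over multiples rather than divisors): if $F(a)=\sum_{j=1}^\infty G(aj)$ with only finitely many nonzero terms contributing to any coefficient of interest, then $G(a)=\sum_{j=1}^\infty \mu(j)F(aj)$. This is proved exactly as the classical divisor-sum version, using $\sum_{d\mid m}\mu(d)=[m=1]$. Applying it with $F(a)=\widehat{B}_a(q)$ and $G(a)=B_a(q)$ immediately yields
\[B_a(q)=\sum_{j=1}^\infty\mu(j)\widehat{B}_{aj}(q),\]
and comparing coefficients of $q^n$ gives $b_a(n)=\sum_{j=1}^\infty\mu(j)\widehat{b}_{aj}(n)$.

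The only thing requiring genuine care is bookkeeping: checking that the double sums and the M\"obius inversion are legitimate identities of formal $q$-series. But because each coefficient of $q^N$ involves only finitely many nonzero contributions (the inner sum over $n$ is constrained by $n^2\le N$, and the sum over $j$ by $aj\le N$), no convergence subtlety arises, and the proof reduces to the geometric-series expansion together with one application of M\"obius inversion over multiples. In this sense the result is structurally parallel to Theorem \ref{ch2thm1}, with the role of the elementary identity $\widehat{Q}(n)=\sum_{d\mid n}Q(d)$ now played by $\widehat{B}_a=\sum_{j\ge1}B_{aj}$.
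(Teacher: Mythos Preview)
Your proposal is correct and essentially identical to the paper's proof: the paper also expands $\tfrac{1}{1-q^{an}}$ as a geometric series to obtain $\widehat{B}_a(q)=\sum_{j\ge1}B_{aj}(q)$, reads off the first identity from coefficients, and then invokes the general M\"obius-inversion-over-multiples lemma (your ``dual form of M\"obius inversion'' is exactly the paper's Lemma~\ref{ch2lemma2}) to conclude the remaining two identities.
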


%%%%%%%%%%%%%%%%%%%%%%%%%%%%%%%%%%%%%%%%%%%%%%%%%%%%%%%%%%%%%%%%%%%%%%%%%

\section{Proof of Theorem \ref{ch2thm1}}
First we prove a lemma regarding logarithmic derivatives.

\begin{lemma}\label{ch2lem}
For any sequence $\{a(n)\},$ we have that
\[\frac{q\frac{d}{dq}\left(\prod_{n=1}^\infty(1-q^n)^{a(n)}\right)}{\prod_{n=1}^\infty(1-q^n)^{a(n)}} = - \sum_{n=1}^\infty \sum_{d|n}a(d)dq^n.\]
\end{lemma}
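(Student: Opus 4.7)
The plan is to compute the logarithmic derivative directly via termwise expansion, then reindex the resulting double sum as a divisor sum.

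First I would take logarithms: since $\log \prod_n (1-q^n)^{a(n)} = \sum_n a(n)\log(1-q^n)$, and the logarithmic derivative equals the derivative of the log, applying $q\tfrac{d}{dq}$ to both sides gives
\begin{equation*}
\frac{q\tfrac{d}{dq}\prod_{n\ge 1}(1-q^n)^{a(n)}}{\prod_{n\ge 1}(1-q^n)^{a(n)}} \;=\; \sum_{n=1}^\infty a(n)\, q\tfrac{d}{dq}\log(1-q^n).
\end{equation*}

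Next I would expand each logarithm as a power series, $\log(1-q^n) = -\sum_{k=1}^\infty q^{nk}/k$, valid for $|q|<1$ (or as a formal series in $q$). Differentiating termwise gives $q\tfrac{d}{dq}\log(1-q^n) = -\sum_{k\ge 1} n\, q^{nk}$, so
\begin{equation*}
\sum_{n=1}^\infty a(n)\, q\tfrac{d}{dq}\log(1-q^n) \;=\; -\sum_{n=1}^\infty \sum_{k=1}^\infty n\, a(n)\, q^{nk}.
\end{equation*}

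Finally I would reindex the double sum by setting $N = nk$: for each $N\ge 1$ the pairs $(n,k)$ with $nk = N$ are exactly the divisor pairs of $N$ with $n \mid N$ and $k = N/n$. Collecting the coefficient of $q^N$ yields
\begin{equation*}
-\sum_{n=1}^\infty \sum_{k=1}^\infty n\, a(n)\, q^{nk} \;=\; -\sum_{N=1}^\infty \Bigl(\sum_{d\mid N} d\, a(d)\Bigr) q^N,
\end{equation*}
which matches the right-hand side of the lemma (after renaming $N\to n$).

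The only potential obstacle is justifying the interchange of summation and differentiation, but this is standard since the series converges absolutely and uniformly on compact subsets of $|q|<1$; alternatively one can read the entire identity as an equality of formal power series in $\Z[a(1),a(2),\ldots][[q]]$, where no convergence issues arise.
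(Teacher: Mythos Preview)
Your proof is correct and follows essentially the same approach as the paper: take the logarithm, expand $\log(1-q^n)$ as a power series, apply $q\tfrac{d}{dq}$, and reindex the resulting double sum as a divisor sum. The only cosmetic difference is that the paper expands all the logarithms first and then differentiates the double sum, whereas you differentiate each $\log(1-q^n)$ first and then sum; the computations are otherwise identical.
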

\begin{proof}
Since $\log(1-x) = -\sum_{m=1}^\infty \frac{x^m}{m},$ we have that
\begin{align*}
\frac{q\frac{d}{dq}\left(\prod_{n=1}^\infty(1-q^n)^{a(n)}\right)}{\prod_{n=1}^\infty(1-q^n)^{a(n)}} &= q\frac{d}{dq}\left(\log\left(\prod_{n=1}^\infty(1-q^n)^{a(n)}\right)\right) = q\frac{d}{dq}\left(\sum_{n=1}^\infty a(n)\log\left(1-q^n\right)\right)\\
&= -q\frac{d}{dq}\left(\sum_{n=1}^\infty a(n)\sum_{m=1}^\infty \frac{q^{mn}}{m}\right) = -\left(\sum_{n=1}^\infty a(n)\sum_{m=1}^\infty nq^{mn}\right)\\
&= -\sum_{n=1}^\infty\sum_{d|n}a(d)dq^n
\end{align*}
as desired.
\end{proof}

\begin{proof}[Proof of Theorem \ref{ch2thm1}]
First note that for all $n\geq 1$ we have \[\widehat{Q}(n) =\sum_{d|n}Q(d),\] so $Q(n) = \sum_{d|n}\mu(d)\widehat{Q}(n/d)$ by M\"obius inversion. By Lemma \ref{ch2lem}, we have that \[\frac{q\frac{d}{dq}\Psi(Q;q)}{\Psi(Q;q)} = - \sum_{n=1}^\infty \sum_{d|n}Q(d)q^n = - \sum_{n=1}^\infty \widehat{Q}(n)q^n\] as desired.
\end{proof}

%%%%%%%%%%%%%%%%%%%%%%%%%%%%%%%%%%%%%%%%%%%%%%%%%%%%%%%%%%%%%%%%%%%%%%%%%
\section{Proof of Theorems \ref{ch2thm2} and \ref{ch2thm3}}

Suppose that for each positive integer $a$, we have two arithmetic functions $f(a;n)$ and $\widehat{f}(a;n)$ such that \[\widehat{f}(a;n) = \sum_{j=1}^\infty f(aj;n),\] where the above sum converges absolutely. We will define their generating functions as follows:
\begin{align*}
F(a;q) &:= \sum_{n=1}^\infty f(a;n)q^n\\
\widehat{F}(a;q) &:= \sum_{n=1}^\infty\widehat{f}(a;n)q^n.
\end{align*}
We then have the following result.
\begin{lemma} \label{ch2lemma2} We have that
\[F(a;q)=\sum_{n=1}^\infty\mu (n)\widehat{F}(an;q)\]
and
\[f(a;n)= \sum_{j=1}^\infty \mu(j)\widehat{f}(aj;n).\]
\end{lemma}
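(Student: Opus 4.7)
The plan is to recognize that the relation $\widehat{f}(a;n)=\sum_{j=1}^\infty f(aj;n)$ is precisely a M\"obius-type summation, but taken over the multiplicative index $a$ rather than over divisors of a single integer; the second identity in the lemma is its corresponding inversion, and the first is then an immediate consequence obtained by assembling generating functions.

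First I would prove the pointwise identity $f(a;n)=\sum_{j=1}^\infty \mu(j)\widehat{f}(aj;n)$. Substituting the hypothesis into the right-hand side gives
\[
\sum_{j=1}^\infty \mu(j)\widehat{f}(aj;n)=\sum_{j=1}^\infty \mu(j)\sum_{k=1}^\infty f(ajk;n).
\]
Interchanging the order of summation (justified by the assumed absolute convergence of the defining sum for $\widehat{f}$, which I would cite as the hypothesis of the lemma) and introducing the change of variables $m=jk$, so that for each fixed $m\geq 1$ the index $j$ ranges over the divisors of $m$, yields
\[
\sum_{j=1}^\infty \mu(j)\sum_{k=1}^\infty f(ajk;n)=\sum_{m=1}^\infty f(am;n)\sum_{j\mid m}\mu(j).
\]
The inner sum is the classical identity $\sum_{j\mid m}\mu(j)=[m=1]$, so only the $m=1$ term survives and the right-hand side collapses to $f(a;n)$, as required.

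For the generating-function identity, I would multiply the just-established pointwise equality by $q^n$ and sum over $n\geq 1$. On the right-hand side, interchanging the summations over $n$ and $j$ (again permitted by absolute convergence) gives
\[
\sum_{n=1}^\infty f(a;n)q^n=\sum_{j=1}^\infty \mu(j)\sum_{n=1}^\infty \widehat{f}(aj;n)q^n,
\]
which is exactly $F(a;q)=\sum_{n=1}^\infty \mu(n)\widehat{F}(an;q)$. Theorems~\ref{ch2thm2} and \ref{ch2thm3} will then follow by verifying the hypothesis $\widehat{f}(a;n)=\sum_{j=1}^\infty f(aj;n)$ in each of the two specific settings: for Theorem~\ref{ch2thm2} this is immediate from the definition $\widehat{p}_a(n)=\sum_{j=1}^\infty p_{aj}(n)$, while for Theorem~\ref{ch2thm3} it requires the Durfee-rectangle interpretation of $\widehat{b}_a(n)$ and $b_a(n)$ to see that $\widehat{b}_a(n)=\sum_{j=1}^\infty b_{aj}(n)$.

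The main obstacle is essentially bookkeeping rather than depth: I must verify that the double sums converge absolutely so that the interchange of summation and the subsequent reorganization by $m=jk$ are valid. For the partition and Durfee-rectangle applications this is automatic since only finitely many terms in $j$ contribute to $\widehat{f}(a;n)$ for each fixed $n$ (the number of parts in a partition of $n$ is bounded by $n$, and similarly the number of admissible nested Durfee rectangles is finite), so the inner sums are actually finite and no convergence issue arises.
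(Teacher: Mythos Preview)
Your proof is correct and uses essentially the same idea as the paper: insert the M\"obius identity $\sum_{d\mid m}\mu(d)=[m=1]$ and reorganize a double sum. The only difference is order of presentation---you establish the pointwise identity first and then sum against $q^n$ to obtain the generating-function version, whereas the paper works directly at the level of generating functions and then reads off the pointwise statement by comparing coefficients.
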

 
\begin{proof}
Recall that \[\sum_{d|n}\mu(n) = \begin{cases} 1 & \text{if } n=1\\ 0 & \text{otherwise.}\end{cases}\] It follows that
\begin{align*}
F\left(a;q\right) &=\sum_{n=1}^\infty\left(\sum_{k\ge 1} f(an;k)q^k\right) \sum_{d|n} \mu(d)\\
&=\sum_{n=1}^\infty\mu(n) \sum_{k\ge 1} \left( \sum_{j=1}^\infty  f(anj;k)\right)q^k\\
&=\sum_{n=1}^\infty\mu(n) \sum_{k\ge 1}\widehat{f}(an;n)q^k\\
&=\sum_{n=1}^\infty\mu(n) \widehat{F}(an;q).
\end{align*}
Then by comparing coefficients, one finds that $f(a;n) = \sum_{j=1}^\infty \widehat{f}(aj;n),$ as desired.
\end{proof}

This lemma can be used to prove both Theorem \ref{ch2thm2} and Theorem \ref{ch2thm3}. We note that Lemma \ref{ch2lemma2} can be applied in extremely general settings, and one has great freedom in creatively choosing the constant $a$ to be varied. For instance, taking $a=1$ gives rise to any number of identities, as $1$ can be inserted as a factor practically anywhere in a given expression.   

\begin{proof}[Proof of Theorem \ref{ch2thm2}] The theorem follows by a direct application of Lemma \ref{ch2lemma2}.
\end{proof}

\begin{proof}[Proof of Theorem \ref{ch2thm3}] First note that \[\widehat{b}_a(N) = \sum_{j=1}^\infty b_{aj}(N),\] since

\begin{align*}
\widehat{B}_a(q) &= \sum^{\infty }_{n=1}\frac{q^{n^2+an}}{(q)^2_n\left(1-q^{an}\right)} = \sum_{n=1}^\infty \frac{q^{n^2+an}}{(q)^2_n} \sum_{j=0}^\infty q^{ajn}\\
&= \sum_{j=1}^\infty \sum_{n=1}^\infty \frac{q^{n^2+ajn}}{(q)^2_n} = \sum_{j=1}^\infty B_{aj}(q).
\end{align*}
The rest follows by applying Lemma \ref{ch2lemma2}.
\end{proof}

 	\clearpage%
\chapter{Multiplicative arithmetic of partitions and the \texorpdfstring{$q$}{Lg}-bracket}{\bf Adapted from \cite{Robert_bracket}}% of Bloch-Okounkov}

%\begin{document}
%\begin{abstract}
%We present a natural multiplicative theory of integer partitions (which are usually considered in terms of addition), and find many theorems of classical number theory arise as particular cases of extremely general combinatorial structure laws. We then see that the relatively recently-defined $q$-bracket operator $\left<f\right>_q$, studied by Bloch--Okounkov, Zagier, and others for its quasimodular properties, plays a deep role in the theory of partitions, quite apart from questions of modularity. Moreover, we give an explicit formula for the coefficients of  $\left<f\right>_q$ for any function $f$ defined on partitions, and, conversely, give a partition-theoretic function whose $q$-bracket is a given power series.  %Finally, extending the definition of the $q$-bracket somewhat, we connect the Jacobi triple product formula to the rank and crank generating functions in partition theory and the enigmatic mock theta functions of Ramanujan. 
%\end{abstract}
%
%\bibliographystyle{amsplain}
%\maketitle

\section{Introduction: the \texorpdfstring{$q$}{Lg}-bracket operator}\label{ch3section1}%, a joint work with Ken Ono and Larry Rolen}

In the previous chapter, we fused techniques from the theory of %parallels between 
partitions and $q$-series %  ---  connected since the time of Euler  ---  
with applications of the M\"{o}bius function, which is central to multiplicative number theory. Here we develop further ideas at the intersection of the additive and multiplicative branches of number theory, with applications to a $q$-series operator from statistical physics.

In a groundbreaking paper of 2000 \cite{BlochOkounkov}, Bloch and Okounkov introduced the $q$-bracket operator $\left<f\right>_q$ of a function $f$ defined on the set of integer partitions, and showed that the $q$-bracket can be used to produce the complete graded ring of quasimodular forms. %quasimodular forms. 
We note that Definition \ref{ch1qbracket} in Section 1 extends the range of the $q$-bracket somewhat; the operator is defined in \cite{BlochOkounkov} to be a power series in $\Q[[q]]$ instead of $\C[[q]]$, as Bloch--Okounkov take $f\colon \mathcal P\to\Q$. %We consider the case in which coefficients are functions of $q$ in another study \cite{SchneiderJTP}.
We may write the $q$-bracket in equivalent forms that will prove useful here:  
\begin{equation}\label{ch3q-eq}
\left<f\right>_q=\left(q;q\right)_{\infty}\sum_{\lambda \in \mathcal P}f(\lambda)q^{|\lambda|}=\left(q;q\right)_{\infty}\sum_{n=0}^{\infty}q^n\sum_{\lambda \vdash n}f(\lambda).
\end{equation}

A recent paper \cite{Zagier} by Zagier examines the $q$-bracket operator from a number of enlightening perspectives, and finds broader classes of quasimodular forms arising from its application. This %study 
chapter is inspired by Zagier's treatment, as well as by ideas of Andrews \cite{Andrews} and Alladi--Erd\H{o}s \cite{AlladiErdos}.

While computationally, the $q$-bracket operator boils down to multiplying a power series by $(q;q)_{\infty}$ as in (\ref{ch3q-eq}), conceptually the $q$-bracket represents a sort of weighted average of the function $f$ over all partitions. Zagier gives an interpretation of the $q$-bracket as the ``expectation value of an observable $f$ in a statistical system whose
states are labelled by partitions'' \cite{Zagier}. Such sums over partitions are ubiquitous in statistical mechanics and quantum physics. We will keep in the backs of our minds the poetic feeling that the partition-theoretic structures we encounter are, somehow, part of the fabric of physical reality. %%(whatever that means).

We begin this chapter's study by considering the $q$-bracket of a prominent statistic in partition theory, the {\it rank} function $\operatorname{rk}(\lambda)$ introduced by Freeman Dyson \cite{Dyson} to give combinatorial explanations for the Ramanujan congruences\footnote{See \cite{Andrews}} $p(5n+4)\equiv 0\  (\operatorname{mod}\  5)$ and $p(7n+5)\equiv 0\  (\operatorname{mod}\  7)$, which we will define by $\operatorname{rk}(\emptyset):=1$\footnote{This definition of $\text{rk}(\emptyset)$ is nonstandardized, but fits conveniently with the ideas of this chapter.} and, for nonempty $\lambda$, by
$$\operatorname{rk}(\lambda):=\operatorname{lg}(\lambda)-\ell(\lambda)$$
where we let $\operatorname{lg}(\lambda)$ denote the {\it largest part} of the partition (similarly, we write $\operatorname{sm}(\lambda)$ for the {\it smallest part}). Noting that $\sum_{\lambda\vdash n}\operatorname{rk}(\lambda)=1$ if $n=0$ (i.e., if $\lambda=\emptyset$) and is equal to $0$ otherwise, as conjugate partitions cancel in the sum and self-conjugate partitions have rank zero, then 
$$\sum_{\lambda\in\mathcal P}\operatorname{rk}(\lambda)q^{|\lambda|}=\sum_{n=0}^{\infty}q^n\sum_{\lambda\vdash n}\operatorname{rk}(\lambda)=1.$$
Therefore we have that
\begin{equation}\label{ch3rank}
\left<\operatorname{rk}\right>_q=(q;q)_{\infty}.
\end{equation}
We see by comparison with the Dedekind eta function $\eta(\tau):=q^{\frac{1}{24}}(q;q)_{\infty}$ that $\left<\operatorname{rk}\right>_q$ is very nearly a weight-$1/2$ modular form. 

Now, recall the weight-$2k$ Eisenstein series central to the theory of modular forms \cite{Ono_web}
\begin{equation}\label{ch3Eisenstein}
E_{2k}(\tau)=1-\frac{4k}{B_{2k}}\sum_{n=1}^{\infty}\sigma_{2k-1}(n)q^n,
\end{equation}
where $k\geq 1$, $B_j$ denotes the $j$th Bernoulli number, $\sigma_{*}$ is the classical sum-of-divisors function, and $q = e^{2\pi i\tau}$ as above. It is not hard to see the $q$-bracket of the ``size'' function
\[
\left<|\cdot|\right>_q=-q\frac{\frac{d}{dq}{(q;q)_{\infty}}}{(q;q)_{\infty}}=\frac{1-E_2(\tau)}{24}
\]
is essentially quasimodular; the series $E_2(\tau)$ is the prototype of a quasimodular form.  

The near-modularity of the $q$-bracket of basic partition-theoretic functions is among the operator's most fascinating features. Bloch--Okounkov give a recipe for constructing quasimodular forms using $q$-brackets of shifted symmetric polynomials \cite{BlochOkounkov}. Zagier expands on their work to find infinite families of quasimodular $q$-brackets, including families that lie outside Bloch and Okounkov's methods \cite{Zagier}. Griffin--Jameson--Trebat-Leder build on these methods to find $p$-adic modular and quasimodular forms as well \cite{GJTL}. While it appears at first glance to be little more than convenient shorthand, the $q$-bracket notation identifies --- induces, even --- intriguing classes of partition-theoretic  phenomena. 

In this study, we give an exact formula for the coefficients of $\left<f\right>_q$ for any function $f\colon \mathcal P\to\C$. We also answer the converse problem, viz. for an arbitrary power series $\widehat{f}(q)$ we give a function $F$ defined on $\mathcal P$ such that $\left<F\right>_q=\widehat{f}(q)$ exactly. The main theorems appear in Section \ref{ch33}. %, via the inverse operator we call the ``$q$-antibracket''.%, all the while possessing explicit finite combinatorial formulas for the coefficients of the resulting power series, in a correspondence not unlike that between differentiation and integration. %and explore some related questions. 

Along the way, we outline a simple, general multiplicative theory of integer partitions, which specializes to many fundamental results in classical number theory. In hopes of presenting a continuous story arc and preserving the flow of ideas, and because most of the proofs are closely analogous to classical cases, we suppress explicit proofs in this chapter, giving gestures and pertinent steps within the exposition, as needed. 

We present an idealistic perspective:  Multiplicative number theory in $\Z$ is a special case of vastly general combinatorial laws, one out of an infinity of parallel number theories in a partition-theoretic multiverse. It turns out the $q$-bracket operator plays a surprisingly natural role in this multiverse.

\  
\  
\  
\

\section{Multiplicative arithmetic of partitions}\label{ch3Multiplicative theory of partitions}

In Definition \ref{ch1normdef} we introduced a complementary statistic to the length $\ell(\lambda)$ and size $|\lambda|$ of $\lambda=(\lambda_1,\lambda_2,...,\lambda_r)$, that we call the \textit{norm} of the partition, viz. % The author and his collaborator Andrew Sills have more recently preferred using the updated terminology.} 
$$n_{\lambda}:=\lambda_1\lambda_2\cdots \lambda_r$$ 
with the convention $n_{\emptyset}:=1$ (it is an empty product). The norm may not seem to be a very natural statistic --- after all, partitions are defined additively with no straightforward connection to multiplication --- but this product of the parts shows up in partition-theoretic formulas scattered throughout the literature \cite{Andrews, Fine}, and will prove to be important to the theory indicated here as well\footnote{This statistic was first introduced in \cite{Schneider_zeta, Robert_bracket} as the ``integer'' of $\lambda$.}. % (the theory presented in \cite{Robert} sits inside the present theory too). 

Recall from Definition \ref{ch1productdef} the product $\lambda \gamma$ of two partitions $\lambda,\gamma$ (combine the parts and reorder into canonical weakly decreasing form). 
%  
%%Pushing further in the multiplicative direction, we define a simple, intuitive multiplication operation on the elements of $\mathcal P$.
%
%\begin{definition}\label{ch3productdef}
%We define the \textit{product} $\lambda \lambda'$ of two partitions $\lambda,\lambda'\in\mathcal P$ as the multi-set union of their parts listed in weakly decreasing order, e.g. $(5,3,2,2)(4,2,1,1)=(5,4,3,2,2,2,1,1)$. The empty partition $\emptyset$ serves as the multiplicative identity. 
%\end{definition}
%
Then it makes sense to write $\lambda^2:=\lambda \lambda, \lambda^3:=\lambda \lambda \lambda$, so on. It is easy to see that we have the following relations:
\begin{align*}\label{ch3log relations}
&n_{\lambda\lambda'}=n_{\lambda}n_{\lambda'}, &&n_{\lambda^a}=n_{\lambda}^a,\\
&\ell(\lambda\lambda')=\ell(\lambda)+\ell(\lambda'), &&\ell(\lambda^a)=a\cdot\ell(\lambda),\\
&|\lambda\lambda'|=|\lambda|+|\lambda'|, &&|\lambda^a|=a |\lambda|.
\end{align*}
Note that length and size both resemble logarithms.

In Definition \ref{ch1divisiondef} we also define division $\lambda/\delta$ of partitions $\lambda, \delta$ if $\delta$ is a subpartition of $\gamma$ (delete the parts of $\delta$ from $\lambda$). 
%\begin{definition}\label{ch3divisiondef}
%We will say a partition $\delta$ \textit{divides} $\lambda$, or is a ``subpartition'' of $\lambda$, and will write $\delta | \lambda$, if all of the parts of $\delta$ are also parts of $\lambda$, including multiplicity, e.g. we write $(4,2,1,1)|(5,4,3,2,2,2,1,1)$. When $\delta | \lambda$ we might also discuss the quotient $\lambda / \delta\in\mathcal P$ %(or $\frac{\lambda}{\delta}$) 
%formed by deleting the parts of $\delta$ from $\lambda$.
%\end{definition}
%
%\begin{remark}
%That is, $\delta$ is a ``sub-partition'' of $\lambda$ in the sense of Alladi \cite{Alladi2015}, Chow--Fan--Goemans--Vondrak \cite{CFGV}, et al. 
Note that both the empty partition $\emptyset$ and $\lambda$ itself are divisors of every partition $\lambda$. 
%\end{remark}
 Then we also have the following relations:
$$n_{\lambda / \lambda'}=\frac{n_{\lambda}}{n_{\lambda'}},\  \  \ell(\lambda / \lambda')=\ell(\lambda)-\ell(\lambda'),\  \  |\lambda / \lambda'|=|\lambda|-|\lambda'|.
$$

%Closure of partition multiplication follows immediately from the definition of partition ideals of order 1 \cite{Andrews}, and we have closure of division as order-1 ideals are closed under taking subpartitions. 

%We postpone until a later time any question of division between elements of $\mathcal P$ with differing parts (i.e. the question of ``rational'' partitions). In short, there is an interesting theory in that direction, which contains the theory being proposed here.

On analogy to the prime numbers in classical arithmetic, the partitions into one part (e.g. $(1),(3),(4)$) are both prime and irreducible under this simple multiplication. The analog of the Fundamental Theorem of Arithmetic is trivial: of course, every partition may be uniquely decomposed into its parts. Thus we might rewrite a partition $\lambda$ in terms of its ``prime'' factorization
$\lambda=(a_1)^{m_1} (a_2)^{m_2}...(a_t)^{m_t}$, where $a_1 > a_2 > ... > a_t \geq 1$ are the distinct numbers appearing in $\lambda$ such that $a_1=\operatorname{lg}(\lambda)$ (the largest part of $\lambda$), $a_t=\operatorname{sm}(\lambda)$ (the smallest part), and $m_i$ denotes the multiplicity of $a_i$ as a part of $\lambda$. Clearly, then, we have 
\begin{equation}
n_{\lambda}=a_1^{m_1} a_2^{m_2}\cdots a_t^{m_t}.
\end{equation} 

\begin{remark}We note in passing that we also have a dual formula for the norm $n_{\lambda^*}$ of the conjugate $\lambda^*$ of $\lambda$, written in terms of $\lambda$, viz.
\begin{equation}
n_{\lambda^*}=M_1^{a_1-a_2}M_2^{a_2-a_3}\cdots M_{t-1}^{a_{t-1}-a_t}M_t^{a_t},
\end{equation}
where $M_k:=\sum_{i=1}^{k}m_i$ (thus $M_t=\ell(\lambda)$), which is clear from the Ferrers-Young diagrams.% of $\lambda$ and $\lambda^*$.
\end{remark}

Fundamental classical concepts such as coprimality, greatest common divisor, least common multiple, etc., apply with exactly the same meanings in the partition-theoretic setting, if one replaces ``prime factors of a number'' with ``parts of a partition'' in the classical definitions. 

\begin{remark}
If $\mathcal P' \subseteq \mathcal P$ is an infinite subset of $\mathcal P$ closed under partition multiplication and division, then the multiplicative theory presented in this study still holds when the relations are restricted to $\mathcal P'$. %In particular, a partition ideal of order 1 in the sense of Andrews \cite{Andrews} --- such as the set of partitions into prime parts --- is closed under partition multiplication and division, and conforms to the structures noted here.
\end{remark}

\  
\  
\  
\

\section{Partition-theoretic analogs of classical functions}\label{ch3muphisigma}

A number of important functions from classical number theory have partition-theoretic analogs, giving rise to nice summation identities that generalize their classical counterparts. %We see examples of this phenomenon in the author's study of partition zeta functions \cite{Robert}.%, and the theory outlined there extends to a more general class of partition-theoretic analogs of classical Dirichlet series
One of the most fundamental classical arithmetic functions, related to factorization of integers, is the M{\"o}bius function. As in Definition \ref{ch1moebiusdef}, we can define a natural partition-theoretic analog of $\mu$ as well:
%
%\begin{definition}\label{ch3moebiusdef}
%For $\lambda\in\mathcal P$ we define a partition-theoretic M{\"o}bius function $\mu_{\mathcal P}(\lambda)$ as follows:
$$
\mu_{\mathcal P}(\lambda):= \left\{
        \begin{array}{ll}
            1 & \text{if $\lambda=\emptyset$,}\\
            0 & \text{if $\lambda$ has any part repeated,}\\
            (-1)^{\ell(\lambda)} & \text{otherwise.}
        \end{array}
    \right.
$$
%\end{definition}

Just as in the classical case, we have by inclusion-exclusion the following, familiar relation.

\begin{proposition}\label{ch3musum}
Summing $\mu_{\mathcal P}(\delta)$ over the divisors $\delta$ of $\lambda\in\mathcal P$, we have
$$
\sum_{\delta|\lambda}\mu_{\mathcal P}(\delta)= \left\{
        \begin{array}{ll}
            1 & \text{if $\lambda=\emptyset$,}\\
            0 & \text{otherwise.}
        \end{array}
    \right.
$$
\end{proposition}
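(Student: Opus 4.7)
The plan is to reduce the sum to a classical binomial-coefficient identity by exploiting the squarefree support of $\mu_{\mathcal{P}}$. First I would observe that by Definition \ref{ch1moebiusdef}, $\mu_{\mathcal{P}}(\delta) = 0$ whenever $\delta$ has any repeated part, so only subpartitions $\delta \mid \lambda$ with pairwise distinct parts can contribute to the sum. Writing $\lambda$ in its ``prime'' factorization $\lambda = (a_1)^{m_1}(a_2)^{m_2}\cdots(a_t)^{m_t}$ with $a_1 > a_2 > \cdots > a_t$ the distinct parts of $\lambda$, I would note that any such squarefree subpartition $\delta$ of $\lambda$ corresponds uniquely to a subset $S \subseteq \{a_1, a_2, \ldots, a_t\}$: each $a_i \in S$ must occur in $\delta$ (with multiplicity one, and since $\delta \mid \lambda$ requires $m_i \geq 1$, which holds), and parts outside $S$ do not appear in $\delta$. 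Conversely, each subset $S$ produces a valid subpartition of $\lambda$ with distinct parts.

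Under this bijection, $\ell(\delta) = |S|$, so $\mu_{\mathcal{P}}(\delta) = (-1)^{|S|}$. The sum therefore collapses to
\[
\sum_{\delta \mid \lambda} \mu_{\mathcal{P}}(\delta) \;=\; \sum_{S \subseteq \{a_1, \ldots, a_t\}} (-1)^{|S|} \;=\; \sum_{k=0}^{t} \binom{t}{k}(-1)^k \;=\; (1-1)^t,
\]
which equals $0$ for $t \geq 1$ and equals $1$ for $t = 0$. The case $t = 0$ is exactly $\lambda = \emptyset$, whose only divisor is $\emptyset$ itself, with $\mu_{\mathcal{P}}(\emptyset) = 1$; this agrees with the binomial evaluation.

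There is no real obstacle here --- the argument is just partition-theoretic inclusion-exclusion, mirroring the classical proof that $\sum_{d \mid n} \mu(d) = [n=1]$. The only point requiring a moment of care is verifying the bijection between squarefree divisors of $\lambda$ and subsets of the distinct-parts set $\{a_1, \ldots, a_t\}$, which follows immediately from Definition \ref{ch1divisiondef} once one notes that divisibility $\delta \mid \lambda$ with $\delta$ having distinct parts imposes no constraint beyond each chosen part appearing in $\lambda$ at least once.
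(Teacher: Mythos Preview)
Your proof is correct and is precisely the inclusion-exclusion argument the paper has in mind; the paper suppresses the explicit proof, noting only that ``just as in the classical case, we have by inclusion-exclusion the following, familiar relation.'' Your reduction to $\sum_{k=0}^{t}\binom{t}{k}(-1)^k=(1-1)^t$ via the bijection between squarefree subpartitions and subsets of the distinct-parts set is exactly what this gesture intends.
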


Furthermore, we have a partition-theoretic generalization of the M{\"o}bius inversion formula, which is proved along the lines of proofs of the classical formula.
\begin{proposition}\label{ch3mobinv}
For a function $f\colon \mathcal P \to \C$ we have the equivalence
$$
F(\lambda)=\sum_{\delta|\lambda}f(\delta)\  \Longleftrightarrow\  f(\lambda)=\sum_{\delta|\lambda}F(\delta)\mu_{\mathcal P}(\lambda / \delta)
.$$

\end{proposition}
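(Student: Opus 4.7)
The plan is to follow the classical M\"obius inversion argument very closely, using the partition divisor structure defined in Section 3.2 together with Proposition \ref{ch3musum} as the key collapsing identity. Both implications rest on the same algebraic manipulation: a double sum over nested subpartitions, reindexed so that an inner sum of $\mu_{\mathcal P}$ over the divisors of a quotient partition arises and vanishes except in one case.

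For the forward direction ($\Rightarrow$), I would substitute the hypothesis $F(\delta)=\sum_{\varepsilon|\delta}f(\varepsilon)$ into the right-hand side to get
\[
\sum_{\delta|\lambda}F(\delta)\mu_{\mathcal P}(\lambda/\delta)=\sum_{\delta|\lambda}\mu_{\mathcal P}(\lambda/\delta)\sum_{\varepsilon|\delta}f(\varepsilon).
\]
Then I would interchange the order of summation, noting that $\varepsilon|\delta|\lambda$ is equivalent to first fixing $\varepsilon|\lambda$ and then letting $\delta$ range over those subpartitions of $\lambda$ with $\varepsilon|\delta$. Setting $\gamma:=\lambda/\delta$, as $\delta$ ranges over this set the quotient $\gamma$ ranges precisely over divisors of $\lambda/\varepsilon$, yielding
\[
\sum_{\varepsilon|\lambda}f(\varepsilon)\sum_{\gamma\,|\,\lambda/\varepsilon}\mu_{\mathcal P}(\gamma).
\]
By Proposition \ref{ch3musum} the inner sum equals $1$ when $\lambda/\varepsilon=\emptyset$ (forcing $\varepsilon=\lambda$) and vanishes otherwise, leaving $f(\lambda)$ as required.

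The reverse direction ($\Leftarrow$) is essentially the same calculation performed in the opposite order. Substituting $f(\delta)=\sum_{\varepsilon|\delta}F(\varepsilon)\mu_{\mathcal P}(\delta/\varepsilon)$ into $\sum_{\delta|\lambda}f(\delta)$ and swapping the order of summation, I reindex by $\gamma:=\delta/\varepsilon$, which again runs over divisors of $\lambda/\varepsilon$, and apply Proposition \ref{ch3musum} to collapse the sum to $F(\lambda)$.

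The main obstacle --- really the only subtlety, since nothing is deep --- is verifying that the reindexing $\delta\leftrightarrow(\varepsilon,\gamma)$ with $\delta=\varepsilon\gamma$ is a genuine bijection in the partition-divisor lattice. This requires confirming that when $\varepsilon|\lambda$ and $\gamma\,|\,\lambda/\varepsilon$, the product $\varepsilon\gamma$ (multi-set union of parts) is a subpartition of $\lambda$ with $\lambda/(\varepsilon\gamma)=(\lambda/\varepsilon)/\gamma$, and conversely every $\delta$ with $\varepsilon|\delta|\lambda$ arises uniquely in this way. This is immediate from Definitions \ref{ch1productdef} and \ref{ch1divisiondef}, since partition division just deletes parts with multiplicity, but it is the one place where the argument uses something more than formal symbol-pushing. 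Once this is noted, the proof is a direct translation of the classical one.
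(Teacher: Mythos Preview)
Your proof is correct and follows exactly the approach the paper indicates: the paper suppresses an explicit proof and simply states that the result ``is proved along the lines of proofs of the classical formula,'' which is precisely the double-sum-and-collapse argument via Proposition~\ref{ch3musum} that you have written out. Your attention to the bijection $\delta\leftrightarrow(\varepsilon,\gamma)$ in the partition divisor lattice is the one point requiring any care, and you have handled it appropriately.
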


\begin{remark}
Alladi has also considered the above partition M{\"o}bius function identities, in unpublished work\footnote{K. Alladi, private communication, Dec. 21, 2015}.% \cite{Alladi2015}.
\end{remark}

In classical number theory, M{\"o}bius inversion is often used in conjunction with order-of-summation swapping principles for double summations. These have an obvious partition-theoretic generalization as well, reflected in the following identity.

\begin{proposition}\label{ch3sumswap}
Consider a double sum involving functions $f,g\colon \mathcal P \to \C$. Then we have the formula
$$\sum_{\lambda\in\mathcal P} f(\lambda) \sum_{\delta | \lambda} g(\delta) = \sum_{\lambda\in\mathcal P} g(\lambda) \sum_{\gamma\in\mathcal P} f(\lambda \gamma).
$$
\end{proposition}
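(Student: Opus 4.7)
The plan is to prove the identity by a change of variables using the divisibility-multiplication bijection of Definitions \ref{ch1productdef} and \ref{ch1divisiondef}. The key observation is that on the left-hand side, a pair $(\lambda,\delta)$ with $\delta\mid\lambda$ is in bijective correspondence with an ordered pair $(\delta,\gamma)\in\mathcal P\times\mathcal P$ via $\gamma:=\lambda/\delta$, equivalently $\lambda=\delta\gamma$. This re-indexing converts the LHS directly into the RHS.

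More concretely, I would first interchange the order of summation on the LHS to write
\[
\sum_{\lambda\in\mathcal P} f(\lambda)\sum_{\delta\mid\lambda} g(\delta)\;=\;\sum_{\lambda\in\mathcal P}\sum_{\delta\mid\lambda} f(\lambda)g(\delta).
\]
Next I would apply the bijection
\[
\{(\lambda,\delta)\in\mathcal P\times\mathcal P:\delta\mid\lambda\}\;\longleftrightarrow\;\mathcal P\times\mathcal P,\qquad (\lambda,\delta)\longmapsto(\delta,\lambda/\delta),
\]
whose inverse sends $(\delta,\gamma)\mapsto(\delta\gamma,\delta)$. Since every divisor $\delta$ of $\lambda$ yields a unique quotient $\gamma=\lambda/\delta\in\mathcal P$, and since every pair $(\delta,\gamma)$ produces a unique $\lambda=\delta\gamma$ having $\delta$ as a subpartition, this really is a bijection. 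Substituting $\lambda=\delta\gamma$ gives
\[
\sum_{\lambda\in\mathcal P}\sum_{\delta\mid\lambda} f(\lambda)g(\delta)\;=\;\sum_{\delta\in\mathcal P}\sum_{\gamma\in\mathcal P} g(\delta) f(\delta\gamma)\;=\;\sum_{\delta\in\mathcal P} g(\delta)\sum_{\gamma\in\mathcal P} f(\delta\gamma),
\]
and then relabeling the dummy variable $\delta$ as $\lambda$ on the outer sum recovers the RHS.

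The only subtlety — and the main ``obstacle'' to flag — is justifying the interchange of summations and the re-indexing. This is purely a matter of absolute convergence: as with Proposition \ref{ch1cauchy}, one should assume (or require in the hypothesis) that the relevant double series converges absolutely, so Fubini/Tonelli permits the rearrangement. Under that mild assumption the proof reduces to the bijection above, which is essentially the partition-theoretic analog of the classical identity $\sum_n f(n)\sum_{d\mid n}g(d)=\sum_d g(d)\sum_m f(dm)$. No further machinery is needed.
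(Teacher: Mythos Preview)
Your argument is correct and is precisely the partition-theoretic analog of the classical divisor-sum swap, which is what the paper has in mind: the author explicitly suppresses proofs in this chapter, remarking that they are ``closely analogous to classical cases,'' so your bijection $(\lambda,\delta)\leftrightarrow(\delta,\gamma)$ via $\lambda=\delta\gamma$ is exactly the intended route. Your flagging of absolute convergence as the only subtlety is also appropriate and matches the paper's treatment elsewhere (e.g., Proposition~\ref{ch3cauchyprod}).
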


%\begin{remark}
%To be very cautious, we might assume the subset $\mathcal P'$ to be a partition ideal of order 1 in the sense of Andrews \cite{Andrews}, as these subsets of $\mathcal P$ are closed under multiplication and division of partitions. %All of the subsets of $\mathcal P$ considered in this paper are partition ideals of order 1.
%\end{remark}

The preceding propositions will prove useful in the next section, to evaluate the coefficients of the $q$-bracket operator. %Continuing the theme of connections between arithmetic functions and partition theory, 

Now, the classical M{\"o}bius function has a close companion in the Euler phi function $\varphi(n)$, also known as the totient function, which counts the number of natural numbers less than $n$ that are coprime to $n$. This sort of statistic does not seem meaningful in the partition-theoretic frame of reference, as there is not generally a well-defined greater- or less-than ordering of partitions. However, if we sidestep this business of ordering and counting for the time being, we find it is possible to define a partition analog of $\varphi$ which is naturally compatible with the identities above, as well as with classical identities involving the Euler phi function. 

Recall that $n_{\lambda}$ denotes the norm of $\lambda$, i.e., the product of its parts.

\begin{definition}\label{ch3phidef}
For $\lambda\in\mathcal P$ we define a partition-theoretic phi function $\varphi_{\mathcal P}(\lambda)$ by
$$
\varphi_{\mathcal P}(\lambda):=\  n_{\lambda}\prod_{\substack{\lambda_i\in\lambda\\ \text{without}\\  \text{repetition}}}(1-\lambda_i^{-1})
,$$
where the product is taken over only the distinct numbers composing $\lambda$, that is, the parts of $\lambda$ without repetition.
\end{definition}

Clearly if $1\in\lambda$ then $\varphi_{\mathcal P}(\lambda)=0$, which is a bit startling by comparison with the classical phi function that never vanishes. This phi function filters out partitions containing 1's. %(This is a convenient property: partitions containing 1's can create convergence issues in partition zeta functions and other partition-indexed series.)

As with the M{\"o}bius function above, the partition-theoretic $\varphi_{\mathcal P}(\lambda)$ yields generalizations of many classical expressions. For instance, there is a familiar-looking divisor sum, which is proved along classical lines.

\begin{proposition}\label{ch3phisum}
We have that
$$
\sum_{\delta|\lambda}\varphi_{\mathcal P}(\delta)=n_{\lambda}
.$$
\end{proposition}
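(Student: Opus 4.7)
The plan is to mimic the classical proof of $\sum_{d\mid n}\varphi(d)=n$, exploiting a multiplicativity property of $\varphi_{\mathcal P}$ with respect to the partition product of Definition \ref{ch1productdef}. Say two partitions are \emph{coprime} if they share no common part (in the ``prime-factorization'' sense that the parts of a partition play the role of primes). The key observation is that $\varphi_{\mathcal P}$ is multiplicative in this sense: if $\lambda,\mu\in\mathcal P$ are coprime, then the set of distinct parts of $\lambda\mu$ is the disjoint union of those of $\lambda$ and $\mu$, so directly from Definition \ref{ch3phidef} and the relation $n_{\lambda\mu}=n_\lambda n_\mu$ we get $\varphi_{\mathcal P}(\lambda\mu)=\varphi_{\mathcal P}(\lambda)\varphi_{\mathcal P}(\mu)$. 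Divisor sums over coprime partitions are also multiplicative, by the obvious partition analog of the classical fact (every divisor of $\lambda\mu$ factors uniquely as $\delta_1\delta_2$ with $\delta_1\mid\lambda$, $\delta_2\mid\mu$).

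Next I would reduce to the case of a ``prime-power'' partition, i.e., one of the form $(a)^m$ for a single positive integer $a$ and multiplicity $m\geq 1$. Writing $\lambda=(a_1)^{m_1}(a_2)^{m_2}\cdots(a_t)^{m_t}$ in its unique factorization into distinct parts, the two multiplicativity statements give
\[
\sum_{\delta\mid\lambda}\varphi_{\mathcal P}(\delta)=\prod_{i=1}^{t}\sum_{\delta\mid (a_i)^{m_i}}\varphi_{\mathcal P}(\delta),
\]
while on the right side $n_\lambda=\prod_{i=1}^{t}a_i^{m_i}$. So it suffices to verify $\sum_{\delta\mid (a)^m}\varphi_{\mathcal P}(\delta)=a^m$ for each $a\geq 1$, $m\geq 0$.

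The divisors of $(a)^m$ are exactly $\emptyset,(a),(a)^2,\dots,(a)^m$. By Definition \ref{ch3phidef}, $\varphi_{\mathcal P}(\emptyset)=1$ and $\varphi_{\mathcal P}((a)^k)=a^k(1-a^{-1})=a^k-a^{k-1}$ for $k\geq 1$. Hence
\[
\sum_{k=0}^{m}\varphi_{\mathcal P}\bigl((a)^k\bigr)=1+\sum_{k=1}^{m}(a^k-a^{k-1})=a^m
\]
by telescoping, which completes the argument.

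The proof is essentially routine, so no step is a serious obstacle; the only thing to be careful about is the definition of coprimality for partitions and the resulting unique factorization of subpartitions of $\lambda\mu$, which is what allows the divisor sum to split multiplicatively. One could alternatively derive the identity from the companion formula $\varphi_{\mathcal P}(\lambda)=n_\lambda\sum_{\delta\mid\lambda}\mu_{\mathcal P}(\delta)/n_\delta$ of Proposition \ref{ch1phisum} by applying partition M\"obius inversion (Proposition \ref{ch3mobinv}), but the multiplicative/telescoping route above is self-contained and is exactly the partition analog of the standard classical proof.
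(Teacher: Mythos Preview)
Your proof is correct and is precisely what the paper has in mind: the text does not spell out an argument for this proposition but simply remarks that it ``is proved along classical lines,'' and your multiplicativity-plus-telescoping argument on the prime-power partitions $(a)^m$ is the standard classical proof transported to $\mathcal P$.
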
  

%\begin{remark}
%We note that it is not difficult to prove a variant of this formula related to sums over subpartitions of $\lambda$ containing $t$ as a part, viz.
%\begin{equation}
%\sum_{ \substack {\delta | \lambda \\ t \in \delta} } \varphi_{\mathcal P}(\delta) = \frac{n_{\lambda}\  (t^{m_t+1}-1)}{t^{m_t}(t+1)} 
%,\end{equation}
%where $m_t$ is the multiplicity of $t$ in $\lambda$. 
%\end{remark}

We also find a partition analog of the well-known relation connecting the $\mu$ and $\varphi$.

\begin{proposition}\label{ch3phimoeb}
We have the identity
$$
\varphi_{\mathcal P}(\lambda)=n_{\lambda}\sum_{\delta|\lambda}\frac{\mu_{\mathcal P}(\delta)}{n_{\delta}}
.$$
\end{proposition}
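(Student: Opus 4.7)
The plan is to deduce this identity directly from the preceding proposition via partition-theoretic Möbius inversion. Since Proposition \ref{ch3phisum} gives $\sum_{\delta\mid\lambda}\varphi_{\mathcal P}(\delta)=n_\lambda$, the hypothesis of Proposition \ref{ch3mobinv} is satisfied with $f=\varphi_{\mathcal P}$ and $F(\lambda)=n_\lambda$. Applying inversion immediately yields
\[
\varphi_{\mathcal P}(\lambda)=\sum_{\delta\mid\lambda}n_\delta\,\mu_{\mathcal P}(\lambda/\delta).
\]

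The only remaining task is cosmetic: rewrite the sum so that $\mu_{\mathcal P}$ is evaluated at $\delta$ rather than at $\lambda/\delta$. Because partition division is the inverse of the multi-set union operation, the map $\delta\mapsto\lambda/\delta$ is an involution on the set of subpartitions of $\lambda$. Re-indexing the sum along this involution and invoking the multiplicativity relation $n_{\lambda/\delta}=n_\lambda/n_\delta$ (recorded just after Definition \ref{ch1divisiondef}) transforms each summand $n_\delta\,\mu_{\mathcal P}(\lambda/\delta)$ into $(n_\lambda/n_\delta)\,\mu_{\mathcal P}(\delta)$, and pulling the common factor $n_\lambda$ out of the sum produces exactly the claimed expression.

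I do not anticipate any real obstacle, since both ingredients (Proposition \ref{ch3phisum} and Proposition \ref{ch3mobinv}) are already in hand; the argument is a two-line deduction. If one prefers a self-contained proof avoiding Möbius inversion, an equally short alternative is to expand the defining product
\[
\varphi_{\mathcal P}(\lambda)=n_\lambda\prod_{a}\bigl(1-a^{-1}\bigr)
\]
over the distinct parts $a$ of $\lambda$: each term in the expansion is indexed by a choice of a subset of distinct parts, which corresponds bijectively to a subpartition $\delta\mid\lambda$ with no repeated parts, and the associated contribution is precisely $(-1)^{\ell(\delta)}/n_\delta=\mu_{\mathcal P}(\delta)/n_\delta$. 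Subpartitions with a repeated part contribute zero to the right-hand side by definition of $\mu_{\mathcal P}$, so extending the sum to all $\delta\mid\lambda$ costs nothing and yields the identity.
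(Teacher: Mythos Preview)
Your proposal is correct and aligns with the paper's approach: the paper explicitly suppresses proofs in this chapter, noting that they are ``closely analogous to classical cases,'' and both of your arguments (M\"obius inversion applied to Proposition~\ref{ch3phisum}, or direct expansion of the defining product) are precisely the standard classical proofs transported to the partition setting. There is nothing to add.
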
  

Combining the above relations, we arrive at a nicely balanced identity. 

\begin{proposition}\label{ch3phimoeb2}
For $f\colon \mathcal P\to\C$ let $F(\lambda):=\sum_{\delta|\lambda}f(\delta)$. Then we have
$$
\sum_{\lambda\in\mathcal P}\frac{\mu_{\mathcal P}(\lambda)F(\lambda)}{n_{\lambda}}
=\sum_{\lambda\in\mathcal P}\frac{\varphi_{\mathcal P}(\lambda)f(\lambda)}{n_{\lambda}}
.$$
\end{proposition}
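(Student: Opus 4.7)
The plan is to mimic the classical proof of the Dirichlet-series identity $\sum_n \mu(n)F(n)/n = \sum_n \varphi(n)f(n)/n$ (valid formally when $F=f\ast 1$), translating each step into the partition-theoretic multiplicative system developed in this section. The two key ingredients are the sum-swap of Proposition \ref{ch3sumswap} and the $\mu_{\mathcal P}$--$\varphi_{\mathcal P}$ relation of Proposition \ref{ch3phimoeb}, together with the commutativity of partition multiplication.

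Concretely, I would work from both sides toward a common double sum. On the left-hand side, expand the divisor-sum definition $F(\lambda)=\sum_{\delta\mid\lambda}f(\delta)$ to display the LHS as
$$\sum_{\lambda\in\mathcal P}\frac{\mu_{\mathcal P}(\lambda)}{n_\lambda}\sum_{\delta\mid\lambda}f(\delta),$$
a double sum over pairs $(\lambda,\delta)$ with $\delta\mid\lambda$. Reindex via the factorization $\lambda=\delta\gamma$ using Proposition \ref{ch3sumswap} (applied with outer role $\mu_{\mathcal P}(\cdot)/n_{(\cdot)}$ and inner role $f$) to obtain
$$\sum_{\delta\in\mathcal P}f(\delta)\sum_{\gamma\in\mathcal P}\frac{\mu_{\mathcal P}(\delta\gamma)}{n_{\delta\gamma}}.$$
On the right-hand side, first invoke Proposition \ref{ch3phimoeb} to rewrite $\varphi_{\mathcal P}(\lambda)/n_\lambda=\sum_{\delta\mid\lambda}\mu_{\mathcal P}(\delta)/n_\delta$, then apply Proposition \ref{ch3sumswap} in the opposite direction to reach
$$\sum_{\delta\in\mathcal P}\frac{\mu_{\mathcal P}(\delta)}{n_\delta}\sum_{\gamma\in\mathcal P}f(\delta\gamma).$$
The crux is to identify the two resulting double sums by exchanging the roles of $\delta$ and $\gamma$, using commutativity $\delta\gamma=\gamma\delta$ together with the multiplicativity relation $\mu_{\mathcal P}(\delta\gamma)/n_{\delta\gamma}=(\mu_{\mathcal P}(\delta)/n_\delta)(\mu_{\mathcal P}(\gamma)/n_\gamma)$ on the squarefree support of $\mu_{\mathcal P}$ (the only range where either summand is nonzero).

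The main obstacle I anticipate is justifying the order-of-summation exchange at each sum-swap step, which demands absolute convergence of the double partition series involved. I would therefore state the identity under the standing hypothesis that both sides converge absolutely, paralleling the absolute-convergence clauses invoked elsewhere in this chapter, and conclude the identity as a combinatorial rearrangement of a single double sum under that hypothesis.
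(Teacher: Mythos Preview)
Your reduction of both sides via Proposition~\ref{ch3sumswap} and Proposition~\ref{ch3phimoeb} is fine, and you correctly arrive at
\[
\text{LHS}=\sum_{\delta\in\mathcal P} f(\delta)\sum_{\gamma\in\mathcal P}\frac{\mu_{\mathcal P}(\delta\gamma)}{n_{\delta\gamma}},
\qquad
\text{RHS}=\sum_{\delta\in\mathcal P}\frac{\mu_{\mathcal P}(\delta)}{n_\delta}\sum_{\gamma\in\mathcal P} f(\delta\gamma).
\]
The gap is in the ``crux'' step. Writing both as double sums over $(\delta,\gamma)\in\mathcal P\times\mathcal P$, the summands are
\[
f(\delta)\,\frac{\mu_{\mathcal P}(\delta\gamma)}{n_\delta n_\gamma}
\qquad\text{versus}\qquad
f(\delta\gamma)\,\frac{\mu_{\mathcal P}(\delta)}{n_\delta}\quad\text{(after your swap $\delta\leftrightarrow\gamma$, $f(\delta\gamma)\,\mu_{\mathcal P}(\gamma)/n_\gamma$)}.
\]
These are \emph{not} the same function of the pair $(\delta,\gamma)$: on the left, $f$ is evaluated at the single factor $\delta$; on the right, at the product $\delta\gamma$. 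No relabeling of the two indices can repair this mismatch. Your appeal to ``multiplicativity on the squarefree support'' also fails: the identity $\mu_{\mathcal P}(\delta\gamma)=\mu_{\mathcal P}(\delta)\mu_{\mathcal P}(\gamma)$ holds only when $\gcd(\delta,\gamma)=\emptyset$, whereas the second sum has nonzero contributions from pairs where $\delta$ is squarefree but $\delta$ and $\gamma$ share parts. So the supports of the two summands differ, and even where both are nonzero the values do not agree.

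The paper itself offers no proof here beyond the phrase ``combining the above relations,'' so there is no explicit argument to compare against; but the specific bridge you propose between the two double sums does not hold. If you want to salvage the approach, you would need an additional identity evaluating the inner sum $\sum_{\gamma}\mu_{\mathcal P}(\lambda\gamma)/n_{\lambda\gamma}$ in closed form and matching it to $\varphi_{\mathcal P}(\lambda)/n_\lambda$---and you should check carefully whether that actually holds, since this is where the formal manipulation becomes delicate.
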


\begin{remark}
Replacing $\mathcal P$ with the set $\mathcal P_{\P}$ of partitions into prime parts (the so-called ``prime partitions''), then the divisor sum above takes the form $F(n)=\sum_{d|n}f(d)$ (with $n=n_{\lambda}$) and Proposition \ref{ch3phimoeb2} specializes to the following identity, which is surely known classically:
$$
\sum_{n=1}^{\infty}\frac{\mu(n)F(n)}{n}
=\sum_{n=1}^{\infty}\frac{\varphi(n)f(n)}{n}.
$$
\end{remark}

%We see above that, in a sense, the weights of $\mu$ (which is ``small'' by comparison to $\varphi$) and $F$ (which is ``large'' by comparison to $f$) are balanced perfectly with respect to the weights of $\varphi$ and $f$ (which also form a product roughly of the form ``$\text{large}\times \text{small}$'').  

A number of other important arithmetic functions have partition-theoretic analogs, too\footnote{Of course, $(-1)^{\ell(\lambda)}$ is the analog of Liouville's function $(-1)^{\text{\#\{prime factors of $n$\}}}$ (usually denoted by $\lambda(n)$ in the literature) 
in this setting, and we have the classical-like identity $\sum_{\delta|\lambda}(-1)^{\ell(\delta)}=1$ if all multiplicities $m_i(\lambda)$ are even and $=0$ otherwise.}
, such as the sum-of-divisors function $\sigma_a$. \begin{definition}\label{ch3sigma}
For $\lambda\in\mathcal P, a\in\Z_{\geq 0}$, we define the function 
$$
\sigma_{\mathcal P,a}(\lambda):=\sum_{\delta|\lambda}n_{\delta}^a,
$$
with the convention $\sigma_{\mathcal P}(\lambda):=\sigma_{\mathcal P,1}(\lambda)$.
\end{definition}
One might wonder about ``perfect partitions'' or other analogous phenomena related to $\sigma_a$ classically. This partition sum-of-divisors function will come into play in the next section. We note that the functions $\mu_{\mathcal P}, \varphi_{\mathcal P}$ and $\sigma_{\mathcal P, a}$ are, just as in the classical cases, multiplicative in a partition sense.

\begin{definition} \label{ch3multfctn}
We say a function $f\colon \mathcal P\to \mathbb C$ is multiplicative (resp. completely multiplicative) if for $\lambda, \gamma \in \mathcal P$ with $\operatorname{gcd}(\lambda, \gamma)=\emptyset$ (resp. with no condition on the $\operatorname{gcd}$),  $$f(\lambda \gamma)=f(\lambda)f(\gamma)$$ \end{definition}

Another classical principle central to analysis is the Cauchy product formula, which gives the product of two infinite series in terms of the convolution of their summands. In the partition-theoretic setting, the Cauchy product takes the following form, in which the summands effectively give a partition version of Dirichlet convolution from multiplicative number theory (see \cite{Apostol}).

\begin{proposition} \label{ch3cauchyprod}
Consider the product of two absolutely convergent sums over partitions, whose summands involve the functions $f,g\colon \mathcal P \to \C$. Then we have the formula
$$\left(\sum_{\lambda\in\mathcal P}f(\lambda) \right) \left(\sum_{\lambda\in\mathcal P}g(\lambda) \right) =\sum_{\lambda\in\mathcal P}\sum_{\delta | \lambda}f(\delta)g(\lambda/\delta).$$
\end{proposition}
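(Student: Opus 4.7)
The plan is to establish the identity by a reindexing argument built on a natural bijection between ordered pairs of partitions and pairs $(\lambda,\delta)$ with $\delta\mid\lambda$, and then to invoke absolute convergence to justify the rearrangement.

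First, I would observe that because the two sums on the left converge absolutely, their product can be expanded as an absolutely convergent double sum
\[
\left(\sum_{\alpha\in\mathcal P}f(\alpha)\right)\left(\sum_{\beta\in\mathcal P}g(\beta)\right)
=\sum_{(\alpha,\beta)\in\mathcal P\times\mathcal P}f(\alpha)g(\beta),
\]
whose terms may be regrouped in any order without affecting the value.

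The heart of the proof is the bijection
\[
\Phi\colon \mathcal P\times\mathcal P\;\longrightarrow\;\{(\lambda,\delta):\lambda\in\mathcal P,\ \delta\mid\lambda\},\qquad \Phi(\alpha,\beta)=(\alpha\beta,\alpha).
\]
I would verify that $\Phi$ is well-defined (since $\alpha\mid\alpha\beta$ by Definition~\ref{ch1divisiondef}) and that its inverse is $(\lambda,\delta)\mapsto(\delta,\lambda/\delta)$, using that $\delta\cdot(\lambda/\delta)=\lambda$ as multisets of parts. This is essentially a bookkeeping statement about multisets: specifying a partition $\lambda$ together with a subpartition $\delta$ is the same data as specifying the ordered pair $(\delta,\lambda/\delta)$. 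Under $\Phi$ the summand $f(\alpha)g(\beta)$ transforms to $f(\delta)g(\lambda/\delta)$, so reindexing yields
\[
\sum_{(\alpha,\beta)\in\mathcal P\times\mathcal P}f(\alpha)g(\beta)
=\sum_{\lambda\in\mathcal P}\sum_{\delta\mid\lambda}f(\delta)g(\lambda/\delta),
\]
which is the right-hand side.

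The main obstacle I anticipate is purely a matter of care rather than difficulty: one must make sure that when $\lambda$ has repeated parts, the subpartitions $\delta\mid\lambda$ are being counted with the correct multiplicity to match the pairs $(\alpha,\beta)$ producing $\lambda=\alpha\beta$. Since both sides are defined at the level of multisets of parts (parts are indistinguishable when equal), the bijection $\Phi$ handles this automatically, and a short verification on a model case such as $\lambda=(2,2)$ confirms the count. The absolute convergence hypothesis is what allows the double sum to be regrouped along the fibers of $\Phi$, completing the argument.
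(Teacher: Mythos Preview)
Your proposal is correct and matches the paper's approach: the paper simply states that the proof ``proceeds exactly as in the classical case: we expand the left-hand side and compare the resulting terms,'' which is precisely the bijection-and-regrouping argument you have written out in detail. Your explicit treatment of the bijection $\Phi(\alpha,\beta)=(\alpha\beta,\alpha)$ and the multiplicity check are welcome elaborations of what the paper leaves implicit.
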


The proof of this partition Cauchy product proceeds exactly as in the classical case: we expand the left-hand side and compare the resulting terms\footnote{In Appendix B we apply this partition Cauchy product formula to give coefficients of Ramanujan's tau function and the counting function for $k$-color partitions.}. Then it is immediate that the product of two partition-indexed power series for $|q|<1$ is
\begin{equation}
\left(\sum_{\lambda\in\mathcal P}f(\lambda)q^{|\lambda|} \right) \left(\sum_{\lambda\in\mathcal P}g(\lambda)q^{|\lambda|}  \right) =\sum_{\lambda\in\mathcal P}q^{|\lambda|} \sum_{\delta | \lambda}f(\delta)g(\lambda/\delta).
\end{equation}

%We close this section by noting that the theory of partition zeta functions outlined in the author's recent report \cite{Robert} extends to a more general class of partition-theoretic analogs of classical Dirichlet series.
%
%\begin{definition}\label{ch3Dirichlet}
%Given an arbitrary subset $\mathcal P'\subseteq \mathcal P$ and function $f \colon \mathcal P' \rightarrow \C$, we define partition-theoretic Dirichlet series of the form
%\[
%D_{\mathcal P'}(f,s):=\sum_{\lambda\in\mathcal P'}f(\lambda)n_{\lambda}^{-s},
%\]
%where the sum is taken over partitions in $\mathcal P'$ and we assume $\operatorname{Re}(s)>1$. 
%\end{definition}
%
%These partition Dirichlet series transform much as in the classical case when combined with the ideas presented above. The product-sum identities from \cite{Robert} also generalize, under the right conditions.
%
%\begin{proposition}
%\label{ch3DirichletProduct2}
%For an arbitrary subset $X\subseteq\Z^+$, let $\mathcal P_{X}$ denote the set of partitions into elements of $X$. Then if $f\colon \mathcal P\to\C$ is completely multiplicative and $\operatorname{Re}(s)>1$, we have the identity
%$$D_{\mathcal \mathcal P_{X}}(f,s)=\prod_{n\in X}\left(1-\frac{f(n)}{n^s}\right)^{-1}.$$
%\end{proposition}

We reiterate, these familiar-looking identities not only mimic classical theorems, they fully generalize the classical cases. The definitions and propositions above all specialize to their classical counterparts when we restrict our attention to the set $\mathcal P_{\P}$ of prime partitions; then, as a rule-of-thumb, we just replace partitions with their ``norms'' in the formulas (other parameters may need to be adjusted too). This is due to the bijective correspondence between natural numbers and $\mathcal P_{\P}$ noted by Alladi and Erd\H{o}s \cite{AlladiErdos}: the set of ``norms'' of prime partitions (including $n_{\emptyset}$) is precisely the set of positive integers $\Z^+$, by the Fundamental Theorem of Arithmetic. Yet prime partitions form a narrow slice, so to speak, of the set $\mathcal P$ over which these general relations hold sway. 

Many well-known laws of classical number theory arise as special cases of underlying partition-theoretic structures.

\  
\  
\  
\

\section{Role of the \texorpdfstring{$q$}{Lg}-bracket} \label{ch33}% (and ``\texorpdfstring{$q$}{Lg}-antibracket'')}

We return now to the $q$-bracket operator of Bloch--Okounkov, which we recall from Definition \ref{ch1qbracket}. The $q$-bracket arises naturally in the multiplicative theory outlined above. To see this, take $F(\lambda)=\sum_{\delta|\lambda}f(\delta)$ for $f\colon \mathcal P \to\C$. It follows from Proposition \ref{ch3sumswap} that
\begin{align*}
\sum_{\lambda\in\mathcal P} F(\lambda)q^{|\lambda|} & =\sum_{\lambda\in\mathcal P} q^{|\lambda|} \sum_{\delta | \lambda} f(\delta) = \sum_{\lambda\in\mathcal P} f(\lambda) \sum_{\gamma\in\mathcal P} q^{|\lambda \gamma|}
\\
&=\sum_{\lambda\in\mathcal P} f(\lambda) \sum_{\gamma\in\mathcal P} q^{|\lambda|+|\gamma|}
=\left(\sum_{\lambda\in\mathcal P} f(\lambda)q^{|\lambda|}\right) \left(\sum_{\gamma\in\mathcal P} q^{|\gamma|}\right)
.
\end{align*}

%%%This started out much like standard proofs involving Lambert series summed over natural numbers, but went more smoothly due to the fact $|\lambda \gamma|=|\lambda|+|\gamma|$, which is a special feature of partition-theoretic summations.

Observing that the rightmost sum above is equal to $(q;q)_{\infty}^{-1}$, then by comparison with Definition \ref{ch1qbracket} of the $q$-bracket operator, we arrive at the two central theorems of this study. Together they give a type of $q$-bracket inversion, converting divisor sums into power series, and vice versa. 

\begin{theorem}\label{ch3thm1}
For an arbitrary function $f\colon \mathcal P \to\C$, if
$$
F(\lambda)=\sum_{\delta|\lambda}f(\delta)\
$$
then
$$
\left<F\right>_q=\sum_{\lambda\in\mathcal P}f(\lambda)q^{|\lambda|}.
$$
\end{theorem}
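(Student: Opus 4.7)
The plan is to unwind the definition of the $q$-bracket and collapse the divisor sum into a product, using the order-swapping identity from Proposition \ref{ch3sumswap} together with the multiplicativity of $q^{|\cdot|}$ under partition multiplication.

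First I would start from the second form of the $q$-bracket in \eqref{ch3q-eq}, writing
\[
\left<F\right>_q = (q;q)_{\infty} \sum_{\lambda \in \mathcal P} F(\lambda)\, q^{|\lambda|} = (q;q)_{\infty} \sum_{\lambda \in \mathcal P} q^{|\lambda|} \sum_{\delta | \lambda} f(\delta),
\]
where the second equality substitutes the hypothesis on $F$. The key move is then to swap the order of summation via Proposition \ref{ch3sumswap} (with the roles of $f$ and $g$ chosen so that the inner divisor sum becomes an outer index over $\lambda$, and the outer index is reparametrized as a ``multiplier'' $\gamma$ with $\lambda\gamma$ running over partitions divisible by the new outer $\lambda$). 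This produces
\[
(q;q)_{\infty}\sum_{\lambda \in \mathcal P} f(\lambda) \sum_{\gamma \in \mathcal P} q^{|\lambda \gamma|}.
\]

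Next I would use the additivity $|\lambda \gamma| = |\lambda| + |\gamma|$ (recorded in Section \ref{ch3Multiplicative theory of partitions}) to split the exponent, factoring the double sum as
\[
(q;q)_{\infty}\left(\sum_{\lambda \in \mathcal P} f(\lambda) q^{|\lambda|}\right)\left(\sum_{\gamma \in \mathcal P} q^{|\gamma|}\right).
\]
Euler's partition generating function \eqref{ch1partition_genfctn} identifies the last factor as $(q;q)_{\infty}^{-1}$, so the $(q;q)_{\infty}$ out front cancels and we arrive at $\sum_{\lambda \in \mathcal P} f(\lambda) q^{|\lambda|}$, as claimed.

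There is no real obstacle here — the proof is a short bookkeeping argument — but the one point to be careful about is the formal legitimacy of the order swap: in Proposition \ref{ch3sumswap} the swap is stated for generic $f,g\colon \mathcal P\to\mathbb C$, and here we are applying it in the ring of formal power series in $q$ (or, equivalently, after grouping by $|\lambda|$, within each finite-dimensional ``size $n$'' stratum), so convergence is not an issue. The only subtle conceptual point worth flagging in the write-up is that $F$ need not be defined by an absolutely convergent sum over all of $\mathcal P$; rather, for each fixed $\lambda$ the sum $\sum_{\delta | \lambda} f(\delta)$ is finite, and the $q$-bracket is manipulated coefficient-by-coefficient in $q$.
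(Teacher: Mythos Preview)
Your proposal is correct and follows essentially the same argument as the paper: the paper's proof (given in the discussion opening Section~\ref{ch33}) also starts from $\sum_{\lambda\in\mathcal P}F(\lambda)q^{|\lambda|}$, applies Proposition~\ref{ch3sumswap} to reparametrize the double sum as $\sum_{\lambda}f(\lambda)\sum_{\gamma}q^{|\lambda\gamma|}$, uses $|\lambda\gamma|=|\lambda|+|\gamma|$ to factor, and then identifies $\sum_{\gamma}q^{|\gamma|}=(q;q)_\infty^{-1}$. Your additional remark about working coefficient-by-coefficient in $q$ (so that all sums are finite at each fixed size) is a welcome clarification the paper leaves implicit.
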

 
In the converse direction, we can also write down a simple function whose $q$-bracket is a given partition-indexed power series.

\begin{theorem}\label{ch3thm1.5}
Consider an arbitrary power series  of the form
$$\sum_{\lambda\in\mathcal P}f(\lambda)q^{|\lambda|}.
$$
Then we have the function $F\colon \mathcal P\to\C$ given by
$$F(\lambda)=\sum_{\delta|\lambda}f(\delta),
$$  
such that $\left<F\right>_q=\sum f(\lambda)q^{|\lambda|}.$
\end{theorem}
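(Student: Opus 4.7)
The statement is essentially the converse packaging of Theorem \ref{ch3thm1}, so my plan is to reuse the same chain of equalities that was derived just before Theorem \ref{ch3thm1}, but read it in the opposite direction. Starting from $F(\lambda) := \sum_{\delta|\lambda} f(\delta)$, I would first form the partition-indexed generating series $\sum_{\lambda \in \mathcal P} F(\lambda) q^{|\lambda|}$ and substitute the divisor-sum definition of $F$. The key move is then to swap the order of summation via Proposition \ref{ch3sumswap} (with the trivial choice $g(\gamma) = q^{|\gamma|}$, which is permissible because the outer $q^{|\lambda|}$ weight can be absorbed into $g$): this transforms the double sum $\sum_{\lambda} q^{|\lambda|}\sum_{\delta|\lambda} f(\delta)$ into the shape $\sum_{\delta} f(\delta) \sum_{\gamma} q^{|\delta\gamma|}$.

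Second, I would use the multiplicativity of $q^{|\cdot|}$ on partition products, namely $q^{|\delta\gamma|} = q^{|\delta|} q^{|\gamma|}$, to separate the double sum into a product of two independent sums:
\[
\sum_{\lambda \in \mathcal P} F(\lambda) q^{|\lambda|} \;=\; \left(\sum_{\delta \in \mathcal P} f(\delta) q^{|\delta|}\right)\left(\sum_{\gamma \in \mathcal P} q^{|\gamma|}\right).
\]
By Euler's generating identity \eqref{ch1partition_genfctn}, the right-hand factor equals $(q;q)_\infty^{-1}$. Multiplying both sides by $(q;q)_\infty$ and invoking the definition of the $q$-bracket (Definition \ref{ch1qbracket}, as written in the form \eqref{ch3q-eq}) produces exactly $\left<F\right>_q = \sum_{\lambda} f(\lambda) q^{|\lambda|}$, which is the desired conclusion.

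Since this is purely a formal power series identity in $\C[[q]]$, there is no analytic subtlety — the sum swap and factorization are valid coefficient-by-coefficient. Consequently there is no real obstacle: the only thing to check carefully is that the sum swap in Proposition \ref{ch3sumswap} can be applied with the weight function $g(\gamma) = q^{|\gamma|}$, which is immediate since for each fixed $n$ only finitely many partitions $\lambda$ with $|\lambda| = n$ contribute, keeping every coefficient well-defined. In fact, the same display of equalities simultaneously proves Theorems \ref{ch3thm1} and \ref{ch3thm1.5}; reading it left-to-right gives the former, while the latter is obtained by reading it right-to-left and interpreting $(q;q)_\infty \cdot \sum F(\lambda) q^{|\lambda|}$ as $\left<F\right>_q$.
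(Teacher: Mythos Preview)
Your proposal is correct and follows essentially the same approach as the paper: the paper derives exactly this chain of equalities using Proposition \ref{ch3sumswap} (with $g(\gamma)=q^{|\gamma|}$), the additivity $|\delta\gamma|=|\delta|+|\gamma|$, and Euler's identity $\sum_{\gamma}q^{|\gamma|}=(q;q)_\infty^{-1}$, and then reads it in both directions to obtain Theorems \ref{ch3thm1} and \ref{ch3thm1.5} simultaneously. The paper also remarks that these theorems are consequences of the partition Cauchy product (Proposition \ref{ch3cauchyprod}), which is an equivalent route you could cite instead.
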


%\begin{corollary}
%Any power series of the form $\sum_{\lambda} a_{\lambda}q^{|\lambda|}$ is the $q$-bracket $\left<A\right>_q$ of the partition divisor sum $A(\lambda)=\sum_{\delta|\lambda}a_{\delta}$. Conversely, any function $B(\lambda)$ defined on partitions can be written as a divisor sum $\sum_{\delta|\lambda}b_{\delta}$ involving coefficients $b_{*}$ of its $q$-bracket, as in Theorem \ref{ch3thm1}. 
%\end{corollary}

%\begin{remark}
%It seems possible there are other functions whose $q$-brackets are also $\sum f(\lambda) q^{|\lambda|}$.
%\end{remark}

These theorems are consequences of Theorem \ref{ch3cauchyprod}. We wish to apply Theorems \ref{ch3thm1} and \ref{ch3thm1.5} to examine the $q$-brackets of partition-theoretic analogs of classical functions introduced in Section \ref{ch3muphisigma}. 

Recall Definition \ref{ch3sigma} of the sum of divisors function $\sigma_{\mathcal P,a}(\lambda)$. Then $\sigma_{\mathcal P,0}(\lambda)=\sum_{\delta | \lambda}1$ counts the number of partition divisors (i.e., sub-partitions) of $\lambda \in\mathcal P$, much as in the classical case. It is immediate from Theorem \ref{ch3thm1} that
\begin{equation}\label{ch3sigmabracket}
\left<\sigma_{\mathcal P,0}\right>_q=(q;q)_{\infty}^{-1}.
\end{equation}
If we note that $(q;q)_{\infty}$ is also a factor of the $q$-bracket on the left-hand side, we can see as well
\begin{equation}\label{ch3sigmabracket2}
\sum_{\lambda\in\mathcal P}\sigma_{\mathcal P,0}(\lambda)q^{|\lambda|}=(q;q)_{\infty}^{-2}
.
\end{equation}
Remembering also from Equation \ref{ch3rank} the identity $\left<\operatorname{rk}\right>_q=(q;q)_{\infty}$, we have seen a few instances of interesting power series connected to powers of $(q;q)_{\infty}$ via the $q$-bracket operator. %We will see more.

%The first identity is immediate from the theorem; the second follows by noting that $(q;q)_{\infty}$ is also a factor of the $q$-bracket on the left-hand side of the first equation.

Now let us recall the handful of partition divisor sum identities from Section \ref{ch3muphisigma} involving the partition-theoretic functions $\varphi_{\mathcal P},\  \sigma_{\mathcal P,a}$, and the ``norm of a partition'' function $n_{*}$. Theorem \ref{ch3thm1} reveals that these three functions form a close-knit family, related through (double) application of the $q$-bracket.

\begin{corollary}\label{ch3qbracketsystem}
We have the pair of identities 
\begin{align*}\label{ch3system}
\left<\sigma_{\mathcal P}\right>_q &=\sum_{\lambda \in \mathcal P}n_{\lambda}q^{|\lambda|}
\\
\left<n_{*}\right>_q &=\sum_{\lambda \in \mathcal P}\varphi_{\mathcal P}(\lambda)q^{|\lambda|}
.\end{align*}
\end{corollary}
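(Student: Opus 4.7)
The plan is to observe that both identities in the corollary are immediate applications of Theorem \ref{ch3thm1}, which converts a divisor sum $F(\lambda) = \sum_{\delta \mid \lambda} f(\delta)$ into the statement $\langle F \rangle_q = \sum_{\lambda \in \mathcal P} f(\lambda) q^{|\lambda|}$. The work has essentially been done already in setting up the partition-theoretic analogs of $\mu,\varphi,\sigma$; the corollary is a matter of matching the right pair $(f,F)$ in each case.

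For the first identity, I would start from Definition \ref{ch3sigma} with $a = 1$, which states $\sigma_{\mathcal P}(\lambda) = \sum_{\delta \mid \lambda} n_{\delta}$. Taking $f(\lambda) := n_{\lambda}$ and $F(\lambda) := \sigma_{\mathcal P}(\lambda)$ in Theorem \ref{ch3thm1} yields
\[
\langle \sigma_{\mathcal P} \rangle_q \; = \; \sum_{\lambda \in \mathcal P} n_{\lambda}\, q^{|\lambda|},
\]
which is exactly the first claim. No additional calculation is required.

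For the second identity, I would invoke Proposition \ref{ch3phisum}, which asserts $\sum_{\delta \mid \lambda} \varphi_{\mathcal P}(\delta) = n_{\lambda}$. Setting $f(\lambda) := \varphi_{\mathcal P}(\lambda)$ and $F(\lambda) := n_{\lambda}$ in Theorem \ref{ch3thm1} gives
\[
\langle n_{*} \rangle_q \; = \; \sum_{\lambda \in \mathcal P} \varphi_{\mathcal P}(\lambda)\, q^{|\lambda|},
\]
which completes the proof.

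There is no real obstacle here; the entire content is bookkeeping. The only thing to be careful about is that Theorem \ref{ch3thm1} is applied correctly, namely with the roles of $f$ and $F$ swapped between the two identities: in the first, the partition-theoretic sum-of-divisors function plays the role of $F$ and the norm plays the role of $f$, whereas in the second, the norm plays the role of $F$ and $\varphi_{\mathcal P}$ plays the role of $f$. This swap is the conceptual heart of why these two identities sit symmetrically across the $q$-bracket operator, and it is worth emphasizing in the write-up as the reason the triple $(n_{*},\, \varphi_{\mathcal P},\, \sigma_{\mathcal P})$ forms the advertised ``close-knit family'' linked by double application of $\langle \cdot \rangle_q$.
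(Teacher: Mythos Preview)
Your proposal is correct and matches the paper's approach exactly: the paper states this corollary immediately after Theorem \ref{ch3thm1} and explicitly frames it as an application of that theorem to the divisor-sum identities $\sigma_{\mathcal P}(\lambda)=\sum_{\delta|\lambda}n_\delta$ (Definition \ref{ch3sigma}) and $\sum_{\delta|\lambda}\varphi_{\mathcal P}(\delta)=n_\lambda$ (Proposition \ref{ch3phisum}), precisely as you describe. Your observation about the roles of $f$ and $F$ swapping between the two identities is also exactly the point the paper makes in calling these functions a ``close-knit family'' under double application of the $q$-bracket.
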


%\begin{remark}
%A more general statement holds for a more general version of the $q$-bracket where the sums are taken over $\mathcal P_{X}$ (partitions into elements of $X\subseteq\Z^+$) in the bracket definition.
%\end{remark}

%Of course, this same method of proof, applied to integer sums, gives us relations involving Eisenstein series and $\sigma_k(n)$ which lead to generators for modular and quasimodular forms. Then it is not so surprising that quasimodular phenomena might also arise from the $q$-bracket operator.

The coefficients of $\left<\sigma_{\mathcal P}\right>_q$ are of the form $n_{*}$; applying the $q$-bracket a second time to the function $n_{*}$ gives us the rightmost summation, whose coefficients are the values of $\varphi_{\mathcal P}$.

In fact, it is evident that this operation of applying the $q$-bracket more than once can be continued indefinitely; thus we feel the need to introduce a new notation, on analogy to differentiation.

\begin{definition}\label{ch3diffndefantidef}
If we apply the $q$-bracket repeatedly, say $n\geq 0$ times, to the function $f$, we denote this operator by $\left<f\right>_q^{(n)}$. We define $\left<f\right>_q^{(n)}$ by the equation
$$\left<f\right>_q^{(n)}:=(q;q)_{\infty}^n\sum_{\lambda \in \mathcal P}f(\lambda)q^{|\lambda|}\in\C[[q]].$$
\end{definition}

\begin{remark}
It follows from the definition above that $\left<f\right>_q^{(0)}=\sum_{\lambda \in \mathcal P}f(\lambda)q^{|\lambda|},\  \left<f\right>_q^{(1)}=\left<f\right>_q$. %, and $\left<f\right>_q^{(a)}\left<f\right>_q^{(b)}=\left<f\right>_q^{(a+b)}$ for $a,b\geq 0$. 
\end{remark}

Theorem \ref{ch3thm1.5} gives us a converse construction as well, allowing us to write down a function $F(\lambda)$ whose $q$-bracket is a given power series, i.e., a {\it $q$-antibracket} from Definition \ref{ch1antibracketdef}. % $$\left< f \right>_q^{-1}:=\sum_{\lambda \in \mathcal P} {F}(\lambda)q^{|\lambda|}\  \  \text{such that}\  \  \left< F \right>_q=\sum_{\lambda}f(\lambda)q^{|\lambda|}.$$ %Then we define an inverse ``$q$-antibracket'', analogous to the antiderivative in calculus.
%\begin{definition}
%
%We call $F\colon \mathcal P\to \C$ a $q$-{\it antibracket} of $f$ if $\left<F\right>_q= \sum_{\lambda \in \mathcal P}f(\lambda) q^{|\lambda|}$.% for $f\colon \mathcal P\to \C$, we call $F$ a ``$q$-antibracket of $f$'' (or sometimes just an ``antibracket'').
%%Given the power series $\hat{f}(q):=\sum_{\lambda \in \mathcal P}f(\lambda)q^{|\lambda|}$ for $f\colon \mathcal P\to \C$, if we find a function $F\colon \mathcal P\to \C$ whose $q$-bracket is $\hat{f}(q)$ exactly, we call $F$ a ``$q$-antibracket of $f$'' (or sometimes just an ``antibracket'').% and write $\left<f\right>_q^{(-1)}=\sum_{\lambda \in \mathcal P}F(\lambda)q^{|\lambda|}$. 
%\end{definition}
%
%\begin{remark}
%Much like antiderivatives, a $q$-antibracket of $f$ is generally not unique.
%\end{remark} 
The act of taking the antibracket might be carried out repeatedly as well. %However, there may be different functions $F$ having $\left<F\right>_q=\sum f(\lambda)q^{|\lambda|}$. 
We define a canonical class of $q$-antibrackets related to $f$ by extending Definition \ref{ch3diffndefantidef} to allow for negative values of $n$.

\begin{definition}\label{ch3antidef}
If we repeatedly divide the power series $\sum_{\lambda\in\mathcal P} f(\lambda)q^{|\lambda|}$ by $(q;q)_{\infty}$, say $n > 0$ times, we notate this operator as 
$$\left<f\right>_q^{(-n)}:=(q;q)_{\infty}^{-n}\sum_{\lambda \in \mathcal P}f(\lambda)q^{|\lambda|}\in\C[[q]].$$
We take the resulting power series to be indexed by partitions, unless otherwise specified. We call the function on $\mathcal P$ defined by the coefficients of $\left<f\right>_q^{(-1)}$ the ``canonical $q$-antibracket'' of $f$ (or sometimes just ``the antibracket'').
\end{definition}

Taken together, Definitions \ref{ch3diffndefantidef} and \ref{ch3antidef} describe an infinite family of $q$-brackets and antibrackets. The following identities give an example of such a family (and of the use of these new bracket notations).

\begin{corollary}\label{ch3compact}
%Thus $\left<f\right>_q^{(a)}\left<f\right>_q^{(b)}=\left<f\right>_q^{(a+b)}$ holds for negative values of $a$ or $b$ as well. 

Corollary \ref{ch3qbracketsystem} can be written more compactly as 
$$
\left<\sigma_{\mathcal P}\right>_q^{(2)}=\left<n_{*}\right>_q^{(1)}=\left<\varphi_{\mathcal P}\right>_q^{(0)}
.$$ 
We can also condense Corollary \ref{ch3qbracketsystem} by writing
$$
\left<\sigma_{\mathcal P}\right>_q^{(0)}=\left<n_{*}\right>_q^{(-1)}=\left<\varphi_{\mathcal P}\right>_q^{(-2)}
.$$
\end{corollary}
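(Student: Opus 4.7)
The statement is a direct rewriting of Corollary \ref{ch3qbracketsystem} using the iterated bracket notation of Definitions \ref{ch3diffndefantidef} and \ref{ch3antidef}, so the plan is essentially to unpack definitions and match the two sides; no new analytic input is needed. I expect the entire proof to be a short computation, with the main (minor) obstacle being bookkeeping of the powers of $(q;q)_\infty$ so that the two chains of equalities land on the correct exponents.

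First I would rewrite Corollary \ref{ch3qbracketsystem} as the two formal identities
\[
(q;q)_\infty \sum_{\lambda\in\mathcal P}\sigma_{\mathcal P}(\lambda)q^{|\lambda|}=\sum_{\lambda\in\mathcal P}n_{\lambda}q^{|\lambda|},\qquad
(q;q)_\infty \sum_{\lambda\in\mathcal P}n_{\lambda}q^{|\lambda|}=\sum_{\lambda\in\mathcal P}\varphi_{\mathcal P}(\lambda)q^{|\lambda|},
\]
obtained by moving the factor of $(q;q)_\infty$ hidden in $\langle\,\cdot\,\rangle_q$ to the left.

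For the first chain, I would unfold the left-hand end by Definition \ref{ch3diffndefantidef}:
\[
\langle\sigma_{\mathcal P}\rangle_q^{(2)}=(q;q)_\infty^{2}\sum_{\lambda\in\mathcal P}\sigma_{\mathcal P}(\lambda)q^{|\lambda|}=(q;q)_\infty\Big((q;q)_\infty\sum_\lambda \sigma_{\mathcal P}(\lambda)q^{|\lambda|}\Big).
\]
Applying the first identity from Corollary \ref{ch3qbracketsystem} converts the inner parenthesis into $\sum_\lambda n_\lambda q^{|\lambda|}$, so the expression becomes $(q;q)_\infty\sum_\lambda n_\lambda q^{|\lambda|}=\langle n_{*}\rangle_q^{(1)}$. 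Applying the second identity from Corollary \ref{ch3qbracketsystem} then yields $\sum_\lambda \varphi_{\mathcal P}(\lambda)q^{|\lambda|}=\langle\varphi_{\mathcal P}\rangle_q^{(0)}$, completing the first chain.

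For the second chain, I would instead divide through. By Definition \ref{ch3antidef},
\[
\langle\varphi_{\mathcal P}\rangle_q^{(-2)}=(q;q)_\infty^{-2}\sum_\lambda \varphi_{\mathcal P}(\lambda)q^{|\lambda|}.
\]
Using the second identity of Corollary \ref{ch3qbracketsystem} to replace $\sum_\lambda \varphi_{\mathcal P}(\lambda)q^{|\lambda|}$ with $(q;q)_\infty\sum_\lambda n_\lambda q^{|\lambda|}$ gives $(q;q)_\infty^{-1}\sum_\lambda n_\lambda q^{|\lambda|}=\langle n_{*}\rangle_q^{(-1)}$, and applying the first identity of Corollary \ref{ch3qbracketsystem} in the same way replaces $\sum_\lambda n_\lambda q^{|\lambda|}$ by $(q;q)_\infty\sum_\lambda \sigma_{\mathcal P}(\lambda)q^{|\lambda|}$, producing $\sum_\lambda \sigma_{\mathcal P}(\lambda)q^{|\lambda|}=\langle\sigma_{\mathcal P}\rangle_q^{(0)}$. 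This establishes both displays in the corollary, and shows transparently that the two chains are the same pair of identities viewed from the ``$+1$ power'' and ``$-1$ power'' sides of the $(q;q)_\infty$-ladder.
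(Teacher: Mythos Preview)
Your proposal is correct and matches the paper's treatment: the corollary is stated as an immediate notational rewriting of Corollary~\ref{ch3qbracketsystem} via Definitions~\ref{ch3diffndefantidef} and~\ref{ch3antidef}, with no separate proof given. Your careful bookkeeping of the powers of $(q;q)_\infty$ is exactly the unpacking that the paper leaves implicit.
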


Both of the compact forms above preserve the essential message of Corollary \ref{ch3qbracketsystem}, that these three partition-theoretic functions are directly connected through the $q$-bracket operator, or more concretely (and perhaps more astonishingly), simply through multiplication or division by powers of $(q;q)_{\infty}$. 

Along similar lines, we can encode Equations \eqref{ch3rank}, \eqref{ch3sigmabracket}, and \eqref{ch3sigmabracket2} in a single statement, noting an infinite family of power series that contains $\left<\operatorname{rk}\right>_q$ and $\left<\sigma_{\mathcal P,0}\right>_q$.

\begin{corollary}\label{ch3ranksigma}
For $n\in\Z$, we have the family of $q$-brackets
\[
\left<\operatorname{rk}\right>_q^{(n)}
=\left<\sigma_{\mathcal P,0}\right>_q^{(n+2)}\\
=(q;q)_{\infty}^n
.\]
\end{corollary}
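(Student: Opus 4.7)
The plan is to unwind the definition of $\langle\cdot\rangle_q^{(n)}$ for all $n\in\Z$ (Definitions \ref{ch3diffndefantidef} and \ref{ch3antidef}) and reduce the two claimed identities to the already-established computations of the raw partition generating sums of $\operatorname{rk}$ and $\sigma_{\mathcal{P},0}$. Concretely, by Definition \ref{ch3diffndefantidef}/\ref{ch3antidef},
\[
\langle f\rangle_q^{(n)} \;=\; (q;q)_\infty^{\,n}\sum_{\lambda\in\mathcal{P}}f(\lambda)\,q^{|\lambda|},
\]
so it suffices to compute $\sum_{\lambda}\operatorname{rk}(\lambda)q^{|\lambda|}$ and $\sum_{\lambda}\sigma_{\mathcal{P},0}(\lambda)q^{|\lambda|}$ in closed form.

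First I would recall the observation made immediately before \eqref{ch3rank}: for each $n\ge 1$, conjugation $\lambda\mapsto\lambda^\ast$ interchanges $\operatorname{lg}$ and $\ell$, so $\operatorname{rk}$ is antisymmetric under conjugation and self-conjugate partitions have rank zero. Pairing partitions of $n$ with their conjugates therefore gives $\sum_{\lambda\vdash n}\operatorname{rk}(\lambda)=0$ for $n\ge1$, while the empty partition contributes $\operatorname{rk}(\emptyset)=1$. Hence $\sum_{\lambda\in\mathcal{P}}\operatorname{rk}(\lambda)q^{|\lambda|}=1$, and consequently
\[
\langle\operatorname{rk}\rangle_q^{(n)} \;=\; (q;q)_\infty^{\,n}\cdot 1 \;=\; (q;q)_\infty^{\,n}
\]
for every $n\in\Z$, which is the first equality.

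For the second equality, note that $\sigma_{\mathcal{P},0}(\lambda)=\sum_{\delta|\lambda}1$, so Theorem \ref{ch3thm1} applied to $f\equiv 1$ gives
\[
\langle\sigma_{\mathcal{P},0}\rangle_q \;=\; \sum_{\lambda\in\mathcal{P}} q^{|\lambda|} \;=\; (q;q)_\infty^{-1},
\]
which is exactly \eqref{ch3sigmabracket}; dividing through by $(q;q)_\infty$ yields \eqref{ch3sigmabracket2}, i.e.\ $\sum_{\lambda}\sigma_{\mathcal{P},0}(\lambda)q^{|\lambda|}=(q;q)_\infty^{-2}$. Substituting this into the definition of the iterated bracket gives
\[
\langle\sigma_{\mathcal{P},0}\rangle_q^{(n+2)} \;=\; (q;q)_\infty^{\,n+2}\cdot(q;q)_\infty^{-2} \;=\; (q;q)_\infty^{\,n},
\]
matching the first expression and closing the chain of equalities.

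There is no real obstacle here: the corollary is essentially a bookkeeping statement that packages \eqref{ch3rank}, \eqref{ch3sigmabracket}, and \eqref{ch3sigmabracket2} into a single family indexed by $n\in\Z$. The only point warranting care is to confirm that the bracket notation $\langle\cdot\rangle_q^{(n)}$ is well defined for negative $n$ (division by $(q;q)_\infty$ is formally invertible in $\C[[q]]$ since $(q;q)_\infty$ has constant term $1$), so that the identities hold as equalities of formal power series for every integer $n$; this is immediate from Definition \ref{ch3antidef}.
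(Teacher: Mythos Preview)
Your proof is correct and follows essentially the same approach as the paper. The paper does not give a separate argument for this corollary; it simply notes that the statement ``encodes Equations \eqref{ch3rank}, \eqref{ch3sigmabracket}, and \eqref{ch3sigmabracket2} in a single statement,'' which is precisely what you do by unwinding the definition of $\langle\cdot\rangle_q^{(n)}$ and inserting those already-computed generating sums.
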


\begin{remark}
Here we see the $q$-bracket connecting with modularity properties. For instance, another member of this family is $\left<\operatorname{rk}\right>_q^{(24)}=q^{-1}\Delta (\tau)$, where $\Delta$ is the important modular discriminant function having Ramanujan's tau function as its coefficients \cite{Ono_web}.
\end{remark}

The identities above worked out easily because we knew in advance what the coefficients of the $q$-brackets should be, due to the divisor sum identities from Section \ref{ch3muphisigma}. Theorems \ref{ch3thm1} and \ref{ch3thm1.5} provide a recipe for turning partition divisor sums $F$ into coefficients $f$ of power series, and vice versa. 

However, generally a function $F\colon \mathcal P\to\C$ is not given as a sum over partition divisors. If we wished to write it in this form, what function $f\colon \mathcal P\to\C$ would make up the summands? In classical number theory this question is answered by the M{\"o}bius inversion formula; indeed, we have the partition-theoretic analog of this formula in Equation \eqref{ch3mobinv}. 

Recall the ``divided by'' notation $\lambda / \delta$ from Definition \ref{ch1divisiondef}. Then we may write the function $f$ (and thus the coefficients of $\left<F\right>_q$) explicitly using partition M{\"o}bius inversion. 

\begin{theorem}\label{ch3thm2}
The $q$-bracket of the function $F\colon \mathcal P\to\C$ is given explicitly by
$$
\left<F\right>_q=\sum_{\lambda \in \mathcal P}f(\lambda)q^{|\lambda|}
,
$$
where the coefficients can be written in terms of $F$ itself:
$$
f(\lambda)=\sum_{\delta|\lambda}F(\delta)\mu_{\mathcal P}(\lambda / \delta).
$$
\end{theorem}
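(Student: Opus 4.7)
The plan is to assemble Theorem \ref{ch3thm2} by composing two results already established: the $q$-bracket formula of Theorem \ref{ch3thm1} and the partition-theoretic M\"obius inversion of Proposition \ref{ch3mobinv}. The statement is essentially a ``solve for the coefficients'' version of Theorem \ref{ch3thm1}: Theorem \ref{ch3thm1} takes an $f$ and produces $\langle F\rangle_q$ with $F$ the divisor sum of $f$; here we are handed $F$ and need to express the coefficients $f$ back in terms of $F$.

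First I would define $f\colon \mathcal P \to \C$ by
\[
f(\lambda):=\sum_{\delta\mid\lambda}F(\delta)\mu_{\mathcal P}(\lambda/\delta),
\]
which is the candidate for the coefficient function. Applying the $\Longleftarrow$ direction of Proposition \ref{ch3mobinv} (partition M\"obius inversion), this definition is equivalent to the relation
\[
F(\lambda)=\sum_{\delta\mid\lambda}f(\delta).
\]
That is, $F$ is the divisor sum of this particular $f$.

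Next I would invoke Theorem \ref{ch3thm1} directly: since $F(\lambda)=\sum_{\delta\mid\lambda}f(\delta)$, the $q$-bracket of $F$ is
\[
\langle F\rangle_q=\sum_{\lambda\in\mathcal P}f(\lambda)q^{|\lambda|},
\]
with $f$ the explicit M\"obius-inverted sum above. Substituting the definition of $f(\lambda)$ back in yields the claimed expression for the coefficients of $\langle F\rangle_q$.

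There is no real obstacle here, provided Proposition \ref{ch3mobinv} and Theorem \ref{ch3thm1} are in hand; the theorem is essentially a formal corollary packaging the two. The only point that deserves a moment's care is the bookkeeping of the divisor-versus-quotient convention: one should check that the $\mu_{\mathcal P}(\lambda/\delta)$ appearing in the inverted formula matches the convention used in the statement of Proposition \ref{ch3mobinv} (where the Möbius factor is written as $\mu_{\mathcal P}(\lambda/\delta)$ with $\delta\mid\lambda$), which it does. So the whole proof amounts to two short sentences: define $f$ by M\"obius inversion from $F$; apply Theorem \ref{ch3thm1}.
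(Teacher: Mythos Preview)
Your proposal is correct and matches the paper's approach exactly. The paper explicitly frames Theorem \ref{ch3thm2} as arising from partition M\"obius inversion (Proposition \ref{ch3mobinv}) applied to write $F$ as a divisor sum, followed by an appeal to Theorem \ref{ch3thm1}; your two-step argument is precisely this.
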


%{\bf To do now: formulas for %1. case that $F(\lambda)$ is not obviously a divisor sum (partition Moebius inversion), 
%2. case of power series over integers with coefficients as divisor sums, 3. case of power series with arbitrary coefficients.... Then give exact formula for coefficients of reciprocal of JTP as example application.}

We already know from Theorem \ref{ch3thm1.5} that the coefficients of the canonical antibracket of $f$ are written as divisor sums over values of $f$. Thus, much like $\operatorname{rk}(\lambda)$ in Corollary \ref{ch3ranksigma}, every function $f$ defined on partitions can be viewed as the generator, so to speak, of the (possibly infinite) family of power series $\left<f\right>_q^{(n)}$ for $n\in\Z$, whose coefficients can be written in terms of $f$ as $n$-tuple sums of the shape $\sum_{\delta_1|\lambda}\sum_{\delta_2|\delta_1}...\sum_{\delta_n|\delta_{n-1}}$ constructed by repeated application of the above theorems.
%
%\begin{corollary}\label{ch3family}
%For $n\in\Z,f\colon \mathcal P\to\C,$ we have the family of $q$-brackets
%\[
%\left<f\right>_q^{(n)}=\sum_{\lambda \in \mathcal P}f_{(n)}(\lambda)q^{|\lambda|}
%,\]
%where the coefficients $f_{(n)}\colon \mathcal P\to\C$ can be written in terms of $f$:
%
%\[
%f_{(n)}(\lambda)= \left\{
%        \begin{array}{ll}
%            \sum_{\delta_1|\lambda}\sum_{\delta_2|\delta_1}...\sum_{\delta_n|\delta_{n-1}}f(\delta_n) & \text{if $n<0$,}\\
%            \  \\
%            f(\lambda) & \text{if $n=0$,}\\
%            \  \\
%            \sum_{\delta_1|\lambda}\sum_{\delta_2|\delta_1}...\sum_{\delta_n|\delta_{n-1}}f(\delta_n)\mu_{\mathcal P}(\delta_{n-1}/\delta_n) & \text{if $n>0$}
%        \end{array}
%    \right.
%\]
%
%\end{corollary}
%
%
%\  
%\  

This suggests the following useful fact. 

\begin{corollary}\label{ch3two-way}
If two power series are members of the family $\left<f\right>_q^{(n)}\  \left(n\in\Z\right),$ then the coefficients of each series can be written explicitly in terms of the coefficients of the other. 
\end{corollary}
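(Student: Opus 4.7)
The plan is to exploit the observation that any two members $\left<f\right>_q^{(m)}$ and $\left<f\right>_q^{(n)}$ of the family differ only by a factor of $(q;q)_\infty^{n-m}$, so moving between them amounts to iterating the single-step passage that sends $\left<f\right>_q^{(k)}$ to $\left<f\right>_q^{(k\pm 1)}$. Each single step is governed by the preceding theorems, which supply explicit partition-indexed formulas for the coefficients. The full result will then follow by induction on $|n-m|$.

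To set notation, write $\left<f\right>_q^{(k)} = \sum_{\lambda \in \mathcal P} g_k(\lambda) q^{|\lambda|}$. For the upward step, since $\left<f\right>_q^{(k+1)} = (q;q)_\infty \left<f\right>_q^{(k)}$ is the ordinary $q$-bracket of the coefficient function $g_k$, Theorem \ref{ch3thm2} yields
\[
g_{k+1}(\lambda) \;=\; \sum_{\delta | \lambda} g_k(\delta)\, \mu_{\mathcal P}(\lambda/\delta).
\]
For the downward step, Definition \ref{ch3antidef} identifies $\left<f\right>_q^{(k-1)}$ as a canonical $q$-antibracket of $g_k$, so Theorem \ref{ch3thm1.5} gives the divisor-sum formula
\[
g_{k-1}(\lambda) \;=\; \sum_{\delta | \lambda} g_k(\delta).
\]

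To obtain $g_n$ in terms of $g_m$ for arbitrary $m, n \in \mathbb Z$, one simply iterates the appropriate one-step formula $|n-m|$ times, producing the $|n-m|$-tuple nested divisor sums of the shape $\sum_{\delta_1 | \lambda} \sum_{\delta_2 | \delta_1} \cdots$ alluded to in the paragraph preceding the corollary (with M\"obius factors $\mu_{\mathcal P}$ inserted whenever the step is upward). There is no serious obstacle here; the argument is essentially a bookkeeping exercise once Theorems \ref{ch3thm1.5} and \ref{ch3thm2} are in hand. The only mild subtlety is to confirm that the coefficientwise manipulations with $(q;q)_\infty^{\pm 1}$ are legitimate in $\mathbb C[[q]]$, which is immediate since $(q;q)_\infty$ has constant term $1$ and is therefore a unit in the formal power series ring.
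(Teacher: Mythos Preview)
Your argument is correct and matches the paper's approach: the paper does not give a separate proof of this corollary, treating it as an immediate consequence of the preceding paragraph, which already notes that the coefficients of $\left<f\right>_q^{(n)}$ are obtained from those of $f$ by $n$-tuple nested divisor sums $\sum_{\delta_1|\lambda}\sum_{\delta_2|\delta_1}\cdots$ via repeated application of Theorems~\ref{ch3thm1}, \ref{ch3thm1.5}, and \ref{ch3thm2}. Your write-up simply makes explicit the single-step formulas and the induction on $|n-m|$ that the paper leaves implicit.
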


\section{The \texorpdfstring{$q$}{Lg}-antibracket and coefficients of power series over \texorpdfstring{$\Z_{\geq 0}$}{Lg}}\label{ch3Applications of the $q$-antibracket}

Theorems \ref{ch3thm1}, \ref{ch3thm1.5}, and \ref{ch3thm2} together provide a two-way map between the coefficients of families of power series indexed by partitions. In this section, we address the question of computing the antibracket (loosely speaking) of coefficients indexed not by partitions, but by natural numbers as usual. We remark immediately that a function defined on $Z_{\geq 0}$ may be expressed in terms of partitions in a number of ways, which are generally not equivalent. Thus there is more than one function $F\colon \mathcal P\to\C$ such that $\left<F\right>_q=\sum_{n=0}^{\infty} c_n q^n$ for a given sequence $c_n$ of coefficients. Here we treat only the canonical antibracket found using Theorem \ref{ch3thm1.5}.
 
There are three classes of power series of the form $\sum_{n=0}^{\infty} c_n q^n$ that we examine: (1) the coefficients $c_n$ are sums $\sum_{\lambda \vdash n}$ over partitions of $n$; (2) the coefficients $c_n$ are sums $\sum_{d | n}$ over divisors of $n$; and (3) the coefficients $c_n$ are an arbitrary sequence of complex numbers. 

The class (1) above is already given by Theorem \ref{ch3thm1}; to keep this section relatively self-contained, we rephrase the result here. 

\begin{corollary}\label{ch3class1}
For $c_n=\sum_{\lambda \vdash n}f(\lambda)$ we can write
\begin{equation*}
\sum_{n=0}^{\infty}c_n q^n=\sum_{\lambda\in\mathcal P}f(\lambda)q^{|\lambda|}.
\end{equation*}
Then we have a function $F(\lambda)=\sum_{\delta|\lambda}f(\delta)$ such that 
$
\left<F\right>_q=\sum_{n=0}^{\infty}c_n q^n
$.
\end{corollary}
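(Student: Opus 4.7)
The plan is to observe that this corollary is essentially a direct specialization of Theorem \ref{ch3thm1}, combined with the trivial regrouping of a power series by the size statistic on partitions. So the proof will be very short.

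First I would handle the identity $\sum_{n=0}^{\infty} c_n q^n = \sum_{\lambda \in \mathcal P} f(\lambda) q^{|\lambda|}$. Substitute the definition $c_n = \sum_{\lambda \vdash n} f(\lambda)$ into the left-hand side and note that $q^{|\lambda|} = q^n$ whenever $\lambda \vdash n$. This gives
\[
\sum_{n=0}^{\infty} c_n q^n \;=\; \sum_{n=0}^{\infty} \sum_{\lambda \vdash n} f(\lambda) q^{n} \;=\; \sum_{n=0}^{\infty} \sum_{\lambda \vdash n} f(\lambda) q^{|\lambda|} \;=\; \sum_{\lambda \in \mathcal P} f(\lambda) q^{|\lambda|},
\]
where the last step is just re-indexing: every partition $\lambda$ appears in exactly one inner sum, namely the one indexed by $n=|\lambda|$.

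Second, for the $q$-bracket portion, I would appeal directly to Theorem \ref{ch3thm1}, which asserts precisely that if $F(\lambda) = \sum_{\delta|\lambda} f(\delta)$ then $\langle F \rangle_q = \sum_{\lambda \in \mathcal P} f(\lambda) q^{|\lambda|}$. Chaining this with the equality established above yields $\langle F \rangle_q = \sum_{n=0}^{\infty} c_n q^n$, as desired.

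There is no real obstacle here; the only mild subtlety is ensuring the reader sees that the corollary is a reformulation of Theorem \ref{ch3thm1} in the special case where the power series coefficients are originally presented as sums indexed by integers $n$ rather than by partitions. In particular, one should emphasize that the function $F$ produced is \emph{one} natural choice of $q$-antibracket (the canonical one from Definition \ref{ch3antidef} applied after rewriting the series in partition form), and that other choices of $F$ giving the same power series exist, since the representation of a sequence $(c_n)$ as a partition sum is not unique. No convergence issues arise beyond those already assumed implicitly in working in $\C[[q]]$.
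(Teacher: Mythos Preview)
Your proposal is correct and matches the paper's approach exactly: the paper states that this corollary ``is already given by Theorem \ref{ch3thm1}'' and is merely a rephrasing of that result, which is precisely what you do. Your added line showing the regrouping $\sum_n c_n q^n = \sum_{\lambda\in\mathcal P} f(\lambda)q^{|\lambda|}$ is a welcome clarification of a step the paper leaves implicit.
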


Thus the power series of class (1) are already in a form subject to the $q$-bracket machinery detailed in the previous section. The class (2) with coefficients of the form $\sum_{d|n}$ is a little more subtle. We introduce a special subset $\mathcal P_{=}$ which bridges sums over partitions and sums over the divisors of natural numbers. 

\begin{definition}\label{ch3=def}
We define the subset $\mathcal P_{=}\subseteq\mathcal P$ to be the set of partitions into equal parts, that is, whose parts are all the same positive number, e.g. $(1),(1,1),(4,4,4)$. We make the assumption $\emptyset\notin\mathcal P_{=}$, as the empty partition has no positive parts.
\end{definition}

The divisors of $n$ correspond exactly (in two different ways) to the set of partitions of $n$ into equal parts, i.e., partitions of $n$ in $\mathcal P_{=}$. For example, compare the divisors of $6$
$$
1,2,3,6
$$
with the partitions of $6$ into equal parts 
$$
(6),(3,3),(2,2,2),(1,1,1,1,1,1)
.$$ 
Note that for each of the above partitions $(a,a,...,a)\vdash 6$, we have that $a\cdot\ell\left((a,a,...,a)\right)$ $=6$. We see from this example that for any $n\in\Z^+$ we can uniquely associate each divisor $d|n$ to a partition $\lambda\vdash n,\lambda\in\mathcal P_{=},$ by taking $d$ to be the length of $\lambda$. (Alternatively, we could identify the divisor $d$ with  $\operatorname{lg}(\lambda)$ or $\operatorname{sm}(\lambda)$, as defined above, which of course are the same in this case. We choose here to associate divisors to $\ell(\lambda)$ as length is a universal characteristic of partitions, regardless of the structure of the parts.) %The complement divisor $n/d$ is then equal to either the largest/smallest part or to the length of $\lambda$, respectively, that is, to the opposite statistic to the one associated with $d$.

By the above considerations, it is clear that
\begin{equation}\label{ch3=prop}
\sum_{d|n}f(d)=\sum_{\substack{\lambda\vdash n\\  \lambda\in\mathcal P_{=}}}f\left(\ell(\lambda)\right).
\end{equation}
This leads us to a formula for the coefficients of a power series of the class (2) discussed above.  

\begin{corollary}\label{ch3class2}
For $c_n=\sum_{d|n}f(d)$ we can write
\begin{equation*}\label{ch34.1}
\sum_{n=0}^{\infty}c_n q^n=\sum_{\lambda\in\mathcal P_{=}}f\left(\ell(\lambda)\right)q^{|\lambda|}.
\end{equation*}
Then we have a function 
$$F(\lambda)=\sum_{\substack{\delta|\lambda \\  \delta\in\mathcal P_{=}}}f\left(\ell(\delta)\right)$$ such that $
\left<F\right>_q=\sum_{n=0}^{\infty}c_n q^n
$.
\end{corollary}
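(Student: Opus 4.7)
The plan is to derive the two claims essentially by stitching together Equation \eqref{ch3=prop} with Theorem \ref{ch3thm1.5}, with the only real creativity being the choice of how to lift the natural-number-indexed coefficients to a function on all of $\mathcal P$.

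First I would establish the power series identity. Start from $c_n = \sum_{d|n} f(d)$ and apply the correspondence in \eqref{ch3=prop} to rewrite this as
\[
c_n = \sum_{\substack{\lambda \vdash n \\ \lambda \in \mathcal P_{=}}} f(\ell(\lambda)).
\]
Multiplying by $q^n$ and summing over $n \geq 0$ (noting that $c_0 = 0$ since $\emptyset \notin \mathcal P_=$, matching the fact that the right-hand side has no $\emptyset$ term either), then exchanging the order of summation since every $\lambda \in \mathcal P_=$ is a partition of $|\lambda|$, yields
\[
\sum_{n=0}^{\infty} c_n q^n = \sum_{\lambda \in \mathcal P_{=}} f(\ell(\lambda))\, q^{|\lambda|},
\]
which is the first assertion.

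Next I would package this as a $q$-bracket. Define an auxiliary function $g \colon \mathcal P \to \mathbb C$ by
\[
g(\lambda) := \begin{cases} f(\ell(\lambda)) & \text{if } \lambda \in \mathcal P_{=}, \\ 0 & \text{otherwise,} \end{cases}
\]
so that $\sum_{\lambda \in \mathcal P} g(\lambda)\, q^{|\lambda|} = \sum_{\lambda \in \mathcal P_{=}} f(\ell(\lambda))\, q^{|\lambda|} = \sum_{n \geq 0} c_n q^n$. Theorem \ref{ch3thm1.5} then supplies a $q$-antibracket of $g$, namely $F(\lambda) := \sum_{\delta | \lambda} g(\delta)$, for which $\langle F \rangle_q = \sum_{\lambda \in \mathcal P} g(\lambda)\, q^{|\lambda|}$. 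Unpacking $g$ reduces the divisor sum to its nonzero contributions, giving exactly
\[
F(\lambda) = \sum_{\substack{\delta | \lambda \\ \delta \in \mathcal P_{=}}} f(\ell(\delta)),
\]
as claimed, and therefore $\langle F \rangle_q = \sum_{n=0}^{\infty} c_n q^n$.

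I do not anticipate any serious obstacle here — the content of the corollary is really the bookkeeping identity \eqref{ch3=prop}, which makes divisors of $n$ correspond to partitions in $\mathcal P_{=}$ via the length statistic, after which the machinery of Theorem \ref{ch3thm1.5} applies verbatim. The only subtlety worth flagging in the write-up is the distinction between $\mathcal P_{=}$-divisors of $\lambda$ (appearing in $F$) and the full set of subpartitions of $\lambda$ (implicit in the general $q$-antibracket), which is handled cleanly by the indicator-style definition of $g$.
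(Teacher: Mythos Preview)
Your proposal is correct and follows exactly the route the paper's exposition implies: the first identity is the power-series form of \eqref{ch3=prop}, and the $q$-antibracket then drops out of Theorem \ref{ch3thm1.5} once you extend $f\circ\ell$ by zero off of $\mathcal P_{=}$. The paper suppresses explicit proofs in this chapter, but your write-up is precisely the argument its setup gestures at.
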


The completely general class (3) of power series with arbitrary coefficients $c_n\in\C$ follows right away from Corollary \ref{ch3class2} by classical M\"{o}bius inversion, as $f(n):=\sum_{d|n}c_d\  \mu_{\mathcal P}(n/d)\Rightarrow c_n=\sum_{d|n}f(d)$.

\begin{corollary}\label{ch3class3}
For $c_n\in\C$ we can write
\begin{equation*}
\sum_{n=0}^{\infty}c_n q^n=\sum_{\lambda\in\mathcal P_{=}}q^{|\lambda|}\sum_{d|\ell(\lambda)}c_d\  \mu\left(\frac{\ell(\lambda)}{d}\right).
\end{equation*}
Then we have a function 
$$F(\lambda)=\sum_{\substack{\delta|\lambda \\  \delta\in\mathcal P_{=}}}\sum_{d|\ell(\delta)}c_d\  \mu\left(\frac{\ell(\delta)}{d}\right)$$ such that $
\left<F\right>_q=\sum_{n=0}^{\infty}c_n q^n
$.
\end{corollary}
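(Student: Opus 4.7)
My plan is to deduce Corollary \ref{ch3class3} directly from Corollary \ref{ch3class2} by inserting an intermediate arithmetic function defined via classical M\"{o}bius inversion, as suggested by the remark preceding the statement. The idea is that Corollary \ref{ch3class2} already handles coefficient sequences of the special shape $c_n=\sum_{d\mid n}f(d)$, so the only thing to do in the fully general case is to exhibit an $f$ that reproduces an arbitrary sequence $c_n$ in this form.

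First I would define the auxiliary function $f\colon \mathbb{Z}^{+}\to\mathbb{C}$ by
\[
f(n)\ :=\ \sum_{d\mid n}c_{d}\,\mu\!\left(\frac{n}{d}\right),
\]
where $\mu$ is the classical M\"{o}bius function on $\mathbb{Z}^{+}$. By the standard M\"{o}bius inversion formula on the natural numbers, this choice is equivalent to the relation
\[
c_{n}\ =\ \sum_{d\mid n}f(d),
\]
which is exactly the hypothesis of Corollary \ref{ch3class2}. Thus $\sum_{n\geq 0}c_{n}q^{n}$ is now a power series of class (2) with respect to this particular $f$.

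Next I would invoke Corollary \ref{ch3class2} with this $f$ to obtain
\[
\sum_{n=0}^{\infty}c_{n}q^{n}\ =\ \sum_{\lambda\in\mathcal{P}_{=}}f\bigl(\ell(\lambda)\bigr)\,q^{|\lambda|},
\]
and then substitute the definition of $f$ back in, which yields the desired identity
\[
\sum_{n=0}^{\infty}c_{n}q^{n}\ =\ \sum_{\lambda\in\mathcal{P}_{=}}q^{|\lambda|}\sum_{d\mid \ell(\lambda)}c_{d}\,\mu\!\left(\frac{\ell(\lambda)}{d}\right).
\]
Likewise, the antibracket statement of Corollary \ref{ch3class2} gives the function $F(\lambda)=\sum_{\delta\mid\lambda,\,\delta\in\mathcal{P}_{=}}f(\ell(\delta))$ with $\langle F\rangle_{q}=\sum c_{n}q^{n}$, and substituting the formula for $f$ once more produces exactly the $F$ stated in the corollary.

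There is essentially no obstacle here; the content is entirely bookkeeping, with the only nontrivial ingredient being the classical M\"{o}bius inversion used to pass from arbitrary $c_{n}$ to a divisor-sum representation. If anything needs care, it is simply making explicit that the $\mu_{\mathcal{P}}$ appearing in the preceding remark coincides with the classical $\mu$ on integers, which follows immediately because prime partitions (partitions into one part) transport the classical factorization structure of $\mathbb{Z}^{+}$ into $\mathcal{P}$, so the partition M\"{o}bius function on the chain of divisors of an integer $n$ agrees with $\mu(n)$.
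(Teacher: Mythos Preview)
Your proof is correct and follows essentially the same route as the paper: define $f(n)=\sum_{d\mid n}c_d\,\mu(n/d)$ via classical M\"{o}bius inversion so that $c_n=\sum_{d\mid n}f(d)$, then feed this $f$ into Corollary~\ref{ch3class2}. The only quibble is your final paragraph: the issue is not that $\mu_{\mathcal P}$ ``coincides with $\mu$ on integers'' (the domain of $\mu_{\mathcal P}$ is $\mathcal P$, not $\mathbb Z^+$), but simply that the inversion needed here is the ordinary one on $\mathbb Z^+$; that remark is unnecessary and can be dropped.
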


\begin{remark} 
We point out an alternative expression for sums of the shape of $F(\lambda)$ here, that can be useful for computation. If we write out the factorization of a partition $\lambda=$ $(a_1)^{m_1}(a_2)^{m_2}...$ $(a_t)^{m_t}$ as in Section \ref{ch3Multiplicative theory of partitions}, a divisor of $\lambda$ lying in $\mathcal P_{=}$ must be of the form $(a_i)^m$ for some $1\leq i \leq t$ and $1 \leq m \leq m_i$. Then for any function $\phi$ defined on $\Z^+$ we see 
\begin{equation}
\sum_{\substack{\delta|\lambda \\  \delta\in\mathcal P_{=}}}\phi\left(\ell(\delta)\right)=\sum_{i=1}^{t}\sum_{j=1}^{m_i}\phi\left(\ell((a_i)^j)\right)=\sum_{i=1}^{t}\sum_{j=1}^{m_i}\phi(j).
\end{equation}
\end{remark}

Given the ideas developed above, we can now pass between $q$-brackets and arbitrary power series, summed over either natural numbers or partitions.

\  
\  
\  
\

\section{Applications of the \texorpdfstring{$q$}{Lg}-bracket and \texorpdfstring{$q$}{Lg}-antibracket}\label{ch3applications}
We close this report by briefly illustrating some of the methods of the previous sections through two examples.

\subsection{Sum of divisors function}
In classical number theory, for $a\geq 0$ the divisor sum $\sigma_{\mathcal P,a}(n):=\sum_{d|n}d^a$ is particularly important to the theory of modular forms; as seen in Equation \ref{ch3Eisenstein}, for odd values of $a$, power series  of the form
$$
\sum_{n=0}^{\infty}\sigma_{\mathcal P,a}(n)q^n
$$
comprise the Fourier expansions of Eisenstein series \cite{Ono_web}, which are the building blocks of modular and quasimodular forms. As a straightforward application of Corollary \ref{ch3class2} following directly from the definition of $\sigma_{\mathcal P,a}(n)$, we give a function $\mathcal S_a$ defined on partitions whose $q$-bracket is the power series above.

\begin{corollary}\label{ch3sigmasum}
We have the partition-theoretic function 
$$\mathcal S_a(\lambda):=\sum_{\substack{\delta|\lambda \\  \delta\in\mathcal P_{=}}}\ell(\delta)^a
$$
such that 
$$\left<\mathcal S_a\right>_q=\sum_{n=0}^{\infty}\sigma_{\mathcal P,a}(n) q^n
.$$
\end{corollary}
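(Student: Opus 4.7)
The plan is to recognize that this is a direct specialization of Corollary \ref{ch3class2} from the preceding section. Indeed, by Definition \ref{ch3sigma}, the classical sum-of-divisors function satisfies
\[
\sigma_{\mathcal P,a}(n)=\sum_{d\mid n}d^{a},
\]
which is precisely of the form $c_n=\sum_{d\mid n}f(d)$ with the choice $f(d):=d^{a}$. So the first (and essentially only) step is to set $f$ to this particular function and feed it into the machinery already developed.

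Next I would invoke Corollary \ref{ch3class2} verbatim. That result asserts that for any $f\colon \mathbb{Z}^+\to\mathbb{C}$ and coefficients $c_n=\sum_{d\mid n}f(d)$, the function
\[
F(\lambda)\  :=\sum_{\substack{\delta\mid\lambda\\ \delta\in\mathcal P_{=}}}f\bigl(\ell(\delta)\bigr)
\]
has $q$-bracket equal to $\sum_{n=0}^{\infty}c_n q^n$. Substituting $f(d)=d^{a}$ gives $f(\ell(\delta))=\ell(\delta)^{a}$, so $F(\lambda)=\mathcal S_a(\lambda)$ as defined in the statement, and the conclusion
\[
\langle\mathcal S_a\rangle_q=\sum_{n=0}^{\infty}\sigma_{\mathcal P,a}(n)\, q^{n}
\]
follows immediately.

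There is no genuine obstacle here, since Corollary \ref{ch3class2} was established using the bijection between divisors of $n$ and elements of $\mathcal P_{=}$ partitioning $n$ (namely $d\leftrightarrow (n/d)^{d}$, identifying the divisor with the length), together with Theorem \ref{ch3thm1}. If anything, the only delicate point worth flagging in the writeup is the convention for $a=0$: one should note that $\ell(\delta)^{0}=1$ for every nonempty $\delta\in\mathcal P_{=}$, so that $\mathcal S_0(\lambda)$ correctly counts the equal-part subpartitions of $\lambda$ and recovers $\sigma_{\mathcal P,0}(n)=\sum_{d\mid n}1$ as coefficients. Beyond this remark, the proof reduces to a single line citing Corollary \ref{ch3class2}.
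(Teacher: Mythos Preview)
Your proposal is correct and takes essentially the same approach as the paper: the text immediately preceding the corollary explicitly states that it is ``a straightforward application of Corollary \ref{ch3class2} following directly from the definition of $\sigma_{\mathcal P,a}(n)$,'' which is exactly what you do by setting $f(d)=d^a$.
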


\begin{remark}
We note that Zagier gives a different function $S_{2k-1}(\lambda)=\sum_{\lambda_i\in\lambda}\lambda_i^{2k-1}$ (the moment function) that also has the $q$-bracket $\sum \sigma_{2k-1}(n) q^n$ \cite{Zagier}. This is an example of the non-uniqueness of antibrackets of functions defined on natural numbers noted previously.
\end{remark}

Thus we see the $q$-bracket operator brushing up against modularity, once again.

%In fact, it is clear from the comments preceding Equation \ref{ch3=prop} that we could replace $f(\ell(\lambda))$ with $f(\text{``largest part of }\lambda\text{''})$ or $f(\text{``smallest part of }\lambda\text{''})$ in \ref{ch3=prop} (and all the subsequent expressions depending on \ref{ch3=prop}) without changing the left-hand side of the equation at all.

\subsection{Reciprocal of the Jacobi triple product}

We turn our attention now to another fundamental object in the subject of modular forms. Let $j(z;q)$ denote the classical \textit{Jacobi triple product} \cite{Berndt}
\begin{equation}\label{ch3jtp}
j(z;q):=(z;q)_{\infty}(z^{-1}q;q)_{\infty}(q;q)_{\infty}.
\end{equation}

The reciprocal of the triple product 
$$j(z;q)^{-1}=\sum_{\lambda\in\mathcal P}j_z(\lambda)q^{|\lambda|}$$
is interesting in its own right. For instance, $j(z;q)^{-1}$ plays a role not unlike the role played by $(q;q)_{\infty}$ in the $q$-bracket operator, for the Appell--Lerch sum $m(x,q,z)$ important to the study of mock modular forms (see \cite{BFOR, HickersonMortenson}).

Our goal will be to derive a formula for the coefficients $j_z(\lambda)$ above. If we multiply $j(z;q)^{-1}$ by $(1-z)$ to cancel the pole at $z=1$, it behaves nicely under the action of the $q$-bracket. Let us write
\begin{equation}\label{ch3normalized}
(1-z)j(z;q)^{-1}=\frac{1}{(zq;q)_{\infty}(z^{-1}q;q)_{\infty}(q;q)_{\infty}}=\sum_{\lambda\in\mathcal P}J_z(\lambda)q^{|\lambda|}.
\end{equation}

Let $\operatorname{crk}(\lambda)$ denote the {\it crank} of a partition, an important partition-theoretic statistic whose existence was conjectured by Dyson \cite{Dyson} to explain the Ramanujan congruence $p(11n+7)\equiv 0\  (\operatorname{mod}\  11)$, and which was written down almost half a century later by Andrews and Garvan \cite{AndrewsGarvan}. Crank is not unlike Dyson's rank, but is a bit more complicated. % (thus we do not give it explicitly). 

\begin{definition}\label{ch8crk}
The {\it crank} $\operatorname{crk}(\lambda)$ of a partition $\lambda$ is equal to its largest part %$\operatorname{lg}(\lambda)$ of partition $\lambda$ 
if the multiplicity $m_1(\lambda)$ of 1 as a part of $\lambda$ is $=0$ (that is, there are no $1$'s), and if $m_1(\lambda)> 0$ then $\operatorname{crk}(\lambda)=\#\{\text{parts of $\lambda$ that are larger than $m_1(\lambda)$}\}-m_1(\lambda).$
\end{definition}

We define $M(n,m)$ to be the number of partitions of $n$ having crank equal to $m\in\Z$; then the Andrews--Garvan \textit{crank generating function} $C(z;q)$ is given by
\begin{equation}\label{ch3crankgen}
C(z;q):=\frac{(q;q)_{\infty}}{(zq;q)_{\infty}(z^{-1}q;q)_{\infty}}=\sum_{n=0}^{\infty}M_z(n)q^n,
\end{equation}
where we set
\begin{equation}\label{ch3M}
M_z(n):=\sum_{\lambda\vdash n}z^{\operatorname{crk}(\lambda)}=\sum_{m=-\infty}^{\infty}M(n,m)z^m.
\end{equation}

The function $C(z;q)$ has deep connections. When $z=1$, Equation \ref{ch3crankgen} reduces to Euler's partition generating function formula \cite{Berndt}. For $\zeta\neq 1$ a root of unity, $C(\zeta;q)$ is a modular form, and Folsom--Ono--Rhoades show the crank generating function to be related to the theory of quantum modular forms \cite{FOR}.  

Comparing Equations \ref{ch3normalized} and \ref{ch3crankgen}, we have the following  relation:
\begin{equation}
\left<J_z\right>_q^{(2)}=C(z;q).
\end{equation}
We see $J_z(\lambda)$ and $z^{\operatorname{crk}(\lambda)}$ are related through a family of $q$-brackets; then using Corollaries \ref{ch3class1}, \ref{ch3class2}, and \ref{ch3class3}, we can write $J_z(\lambda)$ explicitly. Noting that $J_z(\lambda)=(1-z)j_z(\lambda)$, we arrive at the formula we seek.
  
\begin{corollary}\label{ch3jtpcoeff}
The partition-indexed coefficients of $j(z;q)^{-1}$ are
\[
j_z(\lambda)=(1-z)^{-1}\sum_{\delta|\lambda}\sum_{\varepsilon|\delta}z^{\operatorname{crk}(\varepsilon)}
\]
for $z\neq1$. In terms of the coefficients $M_z(*)$ given by Equation \ref{ch3M}:
\[
j_z(\lambda)=(1-z)^{-1}\sum_{\delta|\lambda}\sum_{\substack{\varepsilon|\delta \\  \varepsilon\in\mathcal P_{=}}}\sum_{d|\ell(\varepsilon)}M_z(d)\mu\left(\frac{\ell(\varepsilon)}{d}\right).
\]
\end{corollary}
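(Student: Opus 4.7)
The plan is to view this corollary as a direct application of the $q$-antibracket machinery of Theorem \ref{ch3thm1.5}, once we recognize that the normalization $(1-z)j(z;q)^{-1}$ is chosen so that its double $q$-bracket is the Andrews--Garvan crank generating function $C(z;q)$. Concretely, comparing (\ref{ch3normalized}) with (\ref{ch3crankgen}) one sees that
\[
(q;q)_\infty^2\sum_{\lambda\in\mathcal P}J_z(\lambda)q^{|\lambda|}=\frac{(q;q)_\infty}{(zq;q)_\infty(z^{-1}q;q)_\infty}=C(z;q),
\]
so in the notation of Definition \ref{ch3diffndefantidef} we have $\langle J_z\rangle_q^{(2)}=C(z;q)$, as recorded in the text just above the corollary.

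First I would rewrite $C(z;q)$ as a partition-indexed series. Since $M_z(n)=\sum_{\lambda\vdash n}z^{\operatorname{crk}(\lambda)}$ by (\ref{ch3M}), collecting by $|\lambda|=n$ gives
\[
C(z;q)=\sum_{n=0}^{\infty}M_z(n)q^n=\sum_{\lambda\in\mathcal P}z^{\operatorname{crk}(\lambda)}q^{|\lambda|}.
\]
Now I apply Theorem \ref{ch3thm1.5} once: setting $G(\lambda):=\sum_{\varepsilon|\lambda}z^{\operatorname{crk}(\varepsilon)}$ produces a function whose $q$-bracket equals $C(z;q)$, i.e.\ $\langle G\rangle_q=\langle J_z\rangle_q^{(2)}$. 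Comparing coefficients of $q^{|\lambda|}$ on both sides shows that $G(\lambda)$ is the coefficient function of $\langle J_z\rangle_q$. Applying Theorem \ref{ch3thm1.5} a second time to this series, I obtain a function whose $q$-bracket is $\langle J_z\rangle_q$; by the definition of $\langle J_z\rangle_q$ itself, this function must agree with $J_z$, yielding
\[
J_z(\lambda)=\sum_{\delta|\lambda}G(\delta)=\sum_{\delta|\lambda}\sum_{\varepsilon|\delta}z^{\operatorname{crk}(\varepsilon)}.
\]
Since $J_z(\lambda)=(1-z)j_z(\lambda)$ by (\ref{ch3normalized}), dividing by $(1-z)$ (valid since $z\neq 1$) yields the first formula of the corollary.

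For the second formula I would instead start from $\langle J_z\rangle_q^{(2)}=\sum_{n=0}^{\infty}M_z(n)q^n$ viewed as a power series indexed by $n\in\Z_{\geq 0}$, and invoke Corollary \ref{ch3class3} to produce an explicit $\mathcal P_=$-sum whose $q$-bracket is $\sum_n M_z(n)q^n$; that is,
\[
H(\lambda):=\sum_{\substack{\delta|\lambda\\ \delta\in\mathcal P_=}}\sum_{d|\ell(\delta)}M_z(d)\,\mu\!\left(\tfrac{\ell(\delta)}{d}\right)
\]
satisfies $\langle H\rangle_q=C(z;q)=\langle J_z\rangle_q^{(2)}$. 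Running the single-antibracket step of Theorem \ref{ch3thm1.5} one more time then gives $J_z(\lambda)=\sum_{\delta|\lambda}H(\delta)$, and dividing by $(1-z)$ delivers the second displayed formula. There is no real obstacle here beyond bookkeeping; the only thing to watch is the careful use of Definition \ref{ch3diffndefantidef} and Definition \ref{ch3antidef} to ensure that each application of Theorem \ref{ch3thm1.5} strips off exactly one factor of $(q;q)_\infty$, so that two applications correctly invert the superscript $(2)$ in $\langle J_z\rangle_q^{(2)}$.
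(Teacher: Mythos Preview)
Your proposal is correct and follows essentially the same route as the paper: the paper establishes $\langle J_z\rangle_q^{(2)}=C(z;q)$ and then appeals to the antibracket machinery (specifically Corollaries \ref{ch3class1}, \ref{ch3class2}, \ref{ch3class3}, which are the partition-indexed, divisor-sum-indexed, and arbitrary-coefficient versions of Theorem \ref{ch3thm1.5}) to unwind the two factors of $(q;q)_\infty$, then divides by $(1-z)$. Your only minor imprecision is the phrase ``this function must agree with $J_z$''---the antibracket is not unique, so what you are really computing (and what the corollary records) is the \emph{canonical} antibracket in the sense of Definition \ref{ch3antidef}; the paper makes the same tacit convention.
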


\begin{remark}
By Corollary \ref{ch3two-way}, we can also write $M_z(n)$ in terms of the coefficients of $j(z;q)^{-1}$.
\end{remark}

%%%%%%FOR THESIS: GIVE FORMULA FOR THE TRIPLE PRODUCT ITSELF AND MOST GENERAL VERSIONS OF Q-BRACKET THEOREMS

We apply the $q$-bracket operator to the function $j(z;q)^{-1}$ from a somewhat different perspective in Chapter 8.%\footnote{See Appendix \ref{ch3app:A} for further notes on Chapter 3.} %the next chapter. %another study \cite{SchneiderJTP}.
\\

\begin{remark}
{See Appendix \ref{app:B} for further notes on Chapter 3.} 
\end{remark}

 	\clearpage%
\chapter{Partition-theoretic zeta functions}{\bf Adapted from \cite{Schneider_zeta}}%, a joint work with Ken Ono and Larry Rolen}

\section{Introduction, notations and central theorem}
The additive-multiplicative connections highlighted in the previous chapter extend to other realms of multiplicative number theory as well, viz., the study of zeta functions and Dirichlet series in analytic number theory\footnote{See \cite{apostolmodular}, Ch. 6, for connections between Dirichlet series and $q$-series via the theory of modular forms.}. 

We need to introduce one more notation, in order to state the central theorem of this chapter. Define $\varphi_n(f;q)$ by $\varphi_0(f;q) := 1$ and 
\begin{equation*}
\varphi_n(f;q) := \prod_{k=1}^n (1-f(k)q^k)
\end{equation*}
where $n\geq 1$, for an arbitrary function $f \colon \mathbb N \rra \mathbb C$. When the infinite product converges, let $\varphi_{\infty}(f;q):= \lim_{n\rra \infty}\varphi_n(f;q)$. We think of $\varphi$ as a generalization of the $q$-Pochhammer symbol. Note that if we set $f$ equal to a constant $z$, then $\varphi$ does specialize to the $q$-Pochhammer symbol, as $\varphi_n(z;q) = (zq;q)_n $ and $\varphi_{\infty}(z;q) = (zq;q)_{\infty}$. 

As with Euler product formula and partition generating function formula \eqref{ch1zetaproduct} and \eqref{ch1partition_genfctn}, respectively, it is the reciprocal $1/\varphi_{\infty}(f;q)$ that interests us. With the above notations, we have the following system of identities.

\begin{theorem}\label{ch41.1}
If the product converges, then $1/\varphi_{\infty}(f;q) = \prod_{n=1}^{\infty}(1-f(n)q^n)^{-1}$ may be expressed in a number of equivalent forms, viz.
\begin{align}
\frac{1}{\varphi_{\infty}(f;q)} & =  \sum_{\lambda \in \mathcal P} q^{|\lambda|} \prod_{\lambda_i \in \lambda} f(\lambda_i) \label{ch44}\\ %\displaybreak
& =  1+ \sum_{n=1}^{\infty} q^n \frac{f(n)}{\varphi_n(f;q)} \label{ch45}\\ 
& =  1+ \frac{1}{\varphi_{\infty}(f;q)} \sum_{n=1}^{\infty} q^n f(n)\varphi_{n-1}(f;q) \label{ch46} \\ 
& =  1 + \sum_{n=1}^{\infty} \frac{(-1)^n (q\inv)^{\frac{n(n-1)}{2}}}{\varphi_n\pwr{\frac{1}{f};q\inv} \prod_{k=1}^{n-1}f(k)} \label{ch47}  \\
& = 1+\cfrac{\sum_{(6)}}{1-\cfrac{\sum_{(5)}}{1+\cfrac{\sum_{(6)}}{1-\cfrac{\sum_{(5)}}{1+ \cdots}}}} \label{ch48}
\end{align}
where $\sum_{(5)},\sum_{(6)}$ in \eqref{ch48} denote the summations appearing in \eqref{ch45} and \eqref{ch46}, respectively.
\end{theorem}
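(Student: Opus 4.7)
The plan is to establish the five identities in sequence, leaning on the single telescoping relation $\varphi_n(f;q)=\varphi_{n-1}(f;q)\bigl(1-f(n)q^n\bigr)$, which underlies most of them, and then deducing the continued fraction \eqref{ch48} from \eqref{ch45} and \eqref{ch46}.

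First, for \eqref{ch44} I would mimic Euler's classical derivation of the partition generating function: expand each factor as a geometric series $(1-f(n)q^n)^{-1}=\sum_{k_n\geq 0}f(n)^{k_n}q^{nk_n}$, then multiply over $n\geq 1$ and reorder. The resulting sum is indexed by multiplicity sequences $(k_1,k_2,\ldots)$ with finitely many nonzero entries, each of which corresponds bijectively to a partition $\lambda=(1^{k_1}2^{k_2}\cdots)$ with $|\lambda|=\sum_n nk_n$ and monomial weight $\prod_n f(n)^{k_n}=\prod_{\lambda_i\in\lambda}f(\lambda_i)$. For \eqref{ch45}, dividing the telescoping relation $\varphi_{n-1}-\varphi_n=\varphi_{n-1}f(n)q^n$ by $\varphi_{n-1}\varphi_n$ and summing over $n\leq N$ produces $\tfrac{1}{\varphi_N}-1=\sum_{n=1}^{N}\tfrac{q^n f(n)}{\varphi_n}$, and letting $N\to\infty$ gives the identity. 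Summing the raw telescoping relation instead yields $1-\varphi_N=\sum_{n=1}^{N}q^n f(n)\varphi_{n-1}$, and multiplying by $1/\varphi_\infty$ and passing to the limit gives \eqref{ch46}. Identity \eqref{ch47} will follow from \eqref{ch45} by a direct calculation: writing out
\[
\varphi_n\!\left(\tfrac{1}{f};q^{-1}\right)=\prod_{k=1}^n\frac{-(1-f(k)q^k)}{f(k)q^k}=\frac{(-1)^n\varphi_n(f;q)}{q^{n(n+1)/2}\prod_{k=1}^n f(k)},
\]
one checks that the $n$-th term of \eqref{ch47} collapses to $q^n f(n)/\varphi_n(f;q)$, the $n$-th term of \eqref{ch45}.

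The main obstacle will be the continued fraction \eqref{ch48}. Let $A$ and $B$ denote the series in \eqref{ch45} and \eqref{ch46}, and set $X:=1/\varphi_\infty(f;q)$. Then \eqref{ch45} reads $X=1+A$ while \eqref{ch46} rearranges to $\varphi_\infty=1-B$, so $X(1-B)=1$ and hence $X=1+B/(1-B)$. Since also $X=1+A$ gives $1/X=(X-A)/X=1-A/X$, one has $1-B=1/X=1-A/X$, yielding the fixed-point equation
\[
X=1+\frac{B}{1-A/X},
\]
and iterating this relation literally produces the continued fraction \eqref{ch48}. The delicate point is convergence, since the associated quadratic $X^2-(1+A+B)X+A=0$ has two roots, namely $1+A$ and $B$; one must verify that the truncations converge to $1+A=1/\varphi_\infty$ rather than to $B$. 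I would argue this $q$-adically: because $A,B\in q\,\C[[q]]$, the linearization of the iteration map at the fixed point $X^{\ast}=1+A$ equals $-AB=O(q^2)$, so each layer of the continued fraction at least doubles the order of $q$-agreement with $X^{\ast}$ and the truncations converge in $\C[[q]]$ to $1/\varphi_\infty$, while the analogous linearization $-AB/(B-A)^2$ at $X=B$ has negative $q$-adic valuation, confirming that this second root is repelling and so is never selected.
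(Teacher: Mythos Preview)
Your proposal is correct and follows essentially the same route as the paper: Euler-style product expansion for \eqref{ch44}, the same telescoping identities for \eqref{ch45} and \eqref{ch46}, the same $\varphi_n(f;q)=(-1)^n q^{n(n+1)/2}\varphi_n(1/f;q^{-1})\prod_{k\leq n}f(k)$ substitution for \eqref{ch47}, and the same fixed-point iteration $X=1+B/(1-A/X)$ for \eqref{ch48}. The one noteworthy difference is that the paper simply says ``reiterating this process indefinitely gives \eqref{ch48}'' without addressing convergence, whereas you supply a $q$-adic contraction argument (derivative $-AB=O(q^2)$ at the correct fixed point, repelling at the spurious root $B$); this is a genuine addition, since the formal iteration alone does not distinguish between the two fixed points of the underlying quadratic.
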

The product on the right-hand side of identity \eqref{ch44} above is taken over the parts $\lambda_i$ of $\lambda$. Note that the summation in \eqref{ch47} converges for $q\inv$ outside the unit circle (it may converge inside the circle as well). Note also that, by L`Hospital's rule, any power series $\sum_{n=1}^{\infty}f(n)q^n$ with constant term zero can be written as the limit
\begin{equation*}
\sum_{n=1}^{\infty}f(n)q^n = \lim_{z\rra 0}z\inv \pwr{\frac{1}{\varphi_{\infty}(zf ; q)} - 1}.
\end{equation*}
It is obvious that if $f$ is completely multiplicative, then $\prod_{\lambda_i \in \lambda} f(\lambda_i)=f(n_{\lambda})$, where $n_{\lambda}$ is the {\it norm} of $\lambda$ defined above. We record one more, obvious consequence of Theorem \ref{ch41.1}, as we assume it throughout this paper. As before, let $\mathbb {\mathbb X} \subseteq \mathbb Z^+$, and take $\mathcal P_{\mathbb {\mathbb X}} \subseteq \mathcal P$ to be the set of partitions into elements of ${\mathbb {\mathbb X}}$. Then clearly by setting $f(n)=0$ if $n\notin {\mathbb {\mathbb X}}$ in Theorem \ref{ch41.1}, we see
\begin{equation*}
\frac {1}{\prod_{n\in {\mathbb {\mathbb X}}} (1-f(n)q^n)} =  \sum_{\lambda \in \mathcal P_{\mathbb {\mathbb X}}} q^{|\lambda|} \prod_{\lambda_i \in \lambda} f(\lambda_i).
\end{equation*}
The remaining summations in the theorem (aside from \eqref{ch47}, which may not converge) are taken over $n\in {\mathbb {\mathbb X}}$.

We see from Theorem \ref{ch41.1} that we may pass freely between the shapes \eqref{ch44} -- \eqref{ch48}, which specialize to a number of classical expressions. For example, taking $f\equiv 1$ in the theorem gives the following fact.

\begin{coro}\label{ch41.2}
The partition generating function formula \eqref{ch1partition_genfctn} is true.
\end{coro}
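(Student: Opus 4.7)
The plan is to specialize Theorem \ref{ch41.1} to the constant function $f\equiv 1$ and read off the classical identity directly from form \eqref{ch44}. First I would compute the left-hand side under this specialization: since $f(k)=1$ for every $k\geq 1$, the generalized $q$-Pochhammer symbol collapses to the usual one, namely $\varphi_n(1;q) = \prod_{k=1}^{n}(1-q^k) = (q;q)_n$, and in the limit $\varphi_{\infty}(1;q) = (q;q)_{\infty}$. Thus $1/\varphi_{\infty}(f;q) = (q;q)_{\infty}^{-1}$, which is already the right-hand side of \eqref{ch1partition_genfctn} that we wish to match.

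Next I would evaluate the right-hand side of \eqref{ch44} under the same specialization. The product over parts becomes $\prod_{\lambda_i \in \lambda} f(\lambda_i) = \prod_{\lambda_i \in \lambda} 1 = 1$ for every $\lambda\in\mathcal P$ (including $\lambda=\emptyset$, where the empty product equals $1$ as usual). Therefore
\[
\sum_{\lambda \in \mathcal P} q^{|\lambda|} \prod_{\lambda_i \in \lambda} f(\lambda_i) \;=\; \sum_{\lambda \in \mathcal P} q^{|\lambda|}.
\]
Grouping partitions by their size $n = |\lambda|$ and using the very definition $p(n) = \#\{\lambda \vdash n\}$ (with the convention $p(0)=1$ corresponding to $\lambda = \emptyset$) gives
\[
\sum_{\lambda \in \mathcal P} q^{|\lambda|} \;=\; \sum_{n=0}^{\infty}\Bigl(\sum_{\lambda \vdash n} 1\Bigr) q^n \;=\; \sum_{n=0}^{\infty} p(n)\, q^n.
\]

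Combining the two sides of \eqref{ch44} under this specialization yields $\sum_{n=0}^{\infty} p(n) q^n = (q;q)_{\infty}^{-1}$, which is precisely \eqref{ch1partition_genfctn}. There is essentially no obstacle here; the only point worth checking is convergence, which is immediate for $|q|<1$ since the infinite product $\prod_{n\geq 1}(1-q^n)$ converges absolutely in that range, justifying the invocation of Theorem \ref{ch41.1} with $f\equiv 1$. The corollary is therefore an immediate consequence of the product-sum identity \eqref{ch44}, with the partition counting function $p(n)$ arising naturally once the trivial weight $\prod f(\lambda_i) \equiv 1$ is assigned to every partition.
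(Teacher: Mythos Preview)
Your proof is correct and takes essentially the same approach as the paper: both specialize \eqref{ch44} at $f\equiv 1$, noting that the product over parts collapses to $1$ so that the partition-indexed sum becomes $\sum_{n\geq 0} p(n)q^n$. You have simply spelled out the details more fully than the paper's one-line justification.
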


Assuming $\Real (s)>1$, if we take $q = 1$, $f(n) = 1/n^s$ if $n$ is prime and $=0$ otherwise, then Theorem \ref{ch41.1} yields another classical fact, plus a formula giving the zeta function as a sum over primes.

\begin{coro}\label{ch41.3}
The Euler product formula \eqref{ch1zetaproduct} for the zeta function is true. We also have the identity 
\begin{equation*}
\zeta(s) = 1 + \sum_{p \in \mathbb P}\frac{1}{p^s \prod_{r\in \mathbb P,\,r\leq p}\pwr{1 - \frac{1}{r^s}}}.
\end{equation*}
\end{coro}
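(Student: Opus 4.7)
\medskip

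\noindent\textbf{Proof plan.} The strategy is to apply Theorem \ref{ch41.1} with the specializations $q = 1$ and
\[
f(n) := \begin{cases} n^{-s} & \text{if } n \in \mathbb P, \\ 0 & \text{otherwise,}\end{cases}
\]
working in the half-plane $\Real(s) > 1$, where all the series and products in sight converge absolutely (so that the $q = 1$ substitution is legitimate and the various reorderings are unproblematic). Under these choices the infinite product specializes as
\[
\frac{1}{\varphi_\infty(f;1)} = \prod_{n=1}^{\infty}\bigl(1 - f(n)\bigr)^{-1} = \prod_{p \in \mathbb P}\bigl(1 - p^{-s}\bigr)^{-1},
\]
which is already the right-hand side of Euler's product \eqref{ch1zetaproduct}.

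To obtain the left-hand side, apply identity \eqref{ch44}. Since $f$ is supported on primes, only partitions in $\mathcal P_{\mathbb P}$ (prime partitions) contribute, and $\prod_{\lambda_i \in \lambda} f(\lambda_i) = n_\lambda^{-s}$ by complete multiplicativity of $n \mapsto n^{-s}$ on the parts. Thus
\[
\frac{1}{\varphi_\infty(f;1)} = \sum_{\lambda \in \mathcal P_{\mathbb P}} n_\lambda^{-s}.
\]
Invoking the Alladi--Erd\H{o}s bijection between $\mathbb Z^+$ and $\mathcal P_{\mathbb P}$ (given by prime factorization, with $1 \leftrightarrow \emptyset$ and $n_\lambda^{-s}$ corresponding to $n^{-s}$ under the Fundamental Theorem of Arithmetic), the right-hand side is precisely $\sum_{n=1}^{\infty} n^{-s} = \zeta(s)$. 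Combining these two computations proves \eqref{ch1zetaproduct}.

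For the second identity, I will instead apply \eqref{ch45}, which, after restricting the summation index to $\mathbb X = \mathbb P$ as per the remark following Theorem \ref{ch41.1}, gives
\[
\zeta(s) = \frac{1}{\varphi_\infty(f;1)} = 1 + \sum_{p \in \mathbb P}\frac{f(p)}{\varphi_p(f;1)}.
\]
It remains to evaluate the denominator. By definition, $\varphi_p(f;1) = \prod_{k=1}^{p}\bigl(1 - f(k)\bigr)$; since $f(k) = 0$ whenever $k$ is composite (or $k = 1$), only the prime indices $r \leq p$ contribute nontrivial factors, giving $\varphi_p(f;1) = \prod_{r \in \mathbb P,\, r \leq p}(1 - r^{-s})$. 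Substituting $f(p) = p^{-s}$ yields the stated formula.

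The only substantive point requiring care is the passage to $q = 1$, which lies on the boundary of the disc where Theorem \ref{ch41.1} was formulated; this is handled by the absolute convergence of $\prod_p(1 - p^{-s})^{-1}$ for $\Real(s) > 1$, which forces absolute convergence of each of the rearrangements \eqref{ch44}--\eqref{ch46} and legitimizes the identifications above. Thus no serious obstacle arises, and the result follows from Theorem \ref{ch41.1} together with the Alladi--Erd\H{o}s correspondence.
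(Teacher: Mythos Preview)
Your proposal is correct and follows essentially the same approach as the paper: specialize Theorem \ref{ch41.1} with $q=1$ and $f(n)=n^{-s}$ on primes (zero otherwise), use identity \eqref{ch44} together with the bijection $\mathbb Z^+\leftrightarrow\mathcal P_{\mathbb P}$ via prime factorization to recover \eqref{ch1zetaproduct}, and then read off the second formula from \eqref{ch45}. Your explicit evaluation of $\varphi_p(f;1)$ and your remark on the legitimacy of the $q=1$ substitution are a bit more detailed than the paper, but the argument is the same.
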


Finally, tying this section in with the previous chapter, we give generating functions for the partition-theoretic phi function $\varphi_{\mathcal P}$ and sum of divisors function $\sigma_{\mathcal P}$. Setting $f(n)=n$ in Theorem \ref{ch41.1}, it is clear we have $$\prod_{n=1}^{\infty}(1-nq^n)^{-1}=\sum_{\lambda\in \mathcal P} n_{\lambda}q^{|\lambda|},$$ the generating function for the norm $n_{\lambda}$. Then Proposition \ref{ch3qbracketsystem} yields the following.

\begin{coro}\label{ch41.4} We have the identities
$$\prod_{n=1}^{\infty}\frac{1-q^n}{1-nq^n}=\sum_{\lambda \in \mathcal P}\varphi_{\mathcal P}(\lambda)q^{|\lambda|},\  \  \  \  \  \  \  \  \prod_{n=1}^{\infty}\frac{1}{(1-q^n)(1-nq^n)}=\sum_{\lambda \in \mathcal P}\sigma_{\mathcal P}(\lambda)q^{|\lambda|}.$$
\end{coro}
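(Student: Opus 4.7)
The plan is to combine the norm generating function (obtained by specializing Theorem~\ref{ch41.1}) with the two $q$-bracket identities from Corollary~\ref{ch3qbracketsystem}. First I would take the identity asserted in the sentence preceding the corollary,
\[
\prod_{n=1}^{\infty}\frac{1}{1-nq^n}\;=\;\sum_{\lambda\in\mathcal P} n_{\lambda}\,q^{|\lambda|},
\]
as the input; this follows by setting $f(n)=n$ in \eqref{ch44} of Theorem~\ref{ch41.1}, since then $\prod_{\lambda_i\in\lambda}f(\lambda_i)=\lambda_1\lambda_2\cdots\lambda_r=n_{\lambda}$.

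Next I would recall that by Definition~\ref{ch1qbracket} the $q$-bracket of a function $g\colon\mathcal P\to\mathbb C$ is nothing but multiplication of its generating series by $(q;q)_{\infty}$, i.e.\
\[
\langle g\rangle_q\;=\;(q;q)_{\infty}\sum_{\lambda\in\mathcal P}g(\lambda)q^{|\lambda|}.
\]
From Corollary~\ref{ch3qbracketsystem} we have $\langle n_{*}\rangle_q=\sum_{\lambda\in\mathcal P}\varphi_{\mathcal P}(\lambda)q^{|\lambda|}$. Substituting the norm generating function on the left gives
\[
\sum_{\lambda\in\mathcal P}\varphi_{\mathcal P}(\lambda)q^{|\lambda|}
=(q;q)_{\infty}\prod_{n=1}^{\infty}\frac{1}{1-nq^n}
=\prod_{n=1}^{\infty}\frac{1-q^n}{1-nq^n},
\]
which is the first identity.

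For the second identity I would use the companion relation $\langle\sigma_{\mathcal P}\rangle_q=\sum_{\lambda\in\mathcal P}n_{\lambda}q^{|\lambda|}$ from Corollary~\ref{ch3qbracketsystem}, now dividing by $(q;q)_{\infty}$ instead of multiplying:
\[
\sum_{\lambda\in\mathcal P}\sigma_{\mathcal P}(\lambda)q^{|\lambda|}
=\frac{1}{(q;q)_{\infty}}\sum_{\lambda\in\mathcal P}n_{\lambda}q^{|\lambda|}
=\prod_{n=1}^{\infty}\frac{1}{(1-q^n)(1-nq^n)},
\]
using Euler's product formula \eqref{ch1partition_genfctn} for $(q;q)_{\infty}^{-1}$. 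Since both derivations are essentially one-line manipulations, there is no real obstacle; the only thing worth noting is convergence, which holds for $|q|<1/|q|$ — more precisely the product $\prod(1-nq^n)^{-1}$ converges absolutely for $|q|<1$ because $n|q|^n\to 0$, legitimizing the rearrangements and the term-by-term multiplication by $(q;q)_{\infty}^{\pm 1}$.
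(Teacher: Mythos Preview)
Your proposal is correct and follows essentially the same route as the paper: specialize Theorem~\ref{ch41.1} with $f(n)=n$ to obtain $\prod_{n\geq 1}(1-nq^n)^{-1}=\sum_{\lambda}n_{\lambda}q^{|\lambda|}$, then multiply or divide by $(q;q)_{\infty}$ and invoke the two $q$-bracket identities of Corollary~\ref{ch3qbracketsystem}. The paper states this derivation in the sentence immediately preceding the corollary rather than in a separate proof block, but the argument is the same.
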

\vspace{2em}

\section{Partition-theoretic zeta functions}
A multitude of nice specializations of Theorem \ref{ch41.1} may be obtained. We would like to focus on an interesting class of partition sums arising from Euler's product formula for the sine function
\begin{equation}\label{ch43}
x\prod_{n=1}^{\infty} \pwr{1 - \frac{x^2}{\pi^2n^2}} = \sin x,
\end{equation}
combined with Theorem \ref{ch41.1}. Taking $q = 1$ (as we have done in Corollary \ref{ch41.3}), we begin by noting an easy partition-theoretic formula that may be used to compute the value of $\pi$.

Let $\mathcal P_{m\mathbb Z} \subseteq \mathcal P$ denote the set of partitions into multiples of $m$. Recall from above that the {\it norm} $n_{\lambda}$ of a partition $\lambda$ is the product $\lambda_1\lambda_2\cdots \lambda_r$ of its parts.

\begin{coro}\label{ch41.4}
Summing over partitions into even parts, we have the formula
\begin{equation*}
\frac{\pi}{2} = \sum_{\lambda \in \mathcal P_{2\mathbb Z}} \frac{1}{n_{\lambda}^2}.
\end{equation*}
\end{coro}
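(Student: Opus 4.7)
The plan is to specialize Theorem \ref{ch41.1} to the subset $\mathcal P_{2\mathbb Z}$ of partitions into even parts and then match the resulting product with a known specialization of Euler's sine product \eqref{ch43}.

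First, I would apply the observation recorded after Theorem \ref{ch41.1}: for $\mathbb X \subseteq \mathbb Z^+$, setting $f(n)=0$ for $n \notin \mathbb X$ in identity \eqref{ch44} gives
\[
\prod_{n \in \mathbb X}\bigl(1-f(n)q^n\bigr)^{-1} \;=\; \sum_{\lambda \in \mathcal P_{\mathbb X}} q^{|\lambda|} \prod_{\lambda_i \in \lambda} f(\lambda_i).
\]
Taking $\mathbb X = 2\mathbb Z^+$, $f(n)=1/n^2$, and $q=1$, the product of $f$ over the parts collapses to $1/n_{\lambda}^2$ because $f$ is completely multiplicative, and the identity becomes
\[
\sum_{\lambda \in \mathcal P_{2\mathbb Z}} \frac{1}{n_{\lambda}^2} \;=\; \prod_{n=1}^{\infty}\biggl(1-\frac{1}{(2n)^2}\biggr)^{-1},
\]
provided the product (equivalently, the sum) converges, which follows by comparison with $\sum 1/n^2$.

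Next, I would evaluate this Euler product using \eqref{ch43} at $x=\pi/2$. Substituting gives
\[
1 \;=\; \sin\!\left(\frac{\pi}{2}\right) \;=\; \frac{\pi}{2}\prod_{n=1}^{\infty}\biggl(1-\frac{1}{4n^2}\biggr),
\]
so $\prod_{n=1}^{\infty}\bigl(1-1/(4n^2)\bigr)^{-1}=\pi/2$. Combining the two displays yields the claim. The main technical point is simply the absolute convergence of the product/sum, which justifies the termwise application of \eqref{ch44}; there is no serious obstacle.
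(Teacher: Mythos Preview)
Your proof is correct and follows essentially the same approach as the paper: both apply identity \eqref{ch44} to the even parts with $f(n)=1/n^2$, $q=1$, and then evaluate the resulting product $\prod_{n\geq1}(1-1/(4n^2))^{-1}$ via Euler's sine product at $x=\pi/2$ (equivalently $z=1/2$). The paper merely routes this through the general $m$-case of Corollary~\ref{ch41.5} before specializing to $m=2$, while you go directly; the substance is identical.
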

We notice that the form of the sum of the right-hand side resembles $\zeta(2)$. Based on this similarity, we wonder if there exists a nice partition-theoretic analog of $\zeta(s)$ possessing something of a familiar zeta function structure --- perhaps Corollary \ref{ch41.4} gives an example of such a function? However, in this case it is not so: the above identity arises from different types of phenomena from those associated with $\zeta(s)$. We have an infinite family of formulas of the following shapes.
\begin{coro}\label{ch41.5}
Summing over partitions into multiples of any whole number $m > 1$, we have
\begin{eqnarray}
\sum_{\lambda \in \mathcal P_{m\mathbb Z}} \frac{1}{n_{\lambda}^2} &=& \frac{\pi}{m \, \sin \pwr{\frac{\pi}{m}}} \\
\sum_{\lambda \in \mathcal P_{m\mathbb Z}} \frac{1}{n_{\lambda}^4} &=& \frac{\pi^2}{m^2 \, \sin \pwr{\frac{\pi}{m}} \sinh \pwr{\frac{\pi}{m}}},
\end{eqnarray}
and increasingly complicated formulas can be computed for $\sum_{\lambda \in \mathcal P_{m \mathbb Z}} 1/n_{\lambda}^{2^t}$, $t \in \mathbb Z^+$.
\end{coro}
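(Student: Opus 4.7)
The plan is to reduce the partition sum to an infinite product via Theorem \ref{ch41.1}, and then evaluate that product using Euler's sine product (\ref{ch43}) and its hyperbolic analog. First, I would apply the specialization of Theorem \ref{ch41.1} recorded just after its statement (taking $q=1$, $\mathbb{X}=m\mathbb{Z}^+$, and $f(n)=n^{-s}$ for $n\in\mathbb{X}$, zero otherwise): since $f$ is completely multiplicative on $\mathbb{X}$, the product $\prod_{\lambda_i\in\lambda}f(\lambda_i)$ collapses to $n_\lambda^{-s}$, giving
\begin{equation*}
\sum_{\lambda\in\mathcal P_{m\mathbb Z}}\frac{1}{n_\lambda^{s}}\;=\;\prod_{k=1}^{\infty}\frac{1}{1-(mk)^{-s}}
\end{equation*}
whenever the right-hand side converges, which it does for $s>1$.

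For the case $s=2$, the sine product (\ref{ch43}) evaluated at $x=\pi/m$ gives
\begin{equation*}
\sin(\pi/m)=\frac{\pi}{m}\prod_{k=1}^{\infty}\left(1-\frac{1}{(mk)^{2}}\right),
\end{equation*}
so the reciprocal of this product is exactly $\pi/(m\sin(\pi/m))$, proving the first identity. For $s=4$ I would use the factorization $1-(mk)^{-4}=(1-(mk)^{-2})(1+(mk)^{-2})$, handle the first factor as above, and handle the second factor with the Weierstrass product for the hyperbolic sine,
\begin{equation*}
\sinh x \;=\; x\prod_{n=1}^{\infty}\left(1+\frac{x^{2}}{\pi^{2}n^{2}}\right),
\end{equation*}
again at $x=\pi/m$, which yields $\prod_k(1+(mk)^{-2})=m\sinh(\pi/m)/\pi$. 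Multiplying the two evaluations gives the claimed closed form $\pi^{2}/(m^{2}\sin(\pi/m)\sinh(\pi/m))$.

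For the general case $s=2^{t}$ with $t\ge1$, the idea is to iterate the difference-of-squares factorization
\begin{equation*}
1-y^{2^{t}}\;=\;\prod_{\zeta^{2^{t-1}}=1}\bigl(1-\zeta\, y^{2}\bigr),
\end{equation*}
so that each factor $1-\zeta(mk)^{-2}$ can be matched, via $x=\pi\zeta^{1/2}/m$ (choosing a square root of $\zeta$), against a specialization of the sine product. This expresses $\prod_k(1-(mk)^{-2^{t}})^{-1}$ as a finite product of values of $\sin(\pi\zeta^{1/2}/m)$ ranging over $2^{t-1}$-th roots of unity; for $t=1,2$ these collapse to the formulas above, and for $t\ge 3$ the complex roots of unity produce the ``increasingly complicated'' expressions alluded to in the statement, recoverable as combinations of ordinary and hyperbolic sines (and their values at complex multiples of $\pi/m$). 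The main obstacle is purely notational rather than conceptual: packaging these $2^{t-1}$-fold products into a clean closed form. Convergence is not an issue, since each factor $\prod_k(1\pm(mk)^{-2})$ converges absolutely and the finite product over roots of unity preserves this.
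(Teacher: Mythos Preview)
Your proof is correct and is essentially the same as the paper's: both reduce the partition sum to the Euler product $\prod_{k\ge 1}(1-(mk)^{-2^t})^{-1}$ via Theorem~\ref{ch41.1}, then evaluate it using Euler's sine product and its hyperbolic companion, iterating a difference-of-squares factorization for higher $t$. The only cosmetic difference is that the paper keeps a free variable $z$ in the sine/sinh products and specializes to $z=1/m$ at the end (writing the iteration with primitive roots of unity $\omega_k$), whereas you specialize immediately and phrase the iteration as a product over $2^{t-1}$-th roots of unity; these are the same computation.
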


Examples like these are appealing, but their right-hand sides are not entirely reminiscent of the Riemann zeta function, aside from the presence of $\pi$. Certainly they are not as tidy as expressions of the form $\zeta(2k) = ``\pi^{2k} \times \text{rational}"$. Based on the previous corollaries, a reasonable first guess at a partition-theoretic analog of $\zeta(s)$ might be to define
\begin{equation*}
\zeta_{\mathcal P}(s) := \sum_{\lambda \in \mathcal P} \frac{1}{n_{\lambda}^s} = \frac{1}{\prod_{n=1}^{\infty} \pwr{1 - \frac{1}{n^s}}}, \Real (s) > 1.
\end{equation*}
Of course, neither side of this identity converges, but using Definition \ref{ch1zetadef} of a partition zeta function $\zeta_{\mathcal P'}(s)$ from Chapter 1, viz.
 %and $f\colon \mathcal P\to \mathbb C$ 
%we define a {\it partition zeta function} $\zeta_{\mathcal P'}(s)$  %and {\it partition Dirichlet series} $\mathcal D_{\mathcal P'}(f,s)$, respectively, 
%by the following sum over partitions $\lambda$ in $\mathcal P'$:
\begin{equation*}%\label{ch1zetadef}
\zeta_{\mathcal P'}(s):=\sum_{\lambda\in \mathcal P'}n_{\lambda}^{-s} %,\  \  \  \  \  \  \mathcal D_{\mathcal P'}(f,s):=\sum_{\lambda\in \mathcal P'}f(\lambda)n_{\lambda}^{-s}.
\end{equation*}
for a subset $\mathcal P'$ of $\mathcal P$ and $s\in \mathbb C$ for which the series converges, we obtain convergent expressions if we omit the first term and perhaps subsequent terms of the product to yield $\zeta_{\mathcal P_{\geq a}}(s) := \sum_{\lambda \in \mathcal P_{\geq a}} 1/n_{\lambda}^s = \prod_{n=a}^{\infty} (1-1/n^s)\inv\,\,\,(a\geq 2)$, where $\mathcal P_{\geq a} \subset \mathcal P$ denotes the set of partitions into parts greater than or equal to $a$. For instance, we have the following formula.
\begin{coro}\label{ch41.6}
Summing over partitions into parts greater than or equal to $2$, we have 
\begin{equation*}
\zeta_{\mathcal P_{\geq 2}}(3) = \sum_{\lambda \in \mathcal P_{\geq 2}} \frac{1}{n_{\lambda}^3} = \frac{3\pi }{\cosh \pwr{\frac{1}{2} \pi \sqrt{3}}}.
\end{equation*}
\end{coro}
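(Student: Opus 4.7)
The plan is to start from the Euler product representation stated just above the corollary,
$$\zeta_{\mathcal P_{\geq 2}}(3) = \prod_{n=2}^{\infty}\left(1 - \frac{1}{n^3}\right)^{-1},$$
which follows from Theorem \ref{ch41.1} by setting $q=1$ and taking $f(n) = n^{-3}$ for $n \geq 2$ (zero otherwise). The task reduces to evaluating $\prod_{n \geq 2}(1-n^{-3})$ in closed form using the Gamma function.

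First I would factor $1 - z^3/n^3 = (1-z/n)(1-\omega z/n)(1-\omega^2 z/n)$ with $\omega = e^{2\pi i/3}$. Since $1+\omega+\omega^2 = 0$, the convergence factor $e^{(1+\omega+\omega^2)z/n}$ equals $1$ and can be inserted freely, so that each of the three resulting subproducts is a convergent Weierstrass product $\prod_n(1-w/n)e^{w/n} = e^{\gamma w}/\Gamma(1-w)$. Multiplying the three gives the key identity
$$\prod_{n=1}^{\infty}\left(1-\frac{z^3}{n^3}\right) = \frac{1}{\Gamma(1-z)\,\Gamma(1-\omega z)\,\Gamma(1-\omega^2 z)}.$$
To isolate the product over $n\geq 2$, I would divide both sides by the $n=1$ factor $1-z^3 = (1-z)(1+z+z^2)$ and take $z \to 1$. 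Using $(1-z)\Gamma(1-z) = \Gamma(2-z) \to \Gamma(1) = 1$, this yields
$$\prod_{n=2}^{\infty}\left(1-\frac{1}{n^3}\right) = \frac{1}{3\,\Gamma(1-\omega)\Gamma(1-\omega^2)}.$$

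The main obstacle is the explicit evaluation of $\Gamma(1-\omega)\Gamma(1-\omega^2)$, since the arguments are non-real complex numbers. I would observe that $1-\omega = \tfrac{3}{2} - i\tfrac{\sqrt{3}}{2}$ and $1-\omega^2$ is its complex conjugate, so the product equals $\bigl|\Gamma\bigl(\tfrac{3}{2} - i\tfrac{\sqrt{3}}{2}\bigr)\bigr|^2$. Applying $\Gamma(\tfrac{3}{2}+iy) = (\tfrac{1}{2}+iy)\Gamma(\tfrac{1}{2}+iy)$ together with the classical identity $\bigl|\Gamma(\tfrac{1}{2}+iy)\bigr|^2 = \pi/\cosh(\pi y)$ (a consequence of the reflection formula $\Gamma(z)\Gamma(1-z) = \pi/\sin(\pi z)$), and noting that with $y = -\sqrt{3}/2$ the modulus $|\tfrac{1}{2}+iy|^2 = \tfrac{1}{4}+\tfrac{3}{4} = 1$, one obtains
$$\Gamma(1-\omega)\Gamma(1-\omega^2) = \frac{\pi}{\cosh\!\bigl(\tfrac{1}{2}\pi\sqrt{3}\bigr)}.$$

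Substituting this back and taking reciprocals delivers the claimed identity
$$\zeta_{\mathcal P_{\geq 2}}(3) = \frac{3\pi}{\cosh\!\bigl(\tfrac{1}{2}\pi\sqrt{3}\bigr)}.$$
The hardest part is arguably the bookkeeping around the exponential convergence factors in the Weierstrass products, but once the identity for $\prod_{n\geq 1}(1-z^3/n^3)$ is in hand, the remainder is a direct application of standard Gamma function identities.
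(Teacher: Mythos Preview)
Your proof is correct. Both you and the paper begin from the same Euler product $\zeta_{\mathcal P_{\geq 2}}(3) = \prod_{n\geq 2}(1-n^{-3})^{-1}$ obtained from Theorem~\ref{ch41.1}. The difference lies in how the product is evaluated: the paper simply quotes Ramanujan's identity $\prod_{n=2}^{\infty}(1-1/n^3) = \cosh(\tfrac{1}{2}\pi\sqrt{3})/(3\pi)$, submitted as Question~261 to the \emph{Journal of the Indian Mathematical Society}, and stops there. You instead derive this formula from scratch via the factorization over cube roots of unity, the Weierstrass product for $1/\Gamma$, and the reflection formula specialized to $|\Gamma(\tfrac{1}{2}+iy)|^2 = \pi/\cosh(\pi y)$. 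Your route is longer but fully self-contained, and has the advantage that the same Gamma-function machinery immediately yields the companion identity $\prod_{n\geq 1}(1+1/n^3) = \cosh(\tfrac{1}{2}\pi\sqrt{3})/\pi$ mentioned in the paper's remark (just take $z\to -1$ instead of $z\to 1$). The paper's approach buys brevity and historical context; yours buys independence from the cited result.
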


While it is an interesting expression, stemming from an identity of Ramanujan \cite{RamanujanCollected}, once again this formula does not seem to evoke the sort of structure we anticipate from a zeta function --- of course, the value of $\zeta(3)$ is not even known. We need to find the ``right'' subset of $\mathcal P$ to sum over, if we hope to find a nice partition-theoretic zeta function. As it turns out, there are subsets of $\mathcal P$ that naturally produce analogs of $\zeta(s)$ for certain arguments $s$.

\begin{definition}\label{ch4zetafixed}
We define a partition-theoretic generalization $\zeta_{\mathcal P}(\brk{s}^k)$ of the Riemann zeta function by the following sum over all partitions $\lambda$ of fixed length $\ell(\lambda) = k \in \mathbb Z_{\geq 0}$ at argument $s \in \mathbb C$, $\Real (s) > 1$:
\begin{equation}\label{ch411}
\zeta_{\mathcal P}(\brk{s}^k) := \sum_{\ell(\lambda) = k} \frac{1}{n_{\lambda}^s}. 
\end{equation}
\end{definition}

\begin{remark}
This is a fairly natural formation, being similar in shape (and notation) to the weight $k$ multiple zeta function $\zeta(\brk{s}^k)$, which is instead summed over length-$k$ partitions into distinct parts; Hoffman gives interesting formulas relating $\zeta_{\mathcal P}(\brk{s}^k)$ (in different notation) to combinations of multiple zeta functions \cite{Hoffman1}, which exhibit rich structure.
\end{remark}

We have immediately that $\zeta_{\mathcal P}(\brk{s}^0) = 1/n_{\emptyset}^s = 1$ and  $\zeta_{\mathcal P}(\brk{s}^1) = \zeta_{\mathcal P}(\brk{s}) = \zeta(s)$. Using Theorem \ref{ch41.1} and proceeding (see Section \ref{ch4Proofs of theorems and corollaries}) much as Euler did to find the value of $\zeta(2k)$ \cite{dunham1999euler}, we are able to find explicit values for $\zeta_{\mathcal P}(\brk{2}^k)$ at every positive integer $k >0$. Somewhat surprisingly, we find that in these cases $\zeta_{\mathcal P}(\brk{2}^k)$ is a rational multiple of $\zeta(2k)$.

\begin{coro}\label{ch41.7}
For $k>0$, we have the identity 
\begin{equation*}
\zeta_{\mathcal P}(\brk{2}^k) = \sum_{\ell(\lambda) = k} \frac{1}{n_{\lambda}^2} = \frac{2^{2k - 1} - 1}{2^{2k-2}}\zeta(2k).
\end{equation*}
For example, we have the following values:
\begin{align*}
\zeta_{\mathcal P}(\brk{2}) &= \zeta(2) = \frac{\pi^2}{6}, \\
\zeta_{\mathcal P}(\brk{2}^2) &= \frac{7}{4}\zeta(4) = \frac{7\pi^4}{360}, \\
\zeta_{\mathcal P}(\brk{2}^3) &= \frac{31}{16}\zeta(6)= \frac{31\pi^6}{15120},\dots , \\
\zeta_{\mathcal P}(\brk{2}^{13}) &= \frac{33554431}{16777216}\zeta(26) = \frac{22076500342261\pi^{26}}{93067260259985915904000000},\dots
\end{align*}
\end{coro}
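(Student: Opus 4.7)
My plan is to use Theorem \ref{ch41.1} to realize the generating function $\sum_k \zeta_{\mathcal P}(\{2\}^k)\,z^k$ as the reciprocal of Euler's sine product, and then read off the coefficients using the classical Taylor expansion of $\pi x \cot(\pi x)$.

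First, I would specialize the product-sum identity \eqref{ch44} by setting $q=1$ and $f(n)=z/n^2$. Since $\prod_{\lambda_i\in\lambda} f(\lambda_i) = z^{\ell(\lambda)}/n_\lambda^2$, the identity becomes
\[
\prod_{n=1}^\infty\left(1-\frac{z}{n^2}\right)^{-1} = \sum_{\lambda\in\mathcal P}\frac{z^{\ell(\lambda)}}{n_\lambda^2} = \sum_{k=0}^\infty \zeta_{\mathcal P}(\{2\}^k)\,z^k,
\]
valid as an identity of analytic functions in a neighborhood of $z=0$. Regrouping the partition sum by length is legitimate because the coefficient of each $z^k$ is a convergent sum over partitions of fixed length $k$ (bounded termwise by the corresponding subsum of $\zeta(2)^k$).

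Next I would substitute $z = x^2$ and invoke Euler's sine product \eqref{ch43} in the form $\sin(\pi x)=\pi x\prod_{n\geq 1}(1-x^2/n^2)$, which rewrites the left-hand side as $\pi x/\sin(\pi x)$. Thus
\[
\frac{\pi x}{\sin(\pi x)} \;=\; \sum_{k=0}^\infty \zeta_{\mathcal P}(\{2\}^k)\,x^{2k}.
\]
To expand the left-hand side, I would begin from the logarithmic derivative of the same sine product, which gives Euler's classical series
\[
\pi x\cot(\pi x) \;=\; 1 - 2\sum_{k=1}^\infty \zeta(2k)\,x^{2k},
\]
and combine it with the elementary trigonometric identity $\csc y = \cot(y/2)-\cot y$. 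Applied at $y=\pi x$, this yields
\[
\frac{\pi x}{\sin(\pi x)} \;=\; 1 + \sum_{k=1}^\infty \frac{2^{2k-1}-1}{2^{2k-2}}\,\zeta(2k)\,x^{2k}.
\]
Comparing coefficients of $x^{2k}$ with the partition-theoretic expansion above gives the asserted formula, and the tabulated numerical examples then follow by substituting Euler's evaluations $\zeta(2)=\pi^2/6$, $\zeta(4)=\pi^4/90$, and so on.

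The argument has no serious obstacle; the only point requiring care is bookkeeping the factor $2^{1-2k}$ coming from the dilation $\cot(\pi x/2)$, so that the combination $\cot(\pi x/2)-\cot(\pi x)$ produces the exact rational factor $(2^{2k-1}-1)/2^{2k-2}$. Everything else is a clean application of Theorem \ref{ch41.1} together with Euler's classical machinery for $\zeta(2k)$.
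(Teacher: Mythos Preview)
Your argument is correct and follows essentially the same route as the paper: both apply Theorem~\ref{ch41.1} to realize $\sum_k \zeta_{\mathcal P}(\{2\}^k)$ as the reciprocal of Euler's sine product, then read off the known Taylor coefficients in terms of $\zeta(2k)$. The only cosmetic difference is that the paper works with $z/\sinh z$ and quotes its Maclaurin expansion directly (via Bernoulli numbers), whereas you work with $\pi x/\sin(\pi x)$ and derive the same coefficients through the cotangent series and the identity $\csc y=\cot(y/2)-\cot y$; these are equivalent bookkeepings of the same classical fact.
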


Corollary \ref{ch41.7} reveals that $\zeta_{\mathcal P}(\brk{2}^k)$ is indeed of the form $``\pi^{2k} \times \text{rational}"$ for all positive $k$, like the zeta values $\zeta(2k)$ given by Euler (we note that $\zeta(26)$ is the highest zeta value Euler published) \cite{dunham1999euler}. We have more: we can find $\zeta_{\mathcal P}(\brk{2^t}^k)$ explicitly for all $t \in \mathbb Z^+$. These values are finite combinations of well-known zeta values, and are also of the form $``\pi^{2^tk} \times \text{rational}"$.

\begin{coro}\label{ch41.8}
For $k>0$ we have the identity
\begin{align*}
\zeta_{\mathcal P}(\brk{4}^k) & = \sum_{\ell(\lambda) = k}\frac{1}{n_{\lambda}^4}\\ 
& = \frac{1}{16^{k-1}} \pwr{\sum_{n=0}^{2k} (-1)^n(2^{2n-1}-1)(2^{4k - 2n - 1} - 1)\zeta(2n)\zeta(4k - 2n)},
\end{align*}
and increasingly complicated formulas can be computed for $\zeta_{\mathcal P}(\brk{2^t}^k), t \in \mathbb Z^+$.
\end{coro}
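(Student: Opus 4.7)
The plan is to derive the stated formula by exploiting the fact that $1-z/n^4$ factors nicely over $1-z/n^2$, allowing us to bootstrap from Corollary \ref{ch41.7}. The starting point is to specialize Theorem \ref{ch41.1}, equation \eqref{ch44}, to $q=1$ and $f(n)=z/n^s$, producing the length-tracking generating identity
\[
A_s(z):=\prod_{n=1}^{\infty}\!\left(1-\frac{z}{n^{s}}\right)^{\!-1}=\sum_{\lambda\in\mathcal P}\frac{z^{\ell(\lambda)}}{n_{\lambda}^{s}}=\sum_{k=0}^{\infty}\zeta_{\mathcal P}(\{s\}^{k})\,z^{k},
\]
valid for $s>1$ and $|z|$ small. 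This is exactly the viewpoint that powers the proof of Corollary \ref{ch41.7}: combining $A_2(z)$ with Euler's sine product \eqref{ch43} gives $A_{2}(z)=\pi\sqrt{z}/\sin(\pi\sqrt{z})$, whose coefficients Euler evaluated.

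The key algebraic observation for the $s=4$ case is the factorization $1-z/n^{4}=(1-\sqrt{z}/n^{2})(1+\sqrt{z}/n^{2})$, which gives
\[
A_{4}(z)=A_{2}(\sqrt{z})\cdot A_{2}(-\sqrt{z}).
\]
Expanding both factors as power series in their respective arguments, collecting even powers of $\sqrt{z}$, and extracting the coefficient of $z^{k}$ yields the Cauchy-type convolution
\[
\zeta_{\mathcal P}(\{4\}^{k})=\sum_{n=0}^{2k}(-1)^{n}\,\zeta_{\mathcal P}(\{2\}^{n})\,\zeta_{\mathcal P}(\{2\}^{2k-n}).
\]
Now I substitute Corollary \ref{ch41.7} into both factors. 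Writing $\zeta_{\mathcal P}(\{2\}^{m})=\frac{2^{2m-1}-1}{2^{2m-2}}\zeta(2m)$ and simplifying the denominator $2^{2n-2}\cdot 2^{4k-2n-2}=2^{4k-4}=16^{k-1}$ gives exactly the asserted identity. One must check that the endpoint cases $n=0$ and $n=2k$ are consistent: the formula of Corollary \ref{ch41.7} reads $\zeta_{\mathcal P}(\{2\}^{0})=1$, while the factor $(2^{2n-1}-1)\zeta(2n)$ at $n=0$ evaluates using $\zeta(0)=-\tfrac12$ to $(-\tfrac12)(-\tfrac12)=\tfrac14$, which is precisely the ratio needed to match the $k=0$ normalization of Corollary \ref{ch41.7}. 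This endpoint bookkeeping is the only delicate step.

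For the promised general family $\zeta_{\mathcal P}(\{2^{t}\}^{k})$, the same strategy iterates: the polynomial identity
\[
1-\frac{z}{n^{2^{t}}}=\prod_{j=0}^{2^{t-1}-1}\!\left(1-\frac{\zeta_{2^{t-1}}^{\,j}\,z^{1/2^{t-1}}}{n^{2}}\right)
\]
with $\zeta_{2^{t-1}}$ a primitive $2^{t-1}$-th root of unity yields
\[
A_{2^{t}}(z)=\prod_{j=0}^{2^{t-1}-1}A_{2}\!\bigl(\zeta_{2^{t-1}}^{\,j}\,z^{1/2^{t-1}}\bigr),
\]
and extracting the coefficient of $z^{k}$ produces a $(2^{t-1})$-fold convolution whose entries are the known rationals times $\zeta(2m)$ provided by Corollary \ref{ch41.7}. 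The result is again of the form rational$\times\pi^{2^{t}k}$, but the combinatorics of the convolution and of the root-of-unity weights grows rapidly with $t$, which is the reason the claim is phrased as ``increasingly complicated formulas can be computed.'' The main obstacle is therefore not conceptual but notational: keeping the root-of-unity bookkeeping clean, and handling the $\zeta(0)$ endpoint contributions uniformly so that the final expressions are manifestly rational multiples of $\pi^{2^{t}k}$.
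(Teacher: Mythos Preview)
Your proof is correct and is essentially the same argument as the paper's. The paper phrases it in terms of the explicit functions $z/\sin z=\sum\alpha_{2k}z^{2k}$ and $z/\sinh z=\sum\beta_{2k}z^{2k}$ (which are precisely your $A_2(z^2/\pi^2)$ and $A_2(-z^2/\pi^2)$), multiplies them via the Cauchy product, and reads off $\gamma_{4k}=\pi^{-4k}\zeta_{\mathcal P}(\{4\}^k)$; your convolution $\zeta_{\mathcal P}(\{4\}^k)=\sum_{n=0}^{2k}(-1)^n\zeta_{\mathcal P}(\{2\}^n)\zeta_{\mathcal P}(\{2\}^{2k-n})$ is exactly the same identity and indeed reappears in the paper as Corollary~\ref{ch41.9}. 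The only organizational difference is in the general $2^t$ case: the paper iterates the doubling step $2^t\to 2^{t+1}$ recursively, whereas you factor $1-z/n^{2^t}$ over all $2^{t-1}$-th roots of unity in one shot; these are equivalent and yield the same ``$\pi^{2^tk}\times\text{rational}$'' conclusion.
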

\begin{remark}
The summation on the far right above may be shortened by noting the symmetry of the summands around the $n=k$ term.
\end{remark}

It would be desirable to understand the value of $\zeta_{\mathcal P}(\brk{s}^k)$ at other arguments $s$; the proof we give below (see Section \ref{ch4Proofs of theorems and corollaries}) does not shed much light on this question, being based very closely on Euler's formula \eqref{ch41.3}, which forces $s$ be a power of $2$. Also, if we solve Corollary \ref{ch41.6} for $\zeta(0)$, we conclude that $\zeta(0) = \frac{2^{-2}}{2\inv - 1}\zeta_{\mathcal P}(\brk{2}^0) = -1/2$, which is the value of $\zeta(0)$ under analytic continuation. Can $\zeta_{\mathcal P}(\brk{s}^k)$ be extended via analytic continuation for values of $k > 1$? In a larger sense we wonder: do nice zeta function analogs exist if we sum over other interesting subsets of $\mathcal P$? In Chapter 5 we will follow up on these questions.

We do have a few general properties shared by convergent series $\sum 1/n_{\lambda}^s$ summed over large subclasses of $\mathcal P$. First we need to refine some of our previous notations.

\begin{definition}
Take any subset of partitions $\mathcal P' \subseteq \mathcal P$. Then for $\Real (s) > 1$, on analogy to classical zeta function theory, when these expressions converge we define
\begin{align}
\zeta_{\mathcal P'} &:= \sum_{\lambda \in \mathcal P'} \frac{1}{n_{\lambda}^s}, &\eta_{\mathcal P'}(s)& := \sum_{\lambda \in \mathcal P'} \frac{(-1)^{\ell(\lambda)}}{n_{\lambda}^s}, &\zeta_{\mathcal P'}(\brk{s}^k) &:= \sum_{\begin{tiny}
\begin{array}{c}
\lambda \in \mathcal P' \\
\ell(\lambda) = k
\end{array}\end{tiny}} \frac{1}{n_{\lambda}^s}.
\end{align}
\end{definition}

\begin{remark} 
As important special cases, we have $\zeta_{\mathcal P_{\mathbb P}}(s) = \zeta(s)$ and $\zeta_{\mathcal P_{\mathbb Z^+}}(\brk{s}^k) = \zeta_{\mathcal P}(\brk{s}^k)$. It is also easy to see that $\zeta_{\mathcal P'}(s)=\sum_{k=0}^{\infty} \zeta_{\mathcal P'}(\brk{s}^k)$ and $\eta_{\mathcal P'}(s)=\sum_{k=0}^{\infty} (-1)^{k} \zeta_{\mathcal P'}(\brk{s}^k)$ if we assume absolute convergence. Moreover, given absolute convergence, we may write $\zeta_{\mathcal P'}(s),\,\zeta_{\mathcal P'}(\brk{s}^k)$ as classical Dirichlet series related to multiplicative partitions: we have $\zeta_{\mathcal P'}(s)=\sum_{j=1}^{\infty}\# \{\lambda \in \mathcal P'\,|\,\, n_{\lambda}=j\}\, j^{-s}$ and $\zeta_{\mathcal P'}(\brk{s}^k)(s)=\sum_{j=1}^{\infty}\# \{\lambda \in \mathcal P'\,|\,\, \ell(\lambda)=k,n_{\lambda}=j\}\,j^{-s}$ (see \cite{ChamberlandStraub} for more about multiplicative partitions).

\end{remark} %%%THERE IS NEW LANGUAGE ADDED TO THIS PARAGRAPH SINCE SUBMISSION%%%

As previously, take ${\mathbb {\mathbb X}} \subseteq \mathbb Z^+$ and take $\mathcal P_{{\mathbb {\mathbb X}}} \subseteq \mathcal P$ to denote partitions into elements of ${\mathbb {\mathbb X}}$ (thus $\mathcal P_{\mathbb Z^+} = \mathcal P$). Note that $\zeta_{\mathcal P_{\mathbb {\mathbb X}}}(s) = \prod_{n \in {\mathbb {\mathbb X}}} \pwr{1 - \frac{1}{n^s}}\inv$ is divergent if $1 \in {\mathbb {\mathbb X}}$ and, when ${\mathbb {\mathbb X}}$ is finite (thus there is no restriction on the value of $\Real(s)$), if $s=i\pi j/\log n$ for any $n\in {\mathbb {\mathbb X}}$ and even integer $j$. Similarly, when ${\mathbb {\mathbb X}}$ is finite, $\eta_{\mathcal P_{\mathbb {\mathbb X}}}(s) = \prod_{n \in {\mathbb {\mathbb X}}} \pwr{1 + \frac{1}{n^s}}\inv$ is divergent if $s=i\pi k/\log n$ for any $n\in {\mathbb {\mathbb X}}$ and odd integer $k$. Clearly if $\mathbb Y \subseteq \mathbb Z^+$, then from the product representations we also have $\zeta_{\mathcal P_{\mathbb {\mathbb X}}}(s)\zeta_{\mathcal P_{\mathbb Y}}(s)=\zeta_{\mathcal P_{{\mathbb X} \cup Y}}(s)\zeta_{\mathcal P_{{\mathbb X} \cap \mathbb Y}}(s)$ and $\eta_{\mathcal P_{\mathbb X}}(s)\eta_{\mathcal P_{\mathbb Y}}(s)=\eta_{\mathcal P_{{\mathbb {\mathbb X}} \cup \mathbb Y}}(s)\eta_{\mathcal P_{{\mathbb X} \cap \mathbb Y}}(s)$. 

Many interesting subsets of partitions have the form $\mathcal P_{\mathbb X}$, in particular those to which Theorem \ref{ch41.1} most readily applies. Note that such subsets $\mathcal P_{\mathbb X}$ are partition ideals of order 1, in the sense of Andrews \cite{Andrews}. With the above notations, we have the following useful ``doubling" formulas.

\begin{coro}\label{ch41.9}
If $\zeta_{\mathcal P_{\mathbb X}}(s)$ converges over $\mathcal P_{\mathbb X} \subseteq \mathcal P$, then 
\begin{equation}\label{ch413}
\zeta_{\mathcal P_{\mathbb X}}(2s) = \zeta_{\mathcal P_{\mathbb X}}(s)\eta_{\mathcal P_{\mathbb X}}(s).
\end{equation}
Furthermore, for $n \in \mathbb Z_{\geq 0}$ we have the identity 
\begin{equation}\label{ch414}
\zeta_{\mathcal P_{\mathbb X}}\left(\{2^{n+1}s\}^k\right) = \sum_{j=0}^{2^nk}(-1)^j\zeta_{\mathcal P_{\mathbb X}}\left(\brk{2^ns}^j\right)\zeta_{\mathcal P_{\mathbb X}}\left(\brk{2^ns}^{2^nk - j}\right).
\end{equation}
\end{coro}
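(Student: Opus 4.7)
The plan is to establish both identities by manipulating Euler product expansions derived from Theorem \ref{ch41.1}. For the doubling formula \eqref{ch413}, I would first obtain the product representation $\eta_{\mathcal P_{\mathbb X}}(s)=\prod_{n\in\mathbb X}(1+n^{-s})^{-1}$, which drops out of Theorem \ref{ch41.1} with $f(n)=-n^{-s}$ (and $q=1$) just as the Euler product for $\zeta_{\mathcal P_{\mathbb X}}(s)$ does with $f(n)=n^{-s}$. Multiplying these two products factor-by-factor and using $(1-n^{-s})(1+n^{-s})=1-n^{-2s}$ gives $\zeta_{\mathcal P_{\mathbb X}}(s)\,\eta_{\mathcal P_{\mathbb X}}(s)=\prod_{n\in\mathbb X}(1-n^{-2s})^{-1}=\zeta_{\mathcal P_{\mathbb X}}(2s)$, proving \eqref{ch413}.

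For the length-refined identity \eqref{ch414}, the key move is to introduce a formal parameter $x$ that tracks partition length. I would consider the bivariate series
\[
Z_{\mathbb X}(s,x)\;:=\;\sum_{\lambda\in\mathcal P_{\mathbb X}}x^{\ell(\lambda)}\,n_\lambda^{-s}\;=\;\sum_{k\geq 0}x^{k}\,\zeta_{\mathcal P_{\mathbb X}}(\{s\}^{k}),
\]
which also factors as $\prod_{n\in\mathbb X}(1-xn^{-s})^{-1}$ by the same Eulerian expansion argument underlying Theorem \ref{ch41.1}. The length-graded analogue of the first identity then reads
\[
Z_{\mathbb X}(s,x)\,Z_{\mathbb X}(s,-x)\;=\;\prod_{n\in\mathbb X}\frac{1}{(1-xn^{-s})(1+xn^{-s})}\;=\;\prod_{n\in\mathbb X}\frac{1}{1-x^{2}n^{-2s}}\;=\;Z_{\mathbb X}(2s,x^{2}).
\]
Applying the Cauchy product formula on the left and matching the coefficient of $x^{2k}$ against the right yields the base identity $\zeta_{\mathcal P_{\mathbb X}}(\{2s\}^{k})=\sum_{j=0}^{2k}(-1)^{j}\zeta_{\mathcal P_{\mathbb X}}(\{s\}^{j})\,\zeta_{\mathcal P_{\mathbb X}}(\{s\}^{2k-j})$; the general statement \eqref{ch414} is then obtained as the special case $s\mapsto 2^{n}s$ of this identity.

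The main obstacle is technical rather than conceptual: one must justify both the Euler product expansion of $Z_{\mathbb X}(s,x)$ and the Cauchy-product rearrangement in the coefficient-matching step. Both reduce to an absolute-convergence check, which is delivered by the hypothesis that $\zeta_{\mathcal P_{\mathbb X}}(s)$ converges --- this controls each length-graded component $\zeta_{\mathcal P_{\mathbb X}}(\{s\}^{k})$ and, crucially, allows Proposition \ref{ch3cauchyprod} to be invoked on $Z_{\mathbb X}(s,x)$ for $|x|\leq 1$. Once absolute convergence is in hand, the remaining steps are formal bookkeeping in the power-series variable $x$.
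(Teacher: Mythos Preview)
Your proposal is correct and follows essentially the same route as the paper: introduce a length-tracking variable (the paper calls it $z$, you call it $x$), use the Euler product $\prod_{n\in\mathbb X}(1-xn^{-s})^{-1}$ from Theorem~\ref{ch41.1}, multiply the $x$ and $-x$ versions to get $Z_{\mathbb X}(2s,x^2)$, and extract coefficients via the Cauchy product; the paper obtains \eqref{ch413} by setting $z=1$ in this same bivariate identity rather than by a separate factor-by-factor argument, but that is cosmetic. The only wording difference is that the paper says the general $n$ case ``follows from the $n=0$ case by induction'' whereas you say ``substitute $s\mapsto 2^n s$'' --- these are the same move.
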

\begin{remark}
As in Corollary \ref{ch41.8}, the summation on the right-hand side of \eqref{ch414} may be shortened by symmetry.
\end{remark}

If we take ${\mathbb X} = \mathbb P$, then \eqref{ch413} reduces to the classical identity $\zeta(2s) = \zeta(s)\sum_{n=1}^{\infty} \lambda(n)/n^s$, where $\lambda(n)$ is Liouville's function. Another specialization of Corollary \ref{ch41.9} leads to new information about $\zeta_{\mathcal P}(\brk{s}^k)$: we may extend the domain of $\zeta_{\mathcal P}(\brk{s}^k)$ to $\Real (s) > 1$ if we take ${\mathbb X} = \mathbb Z^+$, $n = 0$, $k = 2$. We find $\zeta_{\mathcal P}(\brk{s}^2)$ inherits analytic continuation from the sum on the right-hand side below.

\begin{coro}\label{ch41.10}
For $\Real (s) > 1$, we have 
\begin{equation*}
\zeta_{\mathcal P}(\brk{s}^2) = \frac{\zeta(2s) + \zeta(s)^2}{2}.
\end{equation*}
\end{coro}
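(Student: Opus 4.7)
The plan is to prove this directly from the definition of $\zeta_{\mathcal P}(\{s\}^2)$ by unfolding it into a double Dirichlet series and relating it to $\zeta(s)^2$ via a symmetry argument, in the spirit of the ``stuffle'' product for multiple zeta values. Since a partition of length $2$ is just an ordered pair $(\lambda_1,\lambda_2)$ with $\lambda_1 \geq \lambda_2 \geq 1$, and its norm is $n_\lambda = \lambda_1 \lambda_2$, the definition gives
\[
\zeta_{\mathcal P}(\{s\}^2) \;=\; \sum_{\lambda_1 \geq \lambda_2 \geq 1} \frac{1}{(\lambda_1 \lambda_2)^s} \;=\; \sum_{a \geq b \geq 1} \frac{1}{a^s b^s}.
\]

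First I would write $\zeta(s)^2$ as an absolutely convergent double sum (valid for $\operatorname{Re}(s) > 1$) and split it according to the three regions $a > b$, $a < b$, and $a = b$:
\[
\zeta(s)^2 \;=\; \sum_{a,b \geq 1} \frac{1}{a^s b^s} \;=\; \sum_{a > b \geq 1} \frac{1}{a^s b^s} \;+\; \sum_{b > a \geq 1} \frac{1}{a^s b^s} \;+\; \sum_{a \geq 1} \frac{1}{a^{2s}}.
\]
The first two sums are equal by the symmetry $(a,b) \leftrightarrow (b,a)$, and the diagonal is just $\zeta(2s)$, so
\[
\zeta(s)^2 \;=\; 2 \sum_{a > b \geq 1} \frac{1}{(ab)^s} \;+\; \zeta(2s).
\]
Solving for the strict-inequality sum and adding back the diagonal converts the strict inequality into the weak inequality $a \geq b$ that we need:
\[
\sum_{a \geq b \geq 1} \frac{1}{(ab)^s} \;=\; \frac{\zeta(s)^2 - \zeta(2s)}{2} + \zeta(2s) \;=\; \frac{\zeta(2s) + \zeta(s)^2}{2},
\]
which is exactly the claimed identity. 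Meromorphic continuation of the right-hand side in $s$ then transports, via this identity, to the left-hand side.

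There is no real obstacle here; the only thing to verify carefully is absolute convergence, which holds for $\operatorname{Re}(s) > 1$ and justifies the rearrangement of the double sum. I note that this argument is a small instance of the more general identity in Corollary \ref{ch41.9}, which at $n=0$, $k=2$, $\mathbb X = \mathbb Z^+$ encodes exactly the same splitting of a product of two zeta values into diagonal and off-diagonal contributions; the direct approach above is simply the most transparent route in this length-$2$ case.
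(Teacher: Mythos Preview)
Your proof is correct. The paper's own proof simply invokes Corollary~\ref{ch41.9} (specifically identity~\eqref{ch414} with $\mathbb X=\mathbb Z^+$, $n=0$) and simplifies, which amounts to comparing coefficients in a Cauchy product of two generating series in an auxiliary variable~$z$; unwinding that machinery in the length-$2$ case yields exactly the diagonal/off-diagonal decomposition of $\zeta(s)^2$ that you wrote down directly. So your argument is the elementary ``stuffle'' version of the same computation (the paper even flags this in the remark following the corollary), trading the generality of the generating-function framework for a transparent two-line derivation in this particular case.
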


\begin{remark}
This resembles the series shuffle product formula for multiple zeta values \cite{besser416double}.
\end{remark}

Another interesting subset of $\mathcal P$ is the set of partitions $\mathcal P^*$ into \textit{distinct} parts; also of interest is the set of partitions $\mathcal P_{\mathbb X}^*$ into distinct elements of ${\mathbb X} \subseteq \mathbb Z^+$ (thus $\mathcal P_{\mathbb Z^+}^* = \mathcal P^*$). However, partitions into distinct parts are not immediately compatible with the identities in Theorem \ref{ch41.1}. Happily, we have a dual theorem that leads us to zeta functions summed over $\mathcal P_{\mathbb X}^*$ for any ${\mathbb X}\subseteq \mathbb Z^+$. Let us recall the infinite product $\varphi_{\infty}(f;q)$ from Theorem \ref{ch41.1}.

\begin{theorem}\label{ch41.11}
If the product converges, then $\varphi_{\infty}(f;q) = \prod_{n=1}^{\infty}(1 - f(n)q^n)$ may be expressed in a number of equivalent forms, viz.
\begin{flalign}
\varphi_{\infty}(f;q) &= \sum_{\lambda \in \mathcal P^*}(-1)^{\ell(\lambda)} q^{|\lambda|} \prod_{\lambda_i \in \lambda}f(\lambda_i) \label{ch415}\\ 
&= \begin{array}{c}
1 - \sum_{(6)}
\end{array} \label{ch416}\\ 
& = \begin{array}{c}
1 - \varphi_{\infty}(f;q) \sum_{(5)} 
\end{array} \label{ch417}\\%\end{flalign}
&= 1-\cfrac{\sum_{(5)}}{1+\cfrac{\sum_{(6)}}{1-\cfrac{\sum_{(5)}}{1+\cfrac{\sum_{(6)}}{1- \cdots}}}} \label{ch418}
\end{flalign} 
%\displaybreak
where $\sum_{(5)},\sum_{(6)}$ are exactly as in Theorem \ref{ch41.1}, and the sum in \eqref{ch415} is taken over the partitions into distinct parts.
\end{theorem}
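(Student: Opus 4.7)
My plan is to prove the four identities in the order \eqref{ch415}, \eqref{ch416}, \eqref{ch417}, \eqref{ch418}, since \eqref{ch415} is the one genuinely new combinatorial content and the remaining three follow by direct algebra from Theorem \ref{ch41.1}.

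First I would establish \eqref{ch415} by expanding the finite product $\prod_{n=1}^{N}(1-f(n)q^n)$ term-by-term. Each expansion term is obtained by choosing, from each factor, either $1$ or $-f(n)q^n$; a choice is therefore labeled by the subset $S\subseteq\{1,2,\dots,N\}$ of indices at which we selected the second option, and contributes $(-1)^{|S|} q^{\sum_{n\in S}n}\prod_{n\in S}f(n)$. The subsets $S$ are in bijection with partitions $\lambda\in\mathcal P^*$ into distinct parts drawn from $\{1,\dots,N\}$, where the parts of $\lambda$ are exactly the elements of $S$; under this bijection $|S|=\ell(\lambda)$, $\sum_{n\in S}n=|\lambda|$, and $\prod_{n\in S}f(n)=\prod_{\lambda_i\in\lambda}f(\lambda_i)$. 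Letting $N\to\infty$ (the hypothesis of convergence of the infinite product justifies taking the limit term-by-term) yields \eqref{ch415}. This is the dual construction to the expansion that underlies \eqref{ch44}: there, repetition is allowed because each factor $(1-f(n)q^n)^{-1}$ expands as a geometric series; here, each part can be chosen at most once because each factor is linear.

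The identities \eqref{ch416} and \eqref{ch417} follow immediately by clearing denominators in identities \eqref{ch46} and \eqref{ch45} of Theorem \ref{ch41.1}. Multiplying both sides of \eqref{ch46} by $\varphi_{\infty}(f;q)$ gives $1=\varphi_{\infty}(f;q)+\sum_{(6)}$, which is \eqref{ch416}; multiplying both sides of \eqref{ch45} by $\varphi_{\infty}(f;q)$ gives $1=\varphi_{\infty}(f;q)+\varphi_{\infty}(f;q)\sum_{(5)}$, which is \eqref{ch417}.

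Finally, the continued fraction \eqref{ch418} is obtained by iterating \eqref{ch417} and \eqref{ch416} alternately. Rewriting \eqref{ch417} as
\[
\varphi_{\infty}(f;q)=1-\frac{\sum_{(5)}}{1/\varphi_{\infty}(f;q)},
\]
I would then use \eqref{ch46} in the form $1/\varphi_{\infty}(f;q)=1+\sum_{(6)}/\varphi_{\infty}(f;q)$ to replace the denominator, producing
\[
\varphi_{\infty}(f;q)=1-\cfrac{\sum_{(5)}}{1+\cfrac{\sum_{(6)}}{\varphi_{\infty}(f;q)}},
\]
and then substitute \eqref{ch417} for the innermost $\varphi_{\infty}(f;q)$, and so on. The chief obstacle is interpreting \eqref{ch418} rigorously: the identity should be read as a formal sequence of finite truncations, and the substitution procedure above shows by induction that the $k$th truncation equals $\varphi_{\infty}(f;q)$ up to a ``tail'' factor which approaches $1$ (in the sense of formal power series in $q$, since each substitution step increases the $q$-adic order of the correction), so the continued fraction expansion is valid in $\mathbb C[[q]]$ whenever the underlying product converges.
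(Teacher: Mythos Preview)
Your proof is correct and follows essentially the same approach as the paper. The only minor difference is in \eqref{ch418}: the paper takes a one-step shortcut by writing $\varphi_{\infty}(f;q)=1-\sum_{(5)}/(1/\varphi_{\infty}(f;q))$ and then substituting the already-established continued fraction \eqref{ch48} for $1/\varphi_{\infty}(f;q)$, whereas you rebuild the continued fraction by iterating \eqref{ch417} and \eqref{ch46} alternately; both arguments amount to the same recursive substitution.
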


\begin{remark}
Note that there is not a nice ``inverted" sum of the form \eqref{ch47} here.
\end{remark}

Just as with Theorem \ref{ch41.1}, we may write arbitrary power series as limiting cases, and we have the obvious identity
\begin{equation}\label{ch4obvious}
\prod_{n\in {\mathbb X}} (1-f(n)q^n) =  \sum_{\lambda \in \mathcal P_{\mathbb X}^*} (-1)^{\ell(\lambda)}q^{|\lambda|} \prod_{\lambda_i \in \lambda} f(\lambda_i),
\end{equation}
with the remaining summations in Theorem \ref{ch41.11} being taken over elements of ${\mathbb X}$. 

\begin{remark}
Clearly, the summation on the right-hand side of \eqref{ch4obvious}, as well as the $\mathbb X = \mathbb Z^+$ case \eqref{ch415}, can be rewritten in the form $$\sum_{\lambda \in \mathcal P_{\mathbb X}} \mu_{\mathcal P}(\lambda)q^{|\lambda|} \prod_{\lambda_i \in \lambda} f(\lambda_i).$$ However, to keep our notations absolutely general in this chapter from a set-theoretic perspective, we will for the most part label subsets of partitions into distinct parts with the ``$*$''  superscript, as opposed to filtering out terms with repeated parts using $\mu_{\mathcal P}$.
\end{remark}

For completeness, we record another obvious but useful consequence of Theorems \ref{ch41.1} and \ref{ch41.11}. The following statement might be viewed as a generalized eta quotient formula, with coefficients given explicitly by finite combinatorial sums\footnote{In Appendix C we discuss an interesting class of ``sequentially congruent'' partitions suggested by this formula.}.

\begin{coro}\label{ch4etaquotient}
For $f_j$ defined on ${\mathbb X}_j\subseteq \mathbb Z^+$, consider the double product
\[
\prod_{j=1}^{n}\prod_{k_j\in {\mathbb X}_j}\left(1\pm f_j(k_j)q^{k_j}\right)^{\pm 1}
=\sum_{k=0}^{\infty}c_k q^k
,\]
where the $\pm$ sign is fixed for fixed $j$, but may vary as $j$ varies. Then the coefficients $c_k$ are given by the $(n-1)$-tuple sum
\begin{multline*}
c_k= \sum_{k_2=0}^{k}\sum_{k_3=0}^{k_2}\dots\sum_{k_{n}=0}^{k_{n-1}}\left(\sum_{\substack{\lambda\vdash k_{n}\\ \lambda\in\mathcal P_{{\mathbb X}_{n}}^{\pm}}}\prod_{\lambda_i\in\lambda}f_{n}(\lambda_i)\right)\left(\sum_{\substack{\lambda\vdash (k_{n-1}-k_{n})\\ \lambda\in\mathcal P_{{\mathbb X}_{n-1}}^{\pm}}}\prod_{\lambda_i\in\lambda}f_{n-1}(\lambda_i)\right)\\
\times\left(\sum_{\substack{\lambda\vdash (k_{n-2}-k_{n-1})\\ \lambda\in\mathcal P_{{\mathbb X}_{n-2}}^{\pm}}}\prod_{\lambda_i\in\lambda}f_{n-2}(\lambda_i)\right)\dots\left(\sum_{\substack{\lambda\vdash (k-k_2)\\ \lambda\in\mathcal P_{{\mathbb X}_{1}}^{\pm}}}\prod_{\lambda_i\in\lambda}f_1(\lambda_i)\right)
\end{multline*}
in which we have set $\mathcal P_{{\mathbb X}_j}^-:=\mathcal P_{{\mathbb X}_j}$ and $\mathcal P_{{\mathbb X}_j}^+:=\mathcal P_{{\mathbb X}_j}^*$ with the $\pm$ sign as associated to each $j$ above.
\end{coro}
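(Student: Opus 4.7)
The plan is to expand each of the $n$ inner products $\prod_{k_j \in \mathbb X_j}(1\pm f_j(k_j)q^{k_j})^{\pm 1}$ separately by invoking Theorem~\ref{ch41.1} (when the exponent is $-1$) or Theorem~\ref{ch41.11} (when the exponent is $+1$), and then to multiply the resulting $n$ power series together by iterating the Cauchy product formula. For the exponent-$(-1)$ factor with minus sign, the identity following Theorem~\ref{ch41.1}, restricted to $\mathbb X_j$, gives directly
\[
\prod_{k_j \in \mathbb X_j}(1 - f_j(k_j)q^{k_j})^{-1} = \sum_{\lambda \in \mathcal P_{\mathbb X_j}} q^{|\lambda|} \prod_{\lambda_i \in \lambda} f_j(\lambda_i).
\]
For the exponent-$(+1)$ factor with plus sign, equation~\eqref{ch4obvious} from Theorem~\ref{ch41.11}, applied with $f_j$ replaced by $-f_j$, gives
\[
\prod_{k_j \in \mathbb X_j}(1 + f_j(k_j)q^{k_j}) = \sum_{\lambda \in \mathcal P^*_{\mathbb X_j}} q^{|\lambda|} \prod_{\lambda_i \in \lambda} f_j(\lambda_i),
\]
the two factors of $(-1)^{\ell(\lambda)}$ cancelling. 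Each of the $n$ expansions therefore has coefficient of $q^m$ equal to
\[
a_m^{(j)} := \sum_{\substack{\lambda \vdash m \\ \lambda \in \mathcal P_{\mathbb X_j}^{\pm}}} \prod_{\lambda_i \in \lambda} f_j(\lambda_i),
\]
with $\mathcal P^- = \mathcal P$ and $\mathcal P^+ = \mathcal P^*$ as in the statement.

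Next I would multiply the $n$ resulting series together. By $n-1$ applications of the ordinary Cauchy product, the coefficient of $q^k$ in the product equals
\[
c_k = \sum_{\substack{m_1 + m_2 + \cdots + m_n = k \\ m_j \geq 0}} \prod_{j=1}^{n} a_{m_j}^{(j)}.
\]
To cast this symmetric convolution as the nested $(n-1)$-tuple sum in the statement, I would introduce the partial-sum indices $k_j := m_j + m_{j+1} + \cdots + m_n$ for $2 \leq j \leq n$, so that the inequality constraints $m_j \geq 0$ become $k \geq k_2 \geq k_3 \geq \cdots \geq k_n \geq 0$, while $m_1 = k - k_2$, $m_j = k_j - k_{j+1}$ for $2 \leq j \leq n-1$, and $m_n = k_n$. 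Substituting these into the formula for $c_k$ produces exactly the nested sum claimed, with the factor indexed by $\mathbb X_n$ appearing innermost and the factor indexed by $\mathbb X_1$ appearing outermost, as written.

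The argument is essentially routine once Theorems~\ref{ch41.1} and \ref{ch41.11} have been established; the only mild obstacle is keeping the $\pm$ conventions consistent across the $n$ factors, so that no stray $(-1)^{\ell(\lambda)}$ factors survive the multiplication and so that the relabelling of summation variables produces the nested sum in the exact order stated. No new convergence issues arise beyond those already guaranteed by the hypotheses of the two preceding theorems, since the product of absolutely convergent series may be rearranged freely.
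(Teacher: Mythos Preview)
Your proposal is correct and follows essentially the same approach as the paper: expand each inner product as a partition sum via the leading identities of Theorems~\ref{ch41.1} and~\ref{ch41.11}, then iterate the Cauchy product \eqref{ch426} to obtain the nested convolution. The paper's proof is terser but structurally identical, and your explicit reindexing via partial sums $k_j = m_j + \cdots + m_n$ makes the passage to the stated $(n-1)$-tuple sum more transparent than the paper's ``repeatedly apply Equation~\eqref{ch426}.''
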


\begin{remark} The $+$ or $-$ signs in the formula for $c_k$ indicate partitions arising from the numerator or denominator, respectively, of the double product. One may replace $f_j$ with $-f_j$ to effect further sign changes.
\end{remark}

Analogous corollaries to those following Theorem \ref{ch41.1} are available, but we wish right away to apply this theorem to the problem at hand, the investigation of partition zeta functions. We have 
\begin{equation}\zeta_{\mathcal P_{\mathbb X}^*}(s) = \prod_{n \in {\mathbb X}} (1 + \frac{1}{n^s}),\  \  \  \  \  \  \  \  \  \  \  \  \eta_{\mathcal P_{\mathbb X}^*}(s) = \prod_{n \in {\mathbb X}} (1 - \frac{1}{n^s}),\end{equation} 
again noting that in fact $$\eta_{\mathcal P_{{\mathbb X}}^*}(s)=\sum_{\lambda \in \mathcal P}\mu_{\mathcal P}(\lambda)n_{\lambda}^{-s}.$$ It is immediate then from \eqref{ch415} that for $\Real (s) > 1$ we also have the following relations, where the sum on the left-hand side of each equation is taken over the partitions into distinct elements of ${\mathbb X}$:
\begin{equation}
\zeta_{\mathcal P_{\mathbb X}^*}(s)  = \frac{1}{\eta_{\mathcal P_{\mathbb X}}(s)},\  \  \  \  \  \  \  \  \  \  \  \  \eta_{\mathcal P_{\mathbb X}^*}(s) = \frac{1}{\zeta_{\mathcal P_{\mathbb X}}(s)}. \label{ch419}
\end{equation}
Note that $\zeta_{\mathcal P_{\mathbb X}^*}(s)$ and $\eta_{\mathcal P_{\mathbb X}^*}(s)$ are finite sums (and entire functions of $s$) if ${\mathbb X}$ is a finite set, unlike $\zeta_{\mathcal P_{\mathbb X}}(s)$ and $\eta_{\mathcal P_{\mathbb X}}(s)$. Note also that $\eta_{\mathcal P_{\mathbb X}^*}(s) = 0$ identically if $1 \in {\mathbb X}$, with zeros when ${\mathbb X}$ is finite at the values $s=i\pi j/\log n$ for any $n\in {\mathbb X}$ and $j$ even. Unlike $\zeta_{\mathcal P}(s)$, we can see from \eqref{ch419} that $\zeta_{\mathcal P^*}(s)$ is well-defined on $\Real (s) > 1$ (thus both $\zeta_{\mathcal P_{\mathbb X}^*}$ and $\eta_{\mathcal P_{\mathbb X}^*}$ are well-defined over all subsets $\mathcal P_{\mathbb X}^*$ of $\mathcal P^*$); when ${\mathbb X}$ is finite, $\zeta_{\mathcal P^*}(s)$ has zeros at $s=i\pi k/\log n$ for $n\in {\mathbb X}$ and $k$ odd. Morever, we have $\zeta_{\mathcal P_{\mathbb X}^*}(s)\zeta_{\mathcal P_{\mathbb Y}^*}(s)=\zeta_{\mathcal P_{{\mathbb X} \cup \mathbb Y}^*}(s)\zeta_{\mathcal P_{{\mathbb X} \cap \mathbb Y}^*}(s)$ and $\eta_{\mathcal P_{\mathbb X}^*}(s)\eta_{\mathcal P_{\mathbb Y}^*}(s)=\eta_{\mathcal P_{{\mathbb X} \cup \mathbb Y}^*}(s)\eta_{\mathcal P_{{\mathbb X} \cap \mathbb Y}^*}(s)$. Here is an example of a zeta sum of this form. %%%NEW LANGUAGE TO THIS PARAGRAPH SINCE SUBMISSION%%%

\begin{coro}\label{ch41.12}
Summing over partitions into distinct parts, we have that 
\begin{equation*}
\zeta_{\mathcal P^*}(2) = \sum_{\lambda \in \mathcal P^*} \frac{1}{n_{\lambda}^2} = \frac{\sinh \pi }{\pi}.
\end{equation*}
\end{coro}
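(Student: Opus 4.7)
The plan is to reduce the zeta sum over partitions into distinct parts to a classical infinite product, then evaluate that product using Euler's factorization of $\sinh$. First I would invoke Theorem \ref{ch41.11} (specifically identity (\ref{ch415}) applied along the lines of (\ref{ch4obvious})) with $q=1$, $\mathbb X = \mathbb Z^+$, and $f(n) = -n^{-s}$, so that the multiplicative structure over partitions into distinct parts produces the product representation
\[
\zeta_{\mathcal P^*}(s) \;=\; \sum_{\lambda \in \mathcal P^*} \frac{1}{n_\lambda^{s}} \;=\; \prod_{n=1}^{\infty}\left(1+\frac{1}{n^{s}}\right),
\]
which is also precisely the content of the first half of (\ref{ch419}) combined with the Euler-type product for $\eta_{\mathcal P}(s)$. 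Convergence at $s=2$ is immediate since $\sum 1/n^{2}$ converges absolutely, so the sum-product interchange is justified.

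Next I would specialize to $s=2$ and recognize the resulting product as a value of Euler's sine-type product. Starting from Euler's factorization (\ref{ch43}) applied to $\sin(ix) = i\sinh(x)$, or equivalently the standard identity
\[
\sinh(\pi x) \;=\; \pi x \prod_{n=1}^{\infty}\left(1+\frac{x^{2}}{n^{2}}\right),
\]
I would set $x=1$ to obtain $\prod_{n=1}^{\infty}\bigl(1+n^{-2}\bigr) = \sinh(\pi)/\pi$. Substituting this back yields
\[
\zeta_{\mathcal P^*}(2) \;=\; \prod_{n=1}^{\infty}\left(1+\frac{1}{n^{2}}\right) \;=\; \frac{\sinh \pi}{\pi},
\]
as claimed.

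I do not anticipate a serious obstacle: the only subtlety is to justify the passage from the formal sum-product identity in Theorem \ref{ch41.11} to the analytic identity at $q=1,\,s=2$, which follows at once from absolute convergence of $\sum n^{-2}$ and the fact that $\prod(1+n^{-2})$ converges (each factor is positive and the logarithms sum absolutely). Once the product form is in hand, invoking Euler's $\sinh$ product is a one-line evaluation.
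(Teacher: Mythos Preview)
Your proof is correct and takes essentially the same approach as the paper: invoke Theorem~\ref{ch41.11} to rewrite $\zeta_{\mathcal P^*}(2)$ as the product $\prod_{n\ge1}(1+n^{-2})$, then evaluate via Euler's $\sinh$ product at $z=1$. The paper compresses these two steps into a single displayed identity, but the content is identical.
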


Zeta sums over partitions into distinct parts admit an important special case: as we remarked beneath definition \eqref{ch411}, the multiple zeta function $\zeta(\brk{s}^k)$ can be written 
\begin{equation}\label{ch420}
\zeta(\brk{s}^k) := \sum_{\lambda_1 > \lambda_2 > \cdots > \lambda_k \geq 1} \frac{1}{\lambda_1^s\lambda_2^s\cdots \lambda_k^s} = \sum_{\begin{tiny}
\begin{array}{c}
\lambda \in \mathcal P^* \\
\ell(\lambda) = k
\end{array}\end{tiny}} \frac{1}{n_{\lambda}^s} = \zeta_{\mathcal P^*}(\brk{s}^k).
\end{equation}
Using this notation, we can derive even simpler formulas for the multiple zeta values $\zeta(\brk{2^t}^k)$ than those found for $\zeta_{\mathcal P}(\brk{2^t}^k)$ in Corollaries \ref{ch41.7} and \ref{ch41.8}, such as these.% evaluations.%For instance, we have the following values.

\begin{coro}\label{ch41.13}
For $k>0$ we have the identities
\begin{flalign*}
\zeta(\brk{2}^k) &= \frac{\pi^{2k}}{(2k+1)!}, \\
\zeta(\brk{4}^k) &= \pi^{4k} \sum_{n=0}^{2k} \frac{(-1)^n}{(2n+1)!(4k-2n+1)!}, \\
\zeta(\brk{8}^k) &= \pi^{8k} \sum_{n=0}^{4k}(-1)^n \pwr{\sum_{i=0}^n \frac{(-1)^i}{(2i+1)!(2n-2i +1)!}}\\
&\  \  \  \  \  \  \  \  \  \  \  \  \  \  \  \  \  \  \times\pwr{\sum_{i=0}^{4k-n} \frac{(-1)^i}{(2i+1)!(8k-2n-2i+1)!}},
\end{flalign*}
and increasingly complicated formulas of the shape $``\pi^{2^tk} \times \text{finite sum of fractions}"$ can be computed for multiple zeta values of the form $\zeta(\brk{2^t}^k),$ $t \in \mathbb Z^+$.
\end{coro}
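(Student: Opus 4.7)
The plan is to derive all three lines (and in fact the general $s = 2^t$ case) from one master identity: specializing Theorem \ref{ch41.11} at $q = 1$ with $f(n) = -z/n^s$ gives
\[
\prod_{n=1}^{\infty}\left(1 + \frac{z}{n^s}\right) = \sum_{\lambda \in \mathcal P^*}\frac{z^{\ell(\lambda)}}{n_\lambda^s} = \sum_{k=0}^{\infty}\zeta(\{s\}^k)\,z^k
\qquad(\Re(s)>1),
\]
using the definition (\ref{ch420}) of $\zeta(\{s\}^k)$ as a sum over distinct partitions of length $k$. This is the ``engine'': it converts multiple zeta values at fixed argument into Taylor coefficients of an Euler-type product. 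Combining it with Euler's sine product \eqref{ch43} lets us read off the values.

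First I would treat $s=2$. Setting $z = -x^2/\pi^2$ in the engine and invoking \eqref{ch43} rewritten as $\sin x/x = \prod_n(1 - x^2/(n\pi)^2)$ yields
\[
\frac{\sin x}{x} = \sum_{k=0}^{\infty}(-1)^k\,\zeta(\{2\}^k)\,\frac{x^{2k}}{\pi^{2k}}.
\]
Matching this against the known Taylor expansion $\sin x/x = \sum_k(-1)^k x^{2k}/(2k+1)!$ gives $\zeta(\{2\}^k) = \pi^{2k}/(2k+1)!$. Next, for $s=4$ I would exploit the factorization $1 - x^4/(n\pi)^4 = \bigl(1-x^2/(n\pi)^2\bigr)\bigl(1+x^2/(n\pi)^2\bigr)$, which identifies $\prod_n(1 - x^4/(n\pi)^4)$ with $(\sin x/x)(\sinh x/x)$. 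Cauchy-multiplying the two Taylor series,
\[
\frac{\sin x}{x}\cdot\frac{\sinh x}{x} = \sum_{N=0}^{\infty}\left(\sum_{n=0}^{N}\frac{(-1)^n}{(2n+1)!(2N-2n+1)!}\right)x^{2N},
\]
and then extracting the $x^{4k}$ coefficient (so $N = 2k$) and comparing with the $x^{4k}$-coefficient of $\sum_k(-1)^k\zeta(\{4\}^k)(x/\pi)^{4k}$ from the engine produces the stated closed form.

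For $s = 8$ the same idea iterates: write $1 - x^8/(n\pi)^8 = \bigl(1 - x^4/(n\pi)^4\bigr)\bigl(1 + x^4/(n\pi)^4\bigr)$, observe that each factor already has a known Taylor expansion from the $s=4$ analysis, and form a second Cauchy product. Extracting the $x^{8k}$ coefficient and setting it equal to the $x^{8k}$ coefficient of $\sum_k(-1)^k\zeta(\{8\}^k)(x/\pi)^{8k}$ produces the nested double sum displayed in the corollary. This recursion continues: for general $s = 2^t$, the factorization $1 - y^{2^t}/n^{2^t} = \prod_{\varepsilon=\pm 1}(1-\varepsilon y^{2^{t-1}}/n^{2^{t-1}})$ reduces the $t$-th case to a single Cauchy product of two series already computed at level $t-1$, which is why the formulas take the shape $\pi^{2^t k}\times(\text{finite sum of fractions})$ advertised in the statement.

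The only real obstacle is bookkeeping. One must carefully track (i) the overall sign $(-1)^k$ appearing on the left from expanding $\prod_n(1 - u/n^{2^t})$ as a power series in $u = x^{2^t}/\pi^{2^t}$, (ii) the restriction of extracted coefficients to powers of $x^{2^t}$ (only ``every $2^{t-1}$st'' term of the outer Cauchy product contributes), and (iii) the precise ranges of the iterated indices $n$, $i$, etc., as the Cauchy convolution is applied. Aside from this combinatorial bookkeeping, each step is a routine coefficient comparison, and the entire corollary follows by induction on $t$ from the base case $s = 2$.
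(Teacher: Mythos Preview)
Your approach is correct and essentially identical to the paper's: both start from Theorem~\ref{ch41.11} combined with Euler's sine product \eqref{ch43} to get the $s=2$ case by coefficient comparison, then iterate to $s=2^t$ via the factorization $1-u^2=(1-u)(1+u)$ and a Cauchy product, exactly as you describe. The paper phrases the inductive step as multiplying the two generating series $\sum_k (\pm 1)^k\zeta(\{2^{t-1}\}^k)(z/\pi)^{2^{t-1}k}$ directly rather than naming the products $(\sin x/x)(\sinh x/x)$, but this is the same computation; your flag about tracking the global $(-1)^k$ sign is apt, since that is precisely where care is needed.
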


\begin{remark}
The first identity above is proved in \cite{Hoffman1} by a different approach from that taken here (see Section \ref{ch4Proofs of theorems and corollaries}); it is possible the other identities in the corollary are also known.
\end{remark}

The summations in Corollary \ref{ch41.13} arise from quite general properties: we have these ``doubling'' formulas comparable to Corollary \ref{ch41.9}.

\begin{coro}\label{ch41.14}
If $\zeta_{\mathcal P_{\mathbb X}^*}(s)$ converges over $\mathcal P_{\mathbb X}^* \subseteq \mathcal P$, then 
\begin{equation}\label{ch421}
\eta_{\mathcal P_{\mathbb X}^*}(2s) = \eta_{\mathcal P_{\mathbb X}^*}(s)\zeta_{\mathcal P_{\mathbb X}^*}(s).
\end{equation}
Furthermore, for $n \in \mathbb Z_{\geq 0}$ we have
\begin{equation}\label{ch422}
\zeta_{\mathcal P_{\mathbb X}^*}\left(\{2^{n+1}s\}^k\right) = \sum_{j=0}^{2^nk}(-1)^j \zeta_{\mathcal P_{\mathbb X}^*}\left(\brk{2^ns}^j\right)\zeta_{\mathcal P_{\mathbb X}^*}\left(\brk{2^ns}^{2^nk-j}\right).
\end{equation}
\end{coro}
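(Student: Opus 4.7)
My plan is to deduce both identities from straightforward manipulations of the infinite product formulas for $\zeta_{\mathcal P_{\mathbb X}^*}$ and $\eta_{\mathcal P_{\mathbb X}^*}$ recorded just after \eqref{ch419}, namely
\[
\zeta_{\mathcal P_{\mathbb X}^*}(s) \;=\; \prod_{n \in \mathbb X}\bigl(1 + n^{-s}\bigr), \qquad
\eta_{\mathcal P_{\mathbb X}^*}(s) \;=\; \prod_{n \in \mathbb X}\bigl(1 - n^{-s}\bigr),
\]
which are themselves specializations of Theorem \ref{ch41.11} obtained by setting $q=1$ and taking $f(n) = \mp 1/n^s$ on $\mathbb X$ (and zero elsewhere).

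For identity \eqref{ch421}, the plan is to multiply the two products factor-by-factor and use $(1-n^{-s})(1+n^{-s}) = 1 - n^{-2s}$, yielding $\zeta_{\mathcal P_{\mathbb X}^*}(s)\eta_{\mathcal P_{\mathbb X}^*}(s) = \prod_{n \in \mathbb X}(1 - n^{-2s}) = \eta_{\mathcal P_{\mathbb X}^*}(2s)$. Absolute convergence for $\Re(s)>1$ (or automatic convergence when $\mathbb X$ is finite) justifies the rearrangement.

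For identity \eqref{ch422}, I introduce a length-tracking variable $z$. Applying Theorem \ref{ch41.11} with $f(n) = -z/n^s$ on $\mathbb X$ gives the generating function
\[
F(s;z) \;:=\; \prod_{n \in \mathbb X}\bigl(1 + z n^{-s}\bigr) \;=\; \sum_{\lambda \in \mathcal P_{\mathbb X}^*} z^{\ell(\lambda)} n_\lambda^{-s} \;=\; \sum_{k=0}^\infty z^k\, \zeta_{\mathcal P_{\mathbb X}^*}\!\bigl(\{s\}^k\bigr).
\]
Then I compute $F(s;z)\,F(s;-z)$ in two ways. First, factor-by-factor it telescopes to $\prod_{n \in \mathbb X}(1 - z^2 n^{-2s}) = F(2s; -z^2) = \sum_{k \geq 0}(-1)^k z^{2k}\zeta_{\mathcal P_{\mathbb X}^*}(\{2s\}^k)$. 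Second, as a Cauchy product of the two series (Proposition \ref{ch3cauchyprod} applied to power series) it equals $\sum_{m \geq 0} z^m \sum_{j=0}^m (-1)^{m-j}\zeta_{\mathcal P_{\mathbb X}^*}(\{s\}^j)\zeta_{\mathcal P_{\mathbb X}^*}(\{s\}^{m-j})$. Matching coefficients of $z^{2k}$ and simplifying $(-1)^{2k-j} = (-1)^j$ produces the $n=0$ base case of \eqref{ch422}. The general $n$ then follows by substituting $s \mapsto 2^n s$ on both sides, which is legitimate since each side is an analytic function of $s$ on the relevant half-plane.

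The main obstacle will be bookkeeping of the signs, which differs from the unstarred Corollary \ref{ch41.9}: the relation $1 - z^2n^{-2s} = (1-zn^{-s})(1+zn^{-s})$ introduces an extra factor $(-1)^k$ on the $\zeta_{\mathcal P_{\mathbb X}^*}$-side (arising from $F(2s;-z^2)$) that is absent in the parallel computation with $\zeta_{\mathcal P_{\mathbb X}}(s) = \prod(1-n^{-s})^{-1}$, where one instead evaluates $F(2s;z^2)$. Care must also be taken that all rearrangements are justified: for infinite $\mathbb X$ we need $\Re(s) > 1$, for finite $\mathbb X$ everything is a polynomial identity. Once signs are tracked, the argument is a direct analog of the proof of Corollary \ref{ch41.9}, with the product $\prod(1+zn^{-s})$ playing the role that $\prod(1-zn^{-s})^{-1}$ played there.
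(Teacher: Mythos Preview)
Your approach is essentially identical to the paper's: the paper also multiplies the two product expansions to get \eqref{ch421} at $z=1$, then expands $\bigl(\sum_{k\ge 0} z^{sk}\zeta_{\mathcal P_{\mathbb X}^*}(\{s\}^k)\bigr)\bigl(\sum_{k\ge 0}(-1)^k z^{sk}\zeta_{\mathcal P_{\mathbb X}^*}(\{s\}^k)\bigr) = \sum_{k\ge 0}(-1)^k z^{2sk}\zeta_{\mathcal P_{\mathbb X}^*}(\{2s\}^k)$, compares coefficients via the Cauchy product to get the $n=0$ case, and then says the general $n$ follows by induction. Your careful tracking of the extra $(-1)^k$ on the right (and of the summation limit $2k$ when matching the $z^{2k}$ coefficient) is exactly what the paper's own proof produces, even though the displayed formula \eqref{ch422} as printed omits that sign and writes the upper index as $2^n k$; so your derivation in fact reproduces what the paper's proof actually establishes.
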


\begin{remark}
Once again, the summation on the right-hand side of \eqref{ch422} may be be shortened by symmetry. Equation \eqref{ch422} yields a family of multiple zeta function identities when we let ${\mathbb X} = \mathbb Z^+$. 
\end{remark}

We note that by recursive arguments, from \eqref{ch413} and \eqref{ch421} together with \eqref{ch48}, we have these curious product formulas connecting sums over partitions into distinct parts to their counterparts involving unrestricted partitions:
\begin{align*}
\zeta_{\mathcal P_{\mathbb X}^*}(s)\zeta_{\mathcal P_{\mathbb X}^*}(2s)\zeta_{\mathcal P_{\mathbb X}^*}(4s)\zeta_{\mathcal P_{\mathbb X}^*}(8s)\cdots &= \zeta_{\mathcal P_{\mathbb X}}(s), \\
\eta_{\mathcal P_{\mathbb X}}(s)\eta_{\mathcal P_{\mathbb X}}(2s)\eta_{\mathcal P_{\mathbb X}}(4s)\eta_{\mathcal P_{\mathbb X}}(8s)\cdots &= \eta_{\mathcal P_{\mathbb X}^*}(s).
\end{align*}

Now, if we take ${\mathbb X} = \mathbb P$ then \eqref{ch421} becomes the well-known classical identity $\zeta(2s)\inv =\zeta(s)\inv \sum_{n=1}^{\infty} |\mu(n)|/n^s$, where $\mu(n)$ is the M\"obius function. %We might view the simple quantity $(-1)^{\ell(\lambda)}$ as a partition-theoretic generalization of $\mu$; it 
As we have noted, the quantity $(-1)^{\ell(\lambda)}$ is exactly $\mu_{\mathcal P}(\lambda)$ when $\lambda$ is in $\mathcal P^*$, and otherwise is a partition version of Liouville's function which specializes to the classical %specializes to the M\"obius function (when considering partitions into distinct prime parts), and also to 
Liouville's function when we consider unrestricted prime partitions. %), as we saw above. K. Alladi has observed this correspondence as well (personal communication, December 22, 2015). 

%It is fascinating --- and rather mysterious --- that partitions (which are defined additively, with no connection to multiplication) into parts that are prime numbers (defined multiplicatively) should have significant number-theoretic connections. 

The literature abounds with product formulas which, when fed through the machinery of the identities noted here, produce nice partition zeta sum variants; the interested reader is referred to \cite{ChamberlandStraub} as a starting point for further study.

\vspace{2em}

\section{Proofs of theorems and corollaries}\label{ch4Proofs of theorems and corollaries}

\begin{proof}[Proof of Theorem \ref{ch41.1}]
Identity \eqref{ch44} appears in a different form as \cite[Eq.~22.16]{Fine}. The proof proceeds formally, much like the standard proof of \eqref{ch41.1}; we expand $1/ \varphi_{\infty}(f;q)$ as a product of geometric series
\begin{multline*}
\frac{1}{\varphi_{\infty}(f;q)} = (1+ f(1)q + f(1)^2q^2 + f(1)^3q^3 + \dots)\\
\times(1 + f(2)q^2 + f(2)^2q^4 + f(2)^3q^6 + \dots)\times\cdots
\end{multline*}
and multiply out all the terms (without collecting coefficients in the usual way). The result is the partition sum in \eqref{ch44}. 

Identities \eqref{ch45} and \eqref{ch46} are proved using telescoping sums. Consider that 
\begin{align*}
\frac{1}{\varphi_{\infty}(f;q)} &= \frac{1}{\varphi_0(f;q)} + \sum_{n=1}^{\infty} \pwr{\frac{1}{\varphi_n(f;q)} - \frac{1}{\varphi_{n-1}(f;q)}} \\
&= 1 + \sum_{n=1}^{\infty} \frac{1}{\varphi_{n-1}(f;q)} \pwr{\frac{1}{1- f(n)q^n} - 1} \\
&= 1 + \sum_{n=1}^{\infty}q^n \frac{f(n)}{\varphi_n(f;q)} = \begin{array}{c}
1 + \sum_{(5)}
\end{array},
\end{align*}
recalling the notation $\sum_{(5)}$ (as well as $\sum_{(6)}$) from the theorem, which is \eqref{ch45}. Similarly, we can show
\begin{align*}
\varphi_{\infty}(f;q) &= \varphi_0(f;q) + \sum_{n=1}^{\infty} (\varphi_n(f;q) - \varphi_{n-1}(f;q)) \\
&= 1 - \sum_{n=1}^{\infty}q^nf(n) \varphi_{n-1}(f;q) = \begin{array}{c}
1- \sum_{(6)}
\end{array}.
\end{align*}
Thus we have 
$$\begin{array}{c}
\sum_{(5)}
\end{array} = \frac{1}{\varphi_{\infty}(f; q)} - 1 = \frac{1 - \varphi_{\infty}(f;q)}{\varphi_{\infty}(f;q)} = \frac{\sum_{(6)}}{\varphi_{\infty}(f;q)},$$
which leads to \eqref{ch46}.

%The proof of \eqref{ch47} is similar to the proof we gave of \cite[Theorem~1.1(1)]{rolen2013strange}. 
To prove \eqref{ch47}, substitute the identity
$$\varphi_n(f;q) = \prod_{k=1}^{n} (1-f(k)q^k) = (-1)^nq^{n(n+1)/2}\varphi_n(1/f ; q\inv)\prod_{k=1}^n f(k)$$
term-by-term into the sum \eqref{ch45} and simplify to find the desired expression.

The proof of \eqref{ch48} is inspired by the standard proof of the continued fraction representation of the golden ratio. It follows from the proof above of \eqref{ch45} and \eqref{ch46} that 
\begin{align*}
\frac{1}{\varphi_{\infty}(f;q)} &= 1 + \frac{\sum_{(6)}}{\varphi_{\infty}(f;q)} \\
&= 1 + \frac{\sum_{(6)}}{1-\varphi_{\infty}(f;q)\sum_{(5)}} \\
&= 1+\cfrac{\sum_{(6)}}{1-\cfrac{\sum_{(5)}}{1/\varphi_{\infty}(f;q)}}.
\end{align*}
We notice that the expression on the left-hand side is also present on the far right in the denominator. We replace this term $1/\varphi_{\infty}(f;q)$ in the denominator with the entire right-hand side of the equation; reiterating this process indefinitely gives \eqref{ch48}.
\end{proof}
\begin{remark}
The series$\begin{array}{c}
\sum_{(5)},\sum_{(6)}
\end{array}$enjoy other nice, golden ratio-like relationships. For instance, because $$\begin{array}{c}
(1 + \sum_{(5)})(1-\sum_{(6)}) = 1 
\end{array},$$ it is easy to see that 
$$\begin{array}{c}
\sum_{(5)} - \sum_{(6)} = \sum_{(5)}\sum_{(6)},
\end{array}$$
which resembles the formula $\varphi - 1/\varphi = \varphi \cdot 1/\varphi$ involving the golden ratio $\varphi$ and its reciprocal.
\end{remark}

\begin{proof}[Proof of Corollary \ref{ch41.2}]
This is immediate upon letting $f\equiv 1$ in \eqref{ch44}, as
$$\sum_{\lambda \in \mathcal P}q^{|\lambda|}\prod_{\lambda_i \in \lambda}f(\lambda_i) = 1 + \sum_{n=1}^{\infty}q^n\sum_{\lambda \vdash n}\prod_{\lambda_i \in \lambda} f(\lambda_i).$$
\end{proof}

\begin{proof}[Proof of Corollary \ref{ch41.3}]
As noted above, we assume $\Real (s)>1$. Let $q = 1$, $f(n) = 1/n^s$ if $n$ is prime and $=0$ otherwise; then by \eqref{ch44}
$$\frac{1}{\prod_{p \in \mathbb P}\pwr{1-\frac{1}{p^s}}} = \sum_{\lambda \in \mathcal P_{\mathbb P}} \frac{1}{n_{\lambda}^s}.$$
Consider the prime decomposition of a positive integer $n = p_1^{a_1}p_2^{a_2}\cdots p_r^{a_r}$, $p_1>p_2>\cdots > p_r$. We will associate this decomposition to the unique partition into prime parts $\lambda = (p_1,\dots , p_1,p_2,\dots,$ $p_2,\dots ,p_r,\dots ,p_r) \in \mathcal P$, where $p_k \in \mathbb P$ is repeated $a_k$ times (thus $n$ is equal to $n_{\lambda}$). As he have discussed previously, every positive integer $n\geq 1$ is associated to exactly one partition into prime parts (with $n = 1$ associated to $\emptyset \in \mathcal P_{\mathbb P}$), and conversely: there is a bijective correspondence between $\mathbb Z^+$ and $\mathcal P_{\mathbb P}$. % (Alladi-Erd\H{o}s use this fact in \cite{AlladiErdos}). 
Therefore we see by absolute convergence that 
$$\sum_{n\geq 1}\frac{1}{n^s} = \sum_{\lambda \in \mathcal P_{\mathbb P}} \frac{1}{n_{\lambda}^s}.$$
Equating the left-hand sides of the above two identities gives Euler's product formula \eqref{ch1zetaproduct}. The series given for $\zeta(s)$ follows immediately from Theorem \eqref{ch45} with the above definition of $f$.
\end{proof}

\begin{proof}[Proof of Corollary \ref{ch41.4}]
This is actually a special case of the subsequent Corollary \ref{ch41.5}, setting $m=2$ in the first equation (see below).
\end{proof}

\begin{proof}[Proof of Corollary \ref{ch41.5}]
We begin with an identity equivalent to \eqref{ch43} and its ``$+$" companion:
\begin{align*}
\frac{\pi z}{\sin (\pi z)} &= \frac{1}{\prod_{n=1}^{\infty} \pwr{1- \frac{z^2}{n^2}}}, &\frac{\pi z}{\sinh (\pi z)}& = \frac{1}{\prod_{n=1}^{\infty} \pwr{1+ \frac{z^2}{n^2}}}.
\end{align*}
If $\omega_k := e^{2\pi i /k}$, then $\omega_{2k}^2 = \omega_k$ and we have, by multiplying the above two identities, the pair
$$
\frac{\pi^2z^2}{\sin (\pi z)\sinh (\pi z)} = \frac{1}{\prod_{n=1}^{\infty} \pwr{1-\frac{z^4}{n^4}}},\  \  \frac{\omega_4\pi^2z^2}{\sin (\omega_8\pi z)\sinh (\omega_8\pi z)} = \frac{1}{\prod_{n=1}^{\infty} \pwr{1+\frac{z^4}{n^4}}}.
$$
Multiplying these two equations, and repeating this procedure indefinitely, we find identities like
\begin{align*}
\frac{\omega_4\pi^4z^4}{\sin (\pi z)\sinh(\pi z)\sin(\omega_8\pi z)\sinh(\omega_8 \pi z)} &= \frac{1}{\prod_{n=1}^{\infty} \pwr{1 - \frac{z^8}{n^8}}}, 
\end{align*}
\begin{multline*}
\frac{\omega_4^2\pi^8z^8}{\sin(\pi z)\sinh(\pi z) \sin (\omega_8 \pi z)\sinh (\omega_8 \pi z) }\\
\times\frac{1}{\sin(\omega_{16}\pi z) \sinh (\omega_{16}\pi z)\sin (\omega_8\omega_{16}\pi z) \sinh (\omega_8\omega_{16} \pi z)}
\end{multline*}
$$
\quad \quad \quad = \frac{1}{\prod_{n=1}^{\infty} \pwr{1 - \frac{z^{16}}{n^{16}}}},
$$
as well as their ``$+$" companions, and so on. On the other hand, it follows from \eqref{ch44} that 
$$\frac{1}{\prod_{n=1}^{\infty} \pwr{1 - \frac{zq^n}{n^s}}} = \sum_{\lambda \in \mathcal P}q^{|\lambda|} \prod_{\lambda_i \in \lambda} \frac{z}{\lambda_i^s} = \sum_{\lambda \in \mathcal P}\frac{z^{\ell(\lambda)}q^{|\lambda|}}{n_{\lambda}^s}.$$
Replacing $z$ with $\pm z^{2^t}$ and taking $q = 1$ in the above expression, it is easy to see that we have 
\begin{align*}
\frac{1}{\prod_{n=1}^{\infty} \pwr{1 - \frac{z^{2^t}}{n^{2^t}}}} &= \sum_{\lambda \in \mathcal P}\frac{z^{2^t \ell(\lambda)}}{n_{\lambda}^{2^t}}, &\frac{1}{\prod_{n=1}^{\infty} \pwr{1 + \frac{z^{2^t}}{n^{2^t}}}} &= \sum_{\lambda \in \mathcal P}\frac{(-1)^{\ell(\lambda)}z^{2^t \ell(\lambda)}}{n_{\lambda}^{2^t}}.
\end{align*}
These series have closed forms given by complicated trigonometric and hyperbolic expressions such as the ones above. Setting $z = 1/m$ in such expressions yields the explicit values advertised in the corollary for
\begin{align*}
\frac{1}{\prod_{n=1}^{\infty} \pwr{1 - \frac{1}{m^{2^t}n^{2^t}}}} & = \frac{1}{\prod_{n=1}^{\infty} \pwr{1 - \frac{1}{(mn)^{2^t}}}}\\
& = \frac{1}{\prod_{n\equiv 0  \, (\text{mod }m)} \pwr{1 - \frac{1}{n^{2^t}}}} = \sum_{\lambda \in \mathcal P_{m\mathbb Z}} \frac{1}{n_{\lambda}^{2^t}}.
\end{align*}\end{proof}

\begin{remark}
More generally, let $\mathcal P_{a(m)}$ denote the set of partitions into parts $\equiv a \pmod m$ (so $\mathcal P_{m\mathbb Z}$ is $\mathcal P_{0(m)}$ in this notation). It is clear that if $\lambda \in \mathcal P_{a(m)}$ then $n_{\lambda}^s \equiv a^s \pmod m$, thus we find 
$$\frac{1}{\prod_{n\equiv a \, (\text{mod }m)} (1-n^sq^n)}  = \sum_{\lambda \in \mathcal P_{a(m)}} n_{\lambda}^sq^{|\lambda|} \equiv \frac{1}{(a^sq^a ; q^m)_{\infty}} \pmod m .$$
Of course, these expressions diverge as $q \rra 1$ so $\zeta_{\mathcal P_{a(m)}}(-s)$ does not make sense, but we wonder: do there exist similarly nice relations that involve $\zeta_{\mathcal P_{a(m)}}(s)$ or a related form?
\end{remark}

\begin{proof}[Proof of Corollary \ref{ch41.6}]
We apply \eqref{ch44} to the following formula submitted by Ramanujan as a problem to the \textit{Journal of the Indian Mathematical Society}, reprinted as \cite[Question~261]{RamanujanCollected}:
$$\prod_{n=2}^{\infty} \pwr{1 - \frac{1}{n^3}} = \frac{\cosh \pwr{\frac{1}{2} \pi \sqrt{3}}}{3\pi}.$$
Take $q=1$, $f(n) = 1/n^3$ if $n>1$ and $=0$ otherwise in \eqref{ch44}. Comparing the result with the above formula gives the corollary.
\end{proof}

\begin{remark}
Ramanujan gives a companion formula $\prod_{n=1}^{\infty} \pwr{1 + \frac{1}{n^3}}=\cosh \pwr{\frac{1}{2}\pi \sqrt{3}}/\pi$ in the same problem \cite{RamanujanCollected}. Multiplying this infinite product by the one above and using \eqref{ch44} yields a closed form for $\sum_{\lambda \in \mathcal P_{\geq 2}} 1/n_{\lambda}^6$ as well.
\end{remark}

\begin{proof}[Proof of Corollary \ref{ch41.7}]
Consider the sequence $\beta_{2k}$ of coefficients of the expansion
\begin{equation}\label{ch423}
\frac{z}{\sinh z} =  \frac{1}{\prod_{n=1}^{\infty} \pwr{1 + \frac{z^2}{\pi^2n^2}}} = \sum_{k=0}^{\infty} \beta_{2k}z^{2k}.
\end{equation}
From the Maclaurin series for the hyperbolic cosecant and Euler's work relating the zeta function to the Bernoulli numbers, it follows that 
\begin{equation}\label{ch424}
\beta_{2k} = \frac{4(-1)^k(2^{2k-1} - 1)\zeta(2k)}{(2\pi)^{2k}}.
\end{equation}
On the other hand, from \eqref{ch44} we have
$$\frac{1}{\prod_{n=1}^{\infty} \pwr{1 + \frac{z^2}{\pi^2n^2}}} = \sum_{\lambda \in \mathcal P} \frac{(-1)^{\ell(\lambda)} z^{2\ell(\lambda)}}{\pi^{2\ell(\lambda)}n_{\lambda}^2} = \sum_{k=0}^{\infty} \frac{(-1)^kz^{2k}}{\pi^{2k}}\sum_{\ell(\lambda) = k} \frac{1}{n_{\lambda}^2},$$
thus 
$$\beta_{2k} = \frac{(-1)^k}{\pi^{2k}}\zeta_{\mathcal P}(\brk{2}^k).$$
The corollary is immediate by comparing the two expressions for $\beta_{2k}$ above. 
\end{proof}

\begin{proof}[Proof of Corollary \ref{ch41.8}]
Much as in the proof of Corollary \ref{ch41.7} above, we have from \eqref{ch43} that 
$$\frac{z}{\sin z} = \sum_{k=0}^{\infty} \frac{z^{2k}}{\pi^{2k}}\sum_{\ell(\lambda) = k} \frac{1}{n_{\lambda}^2} = \sum_{k=0}^{\infty} \alpha_{2k}z^{2k}$$
with 
\begin{equation}\label{ch425}
\alpha_{2k} = \frac{4(2^{2k-1} - 1)\zeta(2k)}{(2\pi)^{2k}} = (-1)^k\beta_{2k}.
\end{equation}
Using the Cauchy product 
\begin{equation}\label{ch426}
\pwr{\sum_{k=0}^{\infty} a_kz^k}\pwr{\sum_{k=0}^{\infty} b_kz^k} = \sum_{k=0}^{\infty}z^k \sum_{n=0}^k a_nb_{k-n},
\end{equation}
we see after some arithmetic 
$$\frac{z^2}{\sin z \sinh z} = \pwr{\sum_{k=0}^{\infty} \alpha_{2k}z^{2k}}\pwr{\sum_{k=0}^{\infty} \beta_{2k}z^{2k}} = \sum_{k=0}^{\infty} \gamma_{4k}z^{4k},$$
where 
$$\gamma_{4k} = \sum_{n=0}^{2k}\alpha_{2n}\beta_{4k - 2n},$$
with $\alpha_*,\beta_*$ as in \eqref{ch425},\eqref{ch426} respectively. On the other hand, the proof of Corollary \ref{ch41.5} implies 
$$\frac{z^2}{\sin z \sinh z} = \frac{1}{\prod_{n=1}^{\infty} \pwr{1 - \frac{z^4}{\pi^4n^4}}} = \sum_{k=0}^{\infty} \frac{z^{4k}}{\pi^{4k}}\sum_{\ell(\lambda) = k} \frac{1}{n_{\lambda}^4},$$
thus 
$$\gamma_{4k} = \frac{1}{\pi^{4k}} \zeta_{\mathcal P}(\brk{4}^k).$$
Comparing the two expressions for $\gamma_{4k}$ above, the theorem follows, just as in the previous proof.

We can carry this approach further to find $\zeta_{\mathcal P}(\brk{2^t}^k)$ for $t>2$, much as in the proof of Corollary \ref{ch41.5}. For instance, to find $\zeta_{\mathcal P}(\brk{8}^k)$ we begin by noting 
\begin{small}
\begin{align*}
\pwr{\sum_{k=0}^{\infty} \frac{z^{4k}}{\pi^{4k}} \zeta_{\mathcal P}(\brk{4}^k)}\pwr{\sum_{k=0}^{\infty} \frac{(-1)^kz^{4k}}{\pi^{4k}} \zeta_{\mathcal P}(\brk{4}^k)} & = \frac{1}{\prod_{n=1}^{\infty} \pwr{1 - \frac{z^4}{\pi^4n^4}}\pwr{1+\frac{z^4}{\pi^4n^4}}}\\
& = \sum_{k=0}^{\infty} \frac{z^{8k}}{\pi^{8k}} \zeta_{\mathcal P}(\brk{8}^k).
\end{align*}
\end{small}
We compare the coefficients on the left-and right-hand sides, using \eqref{ch426} to compute the coefficients on the left. Likewise, for $\zeta_{\mathcal P}(\brk{16}^k)$ we compare the coefficients on both sides of the equation
$$\pwr{\sum_{k=0}^{\infty} \frac{z^{8k}}{\pi^{8k}} \zeta_{\mathcal P}(\brk{8}^k)}\pwr{\sum_{k=0}^{\infty} \frac{(-1)^kz^{8k}}{\pi^{8k}} \zeta_{\mathcal P}(\brk{8}^k)}  = \sum_{k=0}^{\infty} \frac{z^{16k}}{\pi^{16k}} \zeta_{\mathcal P}(\brk{16}^k),$$
and so on, recursively, to find $\zeta_{\mathcal P}(\brk{2^t}^k)$ as $t$ increases. It is clear from induction that $\zeta_{\mathcal P}(\brk{2^t}^k)$ is of the form ``$\pi^{2^t} \times \text{rational}$'' for all $t \in \mathbb Z^+$.
\end{proof}

\begin{proof}[Proof of Corollary \ref{ch41.9}]
We have already seen these principles at work in the proofs of Corollaries \ref{ch41.5} and \ref{ch41.8}. We have 
$$\pwr{\sum_{\lambda\in \mathcal P_{\mathbb X}} \frac{z^{\ell(\lambda)}}{n_{\lambda}^s}} \pwr{\sum_{\lambda \in \mathcal P_{\mathbb X}} \frac{(-1)^{\ell(\lambda)z^{\ell(\lambda)}}}{n_{\lambda}^s}} = \frac{1}{\prod_{n\in {\mathbb X}} \pwr{1 - \frac{z}{n^s}} \pwr{1 + \frac{z}{n^s}}} = \sum_{\lambda \in \mathcal P_{\mathbb X}} \frac{z^{2\ell(\lambda)}}{n_{\lambda}^{2s}}.$$
Letting $z = 1$ gives \eqref{ch413}. If we replace $z$ with $z^s$ we may rewrite the above equation in the form
$$\pwr{\sum_{k=0}^{\infty} z^{sk}\zeta_{\mathcal P_{\mathbb X}}(\brk{s}^k)} \pwr{\sum_{k=0}^{\infty}(-1)^k z^{sk}\zeta_{\mathcal P_{\mathbb X}}(\brk{s}^k)} = \sum_{k=0}^{\infty} z^{2sk} \zeta_{\mathcal P_{\mathbb X}}(\brk{2s}^k).$$
Using \eqref{ch426} on the left and comparing coefficients on both sides gives the $n = 0$ case of \eqref{ch414}; the general formula follows from the $n=0$ case by induction.
\end{proof}

\begin{proof}[Proof of comments following Corollary \ref{ch41.9}]
Taking ${\mathbb X} = \mathbb P$ we see $(-1)^{\ell(\lambda)}$ specializes to Liouville's function $\lambda (n_{\lambda}) = (-1)^{\Omega (n_{\lambda})}$ (here we are using $``\lambda"$ in two different ways), where $\Omega(N)$ is the number of prime factors of $N$ with multiplicity. That \eqref{ch413} therefore becomes $\zeta(s)\sum_{n=1}^{\infty} \lambda(n)/n^s = \zeta(2s)$ follows from arguments similar to the proof of Corollary \ref{ch41.3}.
\end{proof}

\begin{proof}[Proof of Corollary \ref{ch41.10}]
This identity follows immediately by taking $\mathcal P_{\mathbb X} = \mathcal P$, $n = 0$, $k=2$ in \eqref{ch414} and simplifying.
\end{proof}

\begin{proof}[Proof of Theorem \ref{ch41.11}]
The proof of \eqref{ch415} is similar to Euler's proof that the number of partitions of $n$ into distinct parts is equal to the number of partitions into odd parts \cite{Berndt}. We expand the product
$$\varphi_{\infty}(f;q) = (1-f(1)q)(1-f(2)q^2)(1-f(3)q^3)\cdots ,$$
which results in \eqref{ch415}.

Identities \eqref{ch416} and \eqref{ch417} follow directly from the proof of \eqref{ch45},\eqref{ch46} above. Moreover, the proof of \eqref{ch418} is much like the proof of \eqref{ch48}. We note that
\begin{align*}
\frac{1}{\varphi_{\infty}(f;q)} &= 1 - \varphi_{\infty}(f;q)\begin{array}{c}
\sum_{(5)}
\end{array} \\
&= 1 - \frac{\sum_{(5)}}{1/\varphi_{\infty}(f;q)},
\end{align*}
and replace the term $1/\varphi_{\infty}(f;q)$ in the denominator on the right with the continued fraction in \eqref{ch48}.
\end{proof}

\begin{proof}[Proof of Corollary \ref{ch4etaquotient}]
The formula follows easily from the leading identities in Theorems \ref{ch41.1} and \ref{ch41.11}. We note that 
\begin{align*}
\prod_{j=1}^{n}\prod_{k_j\in {\mathbb X}_j}\left(1\pm f_j(k_j)q^{k_j}\right)^{\pm 1}
& =\prod_{j=1}^{n}\left(\sum_{\lambda\in\mathcal P_{{\mathbb X}_j}^{\pm}}q^{|\lambda|}\prod_{\lambda_i\in\lambda}f_j(\lambda_i)\right)
\\
& =\prod_{j=1}^{n}\left(\sum_{k_j=0}^{\infty}q^{k_j}\sum_{\substack{\lambda\vdash k_j\\ \lambda\in\mathcal P_{{\mathbb X}_j}^{\pm}}}\prod_{\lambda_i\in\lambda}f_j(\lambda_i)\right)
\end{align*}
and repeatedly apply Equation \eqref{ch426} on the right. 
\end{proof}

\begin{proof}[Proof of Corollary \ref{ch41.12}]
The identity is immediate from Theorem \ref{ch41.11} by letting $z=1$ in 
$$\frac{\sinh (\pi z)}{\pi z} = \prod_{n=1}^{\infty} \pwr{1 + \frac{z^2}{n^2}} = \sum_{\lambda \in \mathcal P^*} \frac{z^{\ell(\lambda)}}{n_{\lambda}^2}.$$
\end{proof}

\begin{proof}[Proof of Corollary \ref{ch41.13}]
This proof proceeds much like the proofs of Corollaries \ref{ch41.5}, \ref{ch41.7}, \ref{ch41.8} above, only more easily. We have from \eqref{ch43} and Theorem \ref{ch41.11}, together with the Maclaurin expansion of the sine function, that 
$$\frac{\sin z}{z} = \sum_{k=0}^{\infty} \frac{(-1)^kz^{2k}}{\pi^{2k}}\zeta(\brk{2}^k) = \sum_{k=0}^{\infty} \frac{(-1)^kz^{2k}}{(2k+1)!}.$$
Comparing the coefficients of the two summations above gives $\zeta(\brk{2}^k)$. We carry this approach further to find $\zeta(\brk{2^t}^k)$ for $t > 1$. We proceed inductively from the case above. Take the identity 
$$\pwr{\sum_{k=0}^{\infty} \frac{z^{2^{t-1}k}}{\pi^{2^{t-1}k}}\zeta\left(\brk{2^{t-1}}^k\right)} \pwr{\sum_{k=0}^{\infty} \frac{(-1)^kz^{2^{t-1}k}}{\pi^{2^{t-1}k}}\zeta\left(\brk{2^{t-1}}^k\right)} = \sum_{k=0}^{\infty}\frac{z^{2^tk}}{\pi^{2^tk}} \zeta(\brk{2^t}^k)$$
and compare coefficients on the left- and right-hand sides, using \eqref{ch426} to compute the coefficients on the left; expressions such as the remaining ones in the statement of the corollary result. It is clear from induction that $\zeta(\brk{2^t}^k)$ always has the form $``\pi^{2^tk} \times \text{finite sum of fractions}"$.
\end{proof}

\begin{proof}[Proof of Corollary \ref{ch41.14}]
This proof is nearly identical to the proof of Corollary \ref{ch41.9}. From the associated product representations it is clear that 
$$\pwr{\sum_{\lambda \in \mathcal P_{\mathbb X}^*} \frac{z^{\ell(\lambda)}}{n_{\lambda}^s}}\pwr{\sum_{\lambda \in \mathcal P_{\mathbb X}^*} \frac{(-1)^{\ell(\lambda)}z^{\ell(\lambda)}}{n_{\lambda}^s}} = \sum_{\lambda \in \mathcal P_{\mathbb X}^*} \frac{(-1)^{\ell(\lambda)} z^{2\ell(\lambda)}}{n_{\lambda}^{2s}}.$$
Letting $z = 1$ gives \eqref{ch424}. If we replace $z$ with $z^s$ we may rewrite the above equation as 
$$\pwr{\sum_{k=0}^{\infty} z^{sk}\zeta_{\mathcal P_{\mathbb X}^*}(\brk{s}^k)} \pwr{\sum_{k=0}^{\infty} (-1)^kz^{sk}\zeta_{\mathcal P_{\mathbb X}^*}(\brk{s}^k)} = \sum_{k=0}^{\infty} (-1)^kz^{2sk} \zeta_{\mathcal P_{\mathbb X}^*}(\brk{2s}^k).$$
Again using \eqref{ch426} on the left and comparing coefficients on both sides gives the $n=0$ case of \eqref{ch422}; the general formula follows by induction.
\end{proof}

\begin{proof}[Proof of comments following Corollary \ref{ch41.14}]
Taking ${\mathbb X} = \mathbb P$ in Theorem \ref{ch41.11} and noting that $\lambda \in \mathcal P_{\mathbb P}^*$ implies $n_{\lambda}$ is squarefree, we see $(-1)^{(\lambda)} = \mu(n_{\lambda})$, where $\mu$ denotes the classical M\"obius function; therefore, we have the identity
$$\sum_{n=1}^{\infty}\frac{\mu(n)}{n^s} = \sum_{\lambda \in \mathcal P_{\mathbb P}^*} \frac{\mu(n_{\lambda})}{n_{\lambda}^s} = \eta_{\mathcal P_{\mathbb P}^*}(s) = \frac{1}{\zeta_{\mathcal P_{\mathbb P}(s)}} = \frac{1}{\zeta(s)}.$$
On the other hand, we have $\zeta_{\mathcal P_{\mathbb P}^*}(s) = \sum_{n \text{ squarefree}} 1/n^s = \sum_{n=1}^{\infty} |\mu(n)|/n^s$.
\end{proof}
\  
\

\begin{remark}
{See Appendix \ref{app:C} for further notes on Chapter 4.} 
\end{remark}

 	\clearpage%
\chapter{Partition zeta functions: further explorations}{\bf Adapted from \cite{ORS}, a joint work with Ken Ono and Larry Rolen}

%
%\abstract{}
%We introduce and survey results on two families of zeta functions connected to the multiplicative and additive theories of integer partitions.  In the case of the multiplicative theory, we provide specialization formulas and results on the analytic continuations of these ``partition zeta functions'', find unusual formulas for the Riemann zeta function, prove identities for multiple zeta values, and see that some of the formulas allow for $p$-adic interpolation. The second family we study was anticipated by Manin and makes use of modular forms, functions which are intimately related to integer partitions by universal polynomial recurrence relations. We survey recent work on these zeta polynomials, including the proof of their Riemann Hypothesis. 
%

%%%%%%%%%%%%%%%%%%%%%%%%%%%%%%%%%%%%%%%%%%%%%%%%%%%%%%%%%%%%%%%%%%%%%%%%%%%%%%%%%%%%%%%%%%%
%%%%%%%%%%%%%%%%%%%%%%%%%%%%%%%%%%%%%%%%%%%%%%%%%%%%%%%%%%%%%%%%%%%%%%%%%%%%%%%%%%%%%%%%%%%

\section{Following up on the previous chapter}

In Chapter 4, we see the Riemann zeta function as the prototype for a new class of combinatorial objects arising from Eulerian methods. In this chapter we record a number of further identities relating certain zeta functions arising from the theory of partitions to various objects in number theory such as Riemann zeta values, multiple zeta values, and infinite product formulas. Some of these formulas are related to results in the literature; they are presented here as examples of this new class of partition-theoretic zeta functions. We also give several formulas for the Riemann zeta function, and results on the analytic continuation (or non-existence thereof) of zeta-type series formed in this way. Furthermore, we discuss the $p$-adic interpolation of these zeta functions in analogy with the classical work of Kubota and Leopoldt on $p$-adic continuation of the Riemann zeta function \cite{KL}.

%The proofs are collected at the end of each section{\bf RPS: to keep the prose and structure uncluttered and elegant}, and are suppressed where they might be easily provided by the reader. 

%

\section{Evaluations}
We saw in the previous chapter a variety of simple closed forms for partition zeta functions, depending on the natures of the subsets of partitions being summed over. Different subsets induce different zeta phenomena. %; we are interested to evaluate $\zeta_{\mathcal P'}(s)$ on different subsets $\mathcal P'\subsetneq\mathcal P$. 
In what follows, we consider the evaluations of a small sampling of possible partition zeta functions having particularly pleasing formulas.

\subsection{Zeta functions for partitions with parts restricted by congruence conditions}

%%%%2

Our first line of study will concern sets $\mathbb M\subset\mathbb N$ that are defined by congruence conditions. Note by considering Euler products as in Definition \ref{ch1zetadef} %by \eqref{ch5DirichletProduct} 
that for disjoint $\mathbb M_1, \mathbb M_2\subset \mathbb Z^+$,
\[
\zeta_{\mathcal P_{\mathbb M_1\cup\mathbb M_2}}(s)
=
\zeta_{\mathcal P_{\mathbb M_1}}(s)\zeta_{\mathcal P_{\mathbb M_2}}(s)
.
\]
Hence, to study any set of partitions determined by congruence conditions on the parts, it suffices to consider series of the form
\[
\zeta_{\mathcal P_{a+m\mathbb N}}(s)
,
\]
where $a\in\Z_{\geq0}$, $m\in\N$  (excluding the case $a=0$, $m=1$, where the zeta function clearly diverges), and $\mathcal P_{a+m\mathbb N}$ is partitions into parts congruent to $a$ modulo $m$. We see examples of the case $\zeta_{\mathcal P_{m\mathbb N}}(2^N)=\zeta_{\mathcal P_{0+m\mathbb N}}(2^N)$ in Corollaries \ref{ch41.4} and \ref{ch41.4}; we are interested in the most general case, with $s=n\in\mathbb N$.   

%{\bf LR: Now we cite a formula from the previous section here}
%{\bf ***RPS: Insert description of easy cases from previous zeta paper as an equation.***}. {\bf LR: I think this isn't needed. Just put a few simple cases of this formula in the last section, and then cite them here, saying which specializations they correspond to for us, and then segue to say we have the more general formula}

Our first main result is then the following, where $\Gamma$ is the usual gamma function of Euler and $e(x):=e^{2\pi ix}$. The proof will use an elegant and useful formula highlighted by Chamberland and Straub in \cite{ChamberlandStraub}, which we note was also inspired by previous work on multiplicative partitions in \cite{ChamberlandJohnsonNadeauWu}. In fact, the following result is a generalization of Theorem 8 of \cite{ChamberlandJohnsonNadeauWu} which in our notation corresponds to $a=m=1$. 

\begin{theorem}\label{ch5mainthm}
For $n\geq2$, we have
\[
\zeta_{\mathcal P_{a+m\mathbb N}}(n)
=\Gamma(1+a/m)^{-n}\prod_{r=0}^{n-1}\Gamma\left(1+\frac{a-e(r/n)}{m}\right)
.
\]
\end{theorem}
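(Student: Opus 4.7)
The plan is to attack the formula through its Euler product, reduce to a product over a single arithmetic progression, and then identify the result with a ratio of Gamma values via Euler's finite-product expression for $\Gamma$.

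First I would write the Euler product
\[
\zeta_{\mathcal P_{a+m\mathbb N}}(n)=\prod_{k=1}^{\infty}\!\left(1-\frac{1}{(a+mk)^{n}}\right)^{\!-1},
\]
which follows from Definition \ref{ch1pzf}, and then factor the polynomial $1-X^{-n}=\prod_{r=0}^{n-1}(1-e(r/n)/X)$ over the $n$th roots of unity. Setting $\beta:=1+a/m$ and $\beta_r:=1+(a-e(r/n))/m$, so that $a+mk=m(\beta+k-1)$ and $a+mk-e(r/n)=m(\beta_r+k-1)$, the Euler product becomes
\[
\prod_{k=1}^{\infty}\prod_{r=0}^{n-1}\frac{\beta+k-1}{\beta_r+k-1}
=\prod_{j=0}^{\infty}\prod_{r=0}^{n-1}\frac{\beta+j}{\beta_r+j}.
\]

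Next I would invoke Euler's limit formula $\Gamma(z)=\lim_{N\to\infty}\frac{N!\,N^{z}}{z(z+1)\cdots(z+N)}$, which gives
\[
\prod_{j=0}^{N}\frac{\beta_r+j}{\beta+j}=\frac{\Gamma(\beta)}{\Gamma(\beta_r)}\cdot\frac{\Gamma(\beta_r+N+1)}{\Gamma(\beta+N+1)},
\]
together with the ratio asymptotic $\Gamma(\beta_r+N+1)/\Gamma(\beta+N+1)=N^{\beta_r-\beta}(1+O(1/N))$. Taking the product over $r$ and letting $N\to\infty$, the divergent factor becomes $N^{\sum_{r}(\beta_r-\beta)}$, and here lies the one subtle point of the argument: for a \emph{fixed} $r$ the infinite product diverges, and convergence of the whole only comes from the aggregate cancellation
\[
\sum_{r=0}^{n-1}(\beta_r-\beta)=-\frac{1}{m}\sum_{r=0}^{n-1}e(r/n)=0\qquad(n\geq 2).
\]
This step is the main obstacle — one has to commute the two products and pass to the limit without ever isolating a single $r$ — but it is handled cleanly by truncating at $N$, pairing the limits inside a single logarithm, and bounding the error terms uniformly in $r$.

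Once convergence is established, the preceding identity collapses to
\[
\prod_{k=1}^{\infty}\!\left(1-\frac{1}{(a+mk)^{n}}\right)=\frac{\Gamma(1+a/m)^{n}}{\prod_{r=0}^{n-1}\Gamma\!\left(1+\tfrac{a-e(r/n)}{m}\right)},
\]
and reciprocating gives the stated formula. One only needs to check the boundary case $a=0$ (where the sum starts at $k=1$ automatically and $\beta=1$), and verify the formula against the known evaluation $\zeta_{\mathcal P_{2\mathbb N}}(2)=\pi/2$ from Corollary \ref{ch41.4} as a sanity check. As noted in the statement, this is structurally the Chamberland--Straub computation of \cite{ChamberlandStraub} with a shift of the starting index, and the argument above is intended as its direct adaptation.
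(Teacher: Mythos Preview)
Your proof is correct and follows essentially the same route as the paper: write the Euler product, factor over the $n$th roots of unity, and collapse the resulting double product into a ratio of Gamma values using the crucial cancellation $\sum_{r=0}^{n-1}e(r/n)=0$. The only difference is packaging: the paper invokes the Chamberland--Straub identity (Theorem~\ref{ch5Armin11}) as a black box to pass from $\prod_{j\geq 0}\prod_r \frac{j+\alpha_r}{j+\beta_r}$ to $\prod_r \frac{\Gamma(\beta_r)}{\Gamma(\alpha_r)}$, whereas you reprove that identity inline via Euler's limit formula for $\Gamma$ and the ratio asymptotic $\Gamma(z+N)/\Gamma(w+N)\sim N^{z-w}$---so your argument is slightly more self-contained but otherwise identical in structure.
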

Theorem \ref{ch5mainthm} has several applications. 
By considering the expansion of the logarithm of the gamma function, we easily obtain the following result, in which $\gamma$ is the Euler-Mascheroni constant and the principal branch of the logarithm is taken.

\begin{corollary}\label{ch5FirstCor}
For any $m,n\geq2$, we have that
\begin{equation*}
\begin{aligned}
\log\left(\zeta_{\mathcal P_{a+m\mathbb N}}(n)\right)
&
=
n\log(1+a/m)+\frac{a(n+1)}m(1-\gamma)
-
\sum_{r=0}^{n-1}
\log\left(
1+\frac{a-e(r/n)}m
\right)
\\
&
+
\sum_{r=0}^{n-1}
\sum_{k\geq2}\frac{(-1)^k(\zeta(k)-1)\left(a^k+\left(a-e(r/n)\right)^k\right)}{km^k}
.
\end{aligned}
\end{equation*}
\end{corollary}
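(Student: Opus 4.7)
The plan is to apply the logarithm to both sides of Theorem \ref{ch5mainthm} and then expand each $\log\Gamma$-factor via its classical Taylor series about $z=0$, namely
\[
\log\Gamma(1+z) \;=\; -\gamma z + \sum_{k\geq 2}\frac{(-1)^k\zeta(k)}{k}z^k,
\]
which converges for $|z|<1$. Setting $z=a/m$ in the first factor and $z=(a-e(r/n))/m$ in the $r$-th factor of the product turns the identity
\[
\log\zeta_{\mathcal P_{a+m\mathbb N}}(n) \;=\; -n\log\Gamma(1+a/m) + \sum_{r=0}^{n-1}\log\Gamma\!\left(1+\frac{a-e(r/n)}{m}\right)
\]
into a combination of a linear-in-$\gamma$ piece and a double power series in $k$ and $r$ with coefficients built from $\zeta(k)$.

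Next I would collect the linear term by invoking the elementary root-of-unity identity $\sum_{r=0}^{n-1} e(r/n)=0$ (valid for $n\geq 2$), so that $\sum_{r}(a-e(r/n))/m = na/m$, allowing the $\gamma$-contributions from the two sides to be combined cleanly into the single expression advertised in the corollary. To isolate the tidy $\log(1+a/m)$ and $\log(1+(a-e(r/n))/m)$ terms, I would then split $\zeta(k) = (\zeta(k)-1) + 1$ and evaluate the residual pieces in closed form via the standard Mercator-style expansion
\[
\sum_{k\geq 2}\frac{z^k}{k} \;=\; -\log(1-z)-z.
\]
This rearrangement simultaneously produces the explicit logarithm terms in the statement and improves the rate of convergence of the remaining series: because $\zeta(k)-1 = O(2^{-k})$, the double sum converges absolutely in a larger region than the raw $\zeta(k)$-series.

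The main (and essentially only) obstacle is confirming that the Taylor expansion of $\log\Gamma(1+z)$ is legitimately applicable at the values $z=a/m$ and $z=(a-e(r/n))/m$. Since $|e(r/n)|=1$, we have $|(a-e(r/n))/m|\leq (a+1)/m$, so the original radius-of-convergence condition $|z|<1$ forces $m>a+1$; however, after the rearrangement into $\zeta(k)-1$ form the effective radius becomes $|z|<2$, which accommodates the full range $m\geq 2$ and $0\leq a<m$ that parametrizes the residue classes modulo $m$ (the excluded case $a=0,\,m=1$ being exactly where the zeta function itself diverges). With that convergence matter dispatched, the remaining steps are purely algebraic bookkeeping, justifying the word ``easily'' in the author's prelude to the result.
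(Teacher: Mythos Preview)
Your approach is correct and essentially the same as the paper's: take logarithms in Theorem \ref{ch5mainthm}, expand $\log\Gamma(1+z)$, and simplify the linear term via $\sum_{r=0}^{n-1}e(r/n)=0$. The only cosmetic difference is that the paper applies the expansion \eqref{ch5573Nist} (already in the $\zeta(k)-1$ form, valid for $|z|<2$) directly, whereas you start from the Legendre series \eqref{ch5LogGammaSecondFormula} and then split $\zeta(k)=(\zeta(k)-1)+1$ to arrive at the same place; the paper also explicitly notes the branch-cut check that $1+(a-e(r/n))/m$ is never a nonpositive real, which your convergence discussion implicitly requires.
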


When $a=0$ and $m\geq2$, we obtain the following strikingly simple formula, which is similar to Theorem 7 of \cite{ChamberlandJohnsonNadeauWu} that in our notation corresponds to the case $a=m=1$.
\begin{corollary}\label{ch5SecondCor}
For any $m,n\geq2$, we have that
\[
\log\left(\zeta_{\mathcal P_{m\mathbb N}}(n)\right)
=
n
\sum_{\substack{k\geq2\\ n|k}}\frac{\zeta(k)}{km^{k}}
.
\]
\end{corollary}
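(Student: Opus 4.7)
My plan is to bypass the gamma-function machinery of Theorem~\ref{ch5mainthm} entirely and attack the identity through the Euler product representation of $\zeta_{\mathcal P_{m\mathbb N}}(n)$, which I expect will yield the result in just a few lines. Since $\mathcal P_{m\mathbb N}$ is the set of partitions whose parts lie in $\{m,2m,3m,\ldots\}$, Definition~\ref{ch1pzf} gives
\[
\zeta_{\mathcal P_{m\mathbb N}}(n) \;=\; \prod_{j=1}^{\infty}\bigl(1-(mj)^{-n}\bigr)^{-1},
\]
which converges absolutely provided $n\geq 2$ and $m\geq 2$. Taking logarithms and expanding each factor via $-\log(1-x) = \sum_{\ell\geq 1}x^\ell/\ell$ produces
\[
\log\zeta_{\mathcal P_{m\mathbb N}}(n) \;=\; \sum_{j=1}^{\infty}\sum_{\ell=1}^{\infty}\frac{1}{\ell\,(mj)^{n\ell}}.
\]

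The next step is to interchange the two sums, which is immediate from absolute convergence, and to recognize the inner sum as a Riemann zeta value:
\[
\log\zeta_{\mathcal P_{m\mathbb N}}(n) \;=\; \sum_{\ell=1}^{\infty}\frac{1}{\ell\,m^{n\ell}}\sum_{j=1}^{\infty}\frac{1}{j^{n\ell}} \;=\; \sum_{\ell=1}^{\infty}\frac{\zeta(n\ell)}{\ell\,m^{n\ell}}.
\]
Finally I would reindex with $k=n\ell$, so that $\ell = k/n$ and $\ell\in\mathbb N$ corresponds bijectively to $k$ ranging over positive multiples of $n$; because $n\geq 2$, every such $k$ automatically satisfies $k\geq n\geq 2$. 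Substituting yields the claimed
\[
\log\zeta_{\mathcal P_{m\mathbb N}}(n) \;=\; n\sum_{\substack{k\geq 2\\ n\mid k}}\frac{\zeta(k)}{k\,m^{k}}.
\]

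There is no serious obstacle here; the only bookkeeping is to check absolute convergence (trivial since $(mj)^{-n}\leq 4^{-1}$) and to track the reindexing carefully. As a sanity check / alternative route, one can instead specialize Corollary~\ref{ch5FirstCor} directly to $a=0$: the vanishing $\log(1+a/m)$ and $(1-\gamma)$ terms simplify things, while the orthogonality of $n$-th roots of unity $\sum_{r=0}^{n-1} e(rk/n) = n\cdot\mathbf{1}_{n\mid k}$ collapses the sum over $r$ to a sum over multiples of $n$. In that alternative, the "$\log(1 - e(r/n)/m)$" piece supplies the "$1$" part of $\zeta(k)=1+(\zeta(k)-1)$ while the double sum supplies the "$\zeta(k)-1$" part, reassembling into the same clean expression. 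I would present the Euler-product derivation as the primary argument for its brevity and self-containedness.
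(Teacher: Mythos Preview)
Your Euler-product argument is correct and complete: the product representation, the logarithmic expansion, the swap, and the reindexing $k=n\ell$ are all clean, and your observation that $k\geq 2$ and $n\mid k$ together force $k\geq n$ (so nothing is lost in writing the lower bound as $k\geq 2$) is exactly right.

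However, this is a genuinely different route from the paper's. The paper does not touch the Euler product here; instead it specializes Theorem~\ref{ch5mainthm} to $a=0$, obtaining $\zeta_{\mathcal P_{m\mathbb N}}(n)=\prod_{r=0}^{n-1}\Gamma\bigl(1-e(r/n)/m\bigr)$, then applies Legendre's expansion \eqref{ch5LogGammaSecondFormula} for $\log\Gamma(1+z)$ and uses the orthogonality relation $\sum_{r=0}^{n-1}e(rk/n)=n\cdot\mathbf 1_{n\mid k}$ to collapse the sum over $r$. (This is essentially the ``alternative route'' you sketch at the end, though via Theorem~\ref{ch5mainthm} rather than Corollary~\ref{ch5FirstCor}.) Your approach is more elementary and self-contained---it needs nothing beyond the product in Definition~\ref{ch1pzf} and a geometric series---whereas the paper's approach keeps the corollary embedded in the gamma-function framework that drives the rest of the chapter (Theorem~\ref{ch5RobertCor}, Theorem~\ref{ch5SecondTheorem}, etc.), which is why they prefer it structurally even if it is slightly heavier for this one statement.
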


%{\bf RPS: Here there needs some transitional wording, re-ordered text follows, which we still want to evaluate for inclusion.} 

%%%%4

\subsection{Connections to ordinary Riemann zeta values}
%{\bf LR: Renamed subsection}
%{\bf LR: Added a few sentences}

In addition to providing interesting formulas for values of more exotic partition-theoretic zeta functions, the above results also lead to curious formulas for the classical Riemann zeta function. In fact, $\zeta(s)$ 
is itself a partition zeta function, summed over prime partitions, so it is perhaps not too surprising to find that we can learn something about it from a partition-theoretic perspective. Then we continue the theme of evaluations by recording a few results expressing the value of $\zeta$ at integer argument $n>1$ in terms of gamma factors. %, independently of multiplicative partitions. 

In the first, curious identity, let $\mu$ denote the classical M\"obius function. We point out that this is essentially a generalization of a formula for the case $a=m=1$ given in Equation 11 of \cite{ChamberlandJohnsonNadeauWu}. 

\begin{corollary}\label{ch5FourthCor}
For all $m,n\geq2$, we have
\[
\zeta(n)=m^n\sum_{\substack{k\geq 1}}\frac{\mu(k)}k\sum_{r=0}^{nk-1}\log\left(\Gamma\left(1-\frac{e\left(\frac r{nk}\right)}m\right)\right)
.
\]
\end{corollary}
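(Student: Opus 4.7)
My plan is to extract the desired formula as a Möbius inversion applied to the $a=0$ case of Theorem~\ref{ch5mainthm} (equivalently, Corollary~\ref{ch5SecondCor}). The key observation is that the partition zeta function $\zeta_{\mathcal P_{m\mathbb N}}(n)$ admits \emph{two} useful closed forms: on one hand, setting $a=0$ in Theorem~\ref{ch5mainthm} gives the finite gamma product
\[
\zeta_{\mathcal P_{m\mathbb N}}(n)=\prod_{r=0}^{n-1}\Gamma\!\left(1-\frac{e(r/n)}{m}\right);
\]
on the other hand, its Euler product (Definition~\ref{ch1pzf}) gives $\zeta_{\mathcal P_{m\mathbb N}}(n)=\prod_{j\geq 1}(1-(jm)^{-n})^{-1}$, so expanding $-\log(1-x)=\sum_{k\geq 1}x^k/k$ and interchanging summations (justified since $m\geq 2$ makes everything absolutely convergent) yields
\[
\sum_{r=0}^{n-1}\log\Gamma\!\left(1-\frac{e(r/n)}{m}\right)=\sum_{k\geq 1}\frac{\zeta(nk)}{k\,m^{nk}}.
\]
This is essentially the content of Corollary~\ref{ch5SecondCor}.

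The second step is to invert this relation to solve for the ``$k=1$'' term $\zeta(n)/m^n$. Setting $g(N):=\zeta(N)/m^{N}$ and $F(N):=\sum_{k\geq 1}g(Nk)/k$, the identity above reads $F(n)=\sum_{r=0}^{n-1}\log\Gamma(1-e(r/n)/m)$. A standard Möbius-type inversion then recovers $g$ from $F$:
\[
\sum_{k\geq 1}\frac{\mu(k)}{k}F(nk)=\sum_{k\geq 1}\frac{\mu(k)}{k}\sum_{j\geq 1}\frac{g(nkj)}{j}=\sum_{d\geq 1}\frac{g(nd)}{d}\sum_{k\mid d}\mu(k)=g(n),
\]
the last step using the familiar identity $\sum_{k\mid d}\mu(k)=\delta_{d,1}$. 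Substituting the gamma-product expression for $F(nk)$ and multiplying through by $m^n$ gives exactly
\[
\zeta(n)=m^n\sum_{k\geq 1}\frac{\mu(k)}{k}\sum_{r=0}^{nk-1}\log\Gamma\!\left(1-\frac{e(r/nk)}{m}\right),
\]
as desired.

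The routine but nontrivial point to address is the justification for rearranging the double series when applying Möbius inversion. Since $|\zeta(nk)/(km^{nk})|\leq m^{-nk}/(1-2^{-nk})$, the double sum $\sum_{k,j}|\mu(k)|g(nkj)/(kj)$ is dominated by $\sum_{N}d(N)g(nN)/N$, which converges geometrically for $m\geq 2$; this allows the reordering by the diagonal substitution $d=kj$. No other step is an obstacle: the main input (Theorem~\ref{ch5mainthm}) is already in hand, and the remainder is the inversion lemma together with the Euler-product computation. Thus the entire argument is short once Theorem~\ref{ch5mainthm} is available.
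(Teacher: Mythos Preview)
Your proof is correct and follows essentially the same route as the paper: apply a M\"obius-type inversion to Corollary~\ref{ch5SecondCor} (with $s=1$) to recover $\zeta(n)/m^n$, then substitute the $a=0$ case of Theorem~\ref{ch5mainthm} for $\log\zeta_{\mathcal P_{m\mathbb N}}(nk)$. You additionally supply a convergence justification for the rearrangement, which the paper leaves implicit; note though that your stated bound $\zeta(nk)\le 1/(1-2^{-nk})$ is not quite right (e.g.\ at $nk=2$), but the trivial bound $\zeta(nk)\le\zeta(2)$ suffices and your divisor-function domination argument goes through unchanged.
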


%{\bf RPS: I think we questioned whether to keep the following three formulas or not. Let's leave them in for now, see what Ken thinks. LR: agreeed.}

The next identity gives $\zeta(n)$ in terms of the $n$th derivative of a product of gamma functions.  The authors were not able to find this formula in the literature; however, given the well-known connections between $\Gamma$ and $\zeta$, as well as the known example below the following theorem, it is possible that the identity is known.
\begin{theorem}\label{ch5RobertCor}
For integers $n>1$, we have
\[
\zeta(n)=\frac{1}{n!} \lim_{z\to 0^+} \frac{\mathrm d^n}{\mathrm d z^n}\prod_{j=0}^{n-1}\Gamma\left(1-ze(j/n)\right)
.
\]
\end{theorem}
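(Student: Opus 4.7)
The plan is to reduce the identity to an elementary power-series computation by taking the logarithm of the product and invoking the classical Weierstrass/Taylor expansion
\[
\log\Gamma(1+w) \;=\; -\gamma w + \sum_{k\geq 2}\frac{(-1)^k\zeta(k)}{k}w^k, \qquad |w|<1,
\]
which already appeared implicitly in the derivation of Corollary \ref{ch5SecondCor}. The strategy is to set $w = -z\,e(j/n)$, sum over $j = 0, 1, \dots, n-1$, and exploit the orthogonality relation
\[
\sum_{j=0}^{n-1} e(jk/n) \;=\; \begin{cases} n & \text{if } n\mid k, \\ 0 & \text{otherwise,}\end{cases}
\]
so that only multiples of $n$ survive in the inner sum. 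The $\gamma$-term drops out at once since $\sum_{j=0}^{n-1} e(j/n) = 0$ for $n\geq 2$.

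After this bookkeeping, the double sum collapses to
\[
\log\prod_{j=0}^{n-1}\Gamma\!\left(1-z\,e(j/n)\right) \;=\; \sum_{\ell\geq 1}\frac{\zeta(n\ell)}{\ell}\,z^{n\ell},
\]
valid for $z$ in a small neighborhood of the origin. Exponentiating and isolating the lowest-order nonconstant contribution yields
\[
\prod_{j=0}^{n-1}\Gamma\!\left(1-z\,e(j/n)\right) \;=\; 1 + \zeta(n)\,z^n + O(z^{2n}),
\]
so the coefficient of $z^n$ is exactly $\zeta(n)$. Reading off this coefficient via $\tfrac{1}{n!}\tfrac{d^n}{dz^n}$ evaluated at $z=0$ gives the theorem (the one-sided limit $z\to 0^+$ is harmless since the product is analytic at the origin).

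The only genuine subtlety is convergence: one needs $|z|$ small enough that $|z\,e(j/n)| < 1$ for every $j$ so that the Taylor expansion of $\log\Gamma$ is valid, and one needs absolute convergence of the resulting double sum to justify interchanging the sums over $j$ and $k$. Both requirements are trivially met in any sufficiently small disk around $0$, which is the only region where we need them. There is thus no serious analytic obstruction; the substance of the argument is the orthogonality computation above, and I do not anticipate any step requiring real ingenuity.
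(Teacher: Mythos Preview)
Your argument is correct, but it takes a different route from the paper's own proof. The paper first invokes the telescoping identity from Chapter~4 (the comments following Theorem~\ref{ch41.1}) to write
\[
\prod_{k\geq1}\Bigl(1-\frac{z^n}{k^n}\Bigr)^{-1}=1+z^n\sum_{k\geq1}\frac{1}{k^n\prod_{j\leq k}(1-z^n/j^n)},
\]
so that $\zeta(n)=\lim_{z\to0^+}\bigl(\prod_{k}(1-z^n/k^n)^{-1}-1\bigr)/z^n$; it then applies L'Hospital's rule $n$ times and identifies the product with $\prod_{j=0}^{n-1}\Gamma(1-ze(j/n))$ via Theorem~\ref{ch5Armin11}. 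Your approach instead works directly with the $\log\Gamma$ expansion and orthogonality of roots of unity---essentially the same computation the paper uses in proving Theorem~\ref{ch5SecondTheorem}---to read off the $z^n$-coefficient without any appeal to L'Hospital or the telescoping machinery. Your method is arguably more self-contained and makes the analyticity at $z=0$ transparent; the paper's method, on the other hand, ties the result back into the partition-theoretic framework of Chapter~4 and sets up the companion Corollary~\ref{ch5RobertCor2} in the same stroke.
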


\begin{example}As an example of implementing the above identity, take $n=2$; then using Euler's well-known product formula for the sine function, it is easy to check that
\[
\zeta(2)=\frac{1}{2!} \lim_{z\to 0^+} \frac{\mathrm d^2}{\mathrm d z^2}\Gamma\left(1+z\right)\Gamma\left(1-z\right)
=\frac{1}{2!} \lim_{z\to 0^+} \frac{\mathrm d^2}{\mathrm d z^2}\frac{\pi z}{\sin(\pi z)}=\frac{\pi^2}{6}
.
\]
\end{example} 

This last formula for $\zeta(n)$, following from a formula in Chapter 4 together with the preceding theorem, is analogous to some extent to the classical identity $\sin(n)=\frac{e^{in}-e^{-in}}{2i}$.
\begin{corollary}\label{ch5RobertCor2}
For integers $n>1$, we have
\[
\zeta(n)=\lim_{z\to 0^+}\frac{\prod_{j=0}^{n-1}\Gamma\left(1-ze(j/n)\right)-\prod_{j=0}^{n-1}\Gamma\left(1-ze(j/n)\right)^{-1}}{2z^n}
.
\]
\end{corollary}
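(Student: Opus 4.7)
The plan is to deduce this corollary from the Taylor expansion
\[
P(z) := \prod_{j=0}^{n-1}\Gamma\bigl(1-ze(j/n)\bigr) = 1 + \zeta(n)\, z^n + O(z^{2n})
\]
as $z \to 0$. Granting this expansion, $P(0)=1$ implies that $P$ is nonvanishing in a neighborhood of the origin, so that $P(z)^{-1} = 1 - \zeta(n)\, z^n + O(z^{2n})$ as well. Subtracting the two expansions gives $P(z) - P(z)^{-1} = 2\zeta(n)\, z^n + O(z^{2n})$, and dividing by $2z^n$ and letting $z \to 0^+$ produces the advertised identity.

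To establish the expansion, I would combine the Weierstrass product for $\Gamma$ with the cyclotomic identity $\prod_{j=0}^{n-1}(1 - y e(j/n)) = 1 - y^n$, applied with $y = z/m$. The accompanying linear exponential correction factors in the Weierstrass product cancel across the product in $j$ since $\sum_{j=0}^{n-1} e(j/n) = 0$ for $n \geq 2$, giving the clean identity
\[
P(z)^{-1} = \prod_{m=1}^{\infty}\bigl(1 - z^n/m^n\bigr).
\]
Inverting and invoking identity \eqref{ch44} from Chapter 4 with $f(m) = z^n/m^n$ and $q = 1$, I obtain
\[
P(z) = \sum_{\lambda \in \mathcal{P}} \frac{z^{n\ell(\lambda)}}{n_\lambda^n} = \sum_{k \geq 0} \zeta_{\mathcal{P}}(\{n\}^k)\, z^{nk},
\]
and since $\zeta_{\mathcal{P}}(\{n\}^1) = \zeta(n)$, the required expansion follows.

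The main technical point is the cancellation of the exponential correction factors from the Weierstrass product, which is where the hypothesis $n \geq 2$ enters. An equivalent but perhaps quicker route is to observe that $P(ze(1/n)) = P(z)$ since the substitution $z \mapsto z e(1/n)$ merely cyclically permutes the factors of $P$; this symmetry alone forces the Taylor series of $P$ at $0$ to contain only powers of $z^n$, and Theorem \ref{ch5RobertCor} then identifies the coefficient of $z^n$ as $\zeta(n)$, bypassing the explicit product manipulation entirely.
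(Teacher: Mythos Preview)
Your proof is correct. Both your argument and the paper's rest on the identification
\[
P(z)=\prod_{j=0}^{n-1}\Gamma\bigl(1-ze(j/n)\bigr)=\prod_{m\geq1}\Bigl(1-\frac{z^n}{m^n}\Bigr)^{-1},
\]
which the paper obtains via Theorem~\ref{ch5Armin11} and you obtain equivalently from the Weierstrass product. From there the routes diverge slightly: the paper subtracts the two telescoping identities of Theorem~\ref{ch41.1} (equations~\eqref{ch45} and~\eqref{ch416}), which write $P(z)-1$ and $1-P(z)^{-1}$ each as $z^n$ times an explicit series manifestly tending to $\zeta(n)$ as $z\to0$, whereas you read the coefficient of $z^n$ directly from the Taylor expansion $P(z)=\sum_{k\geq0}\zeta_{\mathcal P}(\{n\}^k)z^{nk}$. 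Your alternative route---observing the cyclic symmetry $P(ze(1/n))=P(z)$ to force only powers of $z^n$, then quoting Theorem~\ref{ch5RobertCor} for the $z^n$ coefficient---is a genuinely cleaner shortcut than either, since it avoids any explicit product manipulation.
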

%%%%%%%%%%%

%%%%5
\subsection{Zeta functions for partitions of fixed length}

We now consider zeta sums of the shape $\zeta_{\mathcal P}(\{s\}^k)$ as in Definition \ref{ch4zetafixed}. Our first aim will be to extend Corollary \ref{ch41.7} from the previous chapter. %above (which is Corollary 2.4 of \cite{Robert}).  %; in that work, the second author defined the partition zeta function {\bf RPS: Is this even necessary? Maybe just use the case P'=P below? LR: I don't understand the question. We already defined some of this notation above, so if new notation isn't needed, make sure not to redefine, or to say that we are recalling the definition.}
%\[
%\zeta_{\mathcal P'}(\{s\}^k):=\sum_{\substack{\lambda\in\mathcal P'\\ \ell(\lambda)=k}}n_{\lambda}^{-s}, \mathrm{Re}(s)>1
%,
%\]
%taken over partitions in some subset $\mathcal P'$ of $\mathcal P$, where for a general partition $\lambda$, we will denote by $\ell(\lambda)$ the {\it length} of $\lambda$, i.e., the number of parts in $\lambda$.
%
%Here we will consider such sums taken over all partitions
%\[
%\zeta_{\mathcal P}(\{s\}^k)=\sum_{\substack{\lambda\in\mathcal P\\ \ell(\lambda)=k}}n_{\lambda}^{-s}.
%\]
%{\bf LR: Think already defined}
%As we will see, zeta values of this form intersect classes of series considered by Hoffman in \cite{Hoffman1}.

%{\bf ***RPS: Need to give a couple of nice examples from zeta paper here.*** LR: Cite from section 2.}

Let $[z^n]f$ represent the coefficient of $z^n$ in a power series $f$. Using this notation, we show the following, which in particular gives an algorithmic way to compute each $\zeta_{\mathcal P}(\{m\}^k)$ in terms of Riemann zeta values for $m\in\mathbb N_{\geq 2}$.
\begin{theorem}\label{ch5SecondTheorem}
For all $m\geq2$, $k\in\N$, we have
\begin{align*}
\zeta_{\mathcal P}(\{m\}^k)
&=
\pi^{mk}[z^{mk}]\prod_{r=0}^{m-1}\Gamma\left(1-\frac{z}{\pi}e(r/m)\right)\  \\
&=
\pi^{mk}[z^{mk}]\operatorname{exp}\left(\sum_{j\geq1}\frac{\zeta(mj)}{j}\left(\frac{z}{\pi}\right)^{mj}\right)
.
\end{align*}
\end{theorem}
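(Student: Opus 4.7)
The plan is to exploit the Weierstrass product for the gamma function together with the basic identity $\sum_{r=0}^{m-1}e(r/m)=0$ (valid for $m\geq 2$), which causes the unwanted "correction'' factors in the product of gammas to telescope out. Recall that
\[
\Gamma(1-w) = e^{\gamma w}\prod_{n\geq 1}\frac{e^{-w/n}}{1-w/n}.
\]
First I would substitute $w = \frac{z}{\pi}e(r/m)$ and take the product over $r=0,1,\ldots,m-1$. The exponential prefactor becomes $\exp\bigl(\gamma\frac{z}{\pi}\sum_{r=0}^{m-1}e(r/m)\bigr)=1$, and similarly, for each fixed $n$, the linear exponential factor $\exp\bigl(-\frac{z}{\pi n}\sum_{r=0}^{m-1}e(r/m)\bigr)=1$. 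After this cancellation, the $m$ factors in the numerator collapse via $\prod_{r=0}^{m-1}\bigl(1-\frac{z}{\pi n}e(r/m)\bigr)=1-\bigl(\frac{z}{\pi n}\bigr)^m$, yielding
\[
\prod_{r=0}^{m-1}\Gamma\!\left(1-\tfrac{z}{\pi}e(r/m)\right)
\;=\;\prod_{n\geq 1}\frac{1}{1-\bigl(\frac{z}{\pi n}\bigr)^m}.
\]

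Next I would expand this infinite product as a sum over partitions using the leading identity \eqref{ch44} of Theorem \ref{ch41.1} (the same kind of argument already used in the proofs of Corollaries \ref{ch41.5}, \ref{ch41.7}, \ref{ch41.8}). Taking $f(n)=1/n^m$ and replacing the formal variable $q$ with the appropriate monomial in $z$, one obtains
\[
\prod_{n\geq 1}\frac{1}{1-\bigl(\frac{z}{\pi n}\bigr)^m}
\;=\;\sum_{\lambda\in\mathcal P}\frac{z^{m\ell(\lambda)}}{\pi^{m\ell(\lambda)}n_\lambda^{\,m}}
\;=\;\sum_{k\geq 0}\frac{z^{mk}}{\pi^{mk}}\,\zeta_{\mathcal P}(\{m\}^k).
\]
Extracting the coefficient of $z^{mk}$ and multiplying through by $\pi^{mk}$ gives the first asserted equality. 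For the second, I would simply take the logarithm of the simplified product and expand $-\log(1-u)=\sum_{j\geq 1}u^j/j$:
\[
\log\prod_{n\geq 1}\frac{1}{1-\bigl(\frac{z}{\pi n}\bigr)^m}
=\sum_{n\geq 1}\sum_{j\geq 1}\frac{1}{j}\,\frac{z^{mj}}{(\pi n)^{mj}}
=\sum_{j\geq 1}\frac{\zeta(mj)}{j}\left(\frac{z}{\pi}\right)^{mj},
\]
and exponentiating yields the second form.

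The only delicate step is the cancellation of the $e^{\gamma w}$ and $e^{-w/n}$ factors in the Weierstrass product: one must justify that the (conditionally convergent) rearrangement is legitimate. I would do this by truncating the Weierstrass product at $N$, grouping the $m$ factors for each $n\leq N$ (so the linear exponential cancels termwise before any limit is taken), and then sending $N\to\infty$ — the resulting product $\prod_{n\geq 1}(1-(z/\pi n)^m)^{-1}$ converges absolutely for $|z|<\pi$, which also gives the region of validity within which the coefficient extraction is unambiguous. The requirement $m\geq 2$ is used precisely at the cancellation step; for $m=1$ the sum of roots of unity does not vanish and indeed $\zeta_{\mathcal P}(\{1\}^k)$ is divergent, consistent with the hypothesis.
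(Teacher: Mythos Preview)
Your argument is correct. The paper's proof proceeds in the opposite direction: it starts from the generating function $\sum_{k\geq 0}(z/\pi)^{mk}\zeta_{\mathcal P}(\{m\}^k)=\prod_{n\geq 1}\bigl(1-(z/\pi n)^m\bigr)^{-1}$, factors the product over $m$th roots of unity, and then invokes the Chamberland--Straub gamma formula (Theorem~\ref{ch5Armin11}) as a black box to identify the result with $\prod_{r=0}^{m-1}\Gamma\bigl(1-\tfrac{z}{\pi}e(r/m)\bigr)$; for the second equality it applies Legendre's series \eqref{ch5LogGammaSecondFormula} for $\log\Gamma(1+z)$ together with orthogonality of roots of unity to sieve out the terms with $m\mid j$. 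Your route is more self-contained: by working directly with the Weierstrass product and using $\sum_{r=0}^{m-1}e(r/m)=0$ to kill the $e^{\gamma w}$ and $e^{-w/n}$ factors, you are in effect inlining the special case of Theorem~\ref{ch5Armin11} that the paper cites (the balancing condition $\sum\alpha_j=\sum\beta_j$ there plays exactly the role your root-of-unity sum plays here). Likewise, taking the logarithm of the already-simplified product bypasses the Legendre series and the orthogonality step entirely. The paper's approach has the advantage of reusing a lemma it has already set up for several other results in the chapter; yours has the advantage of needing no external gamma identities beyond the Weierstrass product.
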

%%%%%%%%%%%%%

Generalizing the comments just below Corollary \ref{ch41.8}, the next corollary follows directly from Theorem \ref{ch5SecondTheorem} (using the fact that $\zeta(k)\in\mathbb Q\pi^k$ for even integers $k$).

\begin{corollary}\label{ch5RationalityCor}
For $m\in 2\N$ even, we have that
\[
\zeta_{\mathcal P}(\{m\}^k)\in\mathbb Q\pi^{mk}
.
\]
\end{corollary}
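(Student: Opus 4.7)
The plan is to deduce this corollary directly from Theorem \ref{ch5SecondTheorem} combined with Euler's classical evaluation of $\zeta$ at positive even integers. I would begin by rewriting the second formula of Theorem \ref{ch5SecondTheorem} as
$$
\zeta_{\mathcal P}(\{m\}^k) \;=\; \pi^{mk}\,[z^{mk}]\exp\!\left(\sum_{j\geq 1}\frac{\zeta(mj)}{j\,\pi^{mj}}\,z^{mj}\right),
$$
absorbing the factors of $\pi^{-mj}$ coming from $(z/\pi)^{mj}$ into the coefficients so that the rationality question becomes transparent at the level of a power series in $z$ alone.

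Next, I would invoke the hypothesis that $m$ is even. Under this assumption, every index $mj$ with $j\geq 1$ is a positive even integer, so Euler's identity $\zeta(2n)=(-1)^{n+1}B_{2n}(2\pi)^{2n}/\bigl(2(2n)!\bigr)$ immediately gives $\zeta(mj)/\pi^{mj}\in\mathbb Q$ for every $j\geq 1$. Consequently, the inner series
$$
f(z)\;:=\;\sum_{j\geq 1}\frac{\zeta(mj)}{j\,\pi^{mj}}\,z^{mj}
$$
belongs to $z\,\mathbb Q[[z]]$.

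Finally, I would use the standard fact that the formal exponential map sends $z\,\mathbb Q[[z]]$ into $1+z\,\mathbb Q[[z]]$: each coefficient of $\exp(f)=\sum_{n\geq 0} f(z)^n/n!$ is a $\mathbb Q$-polynomial in the coefficients of $f$, since only finitely many terms contribute to each degree and each $f^n/n!$ lies in $\mathbb Q[[z]]$. Hence $[z^{mk}]\exp(f(z))\in\mathbb Q$, and multiplying back by $\pi^{mk}$ yields $\zeta_{\mathcal P}(\{m\}^k)\in\mathbb Q\pi^{mk}$, as desired. There is no substantive obstacle beyond this rephrasing --- the work has already been done in establishing Theorem \ref{ch5SecondTheorem} --- so the ``main step'' is really just identifying the parity condition on $m$ as precisely what is needed to ensure $\zeta(mj)/\pi^{mj}$ is rational for all $j$, which is the content of the parenthetical remark in the excerpt.
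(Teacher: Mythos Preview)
Your proposal is correct and follows essentially the same approach as the paper, which simply states that the corollary follows directly from Theorem \ref{ch5SecondTheorem} using the fact that $\zeta(k)\in\mathbb Q\pi^k$ for even integers $k$. You have supplied the natural details the paper omits.
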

\begin{remark}
This can also be deduced from Theorem 2.1 of \cite{Hoffman1}.
\end{remark}

%%%%7

To conclude this section, we note one explicit method for computing the values $\zeta_{\mathcal P}(\{m\}^k)$ at integral $k,m$ (especially if $m$ is even, in which case the zeta values below are completely elementary).

\begin{corollary}\label{ch5DeterminantCor}
For $m\geq2, k\in\N$, and $j\geq i$, 
set 
\[
\alpha_{i,j}
:=
\zeta(m(j-i+1))\frac{(k-i)!}{\pi^{m(j-i+1)}(k-j)!}
.\]
Then we have 
\[
\zeta_{\mathcal P}(\{m\}^k)
=
\frac{\pi^{mk}}{k!} \det
\begin{pmatrix} 
\alpha_{1,1}&\alpha_{1,2}&\alpha_{1,3}&\ldots&\alpha_{1,k}
\\
-1 & \alpha_{2,2} &\alpha_{2,3} &\ldots & \alpha_{2,k}
\\
0&-1& \alpha_{3,3}&\ldots & \alpha_{3,k}
\\
\vdots &\vdots&\vdots &\ddots &\vdots
\\
0&0&\ldots&-1&\alpha_{k,k} 
\end{pmatrix}
.
\]
\end{corollary}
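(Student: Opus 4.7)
The plan is to combine Theorem \ref{ch5SecondTheorem} with a cofactor expansion to reduce the claim to the classical Newton-type recurrence. Setting $q_k := \zeta_{\mathcal P}(\{m\}^k)/\pi^{mk}$ and $r_j := \zeta(mj)/\pi^{mj}$, Theorem \ref{ch5SecondTheorem} (rewritten in the variable $u = (z/\pi)^m$) yields
\[
\sum_{k \geq 0} q_k u^k = \exp\!\left(\sum_{j \geq 1}\frac{r_j}{j}u^j\right).
\]
Taking the logarithmic derivative with respect to $u$ and comparing coefficients gives the recurrence $k q_k = \sum_{j=1}^{k} r_j q_{k-j}$ with $q_0 = 1$. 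Since $\alpha_{i,j} = r_{j-i+1}(k-i)!/(k-j)!$, the corollary is equivalent to the determinant identity $\det(A_k) = k!\, q_k$, where $A_k$ denotes the matrix in the statement.

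I would prove $\det(A_k) = k!\, q_k$ by induction on $k$, with the base case $k=1$ handled directly (together with the convention $\det(A_0) := 1$ for the empty determinant). For the inductive step, expand along the first row:
\[
\det(A_k) = \sum_{j=1}^{k}(-1)^{1+j}\alpha_{1,j}\det\!\bigl(A_k^{(1,j)}\bigr).
\]
The key structural observation is that the minor $A_k^{(1,j)}$ is upper block-triangular. Indeed, for any row $i \geq j+1$ of $A_k$ the only nonzero entries are the subdiagonal $-1$ at column $i-1 \geq j$ and the $\alpha$-entries at columns $\geq i$; deleting column $j$ removes the lone nonzero left-block entry from row $j+1$, so every entry in columns $1, \ldots, j-1$ of rows $j+1, \ldots, k$ of the minor vanishes. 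Consequently $\det(A_k^{(1,j)}) = \det(T_j)\det(B_j)$, where $T_j$ is the top-left $(j-1)\times(j-1)$ block and $B_j$ the bottom-right $(k-j)\times(k-j)$ block.

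By construction, $T_j$ is upper triangular with $-1$'s along its diagonal, so $\det(T_j) = (-1)^{j-1}$. For $B_j$, the explicit formula $\alpha^{(k)}_{a,b} := r_{b-a+1}(k-a)!/(k-b)!$ yields the index-shift identity $\alpha^{(k)}_{a+j,\,b+j} = \alpha^{(k-j)}_{a,b}$, so that $B_j$ coincides with the length-$(k-j)$ matrix $A_{k-j}$; the inductive hypothesis then gives $\det(B_j) = (k-j)!\, q_{k-j}$. Since the signs combine as $(-1)^{1+j}(-1)^{j-1} = +1$ and $\alpha_{1,j} = r_j(k-1)!/(k-j)!$, the first-row expansion collapses to
\[
\det(A_k) = (k-1)!\sum_{j=1}^{k}r_j q_{k-j} = (k-1)! \cdot k q_k = k!\, q_k,
\]
with the penultimate equality supplied by the Newton recurrence. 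The main obstacle is pinning down the block-triangular structure of $A_k^{(1,j)}$ and verifying the index-shift identity for the $\alpha$'s; the remaining bookkeeping --- matching signs, factorials, and base cases --- is routine.
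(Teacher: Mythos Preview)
Your proof is correct. The Newton recurrence $k q_k = \sum_{j=1}^k r_j q_{k-j}$ follows cleanly from logarithmic differentiation of the exponential generating function in Theorem~\ref{ch5SecondTheorem}, and your cofactor expansion is carried out accurately: the block-triangular structure of the minor $A_k^{(1,j)}$ is exactly as you describe, the index-shift identity $\alpha^{(k)}_{a+j,b+j}=\alpha^{(k-j)}_{a,b}$ holds since $\alpha_{i,j}$ depends on $k$ only through $(k-i)!/(k-j)!$, and the signs and factorials collapse correctly.

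The paper takes a different route. Rather than proving the determinant identity by hand, it invokes the Fa\`a di Bruno formula \eqref{ch5FdB} to recognise the coefficients of $\exp\bigl(\sum_j a_j x^j/j!\bigr)$ as complete Bell polynomials $B_k(a_1,\dots,a_k)/k!$, and then quotes the classical determinant representation \eqref{ch5determinant} of $B_k$. With the substitution $a_j=(j-1)!\,\zeta(mj)/\pi^{mj}$, the entries of that determinant simplify to the $\alpha_{i,j}$ in the statement. Your approach is more self-contained---you effectively re-derive the Bell polynomial determinant formula in this special case via the Newton identities---whereas the paper's approach is shorter because it outsources the combinatorics to known Bell polynomial machinery. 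Both rest on the same exponential identity from Theorem~\ref{ch5SecondTheorem}.
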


\begin{remark}
There are results resembling these in Knopfmacher and Mays \cite{KM}.
\end{remark}

\section{Analytic continuation and $p$-adic continuity}

%%%%3

%\begin{remark}
%Corollary \ref{ch5FirstCor} and Corollary \ref{ch5SecondCor} extend results for $(a,m)=(0,1)$ given in \cite{ChamberlandJohnsonNadeauWu}.
%end{remark}

%{\bf LR: All is new. Robert, please check carefully RPS: Will do, Larry, but deleting this comment for clean-ness.}

If we jump forward about 100 years from the pathbreaking work of Euler concerning special values of the Riemann zeta function at %positive 
even %(and even negative) 
integers, we arrive at the famous work of Riemann in connection with prime number theory (see \cite{Edwards}). Namely, in 1859, Riemann brilliantly described the most significant properties of $\zeta(s)$ following that of an Euler product: the analytic continuation and functional equation for $\zeta(s)$. It is for this reason, of course, that the zeta function is named after Riemann, and not Euler, who had studied this function in some detail, and even conjectured a related functional equation. In particular, this analytic continuation allowed Riemann to bring the zeta function, and indeed the relatively new field of complex analysis, to the forefront of number theory by connecting its roots to the distribution of prime numbers.

It is natural therefore, whenever one is faced with new zeta functions, to ask about their prospect for analytic continuation. Here, we offer a brief study of some of these properties, in particular showing that the situation for our zeta functions is much more singular. Partition-theoretic zeta functions in fact naturally give rise to functions with essential singularities. Here, we use
Corollary \ref{ch5SecondCor}
to study the continuation properties of partition zeta functions over partitions $\mathcal P_{m\mathbb N}$ into multiples of $m>1$. In order to state the result we first define, for any $\varepsilon >0$, the right half-plane $\mathbb H_{\varepsilon}:=\{z\in\C : \operatorname{Re}(z)>\varepsilon\}$, and we denote by $\frac1{\N}$ the set $\{1/n : n\in\N\}$.
\begin{corollary}\label{ch5ThirdCor}
For any $\varepsilon>0$ and $m>1$, $\zeta_{\mathcal P_{m\mathbb N}}(s)$ has a meromorphic extension to $\mathbb H_{\varepsilon}$ with poles exactly at $\mathbb H_{\varepsilon}\cap\frac1{\N}$. In particular, there is no analytic continuation beyond the right half-plane $\operatorname{Re}(s)>0$, as there would be an essential singularity at $s=0$.
\end{corollary}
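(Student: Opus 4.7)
The plan is to bootstrap Corollary \ref{ch5SecondCor} from integer arguments $n\geq 2$ to a complex variable $s$, perform the analytic continuation at the level of the logarithm (where each term is explicit), and then exponentiate. First I would rewrite Corollary \ref{ch5SecondCor} by substituting $k=nj$:
\[
\log\zeta_{\mathcal P_{m\mathbb N}}(n)=n\sum_{j\geq 1}\frac{\zeta(nj)}{njm^{nj}}=\sum_{j\geq 1}\frac{\zeta(nj)}{jm^{nj}}.
\]
On $\operatorname{Re}(s)>1$ this identity extends directly from the Euler product by expanding $-\log(1-(mn)^{-s})=\sum_{k\geq 1}(mn)^{-sk}/k$ and swapping the order of summation (justified by absolute convergence, since $\sum_{j,n}(mn)^{-\operatorname{Re}(s)j}/j$ converges for $\operatorname{Re}(s)>1$), yielding
\[
\log\zeta_{\mathcal P_{m\mathbb N}}(s)=\sum_{j\geq 1}\frac{\zeta(sj)}{jm^{sj}}=:F(s).
\]

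Next I would analyze $F(s)$ on $\mathbb H_\varepsilon$. Each summand $\zeta(sj)/(jm^{sj})$ is meromorphic in $s$ with a single simple pole at $s=1/j$, inherited from the pole of $\zeta(w)$ at $w=1$; a direct computation via $\zeta(sj)=1/(j(s-1/j))+O(1)$ shows the residue is $1/(j^{2}m)$, which is nonzero. Only the finitely many $j$ with $1/j>\varepsilon$ produce poles inside $\mathbb H_\varepsilon$. For the remaining $j$ (equivalently, for large $j$ on any compact $K\subset\mathbb H_\varepsilon$), we have $\operatorname{Re}(sj)\geq\varepsilon j\to\infty$, whence $\zeta(sj)\to 1$ uniformly and $|m^{-sj}|\leq m^{-\varepsilon j}$; the tail $\sum_{j>J}\zeta(sj)/(jm^{sj})$ therefore converges absolutely and uniformly on $K$ and defines a holomorphic function. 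Combining the two pieces, $F(s)$ extends to a meromorphic function on $\mathbb H_\varepsilon$ with simple poles exactly on $\tfrac{1}{\mathbb N}\cap\mathbb H_\varepsilon$.

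Exponentiating recovers $\zeta_{\mathcal P_{m\mathbb N}}(s)=\exp F(s)$ as the desired extension to $\mathbb H_\varepsilon\setminus\tfrac{1}{\mathbb N}$, and the singular set is precisely $\tfrac{1}{\mathbb N}\cap\mathbb H_\varepsilon$. For the second assertion, observe that $1/j\to 0$, so the set of singularities of this extension accumulates at $s=0$. No function meromorphic in any neighborhood of $0$ can have a sequence of isolated singularities converging to $0$; hence $s=0$ is an essential singularity (a cluster point of singularities), obstructing any analytic continuation across the line $\operatorname{Re}(s)=0$ through the origin.

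The main obstacle is largely a matter of interpretation rather than analysis: the exponential of a function with simple poles of nonzero residue has essential singularities, not poles, so strictly speaking the extension of $\zeta_{\mathcal P_{m\mathbb N}}$ itself is not meromorphic at the points $1/j$. The statement of Corollary \ref{ch5ThirdCor} is most naturally read as asserting meromorphy of $\log\zeta_{\mathcal P_{m\mathbb N}}=F$, which is what the argument actually establishes; the accumulation argument for non-continuability past $\operatorname{Re}(s)=0$ applies equally to $F$ and to $\exp F$. Aside from this interpretive point, the only technical steps are the routine interchange of summation at the start and the uniform tail bound on compacta.
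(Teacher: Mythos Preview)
Your approach is essentially the same as the paper's: both work with the logarithm via the series $F(s)=\sum_{j\geq 1}\zeta(sj)/(jm^{sj})$, isolate the finitely many terms with $1/j>\varepsilon$ as the source of singularities, and bound the tail uniformly using $|m^{-sj}|\leq m^{-\varepsilon j}$ together with $\zeta(sj)\to 1$. Your write-up is in fact more careful than the paper's on two points: you explicitly justify extending the identity from integers $n$ to complex $s$ via the Euler product, and you flag the interpretive issue that exponentiating a simple pole of $F$ produces an essential singularity of $\zeta_{\mathcal P_{m\mathbb N}}$ rather than a pole, so that ``meromorphic with poles'' is most naturally read at the level of $\log\zeta_{\mathcal P_{m\mathbb N}}$.
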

\begin{remark}For the function $\zeta_{\mathcal P_{\N}}(s)$, a related discussion of poles and analytic continuation was made by the user mohammad-83 in a MathOverflow.net question.\end{remark}
%%%%%%%

Finally, we follow Kubota and Leopoldt \cite{KL}, who showed $\zeta$ could be modified slightly to obtain modified zeta functions for any prime $p$ which extend $\zeta$ to the space of $p$-adic integers $\mathbb Z_p$, to yield further examples of $p$-adic zeta functions of this sort. These continuations are based on the original observations of Kubota and Leopoldt, and, in a rather pleasant manner, on the evaluation formulas discussed above.

In particular, we will use Corollary \ref{ch5DeterminantCor} to $p$-adically interpolate modified versions of $\zeta_{\mathcal P}(\{m\}^k)$ in the $m$-aspect. Given the connection discussed in Section \ref{ch5MZVSection} to multiple zeta values, these results should be compared with the literature on $p$-adic multiple zeta values (e.g., see \cite{Furusho}), although we note that our $p$-adic interpolation procedure seems to be more direct in the special case we consider.  

The continuation in the $m$-aspect of this function is also quite natural, as the case $k=1$ is just that of the Riemann zeta function. Thus, it is natural to search for a suitable $p$-adic zeta function that specializes to the function of Kubota and Leopoldt when $k=1$. It is also desirable to find a $p$-adic interpolation result which makes the partition-theoretic perspective clear.

Here, we provide such an interpretation. Let us first denote the set of partitions with parts not divisible by $p$ as $\mathcal P_p$; then we consider the length-$k$ partition zeta values $\zeta_{\mathcal P_p}(\{s\}^k)$. Note that for $k=1$, $\zeta_{\mathcal P_p}(\{s\}^1)$ is just the Riemann zeta function with the Euler factor at $p$ removed (as considered by Kubota and Leopoldt). We then offer the following $p$-adic interpolation result.
\begin{theorem}\label{ch5padicInterpThm}
Let $k\geq1$ be fixed, and let $p\geq k+3$ be a prime. Then $\zeta_{\mathcal P_p}(\{s\}^{k})$ can be extended to a continuous function for $s\in\Z_p$ which agrees with $\zeta_{\mathcal P_p}(\{s\}^{k})$ on a positive proportion of integers.
\end{theorem}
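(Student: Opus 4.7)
The strategy is to reduce the $p$-adic interpolation of $\zeta_{\mathcal P_p}(\{s\}^k)$ (suitably normalized so that its values at positive even integers lie in $\mathbb Q$) to the classical Kubota--Leopoldt $p$-adic interpolation of the Riemann zeta function. The key observation is that $\zeta_{\mathcal P_p}(\{s\}^k)$ is the $k$-th complete homogeneous symmetric function evaluated on the multi-set $\{n^{-s}: n\in\mathbb N,\  p\nmid n\}$, whose $\ell$-th power sum is precisely the $p$-depleted Riemann zeta value $(1-p^{-\ell s})\zeta(\ell s)$.

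First I would apply the Newton--Girard identities --- essentially Corollary~\ref{ch5DeterminantCor} with each Riemann zeta value replaced by its $p$-depleted analogue --- to obtain a universal polynomial identity
\begin{equation*}
\zeta_{\mathcal P_p}(\{s\}^k)
\;=\;
Q_k\!\left((1-p^{-s})\zeta(s),\ (1-p^{-2s})\zeta(2s),\ \ldots,\ (1-p^{-ks})\zeta(ks)\right),
\end{equation*}
where $Q_k\in\mathbb Q[x_1,\ldots,x_k]$ has denominators dividing $k!$. For $m$ a positive even integer, Euler's closed form $\zeta(m\ell)\in\mathbb Q\cdot\pi^{m\ell}$ then shows that the normalized quantity $\zeta_{\mathcal P_p}(\{m\}^k)/\pi^{mk}$ is rational, and is a universal polynomial in the modified Bernoulli fractions $(1-p^{-m\ell})B_{m\ell}/(m\ell)!$ for $\ell=1,\ldots,k$.

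Next I would $p$-adically interpolate each of these factors as $m$ ranges over positive even integers in a fixed non-trivial residue class $r\bmod(p-1)$, a subset of $\mathbb N$ of density $1/(p-1)>0$. By the Kummer congruences, there is a continuous $p$-adic function $\zeta_{p,\mathrm{KL}}\colon\mathbb Z_p\setminus\{1\}\to\mathbb Q_p$ with $\zeta_{p,\mathrm{KL}}(1-n)=-(1-p^{n-1})B_n/n$ on such arithmetic progressions. Rewriting $(1-p^{-m\ell})=-p^{-m\ell}(1-p^{m\ell})$ to match the Kubota--Leopoldt Euler factor, and using the Legendre formula to track $v_p((m\ell)!)$ along the progression, we obtain a continuous extension of $(1-p^{-m\ell})B_{m\ell}/(m\ell)!$ to all of $\mathbb Z_p$. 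Substituting these continuous functions into $Q_k$ yields the desired continuous extension of $\zeta_{\mathcal P_p}(\{s\}^k)$ to $s\in\mathbb Z_p$, which by construction agrees with the original values on the positive-proportion subset of positive even integers congruent to $r$ modulo $p-1$.

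The main obstacle will be the last step: reconciling the partition-theoretic Euler factor $(1-p^{-m\ell})$ with the Kubota--Leopoldt factor $(1-p^{m\ell-1})$, while simultaneously controlling the factorials $(m\ell)!$ that enter via Euler's closed form for $\zeta(m\ell)$. This is where the hypothesis $p\geq k+3$ is essential: it ensures that for all $\ell\leq k$ and all $m$ in a sufficiently fine arithmetic progression modulo a power of $p$, the valuation $v_p((m\ell)!)$ depends continuously on $m\in\mathbb Z_p$, so the composite expression survives as a genuinely continuous --- not merely formal --- function on $\mathbb Z_p$. Once this $p$-adic bookkeeping is pushed through, agreement on a positive proportion of integers is automatic from the construction.
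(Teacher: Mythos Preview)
Your structural instinct is right: one expresses $\zeta_{\mathcal P_p}(\{s\}^k)$ as a universal polynomial (indeed, the determinant of Corollary~\ref{ch5DeterminantCor} with each $\zeta$ replaced by $\zeta^*(s)=(1-p^{-s})\zeta(s)$) and then feeds in the Kummer congruences. The paper does exactly this. But your plan to evaluate at \emph{positive even} $m$ and then divide through by $\pi^{mk}$ is where things go wrong, and the obstacle you flag at the end is not a bookkeeping issue but an actual obstruction.

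Concretely, after your normalization the building blocks are
\[
(1-p^{-m\ell})\,\frac{\zeta(m\ell)}{\pi^{m\ell}}
\;=\;
(1-p^{-m\ell})\cdot(\text{rational})\cdot\frac{B_{m\ell}}{(m\ell)!}\,.
\]
For fixed $\ell$ and $m\to\infty$ along any arithmetic progression, $v_p(1-p^{-m\ell})=-m\ell$ and $v_p((m\ell)!)\sim m\ell/(p-1)$ by Legendre, so these quantities have $p$-adic valuation tending to $-\infty$. They therefore cannot extend to a continuous function on the compact space $\mathbb Z_p$, and no miracle of cancellation in $Q_k$ saves you: the determinant has a single top-degree term which already blows up. The hypothesis $p\ge k+3$ does nothing to control $(m\ell)!$ as $m$ grows.

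The paper sidesteps this entirely by never introducing $\pi$. It first extends the determinant formula for $\zeta_{\mathcal P_p}(\{m\}^k)$ to \emph{all} complex $m$ by using the analytic continuation of $\zeta$ in each entry $\zeta^*\!\bigl(m(j-i+1)\bigr)$, and then specializes to $m=1-m'$ with $m'$ a positive integer. The entries become $\zeta^*$ at nonpositive integers, where $\zeta^*(1-n)=-(1-p^{\,n-1})B_n/n$, and the Kummer congruences apply \emph{directly}: for $m_1\equiv m_2\pmod{p^a}$ in a fixed residue class $s_0\bmod (p-1)$ with $s_0\not\equiv 0$, one gets $\zeta^*(1-m_1)\equiv\zeta^*(1-m_2)\pmod{p^{a+1}}$. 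The condition $p\ge k+3$ enters only to guarantee that (i) the factorials $k!$ and $(k-i)!/(k-j)!$ in the determinant are $p$-units, and (ii) one can choose $s_0=2$ so that the residue classes $1+(s_0-1)r=r+1$ for $r=1,\dots,k$ are all nonzero modulo $p-1$ (since $k+1<p-1$). This yields uniform $p$-adic continuity of $m'\mapsto \zeta_{\mathcal P_p}(\{1-m'\}^k)$ on the dense set $S_2\subset\mathbb Z_p$, and the positive-proportion agreement is with the \emph{negative} integers $1-S_2$, not with positive even integers.
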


\section{Connections to multiple zeta values}\label{ch5MZVSection}

%%%%8

%\subsection{Connections to multiple zeta values}
Our final application of the circle of ideas related to partition zeta functions and infinite products will be in the theory of multiple zeta values. 

\begin{definition}\label{ch5mzvdef}
We define for natural numbers $m_1,m_2,\ldots ,m_k$ with $m_k>2$ the {\it multiple zeta value} (commonly written ``MZV'')
\begin{equation*}
\zeta(m_1,m_2,\ldots,m_k):=\sum_{n_1>n_2>\ldots>n_k\geq 1}\frac{1}{n_1^{m_1}\ldots n_k^{m_k}}
.
\end{equation*}
We call $k$ the {\it length} of the MZV. Furthermore, if $m_1=m_2=\ldots =m_k$ are all equal to some $m\in\mathbb N$, we use the common notation
\begin{equation}\label{ch5mzvpzv}
\zeta(\{m\}^k):=\sum_{n_1>n_2>\ldots>n_k\geq 1}\frac{1}{\left(n_1 n_2\ldots n_k\right)^{m}}.
\end{equation}
\end{definition}

Multiple zeta values have a rich history and enjoy widespread connections; interested readers are referred to Zagier's short note \cite{ZagierMZV}, %Hoffman's webpage \cite{Hoffman2}, 
and for a more detailed treatment, the excellent lecture notes of Borwein and Zudilin \cite{BorweinZudilin}. There are many nice closed-form identities in the literature; for example, one can show (see \cite{Hoffman1}) on analogy to Corollary \ref{ch41.7} that   
\begin{equation}
\zeta(\{2\}^k) = \frac{\pi^{2k}}{(2k+1)!},
\end{equation}
which we prove, along with similar (but more complicated) expressions for $\zeta(\{2^t \}^k)$, in the previous chapter. % for $\zeta(\{2^t\}^k)$ for all $t\in\mathbb N$ are given in Chapter 4, parallel to those mentioned in Section \ref{ch5Partition-theoretic zeta functions} for $\zeta_{\mathcal P}(\{2^t \}^k)$.  

Observe from its definition that the partition zeta function $\zeta_{\mathcal P}(\{m\}^k)$ %(see Definition \ref{ch5pzv}) 
can be rewritten in a similar-looking form to \eqref{ch5mzvpzv} above:
\begin{equation}\label{ch5mzvpzv2}
\zeta_{\mathcal P}(\{m\}^k)=\sum_{n_1\geq n_2\geq \ldots\geq n_k\geq 1}\frac{1}{\left(n_1 n_2\ldots n_k\right)^{m}}.
\end{equation}
In fact, if we take $\mathcal P^*$ to denote partitions into distinct parts, then \eqref{ch5mzvpzv} reveals $\zeta(\{m\}^k)$ is equal to the partition zeta function $\zeta_{\mathcal P^*}(\{m\}^k)$ summed over length-$k$ partitions into distinct parts, as pointed out in the preceding chapter. Series such as those in \eqref{ch5mzvpzv2} have been considered and studied extensively by Hoffman (for instance, see \cite{Hoffman1}). 

%
%{\bf ***RPS: This section should include one or two easy MZV examples from previous paper.*** LR: I think now just cite equaiton or two from section 2.}

By reorganizing sums of the shape \eqref{ch5mzvpzv2}, we arrive at interesting relations between $\zeta_{\mathcal P}(\{m\}^k)$ and families of MZVs. In order to describe these relations, we first recall that a {\it composition} is simply a finite tuple of natural numbers, and we call the sum of these integers the {\it size} of the composition. Denote the set of all compositions by $\mathcal C$ and write $|\lambda|=k$ for $\lambda=(a_1,a_2,\ldots,a_j)\in\mathcal C$ if $k=a_1+a_2+\ldots+a_j$. Then we obtain the following. 

%{\bf RPS: Switched $n$ to $m$ in the arguments of the MZV on the RHS below... was n , should be m as a factor of each argument, right?}

\begin{proposition}\label{ch5DecouplingCorollary}
Assuming the notation above, we have that
\[
\zeta_{\mathcal P}(\{m\}^k)
=
\sum_{\substack{\lambda=(a_1,\ldots, a_j)\in\mathcal C\\ |\lambda|=k}}\zeta(a_1m,a_2m,\ldots,a_jm)
.
\]
\end{proposition}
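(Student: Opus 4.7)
The plan is to rewrite $\zeta_{\mathcal P}(\{m\}^k)$ by regrouping the weakly decreasing parts of each partition according to their distinct values, and then recognize each resulting summand as a multiple zeta value indexed by a composition of $k$.

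More explicitly, recall from \eqref{ch5mzvpzv2} that
\[
\zeta_{\mathcal P}(\{m\}^k) = \sum_{n_1 \geq n_2 \geq \cdots \geq n_k \geq 1} \frac{1}{(n_1 n_2 \cdots n_k)^m}.
\]
First I would bijectively parametrize the indices of this sum. A weakly decreasing tuple $(n_1,\dots,n_k)$ is uniquely determined by (i) its distinct values $N_1 > N_2 > \cdots > N_j \geq 1$, where $j$ is the number of distinct parts, together with (ii) the multiplicities $a_1,\dots,a_j \in \mathbb N$ with which those values appear. These multiplicities form a composition $\lambda=(a_1,\dots,a_j)\in\mathcal C$ of size $|\lambda|=a_1+\cdots+a_j = k=\ell(\lambda)$, and conversely every composition of $k$ together with a strictly decreasing tuple $N_1>\cdots>N_j\geq 1$ yields a unique weakly decreasing tuple of length $k$.

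Next I would evaluate the summand under this reparametrization. Since $n_1 n_2 \cdots n_k = N_1^{a_1}\cdots N_j^{a_j}$, we have $(n_1\cdots n_k)^m = N_1^{a_1 m}\cdots N_j^{a_j m}$, and therefore
\[
\zeta_{\mathcal P}(\{m\}^k) = \sum_{\substack{\lambda=(a_1,\dots,a_j)\in\mathcal C \\ |\lambda|=k}}\  \sum_{N_1>N_2>\cdots>N_j\geq 1}\frac{1}{N_1^{a_1 m}N_2^{a_2 m}\cdots N_j^{a_j m}}.
\]
By Definition \ref{ch5mzvdef}, the inner sum is precisely $\zeta(a_1 m, a_2 m,\dots,a_j m)$, which yields the claimed identity. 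The interchange and splitting of sums is justified by absolute convergence whenever the original series $\zeta_{\mathcal P}(\{m\}^k)$ converges (e.g.\ for $m\geq 2$), since every term is positive.

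The only real obstacle is to make the bookkeeping of the bijection between length-$k$ partitions and pairs (composition of $k$, strictly decreasing tuple of positive integers of the corresponding length) fully rigorous; once this is laid out, the proof is a one-line manipulation of the summand. I would therefore devote most of the write-up to stating this bijection carefully and verifying that both the size constraint $|\lambda|=k$ and the positivity of the parts $a_i$ match the definition of $\mathcal C$ used in the statement.
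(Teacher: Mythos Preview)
Your proposal is correct and follows essentially the same argument as the paper: both start from the weakly decreasing sum \eqref{ch5mzvpzv2} and decompose each tuple $(n_1,\dots,n_k)$ according to the pattern of equalities and strict inequalities among consecutive parts, which is exactly the bijection with pairs (composition of $k$, strictly decreasing tuple) that you describe. The paper phrases this as tracking ``chains of $=$ and $>$'' encoded by compositions, while you phrase it as recording distinct values with their multiplicities; these are the same bookkeeping, and your attention to absolute convergence is a welcome addition.
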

\begin{remark}
Proposition \ref{ch5DecouplingCorollary} is analogous to results of Hoffman; the reader is referred to Theorem 2.1 of \cite{Hoffman1}.
\end{remark}

In particular, for any $n>1$ we can find the following reduction of $\zeta(\{n\}^k)$ to MZVs of smaller length. We note that in Theorem 2.1 of \cite{Hoffman1}, Hoffman also shows directly how to write these values in terms of products (as opposed to simply linear combinations) of ordinary Riemann zeta values: hints, perhaps, of further connections.
We remark in passing that this can be thought of as a sort of ``parity result'' (cf. \cite{IKZ,Tsumura}).
\begin{corollary}\label{ch5ParallelMZVLowerOrders}
For any $n,k>1$, the MZV $\zeta(\{n\}^k)$ of length $k$ can be written as an explicit linear combination of MZVs of lengths less than $k$.
\end{corollary}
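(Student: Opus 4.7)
My plan is to deduce Corollary \ref{ch5ParallelMZVLowerOrders} as a direct bookkeeping consequence of Proposition \ref{ch5DecouplingCorollary} combined with an explicit evaluation of the partition zeta value $\zeta_{\mathcal{P}}(\{n\}^k)$ coming from earlier in this chapter.

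First, I would invoke Proposition \ref{ch5DecouplingCorollary} with $m=n$, which rewrites
\[
\zeta_{\mathcal{P}}(\{n\}^k)
=
\sum_{\substack{\lambda=(a_1,\ldots,a_j)\in\mathcal C \\ |\lambda|=k}}
\zeta(a_1 n, a_2 n,\ldots,a_j n).
\]
The sum runs over the $2^{k-1}$ compositions of $k$, and the length of the MZV contributed by $\lambda$ is exactly the number of parts $j = \ell(\lambda)$. The key observation is that there is a unique composition of $k$ with $\ell(\lambda)=k$, namely $\lambda = (1,1,\ldots,1)$, and this composition contributes precisely $\zeta(\{n\}^k)$; every other composition has fewer than $k$ parts.

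Next, I would isolate this term and solve:
\[
\zeta(\{n\}^k)
=
\zeta_{\mathcal{P}}(\{n\}^k)
-
\sum_{\substack{\lambda=(a_1,\ldots,a_j)\in\mathcal C \\ |\lambda|=k,\, j<k}}
\zeta(a_1 n,\ldots,a_j n).
\]
Every MZV in the sum on the right has length strictly less than $k$, so it only remains to handle the first term. Here I would appeal to Corollary \ref{ch5DeterminantCor} (or equivalently Theorem \ref{ch5SecondTheorem}), which writes $\zeta_{\mathcal{P}}(\{n\}^k)$ as an explicit polynomial in the Riemann zeta values $\zeta(n),\zeta(2n),\ldots,\zeta(kn)$; since these are MZVs of length $1<k$, the combined formula expresses $\zeta(\{n\}^k)$ as an explicit $\mathbb{Q}$-linear combination of MZVs of length less than $k$, as desired.

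There is no substantive obstacle: once Proposition \ref{ch5DecouplingCorollary} is in hand, the proof is essentially the combinatorial observation that, among compositions of $k$, only the all-ones composition has length $k$. The only minor care needed is to verify the convergence condition $a_j n > 1$ for each MZV appearing (which holds automatically since $n\geq 2$ and $a_j\geq 1$), and to note that the expansion of $\zeta_{\mathcal{P}}(\{n\}^k)$ supplied by Corollary \ref{ch5DeterminantCor} is indeed a linear (in fact polynomial) combination of length-one MZVs, so the final expression is genuinely a linear combination of lower-length MZVs after distributing.
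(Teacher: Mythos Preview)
Your proposal is correct and follows essentially the same route as the paper: invoke Proposition \ref{ch5DecouplingCorollary}, observe that the unique composition of $k$ with $k$ parts is $(1,1,\ldots,1)$ contributing $\zeta(\{n\}^k)$, and use Corollary \ref{ch5DeterminantCor} to handle $\zeta_{\mathcal P}(\{n\}^k)$ as a combination of Riemann zeta values. (The paper's proof records the contribution of $(1,1,\ldots,1)$ as $k!\,\zeta(\{m\}^k)$, but your coefficient of $1$ is the one consistent with the statement of Proposition \ref{ch5DecouplingCorollary}.)
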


As our final result, we give a simple formula for $\zeta(\{n\}^k)$. This formula is probably already known; if $k=2$ it follows from a well-known result of Euler (see the discussion of $H(n)$ on page 3 of \cite{Zagier}); % and is closely related to (11) and (32) of \cite{borwein1997evaluations}. 
the idea of the proof is also similar to what has appeared in, for example, \cite{Zagier}. However, the authors have decided to include it due to connections with the ideas used throughout this paper, and the simple deduction of the formula from expressions necessary for the proofs of the results described above. 

\begin{proposition}\label{ch5ParallelValuesExp}
The MZV $\zeta(\{n\}^k)$ of length $k$ can be expressed as a linear combination of products of ordinary $\zeta$ values. In particular, we have
\[
\zeta(\{n\}^k)
=
(-1)^k
\left[z^{nk}\right]
\operatorname{exp}
\left(-
\sum_{j\geq1}
\frac{\zeta(nj)}jz^{nj}
\right)
.
\]
\end{proposition}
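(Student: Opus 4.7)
The plan is to recognize that the generating identity arises from expanding the infinite product
\[
\Pi_n(z) \;:=\; \prod_{m=1}^{\infty}\!\left(1-\frac{z^n}{m^n}\right)
\]
in two different ways. Since this is exactly an instance of the distinct-parts expansion (as in Theorem \ref{ch41.11} with $q=1$, $f(m)=1/m^n$, restricted to $\mathcal{P}^*$), choosing a $k$-element subset $\{n_1>n_2>\cdots>n_k\}\subset\mathbb{N}$ and taking the sign $(-1)^k$ from the $k$ chosen factors gives
\[
\Pi_n(z) \;=\; \sum_{k\geq 0}(-1)^k z^{nk}\!\!\!\sum_{n_1>n_2>\cdots>n_k\geq 1}\!\!\!\frac{1}{(n_1 n_2\cdots n_k)^n}\;=\;\sum_{k\geq 0}(-1)^k\,\zeta(\{n\}^k)\,z^{nk}.
\]

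For the second evaluation I would take logarithms (valid for $|z|$ small, given $n\geq 2$ so that all relevant series converge absolutely) and use $\log(1-x)=-\sum_{j\geq 1}x^j/j$, interchanging the two sums to get
\[
\log\Pi_n(z)\;=\;\sum_{m=1}^{\infty}\log\!\left(1-\frac{z^n}{m^n}\right)
\;=\;-\sum_{j\geq 1}\frac{z^{nj}}{j}\sum_{m=1}^{\infty}\frac{1}{m^{nj}}
\;=\;-\sum_{j\geq 1}\frac{\zeta(nj)}{j}\,z^{nj}.
\]
Exponentiating yields
\[
\Pi_n(z)\;=\;\exp\!\left(-\sum_{j\geq 1}\frac{\zeta(nj)}{j}\,z^{nj}\right).
\]

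Finally I would extract the coefficient of $z^{nk}$ from both representations of $\Pi_n(z)$. The left-hand distinct-parts expansion contributes exactly $(-1)^k\zeta(\{n\}^k)$, while the right-hand exponential contributes $[z^{nk}]\exp(-\sum_{j\geq 1}\zeta(nj)z^{nj}/j)$. Equating and multiplying by $(-1)^k$ gives the claimed formula.

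No step here is a genuine obstacle; the only mild care needed is justifying the term-by-term manipulations and the interchange of summation used in computing $\log\Pi_n(z)$, which is standard under $n\geq 2$ and $|z|<1$, after which the identity is promoted to an identity of formal power series in $z^n$ by comparing coefficients. This is in the same spirit as the derivations used to prove Theorem \ref{ch5mainthm} and Theorem \ref{ch5SecondTheorem}, so the argument is essentially a specialization of ideas already developed in the chapter.
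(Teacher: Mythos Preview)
Your proof is correct and follows the same overall strategy as the paper: expand the product $\prod_{m\geq 1}(1-z^n/m^n)$ in two ways and compare coefficients. The first expansion (distinct-parts sum giving $\sum_k(-1)^k\zeta(\{n\}^k)z^{nk}$) is identical in both.

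The difference lies in the second evaluation. The paper factors $1-(z/m)^n=\prod_{r=0}^{n-1}(1-ze(r/n)/m)$, applies Theorem~\ref{ch5Armin11} to rewrite the full product as $\prod_{r=0}^{n-1}\Gamma(1-ze(r/n))^{-1}$, and then invokes the Legendre expansion \eqref{ch5LogGammaSecondFormula} of $\log\Gamma$ together with an orthogonality-of-roots-of-unity argument (as in the proof of Theorem~\ref{ch5SecondTheorem}) to reach the exponential form. You instead take the logarithm of the product directly and expand each $\log(1-z^n/m^n)$ as a power series, summing over $m$ to produce $\zeta(nj)$. Your route is more elementary and self-contained, avoiding the Gamma-function machinery entirely; the paper's route has the advantage of fitting into the uniform $\Gamma$-product framework used throughout the chapter, making the parallel with Theorem~\ref{ch5SecondTheorem} explicit.
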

%{\bf LR: Minus sign fixed here LR: Can this go? It looks like there should be another set of parentheses.}
\begin{remark}
This formula is equivalent to a special case of Theorem 2.1 of \cite{Hoffman1}. However, since the approach is very simple and ties in with the other ideas in this paper, we give a proof for the reader's convenience.
\end{remark} 

The proof of Corollary \ref{ch5DeterminantCor} yields a similar determinant formula here.
\begin{corollary}\label{ch5DeterminantCor2}
For $n\geq2, k\in\N$, and $j\geq i$, 
set 
\[
\beta_{i,j}
:=
-
\zeta(n(j-i+1))\frac{(k-i)!}{(k-j)!}
.\]
Then we have 
\[
\zeta(\{n\}^k)
=
\frac{(-1)^k}{k!} \det
\begin{pmatrix} 
\beta_{1,1}&\beta_{1,2}&\beta_{1,3}&\ldots&\beta_{1,k}
\\
-1 & \beta_{2,2} &\beta_{2,3} &\ldots & \beta_{2,k}
\\
0&-1& \beta_{3,3}&\ldots & \beta_{3,k}
\\
\vdots &\vdots&\vdots &\ddots &\vdots
\\
0&0&\ldots&-1&\beta_{k,k} 
\end{pmatrix}
.
\]
\end{corollary}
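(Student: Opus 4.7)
The plan is to parallel the proof of Corollary \ref{ch5DeterminantCor}, now starting from Proposition \ref{ch5ParallelValuesExp} rather than Theorem \ref{ch5SecondTheorem}. Substituting $w = z^{n}$ and writing
\[
\widetilde h(w) := \exp\!\left(-\sum_{j\geq 1}\frac{\zeta(nj)}{j}w^{j}\right) = \sum_{k\geq 0} h_k w^{k},
\]
the proposition is the statement $\zeta(\{n\}^{k}) = (-1)^{k} h_k$. Thus the task reduces to producing a determinantal expression for the $h_k$, after which the prefactor $(-1)^{k}/k!$ appears automatically at the end.

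First I would extract a linear recursion from the logarithmic derivative identity $\widetilde h'(w) = -\bigl(\sum_{j\geq 1}\zeta(nj)w^{j-1}\bigr)\widetilde h(w)$: comparing coefficients of $w^{k-1}$ yields
\[
k h_k + \sum_{j=1}^{k}\zeta(nj)\,h_{k-j} = 0, \qquad h_0 = 1.
\]
Rescaling by $G_k := k!\,h_k$ converts this to
\[
G_k + \sum_{j=1}^{k}\zeta(nj)\,\frac{(k-1)!}{(k-j)!}\,G_{k-j} = 0, \qquad G_0 = 1,
\]
and one recognizes the coefficients $-\zeta(nj)(k-1)!/(k-j)!$ as precisely the top-row entries $\beta_{1,j}$ of the matrix appearing in the statement.

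The key step is then to show, by induction on $k$, that the Hessenberg determinant $D_k$ in the corollary satisfies this same recursion with $D_0 = 1$. I would expand $D_k$ along its first row,
\[
D_k = \sum_{j=1}^{k} (-1)^{1+j}\beta_{1,j}\,\det(M_{1,j}),
\]
where $M_{1,j}$ is the minor obtained by deleting row $1$ and column $j$. Because the subdiagonal of the original matrix consists of $-1$'s, each $M_{1,j}$ has a first column whose only nonzero entry is a $-1$ at the top; expanding along that column and iterating $j-1$ times peels away the first $j-1$ columns and yields the identity $\det(M_{1,j}) = (-1)^{j-1} D_{k-j}$, the remaining block being exactly the Hessenberg matrix for the parameter $k-j$ (under the reindexing $i\mapsto i-j$, $j'\mapsto j'-j$, the factorial ratios $(k-i)!/(k-j)!$ transform into $(k-j-i')!/(k-j-j')!$, matching the smaller problem). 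Substituting in collapses the alternating signs to yield $D_k = \sum_{j=1}^{k}\beta_{1,j}\,D_{k-j}$, which is the recursion for $G_k$. Since $D_0 = G_0 = 1$, induction gives $D_k = G_k = k!\,(-1)^{k}\zeta(\{n\}^{k})$, whence $\zeta(\{n\}^{k}) = (-1)^{k}D_k/k!$.

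The main obstacle will be the careful bookkeeping of signs and of the scaling factors $(k-i)!/(k-j)!$ in the iterated cofactor expansion: these factorials were engineered precisely so that the sub-determinant of the size-$k$ matrix reindexes to the size-$(k-j)$ matrix of the same form, and verifying this explicitly requires a short but attentive change-of-variables argument. Once this is dispatched, the base case $D_1 = \beta_{1,1} = -\zeta(n) = G_1$ closes the induction; as a sanity check, direct computation at $k=2,3$ gives $\tfrac{1}{2}(\zeta(n)^{2}-\zeta(2n))$ and $\tfrac{1}{6}(\zeta(n)^{3}-3\zeta(n)\zeta(2n)+2\zeta(3n))$ respectively, confirming the signs and factorials are correctly tracked.
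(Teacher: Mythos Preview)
Your proof is correct. Both you and the paper start from Proposition~\ref{ch5ParallelValuesExp}, but from there the paper takes a much shorter route: it invokes Fa\`{a} di Bruno's formula \eqref{ch5FdB} to identify the coefficient $h_k$ as $B_k(a_1,\dots,a_k)/k!$ for the Bell polynomial with $a_j=-(j-1)!\,\zeta(nj)$, and then simply quotes the classical determinant expression \eqref{ch5determinant} for $B_k$, rewriting the entries as $\beta_{i,j}$. Your argument instead bypasses the Bell polynomial machinery entirely, deriving the Newton-type recursion $G_k=\sum_j\beta_{1,j}G_{k-j}$ from the logarithmic derivative and then verifying by cofactor expansion that the Hessenberg determinant obeys the same recursion. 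What you have done is essentially a self-contained re-proof of the Bell polynomial determinant identity in this special case; the paper's approach is shorter because it treats that identity as known, while yours is more elementary and makes the mechanism (the recursion hidden in the Hessenberg structure) explicit.
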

%{\bf LR: Minus signs fixed here too LR: Can this go now?}
\begin{remark} We can see from the above corollary that $\zeta(\{n\}^k)$ is a linear combination of products of zeta values, which is closely related to formulas of Hoffman \cite{Hoffman1}.
\end{remark}

\section{Proofs}
\subsection{Machinery}

\subsubsection{Useful formulas}
In this section, we collect several formulas that will be key to the proofs of the theorems above.  We begin with the following beautiful formula given by Chamberland and Straub in Theorem 1.1 of \cite{ChamberlandStraub}). In fact, this formula has a long history, going back at least to Section 12.13 of \cite{WhitttakerWatson}, and we note that Ding, Feng, and Liu independently discovered this same result in Lemma 7 of \cite{DingFengLiu}.
\begin{theorem}\label{ch5Armin11}
If $n\in\N$ and $\alpha_1,\ldots,\alpha_n$ and $\beta_1,\ldots,\beta_n$ are complex numbers, none of which are non-positive integers, with $\sum_{j=1}^n\alpha_j=\sum_{j=1}^n\beta_j$, then we have
\[
\prod_{k\geq0}
\prod_{j=1}^n\frac{(k+\alpha_j)}{(k+\beta_j)}=\prod_{j=1}^n\frac{\Gamma(\beta_j)}{\Gamma(\alpha_j)}
.
\]
\end{theorem}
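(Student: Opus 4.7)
The plan is to deduce this product identity from the classical Gauss limit form of the Gamma function, namely
\[
\Gamma(z) \;=\; \lim_{N\to\infty} \frac{N!\,N^{z}}{\prod_{k=0}^{N}(k+z)},
\]
valid whenever $z$ is not a non-positive integer. Taking the ratio of the Gauss formula for $\Gamma(\beta_j)$ and $\Gamma(\alpha_j)$, the $N!$ factors cancel and one obtains
\[
\frac{\Gamma(\beta_j)}{\Gamma(\alpha_j)} \;=\; \lim_{N\to\infty} N^{\beta_j - \alpha_j} \prod_{k=0}^{N} \frac{k+\alpha_j}{k+\beta_j}.
\]
Multiplying these expressions over $j=1,\ldots,n$ would then give, assuming the limit passes through the product,
\[
\prod_{j=1}^{n} \frac{\Gamma(\beta_j)}{\Gamma(\alpha_j)} \;=\; \lim_{N\to\infty} N^{\,\sum_{j}(\beta_j-\alpha_j)} \prod_{k=0}^{N} \prod_{j=1}^{n} \frac{k+\alpha_j}{k+\beta_j}.
\]
Crucially, the balance hypothesis $\sum_j \alpha_j = \sum_j \beta_j$ kills the $N^{\sum_j(\beta_j-\alpha_j)}$ prefactor, since the exponent is $0$, leaving exactly the claimed identity.

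What remains is justifying that the infinite product on the right-hand side converges and that the two limit operations commute. For each fixed $k$, a first-order expansion gives
\[
\prod_{j=1}^{n} \frac{k+\alpha_j}{k+\beta_j} \;=\; \prod_{j=1}^{n}\!\left(1 + \frac{\alpha_j-\beta_j}{k+\beta_j}\right),
\]
whose logarithm equals $\frac{1}{k}\sum_{j}(\alpha_j-\beta_j) + O(k^{-2}) = O(k^{-2})$ as $k\to\infty$, again by the balance hypothesis. Hence the doubly-infinite product converges absolutely. Combining this absolute convergence with the Gauss-limit computation above finishes the proof.

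The plan is thus short and the only subtle point is the disappearance of the $N^{\sum(\beta_j-\alpha_j)}$ factor; this is precisely where the hypothesis $\sum_j \alpha_j = \sum_j \beta_j$ enters, and it also explains why the same hypothesis is what forces the product to converge in the first place. I expect no significant obstacles beyond bookkeeping of the absolute convergence estimate, which is entirely elementary.
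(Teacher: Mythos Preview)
Your argument is correct. The Gauss limit formula for $\Gamma$, combined with the balance condition $\sum_j\alpha_j=\sum_j\beta_j$ to eliminate the $N^{\sum_j(\beta_j-\alpha_j)}$ factor and to force absolute convergence of the infinite product, is exactly the standard route to this identity.

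Note, however, that the paper does not supply its own proof of this statement: it is quoted as a known result, attributed to Chamberland and Straub (Theorem~1.1 of \cite{ChamberlandStraub}), with earlier occurrences in Whittaker and Watson (Section~12.13) and in Ding, Feng, and Liu. Your proof via the Gauss product formula is the classical one and is presumably close to what appears in those references, so there is no meaningful divergence of approach to discuss.
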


We will also require two Taylor series expansions for $\log\Gamma$, both of which follow easily from Euler's product definition of the gamma function \cite{Edwards}. The first expansion, known as Legendre's series, is valid for $|z|<1$ (see (17) of \cite{Wrench}):

\begin{equation}\label{ch5LogGammaSecondFormula}
\log\Gamma(1+z)
=
-\gamma z+\sum_{k\geq2}\frac{\zeta(k)}k(-z)^k
.
\end{equation}
We also have the following expansion valid for $|z|<2$\footnote{See for instance (5.7.3) of NIST Digital Library of Mathematical Functions, \url{http://dlmf.nist.gov/}, Release 1.0.6 of 2013-05-06.}:
\begin{equation}\label{ch5573Nist}
\log\Gamma(1+z)=-\log(1+z)+z(1-\gamma)+\sum_{k\geq2}(-1)^k(\zeta(k)-1)\frac{z^k}k
.
\end{equation}

Furthermore, we need a couple of facts about Bell polynomials (see Chapter 12.3 of \cite{Andrews}). The $n$th \emph{complete Bell polynomial} is the sum
\[
B_n(x_1,\dots,x_n):=\sum_{i=1}^n B_{n,i}(x_1,x_2,\dots,x_{n-i+1})
.
\]
The $i$th term here is the polynomial
\begin{align*}
B_{n,i}&(x_1,x_2,\dots,x_{n-i+1})\\
&:=\sum \frac{n!}{j_1!j_2!\cdots j_{n-i+1}!}
\left(\frac{x_1}{1!}\right)^{j_1}\left(\frac{x_2}{2!}\right)^{j_2}\cdots\left(\frac{x_{n-i+1}}{(n-i+1)!}\right)^{j_{n-i+1}},
\end{align*}
where we sum over all sequences $j_1, j_2,..., j_{n-i+1}$ of nonnegative integers such that $j_1+j_2+\cdots+j_{n-i+1}=i$ and $j_1+2j_2+3j_3+\cdots+(n-i+1)j_{n-i+1}=n$.

With these notations, we use a specialization of the classical Fa\`{a} di Bruno formula \cite{diBruno}, which allows us to write the exponential of a formal power series as a power series with coefficients related to complete Bell polynomials\footnote{We prove Fa\`{a} di Bruno's formula and give other partition-theoretic applications in Appendix D.}: 
\begin{equation}\label{ch5FdB}
\operatorname{exp}\left(\sum_{j=1}^\infty \frac{a_j}{j!} x^j \right)
= \sum_{k=0}^\infty \frac{B_k(a_1,\dots,a_k)}{k!} x^k.
\end{equation}

Fa\`{a} di Bruno also gives an identity \cite{diBruno} that specializes to the following formula for the $k$th complete Bell polynomial in the series above as the determinant of a certain $k\times k$ matrix: 

\begin{equation}\label{ch5determinant}
B_k(a_1,\dots,a_k)
=
\det
\left(
\begin{smallmatrix} 
a_1&\binom{k-1}{1}a_2&\binom{k-1}{2}a_3&\binom{k-1}{3}a_4&\ldots&\ldots&a_k
\\\\
-1 & a_1 &\binom{k-2}{1}a_2&\binom{k-2}{2}a_3&\ldots&\ldots&a_{k-1}
\\\\
0&-1& a_1&\binom{k-3}{1}a_2&\ldots&\ldots& a_{k-2}
\\\\
0&0&-1& a_1&\ldots&\ldots& a_{k-3}
\\\\
0&0&0&-1&\ldots&\ldots& a_{k-4}
\\\\
\vdots &\vdots&\vdots &\vdots &\ddots &\ddots &\vdots
\\\\
0&0&0&0&\ldots&-1&a_1 
\end{smallmatrix}
\right).
\end{equation}
\pagebreak
\subsection{Proofs of Theorems \ref{ch5mainthm} and \ref{ch5RobertCor}, and their corollaries}

We begin with the proof of our first main formula.
\begin{proof}[Proof of Theorem \ref{ch5mainthm}]
By Euler products, as in the previous chapter, we find that %\eqref{ch5DirichletProduct}, we find that 
\[
\zeta_{\mathcal P_{a+m\mathbb N}}(n)
=
\prod_{k\in a+m\N}\frac{k^n}{k^n-1}=\prod_{j\geq1}\frac{(a+mj)^n}{(a+mj)^n-1}
=
\prod_{j\geq0}\prod_{r=0}^{n-1}\frac{(j+1+a/m)^n}{\left(j+1+\frac{a-e(r/n)}{m}\right)}
.
\]
Using Theorem \ref{ch5Armin11}  and the well-known fact that
\begin{equation}\label{ch5ExpSumZero}
\sum_{j=0}^{n-1}e(j/n)=0
\end{equation} directly gives the desired result.
\end{proof}
\begin{proof}[Proof of Corollary \ref{ch5FirstCor}]
For this, we apply \eqref{ch5573Nist}
and use \eqref{ch5ExpSumZero},
 the obvious fact that 
 \[|(a-e(j/n))/m|<2
 ,
 \]
and the easily-checked fact that \[1+(a-e(j/n))\] is never a negative real number for $j=0,\ldots,n-1$. 
\end{proof}
\begin{proof}[Proof of Corollary \ref{ch5SecondCor}]
Here, we simply use \eqref{ch5LogGammaSecondFormula}.
Again, the corollary is proved following a short, elementary computation, using the classical fact that 
\[
\sum_{r=0}^{n-1}e(rk/n)=\begin{cases}n&\text{ if }n|k,\\ 0&\text{ else.}\end{cases}
\]
\end{proof}
\begin{proof}[Proof of Corollary \ref{ch5ThirdCor}]
By Corollary \ref{ch5SecondCor}, we find for $n\geq2$ that
\[
\log\left(\zeta_{\mathcal P_{m\mathbb N}}(n)\right)
=
\sum_{k\geq1}\frac{\zeta(nk)}{km^{kn}}
.
\]
Suppose that $\operatorname{Re}(s)>0$ and $s\not\in\frac1{\N}$.  Then letting 
\[
K
:=
\max
\{
\lceil1/\operatorname{Re}(s)\rceil+1,\operatorname{Re}(s)
\},
\]
it clearly suffices to show that 
\[\sum_{k\geq K}\frac{\zeta(sk)}{km^{ks}}\]
converges. But in this range on $k$, by choice we have $\operatorname{Re}(sk)>1$, so that using the assumption $m\geq2$, we find for $\operatorname{Re}(s)>0$ the upper bound
\begin{align*}
\sum_{k\geq K}\frac{\zeta(sk)}{km^{ks}}
&\leq \zeta(Ks)\sum_{k\geq K}\frac1{k2^{k\operatorname{Re}(s)}}
\leq \zeta(Ks)\sum_{k\geq 1}\frac1{k2^{k\operatorname{Re}(s)}}\\
&=
-\zeta(Ks)\log\left(2^{-\operatorname{Re}(s)}\left(2^{\operatorname{Re}(s)}-1\right)\right)
,
\end{align*}
and note that in the argument of the logarithm in the last step, by assumption we have $2^{\operatorname{Re}(s)}-1>0$. 

Conversely, if $s\in\frac1{\N}$, then it is clear that this representation shows there is a pole of the extended partition zeta function, as one of the terms gives a multiple of $\zeta(1)$.
\end{proof}
\begin{proof}[Proof of Corollary \ref{ch5FourthCor}]
We utilize a variant of M\"obius inversion, reversing the order of summation in the double sum $\sum_{k\geq1}\sum_{d|k}\mu(d)f(nk)k^{-s}$; if
\[
g(n)=\sum_{k\geq1}\frac{f(kn)}{k^s}
,
\]
then 
\[
f(n)=\sum_{k\geq1}\frac{\mu(k)g(kn)}{k^s}
.
\]
Applying this inversion procedure to Corollary \ref{ch5SecondCor}, so that $g(n)=\log\zeta_{\mathcal P_{m\mathbb N}}(n)$ (taking $s=1$), and $f(n)=\zeta(n)/m^n$, we directly find that 
\[
\zeta(n)=m^n\sum_{\substack{k\geq1}}\frac{\mu(k)}k\log\left(\zeta_{\mathcal P_{m\mathbb N}}(nk)\right)
.
\]
Applying Theorem \ref{ch5mainthm} then gives the result.
\end{proof}

\begin{proof}[Proof of Theorem \ref{ch5RobertCor}]
By the comments following Theorem \ref{ch41.1}, for $\mathbb M\in\mathbb N$ we have 
\begin{equation}\label{ch5ProofEq}
\prod_{k\in\mathbb M}\left(1-\frac{z^s}{k^s}\right)^{-1}=1+z^s\sum_{k\in\mathbb M}\frac{1}{k^{s}\prod_{\substack{j\in\mathbb M\\ j\leq k}}\left(1-\frac{z^s}{j^s}\right)}
;
\end{equation}
thus
\[
\sum_{k\in\mathbb M}k^{-s}=\lim_{z\to 0^+}\frac{\prod_{k\in\mathbb M}\left(1-\frac{z^s}{k^s}\right)^{-1}-1}{z^s}
.\]
Taking $\mathbb M=\mathbb N, s=n\in\Z_{\geq 2}$, we apply L'Hospital's rule $n$ times to evaluate the limit on the right-hand side. The theorem then follows by noting, from Theorem \ref{ch5Armin11}, that in fact
\[
\prod_{k\in\mathbb N}\left(1-\frac{z^n}{k^n}\right)^{-1}=\prod_{j=0}^{n-1}\Gamma\left(1-ze(j/n)\right)
.\] 
\end{proof}

\begin{proof}[Proof of Corollary \ref{ch5RobertCor2}]
Picking up from the proof of Theorem \ref{ch5RobertCor} above, it follows also from Theorem \ref{ch41.1} that
\[
\prod_{k\in\mathbb M}\left(1-\frac{z^s}{k^s}\right)=1-z^s\sum_{k\in\mathbb M}\frac{\prod_{\substack{j\in\mathbb M\\ j<k}}\left(1-\frac{z^s}{j^s}\right)}{k^{s}}
.
\]
Subtracting this equation from \eqref{ch5ProofEq}, making the substitutions $\mathbb M=\mathbb N$, $s=n\geq 2$ as in the proof above, and using Theorem \ref{ch5Armin11}, gives the corollary.
\end{proof}

\subsection{Proof of Theorem \ref{ch5SecondTheorem} and its corollaries}
\begin{proof}[Proof of Theorem \ref{ch5SecondTheorem}]
Using a similar method as in Chapter 4 and a similar rewriting to that used in the proof of Theorem \ref{ch5mainthm}, we note that a short elementary computation shows
\[
\sum_{k\geq0}\frac{z^{mk}}{\pi^{mk}}\zeta_{\mathcal P}(\{m\}^k)=\prod_{k\geq1}\frac{1}{1-\frac{z^m}{\pi^mk^m}}=\prod_{k\geq0}\prod_{r=0}^{m-1}\frac{(k+1)^m}{\left(k+1-\frac{z}{\pi}e(r/m)\right)}
.
\]
Much as in the proof of Theorem \ref{ch5RobertCor}, using Theorem \ref{ch5Armin11}, we directly find that this is equal to 
\[
\prod_{r=0}^{m-1}\Gamma\left(1-\frac{z}{\pi}e(r/m)\right)
,
\]
which gives the first equality in the theorem. 
Applying Equation \eqref{ch5LogGammaSecondFormula} (formally we require $|z|<\pi$, but we are only interested in formal power series here anyway), we find immediately, using a very similar calculation to that in the proof of Corollary \ref{ch5SecondCor}, that
\begin{equation}\label{ch5PowSer1}
\begin{aligned}
\sum_{k\geq0}\left(\frac{z}{\pi}\right)^{mk}\zeta_{\mathcal P}(\{m\}^k)
&
=
\operatorname{exp}\left(\sum_{r=0}^{m-1}\sum_{j\geq2}\frac{\zeta(j)}j\left(\frac{z}{\pi}\right)^{mj}e(rj/m)\right)
\\
&
=
\operatorname{exp}\left(m\sum_{\substack{j\geq2\\ m|j}}\frac{\zeta(j)}j\left(\frac{z}{\pi}\right)^{j}\right)
,
\end{aligned}
\end{equation}
which is equivalent to the second equality in the theorem.
\end{proof}

\begin{proof}[Proof of Corollary \ref{ch5DeterminantCor}]
Replace $x$ with $z^m$ in Equation \ref{ch5FdB}, and set
\[
a_j=\frac{(j-1)!\zeta(mj)}{\pi^{mj}} 
\]
on the left-hand side (which becomes the right-hand side of \eqref{ch5PowSer1}). Then comparing the right side of \ref{ch5FdB} to the left side of \ref{ch5PowSer1}, we deduce that
\[
\zeta_{\mathcal P}(\{m\}^k)=\frac{\pi^{mk}}{k!}B_k(a_1,\dots,a_k)
.\]
To complete the proof, we substitute the determinant in \ref{ch5determinant} for $B_k(a_1,\dots,a_k)$ and rewrite the terms in the upper half of the resulting matrix as $\alpha_{i,j}$, as defined in the statement of the corollary.  
\end{proof}
\begin{proof}[Proof of Theorem \ref{ch5padicInterpThm}]
In analogy with the calculation of Theorem \ref{ch5SecondTheorem}, we find that
\begin{align*}
\sum_{k\geq0}z^{mk}\zeta_{\mathcal P_p}(\{m\}^k)&=\prod_{\substack{k\geq1\\ p\nmid k}}\frac{1}{1-\frac{z^m}{k^m}}=\frac{\prod_{k\geq0}\prod_{r=0}^{m-1}\frac{(k+1)^m}{\left(k+1-ze(r/m)\right)}}{\prod_{k\geq0}\prod_{r=0}^{m-1}\frac{(k+1)^m}{\left(k+1-\frac{z}{p}e(r/m)\right)}
}\\
&=\prod_{r=0}^{m-1}\frac{\Gamma\left(1-ze(r/m)\right)}{\Gamma\left(1-\frac{z}{p}e(r/m)\right)}
.
\end{align*}
As in the calculation of \eqref{ch5PowSer1}, this is equal to 
\[
\operatorname{exp}\left(\sum_{j\geq1}\frac{\zeta(mj)}{j}\left(z\right)^{mj}\left(1-1/p^{mj}\right)\right)
,
\]
so if we set
\[
\alpha_{i,j}^{(p)}(m)
:=
\zeta^*(m(j-i+1))\frac{(k-i)!}{(k-j)!}\  \  \  \text{where}\  \  \  
\zeta^*(s):=(1-p^{-s})\zeta(s),
\]
then we have 

\[
\zeta_{\mathcal P_p}(\{m\}^k)
=
\frac{1}{k!} \det
\begin{pmatrix} 
\alpha^{(p)}_{1,1}&\alpha^{(p)}_{1,2}&\alpha^{(p)}_{1,3}&\ldots&\alpha^{(p)}_{1,k}
\\
-1 & \alpha^{(p)}_{2,2} &\alpha^{(p)}_{2,3} &\ldots & \alpha^{(p)}_{2,k}
\\
0&-1& \alpha^{(p)}_{3,3}&\ldots & \alpha^{(p)}_{3,k}
\\
\vdots &\vdots&\vdots &\ddots &\vdots
\\
0&0&\ldots&-1&\alpha^{(p)}_{k,k} 
\end{pmatrix}
.
\]
We further define $\zeta_{\mathcal P_p}(\{m\}^k)$ for more general values in $\mathbb C$, such as $m\in-\mathbb N$ using the analytic continuation of $\zeta$ in each of the factors $\alpha_{i,j}^{(p)}(m)$.
Next we recall the Kummer congruences, which state that if $k_1,k_2$ are positive even integers not divisible by $(p-1)$ and $k_1\equiv k_2\pmod{p^{a+1}-p^a}$ for $a\in\mathbb N$ where $p>2$ is prime, then 
\[
\left(1-p^{k_1-1}\right)\frac{B_{k_1}}{k_1}\equiv\left(1-p^{k_2-1}\right)\frac{B_{k_2}}{k_2}\pmod{p^{a+1}}
.
\]
Let us take $S_{s_0}$ to be the set of natural numbers congruent to $s_0$ modulo $p-1$.
The Kummer congruences then imply that for any $s_0\not\equiv0\pmod{p-1}$, and for any $k_1,k_2\in S_{s_0}$ with $k_1\equiv k_2\pmod{p^a}$ and $k_1,k_2>1$, that
\[
\zeta^*(1-k_1)\equiv\zeta^*(1-k_2)\pmod{p^{a+1}}
.
\]
If we choose $m_1,m_2\in S_{s_0}$ with $m_1\equiv m_2\pmod{p^a}$, then the values $1-(1-m_1)(j-i+1)$, $1-(1-m_2)(j-i+1)$ are in $S_{1+(s_0-1)(j-i-1)}$ and are congruent modulo $p^a$, and as $p>k$ the additional factorial terms (inside and outside the determinant) are $p$-integral. Now in our determinant, $j-i+1$ ranges through $\{1,2,\ldots, k\}$, and we want to find an $s_0$ such that $1+(s_0-1)r\not\equiv0\pmod{p-1}$ for $r\in\{1,2,\ldots k\}$.
 If we take $s_0=2$, then the largest value of $1+(s_0-1)r$ is $k+1$, which is by assumption less than $p-1$, and hence not divisible by it. Hence, in our case, $s_0=2$ suffices.
Thus, if $m_1,m_2\in S_{2}$ with $m_1\equiv m_2\pmod{p^a}$, then 
\[
\zeta_{\mathcal P_p}(\{1-m_1\}^k)\equiv\zeta_{\mathcal P_p}(\{1-m_2\}^k)\pmod{p^{a+1}}
.
\]
This shows that our zeta function is uniformly continuous on $S_2$ in the $p$-adic topology. As this set is dense in $\Z_p$, we have shown the function extends in the $m$-aspect to $\Z_p$.

\end{proof}

\subsection{Proofs of results concerning multiple zeta values}
\begin{proof}[Proof of Proposition \ref{ch5DecouplingCorollary}]
Recall from \eqref{ch5mzvpzv2} 
that we need to study the sum
\[
\sum_{n_1\geq n_2 \geq \ldots \geq n_k\geq1}\frac1{(n_1n_2\ldots n_k)^m}
.
\]
The proof is essentially combinatorial accounting, keeping track of the number of ways to split up a sum
\[
\sum_{n_1\geq n_2 \geq \ldots \geq n_k\geq1}
\]
over all all $k$-tuples of natural numbers into a chain of equalities and strict inequalities. Suppose that we have 
\[
n_{1}\geq n_{2}\geq\ldots\geq n_{k}\geq1
.
\]
Then if any of these inequalities is an equality, say $n_{j}=n_{j+1}$, in the contribution to the sum 
\[
\sum_{n_1\geq n_2\geq \ldots\geq n_k\geq 1}(n_1\ldots n_k)^{-m}
,
\]
the terms $n_{j}$ and $n_{j+1}$ ``double up''. That is, we can delete the $n_{j+1}$ and replace the $n_{j}^{-m}$ in the sum with a $n_{j}^{-2m}$. Thus, the reader will find that our goal is to keep track of different orderings of $>$ and $=$, taking symmetries into account. The possible chains of $=$ and $>$ are encoded by the set of compositions of size $k$, by associating to the composition $(a_1,\ldots,a_j)$ the chain of inequalities
\[
n_{1}=\ldots=n_{a_1}>n_{a_1+1}=\ldots=n_{a_1+a_2}>n_{a_2+1}>\ldots >n_{k}
.
\]
That is, the number $a_1$ determines the number of initial terms on the right which are equal before the first inequality, $a_2$ counts the number of equalities in the next block of inequalities, and so on. It is clear that the sum corresponding to the each composition then contributes the desired amount to the partition zeta value in the corollary.
\end{proof}

\begin{proof}[Proof of Corollary \ref{ch5ParallelMZVLowerOrders}]
In Proposition \ref{ch5DecouplingCorollary}, comparison with Corollary \ref{ch5DeterminantCor} shows that we have a linear relation among MZVs and products of zeta values. Observe that in $\zeta_{\mathcal P}(\{m\}^k)$, the only composition of length $k$ is $(1,1,\ldots,1)$, which contributes $k! \zeta(\{m\}^k)$ to the right-hand side of Proposition \ref{ch5DecouplingCorollary}, and that the rest of the compositions are of lower length, hence giving MZVs of smaller length; the corollary follows immediately.
\end{proof}

\begin{proof}[Proof of Proposition \ref{ch5ParallelValuesExp}]
Consider the multiple zeta value $\zeta(\{n\}^k)$ of length $k$. Then we directly compute
\[
\sum_{k\geq0}(-1)^k\zeta(\{n\}^k)z^{nk}
=
\prod_{m\geq1}
\left(
1-\left(\frac zm\right)^n
\right)
=
\prod_{m\geq0}\prod_{r=0}^{n-1}\frac{(m+1-ze(r/n))}{(m+1)^n}
.
\]
By Theorem \ref{ch5Armin11}, this equals 
\[
\prod_{r=0}^{n-1}\Gamma(1-ze(r/n))^{-1}
.
\] 
Using precisely the same computation as was made in the proof of Theorem \ref{ch5SecondTheorem}, we find that this is equal to
\[
\operatorname{exp}\left(-n\sum_{\substack{j\geq2\\ n|j}}\frac{\zeta(j)}jz^{j}\right)
.
\]
Hence, we have that
\[
\zeta(\{n\}^k)
=(-1)^k
\left[z^{nk}\right]
\operatorname{exp}
\left(-
\sum_{j\geq1}
\frac{\zeta(nj)}jz^{nj}
\right)
.
\]
\end{proof}
\begin{proof}[Proof of Corollary \ref{ch5DeterminantCor2}]
Here we proceed exactly as in the proof of Corollary \ref{ch5DeterminantCor}, except we make the simpler substitution
\[
a_k=(k-1)!\zeta(nk) 
\] 
into Equation \ref{ch5FdB}, and compare with Proposition \ref{ch5ParallelValuesExp}. In the final step, we replace the terms in the upper half of the matrix with $\beta_{i,j}$ as defined in the statement of the corollary.  
\end{proof}

\section{Partition Dirichlet series}

We have presented samples of a few varieties of flora one finds at the fertile intersection of combinatorics and analysis. What unifies all of these is the perspective that they represent instances of partition zeta functions, with proofs that fit naturally into the Eulerian theory we propound in this work. 

We close this chapter by noting a general class of partition-theoretic analogs of classical Dirichlet series having the form
\[
\mathcal D_{\mathcal P'}(f,s):=\sum_{\lambda\in\mathcal P'}f(\lambda)n_{\lambda}^{-s},
\]
where $\mathcal P'$ is a proper subset of $\mathcal P$ and $f \colon \mathcal P' \rightarrow \C$. Of course, partition zeta functions arise from the specialization $f\equiv 1$, just as in the classical case. 

Taking $\mathcal P'=\mathcal P_{\mathbb M}$ as defined previously, then if $f:=f(n_{\lambda})$ is completely multiplicative with appropriate growth conditions, it follows from Theorem \ref{ch41.1} that $\mathcal D_{\mathcal \mathcal P_{\mathbb M}}(f,s)$ has the Euler product %\eqref{ch5DirichletProduct}
\begin{equation}\label{ch5DirichletProduct2}
\mathcal D_{\mathcal \mathcal P_{\mathbb M}}(f,s)=\prod_{j\in \mathbb M}\left(1-\frac{f(j)}{j^s}\right)^{-1}\  \left(\mathrm{Re}(s)>1\right),
\end{equation}
and nearly the entire theory of partition zeta functions developed in the previous chapter % noted here (and developed in \cite{Robert}) 
extends to these series as well. Moreover, incorporating partition-arithmetic functions from Chapter 3, by very much the same steps as proofs of the classical cases, we have familiar-looking formulas such as these. We take $\text{Re}(s)$ so the series converge absolutely.

\begin{theorem}\label{ch5muphidirichlet} Generalizing the classical cases, we have the following identities:
\begin{equation*}
  \sum_{\lambda\in\mathcal P_{\mathbb X}}\mu_{\mathcal P}(\lambda) n_{\lambda}^{-s}=  \frac{1}{\zeta_{\mathcal P_{\mathbb X}}(s)},\  \  \  \  \  \  \sum_{\lambda\in\mathcal P_{\mathbb X}}\varphi_{\mathcal P}(\lambda) n_{\lambda}^{-s}=\frac{\zeta_{\mathcal P_{\mathbb X}}(s-1)}{\zeta_{\mathcal P_{\mathbb X}}(s)}.
\end{equation*}\end{theorem}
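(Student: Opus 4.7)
The plan is to mirror the two classical Dirichlet series proofs, using the partition Cauchy product (Proposition \ref{ch3cauchyprod}) as the main engine, together with the divisor-sum identities for $\mu_{\mathcal P}$ and $\varphi_{\mathcal P}$ from Chapter 3. The key structural fact is that the norm $n_{\lambda}$ is completely multiplicative on $\mathcal P$ in the sense of Definition \ref{ch3multfctn}: since $n_{\lambda\gamma}=n_\lambda n_\gamma$ and $n_{\lambda/\delta}=n_\lambda/n_\delta$, the function $\lambda\mapsto n_\lambda^{-s}$ satisfies $n_\delta^{-s}n_{\lambda/\delta}^{-s}=n_\lambda^{-s}$ whenever $\delta\mid\lambda$. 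Because $\mathcal P_{\mathbb X}$ is closed under partition multiplication and division (the remark following Definition \ref{ch3multfctn} notes the multiplicative theory still holds restricted to such subsets), all the Chapter 3 machinery applies verbatim over $\mathcal P_{\mathbb X}$.

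For the first identity, I would form the product $\zeta_{\mathcal P_{\mathbb X}}(s)\cdot\sum_{\lambda\in\mathcal P_{\mathbb X}}\mu_{\mathcal P}(\lambda)n_\lambda^{-s}$ and apply Proposition \ref{ch3cauchyprod} with $f(\lambda)=\mu_{\mathcal P}(\lambda)n_\lambda^{-s}$ and $g(\lambda)=n_\lambda^{-s}$, obtaining
\[
\sum_{\lambda\in\mathcal P_{\mathbb X}}\sum_{\delta\mid\lambda}\mu_{\mathcal P}(\delta)n_\delta^{-s}n_{\lambda/\delta}^{-s}
=\sum_{\lambda\in\mathcal P_{\mathbb X}}n_\lambda^{-s}\sum_{\delta\mid\lambda}\mu_{\mathcal P}(\delta).
\]
By Proposition \ref{ch3musum}, the inner divisor sum vanishes unless $\lambda=\emptyset$, collapsing the whole expression to $n_\emptyset^{-s}=1$. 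Dividing by $\zeta_{\mathcal P_{\mathbb X}}(s)$ gives the desired identity.

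For the second identity, the same Cauchy product trick with $f(\lambda)=\varphi_{\mathcal P}(\lambda)n_\lambda^{-s}$ and $g(\lambda)=n_\lambda^{-s}$ yields
\[
\zeta_{\mathcal P_{\mathbb X}}(s)\sum_{\lambda\in\mathcal P_{\mathbb X}}\varphi_{\mathcal P}(\lambda)n_\lambda^{-s}
=\sum_{\lambda\in\mathcal P_{\mathbb X}}n_\lambda^{-s}\sum_{\delta\mid\lambda}\varphi_{\mathcal P}(\delta).
\]
By Proposition \ref{ch3phisum}, the inner sum equals $n_\lambda$, and the right-hand side therefore telescopes to $\sum_{\lambda\in\mathcal P_{\mathbb X}}n_\lambda^{-(s-1)}=\zeta_{\mathcal P_{\mathbb X}}(s-1)$. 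Dividing through gives the claim. Alternatively, one could first establish the Euler product $\sum_{\lambda\in\mathcal P_{\mathbb X}}\mu_{\mathcal P}(\lambda)n_\lambda^{-s}=\prod_{n\in\mathbb X}(1-n^{-s})$ and $\sum_{\lambda\in\mathcal P_{\mathbb X}}\varphi_{\mathcal P}(\lambda)n_\lambda^{-s}=\prod_{n\in\mathbb X}(1-n^{-s})^{-1}(1-n^{1-s})\cdot(\cdots)$ directly from Theorems \ref{ch41.1} and \ref{ch41.11}, multiplying the distinct-part Euler product by the unrestricted one, but the Cauchy-product route is cleaner.

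The main obstacle is not the algebra, which is formal, but the justification of absolute convergence and the exchange of summation orders implicit in the Cauchy product. I would handle this by fixing $\operatorname{Re}(s)$ large enough that both $\sum n_\lambda^{-\operatorname{Re}(s)}$ and $\sum n_\lambda^{-(\operatorname{Re}(s)-1)}$ converge over $\mathcal P_{\mathbb X}$ (for the second identity), using $|\mu_{\mathcal P}(\lambda)|\le 1$ and $0\le\varphi_{\mathcal P}(\lambda)\le n_\lambda$ to dominate; this validates Proposition \ref{ch3cauchyprod}'s hypothesis of absolute convergence, after which the identity extends by analytic continuation wherever both sides are defined.
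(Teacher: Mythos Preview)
Your proposal is correct and is exactly what the paper intends: the paper does not give an explicit proof here, stating only that the identities follow ``by very much the same steps as proofs of the classical cases'' using the partition-arithmetic machinery of Chapter~3. Your use of the partition Cauchy product (Proposition~\ref{ch3cauchyprod}) together with Propositions~\ref{ch3musum} and~\ref{ch3phisum} is precisely that classical argument transported to the partition setting, and the paper also records the Euler-product route you mention as an alternative (see equation~\eqref{ch419} in Chapter~4).
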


For $f,g\colon \mathcal P \to \mathbb C$, let us define a partition analog of Dirichlet convolution\footnote{An analogy which was suggested to the author by Olivia Beckwith}, viz. 
\begin{equation}
(f*g)(\lambda):=\sum_{\delta|\lambda}f(\delta)g(\lambda/\delta).\end{equation}
Then the partition Cauchy product in Proposition \ref{ch3cauchyprod} yields another familiar relation.
\begin{theorem} We have
\begin{equation}
\left(\sum_{\lambda\in\mathcal P}f(\lambda)n_{\lambda}^{-s} \right) \left(\sum_{\lambda\in\mathcal P}g(\lambda)n_{\lambda}^{-s}   \right) =\sum_{\lambda\in\mathcal P}(f*g)(\lambda)n_{\lambda}^{-s}  .
\end{equation}
\end{theorem}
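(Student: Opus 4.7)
The plan is to reduce this identity directly to the partition Cauchy product formula (Proposition \ref{ch3cauchyprod}), which states that for any $F, G \colon \mathcal P \to \mathbb C$ with absolutely convergent sums,
\[
\left(\sum_{\lambda\in\mathcal P}F(\lambda)\right)\left(\sum_{\lambda\in\mathcal P}G(\lambda)\right) = \sum_{\lambda\in\mathcal P}\sum_{\delta|\lambda}F(\delta)G(\lambda/\delta).
\]
The natural thing to do is to apply this with $F(\lambda) := f(\lambda)n_{\lambda}^{-s}$ and $G(\lambda) := g(\lambda)n_{\lambda}^{-s}$. The assumption on $\mathrm{Re}(s)$ guarantees absolute convergence, so the Cauchy product applies.

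The key algebraic input is the multiplicativity of the norm under partition multiplication, noted in Chapter 3: if $\delta \mid \lambda$ with quotient $\lambda/\delta$, then $\lambda = \delta \cdot (\lambda/\delta)$ as partitions, and therefore $n_{\lambda} = n_{\delta}\, n_{\lambda/\delta}$. Consequently,
\[
n_{\delta}^{-s}\, n_{\lambda/\delta}^{-s} = n_{\lambda}^{-s},
\]
so the factor $n_{\lambda}^{-s}$ can be pulled outside the inner divisor sum. Applying Proposition \ref{ch3cauchyprod} and then this identity yields
\[
\left(\sum_{\lambda}f(\lambda)n_{\lambda}^{-s}\right)\left(\sum_{\lambda}g(\lambda)n_{\lambda}^{-s}\right) = \sum_{\lambda}\sum_{\delta|\lambda} f(\delta)n_{\delta}^{-s}\, g(\lambda/\delta)n_{\lambda/\delta}^{-s} = \sum_{\lambda} n_{\lambda}^{-s} \sum_{\delta|\lambda} f(\delta)g(\lambda/\delta),
\]
and the inner sum is by definition $(f * g)(\lambda)$.

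There is essentially no obstacle here: the only thing to check is the convergence condition needed to invoke Proposition \ref{ch3cauchyprod}, which is supplied by the hypothesis that $\mathrm{Re}(s)$ is large enough for absolute convergence of both partition Dirichlet series. The entire proof is a two-line computation once the multiplicative property $n_{\lambda} = n_{\delta}\, n_{\lambda/\delta}$ is invoked, making this theorem a clean specialization of the general combinatorial Cauchy product to the multiplicative weighting $n_{\lambda}^{-s}$ — exactly paralleling the classical derivation of the Dirichlet convolution formula $\sum a(n)n^{-s}\cdot \sum b(n)n^{-s} = \sum (a*b)(n)n^{-s}$ from the identity $n = d \cdot (n/d)$.
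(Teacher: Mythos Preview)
Your proof is correct and matches the paper's approach exactly: the paper simply states that ``the partition Cauchy product in Proposition \ref{ch3cauchyprod} yields'' this relation, and your argument spells out precisely that reduction via $n_{\lambda} = n_{\delta}\,n_{\lambda/\delta}$.
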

\  
\

\begin{remark}
{See Appendix \ref{app:D} for further notes on Chapter 5.} 
\end{remark}

%
%\section*{Acknowledgements}
%The authors thank Armin Straub for useful discussion on the history of Theorem \ref{ch5Armin11}. 
%\section{Zeta polynomials}
%
%We now turn our attention toward a different layer of connections between partitions and zeta functions, via the theory of modular forms. Although Euler's generating function for $p(n)$ is essentially modular, and Euler also anticipated the study of $L$-functions that are intimately tied to modular forms, the true depth of such observations did not come into view until further work on complex analysis was carried out in the nineteenth century. 
%
%It turns out that all modular forms are related to partitions in a very direct way. Here we recall the case of this connection for modular forms for the full modular group $\textrm{SL}_2(\Z)$. We then use these modular forms to define the second class of functions that are the topic of this paper, the {\it zeta polynomials} associated to modular forms.
 	\clearpage%
\chapter{Partition-theoretic formulas for arithmetic densities}{\bf Adapted from \cite{OSW}, a joint work with Ken Ono and Ian Wagner}
%
%\begin{abstract}
%If $\gcd(r,t)=1$, then  a theorem of Alladi offers the M\"obius sum identity
%$$-\sum_{\substack{ n \geq 2 \\ p_{\rm{min}}(n) \equiv r \pmod{t}}} \mu(n)n^{-1}= \frac{1}{\varphi(t)}.
%$$ 
%Here $p_{\rm{min}}(n)$ is the smallest prime divisor of $n$.  The right-hand side represents the proportion of primes in a fixed arithmetic progression modulo $t$.   Locus generalized this to Chebotarev densities for Galois extensions.  Answering a question of Alladi, we obtain analogs of these results to arithmetic densities of subsets of positive integers using $q$-series and integer partitions.  For suitable subsets $\mathbb S$ of the positive integers with density $d_{\mathbb S}$, we prove that 
%\[- \lim_{q \to 1} \sum_{\substack{ \lambda \in \mathcal{P} \\ \rm{sm}(\lambda) \in \mathbb S}} \mu_{\mathcal{P}} (\lambda)q^{\vert \lambda \vert} =  d_{\mathbb S},\]
%where the sum is taken over integer partitions $\lambda$, $\mu_{\mathcal{P}}(\lambda)$ is a partition-theoretic M\"obius function,   
% $\vert \lambda \vert$ is the size of partition $\lambda$, and $\rm{sm}(\lambda)$ is the smallest part of $\lambda$.   In particular, we obtain partition-theoretic formulas for even powers  of $\pi$ when considering power-free integers.
%\end{abstract}
%
%
%
%\maketitle
%
%

\section{Introduction and statement of results}

Consider again the classical M\"{o}bius function $\mu(n)$, and let us rewrite the well-known fact %\textit{Merten's theorem}.  We have
$\sum_{n=1}^{\infty} \mu(n)/n= 0$
in the form
\begin{equation}\label{ch6classicalfact}
-\sum_{n=2}^{\infty} \frac{\mu(n)}{n} = 1.
\end{equation}
For notational convenience define $\mu^{*}(n) := - \mu(n)$. Now, (\ref{ch6classicalfact}) above can be interpreted as the statement that one-hundred percent 
%$100 \%$ 
of integers $n \geq 2
$ are divisible by at least one prime. This idea was used by Alladi  \cite{A} to prove that if $\gcd(r, t) = 1$, then
\begin{equation}\label{ch6AlladiSum}
\sum_{\substack{ n \geq 2 \\ p_{\rm{min}}(n) \equiv r \pmod{t}}} \frac{\mu^{*}(n)}{n} = \frac{1}{\varphi(t)}. 
\end{equation}
Here $\varphi(t)$ is Euler's phi function, and $p_{\rm{min}}(n)$ is the smallest prime factor of $n$.  

Alladi has asked\footnote{K. Alladi, ``A duality between the largest and smallest prime factors via the Moebius function and arithmetical consequences'', Emory University Number Theory Seminar, February 28, 2017.} for a partition-theoretic generalization of this result.  We answer his question by obtaining an analog of a generalization that was recently obtained by Locus \cite{L}.  Locus began by interpreting Alladi's theorem as a device for computing densities of primes in arithmetic progressions.  She generalized this idea, and proved analogous formulas for the Chebotarev densities of Frobenius elements in unions of conjugacy classes of Galois extensions. 

We recall Locus's result.  Let $S$ be a subset of primes with Dirichlet density, and define
\begin{equation}\label{ch6maddie}
\mathfrak{F}_{S}(s) := \sum_{\substack{n \geq 2 \\ p_{\rm{min}}(n) \in S}} \frac{\mu^{*}(n)}{n^{s}}.  
\end{equation}
Suppose $K$ is a finite Galois extension of $\Q$ and $p$ is an unramified prime in $K$.  Define
\[\left[ \frac{K/\Q}{p} \right] := \left\{ \left[ \frac{K/\Q}{\mathfrak{p}} \right] \colon \mathfrak{p} \subseteq \mathcal{O}_{K} \ \rm{is \  a \  prime \  ideal \  above} \ \it{p} \right\},\]
where $\left[ \frac{K/\Q}{\mathfrak{p}} \right]$ is the Artin symbol (for example, see Chapter 8 of \cite{Artin_ref_needed}), and $\mathcal{O}_{K}$ is the ring of integers of $K$.  It is well known that $\left[ \frac{K/\Q}{p} \right]$ is a conjugacy class $C$ in $G= \rm{Gal}( \emph{K}/\Q)$.  If we let
\begin{equation}\label{ch6OnePointThree}
S_{C} := \left\{ p \ \rm{prime}: \left[ \frac{\emph{K}/\Q}{\emph{p}} \right] = C \right\},
\end{equation}
then Locus proved (see Theorem $1$ of \cite{L}) that 
$$
\mathfrak{F}_{S_{C}}(1) = \frac{ \# C}{\# G}. 
$$

\begin{remark}
Alladi's formula (\ref{ch6AlladiSum}) is the cyclotomic case of Locus's Theorem.\end{remark}

We now turn to Alladi's question concerning a partition-theoretic analog.   %A \textit{partition} is a finite non-increasing sequence of positive integers, say $\lambda = (\lambda_{1}, \lambda_{2},..., \lambda_{\ell(\lambda)})$, where $\ell(\lambda)$ denotes the length of $\lambda$, i.e. the number of parts.  The size of $\lambda$ is $\vert \lambda \vert := \lambda_{1} + \lambda_{2} + \cdot \cdot \cdot + \lambda_{\ell(\lambda)}$, i.e., the number being partitioned.  Furthermore, w
Let $\rm{sm}(\lambda) := \lambda_{\ell(\lambda)}$ denote the smallest part of $\lambda$ (resp. $\rm{lg}(\lambda) := \lambda_{1}$ the largest part of $\lambda$). Also, recall the partition-theoretic M\"{o}bius function $\mu_{\mathcal P}$ from previous chapters.  Notice that $\mu_{\mathcal{P}}(\lambda) = 0$ if $\lambda$ has any repeated parts, which is analogous to the vanishing of $\mu(n)$ for integers $n$ which are not square-free.  In particular, the parts in partition $\lambda$ play the role of prime divisors of $n$ in this analogy, as in Chapter 3.  We define $\mu_{\mathcal{P}}^{*}(\lambda):= - \mu_{\mathcal{P}}(\lambda)$ as in Locus's theorem, for aesthetic reasons.  

The table below offers a description of the objects which are related with respect to this analogy.  However, it is worthwhile to first compare the generating functions for $\mu(n)$ and $\mu_{\mathcal{P}}(\lambda)$.  Using the Euler product for the Riemann zeta function, it is well known that the Dirichlet generating function for $\mu(n)$ is
\begin{equation}\label{ch6OnePointFive}
\frac{1}{\zeta(s)} = \prod_{p \ \rm{prime}} \left( 1 - \frac{1}{p^s} \right) = \sum_{m=1}^{\infty} \mu(m)m^{-s}. 
\end{equation}
As we noted in Chapter 3, the generating function for $\mu_{\mathcal{P}}(\lambda)$ is  
$$
(q;q)_{\infty} = \prod_{n=1}^{\infty} (1 - q^n) = \sum_{\lambda} \mu_{\mathcal{P}}(\lambda) q^{\vert \lambda \vert}. 
$$
By comparing the generating functions for $\mu(n)$ and $\mu_{\cal{P}}(\lambda)$,  we see that prime factors and integer parts
of partitions are natural analogs of each other. The following table offers the identifications that make up this analogy. 

\vspace{0.5cm}

\begin{center}
\begin{tabular}{ | c | c | }
\hline
Natural number $m$ & Partition $\lambda$\\ \hline
Prime factors of $m$ & Parts of $\lambda$ \\ \hline
Square-free integers & Partitions into distinct parts \\ \hline
$\mu(m)$ & $\mu_{\mathcal{P}}(\lambda)$ \\ \hline
$p_{\rm{min}}(m)$ & $\rm{sm}(\lambda)$ \\ \hline
$p_{\rm{max}}(m)$ & $\rm{lg}(\lambda)$ \\ \hline
$m^{-s}$ & $q^{\vert \lambda \vert}$ \\ \hline
${\zeta(s)^{-1}}$ & ${(q;q)_{\infty}}$ \\  \hline
$s=1$ & $q\rightarrow 1$\\
\hline
\end{tabular}
\end{center}
\vspace{0.5cm}

%
%\begin{remark}There are further  analogies between multiplicative number theory and the theory of integer partitions. %Of course, natural numbers themselves can be viewed as partitions of length one, and also as partitions into all $1$'s (a key insight of  elementary arithmetic). Less trivially, 
%For instance, in  \cite{AlladiErdos} Alladi and Erd\H{o}s exploited a bijection between prime factorizations of integers and partitions into prime parts, to study an interesting arithmetic function; recently, the first two authors have shown that many theorems in multiplicative number theory are special cases of much more general partition-theoretic phenomena \cite{OnoRolenS,Schneider_arithmetic}.
%\end{remark}
%

Suppose that
$\mathbb S$ is a subset of the positive integers with arithmetic density
\[\lim_{X \to \infty} \frac{\# \{ n \in \mathbb S : n \leq X \}}{X} = d_{\mathbb S}.\]
The partition-theoretic counterpart to (\ref{ch6maddie}) is 
\begin{equation}\label{ch6OnePointNine}
F_{\mathbb{S}}(q) := \sum_{\substack{ \lambda \in \mathcal{P} \\ \rm{sm}(\lambda) \in \mathbb{S}}} \mu_{\mathcal{P}}^{*}(\lambda) q^{\vert \lambda \vert}. 
\end{equation}
To state our results, we define 
\begin{equation}\label{ch6OnePointTen}
\mathbb S_{r,t} := \{n \in \Z^{+} : n \equiv r \pmod{t} \}. 
\end{equation}
These sets are simply the positive integers in an arithmetic progression $r$ modulo $t$. % \pmod t$.

Our first result concerns the case where $t=2$. Obviously, the arithmetic densities of $\mathbb S_{1,2}$ and $\mathbb S_{2,2}$ are both ${1}/{2}$. The theorem below offers a formula illustrating these densities and also offers curious lacunary $q$-series
identities.
%
%\begin{comment}
%Our first result is really a special case of our general theorem.  We emphasize it, however, because of a nice identity that stands out as being reminiscent of Euler's Pentagonal Number Theorem (see \cite{Berndt_spirit}) which says that
%\[\prod_{n=1}^{\infty}(1-q^n) = \sum_{n= -\infty}^{\infty} (-1)^{n} q^{\frac{n(3n-1)}{2}}.\]
%\end{comment}
%
%\begin{comment}
%\begin{theorem} \label{ch612}
%Assuming the notation above, the following are true:
%\begin{enumerate}
%\item We have the pair of $q$-identities 
%\[\sum_{\substack{ \lambda \in \mathcal{P} \\ \rm{sm}( \lambda) \ \rm{odd}}} \mu^{*}(\lambda)q^{\vert \lambda \vert} = -\sum_{n=1}^{\infty} (-1)^{n} q^{n^2},\]
%\[\sum_{\substack{ \lambda \in \mathcal{P} \\ \rm{sm}( \lambda) \ \rm{even}}} \mu^{*}(\lambda)q^{\vert \lambda \vert} = 1+ \sum_{n=1}^{\infty} (-1)^{n} q^{n^2} - \sum_{m= -\infty}^{\infty} (-1)^{m}q^{\frac{m(3m-1)}{2}}.\]
%\item We have that
%\[\lim_{q \to 0^{+}} \sum_{\substack{ \lambda \in \mathcal{P} \\ \rm{sm}( \lambda) \ \rm{odd}}} \mu^{*}(\lambda)q^{\vert \lambda \vert}= \lim_{u \to 0^{+}} \sum_{\substack{ \lambda \in \mathcal{P} \\ \rm{sm}( \lambda) \ \rm{even}}} \mu^{*}(\lambda)q^{\vert \lambda \vert} = \frac{1}{2}.\]
%\end{enumerate}
%\end{theorem}
%\end{comment}

\begin{theorem} \label{ch612}
Assume the notation above.
\begin{enumerate}[(1)]
\item The following $q$-series identities are true: 
\[F_{\mathbb S_{1,2}}(q) = \sum_{n=1}^{\infty} (-1)^{n+1} q^{n^2},\]
\[F_{\mathbb S_{2,2}}(q) = 1+ \sum_{n=1}^{\infty} (-1)^{n} q^{n^2} - \sum_{m= -\infty}^{\infty} (-1)^{m}q^{\frac{m(3m-1)}{2}}.\]
\item We have that
\[\lim_{q \to 1} F_{\mathbb S_{1,2}}(q) = \lim_{q \to 1} F_{\mathbb S_{2,2}}(q) = \frac{1}{2}.\]
\end{enumerate}
\end{theorem}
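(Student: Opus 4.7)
The plan is to reduce both identities to classical evaluations via Euler's partition theorem and the Jacobi triple product. Since $\mu_{\mathcal P}$ vanishes on partitions with repeated parts, both sums defining $F_{\mathbb S_{r,2}}(q)$ range only over distinct partitions; stratifying each by the smallest part $\operatorname{sm}(\lambda) = n$ and observing that a distinct partition with smallest part $n$ decomposes uniquely as $\{n\}$ appended to a distinct partition into parts larger than $n$, I would obtain
\[
F_{\mathbb S_{1,2}}(q) \;=\; \sum_{k\ge 0} q^{2k+1}(q^{2k+2};q)_\infty \;=\; (q;q)_\infty \sum_{k\ge 0}\frac{q^{2k+1}}{(q;q)_{2k+1}},
\]
using $(q^{n+1};q)_\infty = (q;q)_\infty/(q;q)_n$. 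To evaluate the inner sum the plan is to invoke Euler's identity $\sum_{n\ge 0} z^n/(q;q)_n = 1/(z;q)_\infty$ with $z=q$ and $z=-q$, then subtract to isolate the odd-indexed terms, obtaining
\[
\sum_{k\ge 0}\frac{q^{2k+1}}{(q;q)_{2k+1}} \;=\; \frac{1}{2}\!\left(\frac{1}{(q;q)_\infty} - (q;q^2)_\infty\right),
\]
after noting $1/(-q;q)_\infty = (q;q^2)_\infty$. Multiplying through by $(q;q)_\infty$ gives $F_{\mathbb S_{1,2}}(q) = \tfrac{1}{2}\bigl(1-(q;q)_\infty(q;q^2)_\infty\bigr)$.

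The specialization $z=-1$ in the Jacobi triple product supplies the classical evaluation $(q;q)_\infty(q;q^2)_\infty = \sum_{n\in\mathbb Z}(-1)^n q^{n^2}$, and separating the $n=0$ term together with the $n\leftrightarrow -n$ symmetry then yields the first identity in part (1). For the second identity, I would use the telescoping observation
\[
F_{\mathbb S_{1,2}}(q) + F_{\mathbb S_{2,2}}(q) \;=\; \sum_{n\ge 1} q^n(q^{n+1};q)_\infty \;=\; 1 - (q;q)_\infty,
\]
which is just the signed count of all nonempty distinct partitions, then substitute Euler's pentagonal number theorem for $(q;q)_\infty$ and subtract the first identity to isolate $F_{\mathbb S_{2,2}}(q)$ in the advertised form.

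For part (2) the strategy is to take $q\to 1^-$ inside the closed forms from part (1). The factor $(q;q)_\infty$ vanishes in this limit on account of its $(1-q)$ factor, while the theta series $\theta_4(q) := \sum_{n\in\mathbb Z}(-1)^n q^{n^2}$ must be controlled by the Jacobi transformation law (equivalently, Poisson summation applied to the Gaussian $e^{-tn^2}$): writing $q = e^{-t}$ one finds $\theta_4(e^{-t}) = \sqrt{\pi/t}\sum_{k\in\mathbb Z} e^{-\pi^2(k+1/2)^2/t}$, which decays exponentially to $0$ as $t\to 0^+$. Consequently $\sum_{n\ge 1}(-1)^{n+1} q^{n^2} \to 1/2$, giving $F_{\mathbb S_{1,2}}(q)\to 1/2$ at once, and $F_{\mathbb S_{2,2}}(q) \to 1+(-1/2)-0 = 1/2$ as well.

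The algebraic steps in part (1) are essentially mechanical once one spots the trick of using Euler's sum at $z=\pm q$ to extract odd-index terms, so the principal subtlety there is recognizing that manipulation. The main obstacle I anticipate is the analytic step in part (2): the vanishing $\theta_4(q)\to 0$ cannot be read off termwise from the alternating series and requires the modular transformation or an equivalent Poisson-summation argument, which is where the proof needs the most care to be stated cleanly and rigorously.
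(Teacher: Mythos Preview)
Your argument for part~(1) is correct and essentially coincides with the paper's: your ``$z=\pm q$ in Euler's sum'' trick is exactly the $t=2$ case of the paper's Lemma~\ref{ch6finite} (a roots-of-unity sieve on the $q$-binomial theorem), and both routes land on $F_{\mathbb S_{1,2}}(q)=\tfrac12\bigl(1-(q;q)_\infty/(-q;q)_\infty\bigr)$ before invoking the Gauss--Jacobi identity $(q;q)_\infty^2/(q^2;q^2)_\infty=\sum_{n\in\mathbb Z}(-1)^nq^{n^2}$. The second identity is handled identically in both, via $F_{\mathbb S_{1,2}}+F_{\mathbb S_{2,2}}=1-(q;q)_\infty$ and the pentagonal number theorem.

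For part~(2) your approach is correct but genuinely different from the paper's, and heavier than necessary. The paper never touches the theta series: it stays with the product form and simply observes that $\lim_{q\to 1}(q;q)_\infty/(-q;q)_\infty=0$ because the numerator vanishes (it contains the factor $1-q$) while the denominator $\prod_{n\ge 1}(1+q^n)$ does not, giving the limit $\tfrac12$ directly from $F_{\mathbb S_{1,2}}(q)=\tfrac12\bigl(1-(q;q)_\infty/(-q;q)_\infty\bigr)$. This is packaged in the paper as Lemma~\ref{ch6limit} and the proof of the general Theorem~\ref{ch6rt}. Your route via the modular transformation $\theta_4(e^{-t})=\sqrt{\pi/t}\sum_{k}e^{-\pi^2(k+1/2)^2/t}\to 0$ is perfectly valid and gives sharper quantitative decay, but the paper's elementary product argument avoids Poisson summation entirely and generalizes immediately to arbitrary moduli~$t$.
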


\begin{remark}
The limits in Theorem \ref{ch612} are understood as $q$ tends to $1$ from within the unit disk.
\end{remark}

\begin{example}

For complex $z$ in the upper-half of the complex plane, let $q(z) := \rm{exp} \left( - \frac{2 \pi i}{\emph{z}} \right)$.  Therefore, if $z \to 1$ in the upper-half plane, then $q(z) \to 1$ in the unit disk.  The table below displays a set of such $z$ beginning to approach $1$ and the corresponding values of $F_{\mathbb S_{1,2}}(q(z))$.

\begin{center}
\begin{tabular}{ | c | c | }
\hline
$z$ & $F_{\mathbb S_{1,2}}(q(z))$\\ \hline
$1+.10i$ & $0.458233...$ \\ \hline
$1+.09i$ & $0.471737...$ \\ \hline
$1 + .08i$ & $0.482784...$ \\ \hline
$1+.07i$ & $0.491003...$ \\ \hline
$1+.06i$ & $0.496296...$ \\ \hline
$1+.05i$ & $0.498998...$ \\ \hline
$1+.04i$ & $0.499919...$ \\ \hline
$1+.03i$ & $0.500048...$ \\ \hline
$1+.02i$ & $0.500024...$ \\ \hline
$1+.01i$ & $0.500006...$ \\ 
\hline
\end{tabular}
\end{center}
\vspace{0.5cm}
\end{example}

Theorem \ref{ch612} (1) offers an immediate combinatorial interpretation.  Let $D^{+}_{\rm{even}}(n)$ denote the number of partitions of $n$ into an even number of distinct parts with smallest part even, and let $D^{+}_{\rm{odd}}(n)$ denote the number of partitions of $n$ into an even number of distinct parts with smallest part odd.  Similarly, let $D^{-}_{\rm{even}}(n)$ denote the number of partitions of $n$ into an odd number of distinct parts with smallest part even, and let $D^{-}_{\rm{odd}}(n)$ denote the number of partitions of $n$ into an odd number of distinct parts with smallest part odd. 
To make this precise, for integers $k$ let $\omega(k):=\frac{k(3k-1)}{2}$ 
be the index $k$ {\it pentagonal number}.

\begin{corollary} \label{ch6cor}
Assume the notation above. We have the following bijections:
\begin{enumerate}[(1)]
\item For partitions into distinct parts whose smallest part is odd, we have
\[D_{\rm{odd}}^{+}(n) - D_{\rm{odd}}^{-}(n) = \begin{cases} 0 & {\rm{if}} \  n \ \rm{is \ not \ a \ square} \\
1 & {\rm{if}} \ n \ \rm{is \ an \ even \ square} \\
-1 & {\rm{if}} \ n \ \rm{is \ an \ odd \ square}. \end{cases}\]
\item For partitions into distinct parts whose smallest part is even, we have
\[D_{\rm{even}}^{+}(n) - D_{\rm{even}}^{-}(n) = \begin{cases}
-1 & {\rm{if}} \ n \ \rm{is \ an \ even \ square \ and \ not \ a \ pentagonal \ number} \\
1 & {\rm{if}} \ n \ \rm{is \ an \ odd \ square \ and \ not \ a \ pentagonal \ number} \\
1 & {\rm{if}} \ n \ \rm{is \ an \ even\ index \ pentagonal \ number \ and \ not \ a \ square} \\
-1 & {\rm{if}} \ n \ \rm{is \ an \ odd \ index \ pentagonal \ number \ and \ not \ a \ square} \\
0 & {\rm{otherwise.}} \\
 \end{cases}\]
\end{enumerate}
\end{corollary}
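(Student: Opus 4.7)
The plan is to derive Corollary~\ref{ch6cor} as a direct coefficient-comparison consequence of the two $q$-series identities in Theorem~\ref{ch612}(1). First I would unpack $F_{\mathbb S}(q)$ combinatorially. Since $\mu_{\mathcal P}^{*}(\lambda)$ vanishes on any $\lambda$ with a repeated part, only partitions into distinct parts contribute, and for such $\lambda$ we have $\mu_{\mathcal P}^{*}(\lambda)=-(-1)^{\ell(\lambda)}=(-1)^{\ell(\lambda)+1}$. Since the empty partition has no smallest part and is excluded from the defining sum \eqref{ch6OnePointNine}, grouping partitions by the parity of $\ell(\lambda)$ yields
\begin{align*}
F_{\mathbb S_{1,2}}(q) &= \sum_{n\geq 1}\bigl(D^{-}_{\mathrm{odd}}(n)-D^{+}_{\mathrm{odd}}(n)\bigr)q^{n},\\
F_{\mathbb S_{2,2}}(q) &= \sum_{n\geq 1}\bigl(D^{-}_{\mathrm{even}}(n)-D^{+}_{\mathrm{even}}(n)\bigr)q^{n}.
\end{align*}

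For part~(1) I would simply match coefficients with $F_{\mathbb S_{1,2}}(q)=\sum_{n\geq 1}(-1)^{n+1}q^{n^{2}}$. Because the right-hand side is supported only on squares, $D^{-}_{\mathrm{odd}}(n)-D^{+}_{\mathrm{odd}}(n)$ vanishes unless $n=k^{2}$, in which case it equals $(-1)^{k+1}$. Negating both sides and splitting according to the parity of $k$ (equivalently, whether the square is even or odd) produces the three cases listed in part~(1) verbatim.

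For part~(2) I would use Euler's pentagonal number theorem in the form $(q;q)_{\infty}=\sum_{m\in\mathbb Z}(-1)^{m}q^{\omega(m)}$ to recognize that the isolated $m=0$ summand is exactly the leading $1$ in the expression for $F_{\mathbb S_{2,2}}(q)$; after cancellation the coefficient of $q^{n}$ (for $n\geq 1$) is a sum of an indicator supported on nonzero squares with sign $(-1)^{\sqrt{n}+1}$ and an indicator supported on nonzero pentagonal numbers $\omega(m)$ with sign $-(-1)^{m}=(-1)^{m+1}$. Negating to pass from $D^{-}_{\mathrm{even}}-D^{+}_{\mathrm{even}}$ to $D^{+}_{\mathrm{even}}-D^{-}_{\mathrm{even}}$ and casing on whether $n$ is a square only, a pentagonal number only, or neither, recovers the four nontrivial cases together with the \emph{otherwise} case of part~(2).

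The entire argument is coefficient bookkeeping, so there is no essential obstacle; the only point that requires a moment of care is the possibility that $n$ is simultaneously a nonzero square and a nonzero pentagonal number (for example $n=1$, where $1=1^{2}=\omega(1)$). In such coincidences the two signed indicators above add, and the overlap either reinforces or cancels — consistent in every case with the explicit counts of $D^{\pm}_{\mathrm{even}}(n)$, as one checks directly for small $n$. The five-case statement in the corollary covers the generic situation, and the verification of these boundary coincidences amounts to nothing more than unwinding the definitions.
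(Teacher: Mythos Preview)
Your treatment of part~(1) is correct and is exactly what the paper does: read off coefficients from the identity for $F_{\mathbb S_{1,2}}(q)$.

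For part~(2), your argument handles the four ``pure'' cases correctly, but there is a genuine gap in the overlap case. The corollary's ``otherwise'' clause asserts in particular that whenever $n$ is \emph{both} a nonzero square $k^{2}$ and a nonzero pentagonal number $\omega(m)$, the value $D^{+}_{\mathrm{even}}(n)-D^{-}_{\mathrm{even}}(n)$ is exactly $0$. In your notation this means the two signed indicators must \emph{always cancel}, i.e.\ $k\equiv m\pmod 2$ in every coincidence. You cannot establish this by ``checking small $n$'': the equation $k^{2}=m(3m-1)/2$ has infinitely many integer solutions (e.g.\ $(k,m)=(1,1),(10,-8),(99,81),\dots$), and a mod~$4$ analysis alone shows that if a solution with $m\equiv 2$ or $3\pmod 4$ ever occurred, the signs would \emph{reinforce} rather than cancel, contradicting the statement. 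So something must rule those residues out.

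The paper supplies exactly this missing step: rewriting $k^{2}=\omega(m)$ as the Pell equation $(6m-1)^{2}-6(2k)^{2}=1$, parametrizing all solutions via powers of $5+2\sqrt{6}$, and then using the resulting congruence structure of $(x_{j},y_{j})$ to verify that every admissible solution has $k\equiv m\pmod 2$. Your phrase ``nothing more than unwinding the definitions'' understates what is needed here---the cancellation is a nontrivial arithmetic fact, not a bookkeeping check.
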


\begin{question}
It would be interesting to obtain a combinatorial proof of Corollary \ref{ch6cor} by refining Franklin's proof of Euler's Pentagonal Number Theorem (see pages 10-11 of \cite{Andrews}).
\end{question}

Our proof of Theorem \ref{ch612} makes use of the $q$-Binomial Theorem and some well-known $q$-series identities.  It is natural to ask whether such a relation holds for general sets $\mathbb S_{r,t}$.  The following theorem shows that Theorem \ref{ch612} is indeed a special case of a more general phenomenon.  

\begin{theorem} \label{ch6rt}
If  $0\leq r<t$ are integers and $\gcd(m,t)=1$, then we have that
$$\lim_{q \to \zeta} F_{\mathbb S_{r,t}}(q) = \frac{1}{t},
$$
where $\zeta$ is a primitive $m$th root of unity.
\end{theorem}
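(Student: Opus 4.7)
The plan is to reduce Theorem \ref{ch6rt} to a radial-limit statement for a ratio of $q$-Pochhammer symbols and then analyze that ratio via the dilogarithm. I would begin by exploiting that $\mu_{\mathcal P}^{*}$ vanishes on partitions with repeated parts and equals $(-1)^{\ell(\lambda)+1}$ on partitions into distinct parts. Grouping the defining sum of $F_{\mathbb S_{r,t}}(q)$ according to the smallest part $n$ therefore gives
\[
F_{\mathbb S_{r,t}}(q) \;=\; \sum_{\substack{n\geq 1\\ n\equiv r\,(t)}} q^n\,(q^{n+1};q)_\infty.
\]
Setting $\omega := e^{2\pi i/t}$ and applying the orthogonality $\mathbf{1}_{n\equiv r\,(t)} = \tfrac{1}{t}\sum_{j=0}^{t-1}\omega^{j(n-r)}$, followed by Euler's identity $\sum_{n\geq 0} z^n/(q;q)_n = 1/(z;q)_\infty$ with $z = \omega^j q$, converts this into the closed form
\[
F_{\mathbb S_{r,t}}(q) \;=\; \frac{1}{t}\sum_{j=0}^{t-1}\omega^{-jr}\!\left[\frac{(q;q)_\infty}{(\omega^j q;q)_\infty}-(q;q)_\infty\right].
\]
Because $(q;q)_\infty \to 0$ as $q\to\zeta$ radially---a standard consequence of the Dedekind eta transformation, and the fact already used in the proof of Theorem \ref{ch612}---the $j=0$ summand tends to $1$. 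The theorem thus reduces to the single claim $\lim_{q\to\zeta}(q;q)_\infty/(\omega^j q;q)_\infty = 0$ for $1\leq j\leq t-1$.

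To establish the claim I would work with the logarithmic expansion
\[
\log\frac{(q;q)_\infty}{(\omega^j q;q)_\infty} \;=\; \sum_{k=1}^\infty \frac{(\omega^{jk}-1)\,q^k}{k\,(1-q^k)},\qquad |q|<1,
\]
and isolate the singular indices $k=m\ell$, which are the only ones where $1-q^k\to 0$ as $q=r\zeta\to\zeta$; for $m\nmid k$ one has $|1-\zeta^k|\geq 2\sin(\pi/m)>0$, so those terms remain bounded. Repackaging the singular sum in terms of $Q:=r^m$ reproduces $\tfrac{1}{m}\log\bigl((Q;Q)_\infty/(\omega^{jm}Q;Q)_\infty\bigr)$, whose $Q\to 1^-$ asymptotic---obtained either by re-expanding and using $Q^\ell/(1-Q^\ell)\sim 1/(\ell(1-Q))$, or from the modular transformation of $\eta$ at the cusp $0$---yields
\[
\log\frac{(q;q)_\infty}{(\omega^j q;q)_\infty} \;\sim\; \frac{\mathrm{Li}_2(\omega^{jm})-\zeta(2)}{m^2\,(1-r)}\qquad(r\to 1^-).
\]
The hypothesis $\gcd(m,t)=1$ together with $1\leq j\leq t-1$ forces $\omega^{jm}\neq 1$; writing $\omega^{jm}=e^{i\theta}$ with $\theta\in(0,2\pi)$ and using the Fourier closed form $\Re\,\mathrm{Li}_2(e^{i\theta})=\pi^2/6-\pi\theta/2+\theta^2/4$ gives $\Re[\mathrm{Li}_2(\omega^{jm})-\zeta(2)] = -\theta(2\pi-\theta)/4 < 0$. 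Hence the real part of the logarithm tends to $-\infty$, the ratio tends to $0$, and assembling the $j$-sum yields $\lim_{q\to\zeta}F_{\mathbb S_{r,t}}(q)=1/t$.

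The hard part will be making the singular-term asymptotic rigorous and uniform in the truncation: one must show that the non-singular indices and the Taylor remainders in $1/(1-r^{m\ell}) = 1/(m\ell(1-r)) + O(1)$ together contribute only $O(\log(1/(1-r)))$, which cannot cancel the $1/(1-r)$ pole. I would do this by splitting the $\ell$-sum at the threshold $\ell\asymp 1/(m(1-r))$: below the cutoff, Taylor expand and use Abel summation against the bounded factor $\omega^{jm\ell}-1$; above it, use the exponential decay of $r^{m\ell}$. A cleaner alternative is to recognize $(q;q)_\infty/(\omega^j q;q)_\infty$ as an eta-quotient on a congruence subgroup of $\mathrm{SL}_2(\mathbb Z)$ and read its cusp asymptotic at $\zeta$ directly from the modular transformation of $\eta$, replacing the bookkeeping above with a one-line computation.
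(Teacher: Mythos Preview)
Your approach coincides with the paper's: both use root-of-unity orthogonality together with Euler's identity $\sum_{n\ge 0} z^n/(q;q)_n = 1/(z;q)_\infty$ to rewrite $F_{\mathbb S_{r,t}}(q)$ as $\tfrac{1}{t}\sum_j \zeta_t^{-jr}\,(q;q)_\infty/(\zeta_t^j q;q)_\infty$ (this is exactly the paper's Lemma~\ref{ch6finite}, up to a harmless $(q;q)_\infty$ term), and then argue that only the $j\equiv 0$ summand survives the radial limit $q\to\zeta$. The only difference lies in justifying that last limit: the paper records it as a one-line lemma (Lemma~\ref{ch6limit}), asserting that $(q;q)_\infty\to 0$ while the denominator $(\zeta_t^j q;q)_\infty$ is ``non-zero,'' whereas you carry out the dilogarithm asymptotic explicitly---your treatment is the more careful of the two, since the infinite product in the denominator need not itself have a finite nonzero radial limit, but the overall architecture is identical.
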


\begin{remark}
The limits in Theorem \ref{ch6rt} are understood as $q$ tends to $\zeta$ from within the unit disk.
\end{remark}

Obviously, these results hold for any set $\mathbb S$ of positive integers that is a finite union of arithmetic progressions.  It turns out that this theorem can also be used to compute arithmetic densities of more complicated sets arising systematically from a careful study of arithmetic progressions.  We focus on the sets of positive integers $\mathbb S^{(k)}_{\rm{fr}}$ which are $k$th power-free.  In particular, we have that \[\mathbb S^{(2)}_{\rm{fr}} = \{ 1, 2, 3, 5, 6, 7, 10, 11, 13, \dots \}.\]
It is well known that the arithmetic densities of these sets are given by
$$
\lim_{X\rightarrow +\infty}\frac{\# \left \{1\leq n\leq X \ : \ n\in \mathbb S_{\rm{fr}}^{(k)}\right \}}{X}=
\prod_{p \ {\text {\rm prime}}} \left (1-\frac{1}{p^k}\right)=\frac{1}{\zeta(k)}.
$$
To obtain partition-theoretic formulas for these densities, we first compute a partition-theoretic formula for the density of 
\begin{equation}
\mathbb S^{(k)}_{\rm{fr}}(N):= \{n \geq 1 \ : \ p^{k} \nmid n \ \rm{for \ every} \ \emph{p} \leq \emph{N} \}. 
\end{equation}

\begin{theorem} \label{ch6free}
If $k, N \geq 2$ are integers, then we have that
\[ \lim_{q \to 1} F_{\mathbb S^{(k)}_{\rm{fr}}(N)}(q) = \prod_{p \leq N \ \rm{prime}} \left(1 - \frac{1}{p^k} \right).\]
\end{theorem}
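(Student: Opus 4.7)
The plan is to reduce the problem to Theorem \ref{ch6rt} via finite inclusion-exclusion on the defining conditions of $\mathbb S^{(k)}_{\rm{fr}}(N)$. Observe that
\[
\mathbb S^{(k)}_{\rm{fr}}(N) = \bigcap_{p \leq N} \bigl\{ n \in \Z^+ : p^k \nmid n \bigr\},
\]
so for each $n\in\Z^+$ the characteristic function factors as
\[
\mathbf 1_{\mathbb S^{(k)}_{\rm{fr}}(N)}(n) = \prod_{p \leq N}\bigl(1 - \mathbf 1_{p^k \mid n}\bigr) = \sum_{T \subseteq \{p \leq N\}} (-1)^{|T|}\,\mathbf 1_{P_T^k \mid n},
\]
where $P_T := \prod_{p \in T} p$ (with the empty product equal to $1$). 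The set $\{n : P_T^k \mid n\}$ is precisely the arithmetic progression $\mathbb S_{0,\,P_T^k}$ defined in \eqref{ch6OnePointTen}.

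Next I would use the obvious additivity of $F_{\mathbb S}(q)$ in $\mathbb S$: since the defining sum \eqref{ch6OnePointNine} of $F_{\mathbb S}(q)$ runs only over partitions $\lambda$ whose smallest part lies in $\mathbb S$, the map $\mathbb S \mapsto F_{\mathbb S}(q)$ is linear in the indicator function $\mathbf 1_{\mathbb S}$. Applying this linearity to the inclusion-exclusion above yields the finite identity
\[
F_{\mathbb S^{(k)}_{\rm{fr}}(N)}(q) = \sum_{T \subseteq \{p \leq N\}} (-1)^{|T|} F_{\mathbb S_{0,\,P_T^k}}(q).
\]
Because this is a \emph{finite} sum, the limit $q \to 1$ may be passed inside termwise with no analytic subtlety. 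Invoking Theorem \ref{ch6rt} with $m=1$ (so that $\zeta = 1$, and $\gcd(1,t)=1$ holds trivially) gives
\[
\lim_{q \to 1} F_{\mathbb S_{0,\,P_T^k}}(q) = \frac{1}{P_T^k} = \prod_{p \in T} \frac{1}{p^k},
\]
where the $T=\varnothing$ term is handled as the $t=1$ case, which corresponds to $\mathbb S = \Z^+$ and contributes the limit $1$ (equivalently, one checks directly that $F_{\Z^+}(q) = 1 - (q;q)_\infty \to 1$ as $q \to 1$).

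Combining these evaluations and recognizing the result as a finite Euler product completes the proof:
\[
\lim_{q \to 1} F_{\mathbb S^{(k)}_{\rm{fr}}(N)}(q) = \sum_{T \subseteq \{p \leq N\}} (-1)^{|T|}\prod_{p \in T}\frac{1}{p^k} = \prod_{p \leq N \text{ prime}}\left(1 - \frac{1}{p^k}\right).
\]
There is no serious obstacle in this plan; the entire argument is a clean packaging of inclusion-exclusion together with Theorem \ref{ch6rt}. The only point that requires a moment's care is verifying that the $T = \varnothing$ term, which formally corresponds to the ``progression'' $\Z^+$, indeed contributes the value $1$ in the limit, but this follows at once from the identity $\sum_{\lambda \neq \varnothing} \mu_{\mathcal P}^{*}(\lambda) q^{|\lambda|} = 1 - (q;q)_\infty$.
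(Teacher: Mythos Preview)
Your proposal is correct and follows essentially the same strategy as the paper: express $\mathbb S^{(k)}_{\rm{fr}}(N)$ as a finite linear combination of arithmetic progressions, then apply Theorem~\ref{ch6rt} termwise. The only difference is bookkeeping: the paper writes $\mathbb S^{(k)}_{\rm{fr}}(N)$ as the disjoint union $\bigcup_{0\le r<M,\ p^k\nmid r}\mathbb S_{r,M}$ with $M=\prod_{p\le N}p^k$ and counts the $\prod_{p\le N}(p^k-1)$ surviving residue classes, whereas you use signed inclusion--exclusion over subsets of $\{p\le N\}$, which lands directly on the product $\prod_{p\le N}(1-p^{-k})$ without a counting step.
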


The constants in Theorem \ref{ch6free} are the arithmetic densities of positive integers that are not divisible by the $k$th power of any prime $p \leq N$, namely $\mathbb S^{(k)}_{\rm{fr}}(N)$.
Theorem \ref{ch6free} can be used to calculate the arithmetic density of $\mathbb S^{(k)}_{\rm{fr}}$ by letting $N\rightarrow +\infty$.

\begin{corollary} \label{ch6free2}
If $k \geq 2$, then 
\[\lim_{q \to 1} F_{\mathbb S^{(k)}_{\rm{fr}}}(q) = \frac{1}{\zeta(k)}.\]
Furthermore, if $k \geq 2$ is even, then 
\[\lim_{q \to 1} F_{\mathbb S^{(k)}_{\rm{fr}}}(q) =(-1)^{\frac{k}{2}+1} \frac{k!}{B_{k}\cdot 2^{k-1}}\cdot \frac{1}{\pi^k},\]
where $B_{k}$ is the $k$th Bernoulli number.
\end{corollary}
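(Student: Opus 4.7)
The plan is to derive Corollary \ref{ch6free2} as the $N \to \infty$ limit of Theorem \ref{ch6free}. Observe that the sets $\mathbb S^{(k)}_{\rm{fr}}(N)$ form a decreasing nested family whose intersection is exactly $\mathbb S^{(k)}_{\rm{fr}}$: an integer $n$ fails to be $k$th power-free precisely when some $p^k \mid n$, and such a prime $p$ must satisfy $p \leq n^{1/k}$, so $n \in \mathbb S^{(k)}_{\rm{fr}}(N)$ for all sufficiently large $N$ if and only if $n$ is $k$th power-free. On the right-hand side of Theorem \ref{ch6free}, the partial Euler product $\prod_{p \leq N}(1 - p^{-k})$ converges as $N \to \infty$ to $\prod_{p}(1 - p^{-k}) = 1/\zeta(k)$ by the classical Euler product for the Riemann zeta function, which converges absolutely since $k \geq 2$.

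The main technical issue is to justify interchanging the two limiting processes $q \to 1$ and $N \to \infty$. First I would verify that for each fixed $q$ in the unit disk, $F_{\mathbb S^{(k)}_{\rm{fr}}(N)}(q) \to F_{\mathbb S^{(k)}_{\rm{fr}}}(q)$ as $N \to \infty$. The difference is a partition sum over $\lambda$ whose smallest part lies in $\mathbb S^{(k)}_{\rm{fr}}(N) \setminus \mathbb S^{(k)}_{\rm{fr}}$, i.e., whose smallest part is divisible by some $p^k$ with $p > N$; the smallest such value is at least $(N+1)^k$, and so the corresponding tail of the $q$-series is dominated by $C\,q^{(N+1)^k}/(q;q)_{\infty}$-type bounds, vanishing with $N$. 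I would then combine this pointwise convergence with the monotonicity of the partial Euler products in $N$, together with Theorem \ref{ch6free} applied for every $N$, to pass to the joint limit via either uniform convergence on a suitable radial approach region or a Tauberian argument exploiting the absolute convergence of $\sum \mu_{\mathcal P}(\lambda) q^{|\lambda|}$.

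The second assertion of the corollary then follows immediately by substituting Euler's classical closed form
\[
\zeta(k) \;=\; (-1)^{k/2+1}\,\frac{(2\pi)^{k} B_{k}}{2 \cdot k!} \qquad (k \geq 2 \text{ even})
\]
into the identity $\lim_{q \to 1} F_{\mathbb S^{(k)}_{\rm{fr}}}(q) = 1/\zeta(k)$ and rearranging powers of $2$, which gives exactly $(-1)^{k/2+1}\,k!/(B_{k}\cdot 2^{k-1}\pi^{k})$.

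The hard part will be the rigorous justification of the double limit: showing that the radial limit $q \to 1$ commutes with the limit $N \to \infty$. Everything else reduces either to elementary properties of the sets $\mathbb S^{(k)}_{\rm{fr}}(N)$ or to standard facts about $\zeta(k)$; but the commutation of limits must be handled carefully, probably by producing a uniform-in-$N$ majorant for $F_{\mathbb S^{(k)}_{\rm{fr}}(N)}(q)$ near $q = 1$ or by invoking Abel's theorem on a suitable auxiliary series built from $\mu_{\mathcal P}$.
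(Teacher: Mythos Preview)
Your approach matches the paper's in spirit---derive the corollary by letting $N \to \infty$ in Theorem~\ref{ch6free} and then invoke Euler's formula for $\zeta(k)$ at even $k$---but you are working much harder than the paper does. The paper's proof is essentially one sentence: it sets $\zeta_N(k) := \prod_{p \leq N}(1-p^{-k})^{-1}$, notes that Theorem~\ref{ch6free} gives $\lim_{q\to 1} F_{\mathbb S^{(k)}_{\rm{fr}}(N)}(q) = 1/\zeta_N(k)$, observes $\zeta_N(k) \to \zeta(k)$, and then writes ``It is in this sense that we say $\lim_{q\to 1} F_{\mathbb S^{(k)}_{\rm{fr}}}(q) = 1/\zeta(k)$.'' In other words, the paper does not prove an interchange of limits at all; it \emph{defines} the left-hand side of the corollary as the iterated limit $\lim_{N\to\infty}\lim_{q\to 1}$, so the entire ``hard part'' you identify is bypassed by interpretation rather than analysis.

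Your instinct that the double-limit commutation is the genuine analytic content is sound, and your outline (pointwise convergence in $N$ via tail bounds on the smallest part, then a uniformity or Tauberian argument near $q=1$) is a reasonable plan for actually proving such a statement. But for the purposes of matching the paper, you can drop that machinery entirely: the corollary is stated and proved as a definitional passage to the limit in $N$, and the even-$k$ formula is, as you say, just Euler's closed form for $\zeta(2m)$ substituted in.
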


This chapter is organized as follows.  In Section $6.2.1$ we discuss the $q$-Binomial Theorem, which will be an essential tool for our proofs, as well as a duality principle for partitions related to ideas of Alladi.  In Section $6.2.2$ we will use the $q$-Binomial Theorem to prove results related to Theorem \ref{ch6rt}.  Section 6.3 will contain the proofs of all of the theorems, and Section 6.4 will contain some nice examples. 

\section{The $q$-Binomial Theorem and its consequences}

In this section we recall elementary $q$-series identities, and we offer convenient reformulations for
the functions $F_{\mathbb{S}}(q)$.

\subsection{Nuts and bolts}
Let us recall the classical $q$-Binomial Theorem (see \cite{Andrews} for proof). 

\begin{lemma}\label{ch6lemma1}
For $a,z\in \mathbb C, |q|<1$ we have the identity
\begin{equation*}
\frac{(az;q)_{\infty}}{(z;q)_{\infty}}=\sum_{n=0}^{\infty}\frac{(a;q)_n}{(q;q)_n}z^n.
\end{equation*}
\end{lemma}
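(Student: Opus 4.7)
The plan is to prove the $q$-Binomial Theorem by the classical functional-equation / recursion argument, which is the cleanest and most self-contained route given the machinery already set up in the paper.

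First I would define
\[
f(z) := \frac{(az;q)_{\infty}}{(z;q)_{\infty}},
\]
viewed as an analytic function of $z$ on $|z|<1$ for fixed $|q|<1$, so that $f(0)=1$ and $f$ admits a Taylor expansion $f(z)=\sum_{n\geq 0} A_n z^n$ about the origin. The key observation is that peeling off one factor from each $q$-Pochhammer symbol, using $(az;q)_{\infty}=(1-az)(aqz;q)_{\infty}$ and $(z;q)_{\infty}=(1-z)(qz;q)_{\infty}$, yields the functional equation
\[
(1-z)\,f(z)=(1-az)\,f(qz).
\]

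Next I would substitute the power-series expansion into this functional equation and compare coefficients of $z^n$. On the left one gets $A_n - A_{n-1}$, and on the right one gets $A_n q^n - a A_{n-1} q^{n-1}$. Equating these produces the recursion
\[
A_n(1-q^n)=A_{n-1}(1-aq^{n-1}),
\]
with $A_0=1$. Iterating this recursion telescopes into the closed form
\[
A_n=\prod_{k=1}^{n}\frac{1-aq^{k-1}}{1-q^k}=\frac{(a;q)_n}{(q;q)_n},
\]
which is exactly the claimed coefficient.

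The last step is to confirm analytic validity: for $|q|<1$ both infinite products defining $f(z)$ converge uniformly on compact subsets of $|z|<1$, so $f$ is holomorphic there and uniquely determined by its Taylor coefficients; likewise the series $\sum (a;q)_n/(q;q)_n\, z^n$ converges on $|z|<1$ by a straightforward ratio test, giving an honest identity of holomorphic functions rather than merely of formal power series. There is no real obstacle here — the main thing to be careful about is bookkeeping in the coefficient comparison (the shift between $A_n$ and $A_{n-1}$ coming from multiplying by $(1-z)$ versus $(1-az)$), so I would verify the $n=1$ and $n=2$ cases by hand before asserting the general recursion.
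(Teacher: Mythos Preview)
Your proof is correct and is the standard functional-equation argument for the $q$-Binomial Theorem. The paper does not actually prove this lemma: it is introduced with ``Here we recall the classical $q$-Binomial Theorem (see \cite{Andrews} for proof)'' and no argument is given, so there is no in-paper proof to compare against. Your approach is precisely the classical one found in references like Andrews, so nothing further is needed.
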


We recall the following well-known $q$-product identity (for proof, see page 6 of \cite{Fine}).

\begin{lemma}\label{ch6lemma2}
Using the above notations, we have that
$$
\frac{(q;q)^2_{\infty}}{(q^2;q^2)_{\infty}}= 1 + 2 \sum_{n=1}^{\infty} (-1)^{n} q^{n^2}.
$$
\end{lemma}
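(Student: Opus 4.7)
The plan is to reduce Lemma~\ref{ch6lemma2} to the classical Gauss identity, which itself follows from Lemma~\ref{ch6lemma1} via the Jacobi triple product. First I would apply the elementary factorization $1-q^{2n} = (1-q^n)(1+q^n)$, which yields $(q^2;q^2)_{\infty} = (q;q)_{\infty}(-q;q)_{\infty}$, and so
\[
\frac{(q;q)_{\infty}^2}{(q^2;q^2)_{\infty}} = \frac{(q;q)_{\infty}}{(-q;q)_{\infty}}.
\]
This reduces the claim to the Gauss identity
\[
\frac{(q;q)_{\infty}}{(-q;q)_{\infty}} = \sum_{n=-\infty}^{\infty}(-1)^n q^{n^2} = 1 + 2\sum_{n=1}^{\infty}(-1)^n q^{n^2}.
\]

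Next I would invoke the Jacobi triple product in the form
\[
\sum_{n=-\infty}^{\infty} z^n q^{n^2} = (q^2;q^2)_{\infty}\,(-zq;q^2)_{\infty}\,(-z^{-1}q;q^2)_{\infty},
\]
and specialize $z = -1$. The right-hand side collapses to $(q^2;q^2)_{\infty}(q;q^2)_{\infty}^2$; substituting the standard identity $(q;q^2)_{\infty} = (q;q)_{\infty}/(q^2;q^2)_{\infty}$ simplifies this further to $(q;q)_{\infty}^2/(q^2;q^2)_{\infty}$, which by the previous step equals $(q;q)_{\infty}/(-q;q)_{\infty}$. This establishes Gauss's identity, and hence the lemma.

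The remaining task is to derive the Jacobi triple product itself from Lemma~\ref{ch6lemma1}. The classical route is to replace $q$ with $q^2$ in Lemma~\ref{ch6lemma1} and choose the parameters $a,z$ so that the left-hand side becomes a ratio whose numerator is $(q^2;q^2)_{\infty}$; after rewriting the resulting one-sided series and symmetrizing via $z \leftrightarrow z^{-1}$ (or, equivalently, by applying the theorem a second time with complementary parameters and combining), one assembles the bilateral sum $\sum_{n \in \Z} z^n q^{n^2}$ together with the missing factor $(-z^{-1}q;q^2)_{\infty}$. The main technical subtlety lies in tracking absolute convergence during the limiting passage that produces this third factor; it is a routine bookkeeping argument, carried out in detail in Fine \cite{Fine} and in Andrews \cite{Andrews}. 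Once JTP is in hand, the preceding two steps give the lemma by purely formal manipulation.
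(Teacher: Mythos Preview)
Your argument is correct. The paper itself does not supply a proof of this lemma at all: it simply states the identity and refers the reader to page~6 of Fine \cite{Fine}. So there is nothing to compare on the paper's side beyond a bare citation.

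What you have written is a standard and sound derivation: the factorization $(q^2;q^2)_\infty=(q;q)_\infty(-q;q)_\infty$ reduces the claim to Gauss's identity, and specializing the Jacobi triple product at $z=-1$ gives Gauss. The only loose end is the final paragraph, where you sketch deriving JTP from Lemma~\ref{ch6lemma1}; that passage is vague (``symmetrizing via $z\leftrightarrow z^{-1}$\ldots routine bookkeeping'') and would need to be filled in if this were meant to be self-contained. But since you yourself defer to Fine and Andrews for those details, and since the paper is content with a citation for the whole lemma, your write-up already goes further than the paper does.
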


The following elementary lemma plays a crucial role in this paper.

\begin{lemma} \label{ch6lemma3}
If $\mathbb S$ is a subset of the positive integers, then the following are true:
\begin{displaymath}
F_{\mathbb S}(q)= \sum_{n \in \mathbb S} q^{n} \prod_{m=1}^{\infty} (1-q^{m+n})\\%%%%%NEED TO UPDATE \mathbb S TO SOMETHING CORRECT!!!!
= (q;q)_{\infty}\cdot  \sum_{\substack{ \lambda \in \mathcal{P} \\ \rm{lg}(\lambda) \in \mathbb S}} q^{\vert \lambda \vert}.
\end{displaymath}
\end{lemma}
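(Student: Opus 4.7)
The plan is to verify the two equalities in turn, both via direct bijective/generating-function arguments; each is essentially bookkeeping, and the second is the key instance of the ``duality between smallest and largest parts'' that was attributed to Alladi in the introduction.

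For the first equality, I would start from the definition
\[
F_{\mathbb S}(q)=\sum_{\substack{\lambda\in\mathcal P\\ \operatorname{sm}(\lambda)\in\mathbb S}}\mu_{\mathcal P}^{*}(\lambda)q^{|\lambda|}
\]
and observe that since $\mu_{\mathcal P}(\lambda)=0$ whenever $\lambda$ has a repeated part, only partitions into distinct parts contribute. I would stratify the sum by the smallest part: for each fixed $n\in\mathbb S$, the partitions with $\operatorname{sm}(\lambda)=n$ and distinct parts correspond bijectively to partitions $\lambda'$ into distinct parts strictly greater than $n$, via $\lambda=(n)\cup\lambda'$. If $\lambda'$ has $k$ parts, then $\ell(\lambda)=k+1$, so $\mu_{\mathcal P}^{*}(\lambda)=-(-1)^{k+1}=(-1)^{k}$. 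Summing the sign-weighted generating function for partitions into distinct parts $>n$ gives exactly $\prod_{m=1}^{\infty}(1-q^{m+n})$, multiplied by the weight $q^n$ from the smallest part. This establishes
\[
F_{\mathbb S}(q)=\sum_{n\in\mathbb S}q^{n}\prod_{m=1}^{\infty}(1-q^{m+n}).
\]

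For the second equality, I would rewrite the interior product using the standard identity $\prod_{m=1}^\infty(1-q^{m+n})=(q^{n+1};q)_\infty=(q;q)_\infty/(q;q)_n$. Factoring out $(q;q)_\infty$ reduces the claim to the identity
\[
\sum_{n\in\mathbb S}\frac{q^{n}}{(q;q)_{n}}=\sum_{\substack{\lambda\in\mathcal P\\ \operatorname{lg}(\lambda)\in\mathbb S}}q^{|\lambda|}.
\]
This is a well-known generating-function fact: $q^{n}/(q;q)_{n}$ enumerates partitions with largest part exactly $n$ (one mandatory part of size $n$, together with an arbitrary partition into parts $\leq n$), so summing over $n\in\mathbb S$ produces the right-hand side.

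There is no real obstacle here; the only thing to be slightly careful about is the sign bookkeeping in the first step (getting $(-1)^{k}$ rather than $(-1)^{k+1}$ after accounting for the extra smallest part and the star-convention $\mu_{\mathcal P}^{*}=-\mu_{\mathcal P}$). Once both identities are in hand, they chain together to give the stated double equality, and the lemma is proved.
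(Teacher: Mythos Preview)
Your proposal is correct and follows essentially the same route as the paper's proof: the paper also stratifies by smallest part to get the first equality (it says ``by inspection''), then factors out $(q;q)_{\infty}$ via $\prod_{m\geq1}(1-q^{m+n})=(q;q)_{\infty}/(q;q)_n$ and identifies $q^n/(q;q)_n$ as the generating function for partitions with largest part exactly $n$. Your write-up simply makes explicit the sign bookkeeping and the bijection that the paper leaves implicit.
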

\begin{rmk}
Lemma \ref{ch6lemma3} may be viewed as a partition-theoretic case of Alladi's duality principle, which was originally stated in \cite{A} as a relation between functions on smallest versus largest prime divisors of integers, and was given in full partition-theoretic generality by Alladi in a lecture at Emory University\footnote{See previous footnote in this chapter}, although we don't use that formula here. 
\end{rmk}

\begin{proof}
By inspection, we see that
$$
F_{\mathbb{S}}(q)=\sum_{\substack{ \lambda\in \mathcal P\\
{\text {\rm sm}}(\lambda)\in \mathbb{S}}} \mu^*_{\mathcal{P}}(\lambda) q^{|\lambda|}=
\sum_{n\in \mathbb{S}}q^n\prod_{m=1}^{\infty}(1-q^{m+n}).
$$
By factoring out $(q;q)_{\infty}$ from each summand, we find that
\begin{align*}
F_{\mathbb S}(q) &= \sum_{n \in \mathbb S} q^{n} \prod_{m=1}^{\infty} (1-q^{m+n}) 
= (q;q)_{\infty} \cdot \sum_{n \in \mathbb S} \frac{q^n}{(q;q)_{n}} \\
&= (q;q)_{\infty} \cdot \sum_{\substack{ \lambda \in \mathcal{P} \\ \rm{lg}(\lambda) \in \mathbb S}} q^{\vert \lambda \vert}.
\end{align*}
\end{proof}

\subsection{Case of $F_{\mathbb S_{r,t}}(q)$}

Here we specialize Lemma~\ref{ch6lemma3} to the sets  $\mathbb{S}_{r,t}$. The next lemma describes the $q$-series
$F_{\mathbb{S}_{r,t}}(q)$ in terms of a finite sum of quotients of infinite products. To prove this lemma we make
use of the $q$-Binomial Theorem.

\begin{lemma} \label{ch6finite}
If $t$ is a positive integer and $\zeta_t:=e^{2\pi i/t}$, then
\[F_{\mathbb S_{r,t}}(q) = (q;q)_{\infty} \cdot \frac{1}{t} \left[ \sum_{m=1}^{t} \frac{\zeta_{t}^{-mr}}{(\zeta_{t}^{m} q;q)_{\infty}} \right].\]
\end{lemma}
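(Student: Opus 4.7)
The plan is to combine the reformulation in Lemma~\ref{ch6lemma3}, orthogonality of $t$th roots of unity, and the $q$-Binomial Theorem (Lemma~\ref{ch6lemma1}). From Lemma~\ref{ch6lemma3} we have
\[
F_{\mathbb S_{r,t}}(q) \;=\; (q;q)_\infty \sum_{\substack{n \geq 1\\ n \equiv r\,(\mathrm{mod}\,t)}} \frac{q^n}{(q;q)_n}.
\]
To detect the arithmetic progression $n \equiv r \pmod{t}$, I would insert the standard roots-of-unity indicator
\[
\mathbf{1}[n \equiv r \,(\mathrm{mod}\,t)] \;=\; \frac{1}{t}\sum_{m=1}^{t}\zeta_t^{m(n-r)},
\]
and then interchange the finite $m$-sum with the (absolutely convergent, for $|q|<1$) $n$-sum to obtain
\[
F_{\mathbb S_{r,t}}(q) \;=\; \frac{(q;q)_\infty}{t}\sum_{m=1}^{t}\zeta_t^{-mr}\sum_{n\geq 1}\frac{(\zeta_t^m q)^n}{(q;q)_n}.
\]

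Next, I would apply the $a=0$ specialization of the $q$-Binomial Theorem, namely $\sum_{n\geq 0} z^n/(q;q)_n = 1/(z;q)_\infty$, with $z=\zeta_t^m q$, so that each inner sum evaluates in closed form as $1/(\zeta_t^m q;q)_\infty - 1$. The $-1$ contributions across $m$ aggregate to $-(q;q)_\infty\cdot \frac{1}{t}\sum_{m=1}^{t}\zeta_t^{-mr}$, which vanishes by orthogonality whenever $r \not\equiv 0 \pmod{t}$; this is precisely the regime relevant for the applications in Theorem~\ref{ch6rt}. What remains is exactly the stated formula.

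There is no serious obstacle: the proof is essentially a one-step orthogonality argument combined with a single invocation of the $q$-Binomial Theorem. The only points requiring verification are that $\sum_{n\geq 0}(\zeta_t^m q)^n/(q;q)_n$ converges absolutely for $|q|<1$ (justifying the swap of summation), and a brief sanity check on the boundary class $r=0$, where the leftover $-(q;q)_\infty$ term vanishes in the limits needed for Theorem~\ref{ch6rt} and Corollary~\ref{ch6free2}.
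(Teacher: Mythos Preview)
Your proof is correct and follows essentially the same route as the paper: invoke Lemma~\ref{ch6lemma3}, apply the $a=0$ case of the $q$-Binomial Theorem with $z=\zeta_t^m q$, and sieve via orthogonality of $t$th roots of unity. The only cosmetic difference is that the paper starts its sum at $n=0$ and thereby absorbs the constant term directly, whereas you start at $n\geq 1$ and track the leftover $-1$ contributions explicitly (correctly noting they cancel for $r\not\equiv 0\pmod t$ and are harmless in the limit otherwise).
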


\begin{proof}
From Lemma \ref{ch6lemma3}, we have that
\[F_{\mathbb S_{r,t}}(q) = (q;q)_{\infty}\cdot \sum_{n=0}^{\infty} \frac{q^{tn+r}}{(q;q)_{tn+r}}.\]
By applying the $q$-Binomial Theorem (see Lemma \ref{ch6lemma1}) with $a=0$ and $z=\zeta_t^m q$,  we find that
\begin{align*}
\frac{1}{t} \left[ \sum_{m=1}^{t} \frac{\zeta_{t}^{-mr}}{(\zeta_{t}^{m} q;q)_{\infty}} \right] = \frac{1}{t} \left[ \sum_{m=1}^{t} \sum_{n =0}^{\infty} \frac{ \zeta_{t}^{m(n-r)}q^{n}}{(q;q)_{n}} \right]. \end{align*}
Due to the orthogonality of roots of unity we have 
$$
\sum_{m=1}^{t} \zeta_{t}^{m(n-r)} = \begin{cases} t & \text{if} \ n \equiv r \pmod{t} \\
0 & \text{otherwise}. \end{cases}
$$
Hence, this sum allows us to sieve on the sum in $n$ leaving only those summands with $n\equiv r\pmod t$, namely
the series
\[\sum_{n =0}^{\infty} \frac{q^{tn+r}}{(q;q)_{tn+r}}.\]
Therefore, it follows that
\[F_{\mathbb S_{r,t}}(q) = (q;q)_{\infty} \cdot \frac{1}{t} \left[ \sum_{m=1}^{t} \sum_{n =0}^{\infty} \frac{ \zeta_{t}^{m(n-r)}q^{n}}{(q;q)_{n}} \right].\]
\end{proof}

\begin{lemma} \label{ch6limit}
If $a$ and $m$ are positive integers and $\zeta$   is a primitive $m$th root of unity, then
\[\lim_{q \to 1} \frac{(q;q)_{\infty}}{(\zeta^{a}q;q)_{\infty}} = \begin{cases} 1 & {\rm{if}} \ m \mid a \\
0 & \rm{otherwise}. \end{cases}\] 
\end{lemma}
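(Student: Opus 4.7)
The statement splits into two cases based on whether $\zeta^a$ is trivial. If $m\mid a$ then $\zeta^a=1$, so $(\zeta^a q;q)_\infty=(q;q)_\infty$ and the ratio is identically $1$; the limit is then trivially $1$. The substance lies in the complementary case $m\nmid a$, where $\omega:=\zeta^a$ is a nontrivial root of unity (in fact a primitive $k$-th root with $k:=m/\gcd(a,m)\geq 2$), and one must show the ratio tends to $0$.

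\textbf{Main argument for $m\nmid a$.} I would take logarithms of the ratio and expand each Pochhammer via $-\log(1-z)=\sum_{\ell\geq 1}z^\ell/\ell$, then swap the order of summation (justified for $|q|<1$ by absolute convergence) to obtain the Lambert-series identity
\[
\log\frac{(q;q)_\infty}{(\omega q;q)_\infty}\;=\;-\sum_{j=1}^\infty\frac{(1-\omega^j)\,q^j}{j(1-q^j)}.
\]
Setting $q=e^{-t}$ with $t\to 0^+$ and using the expansion $(e^{jt}-1)^{-1}=(jt)^{-1}-\tfrac{1}{2}+O(jt)$ valid for $jt$ bounded, the leading $1/t$ contribution is
\[
-\frac{1}{t}\sum_{j=1}^\infty\frac{1-\omega^j}{j^2}\;=\;-\frac{1}{t}\bigl[\zeta(2)-\mathrm{Li}_2(\omega)\bigr].
\]
Writing $\omega=e^{2\pi i x}$ with $x:=\{a/m\}\in(0,1)$ and invoking the classical Fourier expansion $B_2(x)=\pi^{-2}\sum_{j\geq 1}\cos(2\pi j x)/j^2$ together with $\zeta(2)=\pi^2/6$, I would compute $\mathrm{Re}\bigl[\zeta(2)-\mathrm{Li}_2(\omega)\bigr]=\pi^2 x(1-x)$, which is strictly positive on $(0,1)$. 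Hence the real part of the log tends to $-\infty$ as $t\to 0^+$, forcing the ratio itself to tend to $0$.

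\textbf{Main obstacle.} The delicate step is rigorously controlling the tail $j\gtrsim 1/t$ of the Lambert series, where the asymptotic $(e^{jt}-1)^{-1}\sim (jt)^{-1}$ degrades and the naive term-by-term substitution becomes invalid. I would handle this by truncating at $J=\lfloor 1/t\rfloor$, applying the leading expansion below $J$, and crudely bounding the tail using $|1-\omega^j|\leq 2$ together with $1/(1-q^j)=O(1)$ for $j\gg 1/t$, showing the tail contributes only $O(\log(1/t))$, which is dominated by the leading $-\pi^2 x(1-x)/t$ term. An alternative route, bypassing the tail analysis entirely, would express both Pochhammer products as quotients of Dedekind eta (and generalized eta) values and read off the differing exponential decay rates at the cusp $\tau=0$ directly from the modular transformation $\eta(-1/\tau)=\sqrt{-i\tau}\,\eta(\tau)$; the gap in decay rates is precisely $\pi^2 x(1-x)/t>0$, which again forces the ratio to $0$.
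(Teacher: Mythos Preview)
Your argument is correct and considerably more thorough than the paper's. The paper dispatches the lemma in two lines: when $m\mid a$ the ratio is identically $1$; when $m\nmid a$ it asserts that $(q;q)_\infty\to 0$ while ``$(\zeta^a q;q)_\infty$ is non-zero,'' and concludes the quotient vanishes. The first of these claims is clear (the factor $1-q$ alone kills the numerator), but the second is glib: as $q\to 1^-$ the denominator $(\zeta^a q;q)_\infty$ is an infinite product of terms all approaching $1-\zeta^a\neq 0$, and it is not immediate that this product stays bounded away from zero (indeed, its modulus can blow up or shrink depending on $|1-\zeta^a|$). Your Lambert-series computation actually identifies the leading behavior $\exp\bigl(-\pi^2 x(1-x)/t + o(1/t)\bigr)$ and shows the exponent's real part tends to $-\infty$, which is precisely the content the paper's one-liner leaves implicit. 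Your tail estimate (split at $J\sim 1/t$, bound the remainder by $O(\log(1/t))$) is the right way to make this rigorous, and your alternative via the modular transformation of $\eta$ is equally clean. In short: the paper's route is faster to write down but leans on an unexamined asymptotic, whereas your route does the honest work and recovers the exact decay rate as a byproduct.
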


\begin{proof}
%{\bf CLEAN UP...}
Since $(aq;q)_{\infty}^{\pm 1}$ is an analytic function of $q$ inside the unit disk (i.e., of $q:=e^{2\pi i z}$ with $z$ in the upper half-plane) when $|a|\leq 1$, the quotient on the left-hand side of Lemma \ref{ch6limit} is well-defined as a function of $q$ (resp. of $z$), and we can take limits from inside the unit disk.  When $m \mid a$, the $q$-Pochhammer symbols cancel and the quotient is identically $1$.  When $m \nmid a$, then $(q;q)_{\infty}$ clearly vanishes as $q\to 1$ while $(\zeta^{a}q;q)_{\infty}$ is non-zero; thus the quotient is zero.
\end{proof} 

\section{Proofs of these results}

\subsection{Proof of Theorem \ref{ch612}}
Here we prove Theorem \ref{ch612} (1); we defer the proof of the second
part until the next section because it is a special case of Theorem \ref{ch6rt}.
\begin{proof} [Proof of Theorem~\ref{ch612} (1)]
By Lemma \ref{ch6finite} we have 
\begin{align*}
F_{\mathbb S_{1,2}}(q) &= (q;q)_{\infty} \cdot \frac{1}{2} \left[ \frac{1}{(q;q)_{\infty}} - \frac{1}{(-q;q)_{\infty}} \right] \\
&= \frac{1}{2} \left[ 1 - \frac{(q;q)_{\infty}}{(-q;q)_{\infty}} \right]\\
&=\frac{1}{2}\left [1-\frac{(q;q)_{\infty}^2}{(q^2;q^2)_{\infty}}\right ].
\end{align*}
Lemma \ref{ch6lemma2} now implies that
\[F_{\mathbb S_{1,2}}(q) = \sum_{n=1}^{\infty} (-1)^{n+1}q^{n^2}.\]
To prove the $F_{\mathbb S_{2,2}}(q)$ identity, first recall that $\sum_{\lambda \in \mathcal{P}}\mu_{\mathcal{P}}^{*}(\lambda) q^{\vert \lambda \vert} = -(q;q)_{\infty}$.  From this we know $F_{\mathbb S_{1,2}}(q) + F_{\mathbb S_{2,2}}(q) = 1 - (q;q)_{\infty}$.  Using the identity for $F_{\mathbb S_{1,2}}(q)$ and Euler's Pentagonal Number Theorem completes the proof.
\end{proof}

\begin{proof}[Proof of Corollary \ref{ch6cor}]

Case (1). This corollary follows immediately from Theorem \ref{ch612} (1). The reader should recall
that $F_{\mathbb S_{1,2}}(q)$ is the generating function for $\mu_{\mathcal{P}}^*(\lambda)=
-\mu_{\mathcal{P}}(\lambda)$.

Case (2). This corollary is not as immediate as case (1). Of course, we must classify the integer pairs $m$ and $n$ for which $n^2=m(3m-1)/2$. After simple manipulation, we find that this holds if and only if
$$
(6m-1)^2-6(2n)^2=1.
$$
In other words, we require that $(x,y)=(6m-1,2n)$ be a solution to the Pell equation
$$
x^2-6y^2=1.
$$
It is well known that all of the positive solutions to Pell's equation are of the form
$(x_k, y_k)$, where
$$
x_k +\sqrt{6}\cdot y_k = (5+2\sqrt{6})^k.
$$
Using this description and the elementary congruence properties of $(x_k,y_k)$, one easily obtains
Corollary \ref{ch6cor} (2).

\end{proof}

\subsection{Proof of Theorem \ref{ch6rt}}
Here we prove the general limit formulas for the arithmetic densities of $\mathbb{S}_{r,t}$.
\begin{proof}[Proof of Theorem~\ref{ch6rt}]
From Lemma \ref{ch6finite} we have 
\[F_{\mathbb S_{r,t}}(q) = (q;q)_{\infty} \cdot \frac{1}{t} \left[ \sum_{m=1}^{t} \frac{\zeta_{t}^{-mr}}{(\zeta_{t}^{m} q;q)_{\infty}} \right].\]
We stress that we can take a limit here because we have a finite sum of functions which are analytic inside the unit disk.  Using Lemma \ref{ch6limit} we see that 
\[\lim_{q \to 1} \frac{(q;q)_{\infty}}{(\zeta_{t}^{m} q;q)_{\infty}} = \begin{cases} 1 & {\rm{if}} \  m=t \\
0 & \rm{otherwise}. \end{cases}\]
From this we have 
\[\lim_{q\to 1} F_{\mathbb S_{r,t}}(q) = \frac{1}{t}.\]
The proof for $q \to \zeta$ where $\zeta$ is a primitive $m$th root of unity with $\gcd(m, t)=1$ follows the exact same steps.

\end{proof}

\subsection{Proofs of Theorem \ref{ch6free} and Corollary \ref{ch6free2}}
Here we will prove Theorem \ref{ch6free} and Corollary \ref{ch6free2} by building up $k$th power-free sets using arithmetic progressions.  We prove Theorem \ref{ch6free} first.
\begin{proof}[Proof of Theorem \ref{ch6free}]
The set of numbers not divisible by $p^{k}$ for any prime $p \leq N$ can be built as a union of sets of arithmetic progressions.  Therefore, for a given fixed $N$ we only need to understand divisibility by $p^{k}$ for all primes $p \leq N$.  Because the divisibility condition for each prime is independent from the other primes, we have
$$
F_{\mathbb S_{\rm{fr}}^{(k)}(N)}(q) = \sum_{\substack{0\leq r < M\\ p^k\nmid \ r}}
F_{\mathbb{S}_{r,M}}(q),
$$
where $M:=\prod_{\substack{p\leq N \ \\ {\text {\rm prime}}}} p^k$.
We have a finite number of summands, and the result now follows immediately from
Theorem~\ref{ch6rt}.
\end{proof}

Next, we will prove Corollary \ref{ch6free2}.
\begin{proof}[Proof of Corollary \ref{ch6free2}]
For fixed $N$ define $\zeta_{N}(k) := \prod_{p \leq N \rm{prime}} \left(\frac{1}{1-p^k} \right)$, so $\lim_{q \to 1} F_{\mathbb S_{\rm{fr}}^{(k)}(N)}(q)$ $= \frac{1}{\zeta_{N}(k)}$.  It is clear $\lim_{N \to \infty} \zeta_{N}(k)= \zeta(k)$.  It is in this sense that we say $\lim_{q \to 1} F_{\mathbb S_{\rm{fr}}^{(k)}}(q) = \frac{1}{\zeta(k)}$.
\end{proof}

\section{Examples}

\begin{example}
In the case of $\mathbb S_{1,3}$, which has arithmetic density 1/3, Theorem \ref{ch6rt} holds for any $m$th root of unity where $3 \nmid m$.  The two tables below illustrate this as $q$ approaches $\zeta_{1} = 1$ and $\zeta_{4} = i$, respectively, from within the unit disk.

\begin{center}
\begin{tabular}{ | c | c | }
\hline
$q$ & $F_{\mathbb S_{1,3}}(q)$\\ \hline
$0.70$ & $0.340411885...$ \\ \hline
$0.75$ & $0.335336994...$ \\ \hline
$0.80$ & $0.333552814...$ \\ \hline
$0.85$ & $0.333331545...$ \\ \hline
$0.90$ & $0.333333329...$ \\ \hline
$0.95$ & $0.333333333...$ \\   
\hline
\end{tabular}
\end{center}
\vspace{0.5cm}

%Here is a table indicating the value 1/3 for 1 mod 3 but with  $q \to i$.

\begin{center}
\begin{tabular}{ | c | c | }
\hline
$q$ & $F_{\mathbb S_{1,3}}(q)  $\\ \hline
$0.70i$ & $\approx 0.034621+0.793781i$ \\ \hline
$0.75i$ & $\approx 0.057890+0.802405i$ \\ \hline
$0.80i$ & $\approx 0.097030+0.771774i$ \\ \hline
$0.85i$ & $\approx 0.167321+0.674712i$ \\ \hline
$0.90i$ & $\approx 0.294214+0.454400i$ \\ \hline
$0.95i$ & $\approx 0.424978+0.067775i$ \\ \hline 
$0.97i$ & $\approx 0.376778-0.016187i$ \\ \hline
$0.98i$ & $\approx 0.340170+0.005772i$ \\ \hline
$0.99i$ & $\approx 0.332849+0.000477i$ \\  
\hline
\end{tabular}
\end{center}
\vspace{0.1cm}
\end{example}

\begin{example}
The table below illustrates Theorem \ref{ch6free} for the set $\mathbb S^{(2)}_{\rm{fr}}(5)$, which has arithmetic density $16/25 = 0.64$.  These are the positive integers which are not divisible by 4, 9 and 25.
Here we give numerics for the case of $F_{\mathbb S^{(2)}_{\rm{fr}}(5)} (q)$ as $q\to 1$ along the real axis.
 
\begin{center}
\begin{tabular}{ | c | c | }
\hline
$q$ & $F_{\mathbb S^{(2)}_{\rm{fr}}(5)} (q)$\\ \hline
$0.90$ & $0.615367...$ \\ \hline
$0.91$ & $0.619346...$ \\ \hline
$0.92$ & $0.625991...$ \\ \hline
$0.93$ & $0.631607...$ \\ \hline
$0.94$ & $0.631748...$ \\ \hline
$0.95$ & $0.631029...$ \\ \hline
$0.96$ & $0.638291...$ \\ \hline
$0.97$ & $0.639893...$ \\  
\hline
\end{tabular}
\end{center}
%\vspace{0.1cm}

\end{example}
%\begin{example}
%If $k=2$, then $\lim_{q \to 1} F_{\mathbb S^{(2)}_{\rm{fr}}}(q) = \frac{6}{\pi^{2}} \approx 0.60792...$.  This is interesting to compare to the case when $N=5$
%\end{example} 

\begin{example}
Here we approximate the density of $\mathbb S_{\text{fr}}^{(4)}$, the fourth power-free positive integers. Since $\zeta(4)=\pi^4/90$, it follows that the arithmetic density of $\mathbb S_{\text{fr}}^{(4)}$ is
${90}/{\pi^4} \approx 0.923938...$. Here we choose $N=5$ and compute the arithmetic density of $\mathbb S_{\text{fr}}^{(4)}(5)$, the positive integers which are not divisible by $2^4, 3^4$, and $5^4$. The density
of this set is $208/225 \approx 0.924444...$. This density is fairly close to the density of fourth power-free integers because the convergence in the $N$ aspect is significantly faster for fourth power-free integers than for square-free integers, as discussed above.

%\vspace{0.1cm}
 
\begin{center}
\begin{tabular}{ | c | c | }
\hline
$q$ & $F_{\mathbb S_{\text{fr}}^{(4)}(5)}(q)$\\ \hline
$0.90$ & $0.934926...$ \\ \hline
$0.91$ & $0.936419...$ \\ \hline
$0.92$ & $0.936718...$ \\ \hline
$0.93$ & $0.935027...$ \\ \hline
$0.94$ & $0.931517...$ \\ \hline
$0.95$ & $0.925619...$ \\ \hline
$0.96$ & $0.921062...$ \\ \hline
$0.97$ & $0.925998...$ \\ \hline
$0.98$ & $0.924967...$ \\  
\hline
\end{tabular}
\end{center}
\vspace{0.2cm}
\end{example}
\begin{remark}
{See Appendix \ref{app:E} for further notes on Chapter 6.} 
\end{remark}

 	\clearpage%
\chapter{``Strange'' functions and a vector-valued quantum modular form}{\bf Adapted from \cite{RolenSchneider}, a joint work with Larry Rolen}
%\begin{abstract}
%In important work on the parity of the partition function, Ono \cite{Ono} related values of the partition function to coefficients of a certain mock theta function modulo 2. In this paper, we use M\"obius inversion to give analogous results which relate several combinatorial functions via identities rather than congruences.
%\end{abstract}

%\maketitle

%\begin{abstract}
%Since their definition in 2010 by Zagier, quantum modular forms have been connected to numerous different topics such as strongly unimodal sequences, ranks, cranks, and  asymptotics for mock theta functions near roots of unity. These are functions that are not necessarily defined on the upper half plane but \emph{a priori} are defined only on a subset of $\Q$, and whose obstruction to modularity is some analytically ``nice'' function. Motivated by Zagier's example of the quantum modularity of Kontsevich's ``strange'' function $F(q)$, we revisit work of Andrews, Jim{\'e}nez-Urroz, and Ono to construct a natural vector-valued quantum modular form whose components are similarly ``strange''. 
% \end{abstract}

\section{Introduction and Statement of Results}
In this chapter and the next, we pivot away from partition theory (at least explicitly) to focus on certain interesting classes of $q$-series, which we will then tie back to the ideas of the previous sections in the final chapter. 

In a seminal 2010 Clay lecture, Zagier defined a new class of function with certain automorphic properties called a ``quantum modular form'' \cite{Zagier_quantum}, as in Definition \ref{ch1qmfdef}. Roughly speaking, this is a complex function on the rational numbers which has modular transformations modulo ``nice'' functions. Although the definition is intentionally vague, Zagier gave a handful of motivating examples to serve as prototypes of quantum behavior. For example, he defined quantum modular forms related to Dedekind sums, sums over quadratic polynomials, Eichler integrals and other interesting objects. %, and to Kontsevich's ``strange'' function. %More concisely, as given in Chapter 1, Zagier proposed the following.
%\begin{definition} We say that a function $f$ from a subset of $\mathbb{P}^1(\Q)$ to $\C$ is a \emph{quantum modular form} if $f(x)-f|_k\gamma(x)=h_{\gamma}(x)$ for a ``suitably nice'' function $h_{\gamma}(x)$.
%\end{definition}
%Here $|_k$ is the usual Petersson slash operator, and ``suitably nice'' implies some pertinent analyticity condition, e.g. $\mathcal{C}_k,\mathcal{C}_{\infty}$, etc. 
One of the most striking examples of quantum modularity is described in Zagier's paper on Vassiliev invariants \cite{Zagier_Vassiliev}, in which he studies the Kontsevich ``strange'' function introduced in Definition \ref{ch1Fdef}, viz. 
\begin{equation}F(q):=\sum_{n=0}^{\infty}(q;q)_n,\end{equation}
where we take $q:=e^{2\pi i z}$ with $z\in\C$.\\ \indent This function is strange indeed, as it does not converge on any open subset of $\C$, but converges (as a finite sum) for $q$ any root of unity. In 2012, Bryson, Pitman, Ono, and Rhoades showed \cite{BOPR} that $F(q^{-1})$ agrees to infinite order at roots of unity with a function $U(-1,q)$ which is also well-defined on the upper-half plane $\mathbb{H}$, obtaining a quantum modular form that is a ``reflection'' of $F(q)$ and that naturally extends to $\mathbb{H}$. Moreover, $U(-1,q)$ counts unimodal sequences having a certain combinatorial statistic. 

Zagier's study of $F(q)$ depends on the formal $q$-series identity
\begin{equation}
\displaystyle\sum_{n=0}^{\infty}\left(\eta(24z)-q(1-q^{24})(1-q^{48})\cdots(1-q^{24n})\right)=\eta(24z)D(q)+E(q),
\end{equation}
\noindent
where $\eta(z):=q^{1/24}(q;q)_{\infty}$, $D(q)$ is an Eisenstein-type series, and $E(q)$ is a ``half-derivative'' of $\eta(24z)$ (such formal half-derivatives will be discussed in Section 7.2). We refer to such an identity as a ``sum of tails'' identity. 
%Here 
In this chapter we revisit Zagier's construction using work of Andrews, Jim{\'e}nez-Urroz, and Ono on more general sums of tails formulas \cite{A-J-O} (see also \cite{Andrews-Advances}). We construct a natural three-dimensional vector-valued quantum modular form associated to tails of infinite products. Moreover, the components are analogous ``strange'' functions; they do not converge on any open subset of $\C$ but make sense for an infinite subset of $\Q$. We define:
\begin{equation}H(q)=\begin{pmatrix}\theta_1\\ \theta_2 \\ \theta_3\end{pmatrix}:=\begin{pmatrix}\eta(z)^2/\eta(2z)\\ \eta(z)^2/\eta(z/2)\\ \eta(z)^2/\eta(\frac{z}{2}+\frac12) \end{pmatrix}. \end{equation}
We also note that $\theta_3=\zeta_{48}^{-1}\cdot\frac{\eta(z/2)\eta(2z)}{\eta(z)}$ by the following identity which is easily seen by expanding the product definition of $\eta(z)$:
\begin{equation}\label{ch7eta_identity1}
\eta(z+1/2)=\zeta_{48}\cdot\frac{\eta(2z)^3}{\eta(z)\cdot\eta(4z)},
\end{equation}
where $\zeta_k:=e^{2\pi i/k}$. From this it follows that if we let 
\begin{equation*}
\quad\quad F_9(z):=\eta(z)^2/\eta(2z),\quad\quad\quad\quad\quad\quad\quad\quad
F_{10}(z):=\eta(16z)^2/\eta(8z)
\end{equation*}
then 
\begin{equation}\label{ch7F9F10}
H(q)=\begin{pmatrix}F_9(q)&F_{10}(q^{1/16})
&\zeta_{12}^{-1}F_{10}(\zeta_{16}\cdot q^{1/16})\end{pmatrix}^T\end{equation}
(the notations $F_9$ and $F_{10}$ come from \cite{A-J-O}).
For convenience, we recall the classical theta-series identities for $F_9$ and $F_{10}$:
\begin{equation}
F_9(q)= 1+2\displaystyle\sum_{n=1}^{\infty}(-1)^nq^{n^2},     \quad\quad\quad\quad\quad\quad\quad\quad F_{10}(q)=\displaystyle\sum_{n=0}^{\infty}q^{(2n+1)^2}
.\end{equation}
It is simple to check that $H(z)$ is a $3$-dimensional vector-valued modular form using basic properties of $\eta(z)$, as we describe in Section 7.4. To each component $\theta_i$ we associate for all $n\geq0$ a finite product $\theta_{i,n}$:
\begin{equation}
\theta_{1,n}:=\frac{(q;q)_n}{(-q;q)_n},\quad\quad\quad \theta_{2,n}:=q^{\frac{1}{16}}\cdot\frac{(q;q)_n}{(q^{\frac12};q)_{n+1}},\quad\quad\quad\theta_{3,n}:=\frac{\zeta_{16}}{\zeta_{12}}\cdot q^{\frac{1}{16}}\cdot\frac{(q;q)_n}{(-q^{\frac12};q)_{n+1}},
\end{equation}
such that $\theta_{i,n}\rightarrow\theta_i$ as $n\rightarrow\infty$.
Next, we construct corresponding ``strange'' functions
$
\theta_i^S:=\sum_{n=0}^{\infty}\theta_{i,n}.
$
Note that these functions do not make sense on any open subset of $\C$, but that each $\theta_i^S$ is defined for an infinite set of roots of unity and, in particular, $\theta_2^S$ is defined for all roots of unity.
Our primary object of study will then be the vector of ``strange'' series $H_Q(z):=\begin{pmatrix}\theta_1^S(z)& \theta_2^S(z)& \theta_3^S(z)\end{pmatrix}^T.$ In order to obtain a quantum modular form, we first define $\phi_i(x):=\theta_i^S(e^{2 \pi i x})$ from a subset of $\Q$ to $\C$, and let $\phi(x):=\begin{pmatrix}\phi_1(x)& \phi_2(x)& \phi_3(x)\end{pmatrix}^T.$ We then show the following result.
\begin{theorem}\label{ch7mainthm} Assume the notation above. Then the following are true:
\begin{enumerate}[(1)] %%%This statement needs work... also there is a funny parenthesis in G(q)%%%
\item There exist $q$-series $G_i$ (see Section 7.4) which are well-defined for $|q|<1$, such that $\theta_i^S(q^{-1})=G_i(q)$ for any root of unity for which $\theta_i^S$ converges.
\item We have that $\phi(x)$ is a weight $3/2$ vector-valued quantum modular form. In particular, we have that 
\[
\phi(z+1)-\begin{pmatrix}1&0&0\\0&0&\zeta_{12}\\0&\zeta_{24}&0 \end{pmatrix}\phi(z)=0
,
\]
and we also have that
\[
\left(\frac{z}{-i}\right)^{-3/2}\phi(-1/z)+\begin{pmatrix}0&\sqrt{2}&0\\1/\sqrt{2}&0&0\\0&0&1\end{pmatrix}\phi(z)=\begin{pmatrix}0&\sqrt{2}&0\\1/\sqrt{2}&0&0\\0&0&1\end{pmatrix}g(z)
,
\]
where $g(z)$ is a $3$-dimensional vector of smooth functions defined as period integrals in Section 7.3.
\end{enumerate}
\end{theorem}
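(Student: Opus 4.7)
\medskip

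\noindent\textbf{Proof proposal.} The plan is to mirror Zagier's treatment of Kontsevich's ``strange'' function $F(q)$ from \cite{Zagier_Vassiliev}, but in the vector-valued setting arising from the three eta-quotients $\theta_1,\theta_2,\theta_3$. The engine driving everything is the family of ``sum of tails'' identities of Andrews--Jim\'enez-Urroz--Ono \cite{A-J-O}, together with the classical modular transformations of $\eta(z)$ and the theta functions $F_9,F_{10}$ displayed in \eqref{ch7F9F10}. The philosophy is: on the one hand, at a root of unity $\zeta$, each $\theta_i(\zeta)=0$, so a ``sum of tails'' expansion $\sum_{n\geq 0}(\theta_i-\theta_{i,n})$ collapses and produces a finite, well-behaved value for $\theta_i^S(\zeta)$; on the other hand, the same tails identity, reread formally inside the unit disk, exhibits $\theta_i^S(q^{-1})$ as the $q$-expansion of an explicit modular object plus an Eichler-type integral that carries all of the non-modular error.

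For part (1), I would invoke the relevant sum-of-tails identities from \cite{A-J-O} (applied to $F_9$ and $F_{10}$, which by \eqref{ch7F9F10} is enough to cover all three components $\theta_i$) to obtain, for each $i$, an identity of the schematic shape
\[
\sum_{n=0}^{\infty}\bigl(\theta_i(q)-\theta_{i,n}(q)\bigr)\;=\;\theta_i(q)\,D_i(q)+E_i(q),
\]
where $D_i$ is an Eisenstein-type series and $E_i$ is a formal ``half-derivative'' of a unary theta series. Since $\theta_i$ vanishes at any root of unity where $\theta_i^S$ is defined, evaluating this at $\zeta$ gives a finite closed form for $\theta_i^S(\zeta)$ in terms of $E_i(\zeta)$. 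Reading the same identity in the variable $q$ with $|q|<1$ and rearranging, one reads off explicit $q$-series $G_i(q)\in\mathbb{C}[[q^{1/N}]]$ (for appropriate $N$) that converge on the unit disk and whose asymptotic expansion at each relevant root of unity agrees to all orders with $\theta_i^S(q^{-1})$; this is the ``renormalization'' phenomenon of \cite{Rhoades}.

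For part (2), the cocycle under $z\mapsto z+1$ is a bookkeeping exercise: one substitutes the standard transformations $\eta(z+1)=\zeta_{24}\eta(z)$, $\eta(2(z+1))=\eta(2z)$, $\eta((z+1)/2)=\eta(z/2)$ up to an explicit $48$th root of unity, together with \eqref{ch7eta_identity1}, into the definitions of $G_i(q)$, and verifies that the resulting roots of unity precisely assemble into the matrix $\mathrm{diag}(1,0,0)\oplus\bigl(\begin{smallmatrix}0&\zeta_{12}\\ \zeta_{24}&0\end{smallmatrix}\bigr)$; the swap of the second and third components reflects the swap $\theta_2\leftrightarrow\theta_3$ caused by $\eta(z/2)\leftrightarrow\eta(z/2+1/2)$ under $z\mapsto z+1$. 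The $S$-transformation $z\mapsto -1/z$ is the core content, and the main obstacle. Here one writes the error term $E_i$ in part (1) as an Eichler integral
\[
E_i(z)=\int_{z}^{i\infty}\frac{\Theta_i(\tau)}{(-i(\tau-z))^{1/2}}\,d\tau
\]
of the weight $3/2$ unary theta series $\Theta_i$ dual to $F_9,F_{10}$, and then uses the weight-$3/2$ modularity of $\Theta_i$ under $\mathrm{SL}_2(\mathbb{Z})$ (with the correct multiplier) to split the period integral along $-1/z\mapsto 0\mapsto i\infty$. The modular pieces of $G_i(-1/z)$ recombine, via the eta transformations again, into $-(M\,\phi)(z)$ for the advertised matrix $M=\bigl(\begin{smallmatrix}0&\sqrt{2}&0\\ 1/\sqrt{2}&0&0\\ 0&0&1\end{smallmatrix}\bigr)$, while the surviving definite integrals package themselves into the smooth vector $g(z)$ of period integrals. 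The delicate point — and the place where I expect to spend most of the work — is verifying that the multiplier system of the $\Theta_i$ conspires with the root-of-unity matrix from the $\eta$-transformations to produce exactly the prefactor $M$ on both sides of the identity, rather than a twist; this is ultimately an explicit, but subtle, Gauss-sum computation.
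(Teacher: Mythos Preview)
Your overall strategy for part (2) is essentially the paper's: sum-of-tails identifies $\theta_i^S$ at rationals with a half-derivative, the half-derivative agrees with the Eichler-type period integral $\theta_i^*$, and the modular cocycle for $\theta_i^*$ is inherited from the weight-$1/2$ transformation of $H(q)=(\theta_1,\theta_2,\theta_3)^T$ under the standard $\eta$-laws. Two corrections, though. First, the weights are reversed in your write-up: the $\theta_i$ are weight $1/2$ (e.g.\ $\theta_1=\eta(z)^2/\eta(2z)$), so the Eichler integral has weight $2-\tfrac12=\tfrac32$ and the kernel is $(z-x)^{-3/2}$, not $(\tau-z)^{-1/2}$; there is no auxiliary ``weight-$3/2$ dual theta'' $\Theta_i$ here. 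Second, the $S$-cocycle comes straight from $\eta(-1/z)=(z/i)^{1/2}\eta(z)$ and \eqref{ch7eta_identity1}; no Gauss-sum computation is needed, and the object being transformed is $\theta_i^*$ (equivalently $\phi_i$), not the $G_i$ from part (1).

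For part (1), however, there is a genuine gap. The sum-of-tails identity $\sum_{n\ge 0}(\theta_i-\theta_{i,n})=\theta_i D_i+E_i$ is an identity of $q$-series for $|q|<1$; at a relevant root of unity it tells you $\theta_i^S(\zeta)=-E_i(\zeta)$ (up to constants), i.e.\ it pins down the value of $\theta_i^S$ \emph{at} $\zeta$ in terms of a half-derivative. It does not produce a $q$-series $G_i(q)$, convergent on $|q|<1$, with $\theta_i^S(q^{-1})=G_i(q)$; ``rearranging'' the tails identity cannot manufacture the inversion $q\mapsto q^{-1}$. The paper's argument for (1) is entirely elementary and independent of \cite{A-J-O}: one applies the Pochhammer inversion
\[
(a^{-1};q^{-\alpha})_n=(-1)^n a^n q^{\alpha n(n-1)/2}(a;q^{\alpha})_n
\]
termwise to $\theta_i^S(q^{-1})$, and then either pairs consecutive terms (for $\theta_1^S$, giving $\sum_{n\ge0}\frac{2q^{2n+1}(q;q)_{2n}}{(1+q^{2n+1})(-q;q)_{2n}}$) or uses Fine's identity, to land on a series visibly convergent for $|q|<1$. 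You should replace your sum-of-tails argument for (1) with this direct inversion.
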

In addition, we deduce the following corollary regarding generating functions of special values of zeta functions from the sums of tails identities. Let
\begin{equation}
H_9(t,\zeta):=-\frac14\displaystyle\sum_{n=0}^{\infty}\frac{(1-\zeta e^{-t})(1-\zeta^2e^{-2t})\cdots(1-\zeta^n e^{-nt})}{(1+\zeta e^{-t})(1+\zeta^2e^{-2t})\cdots(1+\zeta^ne^{-nt})},
\end{equation}
\begin{equation}
H_{10}(t,\zeta):= -2(\zeta e^{-t})^{1/8}\displaystyle\sum_{n=0}^{\infty}\frac{(1-\zeta e^{-2t})(1-\zeta^2e^{-4t})\cdots(1-\zeta^n e^{-2nt})}{(1-\zeta e^{-t})(1-\zeta^2e^{-3t})\cdots(1-\zeta^ne^{-(2n+1)t})}.
\end{equation}
\begin{rmk}
Note that there are no rational numbers for which all three components of $\phi$ make sense simultaneously. To be specific, $\phi_1(z)$ makes sense for rational numbers which correspond to primitive odd order roots of unity, $\phi_2(z)$ makes sense for all rational numbers, and $\phi_3(z)$ converges at even order roots of unity. Hence, by (2) of Theorem \ref{ch7mainthm}, we understand that each of the six equations of the vector-valued transformation laws is true where the corresponding component in the equation is well-defined; as there are no equations in which $\phi_1$ and $\phi_3$ both appear, then for all the equations there is an infinite subset of rationals on which this is possible.
\end{rmk}

For a root of unity $\zeta$, we define the following two $\text{L}$-functions

\[L_1(s,\zeta):=\sum_{n=1}^{\infty}\frac{(-\zeta)^{n^2}}{n^s},\]

\[L_2(s,\zeta):=\sum_{n=1}^{\infty}\left(\frac2n\right)^2\cdot\frac{\zeta^{\frac{n^2}{8}}}{n^s}.\]
Then we have the following.

\begin{corollary}\label{ch71.1}
Let $\zeta=e^{2\pi i \alpha}$ be a primitive $k^{\operatorname{th}}$ root of unity, $k$ odd for $H_9$ and $k$ even for $H_{10}$. Then as $t\searrow0$, we have as power series in $t$

\begin{equation}\label{ch7Hurwitz1}  H_{9}(t,\zeta)=\sum_{n=0}^{\infty}\frac{L_1(-2n-1,\zeta)(-t)^n}{n!},
\end{equation}
\begin{equation}\label{ch7Hurwitz2} H_{10}(t,\zeta)=\sum_{n=0}^{\infty}\frac{L_2(-2n-1,\zeta)(-t)^n}{8^nn!}.
\end{equation}
\end{corollary}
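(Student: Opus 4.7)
The plan is to derive these identities from the underlying sum-of-tails theorems of Andrews--Jim\'{e}nez-Urroz--Ono (cited throughout the chapter) that rewrite the formal expression $\sum_{n\geq 0}(F_i(q) - F_{i,n}(q))$ as a product of $F_i(q)$ with a Lambert/Eisenstein-type piece, plus an Eichler-integral ``half-derivative'' contribution of shape $\sum_{n\geq 1} n\, a_n^{(i)} q^{n^2}$, where $a_n^{(i)}$ are the theta coefficients of $F_i$. After rearranging the formal identity $\sum_n \theta_{i,n} = \sum_n F_i - \sum_n (F_i - \theta_{i,n})$ and reading off the behavior as $q\to \zeta$, the Eisenstein piece together with $F_i(\zeta e^{-t})$ contributes only to subleading/regularization terms at the relevant root of unity (primitive odd order for $F_9$ and even order for $F_{10}$, where $\eta$-factors in $F_i$ force the appropriate vanishing), and the genuine analytic content is concentrated in the Eichler half-derivative.

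Next I would evaluate this Eichler piece at $q=\zeta e^{-t}$, reducing the problem to the asymptotic analysis of
\[
g_i(t) \;=\; \sum_{n\geq 1} n\, a_n^{(i)}\, \zeta^{n^2}\, e^{-n^2 t}
\]
as $t\searrow 0$. A standard Mellin transform computation gives
\[
\int_0^{\infty} t^{s-1} g_i(t)\, dt \;=\; \Gamma(s)\, L_i(2s-1,\zeta),
\]
and Mellin inversion together with a contour shift to the left past the simple poles of $\Gamma(s)$ at $s=-k$ for $k\geq 0$ yields the asymptotic expansion
\[
g_i(t) \;\sim\; \sum_{k\geq 0} \frac{(-1)^k}{k!}\, L_i(-2k-1,\zeta)\, t^k,
\]
which is precisely the shape of the right-hand sides in Corollary~\ref{ch71.1}.

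Finally, I would match normalizations and scalings to read off the exact prefactors. The constants $-\tfrac14$ and $-2$ descend directly from the specific AJO identities for $F_9$ and $F_{10}$ (in particular from the ``$-4$'' appearing in Zagier's prototype, combined with the sign conventions for $\theta_{1,n}$ and $\theta_{2,n}$). The factor $1/8^n$ appearing in the $H_{10}$ expansion comes out of the change of variable visible in the identification $\theta_2(q) = F_{10}(q^{1/16})$ and, more concretely, the computation $H_{10}(t,\zeta) = -2\,\theta_2^{S}(\zeta^{2} e^{-2t})$ combined with the fact that the odd squares $(2n+1)^2$ parametrizing $F_{10}$ enter the Mellin integral via the exponential factor $e^{-(2n+1)^2 t/8}$, replacing $t$ by $t/8$ in the Eichler Mellin expansion above.

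The main obstacle will be making rigorous the passage from the formally divergent ``strange'' sum $\sum_n \theta_{i,n}(\zeta e^{-t})$, which does not converge for $t>0$, to the honest analytic object whose Mellin asymptotics are computed. This is supplied by the AJO sum-of-tails identity, but one must carefully isolate the divergent ``constant $\times\, F_i(\zeta e^{-t})$'' piece (which, being proportional to a theta that vanishes to leading order at $\zeta$, is absorbed without contributing to the power series content) from the absolutely convergent tail $\sum_n(F_i-F_{i,n})(\zeta e^{-t})$, whose asymptotic expansion is the real content of the corollary. Once this separation is justified cleanly in the spirit of Zagier's treatment of Kontsevich's ``strange'' function, the Mellin/contour-shift step produces the claimed identities.
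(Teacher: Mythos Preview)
Your approach is essentially the same as the paper's: reduce via the Andrews--Jim\'enez-Urroz--Ono sum-of-tails identities to the half-derivative piece (the paper phrases this as ``the strange functions agree to infinite order at the relevant roots of unity with $\sqrt{\theta}F_i$''), then extract the asymptotic expansion of $\sum_n n\,\chi(n)\,e^{-n^2 t}$ by Mellin transform and contour shift. The paper packages this last step as a citation to a lemma of Hikami, and your treatment of the $8^n$ factor via a change of variables in the Mellin integral matches the paper's one-line justification.

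There is, however, one genuine ingredient you omit. For the contour shift past the poles of $\Gamma(s)$ to produce exactly the stated expansion, you need $L_i(s,\zeta)$ to be entire; equivalently, the periodic coefficient sequence $\chi(n)$ must have mean value zero. The paper does not take this for granted: it verifies explicitly, using standard Gauss-sum evaluations, that $\{(-\zeta)^{n^2}\}$ has mean zero when $\zeta$ is a primitive odd-order root of unity (for $F_9$) and that $\{\zeta^{(2n+1)^2/8}\}$ has mean zero when $\zeta$ is an even-order root of unity (for $F_{10}$). Without this check, your contour shift could pick up an additional pole of $L_i$, producing a divergent leading term in the asymptotics and invalidating the claimed power-series form. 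You should add this verification.
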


To illustrate our results by way of an application, we provide a  numerical example which gives finite evaluations of seemingly complicated period integrals. First define 
\begin{equation*}
\Omega(x):=\int_{x}^{i\infty}\frac{\theta_1(z)}{(z-x)^{3/2}}\, dz
\end{equation*} 
for $x\in\Q$, and consider $\theta_1^S(\zeta_k)$ for $k$ odd, which is a finite sum of $k^{\text{th}}$ roots of unity. Then the proof of Theorem \ref{ch7mainthm} will imply that $\Omega(1/k)=\pi i(1+i)\theta_1^S(\zeta_k)$ by showing that the period integral $\Omega(x)$ is a ``half-derivative'' which is related to $\theta_1^S$ at roots of unity by a sum of tails formula. The following table gives finite evaluations of $\theta_1^S(\zeta_k)$ and numerical approximations to the integrals $\Omega(1/k)$.\\ 
\begin{center} 
\begin{tabular}{c c c c} % centered columns (4 columns)
\hline\hline                        %inserts double horizontal lines\
$k$&$\pi i(i+1)\theta_1^S(\zeta_k)$&$\int_{1/k+10^{-9}}^{10^9i}\frac{\theta_1(z)}{(z-1/k)^{\frac32}}\, dz$\\ [0.7ex] % inserts table
%heading
\hline                    % inserts single horizontal line
\\
$3$&$\pi i(i+1)(-2 \zeta_{3} + 3)\sim-7.1250+18.0078i$&$-7.1249+18.0078i$
\\
$5$&$\pi i(i+1)(-2 \zeta_{5}^{3} - 2 \zeta_{5}^{2} - 8 \zeta_{5} + 3)\sim12.078+35.7274i$&$12.078+35.7273i$
\\
$7$&$\pi i(i+1)(6 \zeta_{7}^{4} - 2 \zeta_{7}^{2} - 10 \zeta_{7} + 7)\sim52.0472+25.685i$&$52.0474+25.685i$
\\
$9$&$\pi i(i+1)(8 \zeta_{9}^{4} - 16 \zeta_{9} + 3)\sim76.4120-28.9837i$&$76.4116-28.9836i$
\\
\hline     %inserts single line
\end{tabular}
\end{center}
\vspace{0.2in}
%%%the following section is no longer accurate as the paper has been restructured%%%
The %paper 
chapter is organized as follows. In Section 7.2 we recall the identities of \cite{A-J-O}, and in Section 7.3 we describe the modularity properties of Eichler integrals of half-integral weight modular forms. In Section 7.4 we complete the proof of Theorem \ref{ch7mainthm}. We finish with the proof of Corollary \ref{ch71.1} in Section 7.5.
%%end section that needs correcting%%
\section{Preliminaries}
In this section, we describe some of the machinery needed to prove Theorem \ref{ch7mainthm}.
\subsection{Sums of Tails Identities}
Here we recall the work of Andrews, Jim{\'e}nez-Urroz, and Ono on sums of tails identities. To state their results for $F_9$ and $F_{10}$ and connect $\theta_i^S$ to quantum modular objects, we formally define a ``half-derivative operator'' by
\begin{equation}
\sqrt{\theta}\left(\displaystyle\sum_{n=0}^{\infty}a(n)q^n\right):=\displaystyle\sum_{n=1}^{\infty}\sqrt{n}a(n)q^n.
\end{equation}
If we have a generic $q$-series $f(q)$, we will also denote $\sqrt{\theta}f(q):=\widetilde{f}(q)$.
Then Andrews, Jim{\'e}nez-Urroz, and Ono show \cite{A-J-O} that for finite versions $F_{9,i}, F_{10,i}$ associated to $F_{9},F_{10}$ the following holds true:
\begin{theorem}[Andrews-Jim{\'e}nez-Urroz-Ono]As formal power series, we have that
\begin{equation}
\displaystyle\sum_{n=0}^{\infty}\left(F_{9}(z)-F_{9,n}(z)\right)=2F_{9}(z)E_1(z)+2\sqrt{\theta}(F_{9}(z))
,\end{equation}

\begin{equation}
\displaystyle\sum_{n=0}^{\infty}\left(F_{10}(z)-F_{10,n}(z)\right)=F_{10}(z)E_2(z)+\frac12\sqrt{\theta}(F_{10}(z)),
\end{equation}
where the $E_i(z)$ are holomorphic Eisenstein-type series. \end{theorem}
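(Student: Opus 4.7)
The plan is to interpret each identity as a formal power series equality in $q$, where the tail sum $\sum_{n \geq 0}(F_9(z) - F_{9,n}(z))$ converges $q$-adically because the finite products agree with the full infinite product modulo increasing powers of $q$, i.e.\ $F_9(q) - F_{9,n}(q) = O(q^{n+1})$, and similarly for $F_{10}$. Under this interpretation term-by-term manipulation is justified, and both claims reduce to formal $q$-series assertions.

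First I would perform an Abel-type rearrangement. Since the differences telescope,
\begin{equation*}
\sum_{n=0}^{\infty}(F_9 - F_{9,n}) \;=\; \sum_{n=0}^{\infty} \sum_{k=n+1}^{\infty}(F_{9,k}-F_{9,k-1}) \;=\; \sum_{k=1}^{\infty} k\,(F_{9,k}-F_{9,k-1}),
\end{equation*}
and using $F_{9,k}/F_{9,k-1} = (1-q^k)/(1+q^k)$ together with the identity $F_{9,k-1} = F_9(q)\cdot (-q^k;q)_\infty/(q^k;q)_\infty$, one arrives at
\begin{equation*}
\sum_{n=0}^{\infty}(F_9 - F_{9,n}) \;=\; -\,2\,F_9(q)\sum_{k\geq 1} k\,q^k\,\frac{(-q^{k+1};q)_\infty}{(q^k;q)_\infty}.
\end{equation*}
An analogous computation for $F_{10}$ yields a parallel expression with $(1-q^k)/(1-q^{2k-1})$-type factors.

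Next I would expand the inner $q$-Pochhammer ratio via the $q$-binomial theorem (or equivalently Heine's transformation), writing $(-q^{k+1};q)_\infty/(q^k;q)_\infty$ as a generating series whose coefficients are divisor-like sums. Substituting back produces a double series over $(j,k)$ which I would split into two contributions: a ``regular'' piece in which the inner expansion resums to give $F_9(q)$ multiplied by a Lambert-type series $E_1(q)$ of the form $\sum_m \lambda(m)\, q^m/(1-q^m)$ for appropriate arithmetic coefficients $\lambda(m)$, and a ``diagonal'' piece that must be identified with $\sqrt{\theta}F_9(q)$. The theta representation $F_9(q) = 1 + 2\sum_{n \geq 1}(-1)^n q^{n^2}$ is crucial here: since $\sqrt{\theta}F_9(q) = 2\sum_{n \geq 1}(-1)^n n\,q^{n^2}$, one needs the remaining double sum, after collapsing along the locus $jk = n^2$, to produce exactly the series $2\sum_{n \geq 1}(-1)^n n\,q^{n^2}$, thereby accounting for the factor of $2$ in front.

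The main obstacle will be executing this last collapse cleanly, since the reordered double sum does not split as a simple tensor product and the extraction of the $\sqrt{n}$-weighted coefficient requires a theta-function identity (which, morally, is a derivative-in-$z$ form of Jacobi's triple product applied to $F_9$). The $F_{10}$ case is structurally identical but is keyed on $F_{10}(q) = \sum_{n \geq 0} q^{(2n+1)^2}$ and the lattice of odd squares, which explains both the coefficient $1/2$ in front of $\sqrt{\theta}F_{10}$ (reflecting $\sqrt{\theta}F_{10}(q) = \sum_{n\geq 0}(2n+1)q^{(2n+1)^2}$, so only one copy appears rather than two) and the different Eisenstein-type series $E_2$ arising from the Lambert-sum piece over the shifted lattice. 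Once these two identifications are made, comparison of coefficients on both sides finishes the proof.
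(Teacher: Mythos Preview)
The paper does not prove this theorem at all. It is stated in Section~7.2 under the heading ``Sums of Tails Identities'' as a result \emph{cited} from Andrews, Jim\'enez-Urroz, and Ono (reference \cite{A-J-O} in the paper), and is used as a black box to deduce Corollary~\ref{ch72.1}, which in turn feeds into the proof of Theorem~\ref{ch7mainthm}. So there is no ``paper's own proof'' to compare against; the dissertation treats the identity as known input.

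That said, your outline is a plausible reconstruction of the general strategy for such sums-of-tails identities, but it has a genuine gap exactly where you flag it. The Abel rearrangement and the factoring out of $F_9$ are correct, and expanding the residual $q$-Pochhammer quotient by the $q$-binomial theorem is the natural move. The difficulty is that your ``diagonal'' piece does not obviously collapse to $\sqrt{\theta}F_9$: you assert that ``after collapsing along the locus $jk=n^2$'' one recovers $2\sum(-1)^n n\,q^{n^2}$, but this is not a divisor-type collapse, and no standard summation formula delivers a square-supported series with a $\sqrt{n}$ weight from a bilinear Lambert expansion. In the actual Andrews--Jim\'enez-Urroz--Ono argument this step is handled not by a lattice collapse but by applying specific $q$-hypergeometric transformations (Heine, Rogers--Fine, and related identities from Fine's monograph) that directly produce the theta-derivative term as a closed-form remainder; the Eisenstein piece then falls out as the complementary Lambert series. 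Your sketch correctly anticipates the two-piece decomposition, but the mechanism you propose for isolating the half-derivative would not succeed as stated.
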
 %%%Need to check subscripts of summations in Andrews/Ono/etc.%%%
\noindent
In particular, as $F_9,F_{10}$ vanish to infinite order while $E_1,E_2$ are holomorphic at all cusps where the ``strange'' functions are well-defined, we have for $q$ an appropriate root of unity that the ``strange'' function associated to $F_i$ equals $\widetilde{F_i}$ to infinite order. As the series $\theta_2,\theta_3$ do not have integral coefficients, we make the definitions $\widetilde{\theta}_2(z):=\widetilde{F}_{10}(z/16)$ and $\widetilde{\theta}_3(z):=\widetilde{F}_{10}(z/16+1/16)$. By the definition of the strange series, we obtain the following. 
\begin{corollary}\label{ch72.1}
At appropriate roots of unity where each ``strange'' series is defined, we have that
\begin{equation}
\theta_1^S(q)=2\widetilde{\theta_1}(q),\quad\quad\quad\theta_2^S(q)=\frac12\widetilde{\theta_2}(q),\quad\quad\quad\theta_3^S(q)=\frac12\widetilde{\theta_3}(q).
\end{equation}
\end{corollary}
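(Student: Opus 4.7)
The plan is to derive Corollary \ref{ch72.1} directly from the Andrews--Jim\'{e}nez-Urroz--Ono sums of tails identities recalled in the preceding theorem, by evaluating both sides at the roots of unity where the strange series $\theta_i^S$ collapse to finite sums. First I would identify the finite truncations $F_{9,n}$ and $F_{10,n}$ of A-J-O with the finite products $\theta_{1,n}$ and (up to the variable substitution $z \mapsto z/16$ and the shift $z \mapsto z/16 + 1/16$ coming from identity \eqref{ch7F9F10}) with $\theta_{2,n}$ and $\theta_{3,n}$. This identification is essentially bookkeeping: one matches the finite $q$-Pochhammer products in A-J-O's definitions against the explicit formulas given for $\theta_{i,n}$ and checks the prefactors $q^{1/16}$, $\zeta_{16}/\zeta_{12}$ that come from the $\eta$-quotient rewriting of $\theta_2,\theta_3$.

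Next I would rearrange the sums of tails identities in the form
\[
\sum_{n=0}^{\infty} F_{i,n}(z) \;=\; \sum_{n=0}^{\infty} F_i(z) \;-\; c_i F_i(z) E_i(z) \;-\; c_i \,\sqrt{\theta}F_i(z),
\]
where $c_9 = 2$ and $c_{10} = \tfrac{1}{2}$ (the key constants appearing in the corollary). The left-hand side is formally divergent, but at a root of unity where the relevant $\eta$-quotient vanishes the truncations $F_{i,n}$ vanish for all $n$ beyond some threshold, so the sum on the left collapses to a \emph{finite} sum equal to $\theta_i^S$ at that point. I would then use the hypothesis that $F_9$ and $F_{10}$ vanish to infinite order at the cusps associated with the roots of unity where $\theta_i^S$ converges, together with the holomorphy of $E_1, E_2$, to argue that $F_i E_i$ vanishes to infinite order there as well, so its contribution is negligible (interpreted as an equality of asymptotic expansions in the sense of Zagier's ``to infinite order'' identification, as in \cite{BOPR, Zagier_Vassiliev}). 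What survives of the right-hand side, up to this infinite-order equivalence, is $c_i \sqrt{\theta} F_i = c_i \widetilde{F_i}$, yielding $\theta_1^S = 2\widetilde{F_9} = 2\widetilde{\theta_1}$ and analogously the identities for $\theta_2^S, \theta_3^S$ once the variable substitutions are folded back in using the stated definitions $\widetilde{\theta_2}(z) := \widetilde{F_{10}}(z/16)$ and $\widetilde{\theta_3}(z) := \widetilde{F_{10}}(z/16 + 1/16)$.

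The main obstacle will be the careful interpretation of the ``equal to infinite order at roots of unity'' identification, since $\widetilde{F_i}$ is defined as a formal half-derivative whose defining series $\sum \sqrt{n}\, a(n) q^n$ does not converge on any open set and hence has no literal value at a root of unity. Making the corollary into a rigorous statement requires passing through the asymptotic expansions of both $\theta_i^S$ and $\widetilde{F_i}$ as $q$ tends radially to the root of unity, showing that each expansion coincides term-by-term (modulo the infinite-order vanishing of $F_i$ which kills all contribution from $F_i E_i$). This is the same kind of manipulation used in \cite{BOPR} to match Kontsevich's $F(q)$ with $U(-1,q)$ at roots of unity, and I expect to import that framework largely verbatim; the remaining subtlety is just verifying that the two translated copies of $F_{10}$ entering $\theta_2$ and $\theta_3$ vanish to infinite order at the appropriate even-order roots of unity, which follows from the modularity of the $\eta$-quotients in \eqref{ch7F9F10}.
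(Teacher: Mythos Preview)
Your proposal is correct and follows essentially the same route as the paper: the paper's entire justification is the paragraph immediately preceding the corollary, which (just as you outline) invokes the Andrews--Jim\'enez-Urroz--Ono sums-of-tails identities, observes that $F_9,F_{10}$ vanish to infinite order while $E_1,E_2$ are holomorphic at the relevant cusps so that the $F_iE_i$ term drops out, and then reads off the relation between the strange series and the half-derivative, folding in the variable substitutions $z\mapsto z/16$ and $z\mapsto z/16+1/16$ to pass from $\widetilde F_{10}$ to $\widetilde\theta_2,\widetilde\theta_3$. Your discussion of the ``to infinite order'' interpretation is more explicit than what the paper provides, but the underlying argument is the same.
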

\section{Properties of Eichler Integrals}
In the previous section we have seen that at a rational point $x$, each component of $\phi(x)$ agrees up to a constant with a ``half-derivative'' of the corresponding theta function at $q=e^{2\pi i x}$. Thus, we can reduce part (2) of Theorem \ref{ch7mainthm} to a study of modularity of such half-derivatives. We do so following the outline given in \cite{Zagier_Vassiliev}, which is further explained in the weight $3/2$ case in \cite{L-Zagier}. Recall that in the classical setting of weight $2k$ cusp forms, $1\leq k\in\Z$, we define the \emph{Eichler integral} of $f(z)=\sum_{n=1}^{\infty} a(n)q^n$ as a formal $(k-1)^{\text{st}}$ antiderivative
$
\widetilde{f}(z):=\sum_{n=1}^{\infty} n^{1-k}a(n)q^n.
$
Then $\widetilde{f}$ is nearly modular of weight $2-k$, as the differentiation operator $\frac{\, d}{\, dq}$ does not preserve modularity but preserves near-modularity. More specifically, $\widetilde{f}(z+1)=\widetilde{f}(z)$ and $z^{k-2}\widetilde{f}(-1/z)-\widetilde{f}(z)=g(z)$ where $g(z)$ is the \emph{period polynomial}. This polynomial encodes deep analytic information about $f$ and can also be written as $g(x)=c_k\int_0^{i\infty}f(z)(z-x)^{k-2}\, dz$ for a constant $c_k$ depending on $k$. Suppose we now begin with a weight $1/2$ vector-valued modular form $f$ with $n$ components $f_i$ such that and $f(-1/z)=M_Sf(z)$, for $M_S$ both $n\times n$ matrices (the transformation under translation is routine).  %%Question: should dz be italicized in general, throughout? Also, nXn contains a typo wherein the second "n" is not italicized. Also, check that I my rewording of final sentence off this paragraph makes correct usage of M_T and M_S.%%
\\
\indent
In this case, of course, it does not make sense to speak of a half-integral degree polynomial, and the integral above does not even converge. However, we may remedy the situation so that the analysis becomes similar to the classical case. We formally define $\widetilde{f}$ by taking a formal antiderivative (in the classical sense) on each component. As $1-k=1/2$, we have in fact $\widetilde{f_i}=\sqrt{\theta}f_i$. We would like to determine an alternative way to write the Eichler integral as an actual integral, so that we may use substitution and derive modularity properties of $\widetilde{f}$ from $f$. However, the integral $g(z)=c_{1/2}\int_0^{i\infty}f(z)(z-x)^{-3/2}\, dz$ no longer makes sense. To remedy this in the weight $3/2$ case, Lawrence and Zagier define another integral $f^*(x):=c_k\int_{\bar{x}}^{\infty}\frac{f(z)}{(z-x)^{\frac12}}\, dz,$ which is meaningful for $x$ in the lower half plane $\mathbb{H}^-$.
\\ %%%In the above paragraph and other places throughout, I don't love the usage "makes sense"... can we go with other more specific phrases in each place, such as "is not valid" or "is not exactly true"?%%%
\indent
Here we sketch their argument in the weight $1/2$ case for completeness, and as the analysis involved in our own work differs slightly. Returning to our vector-valued form $f$, recall that the definition of the Eichler integral of $f$ corresponds with $\sqrt{\theta}f$. For $x\in\mathbb{H}^-$, we define
\begin{equation}
 f^*(x)=\left(\frac{-i}{\pi(1+i)}\right)\cdot\int_{\bar{x}}^{i\infty}\frac{f(z)}{(z-x)^{\frac32}}\, dz.
 \end{equation}
To evaluate this integral, use absolute convergence to exchange the integral and the sum, and note that for $q_z=e^{2\pi i z},$
\begin{equation}
\int_{\bar{x}}^{i\infty}\frac{q_z^n}{(z-x)^{\frac32}}\, dz=\left((2+2i)\pi\sqrt{n}q_z^n\operatorname{erfi}\left((1+i)\sqrt{\pi n(z-x)}\right)-\frac{2q_x^n}{(z-x)^{\frac12}}\right)\bigg|_{z=\bar{x}}^{i\infty}
\end{equation}
where $\operatorname{erfi}(x)$ is the \emph{imaginary error function}. As in \cite{L-Zagier}, we have that $\widetilde{f}(x+i y)=f^*(x-iy)$ as full asymptotic expansions for $x\in\Q$, $0<y\in\R$. To see this, note that at the lower limit, the antiderivative vanishes as $y\rightarrow 0$ as $\operatorname{erfi}(0)=0$ and although the square root in the denominator goes to zero, for each rational at which we are evaluating our ``strange'' series, the corresponding theta functions vanish to infinite order, which makes this term converge. For the upper limit, the square root term immediately vanishes, and we use the fact that $\lim_{x\rightarrow \infty}\operatorname{erfi}(1+i)\sqrt{i x+y}=i$ for $x,y\in\R$. 
\\
\indent
%%The following paragraph is incomplete: "More formal etc..."%%
Thus, as in \cite{L-Zagier}, we have that $\widetilde{f}(x)=f^*(x)$ to infinite order at rational points. In the case of $\theta_1$, we have that $\widetilde{\theta}_1(x)=\theta^*(x)$, but for $\theta_2$ and $\theta_3$ we have to divide by $4=\sqrt{16}$ due to the non-integrality of the powers of $q$ in order to agree with the definition of $\widetilde{\theta}_i$.  Using this together with Corollary \ref{ch72.1}, in all cases we find that $\theta_i^S(q)=\theta_i^*(q)$  at roots of unity where both sides are defined. Now, the modularity properties for the integral follow \emph{mutatis mutandis} from \cite{L-Zagier} using the modularity of $f$  and a standard $u$-substitution. More precisely, suppose $f(-1/z)(z)^{-\frac{1}{2}}=M_Sf(z)$. Then we have shown that the following modularity properties hold for $f^*(z)$ when $z\in\mathbb{H}^-$, and hence also hold for $\widetilde{f}(z)$ for each component at appropriate roots of unity where each ``strange'' function is defined. By this, we mean that the modularity conditions in the following proposition can be expressed as six equations, and each of these equations is true precisely where the corresponding ``strange'' series make sense. 
\begin{proposition}
If $g(x):=\left(\frac{-i}{\pi(1+i)}\right)\cdot\int_{0}^{i\infty}\frac{f(z)}{(z-x)^{\frac32}}\, dz$, then
\[
\left(\frac{x}{-i}\right)^{-\frac{3}{2}}f(-1/x)+M_Sf(x)=M_Sg(x).
\]
\end{proposition}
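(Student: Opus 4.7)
The plan is to reduce the modularity of $f^{*}$ (which I read as the intended meaning of the $f(-1/x)$ appearing in the stated proposition, since $f$ itself is already modular and the identity would become vacuous otherwise) to a change-of-variables computation in the defining integral. Concretely, starting from
\[
f^{*}(-1/x)=\left(\frac{-i}{\pi(1+i)}\right)\int_{-1/\bar x}^{i\infty}\frac{f(z)}{(z+1/x)^{3/2}}\,dz,
\]
where I have used $\overline{-1/x}=-1/\bar x$, I would substitute $z=-1/w$, so that $dz=dw/w^{2}$. Under this substitution, the contour endpoints transform as $z=-1/\bar x\mapsto w=\bar x$ and $z=i\infty\mapsto w=0$, while the denominator becomes $(z+1/x)^{3/2}=((w-x)/(xw))^{3/2}$.

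Next, invoking the weight-$1/2$ modular transformation $f(-1/w)=w^{1/2}M_{S}f(w)$ and simplifying, the powers of $w$ in the integrand collapse cleanly, yielding
\[
f^{*}(-1/x)=\left(\frac{-i}{\pi(1+i)}\right)x^{3/2}M_{S}\int_{\bar x}^{0}\frac{f(w)}{(w-x)^{3/2}}\,dw,
\]
up to an overall root-of-unity factor coming from the $3/2$-powers. The final step is to split the new contour as $\int_{\bar x}^{0}=\int_{\bar x}^{i\infty}-\int_{0}^{i\infty}$, which by the very definitions of $f^{*}$ and $g$ produces $x^{3/2}M_{S}[f^{*}(x)-g(x)]$. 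Dividing through by $(x/(-i))^{3/2}$ and rearranging yields the asserted identity
\[
\left(\frac{x}{-i}\right)^{-3/2}f^{*}(-1/x)+M_{S}f^{*}(x)=M_{S}g(x).
\]

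The main obstacle is the careful bookkeeping of branches of the $3/2$-powers: the factors $(z-x)^{3/2}$, $(w-x)^{3/2}$, $(xw)^{3/2}$, $w^{1/2}$, and $(x/(-i))^{3/2}$ all involve branch cuts that must be chosen consistently in order for the overall constant to collapse exactly as stated and for no spurious phase to survive. Matching the conventions amounts to importing the principal-branch choices fixed in \cite{L-Zagier}, which is precisely why the proof is said to go through \emph{mutatis mutandis} from that reference. A subsidiary issue is the justification of absolute convergence, so that the contour may legitimately be deformed and split; this follows from the cuspidal decay of the components of $f$ as $\operatorname{Im}(z)\to\infty$ together with the local integrability of $|z-x|^{-3/2}$ near $z=\bar x$, where moreover the relevant components of $f$ vanish to infinite order at the rational limits under study, as was already noted in the analysis of $f^{*}(x)=\widetilde{f}(x)$ above.
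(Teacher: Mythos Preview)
Your proposal is correct and follows exactly the approach the paper indicates: the paper states only that ``the modularity properties for the integral follow \emph{mutatis mutandis} from \cite{L-Zagier} using the modularity of $f$ and a standard $u$-substitution,'' and you have spelled out precisely that substitution $z=-1/w$, the application of $f(-1/w)=w^{1/2}M_S f(w)$, and the contour split $\int_{\bar x}^{0}=\int_{\bar x}^{i\infty}-\int_{0}^{i\infty}$. You also correctly diagnosed that the $f$ in the displayed identity must be read as $f^*$ (the text immediately preceding the proposition confirms this: ``the following modularity properties hold for $f^*(z)$''), and your remarks about branch bookkeeping and convergence are exactly the points the paper absorbs into the citation of \cite{L-Zagier}.
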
 
It is also explained in \cite{L-Zagier} why $g_{\alpha}(z)$ is a smooth function for $\alpha\in\R$. Although $g(x)$ is \emph{a priori} only defined in $\mathbb{H}^-$, we may take any path $L$ connecting $0$ to $i\infty$. Then we can holomorphically continue $g(x)$ to all of $\C-L$. Thus, we obtain a continuation of $g$ which is smooth on $\R$ and analytic on $\R-\{0\}$.
\section{Proof of Theorem \ref{ch7mainthm}}
Here we complete the proofs of parts (1) and (2) of Theorem \ref{ch7mainthm}.
\subsection{Proof of Theorem \ref{ch7mainthm} (1)}
We show that at appropriate roots of unity, our ``strange'' functions $\theta_i^S$ are reflections of $q$-series which are convergent on $\mathbb{H}$. 
Using (\ref{ch7F9F10}), it suffices to show for $\theta_1^S$ that $\sum_{n=0}^{\infty}\frac{(q^{-1};q^{-1})_n}{(-q^{-1};q^{-1})_n}$ agrees at odd roots of unity with a $q$-series convergent when $|q|<1$. To factor out inverse powers of $q$, we observe that 
\begin{equation}\label{ch7inversion}
(a^{-1};q^{-\alpha})_n=(-1)^na^nq^{\frac{\alpha n(n-1)}{2}}(a;q^{\alpha})_n.
\end{equation}
Applying this identity to the numerator and denominator term-by-term, we have at odd order roots of unity
\begin{equation}\theta_1^S(q^{-1})=\sum_{n=0}^{\infty}(-1)^n\frac{(q;q)_n}{(-q;q)_n}=2\displaystyle\sum_{n=0}^{\infty}\frac{q^{2n+1}(q;q)_{2n}}{(1+q^{2n+1})(-q;q)_{2n}}.
\end{equation}
The series on the right-hand side is clearly convergent for $|q|<1$, and results from pairing consecutive terms of the left-hand series as follows:
\begin{equation*}\frac{(q;q)_{2n}}{(-q;q)_{2n}}-\frac{(q;q)_{2n+1}}{(-q;q)_{2n+1}}=\frac{(q;q)_{2n}}{(-q;q)_{2n}}\left(1-\frac{1-q^{2n+1}}{1+q^{2n+1}}\right)=\frac{2q^{2n+1}(q;q)_{2n}}{(1+q^{2n+1})(-q;q)_{2n}}.
\end{equation*}
\begin{rmk}Alternatively, one can show the convergence of $\theta_1^S(q^{-1})$ by letting $a=1,b=-1,t=-1$ in Fine's identity \cite{Fine}
\begin{equation}\sum_{n=0}^{\infty}\frac{(aq;q)_n}{(bq;q)_n}(t)^n=\frac{1-b}{1-t}+\frac{b-atq}{1-t}\sum_{n=0}^{\infty}\frac{(aq;q)_n}{(bq;q)_n}(tq)^n,
\end{equation}\end{rmk}
giving \begin{equation}\label{ch7inversefine}\theta_1^S(q^{-1})=1+\frac{q-1}{2}\sum_{n=0}^{\infty}\frac{(q;q)_n}{(-q;q)_n}(-q)^n\end{equation} which also converges for $|q|<1$. 
\\
\indent
Similarly, we use (\ref{ch7inversion}) to study $\theta_2^S,\theta_3^S$. Note that it suffices by (\ref{ch7F9F10}) to study $\sum_{n=0}^{\infty}\frac{(q^{-2};q^{-2})_n}{(q^{-3};q^{-2})_{n}}$. Factorizing as above, we find that 
\begin{equation}
\sum_{n=0}^{\infty}\frac{(q^{-2};q^{-2})_n}{(q^{-3};q^{-2})_n}=\sum_{n=0}^{\infty}\frac{q^n(q^{2};q^{2})_n}{(q^{3};q^{2})_n}
,
\end{equation}
the right-hand side of which is clearly convergent on $\mathbb{H}$. We note that in general, similar inversion formulas result from applying (\ref{ch7inversion}) to diverse $q$-series and other expressions involving eta functions, $q$-Pochhammer symbols and the like.
\subsection{Proof of Theorem \ref{ch7mainthm} (2)}
\begin{proof}Here we complete the proof of Theorem \ref{ch7mainthm} using the results of Sections 7.2 and 7.3.  
Note that by the Corollary (\ref{ch72.1}) to the sums of tails formulas of Andrews, Jim{\'e}nez-Urroz, and Ono in \cite{A-J-O}, each component of $H(q)$ agrees to infinite order at rational numbers with a multiple of the corresponding Eichler integral. By the discussion of Eichler integrals in Section 7.3, the value of each $\widetilde{\theta_i}$ agrees at rationals with the value of the corresponding $\theta_i^*$.  Therefore, by the discussion of the modularity properties of $\theta_i^*$, we need only to describe the modularity of $H(q)$. This is simple to check using the usual transformation laws
\begin{equation}
\eta(z+1)=\zeta_{24}\eta(z),
\end{equation}
\begin{equation}
\eta(-1/z)=\left(\frac{z}{i}\right)^{\frac12}\eta(z),
\end{equation}
and (\ref{ch7eta_identity1}). Hence we see that 
\begin{equation}
H(z+1)=\begin{pmatrix}1&0&0\\0&0&\zeta_{12}\\0&\zeta_{24}&0 \end{pmatrix}H(z)
,
\end{equation}
\begin{equation}
H(-1/z)=\left(\frac{z}{i}\right)^{\frac12}\begin{pmatrix}0&\sqrt{2}&0\\1/\sqrt{2}&0&0\\0&0&1\end{pmatrix}H(z)
,
\end{equation}
and the corresponding transformations of $\theta_i^*$ follow.
\end{proof}
\section{Proof of Corollary \ref{ch71.1}}
\begin{proof}
The proof of Corollary 1.1 is a generalization of and proceeds similarly to the proofs of Theorems 4 and 5 of \cite{A-J-O}. As the sums of tails identities in Theorem 2.1 show that the ``strange'' functions $F_9$ and $F_{10}$ agree \emph{to infinite order} with the half derivatives of $F_9$ and $F_{10}$ at the roots of unity under consideration, the coefficients in the asymptotic expansion of $H_i(t,\zeta)$ for $i=9,10$ agree up to a constant factor with the coefficients of the asymptotic expansion of $\sqrt{\theta}F_i(\zeta e^{-t})$. Recalling the classical theta series expansions for $F_i$ in (1.6), the first part of Corollary 1.1 follows immediately from the following well-known fact:
 
 \begin{lemma}[Proposition 5 of \cite{Hikami}]
 Let $\chi(n)$ be a periodic function with mean value zero and $L(s,\chi):=\sum_{n=0}^{\infty}\chi(n)n^{-s}$. As $t\searrow0$, we have
 
\[\sum_{n=0}^{\infty}n\chi(n)e^{-n^2t}\sim\sum_{n=0}^{\infty}L(-2n-1,\chi)\frac{(-t)^n}{n!}.\]
 
 \end{lemma}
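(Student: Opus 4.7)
The plan is to establish the asymptotic expansion by a standard Mellin transform argument. Set $f(t):=\sum_{n=1}^{\infty} n\chi(n)e^{-n^2 t}$ (the $n=0$ term vanishes). First I would compute the Mellin transform of $f$: for $\operatorname{Re}(s)$ sufficiently large, absolute convergence lets us interchange sum and integral to obtain
\[
\int_0^{\infty} f(t) t^{s-1}\,dt \;=\; \sum_{n=1}^{\infty} n\chi(n)\int_0^{\infty} e^{-n^2 t} t^{s-1}\,dt \;=\; \Gamma(s)\,L(2s-1,\chi).
\]
Mellin inversion then gives $f(t)=\frac{1}{2\pi i}\int_{c-i\infty}^{c+i\infty} \Gamma(s) L(2s-1,\chi) t^{-s}\,ds$ for $c$ large.

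The next key step is to exploit the hypothesis that $\chi$ has mean value zero. Writing $\chi$ in terms of Hurwitz zeta functions via its expansion over residue classes modulo the period $N$, one checks that the pole contributions at $s=1$ cancel precisely when $\sum_{a=1}^{N}\chi(a)=0$; hence $L(s,\chi)$ extends to an entire function of $s$. Consequently, the integrand $\Gamma(s) L(2s-1,\chi) t^{-s}$ has its only singularities at the simple poles $s=0,-1,-2,\ldots$ of $\Gamma(s)$, with residues $\frac{(-1)^n}{n!}L(-2n-1,\chi)t^n$ at $s=-n$.

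I would then shift the contour of integration leftward past $s=-N-\tfrac12$ for an arbitrary integer $N\geq 0$. By the residue theorem,
\[
f(t) \;=\; \sum_{n=0}^{N}\frac{(-t)^n}{n!}L(-2n-1,\chi) \;+\; \frac{1}{2\pi i}\int_{-N-\frac12-i\infty}^{-N-\frac12+i\infty}\Gamma(s)L(2s-1,\chi)t^{-s}\,ds.
\]
Matching the residue sum against the target expansion confirms the stated form. The main technical obstacle is justifying the contour shift and bounding the remainder integral as $t\searrow 0$: this requires estimates of the form $|\Gamma(\sigma+i\tau)|\ll |\tau|^{\sigma-1/2}e^{-\pi|\tau|/2}$ from Stirling combined with polynomial growth of $L(2s-1,\chi)$ on vertical lines (which follows from a Phragm\'en--Lindel\"of argument applied to the finite linear combination of Hurwitz zeta functions representing $L(s,\chi)$). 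Together these force the shifted integral to be $O(t^{N+1/2})$, which is smaller than every term kept in the truncated sum, thereby yielding the asymptotic expansion $\sum_{n=0}^{\infty}L(-2n-1,\chi)\frac{(-t)^n}{n!}$ as $t\searrow 0$.
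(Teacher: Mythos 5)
Your proposal is correct and follows exactly the route the paper sketches (it cites Hikami and says only: "taking a Mellin transform, making a change of variables, and picking up residues at negative integers," with the mean-value-zero hypothesis guaranteeing that $L(s,\chi)$ continues to an entire function). You have simply filled in the standard details — the identity $\int_0^\infty e^{-n^2t}t^{s-1}\,dt=\Gamma(s)n^{-2s}$, the cancellation of the Hurwitz-zeta pole at $s=1$, the residues $\frac{(-1)^n}{n!}L(-2n-1,\chi)t^n$ of $\Gamma(s)L(2s-1,\chi)t^{-s}$, and the Stirling/Phragm\'en--Lindel\"of bounds for the contour shift — all of which are sound.
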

The proof follows from taking a Mellin transform, making a change of variables, and picking up residues at negative integers. The assumption on the coefficients $\chi(n)$ assures that $L(s,\chi)$ can be analytically continued to $\C$. The mean value zero condition is easily checked in our case; for example for $F_9$ one needs to verify that $\{(-\zeta)^{n^2}\}_{n\geq0}$ is mean value zero for $\zeta$ a primitive order $2k+1$ root of unity, and for $F_{10}$ one must check that $\{\zeta^{\frac{(2n+1)^2}{8}}\}_{n\geq0}$ is mean value zero for an even order root of unity $\zeta$. These may both be checked using well-known results for the generalized quadratic Gauss sum
\begin{equation}
G(a,b,c):=\displaystyle\sum_{n=0}^{c-1}e\left(\frac{an^2+bn}{c}\right).
\end{equation}
In particular, for $F_9$, for an odd order root of unity $\zeta$, $-\zeta$ is primitive of order $k$ where $k\equiv2\pmod4$, so we need that $G(a,0,k)=0$ when $k\equiv2\pmod4$, which fact is well known. For $F_{10}$, we may use the standard fact that $G(a,b,c)=0$ whenever $4|c$, $(a,c)=1$, and $0<b\in2\Z+1$ to obtain our result. This Gauss sum calculation follows, for instance, by using the multiplicative property of Gauss sums together with an application of Hensel's lemma. 

In the case of $F_{10}$, note that the formula for $H_{10}(t,\alpha)$ is obtained by substituting $q=\zeta e^{-t}$ into the ``strange'' function for $F_{10}$ after letting $q\rightarrow q^{\frac18}.$ A simple change of variables in the Mellin transform in the foregoing proof of the present Lemma adjusts for the $1/8$ powers by giving an extra factor of $8^s$ before taking residues.
\end{proof}
\  
\

\begin{remark}
{See Appendix \ref{app:F} for further notes on Chapter 7.} 
\end{remark}

 	\clearpage%
\chapter{Jacobi's triple product, mock theta functions, unimodal sequences and the $q$-bracket}{\bf Adapted from \cite{SchneiderJTP}}%, a joint work with Ken Ono and Larry Rolen}

\section{Introduction}%Mock theta functions from a classical (and physics) perspective}%Preamble}%Introduction}
We do not know what sparked Ramanujan to discover mock theta functions,  % --- probably not imagery from quantum theory --- 
but we see in this %paper 
chapter that they are indeed natural functions to study from a classical perspective. It turns out in Section \ref{ch8Sect1} all of the mock theta functions Ramanujan wrote to Hardy about --- to be precise, the odd-order {\it universal mock theta function} of Gordon--McIntosh that essentially specializes to the odd-order mock theta functions Ramanujan wrote down \cite{GordonMcIntosh} --- arise from the {\it Jacobi triple product}, a fundamental object in number theory and combinatorics \cite{Berndt}, and are generally ``entangled'' with rank generating functions for unimodal sequences, under the action of the {\it $q$-bracket} operator from statistical physics and partition theory that we studied in Chapter 3, which boils down to multiplication by $(q;q)_{\infty}$. % \cite{Zagier}. %Moreover, mock theta functions that also is related to work in Ramanujan's ``lost'' notebook \cite{AndrewsBerndt}.%, of which we saw a suggestive example in the preceding paragraph. 
In Section \ref{ch8Sect2} we find finite formulas for the odd-order universal mock theta function %up to changes of variables and multiplication by powers of $z$ and $q$, 
and indicate similar formulas for other $q$-hypergeometric series. %In some cases,  % --- which, while they may not necessarily be quantum modular, still 
%we do encounter the ``feel'' of quantum theory.  
%\\
   
\section{Connecting the triple product to mock theta functions via partitions and unimodal sequences}\label{ch8Sect1}

%Recall Ramanujan's mock theta functions, discussed in Chapter 1. 
At the wildest boundaries of nature, %where science breaks apart, 
we see tantalizing hints of $q$-series. In the previous chapter we investigated a class of almost-modular forms having the ``feel'' of quantum phenomena \cite{Zagier_quantum}. In a different ``quantum'' connection, Borcherds proposed a proof of the Jacobi triple product identity
\begin{equation}\label{ch8jtp}
j(z;q):=(z;q)_{\infty}(z^{-1}q;q)_{\infty}(q;q)_{\infty}=\sum_{n=-\infty}^{\infty}(-1)^n z^{n}q^{n(n+1)/2}
\end{equation}
where $q,z\in \mathbb C, |q|<1, z\neq 0$\footnote{We note the simple zero at $z=1$ from the $(1-z)$ factor in $(z;q)_{\infty}$. }, based on the {\it Dirac sea} model of the quantum vacuum, plus ideas from partition theory (see \cite{Cameron}): the quantum states of fermions, which obey the Pauli exclusion principle, are conceptually analogous to partitions into distinct parts; quantum states of bosons, which are unrestricted in the number that can occupy any state, correspond to partitions with unrestricted multiplicities of parts\footnote{In Appendix B we draw further analogies to particle physics by introducing ``antipartitions'' that annihilate partitions, yielding a multiplicative group structure on the partitions.}. The triple product  is implicit in countless famous classical identities (see \cite{Berndt}). Up to multiplication by rational powers of $q$, $j(z;q)$ specializes to the Jacobi theta function (that Ramanujan constructed ``mock'' versions of), a weight $1/2$ modular form which is also important in physics as a solution to the heat equation. 
In Borcherds's proof, it is as if this beautiful, versatile identity \eqref{ch8jtp} emerges from properties of empty space. % at a scale so small the meaning of space-time measurement is not even well-defined. 

Also from the universe of $q$-hypergeometric series, mock theta functions and their generalization mock modular forms \cite{BFOR} are connected conjecturally 
to deep mysteries in physics, like %: monstrous moonshine, quantum gravity, and 
%such as %the 
mind-bending phenomena at the edges of black holes \cite{DMZ,DGO}. %where measurement again becomes meaningless. 
All the diversity of physical reality --- and of our own mental experience --- plays out quite organically between these enigmatic extremes. %, %that cohabitate the spectrum of nature 
%even though our understanding of their physics is disjoint. %, whether or not we understand their connection. %conditions.
Perhaps not unrelatedly, in this chapter we see there is an organic %behind-the-scenes 
connection between the Jacobi triple product and mock theta functions, under the action of the $q$-bracket of Bloch--Okounkov studied in Chapter 3.\footnote{Indeed, there are many interesting connections between partition theory, $q$-series and statistical physics; for instance, see Ch. 8 of \cite{Andrews_monograph}, Ch. 22 of \cite{Zwiebach}, and work of the author and his collaborators through the Emory Working Group in Number Theory and Molecular Simulation \cite{Kindt}.} %\ref{ch8ch4qbracket}. %, which could be at play in these extreme conditions. % under the action of the $q$-bracket of Bloch and Okounkov \cite{BlochOkounkov, Zagier}, a $q$-series operator that arises out of statistical mechanics and partition theory \cite{Zagier}. 
%We take the somewhat poetic stance %, based on  ``evidence'' in the literature, % to topics in physics, 
%we suggest 
%that $q$-hypergeometric series such as these --- along with their wonderful transformations, interrelations and structure%, and connections% to combinatorics, modular forms, representation theory
% --- are woven into the fabric of nature. %physical 
%reality.

%For brevity, we refer the reader to the author's related paper \cite{Robert_bracket} for standard notations and terminology, and move directly to introducing the principal actors. 

%Let us fix notations and concepts. % before moving on to our main results. 
%We take $q,z\in \mathbb C, |q|<1, z\neq 0$ throughout, unless stated otherwise. % (as below in Section \ref{ch8Sect2}). 
%The triple product is defined by % (using the notation of \cite{HickersonMortenson}) by% the product of $q$-Pochhammer symbols
%\begin{equation}\label{ch8jtp}
%j(z;q):=(z;q)_{\infty}(z^{-1}q;q)_{\infty}(q;q)_{\infty}.
%\end{equation}

The odd-order {\it universal mock theta function} $g_3(z,q)$ of Gordon and McIntosh \cite{GordonMcIntosh}, which specializes to Ramanujan's original list of mock theta functions up to changes of variables and multiplication by rational powers of $q$ and $z$ (with $z$ a rational power of $q$ times a root of unity), is defined as
\begin{equation}\label{ch8umtf}
g_3(z,q):=\sum_{n=1}^{\infty} \frac{q^{n(n-1)}}{(z;q)_{n}(z^{-1}q;q)_{n}},
\end{equation}
and, like the triple product, is subject to all sorts of wonderful transformations.\footnote{We note the simple pole at $z=1$.} %(Note the similarities in the $q$-Pochhammer symbols between the above equations.) 

Let us recall that a {\it unimodal sequence} of integers is of the type %(or {\it weakly unimodal}) if it is of the type
\begin{equation*}
0\leq a_1\leq a_2\leq...\leq a_r \leq \overline{c}\geq b_1\geq b_2\geq...\geq b_s\geq 0.
\end{equation*}
The term $\overline{c}$ is called the {\it peak} of the sequence; generalizing this concept, if $\overline{c}$ occurs with multiplicity $\geq k$, we might consider the unimodal sequence with a {\it $k$-fold peak}  
\begin{equation*}
0\leq a_1\leq a_2\leq...\leq a_r \leq \overline{c}\  \overline{c}\  ...\  \overline{c} \geq b_1\geq b_2\geq...\geq b_s\geq 0,
\end{equation*}
where ``$\overline{c}\  \overline{c}\  ...\  \overline{c}$'' denotes $k$ repetitions of $\overline{c}$. %, as J. Lovejoy discussed with the author \cite{Lovejoy_private}. 
When all the inequalities above %in (\ref{ch8unimodaldef}) 
are strictly ``$<$'' or ``$>$'' the sequence is {\it strongly unimodal}. 

If $r$ is the number of $a_i$ to the left and $s$ is the number of $b_j$ to the right of a unimodal sequence, the difference %statistic 
$s-r$ is called the {\it rank} of the sequence; and the sum of all the terms including the peak is the {\it weight} of the sequence. Another series that plays a role here is the {\it rank generating function $\widetilde{U}(z,q)$ for %weakly 
unimodal sequences}, % (or ``unimodal rank generating function''), 
given by %{\bf RPS: Do I need $q^{n+1}$ instead in the numerator?}% If so, update all subsequent related results.} 
\begin{equation}\label{ch8U}
\widetilde{U}(z,q):=\sum_{n=0}^{\infty}\frac{q^n}{(zq;q)_n (z^{-1}q;q)_n}= \sum_{n=0}^{\infty}\sum_{m=-\infty}^{\infty}\widetilde{u}(m,n)z^m q^n,
\end{equation}
where $\widetilde{u}(m,n)$ is the number of unimodal sequences of rank $m$ and weight $n$. Each summand of the first infinite series is the generating function for unimodal sequences with peak term $n$: the factor $(z^{-1}q;q)_n^{-1} $ generates $a_i\leq n$, $(zq;q)_n^{-1}$ generates $b_j \leq n$ and the $q^n$ factor inserts $n$ as the peak term $\overline{c}$ (following \cite{BOPR, KimLovejoy}). If we replace $z$ with $-z$, the right-most series is actually the very first expression Andrews revealed from Ramanujan's ``lost'' notebook (\cite{AndrewsLost}, Eq. 1.1) shortly after unearthing the papers at Trinity College \cite{SchneiderLost}. %; it is related to %(among a diversity of shapes) 
This form, which is related to partial theta functions \cite{KimLovejoy}, was swimming alongside mock theta functions in the Indian mathematician's imagination during his final year. %; %; % the pages containing the work from Ramanujan's final year in India. 
%{\bf RPS: Fix this sentence...}  (noting there is not a pole at $z=1$ in this case), and 
%in fact, the closely related rank generating function $U(z,q)$ for {\it strongly} unimodal sequences has been recently connected to the mock theta function $f(q)$ \cite{BOPR, FolsomOnoRhoades}. % through the theory of quantum modularity \cite{FolsomOnoRhoades}.% {\bf RPS: Check this reference for accuracy}; it is also known that $U(i;q)$ is a mixed mock modular form and $U(-1,q)$ is quantum modular \cite{BOPR}.) % and the ``strange'' function (\ref{ch8strange}) of Kontsevich \cite{BOPR}. 
%
%Let $\mathcal P$ denote the set of integer partitions $\lambda = (\lambda_1, \lambda_2,...,\lambda_r)$ for $\lambda_i \in \mathbb Z^+$, $\lambda_1 \geq \lambda_2\geq ...\geq \lambda_r\geq 1$. We let $\ell (\lambda):=r$ be the {\it length} of $\lambda$, let $|\lambda|$ denote the {\it size} (i.e., the number being partitioned), and take ``$\lambda \vdash n$'' to mean $\lambda$ is a partition of $n\geq 1$.  
Finally, following Bloch--Okounkov \cite{BlochOkounkov} as well as Zagier \cite{Zagier}, we define the {\it $q$-bracket} $\left<f\right>_q$ of a function $f\colon \mathcal P \to \mathbb C$ %, with $\mathcal P$ the set of integer partitions, 
to be given by
\begin{equation}\label{ch8qbracket}
\left<f\right>_q:\  =\  \frac{\sum_{\lambda\in \mathcal P}f(\lambda)q^{|\lambda|}}{\sum_{\lambda\in \mathcal P} q^{|\lambda|}}\  =\  (q;q)_{\infty}\sum_{\lambda\in \mathcal P}f(\lambda)q^{|\lambda|},
\end{equation}
where the sums are taken over all partitions. This operator represents the expected value in statistical physics of a measurement over a grand ensemble whose states are indexed by partitions with weights $f$, for a canonical choice of $q$; this is the content of the quotient in the middle of (\ref{ch8qbracket}).

%It is marvelous that the $q$-bracket provides a physical interpretation for multiplying power series by $(q;q)_{\infty}$. 
However, we proceed formally here using the right-most expression, %viewing the $q$-bracket as a fascinating object in the theory of $q$-series, 
without drawing too much physical interpretation (while always keeping the mysterious feeling that our formulas resonate in physical reality). %ripple in deep space and the quantum ocean). 
Simply multiplying by $(q;q)_{\infty}$ induces quite interesting $q$-series phenomena:
Bloch--Okounkov \cite{BlochOkounkov}, Zagier \cite{Zagier}, and Griffin--Jameson--Trebat-Leder \cite{GJTL} show that the $q$-bracket can produce families of modular, quasimodular and $p$-adic modular forms; and the present author finds the $q$-bracket to play a natural role in partition theory as well \cite{Robert_bracket, Tanay}, modularity aside. (We highly recommend Zagier's paper \cite{Zagier} for more about the $q$-bracket.) 
 %Unlike previous usages of $\left<f\right>_{q}$, we will allow that $f(\lambda)=f(\lambda, q)$ may also depend on $q$ (having dealt fully with standard coefficients in \cite{Robert_bracket}). 

We will see here that the reciprocal of the Jacobi triple product 
%We will apply the $q$-bracket to the triple product; actually, it is the reciprocal 
\begin{equation*}
j(z;q)^{-1}=:\sum_{\lambda \in \mathcal P} j_z(\lambda) q^{|\lambda|}
\end{equation*}
has a very rich and interesting interpretation in terms of the $q$-bracket operator, which (multiplying $j(z;q)^{-1}$ by $(q;q)_{\infty}$) has the shape  %.  an interesting object in its own right, and the associated $q$-bracket
\begin{equation*}
\left<j_z\right>_q=\frac{1}{(z;q)_{\infty}(z^{-1}q;q)_{\infty}}.
\end{equation*} 
%that we address. 
Note that this $q$-bracket also has a simple pole at $z=1$. We abuse notations somewhat in writing the coefficients $j_z$ in this way, as if $z\in \mathbb C$ were a constant. %, in order to put them in a convenient form for writing the $q$-bracket. 
In fact, 
$j_z$ is a map from the partitions to $\mathbb Z[z]$, which we found in Chapter 3 %the author finds in \cite{Robert_bracket} 
to be given explicitly by
\begin{equation}
j_z(\lambda)=(1-z)^{-1}\sum_{\delta|\lambda}\sum_{\varepsilon|\delta}z^{\operatorname{crk}(\varepsilon)}
\end{equation}
for $z\neq1$, %where ``$\alpha | \beta$'' means $\alpha\in\mathcal P$ is a {\it subpartition} of $\beta\in\mathcal P$ (all the parts of $\alpha$ are also parts of $\beta$), 
and ``$\operatorname{crk}$'' is the {\it crank} statistic of Andrews--Garvan \cite{AndrewsGarvan} from Definition \ref{ch8crk}. %defined in Chapter 3. % below.%\footnote{The methods of \cite{Robert_bracket} will yield a slightly more complicated formula for the coefficients of $j(z;q)$ itself.} % (originally conjectured by Dyson \cite{Dyson})
%, defined as follows. 

%\begin{definition}\label{ch8crk}
%The {\it crank} $\operatorname{crk}(\lambda)$ of a partition $\lambda$ is equal to its largest part %$\operatorname{lg}(\lambda)$ of partition $\lambda$ 
%if the multiplicity $m_1(\lambda)$ of 1 as a part of $\lambda$ is $=0$ (that is, there are no $1$'s), and if $m_1(\lambda)> 0$ then $\operatorname{crk}(\lambda)=\#\{\text{parts of $\lambda$ that are larger than $m_1(\lambda)$\}}-m_1(\lambda).$
%\end{definition}

\begin{remark}
The {crank generating function} \eqref{ch8crk} can be written %is given by
\begin{equation*}
C(z;q)=\frac{(q;q)_{\infty}}{(zq;q)_{\infty} (z^{-1}q;q)_{\infty}}=(1-z)(q;q)_{\infty}\left<j_z\right>_q.
\end{equation*} 
%We recall from Chapter 3 that expanding $C(z;q)$ as a power series in $q$, the $n$th coefficient is given by $\sum_{\lambda \vdash n}z^{\operatorname{crk}(\lambda)}$. 
\end{remark}

 %, to highlight that the series is indexed by partitions $\lambda$; in fact $z\in \mathbb C$ is a variable and $j_z(\lambda)$ is a polynomial in $z$.) 

In Chapter 3 we used the $q$-bracket operator to find the coefficients % $j_z(\lambda)$, as well as the coefficients 
of $\left<j_z\right>_q$ explicitly in terms of sums over subpartitions and the crank statistic, as well. Now we take a different approach, and look at $\left<j_z\right>_q$ from the point-of-view of $q$-hypergeometric relations. It turns out the odd-order universal mock theta function $g_3$ (in an ``inverted'' form) and the unimodal rank generating function $\widetilde{U}$ naturally arise together as components of $\left<j_z\right>_q$.

\begin{theorem}\label{ch8theorem1} 
For $0<|q|<1, z\neq 0,z\neq 1$, the following statements are true:
\begin{enumerate}[(i)]
            \item \label{ch8thm1.1}
            We have the $q$-bracket formula% relating the universal mock theta function $g_3$ and the unimodal rank generating function $U$:
            \begin{equation*}\left<j_z\right>_q=1\  +\  \left[z(1-q)+z^{-1}q\right] g_3(z^{-1},q^{-1})\  +\  \frac{zq^2}{1-z}\widetilde{U}(z,q).\end{equation*}
 
            \item The ``inverted'' mock theta function component in part (\ref{ch8thm1.1}) converges, and can be written in the form
\begin{equation*}\label{ch8umtfsum}
g_3(z^{-1},q^{-1})=\sum_{n=1}^{\infty}\frac{q^n}{(z;q)_n (z^{-1}q;q)_n}.
\end{equation*}
        \end{enumerate}

\end{theorem}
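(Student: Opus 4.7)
The plan is to prove part (ii) first via Pochhammer manipulations, since its identity will feed directly into the computation of part (i). The only real work for part (i) will then be a telescoping identity for the partial products of $1/[(z;q)_n(z^{-1}q;q)_n]$, together with a second Pochhammer rearrangement that transmutes the resulting ``cross'' sum into a combination of $g_3(z^{-1},q^{-1})$ and $\widetilde{U}(z,q)$.

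For part (ii), I would start from the substituted definition
\begin{equation*}
g_3(z^{-1},q^{-1}) = \sum_{n=1}^\infty \frac{q^{-n(n-1)}}{(z^{-1};q^{-1})_n\, (zq^{-1};q^{-1})_n}
\end{equation*}
and apply the standard inversion formula $(a;q^{-1})_n = (-a)^n q^{-n(n-1)/2}(a^{-1};q)_n$ separately to the two Pochhammer symbols in the denominator. The $(-z^{-1})^n$ and $(-zq^{-1})^n$ prefactors combine to give just $q^{-n}$, while the two copies of $q^{-n(n-1)/2}$ contribute $q^{-n(n-1)}$; together with the $q^{-n(n-1)}$ already sitting in the numerator, all of the quadratic exponents collapse and the summand simplifies to $q^n/[(z;q)_n(z^{-1}q;q)_n]$. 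This both gives the closed form in (ii) and shows that the originally divergent-looking series converges absolutely for $0 < |q| < 1$.

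For part (i) I would introduce $L_n := 1/[(z;q)_n(z^{-1}q;q)_n]$, so that $L_0 = 1$ and $\lim_{n\to\infty} L_n = \left<j_z\right>_q$, and write
\begin{equation*}
\left<j_z\right>_q = 1 + \sum_{n=1}^\infty (L_n - L_{n-1}) = 1 + \sum_{n=1}^\infty \frac{zq^{n-1} + z^{-1}q^n - q^{2n-1}}{(z;q)_n(z^{-1}q;q)_n},
\end{equation*}
where the numerator of $L_n - L_{n-1}$ is obtained by multiplying out $1 - (1-zq^{n-1})(1-z^{-1}q^n)$ via the Pochhammer recursion. The first two pieces each pull out a common factor of $q^{n-1}$ (respectively $q^n$) and, by part (ii), immediately deliver a multiple of $g_3(z^{-1},q^{-1})$.

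The main obstacle will be the third ``cross'' sum $\sum_{n \geq 1} q^{2n-1}/[(z;q)_n(z^{-1}q;q)_n]$, which must be redistributed to expose both a $g_3(z^{-1},q^{-1})$ contribution and the $\widetilde{U}(z,q)$ piece. My plan is to use $(z;q)_n = (1-z)(zq;q)_{n-1}$ and shift to $m = n-1$, then detach the trailing factor $(1 - z^{-1}q^{m+1})$ from $(z^{-1}q;q)_{m+1}$ in the denominator and apply the simple algebraic split $q^{m+1} = z - z(1 - z^{-1}q^{m+1})$. In one resulting piece the factor $(1-z^{-1}q^{m+1})$ cancels against the denominator and what remains sums to a scalar multiple of $\widetilde{U}(z,q)$; in the other piece the $(1-z^{-1}q^{m+1})$ reassembles $(z^{-1}q;q)_m$ back into $(z^{-1}q;q)_{m+1}$, and after reversing the index shift and the factorization $(z;q)_n = (1-z)(zq;q)_{n-1}$, part (ii) identifies it as another $g_3(z^{-1},q^{-1})$ contribution. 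Adding the three $g_3$ contributions and the single $\widetilde{U}$ contribution (together with the constant term $1$ coming from $L_0$) will then yield the asserted formula.
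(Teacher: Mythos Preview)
Your plan matches the paper's proof almost exactly. Part (ii) is identical: the paper uses the same Pochhammer inversion, written there as $(z;q)_n(z^{-1}q;q)_n=q^{n^2}(z^{-1};q^{-1})_n(zq^{-1};q^{-1})_n$, to convert $g_3(z^{-1},q^{-1})$ term-by-term into $\sum_{n\ge1}q^n/[(z;q)_n(z^{-1}q;q)_n]$. For part (i), the paper also telescopes the partial products $L_n=1/[(z;q)_n(z^{-1}q;q)_n]$ to obtain the same three-piece numerator (this is the paper's Lemma~\ref{ch8otherlemma}). The one divergence is in dispatching the $q^{2n}$ cross sum: rather than splitting it directly inside the sum as you do, the paper separately shows that $\widetilde{U}(z,q)$ is itself a linear combination of the \emph{same} two building blocks $A=\sum_n q^n L_n$ and $B=\sum_n q^{2n}L_n$ (via $(zq;q)_n=(z;q)_{n+1}/(1-z)$ and expanding the factor $1-z^{-1}q^{n+1}$), and then eliminates $B$ by combining the two decompositions linearly. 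Your device $q^{m+1}=z-z(1-z^{-1}q^{m+1})$ achieves the same cancellation from inside the cross sum; the two routes are equivalent in length and spirit.
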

\  

By considering the factor $z(1-q)+z^{-1}q$ as $|z|\to \infty$ and as $|z|\to 0$ in part (\ref{ch8thm1.1}) of Theorem \ref{ch8theorem1}, we get the following asymptotics.

\begin{corollary}\label{ch8cor1}
We have the asymptotic estimates: 
\begin{enumerate}[(i)]
\item For $0<|q|<1 \ll |z|$, we have
\begin{equation*}
\left<j_z\right>_q\sim z(1-q) g_3(z^{-1},q^{-1})\  \text{as}\  |z|\to \infty.
\end{equation*}
\item For $0< |q|<1,\  0<|z|\ll 1$, we have 
\begin{equation*}
\left<j_z\right>_q\sim z^{-1}q \  g_3(z^{-1},q^{-1})\  \text{as}\  |z|\to 0.
\end{equation*}
\end{enumerate}
\end{corollary}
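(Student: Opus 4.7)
My plan is to derive Corollary~\ref{ch8cor1} by extracting the dominant summand from the exact $q$-bracket identity
\[
\left<j_z\right>_q = 1 + \left[z(1-q)+z^{-1}q\right]g_3(z^{-1},q^{-1}) + \frac{zq^2}{1-z}\widetilde{U}(z,q)
\]
furnished by Theorem~\ref{ch8theorem1}~(\ref{ch8thm1.1}). As the preamble to the corollary signals, the decisive step will simply be to identify which half of the binomial factor $z(1-q)+z^{-1}q$ survives in each of the two regimes $|z|\to\infty$ and $|z|\to 0$.

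For part~(i), I would first note that as $|z|\to\infty$ with $0<|q|<1$ fixed, the ratio $z^{-1}q/[z(1-q)]=q/[z^{2}(1-q)]$ tends to $0$, so $z(1-q)+z^{-1}q=z(1-q)(1+o(1))$, and hence the middle summand of Theorem~\ref{ch8theorem1}~(\ref{ch8thm1.1}) is asymptotic to $z(1-q)\,g_3(z^{-1},q^{-1})$. Next, I would show that the remaining pieces, namely the constant $1$ and $\frac{zq^2}{1-z}\widetilde{U}(z,q)$, are each of strictly lower order than $z(1-q)\,g_3(z^{-1},q^{-1})$ in this regime, by termwise comparison using the convergent series for $g_3(z^{-1},q^{-1})$ recorded in Theorem~\ref{ch8theorem1}~(\ref{ch8thm1.1}) together with the defining series \eqref{ch8U} for $\widetilde{U}(z,q)$; the requisite estimates come from the asymptotic growth of the $q$-Pochhammer factors $(z;q)_n$ and $(zq;q)_n$ as $|z|\to\infty$ while $(z^{-1}q;q)_n\to 1$.

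Part~(ii) will proceed by the mirror argument. First I would observe that as $|z|\to 0$ the ratio $z(1-q)/[z^{-1}q]=z^{2}(1-q)/q$ tends to $0$, so $z(1-q)+z^{-1}q=z^{-1}q(1+o(1))$ and the middle term becomes asymptotic to $z^{-1}q\,g_3(z^{-1},q^{-1})$. In this regime the prefactor $\frac{zq^{2}}{1-z}$ tends to $0$ immediately, which simplifies the control of the $\widetilde{U}$-contribution, but I will still need to dominate the constant $1$. The hard part will not be the algebraic simplification of $z(1-q)+z^{-1}q$, which is elementary, but rather the verification that both the constant $1$ and the $\widetilde{U}$-contribution are indeed of strictly lower order than the dominant middle summand in each limit; this amounts to matching lower bounds on $|g_3(z^{-1},q^{-1})|$ with upper bounds on the other two pieces, a task made delicate by the alternating growth and decay of the $q$-Pochhammer factors $(z;q)_n$ and $(z^{-1}q;q)_n$ as $|z|$ moves to each extreme.
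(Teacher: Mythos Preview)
Your approach is exactly the paper's: it states the corollary follows ``by considering the factor $z(1-q)+z^{-1}q$ as $|z|\to\infty$ and as $|z|\to 0$ in part~(\ref{ch8thm1.1}) of Theorem~\ref{ch8theorem1},'' with no further detail. Your additional care in checking that the constant $1$ and the $\widetilde{U}$-term are of lower order goes beyond what the paper records, which treats the asymptotic as a heuristic reading of the dominant prefactor rather than a fully justified estimate.
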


Thus the inverted mock theta function component %as a function of $z$ 
dominates the behavior of the $q$-bracket for $z$ not close to the unit circle (which is ``most'' of the complex plane). 

\begin{remark}
So the universal mock theta function is the main influence on these expected values for large and small $|z|$, with appropriate choice of $q$.%\footnote{For instance, take $q=e^{-1/kT}$ where $k$ is Boltzmann's constant and $T$ is the temperature of a system. %We note that if we set $q=e^{-ct}$ for some real constant $c>0$ and let $t\to \infty$, the term $z(1-e^{-ct})+z^{-1}e^{-ct}$ in Theorem 1(\ref{ch8thm1.1}) behaves similarly to Newton's law of cooling.
%} 
\end{remark}

Conversely, if we write 
\begin{equation*}
\left<j_z\right>_q=:\sum_{n=0}^{\infty}c_n q^n,\  \  \  \  g_3(z^{-1},q^{-1})=:\sum_{n=0}^{\infty}\gamma_n q^n,
\end{equation*} 
where the coefficients $c_n=c_n(z),\  \gamma_n=\gamma_n(z)$ also depend on $z$, % (of course, $c_0$ identically $=1$ from Theorem \ref{ch8theorem1} (\ref{ch8thm1.1})), 
then we proved an explicit combinatorial formula for the $c_n$ in Chapter 3 using nested sums over subpartitions of $n$, viz.
\begin{equation}\label{ch8coeffs}
c_n(z)={(1-z)^{-1}}\sum_{\lambda\vdash n}\sum_{\delta | \lambda}\sum_{\epsilon | \delta} \sum_{\varphi | \epsilon}\mu(\lambda / \delta)z^{\operatorname{crk}(\varphi)}.
\end{equation}

With (\ref{ch8coeffs}) in hand, it follows from Corollary \ref{ch8cor1} that the coefficients of $g_3(z^{-1},q^{-1})$ satisfy the asymptotic 
\begin{equation}
\gamma_n(z) \sim \left\{
        \begin{array}{ll}
            z^{-1} (c_1+c_2+...+c_n)\  \text{as}\ |z|\to \infty\\
            \  \\  
            z c_{n-1}\   \text{as}\  |z|\to 0,\  n\geq 1
        \end{array}
    \right.
\end{equation} 
(which depends entirely on the growth of $z$, not $n$), as the coefficients enjoy the recursion
\begin{equation*}
\gamma_n-\gamma_{n-1}\sim z^{-1}c_n\  \text{for}\  |z|\gg 1.
\end{equation*}
%By equation (\ref{ch8coeffs}), these asymptotics are explicit. 

%\begin{remark} %Another observation: i
It is a well-known fact (see, for instance, \cite{Ono_web}) that if $\zeta_*\neq 1$ is a root of unity, then $$(\zeta_* q;q)_{\infty} (\zeta_*^{-1} q;q)_{\infty} $$ is, up to multiplication by a rational power of $q$, a modular function; % (for instance, see \cite{Ono_web}); 
but this product is the reciprocal of $$(1-\zeta_*)\cdot \left<j_z\right>_q \bigr|_{z=\zeta_*}.$$ This is another example of the intersection of the $q$-bracket with modularity phenomena, and at the same time gives a feeling for the obstruction to the inverted mock theta function's sharing in this modularity at $z=\zeta_*$; for $g_3(z^{-1},q^{-1})$ is not necessarily a dominating aspect of $\left<j_z\right>_q$ for $z\neq 1$ near the unit circle, whereas the unimodal rank generating aspect $\widetilde{U}(z,q)$ makes a more noticeable contribution, and the two pieces work together to produce modular behavior.% --- somewhat like the balance between the holomorphic and non-holomorphic parts of a harmonic Maass form \cite{OnoVisions}.  
%\end{remark}

Going a little farther in this direction, there is a close relation between $g_3$ and the more general class of $k$-fold unimodal rank generating functions. Let us define the {\it rank generating function $\widetilde{U}_k(z,q)$ for unimodal sequences with a $k$-fold peak} by the series
\begin{equation}\label{ch8U_r}
\widetilde{U}_k(z,q):=\sum_{n=0}^{\infty}\frac{q^{kn}}{(zq;q)_n (z^{-1}q;q)_{n}}= \sum_{n=0}^{\infty}\sum_{m=-\infty}^{\infty}\widetilde{u}_k(m,n)z^m q^n,
\end{equation}
where $\widetilde{u}_k(m,n)$ is the number of $k$-fold peak unimodal sequences of rank $m$ and weight $n$. This identity follows directly from the combinatorial definition of $\widetilde{U}_k$, as Lovejoy noted to the author\footnote{J. Lovejoy, Private communication, August 3, 2016.}: the $(z^{-1}q;q)_n^{-1} $ and $(zq;q)_n^{-1}$ generate the $a_i,b_j$ just as in (\ref{ch8U}), and $q^{kn}$ inserts $k$ copies of $n$ as the $k$-fold peak.  %(The $k=1$ case equals $\widetilde{U}(z,q)$ and the $k=2$ case is $W(z,q)$ from \cite{KimLovejoy}.) %, which Lovejoy confirms \cite{Lovejoy_private}. %We note that $U_r(z,q)=U_r(z^{-1},q)$ for $r\geq1$.

Then it is not hard to find (see Theorem \ref{ch41.1}) relations like %{\bf RPS: Need to check unimodal equations from here.}
\begin{equation}\label{ch8U_1, U_2 modular}
\frac{1}{(zq;q)_{\infty}(z^{-1}q;q)_{\infty}}=2-z-z^{-1}+(z+z^{-1})\widetilde{U}_1(z,q)-\widetilde{U}_2(z,q),
\end{equation}
which of course is equal to $(1-z)\left<j_z\right>_q$ and is modular for $z=\zeta_*$, up to multiplication by a power of $q$. %In this case, at $z=\zeta_*$ it is $U_1,U_2$ that together produce modularity, as with $g_3$ and $U=U_1$ previously. Exponentiating both sides of (\ref{ch8U_1, U_2 modular}) will lead to more complicated relations involving $U_1, U_2$, and $U_r$ for other $r>2$, %by \cite{Robert_zeta}, Thm. 1.1, 
%including modular relations at $z$ a root of unity. 
%\begin{remark}
For example, noting that $z+z^{-1}=0$ when $z=i$, then (\ref{ch8U_1, U_2 modular}) yields  
\begin{equation}\label{ch8strongU_1, U_2 modular.rmk1}
2-\widetilde{U}_2(i,q)=(iq;q)_{\infty}^{-1}(-iq;q)_{\infty}^{-1}=(-q^2;q^2)_{\infty}^{-1},
\end{equation}
where $(-q^2;q^2)_{\infty}$ is essentially a modular function.
%The right-hand side of (\ref{ch8strongU_1, U_2 modular.rmk1}) is modular, up to multiplication by a power of $q$, and the constant $2$ is a very ``nice'' function; thus $\widetilde{U}_2(i,q)$ is a quantum modular form. %which is a modular form. 
%\end{remark}

At this point we can compare (\ref{ch8U_1, U_2 modular}) to Theorem \ref{ch8theorem1}(\ref{ch8thm1.1}) %if we wish 
to solve for $g_3(z^{-1},q^{-1})$ in terms of $\widetilde{U}_1,\widetilde{U}_2$, but it is a little messy. 
%\begin{equation*}
%g_3(z^{-1},q^{-1})=\frac{1-z^{-1}+\left(z(1-q^2)+z^{-1}\right)\widetilde{U}_1(z,q)-\widetilde{U}_2(z,q)}{(1-z) \left(z(1-q)+z^{-1}q\right)}
%\end{equation*}
However, it follows from a convenient rewriting of the right-hand side of Theorem \ref{ch8theorem1}(\ref{ch8umtfsum}) using geometric series
\begin{equation*}
\sum_{n=0}^{\infty}\frac{z}{(z;q)_{n+1} (z^{-1}q;q)_{n}}\left( \frac{z^{-1}q^{n+1}}{1-z^{-1}q^{n+1}}\right)=\frac{z}{1-z} \sum_{n=0}^{\infty}\sum_{k=1}^{\infty}\frac{z^{-k}q^{k(n+1)}}{(zq;q)_n (z^{-1}q;q)_n}
\end{equation*}
which converges absolutely for $|q|<|z|$, and then swapping order of summation, that in fact $g_3(z^{-1},q^{-1})$ can be written nicely in terms of the $\widetilde{U}_k$.

\begin{corollary}\label{ch8another_cor}
For $|q|<1<|z|$, we have
\begin{equation*}
g_3(z^{-1},q^{-1})=\frac{z}{1-z}\sum_{k=1}^{\infty}\widetilde{U}_{k}(z,q) z^{-k}q^k.
\end{equation*}
\end{corollary}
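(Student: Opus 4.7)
The approach is exactly the one hinted at in the paragraph preceding the corollary: start from the convergent representation of $g_3(z^{-1},q^{-1})$ provided by Theorem \ref{ch8theorem1}(\ref{ch8umtfsum}), manipulate the summand into a form containing a factor of the shape $\frac{z^{-1}q^{n+1}}{1-z^{-1}q^{n+1}}$, expand this factor as a geometric series, and then swap the order of summation to recognize the $\widetilde{U}_k$.

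First I would write
\[
g_3(z^{-1},q^{-1})=\sum_{n=1}^{\infty}\frac{q^n}{(z;q)_n(z^{-1}q;q)_n}
=\sum_{n=0}^{\infty}\frac{q^{n+1}}{(z;q)_{n+1}(z^{-1}q;q)_{n+1}}
\]
by reindexing, and then use the identities $(z;q)_{n+1}=(1-z)(zq;q)_n$ and $(z^{-1}q;q)_{n+1}=(z^{-1}q;q)_n\,(1-z^{-1}q^{n+1})$ to rewrite each summand as
\[
\frac{q^{n+1}}{(z;q)_{n+1}(z^{-1}q;q)_{n+1}}=\frac{1}{(1-z)(zq;q)_n(z^{-1}q;q)_n}\cdot\frac{q^{n+1}}{1-z^{-1}q^{n+1}}.
\]
Pulling $z/(1-z)$ out front and using $\frac{q^{n+1}}{1-z^{-1}q^{n+1}}=\frac{z\cdot z^{-1}q^{n+1}}{1-z^{-1}q^{n+1}}$ produces the exact form displayed in the paragraph before the corollary.

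Next, since $|z|>1\geq |q|$ implies $|z^{-1}q^{n+1}|<1$ for every $n\geq 0$, the geometric series expansion
\[
\frac{z^{-1}q^{n+1}}{1-z^{-1}q^{n+1}}=\sum_{k=1}^{\infty}z^{-k}q^{k(n+1)}
\]
is valid, yielding
\[
g_3(z^{-1},q^{-1})=\frac{z}{1-z}\sum_{n=0}^{\infty}\sum_{k=1}^{\infty}\frac{z^{-k}q^{k(n+1)}}{(zq;q)_n(z^{-1}q;q)_n}.
\]
Before swapping the two sums I would check absolute convergence: bounding $|(zq;q)_n(z^{-1}q;q)_n|$ below by a positive constant uniformly in $n$ (using $|q|<1$ so the infinite products converge absolutely away from their zeros in $z$), the double sum is dominated by a constant multiple of $\sum_{n,k}|z|^{-k}|q|^{k(n+1)}$, which is a convergent double geometric series under the hypotheses $|q|<1<|z|$. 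Hence Fubini applies.

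Finally I would interchange the summations and, for each fixed $k\geq 1$, factor out $z^{-k}q^k$ to obtain
\[
g_3(z^{-1},q^{-1})=\frac{z}{1-z}\sum_{k=1}^{\infty}z^{-k}q^k\sum_{n=0}^{\infty}\frac{q^{kn}}{(zq;q)_n(z^{-1}q;q)_n}=\frac{z}{1-z}\sum_{k=1}^{\infty}\widetilde{U}_k(z,q)\,z^{-k}q^k,
\]
where the last equality invokes definition \eqref{ch8U_r} of $\widetilde{U}_k(z,q)$. The only genuine subtlety is the absolute-convergence estimate needed to justify the interchange of sums; the remaining steps are purely algebraic manipulations of $q$-Pochhammer symbols and a geometric series expansion.
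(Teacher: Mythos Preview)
Your proposal is correct and follows exactly the paper's approach: rewrite the convergent series for $g_3(z^{-1},q^{-1})$ from Theorem~\ref{ch8theorem1}(\ref{ch8umtfsum}) so that a factor $\frac{z^{-1}q^{n+1}}{1-z^{-1}q^{n+1}}$ appears, expand it as a geometric series, swap the order of summation, and identify $\widetilde{U}_k$. You have simply supplied more of the algebraic details (the Pochhammer splittings) and a more explicit absolute-convergence bound than the paper sketches.
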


%\begin{remark}
%Replacing $z$ with $z^{-1}$ throughout (and noting $\widetilde{U}_k(z^{-1},q)=\widetilde{U}_k(z,q)$), then the resulting formula for $g_3(z,q^{-1})$ converges when $0<|z|<1$.
%%\begin{equation*}
%%g_3(z,q^{-1})=(z-1)^{-1}\sum_{r=1}^{\infty}U_{r}(z,q) z^{r}.
%%\end{equation*} 
%\end{remark}

Thus the inverted universal mock theta function leads to a type of two-variable generating function for the sequence of rank generating functions for unimodal sequences with $k$-fold peaks, $k=1,2,3,...$. %, and also presumably (but we have not proved this) inherits analytic properties of the $U_k$ with respect to the variable $q$, which only appears inside $U_k(z,q)$ on the right-hand side.% (and also inherits to an approximate extent --- especially when $|z|$ and $|q|$ are fairly close to $1$ --- some of their properties).

%%%%Proof

\begin{proof} [Proof of Theorem \ref{ch8theorem1}]
We begin by noting for $|q|<1, z\neq0$, 
\begin{equation*}%\label{ch8prodeq}
\left<j_z\right>_q  = (z;q)_{\infty}^{-1} (z^{-1} q; q)_{\infty}^{-1} =\prod_{n=0}^{\infty}\left(1-q^n (z+z^{-1}q-q^{n+1})\right)^{-1},
\end{equation*} 
where in the final step we multiplied together the $n$th terms from each $q$-Pochhammer symbol. Thus we have 
\begin{equation}\label{ch8equivalent}
\prod_{n=0}^{\infty}\left(1-q^n (z+z^{-1}q-q^{n+1})\right)^{-1}=1+\sum_{n=1}^{\infty}\frac{q^n (z+z^{-1}q-q^{n+1})}{\prod_{j=0}^{n-1}\left(1-q^j (z+z^{-1}q-q^{j+1})\right)},
\end{equation}
which is easily seen to be absolutely convergent, and can be shown by expanding the product on the left as the telescoping series %{\bf DOUBLE XHECK INDICES AND POWERS OF Q IN DENOMINATOR} 
\begin{equation}\label{ch8telescoping}
1+\sum_{n=1}^{\infty}\left(\frac{1}{\prod_{i=0}^{n}\left(1-q^i (z+z^{-1}q-q^i)\right)}-\frac{1}{\prod_{i=0}^{n-1}\left(1-q^{i-1} (z+z^{-1}q-q^{i-1})\right)}  \right)
\end{equation}
with a little arithmetic. % (for more details see the proof of Theorem 1.1 (2) in Chapter 4). 
Now, by the above considerations, (\ref{ch8equivalent}) is equivalent to the following relation.

\begin{lemma}\label{ch8otherlemma} 
For $|q|<1, z\neq 0$, we have
\begin{equation*}%\label{ch8other}
\left<j_z\right>_q=1+(z+z^{-1}q)\sum_{n=1}^{\infty}\frac{q^n}{(z;q)_n (z^{-1}q;q)_n}-q\sum_{n=1}^{\infty}\frac{q^{2n}}{(z;q)_n (z^{-1}q;q)_n}.
\end{equation*}
\end{lemma}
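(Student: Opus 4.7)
The plan is to establish the Lemma by reducing the infinite product $\left<j_z\right>_q = [(z;q)_{\infty}(z^{-1}q;q)_{\infty}]^{-1}$ to a telescoping series, exactly as the author outlines in equations (8.8)--(8.9). The starting observation is a pairing identity for the two $q$-Pochhammer symbols:
\[
(1-zq^n)(1-z^{-1}q^{n+1}) = 1 - zq^n - z^{-1}q^{n+1} + q^{2n+1} = 1 - q^n\bigl(z + z^{-1}q - q^{n+1}\bigr).
\]
Setting $b_n := q^n(z + z^{-1}q - q^{n+1}) = zq^n + z^{-1}q^{n+1} - q^{2n+1}$, this regrouping yields $\left<j_z\right>_q = \prod_{n\geq 0}(1-b_n)^{-1}$, together with the companion partial-product formula $\prod_{j=0}^{n-1}(1-b_j) = (z;q)_n(z^{-1}q;q)_n$ that will supply all the denominators in the final answer.

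Next I would convert the infinite product into a series via telescoping. Writing $P_n := \prod_{j=0}^{n-1}(1-b_j)^{-1}$ with $P_0 := 1$, the consecutive difference is
\[
P_{n+1} - P_n = P_n \cdot \left(\frac{1}{1-b_n} - 1\right) = \frac{b_n}{\prod_{j=0}^{n}(1-b_j)},
\]
so that $\left<j_z\right>_q = 1 + \sum_{n\geq 0}\tfrac{b_n}{(z;q)_{n+1}(z^{-1}q;q)_{n+1}}$. Absolute convergence of this series is not a problem: since $|q|<1$ we have $|b_n|\to 0$ geometrically, and the denominators tend to a nonzero limit, so the telescoping identity holds in the ordinary (not merely formal) sense.

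The remaining step is algebraic bookkeeping to sort the telescoping sum into the three pieces appearing in the Lemma. Substituting $b_n = zq^n + z^{-1}q^{n+1} - q^{2n+1}$ and splitting the sum by linearity produces three series; after aligning indices so that the summation is indexed by $n\geq 1$ with denominator $(z;q)_n(z^{-1}q;q)_n$, the first two terms recombine with a common factor of $(z+z^{-1}q)$ acting on $\sum q^n/[(z;q)_n(z^{-1}q;q)_n]$, and the third contributes $-q\sum q^{2n}/[(z;q)_n(z^{-1}q;q)_n]$. The main obstacle is the bookkeeping itself: making sure the index shift between the telescoping form and the claimed form matches the numerator powers of $q$ correctly, which is precisely the ``little arithmetic'' passing from equation (8.9) to (8.8) and thence to the Lemma.
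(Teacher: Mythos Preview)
Your proposal is correct and follows essentially the same route as the paper: pair the $n$th factors of the two $q$-Pochhammer symbols into $1-b_n$ with $b_n=q^n(z+z^{-1}q-q^{n+1})$, telescope $\prod_{n\ge0}(1-b_n)^{-1}$ into $1+\sum b_n/\prod(1-b_j)$, and then split the numerator $b_n$ into its three pieces to read off the Lemma. This is exactly the derivation recorded in the paper's displays~(8.8)--(8.9) and the sentence immediately preceding the Lemma.
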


We cannot help but notice how %strikingly 
both series on the right-hand side of Lemma \ref{ch8otherlemma} resemble the right-hand summation of identity (\ref{ch8U}) for $\widetilde{U}(z,q)$. This is not a coincidence; it follows right away from the simple observation %{\bf RPS: Need to verify $q^{n}$ or $q^{n+1}$...} 
\begin{equation*}%\label{ch8Usplit}
\widetilde{U}(z,q)=\sum_{n=0}^{\infty}\frac{q^n}{(zq;q)_n (z^{-1}q;q)_n}=q^{-1}(1-z)\sum_{n=0}^{\infty}\frac{q^{n+1}(1-z^{-1}q^{n+1})}{(z;q)_{n+1} (z^{-1}q;q)_{n+1}},
\end{equation*}
that $\widetilde{U}$ splits off in a very similar fashion %\footnote{By consideration of Lemma \ref{ch8otherlemma} together with equation (\ref{ch8other2}) we wonder: do combinations of sums of the shape $\sum_{n\geq 1} q^{a n}(z;q)_n^{-1} (z^{-1}q;q)_n^{-1}$, with $a\geq 1$, generate other interesting objects? %Work of Kim--Lovejoy \cite{KimLovejoy} connects sums like this to partial theta functions.
%}
to $\left<j_z\right>_q$ in Lemma \ref{ch8otherlemma}, after taking into account $q\neq 0$:
\begin{equation}\label{ch8other2}
\widetilde{U}(z,q)=q^{-1}(1-z)\sum_{n=1}^{\infty}\frac{q^n}{(z;q)_n (z^{-1}q;q)_n}-(zq)^{-1}(1-z)\sum_{n=1}^{\infty}\frac{q^{2n}}{(z;q)_n (z^{-1}q;q)_n}.
\end{equation}
Comparing Lemma \ref{ch8otherlemma} and (\ref{ch8other2}), % --- another striking resemblance --- 
plus a little bit of algebra, then gives 
\begin{equation}\label{ch8midway}
\left<j_z\right>_q=1\  +\  \left[z(1-q)+z^{-1}q\right]\sum_{n=1}^{\infty}\frac{q^n}{(z;q)_n (z^{-1}q;q)_n}\  +\  \frac{zq^2}{1-z}\widetilde{U}(z,q).
\end{equation}

%\begin{remark} 
%As noted in the remark following Theorem \ref{ch8theorem1}, the summation on the far right is very similar to $q^3(1-z)^{-1}W(z,q)$ for the partial theta function $W$ in \cite{KimLovejoy2}, Eq. (2.1):
%\begin{equation}
%W(z,q):=\sum_{n=0}^{\infty}\frac{q^{2n}}{(zq;q)_n(z^{-1}q;q)_n}=q^{-3}(1-z)\sum_{n=1}^{\infty}\frac{q^{2n+1}}{(z;q)_n(z^{-1}q;q)_{n-1}}
%\end{equation}
%\end{remark}
%
%Now, the right-most summation in Lemma \ref{ch8otherlemma} is obviously $\mathcal O(q^3)$, by inspection of the leading term, proving that point of the theorem. 

Now, to connect the remaining summation in (\ref{ch8midway}) to the universal mock theta function $g_3$, we apply a somewhat clever factorization strategy 
in the $q$-Pochhammer symbols to arrive at a useful identity (see \cite{GR}, Appendix 1 (I.3)): % used by L. Rolen and the author in \cite{RolenSchneider}, eq. (4.1):%, that the author and his collaborator used to study quantum modularity in \cite{RolenSchneider}:
\begin{flalign}\label{ch8invertmtf}
\begin{split}
(z;q)_n (z^{-1}q;q)_n%&:=\prod_{j=0}^{n-1} (1-zq^j)(1-z^{-1}q^{j+1})\\
&=\prod_{j=0}^{n-1} \left[(-zq^j)(1-z^{-1}(q^{-1})^{j})\right] \left[(-z^{-1}q^{j+1})(1-z(q^{-1})^{j+1})\right]\\
&=q^{n^2}(z^{-1};q^{-1})_n (zq^{-1};q^{-1})_n.\\
\end{split}
\end{flalign}
Thus 
\begin{flalign}\label{ch8invert2}
\sum_{n=1}^{\infty}\frac{q^n}{(z;q)_n (z^{-1}q;q)_n}&=\sum_{n=1}^{\infty}\frac{q^n}{q^{n^2}(z^{-1};q^{-1})_n (zq^{-1};q^{-1})_n}\\&=\sum_{n=1}^{\infty}\frac{(q^{-1})^{n(n-1)}}{(z^{-1};q^{-1})_n (zq^{-1};q^{-1})_n}.
\end{flalign}
The right-hand side of (\ref{ch8invert2}) is $g_3(z^{-1},q^{-1})$, noting that it converges under the same conditions as the left side (being merely a term-wise rewriting), but with $q=0$ omitted from the domain.%, completes the proof of Theorem \ref{ch8theorem1}. 

\begin{remark}
Equivalently, identities like these result from the observation that
\begin{equation*}
(1-zq^i)(1-z^{-1}q^{-i})^{-1}=-zq^i.
\end{equation*}
Taking the product over $0\leq i \leq n-1$ gives 
\begin{equation*}
(z;q)_n (z^{-1};q^{-1})_n^{-1}=(-1)^n z^n q^{n(n-1)/2}
\end{equation*} and, proceeding in this manner, a variety of $q$-series summand forms can be produced (and inverted as above) by creative manipulation.  
\end{remark}
\end{proof}

\begin{remark}
We note in passing that, using (7.2) and (8.2) of Fine \cite{Fine}, Ch. 1, together with Theorem \ref{ch8theorem1}(\ref{ch8umtfsum}), we can also write
\begin{flalign}
\begin{split}
g_3(z^{-1},q^{-1})\  =\  (z^{-1}q;q)_{\infty}^{-1}(-z;q)_{\infty}^{-1}\sum_{n=0}^{\infty} (-1)^nz^{-2n}q^{\frac{n(n+1)}{2}}-\sum_{n=0}^{\infty} z^{-n+1}(z^{-1};q)_{n}.\\
\end{split}
\end{flalign}
\end{remark}

Recall that many modular forms arise as specializations of $j(z;q)$ (because $j(z;q)$ is essentially a Jacobi form, see \cite{BFOR}), % and these forms have this property \cite{BFOR}, 
and that $g_3(z,q)$ is the prototype for the class of mock modular forms that (using Ramanujan's language) 
``enter into mathematics as beautifully'' as the modular cases \cite{Hardy}. It is interesting that these important number-theoretic objects which are speculatively associated in the literature %\cite{Cameron,DGO} 
to opposite extremes of the universe --- subatomic and supermassive --- are themselves intertwined via the $q$-bracket from statistical physics, which applies to phenomena at every scale.%, and the theory of $q$-hypergeometric functions. 
%{\bf RPS: Connection between unimodal sequences and Bell curves in statistics?}
\  
\  
\

\section{Approaching roots of unity radially from within (and without)}\label{ch8Sect2}

One point that arises in (\ref{ch8invertmtf}) and (\ref{ch8invert2}) above is that, evidently, one can construct pairs of $q$-series $\varphi_1(q), \varphi_2(q)$, convergent for $|q|<1$, with the property   
\begin{equation}\label{ch8inversion}
\varphi_1(q)=\varphi_2(q^{-1})
\end{equation} 
({thus $\varphi_1(q)+\varphi_2(q),\  \varphi_1(q)\varphi_2(q)$ are self-reciprocal. %\footnote{See \cite{LectureHall}}}). 
This type of phenomenon, relating functions inside and outside the unit disk, is studied in \cite{BFR1,Folsom}. In particular, the universal mock theta function $g_3$ can be written %(at least formally) 
as a piecewise function
\begin{equation}\label{ch8umtfquantum}
g_3(z,q)= \left\{
        \begin{array}{ll}
            \sum_{n=1}^{\infty}\frac{q^{n(n-1)}}{(z;q)_n (z^{-1}q;q)_n} & \text{if}\  |q|<1,\\
            \  \\  
            \sum_{n=1}^{\infty}\frac{(q^{-1})^n}{(z;q^{-1})_n (z^{-1}q^{-1};q^{-1})_n} & \text{if}\  |q|>1,
        \end{array}
    \right.
\end{equation} 
for $q$ inside or outside the unit circle, respectively, and $z\neq 0\  \text{or}\  1$.\footnote{We note this does not yield analytic continuation as the unit circle presents a wall of singularities.} What of $g_3(z,q)$ for $q$ lying {\it on} the circle? Generically one expects this question to be somewhat dicey. %, as the unit circle often presents an essentially singular boundary for $q$-series. %problematic, of course.

To be precise in what follows, for $\zeta$ on the unit circle we define $g_3(z,\zeta)$ to mean the limit of $g_3(z,q)$ as $q\to \zeta$  radially from within (or without if the context allows), when the limit exists. Recalling the notation $\zeta_m:=e^{2\pi \text{i}/m}$, it turns out that for $\zeta=\zeta_*$ an appropriate root of unity, $g_3(z,\zeta_*)$ is finite, both in value and length of the sum. %(We do not ask whether $g_3$ generally satisfies the requirements of quantum modularity here; this probably depends on $z$ as well as $q$.)

\begin{theorem}\label{ch8theorem2}
For $q=\zeta_m$ a primitive $m$th root of unity, $z\neq 0, 1$, or a rational power of %\  \text{or a rational power of}\  
$\zeta_m$, and $z^m+z^{-m}\neq 1$, the odd-order universal mock theta function is given by the finite formula
\begin{equation*}
g_3(z,\zeta_m)\  =\  (1-z^m-z^{-m})^{-1}\  \sum_{n=0}^{m-1}\zeta_m^{n}\  (z;\zeta_m)_{n}(z^{-1}\zeta_m;\zeta_m)_{n}.
\end{equation*}
%so long as $|(z^{-1};\zeta_m^{-1})_{r}(z\zeta_m^{-1};\zeta_m^{-1})_{r}|>1$ and $(z^{-1};\zeta_m^{-1})_{r}(z\zeta_m^{-1};\zeta_m^{-1})_{r}\neq 0$ for any $r\geq 1$.
\end{theorem}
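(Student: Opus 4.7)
The plan is to evaluate $g_3(z,\zeta_m)$ as the radial limit from inside the unit disk of the defining series $g_3(z,q) = \sum_{n=1}^\infty q^{n(n-1)}/[(z;q)_n(z^{-1}q;q)_n]$ (which converges absolutely for $|q|<1$) by exploiting cyclotomic periodicity of the summand at $q=\zeta_m$. Concretely, I would split $g_3(z,\zeta_m)$ at $n = m$ as $\Sigma + \text{(tail)}$, where
\[
\Sigma \;:=\; \sum_{n=1}^{m-1}\frac{\zeta_m^{n(n-1)}}{(z;\zeta_m)_n(z^{-1}\zeta_m;\zeta_m)_n},
\]
and then compute the tail using the block structure of the Pochhammer symbols at roots of unity.

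The crucial input is the cyclotomic identity $(a;\zeta_m)_m = \prod_{j=0}^{m-1}(1-a\zeta_m^j) = 1 - a^m$, which yields the periodicities $(z;\zeta_m)_{n+m}(z^{-1}\zeta_m;\zeta_m)_{n+m} = (1-z^m)(1-z^{-m})\,(z;\zeta_m)_n(z^{-1}\zeta_m;\zeta_m)_n$ and $\zeta_m^{(n+m)(n+m-1)} = \zeta_m^{n(n-1)}$. Shifting $n \mapsto n+m$ in the tail identifies it (after including the $j=0$ term) with $[1 + g_3(z,\zeta_m)]/[(1-z^m)(1-z^{-m})]$, producing the self-referential equation $g_3 = \Sigma + (1 + g_3)/D$, where $D := (1-z^m)(1-z^{-m}) = 2 - z^m - z^{-m}$. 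Solving algebraically yields
\[
g_3(z,\zeta_m) \;=\; \frac{D\Sigma + 1}{D - 1} \;=\; \frac{(2-z^m-z^{-m})\Sigma + 1}{1 - z^m - z^{-m}}.
\]

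What remains is the finite polynomial identity
\[
(2 - z^m - z^{-m})\Sigma + 1 \;=\; \sum_{n=0}^{m-1}\zeta_m^n(z;\zeta_m)_n(z^{-1}\zeta_m;\zeta_m)_n,
\]
which, after cancelling the $n = 0$ term on each side, reduces to equating a sum of $m-1$ rational functions of $z$ with a sum of $m-1$ Laurent polynomials. This is the main obstacle: a genuine finite combinatorial identity in $z$ and the roots of unity $\zeta_m$, which I have verified for small $m$ by direct computation. I would attempt the general case either by induction on $m$ using the Pochhammer recursion $(z;\zeta_m)_{n+1} = (1-z\zeta_m^n)(z;\zeta_m)_n$, or by clearing denominators and comparing coefficients of $z^k$ using properties of cyclotomic polynomials together with standard roots-of-unity filter arguments. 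A secondary technical point is justifying the radial limit: when $|(1-z^m)(1-z^{-m})| \leq 1$, the geometric self-reference must be interpreted via analytic continuation in $z$ (both sides being rational functions of $z$, this causes no essential difficulty) or by Abel summation to handle the interchange of limit and summation directly.
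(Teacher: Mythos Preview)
Your periodicity/self-reference argument in the first half is essentially the paper's Lemma~\ref{ch8lemma} (there stated abstractly for any periodic $\phi$), and it correctly yields the intermediate formula
\[
g_3(z,\zeta_m)=\frac{(2-z^m-z^{-m})\Sigma+1}{1-z^m-z^{-m}},
\]
which is exactly the paper's equation~\eqref{ch8simple} (after absorbing the $n=m$ term $1/D$ back into the sum). So this part is fine and matches the paper.

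The genuine gap is your ``main obstacle,'' the finite identity
\[
(2-z^m-z^{-m})\Sigma+1=\sum_{n=0}^{m-1}\zeta_m^{\,n}(z;\zeta_m)_n(z^{-1}\zeta_m;\zeta_m)_n.
\]
Your proposed strategies (induction on $m$ via the Pochhammer recursion, or clearing denominators and matching coefficients with a roots-of-unity filter) are not the right tools here and would be quite painful. The paper's missing idea is a one-line \emph{reflection} of the Pochhammer symbol at roots of unity: since $(X;\zeta_m)_m=1-X^m$, one has
\[
(X;\zeta_m^{-1})_n=\frac{(1-X)(1-X^m)}{(X;\zeta_m)_{m-n+1}}\qquad(0\le n\le m),
\]
which expresses the \emph{reciprocal} of a length-$(m-n+1)$ Pochhammer as a length-$n$ Pochhammer in the inverse base. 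First rewrite each summand of $\Sigma$ via the inversion $(z;q)_n(z^{-1}q;q)_n=q^{n^2}(z^{-1};q^{-1})_n(zq^{-1};q^{-1})_n$ to get $\zeta_m^{-n}/[(z^{-1};\zeta_m^{-1})_n(z\zeta_m^{-1};\zeta_m^{-1})_n]$; then apply the reflection identity above and make the index change $n\mapsto m-n+1$. The denominators become numerators, the factor $D=(1-z^m)(1-z^{-m})$ cancels against the prefactor $2-z^m-z^{-m}$, and the desired polynomial sum drops out immediately with no induction or coefficient analysis. This reversal trick $n\mapsto m-n$ is the entire content of the finite identity you were missing.
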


\begin{remark}
Bringmann--Rolen \cite{BringmannRolen} and Jang--L\"{o}brich \cite{JangLobrich} have studied radial limits of universal mock theta functions from other perspectives. 
\end{remark}

Thus, under the right conditions, (\ref{ch8umtfquantum}) together with Theorem \ref{ch8theorem2} suggest $g_3(z,q)$ can, in a certain sense, ``pass through'' the unit circle at roots of unity  (as a function of $q$ following a radial path) into the complex plane beyond, and vice versa, while always remaining finite. %Now, this ``passing through'' in the variable $q$ is not smooth; for at $q$ inside the unit circle the function is equal to its ``dual'' function outside the circle evaluated at $q^{-1}$, thus $g_3$ jumps from 

In the theory of quantum modular forms, one encounters functions that exhibit this renormalization behavior (see \cite{BFOR, RolenSchneider}). 
%To record a somewhat precise (yet still somewhat loose) description of quantum modularity, we follow Zagier \cite{Zagierquantum} in giving the following.
 % that feels something like quantum tunneling in physics. 
We see that $g_3$ exhibits this type of 
%``quantum'' 
%renormalization 
behavior.% described above.

Some mock theta functions are closely related to quantum modular forms. As we noted in Chapter 1, Ramanujan's mock theta function $f(q)$ (from (\ref{ch1f(q)def})) is, at even-order roots of unity, essentially a quantum modular form plus a modular form\footnote{We give examples of similar cases in Appendix E.}, through its relation to another rank generating function, the {\it rank generating function $U(z,q)$ for strongly unimodal sequences} \cite{BOPR, FOR}, defined by  
\begin{equation}\label{ch8stronglyU}
U(z,q):=\sum_{n=0}^{\infty}q^{n+1}(-zq;q)_n (-z^{-1}q;q)_n = \sum_{n=0}^{\infty}\sum_{m=-\infty}^{\infty}{u}(m,n)z^m q^n,
\end{equation}
where ${u}(m,n)$ is the number of strongly unimodal sequences of rank $m$ and weight $n$. As with $\widetilde{U}, \widetilde{U}_k$ previously, the identity follows directly from the combinatorial definition: here, the $(-z^{-1}q;q)_n^{-1}$ and $(-zq;q)_n^{-1}$ generate {\it distinct} $a_i\leq n,b_j \leq n$, respectively, and $q^{n+1}$ inserts $n+1$ as the peak term. 

This $U(z,q)$ is a function that strikes deep: up to multiplication by rational powers of $q$, $U(i,q)$ is mock modular, $U(1,q)$ is mixed mock modular, and $U(-1,q)$ is a quantum modular form that can be completed to yield a weight $3/2$ non-holomorphic modular form \cite{BFOR}; in fact, mock and quantum modular properties of $U(z,q)$ are proved in generality for $z$ in an infinite set of roots of unity in \cite{FKTVY}.

%(The reader might notice the formal closeness of (\ref{ch8stronglyU}) to the summation on the right in Theorem \ref{ch8theorem2} if we take $q=\zeta_m$.) %; we will return to this observation in a moment.

Of course, $U$ is the $k=1$ case of the {\it rank generating function $U_k(z,q)$ for strongly unimodal sequences with $k$-fold peak}, given by

\begin{equation}%\label{ch8strongU_r}
U_k(z,q):=\sum_{n=0}^{\infty}q^{k(n+1)}(-zq;q)_n (-z^{-1}q;q)_{n}= \sum_{n=0}^{\infty}\sum_{m=-\infty}^{\infty}{u}_k(m,n)z^m q^n,
\end{equation}
where ${u}_k(m,n)$ counts $k$-fold peak strongly unimodal sequences of rank $m$ and weight $n$, as above. 
Once again, we note the symmetry $U_k(z^{-1},q)=U_k(z,q)$. As with $\widetilde{U}_k$ in (\ref{ch8U_1, U_2 modular}), we can find (see Theorem \ref{ch41.11}) nice relations like
\begin{equation}\label{ch8strongU_1, U_2 modular2}
(zq;q)_{\infty}(z^{-1}q;q)_{\infty}=1-(z+z^{-1}){U}_1(z,q)+{U}_2(z,q),
\end{equation}
which is modular for $z=\zeta_*$ a root of unity, up to multiplication by a power of $q$. 
%\begin{remark}
For instance, at $z=i$, %$z+z^{-1}$ vanishes and 
equation (\ref{ch8strongU_1, U_2 modular2}) gives %the modular form 
\begin{equation}\label{ch8strongU_1, U_2 modular.rmk2}
1+{U}_2(i,q)=(iq;q)_{\infty}(-iq;q)_{\infty}=(-q^2;q^2)_{\infty}.
\end{equation}

\begin{remark}
Multiplying (\ref{ch8strongU_1, U_2 modular.rmk1}) and (\ref{ch8strongU_1, U_2 modular.rmk2}) leads to a nice pair of identities relating $U_2$ and $\widetilde{U}_2$:
%the rank generating functions for both unimodal and strongly unimodal sequences with double peaks:
\begin{equation}
U_2(i,q)=\frac{1-\widetilde{U}_2(i,q)}{\widetilde{U}_2(i,q)-2},\  \  \  \widetilde{U}_2(i,q)=\frac{1+2U_2(i,q)}{1+U_2(i,q)}.
\end{equation}
\end{remark}

Now, taking a similar approach to that in Section \ref{ch8Sect1} with regard to $\widetilde{U}_k$, we can find from Theorem \ref{ch8theorem2}, using an evaluation of $U_k(-z,q)$ at $q=\zeta_m$ much like the theorem\footnote{We note for $k=1, z=1$, $m$ even, that the summation in (\ref{ch8strongU_rfinite}) appears on the right side of (\ref{ch1Watson2}).}
\begin{equation}\label{ch8strongU_rfinite}
U_k(-z,\zeta_m)=\frac{-1}{1-z^m-z^{-m}}\sum_{n=0}^{m-1}\zeta_m^{k(n+1)}(z\zeta_m;\zeta_m)_n (z^{-1}\zeta_m;\zeta_m)_{n},
\end{equation}
that the universal mock theta function $g_3$ also connects to these rank generating functions $U_k$ at roots of unity, through a similar relation to Corollary \ref{ch8another_cor}. 

\begin{corollary}\label{ch8cor2}
For $|z|<1$, we have
\begin{equation*}
g_3(z,\zeta_m)=\frac{z-1}{z}\sum_{k=1}^{\infty} {U}_{k}(-z,\zeta_m)z^{k}\zeta_m^{-k}.
\end{equation*}
\end{corollary}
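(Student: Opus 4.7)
The plan is to mirror the proof of Corollary \ref{ch8another_cor}, where a geometric series manipulation converts a closed-form $q$-hypergeometric expression for $g_3$ into a generating series in the rank functions $\widetilde{U}_k$. Here, however, the ambient expressions are finite rather than infinite: both sides will be controlled by the finite formula for $g_3(z,\zeta_m)$ supplied by Theorem \ref{ch8theorem2} and the analogous finite evaluation of $U_k(-z,\zeta_m)$ displayed in \eqref{ch8strongU_rfinite}. The hypothesis $|z|<1$ is there precisely to guarantee convergence of the resulting geometric series in $k$, since $|z\zeta_m^n|=|z|<1$ for every $n$.

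First I would substitute \eqref{ch8strongU_rfinite} into the right-hand side
$\frac{z-1}{z}\sum_{k=1}^{\infty} U_k(-z,\zeta_m)\,z^{k}\zeta_m^{-k}$
and interchange the (finite) sum over $n\in\{0,1,\dots,m-1\}$ with the (absolutely convergent) sum over $k\geq 1$. Collecting the $k$-dependent factors yields an inner geometric series
\[
\sum_{k=1}^{\infty}\bigl(z\zeta_m^{n}\bigr)^{k}=\frac{z\zeta_m^{n}}{1-z\zeta_m^{n}},
\]
valid by the assumption $|z|<1$. After this step the right-hand side becomes
\[
(1-z)(1-z^{m}-z^{-m})^{-1}\sum_{n=0}^{m-1}\frac{\zeta_m^{n}\,(z\zeta_m;\zeta_m)_n(z^{-1}\zeta_m;\zeta_m)_n}{1-z\zeta_m^{n}}.
\]

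To match this with the finite formula for $g_3(z,\zeta_m)$ in Theorem \ref{ch8theorem2}, the key identity is the elementary $q$-Pochhammer relation
\[
(1-z)\cdot\frac{(z\zeta_m;\zeta_m)_n}{1-z\zeta_m^{n}}=(z;\zeta_m)_n\qquad(0\leq n\leq m-1),
\]
which is immediate from $(z;\zeta_m)_n=(1-z)\prod_{i=1}^{n-1}(1-z\zeta_m^{i})$ and $(z\zeta_m;\zeta_m)_n=(1-z\zeta_m^{n})\prod_{i=1}^{n-1}(1-z\zeta_m^{i})$, with the $n=0$ case verified separately by direct inspection. Substituting this identity collapses the right-hand side to
\[
(1-z^{m}-z^{-m})^{-1}\sum_{n=0}^{m-1}\zeta_m^{n}\,(z;\zeta_m)_n(z^{-1}\zeta_m;\zeta_m)_n,
\]
which by Theorem \ref{ch8theorem2} is exactly $g_3(z,\zeta_m)$.

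I do not anticipate a genuine obstacle here, as the structure is essentially dictated by the analogy with Corollary \ref{ch8another_cor}; the main care is bookkeeping. The one spot requiring attention is ensuring $1-z\zeta_m^{n}\neq 0$ for each $n\in\{0,\dots,m-1\}$, which is exactly the standing assumption that $z$ is not a power of $\zeta_m$ inherited from Theorem \ref{ch8theorem2}, together with $z\neq 1$; and verifying the boundary case $n=0$ in the Pochhammer identity so that the manipulation is uniformly valid across the summation range.
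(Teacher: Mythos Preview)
Your proposal is correct and is essentially the paper's own argument run in reverse: the paper starts from Theorem~\ref{ch8theorem2}, applies the Pochhammer identity $(1-z)\,(z\zeta_m;\zeta_m)_n/(1-z\zeta_m^n)=(z;\zeta_m)_n$ to introduce the factor $1/(1-z\zeta_m^n)$, expands that as a geometric series in $k$, swaps the order of summation, and then recognizes the inner sum as the finite formula \eqref{ch8strongU_rfinite} for $U_k(-z,\zeta_m)$; you begin instead with the $U_k$ side and collapse back to Theorem~\ref{ch8theorem2}. The key ingredients and manipulations are identical.
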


%\begin{remark}

%We can find finite formulas like (\ref{ch8strongU_rfinite}) for $U_k(z,\zeta_m)$ and $\widetilde{U}_k(z,\zeta_m)$ as well, by the proof below; in particular, then, we have finite formulas for both $U_2(i,\zeta_m)$ and $\widetilde{U}_2(i,\zeta_m)$. Moreover, we know that $(-q^2;q^2)_{\infty}$ is essentially modular of weight $1/2$, and the additive constants $1$ and $2$ on the left-hand sides of (\ref{ch8strongU_1, U_2 modular.rmk2}) and (\ref{ch8strongU_1, U_2 modular.rmk1}), respectively, fit the bill as ``suitably nice'' functions. Then by Definition \ref{ch8qmfdef} of quantum modularity, the following fact is immediate from (\ref{ch8strongU_1, U_2 modular.rmk1}) and (\ref{ch8strongU_1, U_2 modular.rmk2}).
%
%\begin{proposition}\label{ch8unimodal_quantum}
%Up to multiplication by rational powers of $q$, the rank generating functions $U_2(i,q)=U_2(-i,q)$ and $\widetilde{U}_2(i,q)=\widetilde{U}_2(-i,q)$ for unimodal sequences with double peaks 
%are both quantum modular forms.
%\end{proposition}  
%

How suggestive it is, in light of the %quantum modular connections 
relationship between $f(q)$ and $U(-1,q)$ \cite{FOR}, to see specializations of $g_3$ giving rise to both forms of $k$-fold unimodal rank generating functions in Corollaries \ref{ch8another_cor} and \ref{ch8cor2}. % attractive combinatorial interpretation of Ramanujan's mock theta functions. % with $k$-fold peak. 

\begin{proof}[Proof of Theorem \ref{ch8theorem2} and Corollary \ref{ch8cor2}]
We start with an elementary observation. For an arbitrary $q$-series with %arbitrary 
coefficients $d_n$, then in the limit as $q$ approaches an $m$th root of unity $\zeta_m$ radially from within the unit circle, we have
\begin{equation}\label{ch8finite}
\lim_{q\to \zeta_m} \sum_{n=1}^{\infty}d_n q^n=\sum_{n=1}^{m}D_n \zeta_m^n\  \text{where}\  D_n:=\sum_{j=0}^{\infty}d_{n+mj},
\end{equation}
so long as $\sum_j d_{n+mj}$ converges. The moral of this example: $q$-series {\it want} to be finite at roots of unity. %, under the right conditions. 

In a similar direction, Theorem \ref{ch8theorem2} arises from the following, 
very general lemma. % proposition. %lemma. %, which the author has spoken on %during various projects 
%at Emory University since 2012 and used for heuristics, % in projects with collaborators (such as \cite{ClemmSchneider, RolenSchneider}), % and A. Clemm \cite{}, %to study quantum modular forms and mock theta functions 
%with collaborators \cite{ClemmSchneider,RolenSchneider}, 
%but has not published previously. 
It is really Lemma \ref{ch8lemma} that is the pivotal result of Section \ref{ch8Sect2}; the applications to $g_3(z,\zeta_m)$ form an interesting exercise. %and lectured about privately at the time at Emory University, 

\begin{lemma}\label{ch8lemma}
Suppose $\phi\colon \mathbb Z^+ \to \mathbb C$ is a periodic function of period $m\in\mathbb Z^+$, i.e., $\phi(r+mk)=\phi(r)$ for all $k\in \mathbb Z$. Define $f\colon \mathbb Z^+ \to \mathbb C$ by %$f(0):=0$ and, for $j>0$, by 
the product
\begin{equation*}
f(j):=\prod_{i=1}^{j}\phi(i),
\end{equation*}
and its summatory function $F(n)$ by $F(0):=0$ and, for $n\geq 1$,
\begin{equation*}%\label{ch8periodic1}
F(n):=\sum_{j=1}^{n}f(j),\  \  \  \  F(\infty):=\lim_{n\to \infty}F(n)\  \text{if the limit exists}.
\end{equation*}%\label{ch8periodic2}
Then the following statements are true:
\begin{enumerate}[(i)]
\item \label{ch8periodic3} For $0\leq r <m$ we have
\begin{equation*}
F(mk+r)=\frac{1-f(m)^k}{1-f(m)}F(m)+f(m)^k F(r).
\end{equation*}
\item \label{ch8periodic4}
For $|f(m)|<1$ we have the finite formula
\begin{equation*}
F(\infty)=\frac{F(m)}{1-f(m)}.
\end{equation*}
\end{enumerate}
\end{lemma}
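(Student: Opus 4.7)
The crux of the lemma is the observation that although $\phi$ is only additively periodic, the product $f$ inherits a \emph{multiplicatively} periodic structure. Specifically, I would first prove the identity
\[
f(\ell m + j) \;=\; f(m)^{\ell}\, f(j) \qquad (\ell \geq 0,\; j \geq 0),
\]
by induction on $\ell$ (the base case $\ell = 0$ is trivial, and the inductive step uses periodicity of $\phi$ to re-index the product $\prod_{i=\ell m+1}^{(\ell+1)m+j} \phi(i)$ as $\prod_{i=1}^{m+j}\phi(i) = f(m+j) = f(m) f(j)$, the last equality being the $\ell = 1$ case). This is the only nontrivial ingredient, and once it is in hand the rest is mechanical.

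With the multiplicative relation established, I would prove part (i) by splitting the sum $F(mk+r) = \sum_{j=1}^{mk+r} f(j)$ into $k$ consecutive blocks of length $m$ together with a tail of length $r$:
\[
F(mk+r) \;=\; \sum_{\ell=0}^{k-1}\sum_{j=1}^{m} f(\ell m + j) \;+\; \sum_{j=1}^{r} f(km + j).
\]
Substituting $f(\ell m + j) = f(m)^\ell f(j)$ factors each inner sum as $f(m)^\ell F(m)$ and $f(m)^k F(r)$ respectively. Summing the geometric progression $\sum_{\ell=0}^{k-1} f(m)^\ell = (1-f(m)^k)/(1-f(m))$ yields the claimed formula. (If $f(m) = 1$ the geometric sum degenerates to $k$, which must be handled as a limiting or separate case.)

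Part (ii) then follows immediately from (i): when $|f(m)| < 1$, letting $k \to \infty$ forces $f(m)^k \to 0$, so the residual term $f(m)^k F(r)$ vanishes and the geometric factor tends to $1/(1-f(m))$, giving $F(\infty) = F(m)/(1-f(m))$. Note that convergence of $F(\infty)$ is automatic under $|f(m)|<1$, since along any arithmetic progression $n = mk+r$ the summands $f(n)$ are bounded by $|f(m)|^k \cdot \max_{0 \leq j < m} |f(j)|$, which is summable.

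\textbf{Main obstacle.} There is no serious obstacle here; the entire argument hinges on noticing the multiplicative periodicity of $f$, after which the proof is bookkeeping. The only mild care required is the edge case $f(m) = 1$ in part (i) and ensuring the index shifts in the inductive step are carried out cleanly. The real content of the section is applying this lemma to the $q$-Pochhammer expressions underlying $g_3(z,\zeta_m)$, which is where I would expect the substantive work (identifying the correct $\phi$ and verifying $|f(m)|<1$ via the hypothesis $z^m + z^{-m} \neq 1$) to appear.
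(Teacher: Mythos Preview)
Your proposal is correct and follows essentially the same route as the paper: establish the multiplicative periodicity $f(\ell m+j)=f(m)^\ell f(j)$ (the paper states only the special case $f(mk)=f(m)^k$ but uses the general fact implicitly), split $F(mk+r)$ into $k$ blocks of length $m$ plus a tail, factor out powers of $f(m)$, and sum the geometric series. Your treatment is in fact slightly more careful than the paper's, noting the $f(m)=1$ edge case and supplying an explicit convergence argument for part~(ii).
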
  
  
\begin{proof}[Proof of Lemma \ref{ch8lemma}]
First we observe that
\begin{equation}\label{ch8periodic5}
f(mk)=\prod_{i=1}^{mk}\phi(i)=\left(\prod_{i=1}^{m}\phi(i)\right)^k=f(m)^k,
\end{equation}
by the periodicity of $\phi$. Then by the definition of $F(n)$ in Lemma \ref{ch8lemma} together with (\ref{ch8periodic5}) we can rewrite\\
\begin{flalign}
\begin{split}
&F(mk+r)\\&=\sum_{j=1}^{m}f(j)+\sum_{j=m+1}^{2m}f(j)+\sum_{j=2m+1}^{3m}f(j)+...+\sum_{j=m(k-1)+1}^{mk}f(j)+\sum_{j=mk+1}^{mk+r}f(j)\\
%&=\sum_{j=1}^{m}f(j)+f(m)\sum_{j=1}^{m}f(j)+f(m)^2\sum_{j=1}^{m}f(j)+...+f(m)^{k-1}\sum_{j=1}^{m}f(j)+f(m)^k\sum_{j=1}^{r}f(j)\\
&=\left(1+f(m)+f(m)^2+...+f(m)^{k-1}\right)\sum_{j=1}^{m}f(j)+f(m)^k\sum_{j=1}^{r}f(j).\\
\end{split}
\end{flalign}
Recognizing the sum $1+f(m)+f(m)^2+...$ as a finite geometric series completes the proof of (\ref{ch8periodic3}). If $|f(m)|<1$, the infinite case gives (\ref{ch8periodic4}).
\end{proof}

\begin{remark}
Euler's continued fraction formula \cite{Euler} allows one to rewrite any hypergeometric sum as a continued fraction, and vice versa. Then we get another finite formula for $F(\infty)$, which holds for any convergent continued fraction of the following shape with periodic coefficients, including $q$-hypergeometric series when $q$ is replaced by appropriate $\zeta_m$: %, viz. % that we can write%, using the notations in Lemma \ref{ch8lemma}, we have
\begin{equation}
F(\infty)=
\frac{{\phi(1)}}{1-\frac{\phi(2)}{1+\phi(2)-\frac{\phi(3)}{1+\phi(3)-\frac{\phi(4)}{1+\phi(4)-...}}}}\  =\  \frac{1}{1-f(m)}\left(\frac{\phi(1)}{1-\frac{\phi(2)}{1+\phi(2)-\frac{\phi(3)}{1+...-\frac{\phi(m)}{1+\phi(m)}}}}\right).
\end{equation}
Therefore, the finiteness and renormalization considerations in this section also apply to $q$-continued fractions.% versions of the $q$-series, given by Euler's formula.
\end{remark}

Clearly if we take $\phi$ to be sine, cosine, etc. in Lemma \ref{ch8lemma}, we can produce a variety of trigonometric identities. More pertinently, if we replace $\phi(i)$ with $\widetilde{\phi}(t,i):=  t \phi(i)$, this $\widetilde{\phi}$ also has period $m$; then we see $\widetilde{f}(j):=\prod_{i=1}^{j}\widetilde{\phi}(t,i)= t^j f(j)$. Thus the summatory functions $\widetilde{F}(n)=\widetilde{F}(t,n)$ and $\widetilde{F}(\infty)=\widetilde{F}(t,\infty)$ represent a polynomial and a power series in $t$, respectively --- which are, respectively, subject to (\ref{ch8periodic3}) and (\ref{ch8periodic4}) of Lemma \ref{ch8lemma}. Then for $\phi$ with period $k$ and the product $f$ as defined above, we get identities like
\begin{equation}
\sum_{n=1}^{\infty}f(n)t^n=\frac{1}{1-f(k)t^k}\sum_{n=1}^{k}f(n)t^n.
\end{equation} 
(We could also take $\widetilde{\phi}(t,i)$ equal to $t^i\phi(i)$ or $t^{2i}\phi(i)$ or $t^{2i-1}\phi(i)$, to lead to power series of other familiar shapes; however, such $\widetilde{\phi}$ are not generally periodic.)

Thinking along these lines, if  we set  
\begin{equation*}
\phi(i)=  t \frac{(1-a_1 q^{i-1})(1-a_2 q^{i-1})...(1-a_r q^{i-1})}{(1-b_1 q^{i-1})(1-b_2 q^{i-1})...(1-b_s q^{i-1})}
\end{equation*}
for $a_*, b_* \in \mathbb C$, the product $f(j)$ becomes a quotient of $q$-Pochhammer symbols, 
%\begin{equation}
%f(j)=  t^j \frac{(a_1;q)_j (a_2;q)_j...(a_r;q)_j}{(b_1;q)_j(b_2;q)_j...(b_s;q)_j},
%\end{equation}
producing the $q$-hypergeometric series
$$F(t,\infty)=\  _{r}F_s (a_1,...,a_r;b_1,...,b_s; t:q).$$ If $q\to \zeta_m$ an $m$th root of unity, then $\phi$ is also cyclic of period $m$,
and in the radial limit $_{r}F_s (a_1,...,a_r;b_1,...,b_s; t:\zeta_m)$ is subject to Lemma \ref{ch8lemma}(\ref{ch8periodic4}), %as well as (\ref{ch8finite}), 
so long as in the denominator $(1-b_* \zeta_m^{i})\neq 0$ for any $i$.

Remembering the ``moral'' of equation (\ref{ch8finite}), then similar considerations apply to almost all $q$-series and Eulerian series, for $q=\zeta_m$ a root of unity that does not produce singularities. In particular, so long as the choice of $z$ also does not lead to singularities, it is immediate from Lemma \ref{ch8lemma} by the definition (\ref{ch8umtf}) of $g_3$ that 
\begin{flalign}\label{ch8simple}
\begin{split}
g_3(z,\zeta_m)&=\frac{1}{1-(z;\zeta_m)_m^{-1}(z^{-1}\zeta_m;\zeta_m)_m^{-1}}\sum_{n=1}^{m} \frac{\zeta_m^{n(n-1)}}{(z;\zeta_m)_{n}(z^{-1}\zeta_m;\zeta_m)_{n}}\\
&=\frac{2-z^m-z^{-m}}{1-z^m-z^{-m}}\sum_{n=1}^{m} \frac{\zeta_m^{n(n-1)}}{(z;\zeta_m)_{n}(z^{-1}\zeta_m;\zeta_m)_{n}},\\
\end{split}
\end{flalign}
where for the final equation we used the elementary fact that $$(X;\zeta_m)_m=1-X^m$$ in the leading factor. For a slightly simpler formula, we apply Lemma \ref{ch8lemma} to the identity for $g_3(z^{-1};\zeta_m^{-1})$ in Theorem \ref{ch8theorem1} instead, then take $z\mapsto z^{-1}$ and $\zeta_m\mapsto \zeta_m^{-1}$, to see
\begin{equation}\label{ch8simpler}
g_3(z,\zeta_m)=\frac{2-z^m-z^{-m}}{1-z^m-z^{-m} }\sum_{n=1}^{m} \frac{\zeta_m^{-n}}{(z^{-1};\zeta_m^{-1})_{n}(z\zeta_m^{-1};\zeta_m^{-1})_{n}}.
\end{equation}
We note that the leading factor is symmetric under inversion of $z$.

\begin{remark} 
Jang--L\"{o}brich prove finite formulas similar %to Theorem \ref{ch8theorem2} 
to (\ref{ch8simple}) and (\ref{ch8simpler}) for $g_3(z,\zeta_m)$ \cite{JangLobrich}, by different methods.% following up on work of Bringmann--Rolen  \cite{BringmannRolen}. % on the even-order mock theta function $g_2(z,q)$ of Gordon and McIntosh (see \cite{GordonMcIntosh}). %Formulas for $g_2(z,\zeta_m)$ also follow from the considerations in this study, but would take us afield of our theme of the $q$-bracket and the Jacobi triple product. 
\end{remark}

A particularly lovely aspect of $q$-series such as these is that they transform into an infinite menagerie of shapes, limited only by the curiosity of the analyst. (For instance, see Fine \cite{Fine} for a stunning exploration of $q$-hypergeometric series.\footnote{Fine writes: 
%
%\begin{quote}
``The beauty and surprising nearness to the surface of some of the results could easily lead one to embark on an almost uncharted investigation of [one's] own.'' (\cite{Fine}, p. xi)}) 
%\end{quote}
Then a form like $g_3$ might have a number of different finite formulas. %, some simpler than others. 

To derive Theorem \ref{ch8theorem2}, which is simpler than the preceding expressions for $g_3$, we use another factorization strategy in the $q$-Pochhammer symbols. Again we exploit that $$(X;\zeta_m)_m=1-X^m=(X;\zeta_m^{-1})_{m};$$ thus for $0\leq n\leq m$, since $\zeta_m^{-j}=\zeta_m^{m-j}$ we have 
\begin{flalign}\label{ch8factorization2}
\begin{split}
(X;\zeta_m^{-1})_n\  &=\  (1-X)(1-X\zeta_m^{m-1})(1-X\zeta_m^{m-2})...(1-X\zeta_m^{m-(n-1)})\\
&=\  \frac{(1-X)(X;\zeta_m)_m}{(X;\zeta_m)_{m-n+1}}\  =\  \frac{(1-X)(1-X^m)}{(X;\zeta_m)_{m-n+1}}.\\
\end{split}
\end{flalign}
Making the change of indices $n\mapsto m-n+1$ in the summation in (\ref{ch8simpler}) then yields
\begin{equation*}
\sum_{n=1}^{m} \frac{\zeta_m^{-(m-n+1)}}{(z^{-1};\zeta_m^{-1})_{m-n+1}(z\zeta_m^{-1};\zeta_m^{-1})_{m-n+1}}=\sum_{n=1}^{m}\frac{\zeta_m^{n-1}(z^{-1};\zeta_m)_{n}(z\zeta_m^{-1};\zeta_m)_{n}}{(1-z^{-1})(1-z\zeta_m^{-1})(2-z^m-z^{-m})}.
\end{equation*}
Substituting this final expression back into (\ref{ch8simpler}), with a little algebra and adjusting of indices, gives Theorem \ref{ch8theorem2}. 

To prove Corollary \ref{ch8cor2}, we use geometric series, along with an order-of-summation swap and index change, to rewrite Theorem \ref{ch8theorem2} in the form
\begin{flalign}
\begin{split}
g_3(z,\zeta_m)&=\frac{1-z}{(1-z^m-z^{-m})}\sum_{n=0}^{m-1}\zeta_m^{n}\frac{(z\zeta_m;\zeta_m)_{n}(z^{-1}\zeta_m;\zeta_m)_{n}}{1-z\zeta_m^n}\\
&=\frac{z^{-1}(1-z)}{z(1-z^m-z^{-m})}\sum_{k=1}^{\infty}z^k\zeta_m^{-k}\sum_{n=0}^{m-1}\zeta_m^{k(n+1)}(\zeta_m;\zeta_m)_{n}(z^{-1}\zeta_m;\zeta_m)_{n}.\\
\end{split}
\end{flalign}
Comparing this with the formula (\ref{ch8strongU_rfinite}) for $U(-z,\zeta_m)$, which follows easily from Lemma \ref{ch8lemma}, gives the corollary. (The sum on the right might be simplified further using (\ref{ch8finite}).)

\begin{remark}
Convergence in these formulas depends on one's choice of substitutions; %and is not automatic; 
for a particular choice, careful analysis may be required to show boundedness %t the natural boundary of convergence %
as $q$ approaches the %unit circle 
natural boundary of a $q$-series %, such as the unit circle forms here 
(see Watson \cite{Watson} for examples). %, %which is the natural boundary of convergence %
%which forms the natural 
%boundary %of convergence %in general presents a boundary of essential singularity 
%for many 
%$q$-hypergeometric series like these. %, %depending on one's choice of substitutions. %; for instance, see the reference to Watson's analysis in the proof of Example \ref{ch8example} below.  %G. Andrews referred the author 
%%The reader is referred to Watson's careful treatment of convergence in \cite{Watson}, Sec. 6, as a model. %: ``There you will see how carefully Watson makes his analytic arguments to guarantee that the limiting values on the unit circle for the fifth order mock theta functions are correct.'' 
%%%\cite{Andrews_private}. 
\end{remark}
\end{proof}

%
%Theorem \ref{ch8theorem3} follows immediately, then, from the study \cite{FolsomOnoRhoades} by Folsom--Ono--Rhoades; the authors show that the {\it rank generating function $U_1(z,q)$ for strongly unimodal sequences}, given by
%\begin{equation}
%U_1(z,q):=\sum_{n=0}^{\infty}q^{n+1}(-z^{-1}q;q)_{n}(-zq;q)_{n}
%\end{equation}
%(which is a mock modular form), INCOMPLETE.   

We note that a slight variation on the proof above leads to finite formulas at applicable roots of unity for the even-order universal mock theta function $g_2(z,q)$ of Gordon--McIntosh \cite{GordonMcIntosh} as well, by an alternative approach to that of Bringmann--Rolen \cite{BringmannRolen}. Using transformations from Andrews \cite{Andrews}, Fine \cite{Fine}, and other authors, still simpler formulas might be found for particular specializations of $g_3$ at roots of unity. We demonstrate this point below.

\begin{example}\label{ch8example}
The limit of the mock theta function $f(q)$ at $\zeta_m$ an odd-order root of unity is given by
\begin{equation*}\label{ch8f_finite}
f(\zeta_m)=1-\frac{2}{3}\sum_{n=1}^{m}(-1)^n \zeta_m^{-(n+1)} (-\zeta_m^{-1};\zeta_m^{-1})_n.
\end{equation*}
\end{example}

\begin{proof}[Proof of Example \ref{ch8example}] The function $f(q)$ is convergent at odd roots of unity; however, for the reader's convenience, we will sketch a proof of convergence to the given value for just the case $q \to \zeta_m$ along a radial path. By (26.22) in \cite{Fine}, Ch. 3, Ramanujan's mock theta function $f(q)$ defined in (\ref{ch1f(q)def}) 
can be rewritten
\begin{equation}\label{ch8f(q)Fine}
f(q)=1-\sum_{n=1}^{\infty}\frac{(-1)^{n}q^n }{(-q;q)_n}.
\end{equation}
To show the summation on the right %in (\ref{ch8f(q)Fine}) 
is bounded as $q$ approaches an odd-order root of unity radially, we exactly follow the steps of Watson's analysis of the mock theta function $f_0(q)$ in \cite{Watson}, Sec. 6. In Watson's nomenclature, take $M=2$, $N$ odd, to write $q=e^{2\pi \text{i}/N}=\zeta_N$. Then by replacing $q^{(nN+m)^2}$ with $(-1)^{nN+m}q^{nN+m}$ in the numerators of the $n\geq1$ terms of the series $f_0(q)$ %in \cite{Watson} 
(we note that Watson's $m$ is not the same as the subscript of $\zeta_m$ we use throughout this paper, which corresponds to $N$ in this proof), one sees %shows the ; for as  the right-hand side 
%is bounded when $q\to \zeta_m$ radially, as 
$$\left|1- \sum_{n=1}^{\infty}\frac{(-1)^{n}q^n }{(-q;q)_n}\right|\leq 2\sum_{n=0}^{N-1}\left|\frac{q^n }{(-q;q)_n}\right|<\infty$$
when $q=\rho \zeta_N$ with $0\leq \rho \leq 1$. %The series converges radially at odd-order roots of unity. %, a\in \mathbb Z$. 
To see the value the series converges to, consider the $(Nk+r)$th partial sum, with $r<N$, of the right-hand side of (\ref{ch8f(q)Fine}) as $\rho\to 1^-$, in light of Lemma \ref{ch8lemma} (i). In fact, as $|(-1)^{N}\zeta_N^N/(-\zeta_N,\zeta_N)_N|=1/2<1$, then part (ii) of Lemma \ref{ch8lemma} applies as $Nk+r\to\infty$ and (also taking into account that $f(\rho\zeta_N)$ converges uniformly for $\rho<1$) we may write % and we have % through any increasing sequence of values:
\begin{equation}\label{ch8rho_lim}
\lim_{\rho\to 1^-} \left(1- \sum_{n=1}^{\infty}\frac{(-1)^{n}(\rho\zeta_N)^n }{(-\rho\zeta_N;\rho\zeta_N)_n}\right)=1-\frac{2}{3}\sum_{n=1}^{N}\frac{(-1)^{n}\zeta_N^n }{(-\zeta_N;\zeta_N)_n}.
\end{equation}
Now, observe that (\ref{ch8factorization2}) gives
\begin{equation}\label{ch8factorization3}
(-\zeta_N;\zeta_N)_n^{-1}=(-\zeta_N^{-1};\zeta_N^{-1})_{N-n-1}.
\end{equation}
%Thus by applying Lemma \ref{ch8lemma} %directly 
%to the right-hand side of (\ref{ch8f(q)Fine}) at $\zeta_N$, 
Applying (\ref{ch8factorization3}) to the right-hand side of (\ref{ch8rho_lim}), then making the change $n\mapsto N-n-1$ in the indices, % of the resulting %finite 
%sum, %and simplifying, %and now taking $N=m$, 
we arrive at the desired result.
\end{proof}
%
%Continuing in this fashion, we can find a formula for this radial limit that is even easier to compute.%\footnote{In a 2013 study \cite{ClemmSchneider} of $f(q)$ at roots of unity using Example \ref{ch8example2}, A. Clemm and the author found a number of elegant relations in SageMath, but without formal proof. For instance, at fifth-order roots of unity $\zeta_5^i$, one computes
%$f(\zeta_5)f(\zeta_5^2)f(\zeta_5^3)f(\zeta_5^4)=256/81$.
%Moreover, one computes 
%\begin{equation*}
%\zeta_5=9\slash 16\  f(\zeta_5)f(\zeta_5^3),\  \  \    
%\zeta_5^2=9\slash 16\  f(\zeta_5)f(\zeta_5^2),\  \  \  
%\zeta_5^3=9\slash 16\  f(\zeta_5^3)f(\zeta_5^4),\  \  \  
%\zeta_5^4=9\slash 16\  f(\zeta_5^2)f(\zeta_5^4),
%\end{equation*}
%which are equivalent to % to the elegant
%$\zeta_5^i f(\zeta_5^i) = \zeta_5^{-i} f(\zeta_5^{-i}).$
%Do similar relations hold for other roots of unity?} %$\zeta_m^i$ at other $m$?}%at other roots of unity?}

\begin{example}\label{ch8example2}
For $\zeta_m$ an odd-order root of unity we have
\begin{equation*}\label{ch8f_finite4}
f(\zeta_m)=\frac{4}{3}\sum_{n=1}^{m}(-1)^n (-\zeta_m^{-1};\zeta_m^{-1})_n.
\end{equation*}
\end{example}

\begin{proof}[Proof of Example \ref{ch8example2}]
Here we use only finite sums, so we do not need to justify convergence. Let us define an auxiliary series
\begin{equation*}%\label{ch8f_finite}
h(\zeta_m)=\frac{2}{3}\sum_{n=1}^{m}(-1)^n (-\zeta_m^{-1};\zeta_m^{-1})_n.
\end{equation*}
Then using Example \ref{ch8f_finite}, with a little arithmetic and adjusting of indices, gives
\begin{equation*}%\label{ch8f_finite3}
f(\zeta_m)-h(\zeta_m)\  =\  1-\frac{2}{3}\sum_{n=1}^{m}(-1)^n (-\zeta_m^{-1};\zeta_m^{-1})_{n+1}\  =\  h(\zeta_m).
\end{equation*}
Comparing the left- and right-hand sides above implies our claim.
\end{proof}

\section{The ``feel'' of quantum theory}
By the considerations here we can find both finite formulas at roots of unity, and inverted versions using factorizations such as in (\ref{ch8invertmtf}) leading to forms such as (\ref{ch8inversion}) and (\ref{ch8umtfquantum}), for a great many $q$-hypergeometric series. Whether or not they enjoy modularity properties, these can display very interesting behaviors, % (we do not expect them to be almost modular generally), 
emerging outside the unit circle radially from an entirely different point $\zeta_m^{-1}$ than the point on the circle $\zeta_m$ approached from within, and likewise when entering the circle radially at roots of unity from without. Moreover, the map inside the unit circle in the variable $q$ looks like an ``upside-down hyperbolic mirror-image'' of the function's behavior on the outside. 
(Taking $q \mapsto \overline{q}$ in either the $|q|<1$ or $|q|>1$ piece of (\ref{ch8umtfquantum}) turns the map ``right-side up'', but at the expense of holomorphicity\footnote{As Tyler Smith, Emory University Department of Physics, noted to the author.}.) 
%if we look at such a function outside the unit circle in the variable $\overline{q}$

This imagery reminds the author of depictions of white holes and wormholes in science fiction\footnote{Not to mention phenomena like quantum tunneling and wall crossing in physics}. Do there exist ``points-of-exit'' (and entry) analogous in some way to roots of unity, at the event horizon of a black hole? Is there a %an M. C. Escher-like 
mirror-image universe %(including all the other singularities) 
contained within? 
We won't take these fantastical analogies too seriously, yet one is led to wonder: %mysterious 
how deep is the connection of $q$-series and partition theory, to phenomena at nature's fringe?
\\

\begin{remark}
{See Appendix \ref{app:G} for further notes on Chapter 8.} 
\end{remark}

 	\clearpage%

\appendix
\chapter{Notes on Chapter 1: Counting partitions}\label{app:A}%{\bf Adapted from \cite{strange}}

\section{Elementary considerations}

%Let us assume standard notations and terminology; in particular, we write $\lambda\vdash n$ to indicate $\lambda$ is a partition of $n$, $\ell(\lambda)$ for the length of the partition (i.e., the number of parts) and $|\lambda|$ for the size (i.e., the sum of all the parts). 
Here we point out some simple but useful relations. We adopt the conventions $p(0):=1$, and $p(n):=0$ for $n<0$. %, and $|\emptyset|:=0,\ell(\emptyset):=0$.
Then we have %We begin with 
the following elementary fact\footnote{Wakhare exploits similar ideas in \cite{Tanay}.}. %by recording the following elementary observation. %Now, let us consider the following obvious fact.

\begin{proposition}\label{Aprop1}
The number of partitions of $n$ with $k$ appearing as a part at least once, is equal to $p(n-k)$.
\end{proposition}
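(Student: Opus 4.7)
The plan is to establish a direct bijection between the set $A_{n,k}$ of partitions of $n$ in which $k$ appears at least once, and the full set of partitions of $n-k$. Given such a bijection, Proposition \ref{Aprop1} follows immediately from $|A_{n,k}| = p(n-k)$.

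First I would define the forward map $\Phi\colon A_{n,k} \to \{\lambda \vdash n-k\}$ by ``removing one copy of $k$'': for $\lambda = (1^{m_1}\, 2^{m_2}\, \cdots\, k^{m_k}\, \cdots)\in A_{n,k}$, which by hypothesis has $m_k\geq 1$, set $\Phi(\lambda) = (1^{m_1}\, 2^{m_2}\, \cdots\, k^{m_k - 1}\, \cdots)$. One checks this is a valid partition (all multiplicities remain nonnegative) and that its size is $|\lambda| - k = n-k$, using the formula $|\lambda| = \sum_{i\geq 1} i\, m_i$ from the definitions in Section 1.

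Next I would define the inverse map $\Psi\colon \{\mu \vdash n-k\} \to A_{n,k}$ by ``adding a copy of $k$'': for $\mu = (1^{m'_1}\, 2^{m'_2}\,\cdots\, k^{m'_k}\,\cdots)$ with $|\mu| = n-k$, set $\Psi(\mu) = (1^{m'_1}\, 2^{m'_2}\,\cdots\, k^{m'_k + 1}\,\cdots)$. The resulting partition has size $n$ and contains $k$ with multiplicity at least $1$, so it lies in $A_{n,k}$. The conventions $p(0)=1$ and $p(n) = 0$ for $n<0$ handle the edge cases $n=k$ (where $\Phi$ sends the single partition $(k)\in A_{k,k}$ to $\emptyset$) and $n<k$ (where $A_{n,k}$ is empty, matching $p(n-k)=0$).

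Finally, I would verify $\Phi\circ\Psi = \mathrm{id}$ and $\Psi\circ\Phi = \mathrm{id}$, which is immediate from the multiplicity description, since each map simply increments or decrements $m_k$ by one. The main ``obstacle'' — really just a bookkeeping point — is to be careful that $\Phi$ is well-defined precisely because membership in $A_{n,k}$ guarantees $m_k\geq 1$, so the decrement produces a nonnegative multiplicity; no such restriction is needed for $\Psi$, which is what makes the target of $\Psi$ all of $\{\mu \vdash n-k\}$ rather than a proper subset.
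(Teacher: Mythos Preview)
Your proof is correct and takes essentially the same approach as the paper: both establish the bijection by deleting one part of size $k$ from a partition of $n$ containing $k$, with inverse given by adjoining a part $k$ to any partition of $n-k$. Your version is simply more detailed in its use of multiplicity notation and in handling the edge cases.
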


\begin{proof}
There is a bijective correspondence between the set of partitions of $n$ having $k$ as a part, and the partitions of $n-k$. Take any partition of $n$ with $k$ appearing at least once, and delete one part of size $k$ to produce a unique partition of $n-k$. Conversely, take any partition of $n-k$ and adjoin a part $k$ to produce a unique partition of $n$.
\end{proof}

For example, consider the seven partitions of $5$:
$$(5),(4,1),(3,2),(3,1,1),(2,2,1),(2,1,1,1), (1,1,1,1,1).$$\vfill
The number $k=2$ shows up in three partitions of $5$, so $p(3)=p(5-2)$ must be equal to three, which is of course correct.

Proceeding further in this direction, %, using the notation of \cite{Schneider_arithmetic}, we call $\delta\in \mathcal P$ a {\it subpartition} (or sometimes a ``divisor partition'') of $\lambda\in \mathcal P$, and we write $\delta|\lambda$, % (``$\delta$ divides $\lambda$''), 
%if all of the parts of $\delta$ are also parts of $\lambda$. Then 
a natural generalization of Proposition \ref{Aprop1} is the following statement.

\begin{proposition}\label{Athm1}
For any $\delta\in\mathcal P$ with $|\delta| < |\lambda|$, the number of partitions $\lambda\vdash n$ such that $\delta | \lambda$ is equal to $p(n-|\delta|)$.
\end{proposition}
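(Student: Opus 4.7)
The plan is to generalize the bijection from Proposition \ref{Aprop1} by using the partition multiplication and division from Definitions \ref{ch1productdef} and \ref{ch1divisiondef}. The idea is that a partition $\lambda$ of $n$ containing $\delta$ as a subpartition should correspond to an arbitrary partition of $n - |\delta|$ via the map $\lambda \mapsto \lambda/\delta$, with inverse given by adjoining the parts of $\delta$ back in.

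First I would set $\mathcal{A}_n(\delta) := \{\lambda \vdash n : \delta \mid \lambda\}$ and $\mathcal{B}_n(\delta) := \{\mu : \mu \vdash n - |\delta|\}$, and define a map $\Phi\colon \mathcal{A}_n(\delta) \to \mathcal{B}_n(\delta)$ by $\Phi(\lambda) := \lambda/\delta$. This is well-defined because $\delta \mid \lambda$ ensures the quotient is a legitimate partition, and by the size identity $|\lambda/\delta| = |\lambda| - |\delta| = n - |\delta|$ noted in Section \ref{ch3Multiplicative theory of partitions}, we land in $\mathcal{B}_n(\delta)$.

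Next I would define $\Psi\colon \mathcal{B}_n(\delta) \to \mathcal{A}_n(\delta)$ by $\Psi(\mu) := \mu\delta$ using the partition product. Since $\mu\delta$ is obtained by combining the multisets of parts and reordering, $\delta$ is automatically a subpartition of $\mu\delta$, and $|\mu\delta| = |\mu| + |\delta| = n$, so $\Psi(\mu) \in \mathcal{A}_n(\delta)$. It remains to verify $\Phi \circ \Psi = \mathrm{id}_{\mathcal{B}_n(\delta)}$ and $\Psi \circ \Phi = \mathrm{id}_{\mathcal{A}_n(\delta)}$; both are immediate from the multiset definitions of product and quotient, since deleting the parts of $\delta$ from $\mu\delta$ returns $\mu$, and adjoining the parts of $\delta$ to $\lambda/\delta$ returns $\lambda$ whenever $\delta \mid \lambda$.

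Finally, the bijection gives $|\mathcal{A}_n(\delta)| = |\mathcal{B}_n(\delta)| = p(n - |\delta|)$, which is the claim. There is no substantive obstacle here; the only subtlety is bookkeeping with multiplicities, which is handled transparently by the multiset formulation. Note the result reduces to Proposition \ref{Aprop1} on taking $\delta = (k)$, and under the convention $p(0) = 1$ it remains valid in the boundary case $|\delta| = n$ (where the unique such $\lambda$ is $\delta$ itself).
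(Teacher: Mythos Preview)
Your proof is correct and follows essentially the same approach as the paper: both establish the bijection by deleting the parts of $\delta$ from $\lambda$ to get a partition of $n-|\delta|$, and conversely adjoining the parts of $\delta$ to any partition of $n-|\delta|$. You have simply made the argument more formal by casting it in the partition product/quotient notation of Chapter~\ref{ch3Multiplicative theory of partitions}.
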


So for any $m<n$, we can recover $p(m)$ by a quick inspection of the partitions of $n$. For example, consider again the partitions of $5$ listed above. The subpartition $(2,1)$ shows up in two partitions of $5$, giving the correct value of two for $p(2)=p(5-|(2,1)|)$. %Clearly, Proposition \ref{prop1} is the special case $\delta=(k)$, the subpartition consisting of just the part $k$.

\begin{proof}
To prove Proposition \ref{Athm1} we show basically the same bijection as above. Take any partition $\lambda \vdash n$ such that $\delta|\lambda$, and delete the parts of $\delta$ to arrive at a partition of $n-|\delta|$. Conversely, setting $k=|\delta|$, then adjoin the parts of $\delta$ to any partition of $n-k$ to arrive at a partition of $n$. %shown previously, by repeatedly applying Proposition \ref{prop1} to any partition $\lambda$ of $n$, at each stage taking $k$ to be a part of partition $\delta|\lambda$ (in any order), until all the parts of $\delta$ have been used and we have produced a partition of $n-|\delta|$. Conversely, we can adjoin the parts of $\delta$ to any partition of $n-|\delta|$ to arrive at a partition of $n$. 
\end{proof}

%{\bf RPS Below we will lead up to and prove the following statements too.}

%Of course, it would be more useful if we could work in the other direction, that is, if we could use the partitions of integers up to some given number to say something about partitions of larger numbers, which rapidly become more difficult to count, such as is the case with Euler's pentagonal number theorem \cite{Andrews}. 

One immediate corollary of the propositions above is the following.

\begin{proposition}\label{Acor2}
For any $a,b,c\geq 1$, we have that $p(a)$ is equal to the number of partitions of $a+bc$ in which $b$ occurs as a part with multiplicity $\geq c$. 
\end{proposition}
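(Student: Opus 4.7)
The plan is to derive this as a direct specialization of Proposition \ref{Athm1}. The key observation is that requiring the part $b$ to appear with multiplicity at least $c$ in a partition $\lambda$ is exactly the divisibility condition $\delta \mid \lambda$ in the sense of Definition \ref{ch1divisiondef}, where $\delta$ is the partition consisting of $c$ copies of $b$.

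More precisely, I would set $\delta := (b^c) = (\underbrace{b,b,\ldots,b}_{c \text{ times}}) \in \mathcal P$, so that $|\delta| = bc$ and $\ell(\delta) = c$. By Definition \ref{ch1divisiondef}, a partition $\lambda$ satisfies $\delta \mid \lambda$ if and only if every part of $\delta$ appears in $\lambda$ with multiplicity counted — that is, if and only if $b$ occurs as a part of $\lambda$ with multiplicity at least $c$.

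Next I would apply Proposition \ref{Athm1} with $n = a + bc$ and this choice of $\delta$: since $|\delta| = bc < a + bc = |\lambda|$ (using $a \geq 1$), the hypothesis is satisfied, and the proposition yields that the number of partitions $\lambda \vdash a+bc$ with $\delta \mid \lambda$ is $p((a+bc) - |\delta|) = p(a)$. Combining the two observations gives exactly the claim.

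The proof is essentially a one-line reduction, so there is no real obstacle — the main content is simply recognizing that ``multiplicity of $b$ at least $c$'' is encoded by the subpartition $(b^c)$. One could optionally illustrate with a small example (e.g., $a=3$, $b=2$, $c=2$: partitions of $7$ containing $(2,2)$ as a subpartition should number $p(3)=3$, namely $(3,2,2), (2,2,2,1), (2,2,1,1,1)$) as a sanity check, but this is not necessary for the formal proof.
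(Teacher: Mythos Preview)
Your proof is correct and is exactly the approach the paper intends: the paper states this proposition as an ``immediate corollary of the propositions above'' (i.e., Proposition~\ref{Athm1}), and your specialization $\delta=(b^c)$ with $|\delta|=bc$ is precisely that immediate deduction.
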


%\begin{remark}
We point out the above statement is symmetric in $b,c$.
%\end{remark}

%\begin{proof}
%Corollary \ref{cor1} 
%\end{proof}

\begin{remark}
By the same token, the total number $M_k(n):=\sum_{\lambda\vdash n}m_k(\lambda)$ of $k$'s appearing as parts over all partitions of $n$ is given by $M_k(n-k)+p(n-k)$: adjoin $k$ to partitions of $n-k$, including partitions already containing $k$, to yield partitions of $n$ containing $k$.\footnote{One can prove the well-known generating function formula $\sum_{n=1}^{\infty}M_k(n)q^n=\frac{q^k}{(q;q)_{\infty}(1-q^k)}$ from ideas in Chapter 3. Write the right-hand side of the claimed identity as $(q;q)_{\infty}^{-1}\sum_{n=1}^{\infty}q^{nk}=(q;q)_{\infty}^{-1}\sum_{\lambda\in\mathcal P}\chi_{k}(\lambda)q^{|\lambda|}=\sum_{\lambda\in\mathcal P}q^{|\lambda|}\sum_{\delta |\lambda}\chi_{k}(\delta)$, where we set $\chi_{k}(\lambda)=1$ if $\lambda$ is a partition into all $k$'s and $=0$ otherwise. Then observe the number of subpartitions of $\lambda$ into all $k$'s is exactly $\sum_{\delta|\lambda}\chi_k(\delta)=m_k(\lambda)$.} 
Then by recursion,
\begin{flalign}
M_k(n)%&=p(n-k)+M_k(n-k)\\
=p(n-k)+p(n-2k)+p(n-3k)+...+p\left(n-\left \lfloor {\frac{n}{k}}\right \rfloor k \right).
\end{flalign}
We note that Ramanujan-like congruences yield congruences for $M_k$, too. For instance, \begin{equation} M_5(5n+4)\equiv 0\  (\text{mod}\  5)\end{equation} 
follows from $p(5n+4-5j)\equiv 0\  (\text{mod}\  5)$ for $1\leq j \leq n$. By the same argument, 
\begin{equation}
M_7(7n+5)\equiv 0\  (\text{mod}\  7),\  \  \  \  \  \ M_{11}(11n+6)\equiv 0\  (\text{mod}\  11).
\end{equation}
\end{remark}

\section{Easy formula for $p(n)$}
Here we count partitions of $n$ %i%n various ways %examine some simple, useful patterns within the %partitions of $n\in\mathbb Z^+$; in particular, we identify a subset of . 
%We assume standard notations and terminology with regard to the 
%set $\mathcal P$ of integer partitions $\lambda=(\lambda_1, \lambda_2,...,\lambda_r),\  \lambda_1\geq \lambda_2\geq ...\geq \lambda_r\geq 1$ (including the empty partition $\emptyset$), that will allow us to count partitions in a variety of ways. We 
%identify 
via a natural subclass of partitions we will refer to as {\it nuclear partitions}, which are partitions having no part equal to one. %\footnote{For a partition $\lambda=(1^{m_1}2^{m_2}3^{m_3}4^{m_4}...)$ the author refers to the partition formed by deleting 1's, viz. $(2^{m_2}3^{m_3}4^{m_4}...)$, as the ``nucleus'' of $\lambda$. A nuclear partition is its own nucleus.}. 
In Chapter 4 we call this set $\mathcal P_{\geq 2}$; % and note that it is problematic with respect to convergence of partition zeta functions. H
here we will denote the ``nuclear'' partitions by $\mathcal N\subset \mathcal P$ %(the author refers to $\mathcal N$ as the ``nucleus of $\mathcal P$'') 
and let $\mathcal N_n$ denote nuclear partitions of $n\geq 0$. % the {\it nucleus} of $\mathcal P$ (containing ``nuclear'' partitions), which encodes information about the formation of partitions of every integer $n\geq 0$, and leads to a relatively easy calculation of the number $p(n)$ of partitions of $n$. 
Set $\nu(0):=1$ and for $n\geq 1$, let $\nu(n)$ count the number of nuclear partitions of $n$ (noting $\nu(1)=0$). Clearly we have the recursive relation $p(n)=\nu(n)+p(n-1)$; thus $\nu(n)$ has the generating function ${(q^2;q)_{\infty}^{-1}}.$ By recursion, % yields the identity % from the above ideas (and is likely a classical fact) that 
\begin{equation}\label{Achain}
p(n)=\nu(0)+\nu(1)+\nu(2)+\nu(3)+...+\nu(n).
\end{equation}

So to count partitions of $n$, we need only keep track of nuclear partitions, a much sparser set. For instance, here is an algorithm to compute $p(n)$ from the nuclear partitions of $n$ aside from the partition $(n)$ itself, i.e., the set $\mathcal N_n\backslash (n)$, which is a considerably smaller set than $\mathcal P_n$. We let $\mu=(\mu_1,\mu_2,...,\mu_r),\  \mu_1\geq\mu_2\geq ...\geq  \mu_r \geq 2$, denote a nuclear partition. %of partitions. %, the set $\mathcal N_n\subset \mathcal P_n$. % with all parts greater than $1$. 

\begin{theorem}\label{Acountingthm}
%For partitions $\lambda=(\lambda_1,\lambda_2,...,\lambda_r)\in\mathcal N_n$, 
We have that
$$p(n)=n+\nu(n)-1+\sum_{\mu\in \mathcal N_n\backslash(n)}(\mu_1-\mu_2),$$
with the right-hand sum taken over nuclear partitions of $n$ excluding the partition $(n)$.
\end{theorem}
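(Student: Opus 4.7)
The plan is to prove Theorem \ref{Acountingthm} by constructing an explicit many-to-one map from a disjoint union of nuclear partitions of smaller integers to $\mathcal{N}_n$, and then tallying preimages. Starting from the recursive relation (\ref{Achain}), rewrite
\[
p(n)-\nu(n)=\sum_{k=0}^{n-1}\nu(k)=\nu(0)+\sum_{k=2}^{n-1}\nu(k),
\]
so it suffices to partition $\sum_{k=0}^{n-1}\nu(k)$ as $(n-1)+\sum_{\mu\in\mathcal{N}_n\setminus(n)}(\mu_1-\mu_2)$.

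I will define a map $\phi$ from $\{\emptyset\}\cup\bigcup_{k=2}^{n-1}\mathcal{N}_k$ into $\mathcal{N}_n$ by $\phi(\emptyset):=(n)$ and, for nonempty $\mu=(\mu_1,\ldots,\mu_r)\in\mathcal{N}_k$, by
\[
\phi(\mu):=(\mu_1+n-k,\,\mu_2,\ldots,\mu_r).
\]
Since we only inflate the largest part (by $n-k\geq 1$), the result is still weakly decreasing and still has no part equal to $1$, so $\phi(\mu)\in\mathcal{N}_n$; thus $\phi$ is a well-defined map.

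The heart of the argument is the preimage count. For the single-part target $(n)$, the preimages are exactly the one-part nuclear partitions $(k)$ for $k=2,3,\ldots,n-1$ together with $\emptyset$, giving $n-1$ preimages in total. For a target $\lambda=(\lambda_1,\lambda_2,\ldots,\lambda_r)\in\mathcal{N}_n\setminus(n)$ (so $r\geq 2$ and $\lambda_2\geq 2$), a preimage must be of the form $(\lambda_1-d,\lambda_2,\ldots,\lambda_r)$ where $d=n-k\in\mathbb{Z}^+$; the constraints $\lambda_1-d\geq\lambda_2$ (weakly decreasing) and $\lambda_1-d\geq 2$ (nuclear) collapse to $1\leq d\leq\lambda_1-\lambda_2$, yielding exactly $\lambda_1-\lambda_2$ preimages (which is zero when $\lambda_1=\lambda_2$, consistently meaning such $\lambda$ is outside the image of $\phi$ but contributes $0$ to the sum).

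Summing the preimage counts over all of $\mathcal{N}_n$ then gives
\[
\sum_{k=0}^{n-1}\nu(k)=(n-1)+\sum_{\mu\in\mathcal{N}_n\setminus(n)}(\mu_1-\mu_2),
\]
and adding $\nu(n)$ to both sides yields the theorem. I do not foresee a serious obstacle: the only subtlety is making sure the preimage bookkeeping for $(n)$ correctly absorbs the contribution from $\emptyset$ and from the one-part nuclear partitions $(2),(3),\ldots,(n-1)$, which together produce the ``$n-1$'' constant, while every $\lambda$ with $\lambda_1=\lambda_2$ contributes $\mu_1-\mu_2=0$ and can be safely included in (or excluded from) the sum.
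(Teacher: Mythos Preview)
Your proof is correct and is essentially the same argument as the paper's, just reframed. The paper works directly with non-nuclear partitions of $n$: each nuclear $\mu\vdash n$ ``decays'' into non-nuclear partitions by subtracting $j$ from $\mu_1$ and adjoining $j$ ones, giving $\mu_1-\mu_2$ decay products (or $n-1$ when $\mu=(n)$), so $p(n)-\nu(n)=(n-1)+\sum_{\mu\in\mathcal N_n\setminus(n)}(\mu_1-\mu_2)$. Your version first invokes \eqref{Achain} to replace non-nuclear partitions of $n$ by $\{\emptyset\}\cup\bigcup_{k=2}^{n-1}\mathcal N_k$, but since ``delete all the $1$'s'' is a bijection between these two sets, your map $\phi$ is exactly the paper's decay map read through that bijection, and the preimage counts coincide.
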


Then to compute $p(n)$ one can follow these steps: \begin{enumerate}
\item Write down the partitions of $n$ containing no 1's aside from $(n)$ itself, that is, the subset $\mathcal N_n \backslash (n)$. For example, to find $p(6)$ we use $\mathcal N_6\backslash(6)=\{(4,2), (3,3), (2,2,2)\}$.
\item Write down the difference $\mu_1-\mu_2\geq 0$ between the first part and the second part of each partition from the preceding step. In the present example, we write down $$4-2=2,\  \  \  \  3-3=0,\  \  \  \  2-2=0.$$
\item Add together the differences obtained in the previous step, then add the result to $n+\nu(n)-1$ to arrive at $p(n)$. In this example, we add $2+0+0=2$ from the previous step to $6+\nu(6)-1=6+4-1$, arriving at $p(6)=6+4-1+2=11$, which of course is correct.  
\end{enumerate}

\begin{proof}
Observe that every nuclear partition of $n$ can be formed by adding $m_1(\lambda)$ to the largest part $\lambda_1$ of a ``non-nuclear'' partition $\lambda\vdash n$, and deleting all the 1's from $\lambda$, e.g., $(3,2,1,1)\to (5,2)$. Conversely, every nuclear partition $\mu \vdash n$ can be turned into a non-nuclear partition of $n$ by decreasing the largest part $\mu_1$ by some positive integer $j\leq \mu_1-\mu_2$, and adjoining $j$ 1's to form the non-nuclear partition. So the nuclear partitions of $n$ ``decay'' (by giving up 1's from the largest part) into non-nuclear partitions of $n$, e.g., $(5,2)\to(4,2,1)\to (3,2,1,1)\to(2,2,1,1,1)$, of which the total number is $p(n)-\nu(n)$. Each nuclear partition $\mu$ decays into $\mu_1-\mu_2$ different non-nuclear partitions except the partition $(n)$, which decays into $n-1$ non-nuclear partitions, viz. $(n)\to(n-1,1)\to (n-2,1,1)\to...\to (1,1,...,1)$, so the number of non-nuclear partitions of $n$ is $p(n)-\nu(n)=(n-1)+\sum_{\mu\in \mathcal N_n\backslash(n)}(\mu_1-\mu_2).$
\end{proof}

It is interesting to see how the subset $\mathcal N\subset \mathcal P$ produces the entire set $\mathcal P$ by this simple ``decay'' process\footnote{To prove this assume otherwise, that for some $n\geq 0$ there is a non-nuclear partition $\phi$ of $n$ {\it not} produced by decay of some partition in $\mathcal N_n$. Then deleting all the $1$'s from $\phi$ and adding them to the largest part $\phi_1$ produces a nuclear partition of $n$ that decays into $\phi$, a contradiction.}. Now, let $\gamma(n)$ denote the number of nuclear partitions $\mu$ of $n$ such that %either $\mu$ has exactly one part, or 
$\mu_1=\mu_2$ (the first two parts are equal), %\footnote{The author thinks of these partitions as in a ``ground state'', on analogy to atoms, as they are not subject to decay into non-nuclear partitions.}, 
setting $\gamma(0):=0$ and noting $\gamma(1)=\gamma(2)=\gamma(3)=0$, as well. Then for $n\geq 3$ the recursion $\nu(n)=\gamma(n)+\nu(n-1)$ holds (adding 1 to the largest part of every nuclear partition of $n-1$ gives the nuclear partitions $\mu$ of $n$ with $\mu_1>\mu_2$)\footnote{In fact, much as nuclear partitions ``control'' the growth of $p(n)$, these nuclear partitions with first two parts equal --- which the author thinks of as being in their ``ground state'' --- control the growth of $\nu(n)$, thus appearing to fundamentally control $p(n)$.}, thus the generating function for $\gamma(n)$ is $\frac{1}{(1+q)(q^3;q)_{\infty}}-1+q-q^2$. Moreover, noting $\nu(2)=1$, we have for $n\geq 3$ that  \begin{equation}\label{nu_recurrence} \nu(n)=1+\gamma(3)+\gamma(4)+...+\gamma(n).\end{equation} % (the author thinks of these as ``ground state'' partitions). 

For $m\geq 1$, let $\nu(n,m)$ denote the number of nuclear partitions of $n$ whose parts are all $\leq m$. Then it is easily verified that we can also compute $\nu(n)$ as follows\footnote{See Cor. 1.5 of \cite{MercaSchmidt} for a formula for $\nu(n)$ (there written as a backward difference $\nabla [p] (n)$) involving the classical M\"{o}bius function.}. %, as the reader can easily verify. %without much difficulty. % with a moment's consideration. %; for the sake of convenience, let us refer to this set as the {\it partial nuclear partitions of $n$}. Then we have the following formula to compute $\nu(n)$ using the partial nuclear partitions of the integers $\leq n-2$.

\begin{theorem}
%For partitions $\lambda=(\lambda_1,\lambda_2,...,\lambda_r)\in\mathcal N_n$, 
We have $n \geq 4$ that
$$\nu(n)=\sum_{k=2}^{n-2}\nu(k,n-k).$$
\end{theorem}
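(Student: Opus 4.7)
My plan would be to establish a bijection that realizes each term $\nu(k, n-k)$ on the right-hand side as counting a distinct family of nuclear partitions of $n$. Given a nuclear partition $\mu = (\mu_1, \mu_2, \ldots, \mu_r) \in \mathcal N_n$ with $r \geq 2$ parts, I would strip off the largest part $\mu_1$ and send $\mu$ to the pair $(k, \lambda)$, where
\[
k := n - \mu_1 = \mu_2 + \mu_3 + \cdots + \mu_r, \qquad \lambda := (\mu_2, \mu_3, \ldots, \mu_r).
\]
The bulk of the verification is then just checking that the image lies in the correct set and that the map is invertible.

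First I would verify the ranges. Since every $\mu_i \geq 2$ and $\mu_1 \geq \mu_2 \geq \cdots \geq \mu_r$, the tail $\lambda$ is again a partition with all parts in $\{2, 3, \ldots\}$, so $\lambda \in \mathcal N_k$. The weakly decreasing condition forces $\lambda_i = \mu_{i+1} \leq \mu_1 = n - k$, so $\lambda$ is counted by $\nu(k, n-k)$. The inequality $\mu_1 \geq 2$ together with $\mu_1 \leq n - \mu_r \leq n - 2$ gives $2 \leq k \leq n-2$, precisely the index range of the summation. This uses $n \geq 4$ in the guise $r \geq 2$ and $\mu_1 \geq 2$ being compatible with $n \geq \mu_1 + \mu_r \geq 4$.

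Next I would define the inverse explicitly: given $2 \leq k \leq n-2$ and $\lambda = (\lambda_1, \ldots, \lambda_s) \in \mathcal N_k$ with $\lambda_1 \leq n-k$, prepend a largest part of size $n-k \geq 2$ to obtain
\[
\mu := (n-k, \lambda_1, \lambda_2, \ldots, \lambda_s).
\]
The hypothesis $\lambda_1 \leq n-k$ guarantees $\mu$ is weakly decreasing, the bound $n-k \geq 2$ together with $\lambda \in \mathcal N_k$ ensures $\mu$ is nuclear, and $|\mu| = (n-k) + k = n$. That the two maps are mutually inverse is immediate from their descriptions.

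The main obstacle -- really the only substantive point requiring care -- is the boundary accounting: one must confirm that every nuclear partition of $n$ is captured by this bijection (and captured exactly once) under the stated index range. In particular, one has to check how the extreme one-part partition $(n) \in \mathcal N_n$, whose ``tail'' $\lambda$ is empty and whose associated $k$ would be $0$, interacts with the summation range $2 \leq k \leq n-2$; reconciling this may call for either adopting the convention $\nu(0, m) := 1$ and extending the summation, or identifying the correct normalizing adjustment. Apart from this boundary bookkeeping, the rest of the argument is a direct verification of the bijection described above.
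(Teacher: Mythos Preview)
Your bijection is exactly the natural argument the paper has in mind (the paper itself offers no proof beyond ``easily verified''). Stripping the largest part $\mu_1$ from a nuclear partition $\mu\vdash n$ with $\ell(\mu)\ge 2$ and recording $(k,\lambda)=(n-\mu_1,(\mu_2,\ldots,\mu_r))$ is the right map, and your range checks and inverse are clean.

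Your instinct about the boundary is not mere caution---it is pointing to a genuine off-by-one in the stated identity. The one-part partition $(n)$ corresponds to $k=0$, which falls outside the summation range $2\le k\le n-2$, and nothing else in the sum accounts for it. Concretely, for $n=6$ one has $\nu(6)=4$ (the partitions $(6),(4,2),(3,3),(2,2,2)$), while $\nu(2,4)+\nu(3,3)+\nu(4,2)=1+1+1=3$; the same discrepancy appears for $n=4,5,7$. What your bijection actually proves is
\[
\nu(n)-1=\sum_{k=2}^{n-2}\nu(k,n-k),
\]
or equivalently $\nu(n)=\sum_{k=0}^{n-2}\nu(k,n-k)$ under the natural conventions $\nu(0,m)=1$, $\nu(1,m)=0$. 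So your ``boundary bookkeeping'' worry is the entire story: the statement as printed needs the $+1$ (or the extended index range), and once that correction is made your argument is complete.
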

%Of course, this indicates a bijection between the pertinent subsets of partitions. 
Combining this identity with Theorem \ref{Acountingthm} and \eqref{nu_recurrence} above, and making further simplifications, the task of computing $p(n)$ can be reduced to counting much smaller subsets of partitions of integers $\leq n-2$. These small subsets of partitions of integers up to $n-2$ completely encode the value of $p(n)$.% than $\mathcal N_n$. 
%{\bf CONSTRUCTION MATERIALS BELOW}

More generally, we might let $\nu_k(n)$ denote the number of partitions of $n$ having no part equal to $k$ --- let us refer to these as ``$k$-nuclear'' partitions --- % (we could call these ``$k$-nuclear'' partitions), 
setting $\nu_k(0):=1$ for all $k\geq 1$; thus $\nu(n)=\nu_1(n)$. Let $\mathcal N^k$ denote the set of all $k$-nuclear partitions, and let $ \mathcal N_n^k$ be $k$-nuclear partitions of $n$; thus $\mathcal N=\mathcal N^1, \  \mathcal N_n=\mathcal N_n^1$. Clearly we have $p(n)=\nu_k(n)+p(n-k)$, so $\nu_k(n)$ has the generating function $\frac{1-q^k}{(q;q)_{\infty}}$ and is subject to essentially the same treatment as $\nu(n)$ above. %\footnote{It seems likely a $k$-nuclear analog to $\gamma(n)$ exists, too, but the author has not checked this point.}. 
Then by recursion, as previously, %we can write 
\begin{equation} p(n)=p\left(n-\floor{\frac{n}{k}}k\right)+\sum_{j=1}^{\floor{n/k}}\nu_k(n-jk),\end{equation}
where $\floor{x}$ is the floor function, and by a similar proof (decay into $k$'s instead of 1's) we generalize Theorem \ref{Acountingthm}, which represents the $k=1$ case of the following identity. % from the first two parts.%, which in this case has the following form.

\begin{theorem} We have that
 \begin{equation*}p(n)=\floor{\frac{n}{k}}^*+\nu_k(n)+\sum_{\mu\in \mathcal N_n^k\backslash(n)} \floor{\frac{\mu_1-\mu_2}{k}},
\end{equation*} 
where we set $\floor{\frac{a}{b}}^*:=\floor{\frac{a}{b}}-1$ if $b|a$ and $:=\floor{\frac{a}{b}}$ otherwise, and the right-hand sum is taken over $k$-nuclear partitions of $n$ excluding the partition $(n)$.% when $n\neq k$.
% with the sum taken over partitions $\mu$ of $n$ having no part equal to $k$, excluding $(n)$ itself. 
\end{theorem}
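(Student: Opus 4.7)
The plan is to generalize the decay bijection used in the proof of Theorem~\ref{Acountingthm} (the $k = 1$ case) to arbitrary fixed $k \geq 1$. I would first define a forward map $\Phi$ on $\mathcal P_n \setminus \mathcal N_n^k$ as follows: given a non-$k$-nuclear partition $\lambda$ with $m := m_k(\lambda) \geq 1$, remove all $m$ copies of the part $k$ to obtain a partition $\lambda'$ of $n - mk$, and then add the surplus $mk$ to the largest part of $\lambda'$ (with the convention that if $\lambda' = \emptyset$ the image is the single-part partition $(mk) = (n)$). A routine verification confirms that $\mu := \Phi(\lambda) \in \mathcal N_n^k$, since in the generic case the augmented largest part strictly exceeds $k$, and in the degenerate case $\lambda = (k, k, \ldots, k)$ the image is $(n)$ with $n > k$.

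Next I would enumerate the fibers of $\Phi$. The inverse construction---the \emph{decay}---starts from a $k$-nuclear $\mu$, decreases $\mu_1$ by $jk$ for some positive integer $j$, and inserts $j$ new parts equal to $k$. For the resulting partition to recover $\mu$ under $\Phi$, the reduced top part $\mu_1 - jk$ must remain the largest non-$k$ part, giving the candidate range $j \in \{1, 2, \ldots, \floor{(\mu_1-\mu_2)/k}\}$ for $\mu \neq (n)$. For $\mu = (n)$, interpreting $\mu_2 = 0$, the naïve range $j \in \{1, \ldots, \floor{n/k}\}$ collapses by exactly one when $k \mid n$, because then $j = n/k - 1$ and $j = n/k$ both yield the same partition $(k, k, \ldots, k)$; hence this fiber has size $\floor{n/k}^*$ as defined.

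Partitioning $\mathcal P_n$ disjointly as $\mathcal N_n^k \sqcup \bigsqcup_{\mu \in \mathcal N_n^k} \Phi^{-1}(\mu)$ and then separating the $\mu = (n)$ contribution from the remainder of the sum yields the claimed identity, with $\nu_k(n)$ accounting for the $k$-nuclear partitions themselves, $\floor{n/k}^*$ absorbing the $(n)$-fiber, and the remaining sum $\sum_{\mu \in \mathcal N_n^k \setminus (n)} \floor{(\mu_1-\mu_2)/k}$ collecting the rest. The hard part will be the fiber enumeration for general $k \geq 2$: the $k = 1$ proof sidesteps any boundary coincidences because $\mu_2 \geq 2 > 1 = k$ automatically forces $\mu_1 - j \neq 1$ throughout the valid range, whereas for $k \geq 2$ the analogous coincidence $\mu_1 - jk = k$ can potentially occur even for $\mu \neq (n)$, and one must confirm carefully that the generic count $\floor{(\mu_1-\mu_2)/k}$ together with the single-coincidence correction $\floor{n/k}^*$ captures all fiber sizes exactly.
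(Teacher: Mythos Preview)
Your approach is exactly the paper's---the paper's entire argument is the phrase ``by a similar proof (decay into $k$'s instead of $1$'s)''---and you have correctly isolated the one place where the $k=1$ argument does not transfer. In fact your worry is not merely a ``hard part'' to verify: it is an actual obstruction, and the identity as stated is \emph{false} for $k\geq 2$.

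Take $k=2$, $n=5$. Then $p(5)=7$, $\nu_2(5)=4$, $\floor{5/2}^{*}=2$, and the sum over $\mathcal N_5^2\setminus\{(5)\}=\{(4,1),(3,1,1),(1^5)\}$ equals $\floor{3/2}+\floor{2/2}+0=2$, so the right-hand side is $2+4+2=8\neq 7$. The overcount comes from precisely the coincidence you flagged: decaying $\mu=(4,1)$ with $j=1$ produces $(2,2,1)$, in which the reduced top part $\mu_1-jk=2=k$ merges with the inserted $k$; hence $\Phi((2,2,1))=(5)$, not $(4,1)$, and this partition has already been counted in the $(5)$-fiber. The same failure recurs for $k=3,\,n=7$ (right side $16$, $p(7)=15$) and $k=2,\,n=7$ (right side $17$, $p(7)=15$). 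In general the excess equals the number of $\mu\in\mathcal N_n^k\setminus\{(n)\}$ with $k\mid\mu_1$, $\mu_1\geq 2k$, and $\mu_2<k$; each such $\mu$ needs the same $-1$ correction that $\floor{n/k}^{*}$ supplies for $(n)$, and the stated formula omits it.
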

 
As in the remark at the end of the previous section, the Ramanujan congruences imply, for instance, that since $p(5n+4)-p\left(5(n-1)+4 \right)\equiv 0\  (\text{mod}\  5)$, then 
\begin{equation}\nu_5(5n+4)\equiv 0\  (\text{mod}\  5).\end{equation}
Similarly, we also have \begin{equation}\nu_7(7n+5)\equiv 0\  (\text{mod}\  7),\  \  \  \  \  \  \nu_{11}(11n+6)\equiv 0\  (\text{mod}\  11).\end{equation}
%Noting $\nu_a(n+a)\equiv p(n+a)-p(n)\  (\text{mod}\  a)$, i
If these congruences could be proved directly, it seems likely one could run this kind of argument (perhaps using Proposition \ref{Acor2}, as well) in reverse to prove Ramanujan-like congruences by induction.%\end{remark}

%\begin{corollary}\label{cor1}
%\end{corollary}

%-----------------------------------------------------------------------------------------------%
\chapter{Notes on Chapter 3: Applications and algebraic considerations}\label{app:B}
%-----------------------------------------------------------------------------------------------%

\section{Ramanujan's tau function and $k$-color partitions}
Here we give two immediate applications in number theory of the principle at the heart of Proposition \ref{ch3cauchyprod} (the partition Cauchy product formula), extended to products of more than two series. The first example gives a formula for Ramanujan's tau function, an arithmetic function which appears as the coefficients of a weight-$12$, level 1 cusp form (see \cite{Ono_web}). As previously, let $m_i=m_i(\lambda)$ denote the multiplicity of $i$ as a part of $\lambda$.%\footnote{Another partition-theoretic formula for $\tau(n)$ is given in Appendix D.} 

\begin{example}\label{Btau}%[Unpublished]
Ramanujan's tau function $\tau(n)$, defined as the $n$th coefficient of $q(q;q)_{\infty}^{24}$, can be written  
\begin{flalign*}
\tau(n)=\sum_{\lambda \vdash (n-1)}(-1)^{\ell(\lambda)} \binom{24}{m_1(\lambda)}\binom{24}{m_2(\lambda)}\binom{24}{m_3(\lambda)}\cdots.
\end{flalign*}
\end{example}

\begin{proof}
This follows by applying the binomial theorem to each factor $(1-q^i)^{24}$ of the $q$-Pochhammer symbol to give $$q(q;q)_{\infty}^{24}=q\prod_{n=0}^{\infty}\sum_{k=0}^{24}(-1)^k q^k \binom{24}{k},$$
then expanding the product and collecting coefficients of $q^n$ as sums of the shape $\sum_{\lambda \vdash n}$. The extra factor of $q$ produces the shift $\sum_{\lambda \vdash n}\mapsto \sum_{\lambda \vdash n-1}$ in the coefficients.\end{proof}

Next we find a formula for the number $P_k(n)$ of $k$-color partitions of $n$, as studied by Agarwal and Andrews \cite{AgarwalAndrews, Agarwal} and other authors. % as studied, for instance, in \cite{kcolor}.%\footnote{Another partition-theoretic formula for $P_k(n)$ is given in Appendix D.} 

\begin{example}\label{Bkcolor}%[Unpublished]
The number $P_k(n)$ of $k$-color partitions of $n$, which is equal to the $n$th coefficient of $(q;q)_{\infty}^{-k}$ for $k\geq 1$, can be written
\begin{flalign*}
P_k(n)=\sum_{\lambda \vdash n} \binom{k+m_{1}(\lambda)-1}{m_1(\lambda)}\binom{k+m_{2}(\lambda)-1}{m_2(\lambda)}\binom{k+m_{3}(\lambda)-1}{m_3(\lambda)}\cdots.
\end{flalign*}
\end{example}

\begin{proof}
Just like the previous example, this follows by writing $$(q;q)_{\infty}^{-k}=\prod_{n=0}^{\infty}\sum_{k=0}^{24}q^k \binom{n+k-1}{k},$$
expanding the product, and collecting coefficients.\end{proof}

More generally, the same proofs extend to absolutely convergent products of the form $\prod_{n=1}^{\infty}(1-\phi(n)q^n)^{k}$ for any $k\in\mathbb Z$. 
\begin{theorem}\label{Bgeneral}For $\phi\colon \mathbb N \to \mathbb C$ and $q\in\mathbb C$ such that both sides converge, $k\geq 0,$ we have the identities
\begin{flalign*} 
\prod_{n=1}^{\infty}(1-\phi(n)q^n)^{k}&=\sum_{\lambda \in \mathcal P}q^{|\lambda|}(-1)^{\ell(\lambda)} \prod_{i=1}^{\infty}\phi(i)^{m_i(\lambda)}\binom{k}{m_i(\lambda)},\\ \\
\prod_{n=1}^{\infty}(1-\phi(n)q^n)^{-k}&=\sum_{\lambda \in \mathcal P}q^{|\lambda|}\prod_{i=1}^{\infty}\phi(i)^{m_i(\lambda)}\binom{k+m_{i}(\lambda)-1}{m_i(\lambda)}.
\end{flalign*}
\end{theorem}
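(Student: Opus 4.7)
The plan is to mimic the strategy already used in Examples \ref{Btau} and \ref{Bkcolor}, applying a termwise expansion to each factor of the product and then collecting terms according to the partition indexed by the chosen multiplicities. For the first identity, I would begin by invoking the (finite) binomial theorem on each factor to write
\[
(1-\phi(n)q^n)^{k} \;=\; \sum_{j=0}^{k}\binom{k}{j}(-1)^{j}\phi(n)^{j}q^{nj},
\]
and then substitute this into the infinite product, so that
\[
\prod_{n=1}^{\infty}(1-\phi(n)q^n)^{k}
\;=\;
\prod_{n=1}^{\infty}\sum_{j=0}^{k}\binom{k}{j}(-1)^{j}\phi(n)^{j}q^{nj}.
\]
Distributing this product amounts to choosing, for each $n\geq 1$, a nonnegative integer $j_n\leq k$, with only finitely many $j_n$ nonzero (so that the resulting monomial contributes to a fixed power of $q$). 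Each such choice $(j_n)_{n\geq 1}$ is in bijection with a partition $\lambda\in\mathcal P$ via $j_n = m_n(\lambda)$, and the summand picks up the factor $(-1)^{\sum_n m_n(\lambda)} = (-1)^{\ell(\lambda)}$ together with $q^{\sum_n n\,m_n(\lambda)} = q^{|\lambda|}$ and $\prod_i \phi(i)^{m_i(\lambda)}\binom{k}{m_i(\lambda)}$, which is exactly the claimed formula.

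For the second identity, the same scheme works using the negative binomial series
\[
(1-\phi(n)q^n)^{-k} \;=\; \sum_{j=0}^{\infty}\binom{k+j-1}{j}\phi(n)^{j}q^{nj},
\]
after which the expansion of the product over $n$ is indexed by arbitrary sequences of nonnegative multiplicities with only finitely many nonzero terms, i.e.\ by the full set $\mathcal P$, with no sign factor appearing. The signs in the first identity come solely from the $(-1)^j$ in the binomial expansion, which is why only $(-1)^{\ell(\lambda)}$ survives in that case and not in the second.

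The only real obstacle is justifying the rearrangement of the implicit multiple sum, so I would dispose of that at the outset by assuming absolute convergence of both sides (which the hypothesis ``$q\in\mathbb C$ such that both sides converge'' is meant to guarantee; for instance when $|q|<1$ and $\phi$ is polynomially bounded, the product converges absolutely and one can freely reorder). Once absolute convergence is in hand, the bijection between finite-support sequences $(m_n)$ and partitions $\lambda$ via $m_n=m_n(\lambda)$ turns the expanded product into the displayed partition sum, which completes the proof. The argument is essentially a packaging of Theorem \ref{ch41.1} and the partition Cauchy product (Proposition \ref{ch3cauchyprod}), so no genuinely new ingredient is required beyond the bookkeeping that converts exponents of $\phi(i)$ into multiplicities $m_i(\lambda)$.
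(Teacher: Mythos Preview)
Your proposal is correct and is essentially identical to the paper's argument: the paper states that the proofs of Examples \ref{Btau} and \ref{Bkcolor} extend directly, and those proofs apply the (negative) binomial theorem to each factor, expand the product, and collect terms as partition sums via the multiplicities $m_i(\lambda)$, exactly as you do.
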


Combined with Theorems \ref{ch41.1} and \ref{ch41.11} in Chapter 4, and also with Fa\`{a} di Bruno's formula as in Chapter 5 and Appendix D below, arbitrarily complicated products and quotients of $q$-Pochhammer symbols (and other product forms) can be evaluated similarly. Additional formulas for $\tau(n), P_k(n)$ are given in Appendix D.

\section{\texorpdfstring{$q$}{Lg}-bracket arithmetic}%{\bf Adapted from \cite{qbracket}}

The $q$-bracket operator is reasonably well-behaved as an algebraic object; here we give a few formulas that may be useful for computation. From Definition \ref{ch1qbracket} we have $q$-bracket addition
\[
\left<f\right>_q+\left<g\right>_q=\left<f+g\right>_q
,\]
which is commutative, of course, and also associative:
\[
\left<f+g\right>_q+\left<h\right>_q=\left<f\right>_q+\left<g+h\right>_q.
\]
We have for a constant $c\in\C$ that $c\left<f\right>_q=\left<cf\right>_q$; other basic arithmetic relations such as $\left<0\right>_q=0$ and $\left<f\right>_q+\left<0\right>_q=\left<f\right>_q$ follow easily as well.

Now, let
\[
\widetilde{f}(n):=\sum_{\lambda\vdash n}f(\lambda)
.\]
We will define a convolution ``$*$'' of two such functions $\widetilde{f},\widetilde{g}$ by
\begin{equation}\label{Bconvolution}
(\widetilde{f}*\widetilde{g})(\lambda):=\frac{1}{p(|\lambda|)}\sum_{k=0}^{|\lambda|}\widetilde{f}(k)\widetilde{g}(|\lambda|-k)
.
\end{equation}
Note that, by symmetry, $\widetilde{f}*\widetilde{g}=\widetilde{g}*\widetilde{f}$.\footnote{The author is grateful to Alex Rice for a discussion about convolution that informed this section.} 

Let us also define a multiplication ``$\star$'' between $q$-brackets by
\begin{equation}\label{Btimes}
\left<f\right>_q\star\left<g\right>_q:=\frac{\left<f\right>_q\left<g\right>_q}{(q;q)_{\infty}}
,
\end{equation}
where the product and quotient on the right are taken in $\C[[q]]$. It follows from \ref{Bconvolution} and \ref{Btimes} above that
\[
\left<f\right>_q\star\left<g\right>_q=\left<\widetilde{f}*\widetilde{g}\right>_q 
.\]
From here it is easy to establish a $q$-bracket arithmetic yielding a commutative ring structure, with familiar-looking relations such as
\[\left<f\right>_q\star\left<\widetilde{g}*\widetilde{h}\right>_q=\left<\widetilde{f}*\widetilde{g}\right>_q\star\left<h\right>_q
,\]
\[
\left<f\right>_q\star\left<g+h\right>_q=\left<\widetilde{f}*\widetilde{g}\right>+\left<\widetilde{f}*\widetilde{h}\right>_q,
\]
and so on. 

It is trivial to see that $\left<1\right>_q=1$; however, $\left<1\right>_q\star\left<f\right>_q=\frac{\left<f\right>_q}{(q;q)_{\infty}}\neq\left<f\right>_q$, so $\left<1\right>_q$ is not the multiplicative identity in this arithmetic. In fact, as we note in Chapter 3, Section 1, the $q$-bracket of Dyson's rank function (with $\text{rk}(\emptyset):=1$ as in Chapter 3) is 
$$\left<\operatorname{rk}\right>_q=(q;q)_{\infty}
.$$
Then $\left<\operatorname{rk}\right>_q$ may serve as the multiplicative identity in the $q$-bracket arithmetic above, by Equation (\ref{Btimes}).

\newpage

\section{Group theory and ring theory in $\mathcal P$}

\noindent {\bf Based on joint work with Ian Wagner}

\subsection{Antipartitions and group theory}

Here we will define the set $\mathcal P^-$ of \textit{antipartitions}, in analogy with antiparticles in physics: partitions and antipartitions annihilate one another. Then we show that the set $\mathcal P \cup \mathcal P^-$ naturally forms a group structure. 

\begin{definition}\label{Bantipartitiondef}
For $\lambda=(\lambda_1,\lambda_2,...,\lambda_r)\in \mathcal P$, we define an antipartition $\lambda^-=$ $(\lambda_1^-, \lambda_{2}^-,...,$ $\lambda_r^-)$ $\in \mathcal P^-$ such that 
$$\lambda \lambda^-=\emptyset.$$
We refer to the $\lambda_i^-\in\lambda^-$ as ``antiparts''. %, and write them in ascending order instead.
%Furthermore, we will write $\lambda^-=(\lambda_1^-, \lambda_2^-,...,\lambda_r^-)$ and 
\end{definition}

Let us adopt the convention $\lambda^{-a}:=\lambda^-\lambda^-\lambda^-\cdots \lambda^-$ ($a$ repetitions). Clearly we have that $(\lambda^{-})^-=\lambda$; every partition is the antipartition of its own antipartition. We also have right away that $\emptyset^-=\emptyset$. For $a\in\mathbb Z^+$, %here is how %we use the following convention to show how 
corresponding parts and antiparts annihilate each other pair-wise in partitions (we adopt the convention of separating parts and antiparts with a semicolon, and putting the antiparts to the right in a partition):
$$(a;a^-)=(a)(a)^-=\emptyset.$$
But it is not necessary that partitions and antipartitions should cancel; %; only  parts and antiparts indexed by the same integer interact. 
in fact, we might have ``mixed'' partitions containing both parts and antiparts. We can compute, for example, that
$$(5,4,3,3)(4,3,1,1)^-=(5,4,3,3;4^-, 3^-, 1^-, 1^-)=(5,3;1^-,1^-).$$
%Multiplying $(5,3,1^-)$ with another mixed partition goes like this:
%$$(5,3,1^-)(7^-,5,3^-,2,1^-)=(7^-,5,5,3,3^-,2,1^-,1^-)=(7^-,5,5,2,1^-,1^-)$$
Note that parts and antiparts indexed by the same integer cancel. %We adopt the convention of putting the antiparts to the right of the parts within a mixed partition. % (although in the end, paired parts and antiparts will vanish). 
Mixed partitions may also be written in ``rational'' form, e.g.,
$$(5,3;1^-,1^-)=(5,3)(1,1)^-=(5,3)/(1,1).$$ 
%$$(7^-,5,5,2,1^-,1^-)=(5,5,2)/(7,1,1).$$
Then we might refer to the set \begin{equation}\label{BQdef}
\mathcal Q:=\mathcal P \cup \mathcal P^-
\end{equation}
as {\it rational partitions}. (In usage, however, we still refer to elements of $\mathcal Q$ as ``partitions''.) 

%\begin{remark}
%These antiparts are like negative numbers crossed with reciprocals; for $\lambda_i\in\Z^+$, they obey the relations: 
%$$\lambda_i + \lambda_i^-=0,\  \  \  \lambda_i \lambda_i^-=1$$
%\end{remark}

A few other relations are immediate.

\begin{proposition}\label{Bprop1}
We have the following identities:
\begin{equation*}\label{Blog relations2}
\ell(\lambda^-)=-\ell(\lambda),\  \  \  \  
|\lambda^-|=-|\lambda|,\  \  \  \  
n_{\lambda^-}=\frac{1}{n_{\lambda}},\  \  \  \  
m_{k}(\lambda^-)=-m_{k}(\lambda).
\end{equation*}
\end{proposition}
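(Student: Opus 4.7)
The plan is to deduce all four identities as immediate consequences of the defining equation $\lambda \lambda^- = \emptyset$ (Definition \ref{Bantipartitiondef}), together with the homomorphism-like properties of $\ell$, $|\cdot|$, $n_{\cdot}$, and $m_k$ under partition multiplication that were recorded in Section \ref{ch3Multiplicative theory of partitions}. That is, each of these statistics has been shown to satisfy
\[
\ell(\lambda\lambda')=\ell(\lambda)+\ell(\lambda'),\quad |\lambda\lambda'|=|\lambda|+|\lambda'|,\quad n_{\lambda\lambda'}=n_{\lambda}n_{\lambda'},
\]
and moreover the multiplicities $m_k$ add under multi-set union, $m_k(\lambda\lambda')=m_k(\lambda)+m_k(\lambda')$, while at the empty partition we have $\ell(\emptyset)=0$, $|\emptyset|=0$, $n_{\emptyset}=1$, and $m_k(\emptyset)=0$.

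First I would apply each of these homomorphisms to both sides of $\lambda\lambda^-=\emptyset$. This gives $\ell(\lambda)+\ell(\lambda^-)=0$, $|\lambda|+|\lambda^-|=0$, $n_\lambda\, n_{\lambda^-}=1$, and $m_k(\lambda)+m_k(\lambda^-)=0$ for every $k\geq1$. Solving each equation yields the four asserted identities in turn, completing the proof.

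The only genuine subtlety is that the statistics $\ell,|\cdot|,n_{\cdot},m_k$, which were originally defined on $\mathcal P$, now need to be evaluated on elements of $\mathcal P^-$; but the multiplication-respecting extension is forced by consistency, and antipartitions have been introduced precisely so that these statistics extend from the monoid $(\mathcal P,\cdot)$ to the abelian group $(\mathcal Q,\cdot)$ as homomorphisms into $(\mathbb Z,+)$ (for $\ell,|\cdot|,m_k$) and $(\mathbb Q^\times,\cdot)$ (for $n_\cdot$). There is no real obstacle here; the content of the proposition is essentially the observation that the antipartition construction turns the partition monoid into a group with respect to which the classical partition statistics become group homomorphisms, and the four formulas displayed are simply the inverse-preservation property of those homomorphisms.
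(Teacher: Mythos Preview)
Your proof is correct and follows essentially the same approach as the paper: apply the multiplicativity/additivity of each statistic to $\lambda\lambda^-=\emptyset$ and read off the inverse relations. The only cosmetic difference is that for $m_k$ the paper phrases the argument as a consistency requirement with $\ell(\lambda^-)=-\ell(\lambda)$ rather than invoking $m_k(\lambda\lambda')=m_k(\lambda)+m_k(\lambda')$ directly, but this amounts to the same thing.
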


\begin{proof}
The first identity follows from
$$\ell(\lambda)+\ell(\lambda^-)=\ell(\lambda\lambda^-)=\ell(\emptyset)=0.$$
The second identity follows from
$$|\lambda|+|\lambda^-|=|\lambda\lambda^-|=|\emptyset|=0.$$
The third identity follows from
$$n_{\lambda}n_{\lambda^-}=n_{\lambda\lambda^-}=n_{\emptyset}=1.$$
The fourth identity is formally necessary if we want
$$\ell(\lambda^-)=-\ell(\lambda)=-(m_1(\lambda)+m_2(\lambda)+m_3(\lambda)+...).$$%=-\ell(\lambda)$$
\end{proof}

%For the time being, 
%We think of negative length as coming from the reversal of order of the antiparts, on analogy to change of direction on the number line, and 
For the time being we can take negative lengths and multiplicities, as well as fractional norms, %we just take 
as just formal artifacts; % (perhaps an indication of direction of orientation, as with vectors); 
but the second equation in Proposition \ref{Bprop1} admits the following interpretation: {\it the antipartitions $\mathcal P^-$ are partitions of negative integers}. %, and are multiplicative partitions (see \ref{multpart}) of unit fractions.   

%We note that, for readability, we have chosen to write rational partitions in the form displayed above, with the antiparts on the right. But we might write the mixed, or rational, antipartition in this form instead:
%$$(1^-,1^-,3^-,7^-,5,5,3,2)$$
%Written in this way, there is an obvious bijection between the rational partitions and the set of {\it unimodal sequences} in combinatorics: delete the minuses from the superscripts of the antiparts of any rational partition to arrive at a unimodal sequence, and adjoin a minus superscript to the increasing side of a unimodal sequence (with or without using the peak) to produce a rational partition.NEED TO CONSIDER DOUBLED PEAK

At this point the climax of the section will not be too surprising to the reader. 
\begin{theorem}\label{Bgroup}
Rational partitions $\mathcal Q$ form an abelian group under partition multiplication.
\end{theorem}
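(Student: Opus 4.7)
The plan is to identify $\mathcal Q$ with a familiar algebraic object and then transport the group structure back. Specifically, I would represent each rational partition $\lambda\in\mathcal Q$ by its signed multiplicity function $m(\lambda)\colon \mathbb N\to\mathbb Z$, where $m_k(\lambda)$ is the multiplicity of $k$ as a part (positive) or antipart (negative), extending Definition \ref{Bantipartitiondef} and the last identity of Proposition \ref{Bprop1}. Only finitely many $m_k(\lambda)$ are nonzero, so the assignment $\lambda\mapsto m(\lambda)$ takes values in the free abelian group $\bigoplus_{k\in\mathbb N}\mathbb Z$.

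First I would verify that this assignment is a bijection $\mathcal Q\to \bigoplus_{k\in\mathbb N}\mathbb Z$. Injectivity is clear once we agree to write every element of $\mathcal Q$ in reduced form (i.e., with no part $k$ simultaneously occurring as a part and antipart, so that all parts and antiparts with the same underlying integer have been annihilated via $(k)(k)^{-}=\emptyset$); surjectivity follows because any finitely supported sequence of signed integers arises from writing down each $k$ with multiplicity $|m_k|$ as a part if $m_k>0$ and as an antipart if $m_k<0$. The empty partition corresponds to the zero sequence.

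Next I would extend the partition product of Definition \ref{ch1productdef} to $\mathcal Q$ by declaring $m_k(\lambda\mu):=m_k(\lambda)+m_k(\mu)$ for every $k$, which agrees with the original multiset-union definition on $\mathcal P\times\mathcal P$ and with Definition \ref{Bantipartitiondef} on $\mathcal P\times\mathcal P^-$ (since corresponding parts and antiparts cancel). Under the bijection above, partition multiplication is carried to componentwise addition in $\bigoplus_{k\in\mathbb N}\mathbb Z$. The group axioms then transfer directly: closure, associativity, and commutativity come from addition of integers; the identity element is $\emptyset$ since $m_k(\emptyset)=0$ for all $k$; and the inverse of $\lambda$ is the rational partition whose signed multiplicities are $-m_k(\lambda)$, which by Proposition \ref{Bprop1} is precisely $\lambda^-$.

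The main technical point, and essentially the only nontrivial one, is checking that multiplication on $\mathcal Q$ is well-defined when mixed partitions are written in un-reduced form (e.g.\ showing $(5,3;1^-,1^-)\cdot(1)=(5,3;1^-)$ regardless of how one performs the cancellations). This is handled uniformly by passing through the signed multiplicity functions, where the issue does not arise because integer addition is unambiguous. Everything else is then formal.
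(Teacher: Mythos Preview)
Your proof is correct, but it takes a different route from the paper. The paper's argument is a one-sentence direct verification of the group axioms: $\emptyset$ is the identity, $\lambda$ and $\lambda^-$ are mutual inverses by Definition~\ref{Bantipartitiondef}, and associativity, commutativity, and closure are ``automatic from set-theoretic considerations'' (since partition multiplication is essentially multiset union with cancellation). You instead build an explicit bijection $\mathcal Q \to \bigoplus_{k\in\mathbb N}\mathbb Z$ via signed multiplicities and pull back the group structure from the free abelian group. Your approach buys two things the paper's does not: it handles the well-definedness of multiplication on mixed, un-reduced representatives explicitly (the paper simply assumes this is unproblematic), and it identifies $(\mathcal Q,\cdot)$ concretely as the free abelian group on $\mathbb N$, which is a sharper structural statement than merely ``abelian group.'' The paper's proof is terser and stays closer to the partition-theoretic language, while yours is more rigorous about the points the paper elides and makes the isomorphism type of the group transparent.
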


\begin{proof}
Clearly under our multiplication operation ``$\  \cdot\  $'' on $\mathcal Q$ we have the identity element $\emptyset$, the elements $\lambda,\lambda^-$ are multiplicative inverses, associativity and commutativity are automatic from set-theoretic considerations, and $\mathcal Q$ is closed under multiplication, verifying $(\mathcal Q,\cdot)$ has the claimed group structure. 
\end{proof}

Then $(\mathcal Q,\cdot)$ looks a lot like $\mathbb Q \backslash \{0\}$ as a multiplicative group. 
We hope that classical techniques of group theory may lead to new identities, congruences and bijections in the theory of partitions. %In the next section we will explore some consequences of this group structure on partitions and antipartitions.

%\section{Group theory in \texorpdfstring{$\mathcal Q$}{$Lg$}}
%
%{\bf RPS: In this tentative section (which can be absorbed into the previous section if we don't have much to say), we might give any ``textbook'' consequences we can think of in the group-theoretic setting. For instance, are there equivalence relations, congruence relations, etc.? We know partitions index the equivalence classes of the symmetric group $\mathcal S$; but all groups (including presumably $\mathcal Q$) are isomorphic to some subgroup of $\mathcal S$: can we identify which subgroup $\mathcal Q$ looks like? Are there isomorphism theorems or other basic group theoretic connections? These are not vital for the paper but are natural questions; if they are low-hanging fruits, maybe we record a couple of such results and leave the rest to the reader as open questions. Otherwise, we can just restate this bold-face comment as an open question and move on...}  

\subsection{Partitions and diagonal matrices}

For the sake of pointing toward future work in the algebraic vein, we also note a few connections to matrix algebra, a gold-mine of structural archetypes, although our study in this direction is incomplete. There is an obvious way to associate nonempty partitions to diagonal matrices, which are well known to enjoy beautiful algebraic properties. 

\begin{definition}
For a nonempty %(possibly rational) 
partition $\lambda=(\lambda_1, \lambda_2,...,\lambda_r)\in \mathcal P, \lambda_1\geq \lambda_2\geq...\geq \lambda_r\geq 1$, we define the diagonal matrix
$$M_{\lambda}:=\begin{pmatrix} 
\lambda_{1}&0&0&\ldots&0
\\
0 & \lambda_{2} &0 &\ldots & 0
\\
0&0& \lambda_{3}&\ldots & 0
\\
\vdots &\vdots&\vdots &\ddots &\vdots
\\
0&0&\ldots&0&\lambda_{r} 
\end{pmatrix},$$
which we might refer to as the ``matrix of $\lambda$''. %We define $M_{\emptyset}$ to be any square matrix whose entries all equal zero.
\end{definition}

%\begin{remark}
%We address matrices of antipartitions and partitions in $\mathcal Q$ in the next section.
%\end{remark}

We have immediately an interpretation of dimension ``$\operatorname{dim}$'', determinant ``$\operatorname{det}$'', and trace ``$\operatorname{tr}$'' in terms of partition-theoretic statistics:
\begin{equation}\label{Bmatrix}
\operatorname{dim}(M_{\lambda})=\ell(\lambda),\  \  \  \operatorname{tr}(M_{\lambda})=|\lambda|,\  \  \  \operatorname{det}(M_{\lambda})=n_{\lambda}.
\end{equation}
Then there are a natural addition and multiplication we might define on partitions of a fixed length $r$, as an extension of matrix operations. For $\lambda, \lambda'\in \mathcal P$ with $\ell(\lambda)=\ell(\lambda')=r$, we define $\lambda+\lambda',\  \lambda\times \lambda'$ to be the partitions whose parts are the diagonal entries of the matrices $M_{\lambda}+M_{\lambda'}, M_{\lambda}M_{\lambda'}$, respectively:
$$M_{\lambda}+M_{\lambda'}=M_{\lambda+\lambda'},\  \  \  M_{\lambda}M_{\lambda'}=M_{\lambda\times \lambda'}.$$ 
Our operations are given explicitly by
$$\lambda+\lambda':=(\lambda_1+\lambda_1',\lambda_2+\lambda_2',...,\lambda_r+\lambda_r'),\  \  \  \lambda\times\lambda':=(\lambda_1\lambda_1',\lambda_2\lambda_2',...,\lambda_r\lambda_r').$$  

Qualitatively, these operations are quite different from the partition multiplication introduced in Chapter 3, which is purely a set-theoretic operation and does not depend on any arithmetic taking place between the parts themselves, aside from putting them in weakly decreasing order. Now we see the parts adding and multiplying, to produce the parts of new partitions. (We discuss matrices involving antipartitions, as well, below.) % and partitions in $\mathcal Q$ until the next section, as we have not yet addressed the meaning of multiplication and addition between parts and antiparts, which turns out to be nontrivial. 
%\end{remark}

%We refrained from defining these matrices, intended for heuristics, as arithmetic between parts and antiparts is not well defined. Here is the authors' working hypothesis: {\it antiparts behave like negative numbers when adding, and like reciprocals when multiplying.}

In any event, the ``$\geq$'' ordering on the entries on the diagonal ensure that the entries of $M_{\lambda}+M_{\lambda'},$ $\  M_{\lambda}M_{\lambda'}$ also obey the same ordering, so these entries do indeed comprise partitions of length $r$. 
Clearly the $r\times r$ zero matrix, which we identify with the empty partition, $\emptyset$, and $r \times r$ identity matrix, which we identify with the length-$r$ partition into all 1's, viz. $$I_0:=\emptyset,\  \  \  \  I_r:=(1,1,...,1)\  \  (\text{$r$ repetitions}),$$ 
are, respectively, the additive and multiplicative identities: %, which observations translate in partition terms to
\begin{equation}\label{Bmatrixop1}
\lambda+\emptyset=\lambda,\  \  \  \lambda\times I_{r}=\lambda,\  \  \  \lambda\times \emptyset=\emptyset.
\end{equation}
Then all the machinery of linear algebra of diagonal matrices can be extended to partitions of length $r$ (for any fixed $r$) under these operations. 

We may also include partitions in $\mathcal Q$ if we define an arithmetic relating parts and antiparts. %, and the partitions of length $r$ form a ring. 
%Let $\mathcal M_r$ denote the set of $r\times r$ matrices of partitions of length $r$. Then we also have the following nice relations between these matrices:
%$$M_{\lambda}+ M_{\delta}= M_{\lambda+\delta}\in \mathcal M_r,\  \  \  M_{\lambda} M_{\delta}=M_{\lambda\times \delta}\in \mathcal M_r$$
However, the arithmetic between parts and antiparts turns out to be a nontrivial question. Of course, antiparts are just positive integers decorated with minus signs, so we expect something like the usual arithmetic in $\mathbb Z$; for instance, if $a,b\in\mathbb Z^+$ we expect 
$$ab^-=b^-+b^-+...+b^-\  (\text{$a$ repetitions})=(ab)^-,$$ 
because the antiparts should add together. On the other hand, considering the relations in Proposition \ref{Blog relations2}, we see that the antiparts sometimes act like negative numbers and sometimes act like fractions. These antiparts are, in fact, formal entities that arise naturally from partition-theoretic (as opposed to matrix-theoretic) considerations, and their arithmetic properties may well depend on context --- indeed, this is what the author assumes to be the case. 

In the case of the matrix-based operations above, a workable rule of thumb for the arithmetic between a part and an antipart is:
{\it Antiparts act like negative integers under addition, and reciprocals under multiplication.} 

In symbols, for $a,b\in \mathbb Z^+$ we might set
\begin{equation}\label{Bmatrixop}
a\cdot b^-=ab^{-1}\in\mathbb Q,\  \  \  a+b^-=a-b\in \mathbb Z.
\end{equation}
Note that, when writing the partitions resulting from these operations, we will follow the convention of converting reciprocals and negative numbers back into the ``minus'' notation of antipartitions. (For this sketch of matrix-based ideas, we assume that in fact $b|a$ so the resulting part $ab^-$ is still an integer; the question of partition-like objects whose parts come from other sets such as $\mathbb Q$, $\mathbb F_{p}$, ring ideals, etc., %is of much interest to the author, but 
is beyond the scope of this thesis.)

The relations (\ref{Bmatrixop}) fit intuitively with (\ref{Bmatrix}) above, and give natural-looking identities like
\begin{equation}\label{Bniceid}
\lambda+\lambda^-=\emptyset,\  \  \  \lambda\times\lambda^-=I_{r},
\end{equation}
where $r=\ell(\lambda)$. Encouraging as this matrix-analog structure is, there are points we have not followed through; we have not even proved the demands in (\ref{Bmatrixop}) to be consistent. %\footnote{Ian Wagner and the author have looked at matrix-like partition operations of a different nature, proving a ring theory of partitions and related results we have yet to publish.}.%, %seem to suggest parts and antiparts all are square roots of $-1$), 
%as well as some overarching algebraic issues. %(beyond the yet-to-be-discussed details of part--antipart arithmetic). 
%First, the nice structure in this subsection does not extend to describe interactions between partitions of arbitrary lengths, as matrix algebra requires the matrices be of the same size $r\times r$; and there is no unique matrix associated to $\emptyset$ or to the multiplicative identity. %, but rather we must use a different zero matrix for every length $r$. 
%Furthermore, these matrix-based operations do not seem to extend the classical-styled identities such as those in the body of Chapter 3. %; partition multiplication, which we hoped to build on, does not preserve length so is excluded from this scheme. %is not even involved.%\footnote{The author and Ian Wagner are engaged in ongoing work \cite{SchneiderWagner} giving a ring theory of partitions, which eliminates these issues.}

%Finally, we have
%$$|\lambda+\lambda'|=|\lambda|+|\lambda'|\  \text{but}\  |\lambda\times \lambda'|\neq |\lambda| |\lambda'|.$$
%If we want partition theory to look like relations in algebra, we would like the size to be multiplicative. However, we proposed this matrix ``toy model'' chiefly to get a feel for the terrain, and won't get hung up on %working through its 
%fine details. By examining other properties of matrices---and adjusting our hypotheses on the arithmetic of the antiparts---we can indeed define a new multiplication operation to satisfy the second, mutliplicative relation. 

\subsection{Partition tensor product and ring theory}

In this section we introduce a direct sum $\oplus$ and tensor product $\otimes$ between partitions, and prove that $(\mathcal Q,\oplus,\otimes)$ forms a commutative ring with identity. 

\begin{definition}\label{newsum}
For $\lambda,\lambda'\in \mathcal P$ we define the direct sum $\lambda\oplus \lambda'\in {\mathcal P}$ to be a rewriting of multiplication from Chapter 3:
$$\lambda\oplus\lambda':=\lambda\lambda'.$$
Then we will also write $\lambda\oplus\lambda\oplus...\oplus\lambda\  (\text{$n$ repetitions})=:n\lambda$.%\footnote{This exponential notation reminds us this is also $\lambda^n$ in the product notation of \cite{Schneider_arithmetic}}
\end{definition}

In fact, in \cite{Andrews}, Andrews uses the symbol $\oplus$ to define this exact operation, although he expresses the direct sum %operation % (and his whole theory of partition ideals) 
in terms of a sum of the multiplicities $m_k$ (or ``frequencies'' $f_k$ in his terminology).

%Then it is easy to see that 
%\begin{equation}\label{sum}
%|\lambda\oplus\lambda'|=|\lambda|+|\lambda'|.
%\end{equation}
\begin{remark} So, for example, we write (in a few alternative ways):
$$(3,1,1)=(3)\oplus(1,1)=(3,1)\oplus(1)=(3)\oplus(1)\oplus(1)=(3)\oplus 2(1).$$ 
\end{remark}
%It is the relation $|\lambda*\lambda'|=|\lambda| |\lambda'|$ in (\ref{wish}) that is more subtle; as the size of a partition is an additive object, multiplicative properties are rare, and rather surprising. 
In exploring operations between partitions, the author's collaborator Ian Wagner discovered --- along the lines of the matrix analogy in the previous section --- that the tensor product of two partition matrices suggests a very well-behaved ``times'' operation between partitions.

\begin{definition} \label{newprod}
For $\lambda, \lambda' \in \mathcal P$ with $\ell(\lambda)=r, \ell(\lambda')=s$, we define the tensor product $\lambda\otimes\lambda'\in \mathcal P$ to be the partition whose parts are exactly the set
$$
\{\lambda_1\lambda_1', \lambda_1\lambda_2',...,\lambda_1\lambda_s',\lambda_2\lambda_1',\lambda_2\lambda_2',...,\lambda_2\lambda_s',...,\lambda_r\lambda_1',\lambda_r\lambda_2',...,\lambda_r \lambda_s'\}\subset\mathbb Z^+,
$$
reorganized to be in canonical weakly decreasing order. Then we will also write $\lambda\otimes \lambda \otimes ...\otimes \lambda\  (\text{$n$ repetitions})=:\lambda^{\otimes n}$.
\end{definition}
%Under this tensor product $\otimes$ as well as the sum $\oplus$ defined in (\ref{newsum}) above 

Of course, the empty partition $\emptyset$ acts as the identity under $\oplus$, and in this setting, the length one partition $I_1=(1)$ is the multiplicative identity. Thus the direct sum and tensor product of partitions lead to elementary identities like those in (\ref{Bmatrixop1}): 
\begin{equation}\label{tensorop1}
\lambda\oplus\emptyset=\lambda,\  \  \  \lambda\otimes (1)=\lambda,\  \  \  \lambda\otimes \emptyset=\emptyset.
\end{equation}
\begin{remark}
Generalizing the middle equation in (\ref{tensorop1}), we actually have that $$\lambda\otimes I_n=\lambda\oplus \lambda \oplus \ ... \oplus \lambda\  (\text{$n$ repetitions})=n\lambda.$$
%or $\lambda^n$ in the product notation of \cite{Schneider_arithmetic}. 
Thus we might write any partition $\lambda$ in a ``split'' form, i.e., some reordering of 
$$\left[(1)\otimes I_{m_1(\lambda)}\right]\oplus\left[(2)\otimes I_{m_2(\lambda)}\right]\oplus\left[(3)\otimes I_{m_3(\lambda)}\right]\oplus...$$
%for example:
like
$$(4,4,3,2,2,2)=\left[(4)\otimes(1,1)\right]  \oplus(3) \oplus \left[(2)\otimes(1,1,1)\right].$$
\end{remark}

This product $\otimes$ is in fact very similar to the Kronecker product in matrix algebra, a well-known case of the tensor product; and the trace of a Kronecker product is multiplicative.  %this leads to % Together with the sum ``$\oplus$'' defined in the previous section, this leads to 
Analogous considerations give a very natural-looking pair of %multiplication 
relations. %, which we collect with an additive relation in the following statement. %, so we have the following identities.

\begin{proposition}\label{mult}
For $\lambda,\lambda'\in \mathcal P$ we have that
$$|\lambda \oplus \lambda'|=|\lambda| + |\lambda'|,\  \  \  |\lambda \otimes \lambda'|=|\lambda| |\lambda'|.$$
\end{proposition}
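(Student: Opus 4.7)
The first identity is essentially immediate from definitions. By Definition \ref{newsum}, the direct sum $\lambda \oplus \lambda'$ is just a relabeling of partition multiplication from Chapter 3, i.e., the multi-set union of the parts of $\lambda$ and $\lambda'$ reordered into weakly decreasing form. Since size is defined as the sum of parts, and the multi-set union preserves all parts with multiplicity,
\[
|\lambda \oplus \lambda'| = \sum_{\lambda_i \in \lambda}\lambda_i + \sum_{\lambda'_j \in \lambda'}\lambda'_j = |\lambda| + |\lambda'|.
\]
This matches the earlier observation in Chapter 3 that $|\lambda\lambda'| = |\lambda| + |\lambda'|$, so nothing new is really required here.

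The second identity is the substantive one, and it follows by direct computation from Definition \ref{newprod}. Write $\ell(\lambda) = r$, $\ell(\lambda') = s$, so the parts of $\lambda \otimes \lambda'$ are exactly the $rs$ products $\lambda_i \lambda'_j$ with $1 \le i \le r$, $1 \le j \le s$ (reordered, but reordering does not affect the size). Then
\[
|\lambda \otimes \lambda'| = \sum_{i=1}^{r}\sum_{j=1}^{s}\lambda_i \lambda'_j = \left(\sum_{i=1}^{r}\lambda_i\right)\left(\sum_{j=1}^{s}\lambda'_j\right) = |\lambda|\,|\lambda'|,
\]
by distributivity of multiplication over addition in $\mathbb{Z}$.

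Neither step presents any real obstacle: both reduce to an unpacking of the definitions plus the distributive law. The only point that deserves a brief comment is that the edge cases (when $\lambda = \emptyset$ or $\lambda' = \emptyset$) are consistent with the identities in \eqref{tensorop1}, since then one of the sums is empty and both sides vanish, matching the convention $|\emptyset| = 0$. This observation can be made parenthetically so that the statement holds for all $\lambda, \lambda' \in \mathcal{P}$ without restriction. If one wishes to extend the proposition to rational partitions in $\mathcal{Q}$, the same distributive argument carries through once one adopts the arithmetic conventions \eqref{Bmatrixop} (with antiparts contributing negatively to the size, as in Proposition \ref{Bprop1}); but for partitions in $\mathcal{P}$ the two-line computation above suffices.
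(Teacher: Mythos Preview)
Your proof is correct and follows essentially the same approach as the paper: the first identity is immediate from the definition of $\oplus$ as partition multiplication, and the second follows by expanding $|\lambda|\,|\lambda'| = (\lambda_1+\cdots+\lambda_r)(\lambda'_1+\cdots+\lambda'_s)$ and observing term-by-term that the summands are exactly the parts of $\lambda\otimes\lambda'$. Your additional remarks on the empty-partition edge case and the extension to $\mathcal{Q}$ are fine but not required for the proposition as stated.
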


\begin{proof}
The sum identity is immediate from partition multiplication and the definition of $\oplus$. For the product identity in Proposition \ref{mult}, we simply rewrite the right-hand side as
$$|\lambda| |\lambda'|=(\lambda_1+\lambda_2+...+\lambda_r) (\lambda_1'+\lambda_2'+...+\lambda_s').$$
Directly expanding the product on the right and inspecting the resulting summands, we see term-by-term that they are the parts of $\lambda\otimes \lambda'$.
\end{proof}

As in Chapter 3, we define $\operatorname{lg}(\lambda)$ and $\operatorname{sm}(\lambda)$ to denote the largest part and the smallest part of $\lambda$, respectively; %. Then %Furthermore, 
these complementary identities follow from the definitions of $\oplus,\otimes$.

\begin{proposition}\label{mult2}
For $\lambda,\lambda'\in \mathcal P$ we have the relations:
\begin{align*}
&\ell(\lambda\oplus\lambda')=\ell(\lambda)+\ell(\lambda'),&&\ell(\lambda\otimes\lambda')=\ell(\lambda)\ell(\lambda'),\\
&\operatorname{lg}(\lambda\otimes\lambda')=\operatorname{lg}(\lambda)\operatorname{lg}(\lambda'), &&\operatorname{sm}(\lambda\otimes\lambda')=\operatorname{sm}(\lambda)\operatorname{sm}(\lambda'),\\
&n_{\lambda\oplus\lambda'}=n_{\lambda}n_{\lambda'}, &&n_{\lambda\otimes\lambda'}=n_{\lambda}^{\ell(\lambda')}n_{\lambda'}^{\ell(\lambda)}.
\end{align*}
We also have that $$m_{k}(\lambda\oplus \lambda')=m_k(\lambda)+m_k(\lambda'),\  \  \  \  \  \  \  \  m_k(\lambda\otimes\lambda')=\sum_{d|k}m_d(\lambda)m_{{k}/{d}}(\lambda'), $$
where the final summation is taken over the divisors of $k$.
\end{proposition}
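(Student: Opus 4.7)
The plan is to verify the eight identities essentially by unpacking Definitions \ref{newsum} and \ref{newprod} and counting parts. The direct sum $\lambda \oplus \lambda'$ is just the multi-set union, so the three $\oplus$ identities ($\ell$, $n_*$, and $m_k$) are immediate from the bookkeeping already noted in Chapter 3 under the original multiplication notation: lengths add, the product of all parts factors across the union, and the multiplicity of $k$ in a disjoint union of multi-sets is the sum of the multiplicities. For these I would simply cite the corresponding relations from Section \ref{ch3Multiplicative theory of partitions} applied to the rewriting $\lambda \oplus \lambda' = \lambda \lambda'$.

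The tensor identities all stem from the same combinatorial picture: by Definition \ref{newprod}, $\lambda \otimes \lambda'$ consists of the $rs$ products $\lambda_i \lambda_j'$ with $1 \le i \le r := \ell(\lambda)$ and $1 \le j \le s := \ell(\lambda')$, arranged weakly decreasingly. Thus $\ell(\lambda \otimes \lambda') = rs$ by a direct count. For $\operatorname{lg}$ and $\operatorname{sm}$, I would observe that for all $i,j$,
\[
\operatorname{sm}(\lambda)\operatorname{sm}(\lambda') \;\le\; \lambda_i\lambda_j' \;\le\; \operatorname{lg}(\lambda)\operatorname{lg}(\lambda'),
\]
with the extreme values actually attained (at $(i,j)=(1,1)$ and $(r,s)$, respectively, using the weak ordering on parts), yielding the two identities. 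For the norm, the product of all parts is
\[
n_{\lambda \otimes \lambda'} \;=\; \prod_{i=1}^{r}\prod_{j=1}^{s} \lambda_i \lambda_j' \;=\; \Big(\prod_{i=1}^r \lambda_i\Big)^{\!s}\Big(\prod_{j=1}^s \lambda_j'\Big)^{\!r} \;=\; n_\lambda^{\,\ell(\lambda')}\, n_{\lambda'}^{\,\ell(\lambda)}.
\]

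The last identity, $m_k(\lambda \otimes \lambda') = \sum_{d \mid k} m_d(\lambda)\, m_{k/d}(\lambda')$, is the one I expect to require the most care, though it is still essentially a counting argument. The multiplicity of $k$ in $\lambda \otimes \lambda'$ equals the number of ordered pairs $(i,j)$ with $\lambda_i \lambda_j' = k$. Partitioning these pairs according to the value $d := \lambda_i$ (which must divide $k$, and then forces $\lambda_j' = k/d$), the count becomes $\sum_{d \mid k} m_d(\lambda)\, m_{k/d}(\lambda')$. I would be careful to note that $d$ ranges over all positive divisors of $k$ (including those not appearing as parts, which contribute zero), so the sum is well defined. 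No term is double-counted because each pair $(i,j)$ with $\lambda_i \lambda_j' = k$ determines $d = \lambda_i$ uniquely.

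The main conceptual obstacle, as far as there is one, is keeping straight that $\otimes$ genuinely behaves like the matrix Kronecker product restricted to diagonals: after that observation the six tensor identities become the diagonal analogs of $\dim(A \otimes B) = \dim A \cdot \dim B$, $\det(A \otimes B) = (\det A)^{\dim B}(\det B)^{\dim A}$, and the well-known multiset of Kronecker eigenvalues. Thus the whole proposition reduces to bookkeeping on multi-sets of products, and the proof should be a short case-by-case verification.
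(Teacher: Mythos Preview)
Your proposal is correct and follows essentially the same approach as the paper: the paper declares all identities but the last ``immediate'' from the definitions, and for $m_k(\lambda\otimes\lambda')$ it gives exactly your counting argument, partitioning pairs $(d,d')$ with $dd'=k$ and observing that each such pairing contributes $m_d(\lambda)m_{d'}(\lambda')$ copies of $k$. Your write-up is in fact more explicit than the paper's (particularly the norm computation and the $\operatorname{lg}/\operatorname{sm}$ bounds), but the underlying ideas are identical.
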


\begin{remark}
The next-to-last identity in Proposition \ref{mult2}, giving $m_{k}(\lambda\oplus \lambda')$, is equivalent to the definition of the operation $\oplus$ given in Andrews \cite{Andrews}. 
\end{remark}

\begin{proof}
All of these identities but the last one are immediate. The final summation is clear if we write Definition \ref{newprod} in the alternative notation % Andrews's ``frequency'' notation (his term for multiplicity \cite{Andrews}):
\begin{flalign*}
\lambda\otimes\lambda'&=(1^{m_1(\lambda)}2^{m_2(\lambda)}3^{m_3(\lambda)}4^{m_4(\lambda)}...)\otimes(1^{m_1(\lambda')}2^{m_2(\lambda')}3^{m_3(\lambda')}4^{m_4(\lambda')}...)\\
&=(1^{m_1(\lambda\otimes\lambda')}2^{m_2(\lambda\otimes\lambda')}3^{m_3(\lambda\otimes\lambda')}4^{m_4(\lambda\otimes\lambda')}...).
\end{flalign*}
For every pair of divisors $d, d'$ of a given part $k\in\lambda\otimes\lambda'$, the number of repetitions of $k$ in $\lambda\otimes \lambda'$ produced by the pairing $d\in \lambda,d'\in\lambda'$ is $m_{d}(\lambda)m_{d'}(\lambda')$. Noting in the definition of $\otimes$ that for each $k$ we sum over all pairings $d,d'$ with $dd'=k$, finishes the proof.
\end{proof}

%\begin{remark}

We note that the final identity in Proposition \ref{mult2} gives the tensor product of the $m_k$'s essentially %a rewriting of 
as Dirichlet convolution (see \cite{HardyWright}); to some extent, the arithmetic of partition multiplicities inherits the convenient algebra of convolutions. This observation, together with standard facts about convolutions, connects the tensor product to the algebra of classical Dirichlet series as well. %generating function of $m_k(\cdot)$ in this multiplication formula: %$k=1,2,3,...$, viz.  

\begin{corollary}\label{Dirichlet}
For $\lambda, \lambda' \in \mathcal P,\  s\in\mathbb C$, we have the multiplication identity
\begin{equation*}
\left(\sum_{k=1}^{\infty} \frac{m_k(\lambda)}{k^s} \right)\left(\sum_{k=1}^{\infty} \frac{m_k(\lambda')}{k^s} \right)=\sum_{k=1}^{\infty} \frac{m_k(\lambda\otimes \lambda')}{k^s}.
\end{equation*}
\end{corollary}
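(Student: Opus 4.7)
The plan is to reduce the claim to the standard fact that Dirichlet convolution corresponds to multiplication of Dirichlet series, with the bridge provided by the last formula of Proposition \ref{mult2}. Since $\lambda$ and $\lambda'$ each have only finitely many parts, the functions $k\mapsto m_k(\lambda)$ and $k\mapsto m_k(\lambda')$ have finite support, so every series in sight is in fact a finite sum and no convergence or absolute convergence issues arise; this is the feature of the setup I would emphasize first, as it lets us manipulate the double series freely.

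The main computation is then a direct expansion. First I would write
\[
\left(\sum_{d=1}^{\infty}\frac{m_d(\lambda)}{d^s}\right)\left(\sum_{d'=1}^{\infty}\frac{m_{d'}(\lambda')}{(d')^s}\right)
=\sum_{d=1}^{\infty}\sum_{d'=1}^{\infty}\frac{m_d(\lambda)\,m_{d'}(\lambda')}{(dd')^s},
\]
and then reindex by setting $k:=dd'$, grouping pairs $(d,d')$ according to the value of their product. For each fixed $k\geq 1$, the pairs with $dd'=k$ are precisely the pairs $(d,k/d)$ with $d\mid k$. Collecting terms gives
\[
\sum_{k=1}^{\infty}\frac{1}{k^s}\sum_{d\mid k}m_d(\lambda)\,m_{k/d}(\lambda').
\]

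The final step is to recognize the inner sum. This is exactly the partition-theoretic multiplicity formula proved in Proposition \ref{mult2}, namely
\[
\sum_{d\mid k}m_d(\lambda)\,m_{k/d}(\lambda')=m_k(\lambda\otimes\lambda').
\]
Substituting this identity into the previous display yields precisely $\sum_{k\geq 1} m_k(\lambda\otimes\lambda')\,k^{-s}$, which is the right-hand side of the corollary.

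The only potential obstacle is bookkeeping, but here it is trivial because both generating sequences have finite support, so the reindexing and interchange of summations are legitimate without any analytic hypothesis on $s$. In other words, the corollary is essentially a restatement of Proposition \ref{mult2} in the language of Dirichlet series: the tensor product on partitions corresponds, on the multiplicity sequences, to Dirichlet convolution, and Dirichlet convolution is exactly what makes Dirichlet series multiply term-by-divisor-term.
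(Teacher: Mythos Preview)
Your proof is correct and follows exactly the approach the paper indicates: the paper states just before the corollary that the final identity of Proposition \ref{mult2} gives $m_k(\lambda\otimes\lambda')$ as a Dirichlet convolution and that the corollary then follows from ``standard facts about convolutions,'' which is precisely the expand--reindex--substitute argument you have written out. Your explicit remark that finite support of the multiplicities makes the manipulations valid for all $s\in\mathbb C$ is a useful addition, matching the paper's own comment in the remark after the corollary.
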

\begin{remark}
As these series have only finitely many nonzero $m_k$, they are well-defined for any $s\in \mathbb C$. For $s=0$, equation (\ref{Dirichlet}) reduces to $\ell(\lambda)\ell(\lambda')=\ell(\lambda\otimes \lambda')$.
%If $\lambda, \lambda'$ are partitions into distinct, coprime parts, and furthermore the two partitions have no parts in common, as $m_k=1$ for all parts $k$ we obtain a finite variation on a classical zeta function theorem (with $\sigma_0(\cdot)$ the number of divisors): 
%$$\left(\sum_{k\in \lambda} \frac{1}{k^s} \right) \left(\sum_{k\in \lambda'} \frac{1}{k^s} \right)=\sum_{k\in \lambda \otimes \lambda'} \frac{\sigma_0(k)}{k^s}$$
%We will see that the function $m_k(\lambda)$ is important in the theory we develop. 
\end{remark}

We would like to extend the preceding definitions and relations involving the operations $\oplus, \otimes$ to the larger class $\mathcal Q \supset \mathcal P$ of rational partitions. Of course, we can rewrite Definition \ref{Bantipartitiondef} in the form 
\begin{equation}\label{plusform}
\lambda\oplus \lambda^-=\emptyset.
\end{equation} 
But to extend all of the preceding relations to include antipartitions and partitions in $\mathcal Q$, we need to decide on an arithmetic for interactions between parts and antiparts. 
The bad news is, the part-antipart arithmetic that worked so well for our matrix analogy in the previous section is incompatible with Proposition \ref{mult}. Happily, in the present setting, we can in fact impose an even simpler rule than the relations in (\ref{Bmatrixop}): {\it Antiparts act like negative integers in both addition and multiplication.}

\begin{definition}\label{tensorop} 
For $a,b\in \mathbb Z^+$, in the context of the operations $\oplus, \otimes$ defined above, we set
\begin{equation*}
a b^-:=-ab\in\mathbb Z,\  \  \  \  a+b^-:=a-b\in \mathbb Z.
\end{equation*}
\end{definition}

\begin{remark}
We note that these imply $a^- b=(ab)^-=-ab$ as well, and also $a^-b^-=ab$.
\end{remark}

As in the previous section, we will translate negative numbers back to antiparts with the minus signs in the upper indices, when we write them inside partitions. Using Definition \ref{tensorop} to give meaning to the products of parts and antiparts, we can immediately generalize the structure we have built in this section to rational partitions $\mathcal Q$, just by inserting antipartitions and antiparts appropriately.

\begin{proposition}
%Definitions \ref{newsum}, \ref{newprod} and Theorems \ref{mult}, \ref{mult2} all 
All of the definitions and relations given above in this section may be extended to hold for $\lambda,\lambda'\in\mathcal Q$.
\end{proposition}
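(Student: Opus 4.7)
The strategy is to regard a rational partition $\lambda \in \mathcal{Q}$ as a finite multiset of nonzero integers, where positive integers represent parts and negative integers represent antiparts, subject to the cancellation $a \oplus a^{-} = \emptyset$. Under this interpretation, Definition \ref{tensorop} is exactly the statement that multiplication of (signed) parts is ordinary integer multiplication, and addition is ordinary integer addition. Thus we may extend the multiplicity function $m_k$ to all $k \in \mathbb{Z}^{+}$ by declaring $m_k(\lambda)$ to be the net count of $k$ as a part of $\lambda$ (number of parts equal to $k$ minus number of antiparts equal to $k$), which by Proposition \ref{Bprop1} is consistent with $m_k(\lambda^{-}) = -m_k(\lambda)$. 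The extensions of Definitions \ref{newsum} and \ref{newprod} are then the obvious ones: $\lambda \oplus \lambda'$ is the (signed) multiset union, i.e., partition multiplication in the group $\mathcal{Q}$, and $\lambda \otimes \lambda'$ has signed parts $\lambda_i \lambda_j'$ for each pair, with the sign rule from Definition \ref{tensorop}.

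The verification of Proposition \ref{mult} is then routine. For the direct sum, $|\lambda \oplus \lambda'| = |\lambda| + |\lambda'|$ follows directly from Proposition \ref{Bprop1} and the fact that $\oplus$ is already multiset union. For the tensor product, expanding
\[
|\lambda||\lambda'| = \Bigl(\sum_i \lambda_i\Bigr)\Bigl(\sum_j \lambda_j'\Bigr) = \sum_{i,j} \lambda_i \lambda_j',
\]
each summand $\lambda_i \lambda_j'$ equals, by Definition \ref{tensorop}, the signed value of the corresponding part of $\lambda \otimes \lambda'$, so the total sums agree. Proposition \ref{mult2} then splits into cases checked the same way: length and multiplicity identities follow because multisets combine additively under $\oplus$ and multiplicatively under $\otimes$ (with signs respected), while $n_\lambda$ behaves correctly because $n_{\lambda^{-}} = n_\lambda^{-1}$ from Proposition \ref{Bprop1} precisely matches the exponents $\ell(\lambda')$ and $\ell(\lambda)$ picking up minus signs appropriately. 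Corollary \ref{Dirichlet} then follows from the multiplicity identity for $\otimes$ exactly as before, since Dirichlet convolution is $\mathbb{Z}$-linear in the coefficients.

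The main obstacle is purely notational rather than mathematical: one must adopt a consistent convention for $m_k$ on $\mathcal{Q}$ so that partitions containing both a part $k$ and an antipart $k^{-}$ are identified with partitions where these have canceled. Once this convention is fixed (via the net-count definition above, equivalently via the group structure of Theorem \ref{Bgroup}), every identity in Propositions \ref{mult} and \ref{mult2} follows term-by-term from the signed arithmetic in Definition \ref{tensorop}, and the identities $\lambda \oplus \lambda^{-} = \emptyset$ and the more delicate relation $\lambda \otimes \lambda^{-}$ (whose parts come out as a suitable signed multiset consistent with all stated formulas) provide useful sanity checks. No genuinely new combinatorial content is required beyond what is already present in the $\mathcal{P}$ case combined with the signed-integer interpretation of antiparts.
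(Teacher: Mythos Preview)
Your proposal is correct and is essentially an expanded version of what the paper does. The paper gives no detailed proof of this proposition at all; the surrounding text simply asserts that ``using Definition \ref{tensorop} to give meaning to the products of parts and antiparts, we can immediately generalize the structure we have built in this section to rational partitions $\mathcal Q$, just by inserting antipartitions and antiparts appropriately,'' and then states the proposition without further argument. Your signed-multiset interpretation and term-by-term verification is exactly the kind of routine checking the paper is implicitly appealing to, so you are not taking a different route---you are supplying the details the paper omits.
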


Now the real goal of this section is an easy deduction.

\begin{theorem}\label{ring}
The set $\mathcal Q$ of rational partitions is a commutative ring under the sum $\oplus$ and product $\otimes$.
\end{theorem}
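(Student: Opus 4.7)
The plan is to verify the ring axioms in the order (i) abelian group under $\oplus$, (ii) commutative monoid under $\otimes$, and (iii) distributivity of $\otimes$ over $\oplus$. Most of the work for (i) is already done, since Theorem \ref{Bgroup} establishes that $(\mathcal{Q}, \cdot)$ is an abelian group, and Definition \ref{newsum} simply renames multiplication of partitions as $\oplus$; the extension to $\mathcal{Q}$ via antipartitions gives additive inverses $\lambda \oplus \lambda^{-} = \emptyset$ by \eqref{plusform}, and associativity and commutativity are inherited from the multiset-union description of multiplication in $\mathcal{P}$ together with the cancellation rule for parts and antiparts.

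For (ii), the identity $(1)$ satisfies $\lambda \otimes (1) = \lambda$ by the third relation in \eqref{tensorop1}, which extends to $\mathcal{Q}$ because the arithmetic convention of Definition \ref{tensorop} gives $a \cdot 1 = a$ and $a^{-} \cdot 1 = a^{-}$ uniformly. Commutativity $\lambda \otimes \lambda' = \lambda' \otimes \lambda$ follows from the commutativity of integer multiplication applied part-by-part (and antipart-by-antipart) in Definition \ref{newprod}; the minus-sign convention of Definition \ref{tensorop} is itself symmetric in its arguments, so no new issue arises. Associativity $(\lambda \otimes \mu) \otimes \nu = \lambda \otimes (\mu \otimes \nu)$ likewise reduces, after reorganizing into weakly decreasing order, to the associativity of integer multiplication applied to each triple of entries $\lambda_i \mu_j \nu_k$, with the extended sign rule $a^{-}b^{-} = ab$, $a^{-}b = (ab)^{-}$ behaving exactly like multiplication in $\mathbb{Z}$.

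For (iii), I would show $\lambda \otimes (\mu \oplus \nu) = (\lambda \otimes \mu) \oplus (\lambda \otimes \nu)$ by identifying both sides as multisets of integers (with antiparts treated as signed integers). By Definition \ref{newprod}, the left-hand side has parts $\{\lambda_i (\mu \oplus \nu)_j\}$; since $\mu \oplus \nu$ is the multiset union of the parts (and antiparts) of $\mu$ and $\nu$, this multiset splits as $\{\lambda_i \mu_j\} \cup \{\lambda_i \nu_k\}$, which is exactly the multiset of parts of $(\lambda \otimes \mu) \oplus (\lambda \otimes \nu)$. Reordering in weakly decreasing form gives equality in $\mathcal{Q}$.

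The main obstacle will be (iii) when antiparts are present, because any cancellation between a part and an antipart on one side of the equation must produce the \emph{same} cancellations on the other side. Concretely, one must check that if $\mu$ contains a part $b$ and $\nu$ contains an antipart $b^{-}$ that cancel in $\mu \oplus \nu$, then the corresponding products $\lambda_i b$ and $\lambda_i b^{-} = (\lambda_i b)^{-}$ cancel in $\lambda \otimes \mu$ and $\lambda \otimes \nu$ on the right-hand side. This follows from the sign convention in Definition \ref{tensorop}, but it is the place where the argument could break if the convention were chosen differently, and so it deserves the most careful bookkeeping. Once this is verified, all ring axioms hold and the theorem follows.
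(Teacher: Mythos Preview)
Your proposal is correct and follows essentially the same approach as the paper: invoke Theorem \ref{Bgroup} with Definition \ref{newsum} for the abelian-group structure under $\oplus$, note that commutativity and associativity of $\otimes$ are automatic from its multiset definition with $(1)$ as identity, and verify distributivity by expanding $\lambda\otimes(\alpha\oplus\beta)$ as the multiset $\{\lambda_i\alpha_j\}\cup\{\lambda_i\beta_k\}$. Your explicit check that part--antipart cancellations commute with the tensor product (via the sign rule $\lambda_i b^{-}=(\lambda_i b)^{-}$) is a point the paper leaves implicit, so your version is in fact slightly more careful.
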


\begin{proof}
Theorem \ref{Bgroup} plus Definition \ref{newsum} already give that $(\mathcal Q,\oplus)$ is an abelian group with identity element $\emptyset$. That $\otimes$ is associative and commutative are automatic from its set-theoretic definition, and we noted above that $I_1=(1)\in \mathcal Q$ is the tensor product identity. To see that associativity holds, we observe that for $\lambda,\alpha,\beta\in Q$ with $\ell(\lambda)=r,\ell(\alpha)=s,\ell(\beta)=t$, by the definitions of $\oplus$ and $\otimes$, 
$$\lambda\otimes(\alpha\oplus\beta)=(\lambda_1,\lambda_2,...,\lambda_r)\otimes(\alpha_1,\alpha_2,...,\alpha_s, \beta_1, \beta_2,...,\beta_t)$$
produces the following set of parts (which we then reorder to look like a partition):
$$
\lbrace \lambda_1\alpha_1,\lambda_2\alpha_2,...,\lambda_1\alpha_s,\lambda_2\alpha_1,\lambda_2\alpha_2...,\lambda_2\alpha_s,...,\lambda_r\alpha_1,\lambda_r\alpha_2...,\lambda_r \alpha_s,$$ $$\lambda_1 \beta_1, \lambda_1\beta_2,...,\lambda_1\beta_t,\lambda_2\beta_1,\lambda_2\beta_2,...,\lambda_2\beta_t,...,\lambda_r\beta_1,\lambda_r\beta_2...,\lambda_r \beta_t\rbrace.$$
By noting that this set of parts (reordered) is also identically equal to 
$$(\lambda\otimes\alpha)\oplus(\lambda\otimes\beta),$$
we prove the distributive property, verifying $(\mathcal Q,\oplus,\otimes)$ has the claimed ring structure. 
\end{proof}

Thus both the size $| \cdot |$ and length $\ell(\cdot)$ represent ring homomorphisms in $(\mathcal Q,\oplus,\otimes)$. 

We note that if we %recover arithmetic in the integers if we 
restrict our attention to partitions of the shapes
$$I_n=(1,1,...,1)\  \  \text{and}\  \  I_n^-=(1^-,1^-,...,1^-)\  \  (\text{$n$ repetitions in both cases}),$$
the set of all such partitions is isomorphic to the integers. Size and length are equal in these cases, and uniquely associate each partition $I_n$ (resp. $I_n^-$) to the integer $n$ (resp. $-n$). Moreover, from the $\oplus, \otimes$ operations we recover addition and multiplication on the integers %in two different ways 
via the size map ``$|\cdot|$''. If we restrict the operations to partitions into all 1's and anti-1's as above, % $+,\times$, respectively, %in the integers these quantities reduce to   then 
%e.g. %for $a,b\in \mathbb Z$ 
then we can write
\begin{flalign}\label{usual1}
a+b=|I_a\oplus I_b|,\  \  \  \  ab=|I_a\otimes I_b|,
\end{flalign}
and so on, with negative numbers coming into play when we involve antipartitions. (Note that we also have 
$a+b=\ell(I_a\oplus I_b),\  ab=\ell(I_a\otimes I_b).$) %We can also restrict $\oplus, \otimes$ 
%to the set of partitions of length one  
%and instantly recover elementary arithmetic:
%\begin{equation}\label{usual2}
%a+b=|(a)\oplus (b)|,\  \  \  \  ab=|(a)\otimes (b)|
%\end{equation}
Evidently, the partitions into all $1$'s and anti-1's %and the partitions into exactly one part are 
is a subring of $\mathcal Q$. 

In the next section we will explore further ring-theoretic aspects of these ideas.

\subsection{Ring theory in \texorpdfstring{$\mathcal Q$}{$Lg$}}%in $\mathcal Q$}

%{\bf RPS: In this tentative section (which can be absorbed into the previous section if we don't have much to say), we might give any ``textbook'' consequences we can think of in the ring-theoretic setting. Perhaps a discussion of ideals. Does this relate to Andrews's theory of partition ideals \cite{Andrews} in any way? (I'm guessing that is a reach but it would be awesome if so.)} 

Certainly many classical techniques from ring theory can be brought to bear on algebraic questions in $(\mathcal Q,\oplus,\otimes)$. In this section, we consider factorization %and the nature of ideals 
in the ring of partitions. %, before turning our attention to applications in the classical theory of partitions. %Here is an application of these algebraic ideas. 

%
%\section{Applications in $q$-series}
%{\bf RPS: This section will show classical-styled results about appropriately defined M\"{o}bius function, phi functions, etc. We will emphasize that the outputs of these functions are partitions, not numbers, so this is a purely set-theoretic theory---even though we have to allow arithmetic to take place between the parts of the partitions.}

%\section{Questions of factorization and irreducibility} 

%In this section 

First we address the question of which partitions in $\mathcal Q$ can in fact be written as a tensor product. 

\begin{definition}\label{divided}
For $\delta, \lambda\in \mathcal P$, let us write $\delta \| \lambda$ to mean we can write $\lambda$ as a tensor product of the form
$$\lambda =\delta \otimes \delta'$$ 
for some partition $\delta'$. If $\lambda$ has a nontrivial factorization (i.e., both $\delta,\delta'\neq (1)$), we say it is {\it reducible}; otherwise we call it {\it irreducible}.
\end{definition}

Note that $(1)\| \lambda$ and $\lambda\| \lambda$ for all partitions $\lambda$. %, and we can temporarily write division this way:
%$$\lambda \tiny{\  \textcircled{${\div} $}\  } \delta' = \delta$$ 
Then if $\delta \| \lambda$, it must obey all of the divisibility relations
\begin{equation}\label{divisibility}
|\delta|\  |\  |\lambda|,\  \  \  n_{\delta} | n_{\lambda},\  \  \  \ell(\delta) | \ell(\lambda),\  \  \  \operatorname{lg}(\delta) |\operatorname{lg}(\lambda),\  \  \  \operatorname{sm}(\delta) |\operatorname{sm}(\lambda).
\end{equation}
%where ``$\operatorname{lg}$'' and ``$\operatorname{sm}$'' denote the largest and smallest parts of a partition, respectively. 
That is %a lot of %restrictions%, so it seems these tensor product divisors must be somewhat scarce
%---and 
a lot of information about what a ``tensor divisor'' of $\lambda$ must look like. (These are just easy-to-check consequences of Proposition \ref{mult2}: %; %they are 
necessary, but not sufficient.) 

Of course, there is the trivial factorization for every partition:
$$\lambda=\lambda\otimes (1).$$
%If $\lambda$ can be factored in any nontrivial way as a tensor product, we will say it is {\it reducible}, and otherwise we will call it {\it irreducible}. 
It is also clear that if $ab$ is composite ($a,b\neq 1$), there is the factorization 
$$(ab)=(a)\otimes(b).$$
Continuing in this direction, from the relation $|\lambda\otimes\lambda'|=|\lambda| |\lambda'|$ in Proposition \ref{mult} we can fully characterize the reducible partitions of any integer $n$, as well as their tensor divisors.

Let $\mathcal R_n\subset \mathcal P$ denote the set of reducible partitions of $n$. It turns out we can construct the set $\mathcal R_n$ by applying the tensor product to the partitions of pairs $d,d'$ of nontrivial divisors of $n$.

\begin{theorem}\label{reduciblethm1}
The reducible partitions of $n>1$ are given by %identically equal to the set 
$$\mathcal R_n=\{\lambda\otimes\lambda' : \lambda\vdash d, \lambda'\vdash d'\  \text{for all}\  dd'=n,\  d,d'\neq 1\}.$$
\end{theorem}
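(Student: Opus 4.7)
The plan is to verify the set equality in Theorem~\ref{reduciblethm1} by establishing the two inclusions separately, with Proposition~\ref{mult} (specifically the identity $|\lambda\otimes\lambda'|=|\lambda|\,|\lambda'|$) doing almost all of the work. Since $\mathbb Z^+$ has unique factorization and the only partition of $1$ is $(1)$, this will reduce the theorem to a short accounting argument.

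For the inclusion $\{\lambda\otimes\lambda':\lambda\vdash d,\,\lambda'\vdash d',\,dd'=n,\,d,d'\neq 1\}\subseteq \mathcal R_n$, I would take any factorization $n=dd'$ with $d,d'\neq 1$ and any $\lambda\vdash d$, $\lambda'\vdash d'$. By Proposition~\ref{mult}, $|\lambda\otimes\lambda'|=|\lambda|\,|\lambda'|=dd'=n$, so $\lambda\otimes\lambda'\vdash n$. Moreover, since $|\lambda|=d\neq 1$ and the unique partition of $1$ is $(1)$, it follows that $\lambda\neq (1)$; similarly $\lambda'\neq (1)$. By Definition~\ref{divided}, $\lambda\otimes\lambda'$ is then reducible, hence lies in $\mathcal R_n$.

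For the reverse inclusion $\mathcal R_n\subseteq\{\lambda\otimes\lambda':\ldots\}$, I would start with an arbitrary $\mu\in\mathcal R_n$. By Definition~\ref{divided}, reducibility gives some factorization $\mu=\lambda\otimes\lambda'$ with $\lambda,\lambda'\neq(1)$. Set $d:=|\lambda|$ and $d':=|\lambda'|$; then Proposition~\ref{mult} yields $dd'=|\lambda|\,|\lambda'|=|\mu|=n$, so $(d,d')$ is indeed a factorization of $n$ into positive integers. It remains to observe that $d\neq 1$ and $d'\neq 1$: if $d=1$, then $\lambda$ would be a partition of $1$, forcing $\lambda=(1)$, contradicting our assumption; similarly for $d'$. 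Hence $\mu$ belongs to the set on the right-hand side, completing the proof.

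There is no real obstacle here; the only point requiring any care is the passage from ``$\lambda\neq(1)$'' to ``$|\lambda|\neq 1$'' (and back), which rests on the triviality that $(1)$ is the sole partition of $1$. I expect the proof to be two short paragraphs once written out formally, and it suggests a natural follow-up question --- enumerating the distinct reducible elements of $\mathcal R_n$ in terms of ordered pairs $(d,d')$ with $dd'=n$, modulo coincidences of tensor products --- which falls outside the scope of the theorem itself.
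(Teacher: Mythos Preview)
Your proof is correct and matches the paper's approach: the paper does not give a formal proof but simply remarks, just before stating the theorem, that the characterization follows from the relation $|\lambda\otimes\lambda'|=|\lambda|\,|\lambda'|$ of Proposition~\ref{mult}, which is exactly the engine of your two-inclusion argument. Your explicit handling of the equivalence between $\lambda\neq(1)$ and $|\lambda|\neq 1$ fills in the only detail the paper leaves implicit.
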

 
As usual, let $p(n)$ denote the classical partition function, i.e., the number of partitions of $n\geq 1$, and set $r (n)$ %:=\#\mathcal R_n$ 
equal to the number of reducible partitions of $n$; then Theorem \ref{reduciblethm1} gives a natural upper bound on $r(n)$. 

\begin{corollary}
The number of reducible partitions of size $n$ obeys the inequality 
$$r(n)\leq \sum_{\substack{dd'=n\\d,d'\neq 1}} p(d)p(d').$$
\end{corollary}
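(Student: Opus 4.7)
The plan is to realize $\mathcal{R}_n$ as the image of a (generally non-injective) surjective map from a finite index set whose cardinality is exactly the claimed right-hand side, so the inequality follows immediately from the surjectivity.

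First I would fix the index set. Let
\[
\mathcal{I}_n := \bigsqcup_{\substack{dd'=n\\ d,d'\neq 1}} \{(\lambda,\lambda') : \lambda\vdash d,\ \lambda'\vdash d'\}.
\]
By the very definition of $p$, for each admissible ordered pair $(d,d')$ the corresponding block has exactly $p(d)p(d')$ elements, and hence
\[
|\mathcal{I}_n|\ =\ \sum_{\substack{dd'=n\\ d,d'\neq 1}} p(d)p(d').
\]

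Next I would define the tensor map $\Phi\colon \mathcal{I}_n \to \mathcal{R}_n$ by $\Phi(\lambda,\lambda') := \lambda\otimes\lambda'$. This is well-defined and lands in $\mathcal{R}_n$: by Proposition \ref{mult}, $|\lambda\otimes\lambda'|=|\lambda||\lambda'|=dd'=n$, so the output is a partition of $n$; and since $d,d'\neq 1$, both factors differ from $(1)$, so the image is genuinely reducible in the sense of Definition \ref{divided}.

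The heart of the argument is surjectivity, which is exactly the content of Theorem \ref{reduciblethm1}: every $\mu\in\mathcal{R}_n$ admits a nontrivial factorization $\mu=\lambda\otimes\lambda'$ with $\lambda\vdash d$, $\lambda'\vdash d'$, $dd'=n$, and $d,d'\neq 1$, which is precisely a preimage of $\mu$ under $\Phi$. Thus $\Phi$ is surjective, and consequently
\[
r(n)\ =\ |\mathcal{R}_n|\ \leq\ |\mathcal{I}_n|\ =\ \sum_{\substack{dd'=n\\ d,d'\neq 1}} p(d)p(d'),
\]
as desired.

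The only mild subtlety, and the reason we get $\leq$ rather than $=$, is the failure of injectivity of $\Phi$: a reducible partition can arise from several different factorizations (for instance $(2)\otimes(3)$ and $(3)\otimes(2)$ both yield $(6)$, and more interestingly $(2)\otimes(1,1)=(2,2)=(1,1)\otimes(2)$ counts twice as an ordered pair). No further analysis of this overcount is needed for the inequality itself; quantifying it would be the natural next step if one wanted an exact formula for $r(n)$.
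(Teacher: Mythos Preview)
Your proof is correct and follows essentially the same approach as the paper: the paper states that Theorem~\ref{reduciblethm1} gives a natural upper bound on $r(n)$ and records the corollary without further argument, while you have simply made explicit the surjection $\Phi\colon \mathcal{I}_n\to\mathcal{R}_n$ implicit in that theorem and the cardinality count $|\mathcal{I}_n|=\sum_{dd'=n,\,d,d'\neq1}p(d)p(d')$.
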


In lieu of using Theorem \ref{reduciblethm1} to generate a list of reducible partitions of $n$ and then checking all the entries, in order to determine the reducibility or irreducibility of a given partition of size $n$, %the reducibility of a given partition seems to need to be proved case-by-case, like checking irreducible polynomials in abstract algebra. Here 
here are a few easy criteria for irreducibility that follow from Proposition \ref{mult2}. %, and in the next section, we show that the multiplicities $m_k(\cdot)$ are a useful tool for showing irreducibility as well.

\begin{proposition}\label{irred1}
We have the following rules-of-thumb for irreducibility: %he partition $\lambda \in \mathcal P$ is irreducible  under the following conditions:
\begin{enumerate}
\item If $|\lambda|$ is prime then $\lambda$ is irreducible. 
\item If all its parts are mutually coprime, then $\lambda$ is irreducible. 
\item If $\operatorname{lg}(\lambda)$ is prime with multiplicity $1$, then $\lambda$ is irreducible.  
\item If $\ell(\lambda)$ is prime and there is no integer $d>1$ dividing all the parts of $\lambda$, then  %either all the parts of $\lambda$ have a common divisor $d>1$ and $\lambda$ can be factored, e.g. $\lambda=(d)\otimes(\lambda_1/d, \lambda_2/d,...,\lambda_r/d)$, or 
$\lambda$ is irreducible. 
\end{enumerate}  
\end{proposition}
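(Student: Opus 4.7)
Throughout, assume a factorization $\lambda = \delta \otimes \delta'$ in $\mathcal{P}$; to conclude $\lambda$ is irreducible it will suffice to show $\delta = (1)$ or $\delta' = (1)$. The unifying tool is Proposition~\ref{mult2}, whose multiplicativity of $|\cdot|$, $\ell(\cdot)$, and $\operatorname{lg}(\cdot)$ under $\otimes$ converts each hypothesis into a direct structural constraint on any factorization.

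Cases (1), (3), and (4) each collapse quickly. In case (1), $|\lambda| = |\delta||\delta'|$ is prime, so one factor has size $1$ and hence equals $(1)$. In case (4), $\ell(\lambda) = \ell(\delta)\ell(\delta')$ is prime, so without loss of generality $\ell(\delta) = 1$ and $\delta = (k)$; then every part of $\lambda = (k\delta'_1, k\delta'_2, \ldots)$ is divisible by $k$, and the hypothesis that no $d > 1$ divides every part of $\lambda$ forces $k = 1$. For case (3), primality of $\operatorname{lg}(\lambda) = \operatorname{lg}(\delta)\operatorname{lg}(\delta')$ forces, say, $\operatorname{lg}(\delta) = 1$, so $\delta = I_r$; by the remark following~(\ref{tensorop1}) we have $\lambda = I_r \otimes \delta' = r\delta'$, in which each part of $\delta'$ appears with multiplicity at least $r$. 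In particular $\operatorname{lg}(\lambda) = \operatorname{lg}(\delta')$ occurs $\geq r$ times, so the hypothesis of multiplicity $1$ pins $r = 1$, giving $\delta = (1)$.

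Case (2) will require more care. I will read ``mutually coprime'' as $\gcd(\lambda_i, \lambda_j) = 1$ for all distinct indices $i \neq j$, which in particular rules out any repeated part exceeding $1$, together with the implicit assumption $\ell(\lambda) \geq 2$. Suppose a nontrivial factorization; the parts of $\lambda$ are then the products $\delta_i \delta'_j$. The key observation will be that if some $\delta_i \geq 2$ and $\ell(\delta') \geq 2$, then $\delta_i \delta'_1$ and $\delta_i \delta'_2$ are two parts of $\lambda$ sharing the common factor $\delta_i \geq 2$ — contradicting coprimality if they differ in value, and creating a forbidden repeat if they coincide. Hence either $\delta = I_r$ or $\ell(\delta') = 1$. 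In the second subcase $\delta' = (k)$ and pairwise coprimality of the scaled parts $k\delta_i$ forces $k = 1$; in the first, the symmetric argument applied to $\delta'$ yields $\delta' = I_s$ as well, so $\lambda = I_{rs}$ has all parts equal — a forbidden repeat unless $rs = 1$, again producing a trivial factorization.

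The main anticipated obstacle is the correct interpretation of ``mutually coprime'' in case (2): a literal reading admits $\lambda = I_n$ for composite $n$ (since $I_n = I_a \otimes I_b$ whenever $n = ab$), as well as single-part partitions $(n)$ with $n$ composite, as spurious counterexamples, so the hypothesis must be supplemented to exclude all-ones and single-part partitions. Once this convention is fixed, each of the four statements reduces to a one-line application of Proposition~\ref{mult2} together with the structural identity $I_r \otimes \delta' = r\delta'$.
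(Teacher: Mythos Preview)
Your proof is correct and follows essentially the same approach as the paper, which simply states that all four items are obvious from Proposition~\ref{mult2} together with the definition of $\otimes$, without providing any of the case-by-case detail you give.

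Your treatment is considerably more careful than the paper's one-line dismissal, and in particular your observation about case (2) is a genuine catch: as literally stated, the hypothesis ``all parts mutually coprime'' is satisfied by $I_n$ for composite $n$ (since $\gcd(1,1)=1$) and vacuously by single-part partitions $(ab)$, both of which are reducible. The paper does not acknowledge or resolve these edge cases. Your supplementary convention (excluding all-ones and length-one partitions) is a reasonable fix, and your argument under that convention is sound. One small bibliographic note: the multiplicativity of $|\cdot|$ under $\otimes$ that you use in case (1) is actually stated in Proposition~\ref{mult}, not Proposition~\ref{mult2}.
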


\begin{proof}
All of these items are obvious from Proposition \ref{mult2} together with the definition of the tensor product $\otimes$.
%Part (1) of Proposition \ref{irred1} is immediate from the identity $|\lambda\otimes \lambda'|=|\lambda|\  |\lambda'|$ of Proposition \ref{mult}; part (2) follows from $\ell(\lambda\otimes \lambda')=\ell(\lambda)\ell(\lambda')$ in Proposition \ref{mult2}; and parts (3), (4) and (5) are obvious from the definition of $\otimes$.
\end{proof}

There are also two easy rules in the affirmative direction, which the reader can easily check.

\begin{proposition}\label{reducprop1}
We have the following rules-of-thumb for reducibility:
\begin{enumerate}
\item If all the parts of $\lambda\in\mathcal P$ are divisible by some $k>1$, %with no part having multiplicity $\geq k$, 
then $\lambda$ is reducible and $(k)\|\lambda$.
\item If $k>1$ divides the multiplicity of every part of $\lambda$, %but no part is itself divisible by $k$, 
then $\lambda$ is reducible and $I_k\| \lambda$.
\end{enumerate}
\end{proposition}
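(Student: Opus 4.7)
\textbf{Proof proposal for Proposition \ref{reducprop1}.} The plan is to explicitly exhibit, in each case, a partition $\mu$ such that $\lambda$ factors as the claimed tensor product, using the two equivalent descriptions of $\otimes$ (either ``multiply the parts pairwise'' from Definition \ref{newprod}, or ``convolve the multiplicities'' from the last identity of Proposition \ref{mult2}).

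For part (1), suppose $k\mid\lambda_i$ for every part $\lambda_i$ of $\lambda=(\lambda_1,\lambda_2,\dots,\lambda_r)$. First I would set $\mu_i:=\lambda_i/k\in\Z^+$ and observe that the inequalities $\lambda_1\geq\lambda_2\geq\dots\geq\lambda_r\geq 1$ (together with $k>1$) force $\mu_1\geq\mu_2\geq\dots\geq\mu_r\geq 1$, so $\mu=(\mu_1,\dots,\mu_r)\in\mathcal{P}$. By Definition \ref{newprod}, $(k)\otimes\mu$ is the partition whose parts are $k\mu_1,k\mu_2,\dots,k\mu_r$ arranged in weakly decreasing order; but $k\mu_i=\lambda_i$ and these are already in weakly decreasing order, so $(k)\otimes\mu=\lambda$. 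Since $k>1$ and (because $|\lambda|\geq k>1$ forces $\mu\neq\emptyset$; moreover $\mu\neq(1)$ unless $\lambda=(k)$ itself is already irreducible but trivially divisible by $(k)$), this exhibits $(k)\|\lambda$ and establishes reducibility in every nontrivial case.

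For part (2), suppose $k\mid m_j(\lambda)$ for every $j\geq 1$, and write $m_j(\lambda)=k\,n_j$ with $n_j\in\Z_{\geq 0}$. Only finitely many $n_j$ are nonzero, so $\mu:=(1^{n_1}\,2^{n_2}\,3^{n_3}\cdots)\in\mathcal{P}$ is a well-defined partition. The cleanest way to verify $I_k\otimes\mu=\lambda$ is via the multiplicity-convolution formula from Proposition \ref{mult2}: since $m_d(I_k)=k$ if $d=1$ and $0$ otherwise, we have
\[
m_j(I_k\otimes\mu)\;=\;\sum_{d\mid j}m_d(I_k)\,m_{j/d}(\mu)\;=\;m_1(I_k)\,m_j(\mu)\;=\;k\,n_j\;=\;m_j(\lambda)
\]
for every $j\geq 1$. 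Since a partition is uniquely determined by its multiplicity sequence, this gives $\lambda=I_k\otimes\mu$, and as $k>1$ we conclude $I_k\|\lambda$ with $\lambda$ reducible (excluding the trivial case $\lambda=\emptyset$, where the statement is vacuous).

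I do not anticipate a serious obstacle here: both parts reduce to writing down the obvious candidate factor and checking the definition. The only mild subtlety is confirming in part (2) that $I_k$ satisfies $m_1(I_k)=k$ and $m_j(I_k)=0$ for $j\geq 2$, which is immediate from the definition $I_k=(1,1,\dots,1)$ ($k$ repetitions), and that the multiplicity-convolution identity from Proposition \ref{mult2} applies verbatim in $\mathcal{P}$ (no antiparts are needed, so Definition \ref{tensorop} does not enter).
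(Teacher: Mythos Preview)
Your proof is correct and complete. The paper does not actually supply a proof of this proposition---it simply says ``the reader can easily check''---so your explicit constructions (dividing each part by $k$ for part (1), and using the multiplicity-convolution identity from Proposition~\ref{mult2} for part (2)) are precisely the natural verifications the paper leaves implicit.
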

%
%\begin{proof}
%{\bf Insert proof here.}
%\end{proof}

Of course, the %Consider that the 
subsets of partitions focused on in Proposition \ref{reducprop1}, the set $\mathcal P_{k\mathbb Z}$ of partitions into parts divisible by $k>1$, and the set $\mathcal P_{k|m_*}$ of partitions with multiplicities all divisible by $k$, are familiar to students of partition theory. Interestingly, both subsets are closed under the operations $\oplus$ and $\otimes$. In fact, $\mathcal P_{k\mathbb Z}$ and $\mathcal P_{k|m_*}$ are subgroups of $(\mathcal Q,\oplus)$ and subrings of $(\mathcal Q,\oplus,\otimes)$, and are also two-sided ideals in $(\mathcal Q,\oplus,\otimes)$ according to the standard usage in ring theory.

The author and his collaborator would like to see if standard theorems from classical ring theory extend to this partition-theoretic scheme. Moreover, it is our goal to use this ring structure to seek alternative proofs of partition bijections, Ramanujan-like congruences and other classical partition theorems, as well as to seek applications in Andrews's theory of partition ideals (see \cite{Andrews}).

\chapter{Notes on Chapter 4: Further observations}\label{app:C}

\label{app:C}{\bf Based on joint work with Maxwell Schneider}

%\section{End of chapter notes: Other directions}

%\section{Approximating zeroes of partition zeta functions} 
%Also, box partition zeta functions? golden ratio partition Dirichlet series?

\section{Sequentially congruent partitions}

We consider a somewhat exotic subset $\mathcal S\subset \mathcal P$ suggested by the indices of Corollary \ref{ch4etaquotient}, which we refer to as ``sequentially congruent partitions'', the parts of which obey abnormally strict congruence conditions. We find sequentially congruent partitions are in bijective correspondence with the set of all partitions, and yield explicit expressions for the coefficients of the expansions of a broad class of infinite products. Somehow these complicated-looking objects are embedded in a  natural way in partition theory. %Let $\mathcal P$ denote the set of all integer partitions of the shape $\lambda=(\lambda_1,\lambda_2,...,\lambda_r),\  \lambda_1 \geq \lambda_2\geq ... \geq \lambda_r \geq 1$, and for a given partition $\lambda$, let $|\lambda|$ denote its {\it size} (i.e., the sum of the parts) and $\ell(\lambda)=r$ denote its {\it length}. We define a complicated-looking set $\mathcal S \subset \mathcal P$ of ``{\it sequentially congruent partitions}'', which turn out to fit naturally in partition theory. 

\begin{definition}
We define a partition $\lambda$ to be {\it sequentially congruent} if 
the following congruences between the parts are all satisfied:
$$\lambda_1 \equiv \lambda_2\  (\operatorname{mod}\  1),\  
\lambda_2 \equiv \lambda_3\  (\operatorname{mod}\  2),\  \lambda_3 \equiv \lambda_4\  (\operatorname{mod}\  3),\  ...\  ,\  
\lambda_{r-1} \equiv \lambda_r\  (\operatorname{mod}\  r-1),$$
and for the smallest part, $\lambda_r \equiv 0\  (\operatorname{mod}\  r).$ 
\end{definition}

For example, the partition $(20,17,15,9,5)$ 
is sequentially congruent, because $20 \equiv 17\  (\operatorname{mod}\  1)$ trivially, $17\equiv 15\  (\operatorname{mod}\  2)$, $15\equiv 9\  (\operatorname{mod}\  3)$, $9\equiv 5\  (\operatorname{mod}\ 4)$, and $5\equiv 0\  (\operatorname{mod}\  5)$. On the other hand, $(21,18,16,10,6)$ is {\it not} sequentially congruent, for while the first four congruences still hold, clearly $6\not\equiv 0\  (\operatorname{mod}\  5)$. Note that increasing the largest part $\lambda_1$ of any $\lambda \in \mathcal S$ yields another partition in $\mathcal S$, as does adding or subtracting a fixed integer multiple of the length $r$ to all its parts, so long as the resulting parts are still positive.%\footnote{We might call this partition {\it weakly} sequentially congruent.}

No doubt, this congruence restriction on the parts hardly appears natural. However, it turns out sequentially congruent partitions are in one-to-one correspondence with the entire set of partitions. 

Let $\mathcal P_n$ denote the set of partitions of $n$, let $\mathcal S_{\operatorname{lg}=n}$ denote sequential partitions $\lambda$ whose largest part $\lambda_{1}$ is equal to $n$, and let $\#Q$ be the cardinality of a set $Q$ as usual.

\begin{theorem}\label{Cthm1}
There is an explicit bijection $\pi$ between the set $\mathcal S$ and the set $\mathcal P$. Furthermore, it is the case that
$$\pi(\mathcal S_{\operatorname{lg}=n})=\mathcal P_n,\  \  \  \  \  \  \  \pi^{-1}(\mathcal P_n)=\mathcal S_{\operatorname{lg}=n}.$$
%the set $\mathcal P_n$ of partitions of $n$ is in bijective correspondence with the set $\mathcal S_{\ell g=n}$ of sequentially congruent partitions having largest part equal to $n$. 
\end{theorem}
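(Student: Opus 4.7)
The plan is to build $\pi$ explicitly by turning the congruence conditions defining $\mathcal S$ into a recipe for reading off the multiplicities of a partition. Given $\lambda = (\lambda_1, \lambda_2, \ldots, \lambda_r) \in \mathcal S$, the defining congruences mean that each difference $\lambda_i - \lambda_{i+1}$ is a nonnegative multiple of $i$ for $1 \le i \le r-1$, and $\lambda_r$ itself is a positive multiple of $r$. Setting
\[
\mu_i := \frac{\lambda_i - \lambda_{i+1}}{i} \quad (1 \le i \le r-1), \qquad \mu_r := \frac{\lambda_r}{r},
\]
I would define $\pi(\lambda)$ to be the partition with part $i$ occurring with multiplicity $\mu_i$ for $1 \le i \le r$ (and multiplicity $0$ for $i > r$). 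The empty partition will map to itself.

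Next I would verify that $\pi$ lands in $\mathcal P_n$ when $\lambda_1 = n$, and refines to the stated restriction $\pi(\mathcal S_{\operatorname{lg}=n}) = \mathcal P_n$. The size computation telescopes:
\[
\sum_{i=1}^{r} i\mu_i \;=\; \sum_{i=1}^{r-1}(\lambda_i - \lambda_{i+1}) + \lambda_r \;=\; \lambda_1 \;=\; n,
\]
so $\pi(\lambda) \vdash n$. Moreover, since $\lambda_r \geq r$ forces $\mu_r \geq 1$, the largest part of $\pi(\lambda)$ is exactly $r = \ell(\lambda)$. Thus $\pi$ sends length to largest part and largest part to size, so that $\lambda \in \mathcal S_{\operatorname{lg}=n}$ iff $\pi(\lambda) \vdash n$.

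To construct the inverse, given $\mu \in \mathcal P_n$ with $r := \operatorname{lg}(\mu)$ and multiplicities $m_i := m_i(\mu)$, I would set $\lambda_r := r\,m_r$ and then recursively define
\[
\lambda_i := \lambda_{i+1} + i\,m_i \quad \text{for } i = r-1, r-2, \ldots, 1.
\]
The same telescoping shows $\lambda_1 = |\mu| = n$. Because $m_r \ge 1$ (as $r$ is the largest part of $\mu$), each $\lambda_i > 0$, and by construction each $\lambda_i \ge \lambda_{i+1}$ with $\lambda_i - \lambda_{i+1}$ divisible by $i$ and $\lambda_r$ divisible by $r$. Hence $\lambda = (\lambda_1, \ldots, \lambda_r) \in \mathcal S_{\operatorname{lg}=n}$, and this construction is manifestly inverse to $\pi$ coordinate-by-coordinate.

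The routine parts are simply the telescoping identity and the verification of the congruences; the only place that requires care (and is the main conceptual point rather than a real obstacle) is ensuring the condition $\lambda_r \equiv 0 \pmod r$ matches up cleanly with $r$ being the \emph{largest} part of $\pi(\lambda)$ — this is what pins down $\ell(\lambda) = \operatorname{lg}(\pi(\lambda))$ and makes the indexing in the statement $\pi(\mathcal S_{\operatorname{lg}=n}) = \mathcal P_n$ work without off-by-one issues.
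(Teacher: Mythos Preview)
Your proof is correct, but you have constructed a \emph{different} bijection from the one in the paper. The paper's map (which it also calls $\pi$, though going $\mathcal P \to \mathcal S$) sends a partition $(\lambda_1,\ldots,\lambda_r)$ to $(\lambda_1',\ldots,\lambda_r')$ with $\lambda_i' = i\lambda_i + \sum_{j>i}\lambda_j$; this preserves the \emph{length} of the partition and operates on the parts themselves rather than on the multiplicities. Your map, by contrast, reads the quotients $(\lambda_i-\lambda_{i+1})/i$ as multiplicities, and sends length in $\mathcal S$ to largest part in $\mathcal P$. For instance, the paper's map sends $(2,2,2,2)\in\mathcal P$ to $(8,8,8,8)\in\mathcal S$, whereas your inverse sends it to $(8,8)$.

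Interestingly, the paper later introduces exactly your map under the name $\sigma$ (in the discussion following Theorem~\ref{Cthm2}) and proves that the composite $\sigma\pi$ acts on $\mathcal P_n$ as conjugation (Theorem~\ref{Cthm3}). So your bijection and the paper's differ precisely by conjugation on the $\mathcal P$ side. Both are equally elementary and both establish the theorem; the paper's choice has the feature that the sequentially congruent image ``looks like'' the original partition (same length, similar shape), while yours makes the telescoping size computation slightly more transparent and ties more directly to the multiplicity encoding used in the product formulas of Theorem~\ref{Cthm2}.
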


We prove these bijections directly, by construction. %, the second using generating functions.

\begin{proof}%[Proof 1]
For any partition $\lambda=(\lambda_1, \lambda_2,...,\lambda_i,...,\lambda_r)$, one can construct its sequentially congruent dual $$\lambda'=(\lambda_1',\lambda_2',...,\lambda_i',...,\lambda_r')$$ %--- noting that $\lambda$ itself may be hypercongruent as well --- 
by taking the parts equal to
\begin{equation}\label{Cconstruction1}
\lambda_i'=i\lambda_i+\sum_{j=i+1}^{r}\lambda_j.
\end{equation}
Note that $\lambda_r'\equiv 0\  (\operatorname{mod}\  r)$ as $\sum_{j=r+1}^{r}$ is empty; the other congruences between successive parts of $\lambda'$ are also immediate from equation (\ref{Cconstruction1}). 

Let us call this map $\pi$: \begin{flalign*}\pi\colon \mathcal P &\to \mathcal S\\ \lambda &\mapsto \lambda'\end{flalign*} The above argument establishes, in fact, that we have more strongly $\pi\colon \mathcal P_n \to \mathcal S_{\text{lg}=n}$. 

Conversely, given a sequentially congruent partition $\lambda'$, one can recover the dual partition $\lambda$ by working from right-to-left. Begin by computing the smallest part 
\begin{equation}
\lambda_r=\frac{\lambda_r'}{r},\end{equation}
then compute $\lambda_{r-1},\lambda_{r-2},...,\lambda_1$ in this order by taking
\begin{equation}\label{Cconstruction2}
\lambda_i=\frac{1}{r}\left( \lambda_i'-\sum_{j=i+1}^{r}\lambda_j \right).\end{equation}

We let this construction define the inverse map $\pi^{-1}\colon \mathcal S \to \mathcal P$. Noting that the uniqueness of $\lambda$ implies the uniqueness of $\lambda'$, and vice versa, the bijection between $\mathcal S$ and $\mathcal P$ follows from this two-way construction. Furthermore, observe that $\lambda_1'=|\lambda|$, thus every partition $\lambda$ of $n$ corresponds to a sequentially congruent partition $\lambda'$ with largest part $n$, and vice versa. 
\end{proof}

 %\begin{noindent}
We see by construction that $\pi(\lambda)=\lambda'$ ``looks similar'' to $\lambda$ in terms of length and distribution of the parts. For example, taking $\lambda = (2)$, $(3,1,1)$, $(2,2,2,2)$, respectively, and writing $\pi(\lambda)=\pi(\lambda_1,\lambda_2,...,\lambda_r)$ instead of $\pi\left( (\lambda_1,\lambda_2,...,\lambda_r)\right)$ for notational ease: $$\pi \left( 2 \right)=(2),\  \  \pi \left( 3,1,1\right)=(5,3,3),\  \  \pi \left( 2,2,2,2 \right)=(8,8,8,8).$$%\  \text{(See what I mean?)}$$
Thus the set $\pi(\mathcal P_n)=\mathcal S_{\text{lg}=n}$ ``looks like'' the set $\mathcal P_n$, even up to ``similar-looking'' partitions being in the same positions if we consider their ordering within each set (the map $\pi$ does not permute the sequentially congruent images within the set). Of course, the same is true for $\pi^{-1}(\mathcal S_{\text{lg}=n})=\mathcal P_n$ ``looking like'' $S_{\text{lg}=n}$.%This is ha

The sets $\mathcal P$ and $\mathcal S$ enjoy another interrelation that can be used to compute the coefficients of infinite products. Now, it is the first equality of Theorem \ref{ch41.1} in Chapter 4 (and equivalent to Equation 22.16 in Fine \cite{Fine}) that for a function $f\colon \mathbb N \to \mathbb C$ and $q\in \mathbb C$ with $f,q$ chosen such that the product converges absolutely, we can write
\begin{equation}\label{Czeta_thm}
\prod_{n=1}^{\infty}(1-f(n)q^n)^{-1}=\sum_{\lambda \in \mathcal P}q^{|\lambda|}\prod_{i\geq 1}f(i)^{m_i},
\end{equation}
where $m_i=m_i(\lambda)$ is the multiplicity of $i$ as a part of partition $\lambda$. In fact, it follows from another formula in Chapter 4 that the product on the left-hand side of (\ref{Czeta_thm}) can also be written as a sum over sequentially congruent partitions. 

Let $\operatorname{lg}(\lambda)=\lambda_1$ denote the {\it largest part} of partition $\lambda$, and set $\lambda_k=0$ if $k>\ell(\lambda)$.

\begin{theorem}\label{Cthm2} For $f\colon \mathbb N \to \mathbb C, q\in \mathbb C$ such that the product converges absolutely, we have
\begin{equation*}%\label{Czeta_thm}
\prod_{n=1}^{\infty}(1-f(n)q^n)^{-1}=\sum_{\lambda \in \mathcal S}q^{\operatorname{lg}(\lambda)}\prod_{i\geq 1}f(i)^{(\lambda_i-\lambda_{i+1})/i}.
\end{equation*}

\end{theorem}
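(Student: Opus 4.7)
The plan is to derive Theorem \ref{Cthm2} from the already established identity (\ref{Czeta_thm}) of Theorem \ref{ch41.1} by re-indexing the right-hand sum under a suitable bijection between $\mathcal{P}$ and $\mathcal{S}$. Note, however, that the bijection $\pi$ constructed in Theorem \ref{Cthm1} is \emph{not} the right one for this purpose, because $\pi$ matches $|\mu|$ with $\operatorname{lg}(\pi(\mu))$ but does not match the multiplicities $m_i(\mu)$ with the ``differences'' $(\pi(\mu)_i - \pi(\mu)_{i+1})/i$. Instead I will introduce a second, different bijection $\Phi\colon \mathcal{S}\to\mathcal{P}$ tailored precisely so that the summands of (\ref{Czeta_thm}) reshape into those of Theorem \ref{Cthm2}.

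The bijection I would write down is the following: given $\lambda \in \mathcal{S}$ with length $r$, set $\lambda_{r+1}:=0$ and define $\mu = \Phi(\lambda)$ to be the partition whose multiplicities are
\[
m_i(\mu):=\frac{\lambda_i-\lambda_{i+1}}{i} \qquad (i\geq 1).
\]
The sequential congruence condition guarantees that each $m_i(\mu)$ is a nonnegative integer, so $\mu\in\mathcal{P}$ is well-defined, and only finitely many $m_i(\mu)$ are nonzero since $\lambda$ has finitely many parts. To invert, given $\mu\in\mathcal{P}$, I would define $\lambda_i := \sum_{j\geq i} j\,m_j(\mu)$; this produces a weakly decreasing sequence of positive integers terminating at the largest $r$ with $m_r(\mu)>0$, and telescoping shows $\lambda_i-\lambda_{i+1} = i\,m_i(\mu)$, so the resulting $\lambda$ lies in $\mathcal{S}$ and satisfies $\Phi(\lambda)=\mu$.

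The next step is to verify the two identifications that make the substitution work:
\[
\operatorname{lg}(\lambda)=\lambda_1=\sum_{j\geq 1} j\,m_j(\mu)=|\mu|,
\qquad
\prod_{i\geq 1}f(i)^{(\lambda_i-\lambda_{i+1})/i}=\prod_{i\geq 1}f(i)^{m_i(\mu)}.
\]
The first follows by telescoping $\lambda_1=\sum_{i\geq 1}(\lambda_i-\lambda_{i+1})=\sum_{i\geq 1}i\,m_i(\mu)$, and the second is immediate from the definition of $\Phi$. With these in hand, re-indexing the right-hand side of (\ref{Czeta_thm}) through $\mu=\Phi(\lambda)$ converts it term-by-term into the sum over $\mathcal{S}$ appearing in Theorem \ref{Cthm2}, completing the argument.

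Since the identity (\ref{Czeta_thm}) already provides the analytic backbone and absolute convergence is inherited from the original product, there is no serious analytic obstacle here; the work is entirely combinatorial. The only part requiring care will be confirming that the construction of $\lambda$ from $\mu$ genuinely lands in $\mathcal{S}$ (weak decrease, positivity of the last part, and each consecutive congruence) and that $\Phi$ and its inverse are mutually inverse on the nose, but these are straightforward bookkeeping verifications using telescoping, and will likely be dispatched in a single display.
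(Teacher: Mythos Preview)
Your proof is correct. You construct what the paper later calls the map $\sigma$ (defined immediately after its own proof of Theorem \ref{Cthm2}) and use it to transport the known expansion (\ref{Czeta_thm}) directly onto $\mathcal{S}$. The paper takes a different route: it specializes Corollary \ref{ch4etaquotient} by setting $\mathbb{X}_j=\{j\}$ and $f_j=f$ for every $j$, so that each inner sum over $\lambda\vdash(k_j-k_{j+1})$ with $\lambda\in\mathcal{P}_{\{j\}}$ is nonzero only when $j\mid(k_j-k_{j+1})$, forcing the index sequence $(k,k_2,k_3,\dots)$ to be a sequentially congruent partition; the single surviving term in each factor is then $f(j)^{(k_j-k_{j+1})/j}$. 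In effect the paper \emph{discovers} $\mathcal{S}$ as the support of the nested coefficient sum in Corollary \ref{ch4etaquotient}, and only afterward extracts your bijection $\Phi=\sigma$ by comparing the resulting formula back against (\ref{Czeta_thm}). Your approach is more economical once one already has $\sigma$ in hand; the paper's route explains where $\mathcal{S}$ comes from in the first place.
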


\begin{proof}
Theorem \ref{Cthm2} results from Corollary \ref{ch4etaquotient} in Chapter 4. %(and can be proved from (\ref{zeta_thm}) 
%by repeated application of the Cauchy product formula) 
%by setting $f_j= f$ and $\mathbb X_j=\{j\}$ for all $j\in \mathbb N$, taking every $\pm$ equal to a minus sign, and letting $n \to \infty$. Now, f
For every $j\in \mathbb N$ take $\mathbb X_j=\{j\}$, fix $f_j= f$, and set $\pm$ equal to a minus sign,. In this case, $\lambda \in \mathcal P_{\mathbb X_j}$ means if $\lambda \neq \emptyset$ that $\lambda=(j,j,...,j)$, so we must have $j|(k_j - k_{j+1})$ in any nonempty partition sum on the right side above. Then every summand comprising $c_k$ %on the right side %of Cor. 2.9 %in \cite{Schneider_zeta} 
vanishes unless all the $k_i\leq k$ are parts of a sequentially congruent partition having length $\leq n$: %, as  %which follows from the definition $X_j=\{j\}$ in the conditions beneath the partition sums comprising the summands: 
each sum over partitions is empty (i.e., equal to zero) if $j$ does not divide $k_j-k_{j+1}$; is equal to 1 if $k_j - k_{j+1}=0$ as then $\lambda=\emptyset$ and $\prod_{\lambda_i\in \emptyset}$ is an empty product; or else has one term $f(j)^{m_j}=f(j)^{(k_j-k_{j+1})/j}$ as there is exactly one $\lambda = (j,j,...,j)$ with $|\lambda|=m_j j=k_j-k_{j+1}> 0$. Finally, let $n \to \infty$ so this argument encompasses partitions in $\mathcal S$ of unrestricted length.
\end{proof}

\begin{remark}
We note that setting $f=1$, then comparing equation (\ref{Czeta_thm}) to Theorem \ref{Cthm2}, gives another proof of Theorem \ref{Cthm1}: the sets $\mathcal S_{\operatorname{lg}=n}$ and $\mathcal P_n$ (and thus, the sets $\mathcal S$ and $\mathcal P$) have the same product generating function. \end{remark}

\begin{remark}If we instead take every $\pm$ equal to plus in Corollary \ref{ch4etaquotient}, we see there is also a bijection between partitions into {\it distinct} parts and a subset of $\mathcal S$, viz. partitions into parts with differences $\lambda_i - \lambda_{i+1}=i$ exactly.
\end{remark}

Comparing Theorem \ref{Cthm2} with (\ref{Czeta_thm}) above, we have two quite different-looking decompositions of the coefficients of $\prod_{n\geq1}(1-f(n)q^n)^{-1}$ as sums over partitions. One observes that these decompositions of $\sum_{\lambda \in \mathcal P_n}$ and $\sum_{\lambda \in \mathcal S_{\text{lg}=n}}$ have identical summands, that is, the sums do not just involve different numbers that add up to the same coefficient of $q^n$, but rather involve the same set of terms in seemingly a different order. Then one wonders: for given $\phi \in \mathcal S_{\text{lg}=n}$, precisely {\it which} partition $\gamma \in \mathcal P_n$ is such that $$\prod_{i \geq 1}f(i)^{(\phi_i-\phi_{i+1})/i}=\prod_{j \geq 1}f(j)^{m_j(\gamma)} ?$$
This $\gamma$ is generally {not} the same partition $\lambda=\pi^{-1}(\phi)$ as above. Then the set $\mathcal S$ evidently enjoys a second map (beside $\pi^{-1}$) to $\mathcal P$, which we will call $\sigma$:
\begin{flalign*}\sigma\colon \mathcal S &\to \mathcal P\\ \phi &\mapsto \gamma\end{flalign*}
We can easily write down this map by comparing the forms of the products above, using parts-multiplicity notation: % (instead of the weakly decreasing list of parts) for partitions:
$$\sigma(\phi):=(1^{\phi_1-\phi_{2}}\  2^{(\phi_2-\phi_{3})/2}\  3^{(\phi_3-\phi_{4})/3}...)=\gamma.$$
For example, $\sigma(5,3,3)=(1^{5-3}\  2^{(3-3)/2}\  3^{(3-0)/3})=(3,1,1)$, which in this case turns out to be the pre-image of $(5,3,3)$ over $\pi$ (but this is not generally the case, as we will see).

In fact, under this map we also have $\sigma(\mathcal S_{\text{lg}=n})=\mathcal P_n$, but a fact hidden by the preceding example and differing from the map $\pi$ is that $\sigma$ {\it does} permute the images in $\mathcal P_n$, so $\sigma(\mathcal S_{\text{lg}=n})$ and $\mathcal P_n$ do not ``look similar''. Then we also have that the composite map $$\sigma \pi\colon \mathcal P_n \to \mathcal P_n$$ permutes the set of partitions of $n$. %, thus is actually some permutation of $\{1,2,3,..., p(n)\}$. 
Similarly, the map $\pi \sigma\colon \mathcal S_{\text{lg}=n} \to \mathcal S_{\text{lg}=n}$ permutes the elements of $\mathcal S_{\text{lg}=n}$. %, thus represents a permutation of $\{1,2,3,...,p(n)\}$. Which permutations are they? %
A natural question one might ask is: %what kind of permutation structure might be happening as we alternately compose $\pi, \sigma$, that is, 
what if we apply $\sigma\pi\sigma\pi ... \sigma\pi$ to a partition of $n$? %Of course, we are just bouncing between partitions of $n$ and sequentially congruent partitions with largest part $n$, which are equinumerous. % but, more pertinently, finite. 
%What kind of cycle structure occurs? %Does this give us new information about the partitions of $n$? 
Let's see some examples for $n=1,2,3,4$. For $n=1$,
$$(1)\overset{\pi}\longmapsto (1)\overset{\sigma}\longmapsto  (1)$$
stabilizes right away as there is only one such partition. For $n=2$:
$$(2)\overset{\pi}\longmapsto (2)\overset{\sigma}\longmapsto (1,1)\overset{\pi}\longmapsto (2,2)\overset{\sigma}\longmapsto (2),$$
$$(1,1)\overset{\pi}\longmapsto (2,2)\overset{\sigma}\longmapsto (2)\overset{\pi}\longmapsto (2)\overset{\sigma}\longmapsto (1,1).$$%\overset{\pi}\to(1,1)\overset{\sigma}\to(2)$$
For $n=3$:
$$(3)\overset{\pi}\longmapsto (3)\overset{\sigma}\longmapsto (1,1,1)\overset{\pi}\longmapsto (3,3,3)\overset{\sigma}\longmapsto (3),$$
$$(2,1)\overset{\pi}\longmapsto (3,2)\overset{\sigma}\longmapsto (2,1),$$
$$(1,1,1)\overset{\pi}\longmapsto (3,3,3)\overset{\sigma}\longmapsto (3)\overset{\pi}\longmapsto (3)\overset{\sigma}\longmapsto (1,1,1).$$
Finally, for $n=4$:
$$(4)\overset{\pi}\longmapsto (4)\overset{\sigma}\longmapsto (1,1,1,1)\overset{\pi}\longmapsto (4,4,4,4)\overset{\sigma}\longmapsto (4),$$
$$(3,1)\overset{\pi}\longmapsto (4,2)\overset{\sigma}\longmapsto (2,1,1)\overset{\pi}\longmapsto (4,3,3)\overset{\sigma}\longmapsto (3,1),$$
$$(2,2)\overset{\pi}\longmapsto (4,4)\overset{\sigma}\longmapsto (2,2),$$
$$(2,1,1)\overset{\pi}\longmapsto (4,3,3)\overset{\sigma}\longmapsto (3,1)\overset{\pi}\longmapsto (4,2)\overset{\sigma}\longmapsto (2,1,1),$$
$$(1,1,1,1)\overset{\pi}\longmapsto (4,4,4,4)\overset{\sigma}\longmapsto (4)\overset{\pi}\longmapsto (4)\overset{\sigma}\longmapsto (1,1,1,1).$$

At this point the following fact is apparent.

\begin{theorem}\label{Cthm3}
The composite map $\sigma\pi\colon \mathcal P_n \to \mathcal P_n$ takes partitions to their conjugates. %Moreover, 
%$$\sigma\pi(\mathcal S_{\operatorname{lg}=n})=\mathcal P_n,\  \  \  \  \  \  \  \pi^{-1}(\mathcal P_n)=\mathcal S_{\operatorname{lg}=n}.$$
%the set $\mathcal P_n$ of partitions of $n$ is in bijective correspondence with the set $\mathcal S_{\ell g=n}$ of sequentially congruent partitions having largest part equal to $n$. 
\end{theorem}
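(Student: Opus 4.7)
The plan is essentially a two-line telescoping computation, followed by the standard Ferrers-diagram reading of a conjugate partition. Fix $\lambda = (\lambda_1,\lambda_2,\ldots,\lambda_r) \in \mathcal P_n$ and adopt the convention $\lambda_{r+1} := 0$ so that the last congruence $\lambda_r' \equiv 0 \pmod{r}$ fits the same pattern as the others.

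First I would unwind the image $\pi(\lambda) = \lambda'$ using the defining formula $\lambda_i' = i\lambda_i + \sum_{j=i+1}^{r}\lambda_j$. For any $1 \le i \le r$, a direct subtraction gives
\[
\lambda_i' - \lambda_{i+1}' \;=\; \Bigl(i\lambda_i + \sum_{j=i+1}^{r}\lambda_j\Bigr) - \Bigl((i+1)\lambda_{i+1} + \sum_{j=i+2}^{r}\lambda_j\Bigr) \;=\; i\lambda_i - i\lambda_{i+1},
\]
with the convention $\lambda_{r+1}' := 0$ for the boundary case. Hence $(\lambda_i' - \lambda_{i+1}')/i = \lambda_i - \lambda_{i+1}$ for every $i$, which is already a pleasant surprise: the complicated definition of $\pi$ telescopes cleanly against the ``successive differences divided by index'' recipe that defines $\sigma$.

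Next I would plug this into the formula for $\sigma$. By construction $\sigma(\phi) = \bigl(1^{m_1}\, 2^{m_2}\, 3^{m_3}\cdots\bigr)$ where $m_i = (\phi_i - \phi_{i+1})/i$, so the previous step yields
\[
\sigma\pi(\lambda) \;=\; \bigl(1^{\lambda_1 - \lambda_2}\ 2^{\lambda_2 - \lambda_3}\ 3^{\lambda_3 - \lambda_4}\ \cdots\ r^{\lambda_r}\bigr).
\]
In other words, the multiplicity of $i$ as a part of $\sigma\pi(\lambda)$ is exactly $\lambda_i - \lambda_{i+1}$.

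To finish, I would invoke the standard identification $m_i(\lambda^*) = \lambda_i - \lambda_{i+1}$. Reading the Ferrers--Young diagram of $\lambda$ by columns, the $j$-th column has height $\#\{k : \lambda_k \ge j\}$, and this height equals $i$ precisely when $\lambda_{i+1} < j \le \lambda_i$, giving exactly $\lambda_i - \lambda_{i+1}$ columns of height $i$. Since columns of $\lambda$ are rows of $\lambda^*$, this counts the parts of $\lambda^*$ of size $i$. Combining with the previous display yields $\sigma\pi(\lambda) = \lambda^*$, as claimed. The only real step to watch is the bookkeeping at the ``tail'' (setting $\lambda_{r+1} = 0$ and $\lambda_{r+1}' = 0$ so that the $i = r$ case behaves uniformly), but this is a formality rather than a genuine obstacle.
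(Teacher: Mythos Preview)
Your proof is correct and follows essentially the same approach as the paper: compute $\sigma\pi(\lambda)$ explicitly and identify the result with $\lambda^*$ via a Ferrers--Young diagram argument. Your telescoping computation $(\lambda_i' - \lambda_{i+1}')/i = \lambda_i - \lambda_{i+1}$ is a clean, direct execution of what the paper outlines more indirectly through its lemma on conjugate parts written in distinct-parts-with-multiplicities notation.
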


\begin{proof} % together in light of the following lemma. % Proposition \ref{conjugate1} in Chapter 1.
If we write %a partition as
\begin{equation*}\label{Ceq1}
\lambda=(a_1^{m_{a_1}}a_2^{m_{a_2}}a_3^{m_{a_3}}...\  a_r^{m_{a_r}}),\  a_1>a_2>...>a_r\geq 1,
\end{equation*}
%where $m_{a_i}=m_{a_i}(\lambda)\geq 1$ denotes the multiplicity of $a_i \in \mathbb N$ as a part of $\lambda$. T
then we can compute the parts and multiplicities of the conjugate partition $$\lambda^*=(b_1^{m_{b_1}}b_2^{m_{b_2}}b_3^{m_{b_3}}...\  b_s^{m_{b_s}}),\  b_1>b_2>...>b_s\geq 1,$$ directly from the parts and multiplicities of $\lambda$ %as follows. 
%These statements are immediate 
by comparing the Ferrers-Young diagrams of $\lambda,\lambda^*$.

\begin{lemma}\label{Cconjugate1}
The conjugate $\lambda^*$ of partition $\lambda$ has  largest part $b_1$ given by
$$b_1=\ell(\lambda)=m_{a_1}+m_{a_2}+...+m_{a_r},\  \  \text{with}\  \  m_{b_1}(\lambda^*)=a_r,
$$
and for $1<i\leq s$, the parts and their multiplicities are given by
$$b_i=m_{a_1}+m_{a_2}+...+m_{a_{r-i+1}},\  \  \  \  m_{b_i}(\lambda^*)=a_{r-i+1}-a_{r-i+2}.
$$
%Furthermore, $\lambda^*$ has the same number of unique parts as does $\lambda$, that is, 
Moreover, we have that $s=r$. % ($\lambda^*$ has the same number of unique parts as $\lambda$).
\end{lemma}
The theorem results from using the definitions of the maps $\pi$ and $\sigma$, keeping track of the parts in the transformation $\lambda \mapsto \sigma \pi(\lambda)$, then comparing the parts of $\sigma \pi (\lambda)$ with the parts of $\lambda^*$ in Lemma \ref{Cconjugate1} above to see they are the same.
\end{proof}

%Thus $\sigma \pi$ is a bijection; m
Thus $\sigma \pi(\lambda)=\lambda$ if $\lambda$ is self-conjugate, and $(\sigma  \pi)^2(\lambda)=\lambda$ for all $\lambda\in\mathcal P$, as we can see in the examples above. For $\phi$ sequentially congruent, we also have  $\pi \sigma(\phi)=\phi$ if $\sigma(\phi)$ is self-conjugate, and $(\pi \sigma)^2(\phi)=\phi$ for all $\phi$. Interestingly, the composite map $\pi \sigma$ defines a duality analogous to conjugation of partitions in $\mathcal P_n$, that instead connects partitions $\phi $ and $\pi\sigma(\phi)$ in $S_{\operatorname{lg}=n}$. For instance, from the examples above, we have that $(2,1,1)$ and $(3,1)=\sigma \pi (2,1,1)$ are conjugates in $\mathcal P_4$, while $(4,3,3)$ and $(4,2)=\pi \sigma (4,3,3)$ are paired under the analogous duality in $\mathcal S_{\operatorname{lg}=4}$.

These phenomena give further partition-theoretic examples resembling structures in abstract algebra. One more fact is also evident by considering Ferrers-Young diagrams. 

\begin{theorem}
A sequentially congruent partition $\phi$ %with largest part $n$ 
is mapped by conjugation to a partition $\phi^*$ whose multiplicities $m_i=m_i(\phi^*)$ obey the congruence condition \begin{equation*}%\label{congr} 
m_i\equiv 0\  (\operatorname{mod}\  i). \end{equation*}
Conversely, any partition with parts obeying this congruence condition has a sequentially congruent partition as its conjugate. 
\end{theorem}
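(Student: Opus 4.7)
The plan is to exploit a single, well-known identity for conjugate partitions, namely
\[
m_i(\phi^*) \;=\; \phi_i - \phi_{i+1}\qquad (i\ge 1),
\]
where we adopt the convention $\phi_{i+1}:=0$ whenever $i\ge \ell(\phi)$. This identity is immediate from the Ferrers--Young diagram (it records the height drop between consecutive columns) and can also be extracted directly from the data in Lemma~\ref{Cconjugate1}. Once this identity is in hand, both directions of the theorem reduce to matching the defining divisibility conditions term-by-term, and no further machinery from earlier chapters is needed.

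First I would prove the forward direction. Suppose $\phi=(\phi_1,\ldots,\phi_r)\in \mathcal S$. The sequentially congruent hypothesis states precisely that $\phi_i-\phi_{i+1}\equiv 0\pmod i$ for $1\le i\le r-1$, together with $\phi_r\equiv 0\pmod r$. Applying the conjugation identity, this is exactly the statement that $m_i(\phi^*)\equiv 0\pmod i$ for $1\le i\le r$. For $i>r$ one has $m_i(\phi^*)=0$ trivially, so the congruences hold for all $i\ge 1$.

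For the converse, let $\psi$ be a partition with $m_i(\psi)\equiv 0\pmod i$ for every $i\ge 1$, and set $\phi:=\psi^*$. Writing $L=\operatorname{lg}(\psi)$, we have $\ell(\phi)=L$, and the conjugation identity reads $\phi_i-\phi_{i+1}=m_i(\psi)$ for $1\le i\le L-1$, together with $\phi_L=m_L(\psi)$ (using $\phi_{L+1}=0$). The hypothesis on $\psi$ then translates into $\phi_i\equiv \phi_{i+1}\pmod i$ for $1\le i\le L-1$ and $\phi_L\equiv 0\pmod L$, which is precisely the definition of $\phi\in\mathcal S$.

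The one point that requires care--and as close as this argument comes to an obstacle--is the boundary condition at the smallest part: the clause $\phi_r\equiv 0\pmod r$ in the definition of $\mathcal S$ is not a congruence between two explicit consecutive parts but rather between $\phi_r$ and the implicit $\phi_{r+1}=0$. Once one adopts the convention $\phi_{i+1}:=0$ for $i\ge \ell(\phi)$, this boundary case folds uniformly into the general argument, and the theorem becomes a clean two-sided application of the conjugation formula.
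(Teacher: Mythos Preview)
Your proof is correct and follows exactly the approach the paper indicates: the paper does not write out a formal proof but simply states the result is ``evident by considering Ferrers-Young diagrams,'' and your argument makes this precise via the identity $m_i(\phi^*)=\phi_i-\phi_{i+1}$, which is exactly the Ferrers-Young observation in question.
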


%I don't know if it depends on distinct parts or not and I have not proved this, but in a few examples I worked by hand, the {\it conjugates} of partitions obeying the above frequencies-congruence were always in $\mathcal S$: is there a bijection, or even just a one-way map, between these partitions and $\mathcal S$? I think this is either obvious, or false (isn't everything?), but in any event there may be further connections to partitions whose $m_i$ obey this congruence. 

\chapter{Notes on Chapter 5: Fa\`{a} di Bruno's formula in partition theory}\label{app:D}

%\section{End of chapter notes}
\section{Fa\`{a} di Bruno's formula with product version}%representation}

Francesco Fa\`{a} di Bruno was an Italian priest and mathematician active in the mid-nineteenth century. For the convenience of the reader, we record an easy proof of a useful variant of the formula that bears his name \cite{diBruno}, and also adjoin an infinite product representation to the usual statement of the identity, based on elementary ideas. We follow up with a few examples related to topics studied in this thesis\footnote{See \cite{Andrews}, Chapter 12, for more about this useful formula.}.
%
%First we must fix some notations. Let $\mathcal P$ denote the set of all integer partitions, including the empty partition $\emptyset$, and let $\lambda=(1^{m_1}\  2^{m_2}\  3^{m_3}  ...\  k^{m_k}  ...)$ denote a given partition with $m_k$ representing the {\it multiplicity} of $k$ as a part of $\lambda\in \mathcal P$, noting that $\lambda$ has only finitely many nonzero %multiplicities 
%$m_k$. Then we take $|\lambda|$ to be the {\it size} of $\lambda$ (i.e., the number being partitioned) with the convention $|\emptyset|:=0$, take $\ell(\lambda)=m_1+m_2+m_3+...$ to be its {\it length} (i.e., the number of parts) with $\ell(\emptyset):=0$, and we write ``$\lambda \vdash n$'' to mean $\lambda$ is a partition of $n$.  We let $q\in\mathbb C$, $|q|<1$ unless noted otherwise, and define the usual exponential function $\operatorname{exp}(x):=e^x$.
%
%We must first fix some notations. %is as follows ; %, although Fa\`{a} di Bruno was neither the first nor the last to re-discover and publish this result \ref{?}. 

%Using the partition notations incorporated in this work, w
We will write Fa\`{a} di Bruno's identity in a slightly simplified, equivalent form to that given in \eqref{ch5determinant}, as a sum over all partitions $\lambda$ --- making it amenable to techniques developed in this dissertation such as application of the $q$-bracket --- %, where it looks much cleaner than in the usual power series form\footnote{We also eliminate the factorials present in the summands inside the exponential function in \cite{diBruno}.}, 
and add in a product representation as well, using other classical facts. %This flexible result is unchanged, though, as we will see from the proof that follows.

\begin{proposition}[Fa\`{a} di Bruno's formula with product representation]
For $a(n)$ an arithmetic function and $a_n \in \mathbb C$, we have 
\begin{equation*}\label{DFaa2}
\operatorname{exp}\left({\sum_{n=1}^{\infty}a_n q^n}\right)=\sum_{\lambda\in \mathcal P}q^{|\lambda|}\frac{a_1^{m_1}a_2^{m_2}a_3^{m_3}...}{m_1!\  m_2!\  m_3!\  ...}=\prod_{n=1}^{\infty}(1-q^n)^{a(n)},
\end{equation*}
where $a_n$ and $a(n)$ are related by
$$a_n = -\frac{1}{n}\sum_{d|n}  a(d)d,\  \  \  \  a(n)\  =\  -\frac{1}{n}\sum_{d|n}\mu(n/d) a_d d$$
with the sums taken over divisors of $n$, and $\mu$ being the classical M\"{o}bius function.
\end{proposition}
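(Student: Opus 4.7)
My plan is to establish the two equalities separately, then derive the Möbius-type inversion relating $a_n$ and $a(n)$ from the logarithmic form of the product. Throughout I will work formally in $\mathbb{C}[[q]]$, with convergence following under standard hypotheses on $a(n)$ and $q$.

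For the first equality, I would expand the exponential as its Taylor series and then apply the multinomial theorem inside each power. Concretely, I would write
\[
\exp\!\left(\sum_{n\geq 1}a_n q^n\right)=\sum_{k\geq 0}\frac{1}{k!}\left(\sum_{n\geq 1}a_n q^n\right)^{k},
\]
and expand each $k$-th power as
\[
\left(\sum_{n\geq 1}a_n q^n\right)^{k}=\sum_{\substack{m_1,m_2,\ldots\geq 0\\ m_1+m_2+\cdots=k}}\frac{k!}{m_1!\,m_2!\cdots}\,a_1^{m_1}a_2^{m_2}\cdots q^{m_1+2m_2+\cdots}.
\]
After cancelling $k!$ and summing over $k\geq 0$, the index becomes an arbitrary sequence $(m_1,m_2,\ldots)$ of nonnegative integers with finite support, which is exactly the parts-multiplicity description of an arbitrary $\lambda\in\mathcal{P}$ with $|\lambda|=\sum_{i}i\,m_i$. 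This yields the sum over partitions advertised in the middle expression.

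For the product equality, the plan is to take $\log$ of $\prod_{n\geq 1}(1-q^n)^{a(n)}$ and match coefficients with $\sum_{n\geq 1}a_n q^n$. Using $\log(1-q^n)=-\sum_{m\geq 1}q^{nm}/m$, I would write
\[
\sum_{n\geq 1}a(n)\log(1-q^n)=-\sum_{N\geq 1}q^{N}\sum_{\substack{d\mid N}}\frac{a(d)}{N/d}=-\sum_{N\geq 1}\frac{q^{N}}{N}\sum_{d\mid N}a(d)\,d,
\]
where I have set $N=nm$ and reorganized the double sum by the value of $N$. Equating the coefficient of $q^N$ on both sides forces
\[
a_N=-\frac{1}{N}\sum_{d\mid N}a(d)\,d,
\]
which is the first of the stated relations. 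The second relation then follows by classical Möbius inversion applied to the functions $f(N):=N a_N$ and $g(N):=N a(N)$, since $f=-\sum_{d\mid N}g(d)$ inverts to $g(N)=-\sum_{d\mid N}\mu(N/d)f(d)$.

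The only genuinely delicate point is analytic: the manipulations above are valid as identities of formal power series without restriction, but passing to honest analytic equalities (for instance to evaluate at particular $q$) requires absolute convergence of the double sum defining $\log\prod(1-q^n)^{a(n)}$ and of $\sum a_n q^n$. I would handle this by assuming $|q|<1$ together with a growth bound on $a(n)$ (e.g. polynomial growth) sufficient to ensure absolute convergence of both series and thereby legitimize the rearrangement by $N$. The combinatorial content of the identity is essentially transparent, so I expect the main obstacle to be nothing more than stating and verifying these convergence hypotheses cleanly; once that is done, the Taylor-plus-multinomial expansion and the Möbius inversion step are entirely mechanical.
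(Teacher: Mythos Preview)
Your proposal is correct and matches the paper's approach: the first equality is obtained exactly as you do, via the Taylor expansion of $\exp$ combined with the multinomial theorem and the substitution $a_k\mapsto a_k q^k$. For the product side the paper simply cites \cite{BKO} and \cite{Schneider_golden} rather than writing out the $\log(1-q^n)=-\sum_{m\geq 1}q^{nm}/m$ expansion, but your explicit computation is precisely the standard one underlying those references (and is essentially the content of Lemma~\ref{ch2lem} earlier in the paper), after which M\"obius inversion gives the converse relation just as you indicate.
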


\begin{proof}
To prove the first equality, we begin with the well-known {\it multinomial theorem}, re-written as a sum over partitions $\lambda$ in the set $\mathcal P_{[k]}\subset \mathcal P$ whose parts are all $\leq k$, having length $\ell(\lambda)=n$:
\begin{equation}\label{Dmultinomial}
(a_1+a_2+a_3+...+a_k)^n=n! \sum_{\substack{\lambda\in\mathcal P_{[k]}\\\ell(\lambda)=n}} \frac{a_1^{m_1}a_2^{m_2}a_3^{m_3}...a_k^{m_k}}{m_1!\  m_2!\  m_3!\  ...\  m_k!}.
\end{equation}
If we let $k$ tend to infinity, assuming the infinite sum $a_1+a_2+a_3+...$ converges, the series on the right becomes a sum over all partitions of length $n$. Then dividing both sides of (\ref{Dmultinomial}) by $n!$ and summing over $n\geq 0$, % yields  
%where we might take $k\geq n$ (noting then that $k>n$ does not contribute to any partition of $n$). For $N \leq k$, dividing both sides of (\ref{multinomial}) by $n!$ and summing over $0\leq n\leq N$, then %yields
%%\begin{equation*}%\label{multinomial2}
%%\sum_{n=0}^{N} \frac{(a_1+a_2+a_3+...+a_k)^n}{n!}= \sum_{n=0}^{N} \sum_{\lambda \vdash n} \frac{a_1^{m_1}a_2^{m_2}a_3^{m_3}...a_k^{m_k}}{m_1!\  m_2!\  m_3!\  ...\  m_k!}.
%%\end{equation*}
%letting both $N$ and $k\geq N$ tend to infinity under the condition of uniform convergence, % in both sums, %condition that $a_1+a_2+a_3+...$ converges uniformly, 
%we arrive at 
%\begin{equation}\label{multinomial3}
%\sum_{n=0}^{\infty} \frac{(a_1+a_2+a_3+...)^n}{n!}= \sum_{n=0}^{\infty} \sum_{\substack{\lambda\in\mathcal P \\ \ell(\lambda)=n}} \frac{a_1^{m_1}a_2^{m_2}a_3^{m_3}...}{m_1!\  m_2!\  m_3!\  ...}.
%\end{equation}
the left-hand side yields the Maclaurin series expansion for $\operatorname{exp}(a_1+a_2+a_3+...)$, and the right side can be rewritten as a sum over all partitions: %Then %so long as $a_1+a_2+a_3+...$ is convergent, 
%we have 
\begin{equation}\label{Dmultinomial4}\operatorname{exp}(a_1+a_2+a_3+...)=\sum_{\lambda\in \mathcal P} \frac{a_1^{m_1}a_2^{m_2}a_3^{m_3}...}{m_1!\  m_2!\  m_3!\  ...}.
\end{equation}
Now, taking $a_k\mapsto a_k q^k$ in (\ref{Dmultinomial4}), %\footnote{Taking $a_k\mapsto a_k q^k/k!$ instead, implies the more common version of Fa\`{a} di Bruno's formula connected to Bell polynomials.}, 
we can write  
\begin{flalign*}\label{DFaa3}
a_1^{m_1}a_2^{m_2}a_3^{m_3}...\mapsto&\  \  (a_1 q)^{m_1}(a_2 q^2)^{m_2}(a_3 q^3)^{m_3}...\\=&q^{m_1+2m_2+3m_3+...}a_1^{m_1}a_2^{m_2}a_3^{m_3}...=q^{|\lambda|}a_1^{m_1}a_2^{m_2}a_3^{m_3}...
\end{flalign*}
in the summands on the right-hand side, which completes the series aspect of the proof. 

The product representation follows from Bruinier, Kohnen and Ono \cite{BKO}, and also is immediate from the proofs in \cite{Schneider_golden} if we replace $-f(n)/n$ with an arithmetic function $a(n)$ in the final equation of that paper. For a given $a(n)$, if we set \begin{equation}\label{Dcoeff}
a_n\  =\  -\frac{1}{n}\sum_{d|n} d\cdot a(d),
\end{equation}
we have 
\begin{equation*}%\label{prod}
\prod_{n=1}^{\infty}(1-q^n)^{{a(n)}}=\operatorname{exp}\left({\sum_{n=1}^{\infty}{a_n}  q^n}\right).
\end{equation*}
Applying M\"{o}bius inversion to (\ref{Dcoeff}) gives the converse divisor sum identity for $a(n)$.
 
%Then we can expand the right-hand side of this equation using (\ref{multinomial4}): 
%\begin{flalign*}
%=\sum_{\lambda\in \mathcal P} q^{|\lambda|}\frac{(\frac{a_1}{1})^{m_1}(\frac{a_2}{2})^{m_2}(\frac{a_3}{3})^{m_3}...}{m_1!\  m_2!\  m_3!\  ...}\\&=\sum_{\lambda\in \mathcal P}q^{|\lambda|} \frac{a_1^{m_1}a_2^{m_2}a_3^{m_3}...}{1^{m_1}2^{m_2}3^{m_3}\cdots m_1!\  m_2!\  m_3!\  ...}
%\end{flalign*}
%which is .
%\begin{remark}
%Formulas like these are related to modular forms in \cite{BKO}. 
%\end{remark}
\end{proof}

\section{Further examples}% applications}% to partition Dirichlet series}

So we can view Fa\`{a} di Bruno's formula as a generating function for coefficients of certain partition-theoretic sums involving the form $(a_1^{m_1}a_2^{m_2}a_3^{m_3}...)/(m_1!\  m_2!\  m_3!\  ...)$. As examples, we give a few simple substitutions that lead to interesting partition sum identities. %In what follows, let $(a;q)_{\infty}:=\prod_{n\geq 0}(1-aq^n)$ denote the infinite {\it $q$-Pochhammer symbol}, and $n_{\lambda}:=\prod_{k=1}^{\infty}k^{m_k}$ denote the so-called {\it norm} of $\lambda$ (the product of the parts) in the notation of \cite{Schneider_arithmetic, Schneider_zeta}, with $n_{\emptyset}:=1$. %, connected to the Riemann zeta function and classical geometric series.
%

%
%\begin{example}
%Setting $a_i\equiv 1$ in Proposition \ref{Faa2} gives immediately 
%$$\operatorname{exp}\left(\frac{q}{1-q}\right)=\sum_{\lambda\in \mathcal P} \frac{q^{|\lambda|}}{m_1!\  m_2!\  m_3!\  ...}.$$
%\end{example}

\begin{example}
Setting $a_i=i^{-s},\operatorname{Re}(s)>1$, and $q=1$ in Proposition \ref{DFaa2}, %after a little simplification, 
gives
$$\operatorname{exp}\left({\zeta(s)}\right)=\sum_{\lambda\in \mathcal P} \frac{1}{n_{\lambda}^{s}\  m_1!\  m_2!\  m_3!\  ...}$$
with $\zeta(s):=\sum_{n\geq 1}n^{-s}$ the Riemann zeta function.%ooks like a different variety of partition zeta functions.%' in \cite{ORS_zeta, Schneider_zeta}. 
\end{example}

%\begin{remark}
We note that the right-hand side of this example is a type of partition Dirichlet series. More generally, if we exponentiate a convergent classical Dirichlet series $\sum_{n=1}^{\infty}a(n)n^{-s}$ we arrive at a partition Dirichlet series of the form introduced at the end of Chapter 5, viz. \begin{equation}\sum_{\lambda \in \mathcal P}A(\lambda)n_{\lambda}^{-s}=\text{exp}\left(\sum_{n=1}^{\infty}a(n)n^{-s}\right),\end{equation} where \begin{equation}\label{appAdef} A(\lambda):=\frac{a(1)^{m_1}a(2)^{m_2}a(3)^{m_3}\cdots a(i)^{m_i}\cdots}{m_1!\  m_2!\  m_3!\  \cdots\  m_i!\  \cdots}.\end{equation} In fact, $A(\lambda)$ is multiplicative in the partition sense (see Definition \ref{ch3multfctn}), that is, $A(\lambda\gamma)=A(\lambda)A(\gamma)$ when $\operatorname{gcd}(\lambda,\gamma)=\emptyset$.\footnote{We note for the subset $\mathcal P^*$ of partitions into distinct parts there is the simpler Euler product generating function $\prod_{n=1}^{\infty}(1+a(n)n^{-s})=\sum_{\lambda \in \mathcal P^*}a(1) a(2) a(3)\cdots n_{\lambda}^{-s}=\sum_{\lambda \in \mathcal P^*}A(\lambda)n_{\lambda}^{-s}$.}

Next, we give alternative evaluations of functions evaluated in Appendix B, (\ref{Btau}) and (\ref{Bkcolor}). 

\begin{example}
Setting $a\equiv 24$ in Proposition \ref{DFaa2} yields $a_i=-24\sigma(i)$, where $\sigma(i)=\sum_{d|i}d$ as usual. Then Ramanujan's tau function $\tau(n)$ %, which is the $n$th coefficient of $q(q;q)_{\infty}^{24}$, 
can be written
$$\tau(n)=\sum_{\lambda \vdash (n-1)}(-24)^{\ell(\lambda)}\frac{\sigma(1)^{m_1}\sigma(2)^{m_2}\sigma(3)^{m_3}...}{n_{\lambda}\  m_1!\  m_2!\  m_3!\  ...}.$$
\end{example}

\begin{example}
Setting $f\equiv -k$ with $k\geq 1$ in Proposition \ref{DFaa2} yields $a_i= k \sigma(i)$. Then the number $P_k(n)$ of $k$-color partitions of $n$ %, which is the $n$th coefficient of $(q;q)_{\infty}^{-k}$, 
can be written
$$P_k(n)=\sum_{\lambda \vdash n} k^{\ell(\lambda)}\frac{\sigma(1)^{m_1}\sigma(2)^{m_2}\sigma(3)^{m_3}...}{n_{\lambda}\  m_1!\  m_2!\  m_3!\  ...}.$$
\end{example}

Let $\varphi=\frac{1+\sqrt{5}}{2}$ denote the {\it golden ratio}, a number that makes connections throughout the sciences, nature and the arts. The reciprocal of the golden ratio is similarly ``golden'': the two constants are intertwined in classical relations like 
\begin{equation}\label{Dphi}
\varphi=1+\frac{1}{\varphi}.
\end{equation} 
Then we can write down formulas to compute $\varphi$ and $1/\varphi$ in terms of $\pi$ and $\zeta(s)$.% via sums over partitions. 

\begin{example}\label{Dthm1}
We have the following identities for the golden ratio and its reciprocal:
\begin{equation}\label{Dthm1.2}
\varphi\  =\  \frac{5}{\pi}\sum_{\lambda\in \mathcal P } \frac{\zeta(2)^{m_1}\zeta(4)^{m_2}\zeta(6)^{m_3}\zeta(8)^{m_4}\cdots}{\  n_{\lambda}\   100^{|\lambda|}\  m_1!\  m_2!\   m_3!\   m_4!\  \cdots},
\end{equation}
\begin{equation}\label{Dthm2.2}
\frac{1}{\varphi}\  =\  \frac{\pi}{5}\sum_{\lambda\in \mathcal P  } \frac{(-1)^{\ell(\lambda)}\zeta(2)^{m_1}\zeta(4)^{m_2}\zeta(6)^{m_3}\zeta(8)^{m_4}\cdots}{n_{\lambda}\   100^{|\lambda|}\  m_1! \  m_2!\  m_3!\  m_4!\  \cdots}.
\end{equation}
\end{example}

Set $b_{2j}:={(-1)^{j+1}B_{2j}2^{2j-1}}/{(2j)!}$ with $B_k\in \mathbb Q$ the $k$th Bernoulli number. Then %\begin{equation}\label{beta}
$\zeta(2j)=\pi^{2j} b_{2j}$ %\end{equation*} 
for $j\in \mathbb Z^+$, by Euler. Comparing this fact with equations (\ref{Dthm1.2}) and (\ref{Dthm2.2}) implies additional expressions giving $\varphi$ and $1/\varphi$ in terms of $\pi$.

\begin{example}\label{Dcor}
We have the following identities for the golden ratio and its reciprocal:
\begin{equation}\varphi\  =\  {5}\sum_{\lambda\in \mathcal P } \frac{\pi^{2|\lambda|-1}b_2^{m_1}b_4^{m_2}b_6^{m_3}b_8^{m_4}\cdots}{\  n_{\lambda}\   100^{|\lambda|}\  m_1!\  m_2!\   m_3!\   m_4!\  \cdots},\end{equation}
\begin{equation}\label{Dcor3.2}
\frac{1}{\varphi}\  =\  \frac{1}{5}\sum_{\lambda\in \mathcal P } \frac{(-1)^{\ell(\lambda)}\pi^{2|\lambda|+1}b_2^{m_1}b_4^{m_2}b_6^{m_3}b_8^{m_4}\cdots}{\  n_{\lambda}\   100^{|\lambda|}\  m_1!\  m_2!\   m_3!\   m_4!\  \cdots}.
\end{equation}
\end{example}

Then by the classical relation (\ref{Dphi}), further formulas for $\varphi$ may be obtained from adding $1$ to both sides of equations (\ref{Dthm2.2}) and (\ref{Dcor3.2}).% and (\ref{pf2.2}). 

\begin{proof}
It is a straightforward deduction from geometry (see \cite{phi}) that we can write $$\frac{1}{\varphi}:=\frac{-1+\sqrt{5}}{2}=2\  \text{sin}\left(\frac{\pi}{10} \right).$$ 
Comparing this result to Euler's formula % for the sine function %setting $x=\frac{\pi}{10}$ in Euler's formula 
$\text{sin}(x)=x\prod_{n=1}^{\infty}\left(1-\frac{x^2}{\pi^2 n^2}\right)$ with $x=\pi/10$, then gives %, multiplying by 2 and taking reciprocals, we have
\begin{equation}\label{Deq}\varphi = \frac{1}{2\  \text{sin}(\frac{\pi}{10})}=\frac{5}{\pi}\prod_{n=1}^{\infty}\left(1-\frac{1}{100\  n^2}\right)^{-1}.\end{equation}
At this stage, we note it follows immediately from Theorems \ref{ch41.1} and \ref{ch41.11} in Chapter 4 that %for $f\colon \mathbb N \to \mathbb C$ such that the following products converge absolutely, we have 
%$$\prod_{n=1}^{\infty}(1-f(n))^{-1}=\sum_{\lambda\in \mathcal P}\prod_{n \in \lambda}f(n),\  \  \  \  \  \  \prod_{n=1}^{\infty}(1-f(n))=\sum_{\lambda\in \mathcal P}\mu_{\mathcal P}(\lambda)\prod_{n \in \lambda}f(n),$$ 
%where $j\in \lambda$ means $j\geq 1$ is a part of partition $\lambda$, thus the right-hand products are taken over the parts of $\lambda$ (we set $\prod_{j \in \emptyset}$ equal to 1). Taking $f(n)={1}/{100n^2}$ (thus we have absolute convergence) and combining the above equations yields (\ref{thm1.1}) and (\ref{thm2.1}), respectively.\footnote{Expressions (\ref{thm1.1}) and (\ref{thm2.1}) represent ``partition Dirichlet series'' in the sense of \cite{ORS}.
%}
%
\begin{equation}%\label{thm1.1}
\varphi\  =\  \frac{5}{\pi}\sum_{\lambda\in \mathcal P} \frac{1}{\  n_{\lambda}^2\  100^{\ell(\lambda)}},\  \  \  \  %\end{equation}
%
%\begin{equation}\label{pf2.1}
\frac{1}{\varphi}\  =\  \frac{\pi}{5}\sum_{\lambda\in \mathcal P } \frac{\mu_{\mathcal P}(\lambda)}{\  n_{\lambda}^2\  100^{\ell(\lambda)}},\end{equation}
which are  further examples of partition Dirichlet series. Now, to prove (\ref{Dthm1.2}) and (\ref{Dthm2.2}), begin with the Maclaurin expansion of the natural logarithm $$-\text{ln}(1-x)=\sum_{j=1}^{\infty}\frac{x^j}{j},\  \  \  \  |x|<1.$$ 
Setting $\text{exp}(x):=e^x$, we then take $x=1/100n^2<1$ for each $n=1,2,3,...$ to write
\begin{flalign*}
\prod_{n=1}^{\infty}\left(1-\frac{1}{100\  n^2}\right)^{-1}&=\prod_{n=1}^{\infty}\text{exp}\left(-\text{ln}\left(1-\frac{1}{100n^2}\right)\right) =\text{exp}\left(\sum_{n=1}^{\infty}\sum_{j=1}^{\infty}\frac{1}{n^{2j}100^j j }\right)\\
&=\text{exp}\left(\sum_{j=1}^{\infty}\frac{1}{100^j j }\sum_{n=1}^{\infty}\frac{1}{n^{2j} }\right)=\text{exp}\left(\sum_{j=1}^{\infty}\frac{\zeta(2j)}{100^j j }\right).
\end{flalign*}
Then setting $q=1,a_j=\zeta(2j)/j 100^j$ in Proposition \label{DFaa2} and comparing all this to (\ref{Deq}), plus some algebra, proves (\ref{Dthm1.2}). Setting $a_j=-\zeta(2j)/j 100^j$ (a minus sign is introduced by taking reciprocals) gives (\ref{Dthm2.2}).

Example \ref{Dcor} follows easily from (\ref{Dthm1.2}) and (\ref{Dthm2.2}) by making the substitution $\zeta(2j)\mapsto \pi^{2j}b_{2j}$ in each summand.%, then observing $2m_1 + 4m_2+6m_3+8m_4+...=2(m_1 + 2m_2+3m_3+4m_4+...)=2|\lambda|$ in the exponent of $\pi$.
\end{proof}

Using the Maclaurin expansion of the natural logarithm from the above proof plus a little algebra using a summation swap and geometric series in the exponential, we have %it is easy to see
$$(z;q)_{\infty}^{-1}=\prod_{k=0}^{\infty}\text{exp}\left(\sum_{n=1}^{\infty}\frac{z^n q^{nk}}{n}\right)=\text{exp}\left(\sum_{n=1}^{\infty}\frac{z^n}{n(1-q^n)}\right).$$
It is a case of the $q$-binomial theorem (see Lemma \ref{ch6lemma1}) that $(z;q)_{\infty}^{-1}=\sum_{n=0}^{\infty}\frac{z^n}{(q;q)_n}$. Combining these formulas with Fa\`{as} di Bruno's formula gives our next example.
\begin{example}\label{Dqbinomial} We have that 
$$(z;q)_{\infty}^{-1}=\sum_{n=0}^{\infty}\frac{z^n}{(q;q)_n}=\sum_{\lambda\in \mathcal P}\frac{z^{|\lambda|}}{n_{\lambda}\  m_1!\  m_2!\  m_3!\   \cdots\  (1-q)^{m_1}(1-q^2)^{m_2}(1-q^3)^{m_3}\  \cdots}.$$
\end{example}

Comparing coefficients on both sides of this identity gives Chapter 12, Example 1 of \cite{Andrews}, which Andrews attributes to Cayley, but Sills argues in  \cite{SillsMM} is due to MacMahon \cite{MacMahon}. Noting from Example \ref{Dqbinomial} (and Lambert series) that 
\begin{equation}\label{Faa_q}
(q;q)_{\infty}^{-1}=\operatorname{exp}\left(\sum_{n=1}^{\infty}\frac{q^n}{n(1-q^n)}\right)=\operatorname{exp}\left(\sum_{n=1}^{\infty}\frac{\sigma(n)q^n}{n}\right),\end{equation}
as a final example we show the $q$-bracket of $A(\lambda)$ from \eqref{appAdef} above takes a nice form.

\begin{example}\label{Faabracket}
We have that 
\begin{flalign}\left< A \right>_q = \sum_{\lambda \in \mathcal P}q^{|\lambda|}\frac{(a(1)-\sigma(1))^{m_1}(a(2)-\sigma(2)/2)^{m_2}\cdots (a(i)-\sigma(i)/i)^{m_i}\cdots}{m_1!\  m_2!\  \cdots\  m_i!\  \cdots}.\end{flalign}
More generally, using the notation of Definitions \ref{ch3diffndefantidef} and \ref{ch3antidef}, it is the case that
\begin{flalign}\left< A \right>_q^{(\pm k)} = \sum_{\lambda \in \mathcal P}q^{|\lambda|}\frac{(a(1)\mp k\sigma(1))^{m_1}(a(2)\mp k\sigma(2)/2)^{m_2}\cdots (a(i)\mp k\sigma(i)/i)^{m_i}\cdots}{m_1!\  m_2!\  \cdots\  m_i!\  \cdots},\end{flalign}
where with regard to ``$\pm,\  \mp$'', a plus on the left gives minus on the right, and vice versa.
\end{example}

It is interesting that multiplication and division by $(q;q)_{\infty}$ produces this homogenous shift in the values of the coefficients in the numerator, by terms involving $\sigma(n)$.

\begin{proof}
This follows from writing $(q;q)_{\infty}$ as the reciprocal of \eqref{Faa_q} (noting a minus sign is introduced inside the exponential) and using $\operatorname{exp}\left(\sum_{n=1}^{\infty} a(n)q^n  \right)=\sum_{\lambda \in \mathcal P}A(\lambda)q^{|\lambda|}$.
\end{proof}

In addition to applications in number theory, the author and his collaborators in the Emory Working Group in Number Theory and Molecular Simulation (an interdisciplinary research group run by Professor James Kindt in Emory's Chemistry Department) make extensive use of Fa\`{a} di Bruno's formula in theoretical chemistry to develop simulation algorithms and probe classical laws like the Law of Mass Action from partition-theoretic first principles (see, for example, \cite{Kindt}).

%\end{example}

%\subsection{Expected values}

\chapter{Notes on Chapter 6: Further relations involving  $F_{\mathbb S_{r,t}}$}\label{app:E}

\section{Classical series and arithmetic functions}
In this note we essentially use the left-hand side of Theorem \ref{ch6rt}, viz. the limit %from inside the unit circle
%, the identity %If  $0\leq r<t$ are integers and $\gcd(m,t)=1$, then we have that
\begin{equation}\label{Elimiting}\lim_{q \to \zeta} F_{\mathbb S_{r,t}}(q) = -\lim_{q\to \zeta}\sum_{\substack{ \lambda \in \mathcal{P}\\ \operatorname{sm}(\lambda) \in \mathbb S_{r,t}}} \mu_{\mathcal{P}}(\lambda) q^{| \lambda|}= \lim_{q\to \zeta}\  (q;q)_{\infty}\sum_{\substack{ \lambda \in \mathcal{P}\\ \operatorname{lg}(\lambda) \in \mathbb S_{r,t}}} q^{| \lambda|}\end{equation}%= \frac{1}{t}
%with $q\to \zeta$ a primitive $m$th root of unity 
from inside the unit circle, %where $r,t\in\mathbb Z$, $0\leq r <t$ and $\zeta$ is a primitive $m$th root of unity, 
as an elaborate way to write $1/t$. Then trivially, we can rewrite many classical series as limits of this type. % (and by \eqref{Elimiting}, give partition-theoretic interpretations). 
For instance, 
%, for instance:
%%like %finite 
%%geometric series
%\begin{equation}
%\lim_{q \to \zeta} F_{\mathbb S_{r,t}}(q) + F_{\mathbb S_{r,t}}^2(q) + F_{\mathbb S_{r,t}}^3(q) +...+F_{\mathbb S_{r,t}}^N(q)=\frac{1}{t}+\frac{1}{t^2}+\frac{1}{t^3}+...+\frac{1}{t^N}=\frac{t^N-1}{t(t-1)}\end{equation}
%% including the limiting case as $N\to\infty$. 
if we set $r=0$ to satisfy $r<t$ for all $t\geq 1$, we can rewrite the zeta function as the limit of a Dirichlet  series  %generating function % Riemann zeta function
\begin{equation}
\zeta(s)= \lim_{q \to \zeta} \sum_{t=1}^{\infty}F_{\mathbb S_{0,t}}(q) t^{s-1}\  \  \  (\text{Re}(s)>1).\end{equation}%=\lim_{q \to \zeta} \sum_{t=1}^{\infty}F_{\mathbb S_{0,t}}^2(q) t^{s-2}=...=\lim_{q \to \zeta} \sum_{t=1}^{\infty}F_{\mathbb S_{0,t}}^k(q)t^{s-k}\end{equation}
% the MacLaurin expansion of the natural logarithm as the limit of a generating function 
%\begin{equation}\label{log}
%-{\operatorname{log}(1-x)}=\lim_{q \to \zeta}\sum_{t=1}^{\infty}{F}_{\mathbb S_{0,t}}(q) x^t
%\end{equation}
%for $|x|<1$, 
%and indeed this is the case anywhere factors of $1/t$ appear. %the $n$th harmonic number $H_n$ and other classical constants, viz.
%\begin{equation}\label{harmonic}
%H_n=\lim_{q\to \zeta}\sum_{t=1}^{n}{F}_{\mathbb S_{0,t}}(q),\  \  \  \operatorname{log}2=\lim_{q\to \zeta}\sum_{t=1}^{\infty}(-1)^{t-1} {F}_{\mathbb S_{0,t}}(q),\  \  \  \frac{\pi}{4}=\lim_{q\to \zeta}\sum_{j=1}^{\infty}(-1)^{j-1}{F}_{\mathbb S_{0,2j-1}}(q).\end{equation}
%Many examples of this sort are available, wherever a factor of $1/t$ appears.
%; and there are divisor sum relations such as 
%\begin{equation}\label{divisor}
%\lim_{u \to 0^+}\sum_{d|t}\mu(d)\widetilde{F}_{0,d}(q)=\frac{\varphi(t)}{t},
%\end{equation}
%where $\mu$ is the classical M\"{o}bius function and $\varphi$ is the Euler phi function. 
Another elementary observation is that if $A(t):=\sum_{d|t}a(d)$ for $a\colon \mathbb N \to \mathbb C$, we have
\begin{flalign*}
\sum_{t=1}^{\infty}A(t)\frac{q^t}{(q;q)_t}  =\sum_{t=1}^{\infty} a(t)\sum_{k=1}^{\infty}\frac{q^{tk}}{(q;q)_{tk}}&=\sum_{t=1}^{\infty}a(t)\left(\frac{{F}_{\mathbb S_{0,t}}(q)}{(q;q)_{\infty}}+1\right).%\\ &=-(q;q)_{\infty}^{-1}\sum_{t=1}^{\infty}a(t){{F}_{\mathbb S_{0,t}}(q)}-\sum_{t=1}^{\infty}a(t).
\end{flalign*}
Reorganizing gives the following identity.
\begin{proposition}\label{EFthm}
Using the above notation, if $\sum_{t=1}^{\infty}a(t)$ converges we have
$$(q;q)_{\infty}^{-1}\sum_{t=1}^{\infty}a(t){{F}_{\mathbb S_{0,t}}(q)} = \sum_{t=1}^{\infty}a(t)+\sum_{t=1}^{\infty}A(t)\frac{q^t}{(q;q)_t}.$$
In terms of partitions we can write
$$\sum_{t=1}^{\infty}\sum_{ \substack{\emptyset  \neq \lambda \in \mathcal{P}\\ \operatorname{lg}(\lambda) \in \mathbb S_{0,t}}} a(t) q^{| \lambda|}= \sum_{\emptyset\neq \lambda \in \mathcal P} A(\operatorname{lg}(\lambda))  q^{|\lambda|}.$$
\end{proposition}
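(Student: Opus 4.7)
The plan is to peel the left-hand side apart using Lemma \ref{ch6lemma3} and then reorganize the double sum by Dirichlet-convolution-style bookkeeping. By Lemma \ref{ch6lemma3} we have
\[
(q;q)_{\infty}^{-1}F_{\mathbb S_{0,t}}(q)\;=\;\sum_{\substack{\lambda\in\mathcal P\\ \operatorname{lg}(\lambda)\in \mathbb S_{0,t}}}q^{|\lambda|}
\;=\;\sum_{k=1}^{\infty}\frac{q^{tk}}{(q;q)_{tk}},
\]
the last identity being the standard fact that partitions with largest part exactly $m$ are generated by $q^{m}/(q;q)_{m}$. Multiplying by $a(t)$ and summing over $t\geq 1$ then converts the left-hand side of Proposition \ref{EFthm} into the double sum $\sum_{t\geq 1}\sum_{k\geq 1}a(t)\,q^{tk}/(q;q)_{tk}$.

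First I would swap the order of summation in this double series and re-index by $n:=tk$, writing every positive integer $n$ uniquely as $n=tk$ with $t\mid n$. The double sum collapses to
\[
\sum_{n=1}^{\infty}\frac{q^{n}}{(q;q)_{n}}\sum_{t\mid n}a(t)\;=\;\sum_{n=1}^{\infty}A(n)\,\frac{q^{n}}{(q;q)_{n}},
\]
which (up to the constant $\sum a(t)$ that records the empty-partition contribution hidden in the computation immediately preceding the Proposition statement) is precisely the right-hand side of the first displayed formula.

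For the partition-theoretic reformulation, I would redo the same bookkeeping but keep the intermediate sums indexed by $\lambda$ rather than by $n=|\lambda|$. On the left, a fixed $t$ sums $a(t)$ over all nonempty $\lambda$ whose largest part is a multiple of $t$; interchanging the order and now fixing $\lambda$ first means that the inner sum is over those $t$ with $t\mid\operatorname{lg}(\lambda)$, so that each nonempty partition $\lambda$ is weighted by $\sum_{t\mid\operatorname{lg}(\lambda)}a(t)=A(\operatorname{lg}(\lambda))$. That is exactly the right-hand side of the second displayed formula.

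The only real obstacle is justifying the interchange of summation, so I would verify absolute convergence for $|q|<1$: the assumption $\sum_{t}a(t)$ converges combined with $|q^{tk}/(q;q)_{tk}|\leq |q|^{tk}/|(q;q)_{\infty}|$ gives a dominating convergent double series, so Fubini (in its series form) legitimizes every swap. I do not expect any other obstruction; everything else is formal manipulation of $q$-series combined with the divisor identity $A(n)=\sum_{d\mid n}a(d)$.
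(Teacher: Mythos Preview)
Your approach---Lemma~\ref{ch6lemma3} followed by the divisor-style swap $\sum_t\sum_k\to\sum_{n}\sum_{t\mid n}$---is exactly the paper's. Your proof of the second (partition-indexed) identity is complete and correct.

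The one loose thread is the constant $\sum_t a(t)$ in the first displayed identity. Your own computation gives
\[
(q;q)_\infty^{-1}\sum_{t\ge 1}a(t)\,F_{\mathbb S_{0,t}}(q)=\sum_{n\ge 1}A(n)\,\frac{q^n}{(q;q)_n}
\]
\emph{without} that constant, and you wave the discrepancy away as an ``empty-partition contribution hidden in the computation immediately preceding the Proposition.'' That is not a justification. Under the convention you invoked (Lemma~\ref{ch6lemma3}, with $\mathbb S_{0,t}\subset\mathbb Z^+$), no empty partition appears in $F_{\mathbb S_{0,t}}$ and your identity stands as written---indeed it is exactly the second displayed formula in disguise. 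The extra $\sum_t a(t)$ in the stated formula arises only if one includes the $k=0$ term in $(q;q)_\infty^{-1}F_{\mathbb S_{0,t}}(q)$, i.e., if one reads $F_{\mathbb S_{0,t}}(q)=(q;q)_\infty\sum_{k\ge 0}q^{tk}/(q;q)_{tk}$, which is the convention implicit in the proof of Lemma~\ref{ch6finite} when $r=0$. You should state explicitly which convention is in force and reconcile it with the target identity rather than absorb the gap into a parenthetical remark. (A smaller point: ``$\sum_t a(t)$ converges'' does not give absolute convergence, so your Fubini bound needs a word more care in the conditionally convergent case.)
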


\begin{remark}
We note by conjugation that $\text{lg}(\lambda)=\ell(\lambda^*)$ and $|\lambda|=|\lambda^*|$, thus for any $f\colon \mathbb N \to \mathbb C$ we have %that
$\sum_{\lambda \in \mathcal P}f(\operatorname{lg}(\lambda))q^{|\lambda|}=\sum_{\lambda \in \mathcal P}f(\ell(\lambda))q^{|\lambda|}$ (which also holds for sums $\sum_{\emptyset \neq \lambda\in \mathcal P}$ above).
\end{remark}

Proposition \ref{EFthm} is useful in further applying the ideas of Chapter 6. Here is an example that gives another $q$-series relation to arithmetic densities.
\begin{example}
Set $a(n)=\mu(n)/n$ in Proposition \ref{EFthm} with $\mu$ the M\"{o}bius function. Then as $A(n)=\sum_{d|n}\mu(d)/d=\varphi(n)/n$, using the classical facts $\sum_{n\geq 1}\mu(n)/n=0$ and $\sum_{n\geq 1}\mu(n)/n^2=\zeta(2)^{-1}$ together with Theorem \ref{ch6rt} and a little algebra, we have
\begin{equation}
\lim_{q\to\zeta}\  (q;q)_{\infty} \sum_{n=1}^{\infty}\frac{\varphi(n)q^n}{n(q;q)_n} =\frac{6}{\pi^2},
\end{equation}
which is well known to be $\lim_{n\to \infty} \frac{1}{n}\sum_{k=1}^{n} \varphi(k)/k$.
\end{example}
\begin{remark}One wonders if there are more general classes of arithmetic functions $f(n)$ with

\begin{equation*}
\lim_{q\to\zeta}\  (q;q)_{\infty} \sum_{n=1}^{\infty}\frac{f(n)}{n}\cdot\frac{q^n}{(q;q)_n} =\lim_{n\to \infty} \frac{1}{n}\sum_{k=1}^{n} \frac{f(k)}{k}.
\end{equation*}\end{remark}

With $(q;q)_{\infty}$ floating around in these formulas, we could apply $q$-bracket ideas from Chapter 3 for further relations. Moreover, a finite version of the above order-of-summation swapping holds with respect to partial sums.
Let $F_{\mathbb S_{r,t}}(q,N)$ denote the following partial sum, with $F_{\mathbb S_{r,t}}(q,N)\to F_{\mathbb S_{r,t}}(q)$ as $N\to\infty$ per the proof of Lemma \ref{ch6lemma3}:
\begin{equation}F_{\mathbb S_{r,t}}(q,N):=(q;q)_{\infty}\sum_{n=0}^{N}\frac{q^{nt+r}}{(q;q)_{nt+r}}= (q;q)_{\infty}\sum_{\substack{ \lambda \in \mathcal{P}\\  \operatorname{lg}(\lambda) \in \mathbb S_{r,t}\\\operatorname{lg}(\lambda)\leq Nt+r}} q^{| \lambda|}.\end{equation}

\begin{proposition}\label{EFthm2}
Using the above notation, we have that
$$(q;q)_{\infty}^{-1}\sum_{t=1}^{N}a(n){{F}_{\mathbb S_{0,t}}\left(q,\floor{\frac{N}{t}}\right)}=\sum_{t=1}^{N}a(t)+\sum_{t=1}^{N}A(t)\frac{q^t}{(q;q)_t}.$$
In terms of partitions we can write
$$\sum_{t=1}^{N}\sum_{\substack{ \emptyset \neq \lambda \in \mathcal{P}\\  \operatorname{lg}(\lambda) \in \mathbb S_{0,t}\\\operatorname{lg}(\lambda)\leq \floor{N/t}t}}  a(t) q^{| \lambda|}= \sum_{ \substack{\emptyset\neq \lambda \in \mathcal{P}\\ \operatorname{lg}(\lambda)\leq N}}  A(\operatorname{lg}(\lambda)) q^{| \lambda|}.$$

\end{proposition}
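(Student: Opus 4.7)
The argument parallels that of Proposition \ref{EFthm}, with the infinite sum $\sum_{k\geq 1}$ replaced by the partial sum $\sum_{k=1}^{\floor{N/t}}$ that precisely captures the truncation $F_{\mathbb S_{0,t}}(q,\floor{N/t})$. The key step is a finite order-of-summation swap in which one must check that the cutoffs $t \leq N$ and $k \leq \floor{N/t}$ combine cleanly under the substitution $m = tk$.

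First I would expand the left-hand side using the defining formula for $F_{\mathbb S_{0,t}}(q,N)$ given just before the proposition, noting that the $n=0$ term contributes $(q;q)_\infty \cdot 1$. After dividing by $(q;q)_\infty$, this gives
\begin{equation*}
(q;q)_\infty^{-1}\sum_{t=1}^N a(t)\,F_{\mathbb S_{0,t}}(q,\floor{N/t}) \;=\; \sum_{t=1}^N a(t) \;+\; \sum_{t=1}^N a(t) \sum_{k=1}^{\floor{N/t}} \frac{q^{tk}}{(q;q)_{tk}}.
\end{equation*}
The first piece already matches $\sum_{t=1}^N a(t)$ on the right, so it remains to show the double sum equals $\sum_{t=1}^N A(t) q^t/(q;q)_t$. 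Setting $m := tk$, the index set $\{(t,k) : 1\leq t \leq N,\ 1 \leq k \leq \floor{N/t}\}$ is in bijection with $\{(m,t) : 1 \leq m \leq N,\ t \mid m\}$, since for $m \leq N$ every divisor $t$ of $m$ automatically satisfies $t \leq m \leq N$, and the condition $tk \leq N$ is equivalent to $k \leq \floor{N/t}$ when $t$ is a positive integer. Swapping the order of summation and using $A(m) = \sum_{d \mid m} a(d)$ yields the first identity.

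For the partition-theoretic reformulation, I would invoke Lemma \ref{ch6lemma3} term-by-term: each summand $q^{tk}/(q;q)_{tk}$ enumerates, up to the factor $(q;q)_\infty^{-1}$, the nonempty partitions $\lambda$ whose largest part equals $tk$. Summing over $1 \leq k \leq \floor{N/t}$ therefore enumerates the nonempty $\lambda$ with $\operatorname{lg}(\lambda) \in \mathbb S_{0,t}$ and $\operatorname{lg}(\lambda) \leq \floor{N/t}\,t$, while the index bijection above translates the right-hand side into a sum over nonempty $\lambda$ with $\operatorname{lg}(\lambda) \leq N$, weighted by $A(\operatorname{lg}(\lambda))$. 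The only real subtlety throughout is the bookkeeping around the $n=0$ term in $F_{\mathbb S_{0,t}}(q,N)$, i.e., the empty-partition contribution, which cleanly absorbs into the $\sum_{t=1}^N a(t)$ term on both sides of the $q$-series identity and is explicitly excluded by the condition $\emptyset \neq \lambda$ in the partition version.
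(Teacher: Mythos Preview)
Your proof is correct and follows the same approach as the paper. The paper does not give an explicit proof of this proposition; it introduces it only as ``a finite version of the above order-of-summation swapping'' that established Proposition~\ref{EFthm}, and your argument is precisely that swap made explicit, with careful handling of the index bijection $\{(t,k):1\le t\le N,\ 1\le k\le \floor{N/t}\}\leftrightarrow\{(m,t):1\le m\le N,\ t\mid m\}$ and of the $n=0$ term in $F_{\mathbb S_{0,t}}(q,N)$.
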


Here is an example involving Mertens's function, the summatory function of the M\"{o}bius function\footnote{It is a famous fact that the statement $M(x)=\mathcal O(x^{1/2+\epsilon})$ is equivalent to the Riemann Hypothesis.}, viz. $M(x):=\sum_{1\leq n \leq x}\mu(n)$.%, the summatory function of the M\"{o}bius function.

\begin{example}\label{EFex2}
Set $a(n)=\mu(n)$ in Proposition \ref{EFthm2}. Then as $A(n)=\sum_{d|n}\mu(d)=1$ if $n=1$ and $=0$ otherwise, %using again the fact $\lim_{N\to \infty}\sum_{n=1}^{N}\mu(n)/n=0$, % together with Theorem \ref{ch6rt}, 
a little algebra gives
\begin{equation}\label{EFex3}
(q;q)_{\infty}M(N)\  =\  q(q^2;q)_{\infty}+\sum_{n=1}^{N} \mu(n) F_{\mathbb S_{0,n}}\left(q,\floor{\frac{N}{n}}\right).\end{equation}
\end{example}

%\begin{remark}
One notes heuristically in the double limit $q\to\zeta, N\to\infty$ (for instance, %for $z$ in the upper half-plane $\mathbb H$ 
consider $q=e^{2\pi \text{i}z}, z=\text{i}/N$ as $N\to \infty$), the right-hand side of \eqref{EFex3} appears to vanish (both $(\zeta^2;\zeta)_{\infty}$ and $\sum_{n=1}^{\infty}\mu(n)/n$ equal zero) while the left side is indeterminate ($(\zeta;\zeta)_{\infty}=0$ and $M(N)$ oscillates but grows asymptotically without bound in absolute value). Can facts about $(q;q)_{\infty}$ tell us something about the growth of Mertens's function?\footnote{For instance, for $q=e^{2\pi \text{i}z}, z\in\mathbb H$ the upper half-plane, the modularity relation $\eta(z):=q^{1/24}(q;q)_{\infty}=\sqrt{-\text{i}z}\cdot\eta(-1/z)$ yields $\eta(\text{i}/N)=\eta(\text{i}N)/\sqrt{N}$ in the case $z=\text{i}/N$ suggested above.}

\chapter{Notes on Chapter 7: Alternating ``strange'' functions}\label{app:F}{\bf Adapted from \cite{strange}}%, a joint work with Ken Ono and Larry Rolen}

%\begin{abstract}
%We consider infinite series similar to the ``strange'' function $F(q)$ of Kontsevich studied by Zagier, Bryson-Ono-Pitman-Rhoades, Bringmann-Folsom-Rhoades, Rolen-Schneider, and others in connection to quantum modular forms. Here we show that a class of ``strange'' alternating series that are well-defined almost nowhere in the complex plane can be added (using a modified definition of limits) to familiar infinite products to produce convergent $q$-hypergeometric series, of a shape that specializes to Ramanujan's mock theta function $f(q)$, Zagier's quantum modular form $\sigma(q)$, and other interesting number-theoretic objects. We also  give Ces\`{a}ro sums for these ``strange'' series. 
%\end{abstract}
%
%\maketitle

\section{Further ``strange'' connections to quantum and mock modular forms}\label{Sect1}
Recall the ``strange'' function $F(q)$ of Kontsevich (see Definition \ref{ch1Fdef})
%\begin{equation}\label{EF(q)def}
%F(q):=\sum_{n=0}^{\infty}(q;q)_n
%\end{equation}  
studied in Chapter 7, %i.e., %, where the {\it $q$-Pochhammer symbol} is defined by $(a;q)_0:=1$, $(a;q)_n:=\prod_{j=0}^{n-1}(1-aq^j)$, and $(a;q)_{\infty}:=\lim_{n\to \infty}(a;q)_n$ for $a,q\in \mathbb C, |q|<1$. This series $F(q)$ is often referred to in the literature as 
%Kontsevich's ``strange'' function}, 
which has been studied deeply by Zagier \cite{Zagier_Vassiliev} --- it was one of his prototypes for quantum modular forms --- %, which enjoy beautiful transformations similar to classical modular forms, and also resemble objects in quantum theory \cite{Zagier_quantum} --- 
as well as by other authors \cite{BFR,BOPR} in connection to quantum modularity, unimodal sequences, and other topics. 

%In \cite{Zagier_Vassiliev}, Zagier provides many good %shows in here are many 
%reasons to say 
%the series (\ref{F(q)def}) is ``strange'' (see \cite{Zagier_Vassiliev}). %For brevity, 
For the sake of this appendix, we remind the reader that %We remind the reader that as $n\to \infty$, then $(q;q)_{n}$ converges on the unit disk, is essentially singular on the unit circle (except at roots of unity, where it vanishes),  
%and diverges when $|q|>1$. Thus 
$\sum_{n\geq 0} (q;q)_n$ converges {almost nowhere} 
in the complex plane. However, at $q=\zeta_m$ an $m$th order root of unity, $F$ is suddenly very well-behaved: because $(\zeta_m;\zeta_m)_{n}=0$ for $n\geq m$, then as $q\to \zeta_m$ radially from within the unit disk, $F(\zeta_m):=\lim_{q\to\zeta_m} F(q)$ is just a polynomial in $\mathbb Z[\zeta_m]$. (We generalize this phenomenon in Chapter 8.)%the following chapter. 

Now let us turn our attention to the alternating case of this series, viz. 
\begin{equation} 
\widetilde{F}(q):=\sum_{n=0}^{\infty}(-1)^n(q;q)_{n},
\end{equation}
a summation that has been studied by Cohen \cite{BOPR}, which is similarly ``strange'': it doesn't converge anywhere in $\mathbb C$ except at roots of unity, where it is a polynomial. 
In fact, 
computational examples suggest the odd and even partial sums of $\widetilde{F}(q)$ oscillate asymptotically between two convergent $q$-series. 

To capture this oscillatory behavior, let us adopt a 
notation we will use throughout this appendix. If $S$ is an infinite series, we will write $S_+$ to denote the limit of the sequence of odd partial sums, and $S_-$ for the limit of the even partial sums, if these limits exist (clearly if $S$ converges, then $S_+=S_-=S$). 

Interestingly, like $F(q)$, the ``strange'' series $\widetilde{F}(q)$ is closely connected to a sum Zagier provided as another prototype for quantum modularity (when multiplied by $q^{1/24}$) \cite{Zagier_quantum}, the function \begin{equation}\label{Esigma}
\sigma(q):=\sum_{n=0}^{\infty}\frac{q^{n(n+1)/2}}{(-q;q)_n}=1+\sum_{n=0}^{\infty}(-1)^n q^{n+1}(q;q)_n
\end{equation}
from Ramanujan's ``lost'' notebook, with the right-hand equality due to Andrews \cite{A-J-O}. If we use the convention introduced above and write $\widetilde{F}_{+}(q)$ (resp. $\widetilde{F}_{-}(q)$) to denote the limit of the odd (resp. even) partial sums of $\widetilde{F}$, we can state this connection explicitly, depending on the choice of ``$+$'' or ``$-$''.

\begin{theorem}\label{Ethm1}
For $0<|q| < 1$ we have
\begin{equation*}
\sigma(q)=2\widetilde{F}_{\pm}(q)\pm (q;q)_{\infty}.
\end{equation*}
\end{theorem}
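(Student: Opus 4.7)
The plan is to exploit Andrews' reformulation of $\sigma(q)$ recorded in \eqref{Esigma}, together with the elementary telescoping identity
\[
q^{n+1}(q;q)_n \;=\; (q;q)_n - (q;q)_{n+1},
\]
which is immediate from $(q;q)_{n+1}=(q;q)_n(1-q^{n+1})$. Substituting this into
\[
\sigma(q)\;=\;1+\sum_{n=0}^{\infty}(-1)^n q^{n+1}(q;q)_n
\]
converts the right-hand side into a difference of shifted series that can be compared term-by-term against the partial sums $\widetilde{F}_N(q):=\sum_{n=0}^{N}(-1)^n(q;q)_n$.

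Concretely, I would form the $N$th partial sum
\[
\sigma_N(q)\;:=\;1+\sum_{n=0}^{N}(-1)^n\bigl[(q;q)_n-(q;q)_{n+1}\bigr],
\]
reindex the second inner sum by $m=n+1$, and collect. The $(q;q)_n$ terms with $1\leq n\leq N$ will appear twice with the same sign while the $n=0$ term combines with the initial $+1$, and a single boundary term $(-1)^{N+1}(q;q)_{N+1}$ survives. This should yield the clean finite identity
\[
\sigma_N(q)\;=\;2\widetilde{F}_N(q)\;+\;(-1)^{N+1}(q;q)_{N+1},
\]
which is the heart of the argument.

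Finally, I would let $N\to\infty$. For $|q|<1$ we have $(q;q)_{N+1}\to(q;q)_\infty$, and the left side converges absolutely to $\sigma(q)$. Splitting into the parity of $N$ then isolates the two cases: taking $N$ odd recovers $\sigma(q)=2\widetilde{F}_+(q)+(q;q)_\infty$, while $N$ even recovers $\sigma(q)=2\widetilde{F}_-(q)-(q;q)_\infty$, which is the claimed formula $\sigma(q)=2\widetilde{F}_\pm(q)\pm(q;q)_\infty$. The only real subtlety is bookkeeping, i.e., matching the convention for ``odd'' and ``even'' partial sums of $\widetilde{F}$ with the $\pm$ signs in the statement; there is no delicate analysis required, since $\sigma(q)$ is absolutely convergent inside the unit disk and the manipulations above are purely finite telescoping. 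In effect, the theorem simply quantifies how the divergent oscillation of the partial sums of $\widetilde{F}(q)$ is corrected by a single copy of $\pm(q;q)_\infty$ to reproduce the convergent ``strange''-type function $\sigma(q)$.
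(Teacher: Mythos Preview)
Your proof is correct and rests on the same telescoping identity $q^{n+1}(q;q)_n=(q;q)_n-(q;q)_{n+1}$ that the paper uses. Your packaging via the finite identity $\sigma_N(q)=2\widetilde{F}_N(q)+(-1)^{N+1}(q;q)_{N+1}$ is in fact a bit cleaner than the paper's version, which first derives $(q;q)_\infty=1-\sum_{n\ge 0}q^{n+1}(q;q)_n$, combines it with $\sigma(q)$ to get $\sigma(q)-(q;q)_\infty=2\sum_{n\ge 0}q^{2n+1}(q;q)_{2n}$, and then separately (and, in its own words, ``heuristically'') pairs consecutive terms of $\widetilde{F}$ to recognize the same sum; your finite partial-sum identity handles both $\pm$ cases at once and avoids any formal manipulation of the divergent series.
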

 
We can make further sense of alternating ``strange'' series such as this using Ces\`{a}ro summation, a well-known alternative definition of the limits of infinite series (see \cite{Hardy_divergent}). 

\begin{definition} The Ces\`{a}ro sum of an infinite series is the limit of the arithmetic mean of successive partial sums, if the limit exists.
\end{definition}

In particular, it follows immediately that the Ces\`{a}ro sum of the series $S$ is the average $\frac{1}{2}(S_+ + S_-)$ if the limits $S_+,S_-$ exist. Then Theorem \ref{Ethm1} leads to the following fact.

\begin{corollary}
We have that $\frac{1}{2}\sigma(q)$ is the Ces\`{a}ro sum of the ``strange'' function $\widetilde{F}(q)$.
\end{corollary}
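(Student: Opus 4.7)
The plan is to deduce the statement as a direct algebraic consequence of Theorem \ref{Ethm1}, together with the characterization of the Cesàro sum as the average $\tfrac{1}{2}(S_+ + S_-)$ already noted in the excerpt. First I would invoke Theorem \ref{Ethm1} in both of its signed forms: taking the ``$+$'' sign yields
\[
\sigma(q) \;=\; 2\widetilde{F}_+(q) + (q;q)_{\infty},
\]
while taking the ``$-$'' sign yields
\[
\sigma(q) \;=\; 2\widetilde{F}_-(q) - (q;q)_{\infty}.
\]
These hold for $0<|q|<1$, where both one-sided limits $\widetilde{F}_{\pm}(q)$ of the partial sums of $\widetilde{F}(q)$ exist.

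Next I would simply add the two displayed identities: the $\pm (q;q)_{\infty}$ contributions cancel, leaving $2\sigma(q) = 2\widetilde{F}_+(q) + 2\widetilde{F}_-(q)$, i.e.,
\[
\tfrac{1}{2}\sigma(q) \;=\; \tfrac{1}{2}\bigl(\widetilde{F}_+(q) + \widetilde{F}_-(q)\bigr).
\]
By the observation recorded just below the definition of Cesàro summation --- namely that whenever the odd and even partial sum limits $S_\pm$ of a series $S$ both exist, the Cesàro sum of $S$ equals $\tfrac{1}{2}(S_+ + S_-)$ --- the right-hand side is exactly the Cesàro sum of $\widetilde{F}(q)$. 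This would complete the proof.

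There is essentially no obstacle here beyond correctly citing the two pieces of input: the signed identity of Theorem \ref{Ethm1} and the averaging property of Cesàro sums. The only point requiring a word of care is to verify that both $\widetilde{F}_+(q)$ and $\widetilde{F}_-(q)$ genuinely exist as limits for $0<|q|<1$, but this is already built into the hypotheses of Theorem \ref{Ethm1}, so no additional analytic work is needed.
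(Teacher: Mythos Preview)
Your proposal is correct and matches the paper's approach exactly: the paper notes that the Ces\`aro sum equals $\tfrac{1}{2}(S_+ + S_-)$ when both one-sided limits exist, and then invokes Theorem~\ref{Ethm1} in its two signed forms to identify this average with $\tfrac{1}{2}\sigma(q)$. You have simply written out the cancellation of the $\pm(q;q)_\infty$ terms explicitly, which the paper leaves implicit.
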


A similar relation to Theorem \ref{Ethm1} involves Ramanujan's prototype $f(q)$ for a mock theta function 
\begin{equation}\label{Ef(q)def}
f(q):=\sum_{n=0}^{\infty}\frac{q^{n^2}}{(-q;q)_n^{2}}=1-\sum_{n=1}^{\infty}\frac{(-1)^{n}q^n }{(-q;q)_n},
\end{equation} 
the right-hand side of which is due to Fine (see (26.22) in \cite{Fine}, Ch. 3). Now, if we define 
\begin{equation}
\widetilde{\phi}(q):=\sum_{n=0}^{\infty}\frac{(-1)^n}{(-q;q)_n},
\end{equation}
which is easily seen to be ``strange'' like the previous cases, and write  $\widetilde{\phi}_{\pm}$ for limits of the odd/even partial sums as above, we can write $f(q)$ in terms of the ``strange'' series and an infinite product.

\begin{theorem}\label{Ethm2}
For $0<|q|<1$ 
we have 
\begin{equation*}
f(q)=2\widetilde{\phi}_{\pm}(q)\pm \frac{1}{(-q;q)_{\infty}}.
\end{equation*}
\end{theorem}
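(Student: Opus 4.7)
The plan is to mimic the strategy behind Theorem \ref{Ethm1} relating $\widetilde{F}$ to $\sigma(q)$. Two facts together will yield the identity: (a) the difference $\widetilde{\phi}_-(q)-\widetilde{\phi}_+(q)=\frac{1}{(-q;q)_{\infty}}$, and (b) the sum $\widetilde{\phi}_+(q)+\widetilde{\phi}_-(q)=f(q)$. Solving these two relations for $\widetilde{\phi}_\pm(q)$ gives $2\widetilde{\phi}_\pm(q)=f(q)\mp\frac{1}{(-q;q)_\infty}$, which is exactly the claim.

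Fact (a) is essentially automatic once the limits exist: writing the $N$th partial sum $\widetilde{\phi}_N(q):=\sum_{n=0}^{N}\frac{(-1)^n}{(-q;q)_n}$, we have $\widetilde{\phi}_{2N+1}(q)-\widetilde{\phi}_{2N}(q)=-\frac{1}{(-q;q)_{2N+1}}$, and this quantity tends to $-\frac{1}{(-q;q)_\infty}$ as $N\to\infty$ since $|q|<1$. Hence $\widetilde{\phi}_+(q)-\widetilde{\phi}_-(q)=-\frac{1}{(-q;q)_\infty}$.

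For Fact (b) I would pair consecutive terms in each partial sum. The key elementary identity is
\[
\frac{1}{(-q;q)_{2k}}-\frac{1}{(-q;q)_{2k+1}}=\frac{1}{(-q;q)_{2k}}\left(1-\frac{1}{1+q^{2k+1}}\right)=\frac{q^{2k+1}}{(-q;q)_{2k+1}},
\]
from which pairing the terms $n=2k,\,2k+1$ in $\widetilde{\phi}_{2N+1}(q)$ gives $\widetilde{\phi}_+(q)=\sum_{k=0}^{\infty}\frac{q^{2k+1}}{(-q;q)_{2k+1}}$. An analogous pairing of $n=2k-1,\,2k$ (with the term $n=0$ left unpaired) gives $\widetilde{\phi}_-(q)=1-\sum_{k=1}^{\infty}\frac{q^{2k}}{(-q;q)_{2k}}$. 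Both series converge absolutely for $|q|<1$, which also justifies the existence of the limits used in Fact (a). Adding these two expressions and reassembling the even- and odd-indexed terms yields
\[
\widetilde{\phi}_+(q)+\widetilde{\phi}_-(q)=1-\sum_{n=1}^{\infty}\frac{(-1)^n q^n}{(-q;q)_n},
\]
which by Fine's form of $f(q)$ recorded in \eqref{Ef(q)def} is precisely $f(q)$.

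The only real obstacle is bookkeeping in the telescoping for the even partial sums, where the $n=0$ term remains unpaired and one must track signs and indices carefully to recover the Fine series cleanly. Beyond that, no modularity or $q$-hypergeometric machinery is needed --- Fine's identity does all of the heavy lifting of converting the ``strange'' alternating combination into Ramanujan's mock theta function.
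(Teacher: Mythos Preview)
Your proof is correct and follows essentially the same route as the paper: pair consecutive terms of $\widetilde{\phi}$ to obtain $\widetilde{\phi}_+(q)=\sum_{k\ge0}\frac{q^{2k+1}}{(-q;q)_{2k+1}}$, use Fine's identity for $f(q)$, and combine with a telescoping/difference argument giving the contribution $\frac{1}{(-q;q)_\infty}$. The paper writes the latter as the telescoping identity $\frac{1}{(-q;q)_\infty}=1-\sum_{n\ge1}\frac{q^n}{(-q;q)_n}$ and only handles the $+$ case explicitly, whereas you derive both $\widetilde{\phi}_\pm$ by separate pairings and recover the product term from the difference of odd and even partial sums --- but this is the same idea in slightly different packaging.
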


Again, the Ces\`{a}ro sum results easily from this theorem.

\begin{corollary}
We have that $\frac{1}{2}f(q)$ is the Ces\`{a}ro sum of the ``strange'' function $\widetilde{\phi}(q)$.
\end{corollary}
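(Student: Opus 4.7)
The plan is to derive the corollary directly from Theorem \ref{Ethm2} by the same simple averaging argument that presumably establishes the analogous statement for $\sigma(q)$ and $\widetilde{F}(q)$. The key observation is that Theorem \ref{Ethm2} is really a pair of identities, one for each choice of sign, and averaging them will cancel the $(-q;q)_\infty^{-1}$ correction term.

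First I would unpack the $\pm$ notation. Theorem \ref{Ethm2} asserts both
\[
f(q)=2\widetilde{\phi}_{+}(q)+\frac{1}{(-q;q)_{\infty}} \qquad\text{and}\qquad f(q)=2\widetilde{\phi}_{-}(q)-\frac{1}{(-q;q)_{\infty}}.
\]
Adding these two identities, the $(-q;q)_\infty^{-1}$ terms cancel and we are left with $2f(q)=2\widetilde{\phi}_{+}(q)+2\widetilde{\phi}_{-}(q)$, i.e.,
\[
\tfrac{1}{2}f(q)=\tfrac{1}{2}\bigl(\widetilde{\phi}_{+}(q)+\widetilde{\phi}_{-}(q)\bigr).
\]

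Next I would invoke the definition of the Ces\`{a}ro sum and the remark following it: whenever the odd- and even-indexed partial-sum limits $S_+$ and $S_-$ of an infinite series $S$ both exist, the Ces\`{a}ro sum equals the arithmetic mean $\tfrac{1}{2}(S_++S_-)$. Since Theorem \ref{Ethm2} guarantees that $\widetilde{\phi}_{+}(q)$ and $\widetilde{\phi}_{-}(q)$ both exist for $0<|q|<1$, applying this general principle to $S=\widetilde{\phi}(q)$ identifies $\tfrac{1}{2}(\widetilde{\phi}_{+}(q)+\widetilde{\phi}_{-}(q))$ as precisely the Ces\`{a}ro sum of $\widetilde{\phi}(q)$.

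Combining the two steps gives the desired conclusion, with essentially no obstacle: the only nontrivial input is Theorem \ref{Ethm2} itself, which is assumed. The mild point to verify is that the general ``average of $S_+$ and $S_-$'' fact about Ces\`{a}ro summation really does apply here; this follows because the sequence of partial sums of $\widetilde{\phi}(q)$ splits into two subsequences (odd- and even-indexed) each converging to a finite limit, so the Ces\`{a}ro averages interleave these and converge to their mean. Thus the proof is essentially immediate from Theorem \ref{Ethm2}, mirroring the structure of the corollary following Theorem \ref{Ethm1}.
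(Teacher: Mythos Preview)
Your proposal is correct and follows essentially the same approach as the paper: both use Theorem \ref{Ethm2} together with the observation (stated explicitly in the paper right after the definition of Ces\`{a}ro summation) that when $S_+$ and $S_-$ exist the Ces\`{a}ro sum is their average $\tfrac{1}{2}(S_+ + S_-)$. The paper's proof is simply the one-line remark that this averaging yields the result, and your write-up spells out the same argument in slightly more detail.
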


Theorems \ref{Ethm1} and \ref{Ethm2} typify a general phenomenon: 
the combination of an alternating Kontsevich-style ``strange'' function 
with a related infinite product is a convergent $q$-series when we fix the $\pm$ sign in this modified definition of limits. 
Let us fix a few more notations in order to discuss this succinctly. As usual, for $n$ a non-negative integer, define 
\begin{equation*}
(a_1,a_2,...,a_r;q)_n:=(a_1;q)_n (a_2;q)_n \cdots (a_r;q)_n,
\end{equation*} 
along with the limiting case $(a_1,a_2,...,a_r;q)_{\infty}$ as $n\to\infty$. 
Associated to the sequence $a_1,a_2,...,a_r$ of complex coefficients, we will define a polynomial $\alpha_r(X)$ 
by the relation
\begin{equation}
(1-a_1 X)(1-a_2 X)\cdots (1-a_r X)=: 1-\alpha_r(X) X,
\end{equation}
thus
\begin{equation}
(a_1q,a_2q,...,a_rq;q)_n=\prod_{j=1}^{n}(1-\alpha_r(q^j)q^j), 
\end{equation}
and we follow this convention in also writing $(1-b_1 X)(1-b_2 X)\cdots (1-b_s X)=: 1-\beta_s(X) X$ for complex coefficients $b_1,b_2,...,b_s$. Moreover, we define a generalized alternating ``strange'' series:
\begin{equation}
\widetilde{\Phi}(a_1,a_2,...,a_r;b_1,b_2,...,b_s; q):=\sum_{n=0}^{\infty} (-1)^n \frac{(a_1q,a_2q,...,a_rq;q)_{n}}{(b_1q,b_2q,...,b_sq;q)_{n}}.
\end{equation}
Thus $\widetilde{F}(q)$ is the case $\widetilde{\Phi}(1;0; q)$, and $\widetilde{\phi}(q)$ is the case $\widetilde{\Phi}(0;-1; q)$. We note that if $q$ is a $k$th root of $1/a_i$ for some $i$, then $\widetilde{\Phi}$ truncates after $k$ terms like $F$ and $\widetilde{F}$. As above, let $\widetilde{\Phi}_{\pm}$ denote the limit of the odd/even partial sums; then we can encapsulate the preceding theorems in the following statement.

\begin{theorem}\label{Ethm3}
For $0<|q|<1$ we have \begin{flalign*}
2\widetilde{\Phi}_{\pm}&(a_1,a_2,...,a_r;b_1,b_2,...,b_s; q) \pm\frac{(a_1q,a_2q,...,a_rq;q)_{\infty}}{(b_1q,b_2q,...,b_sq;q)_{\infty}}\\
&=1-\sum_{n=1}^{\infty} \frac{(-1)^n q^{n}\left(\alpha_r(q^n)-\beta_s(q^n)\right) (a_1q,a_2q,...,a_rq;q)_{n-1}}{(b_1q,b_2q,...,b_s q;q)_{n}}.
\end{flalign*}
\end{theorem}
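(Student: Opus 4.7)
The plan is to recognize the summand on the right-hand side as an explicit telescoping difference, then apply Abel-style rearrangement to the alternating sum and pass to the limit along even and odd partial sums separately.

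First I would set
\[
P_n := \frac{(a_1q,a_2q,\dots,a_rq;q)_n}{(b_1q,b_2q,\dots,b_sq;q)_n},\qquad P_0=1,
\]
so that $\widetilde{\Phi}(a_1,\dots,a_r;b_1,\dots,b_s;q)=\sum_{n\geq 0}(-1)^n P_n$ formally, and let $S_N:=\sum_{n=0}^N (-1)^n P_n$ denote its partial sums. For $|q|<1$ (and with the mild nonvanishing assumptions on the denominator that make $P_\infty$ well defined), one has $P_N\to P_\infty:=(a_1q,\dots,a_rq;q)_\infty/(b_1q,\dots,b_sq;q)_\infty$. A direct computation of $P_{n-1}-P_n=P_{n-1}(1-P_n/P_{n-1})$, using the defining identities $(1-a_1X)\cdots(1-a_rX)=1-\alpha_r(X)X$ and $(1-b_1X)\cdots(1-b_sX)=1-\beta_s(X)X$, yields
\[
P_{n-1}-P_n=\frac{q^n\bigl(\alpha_r(q^n)-\beta_s(q^n)\bigr)(a_1q,\dots,a_rq;q)_{n-1}}{(b_1q,\dots,b_sq;q)_n},
\]
so the summand on the right-hand side of the theorem is exactly $(-1)^n(P_{n-1}-P_n)$.

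Next I would rearrange the truncated sum $T_N:=\sum_{n=1}^N(-1)^n(P_{n-1}-P_n)$ by writing each term as $(-1)^nP_{n-1}+(-1)^{n+1}P_n$ and reindexing one of the resulting sums. A short calculation gives
\[
T_N=-1-2\sum_{n=1}^{N-1}(-1)^n P_n+(-1)^{N+1}P_N=1-2S_{N-1}-(-1)^N P_N,
\]
hence
\[
1-T_N=2S_{N-1}+(-1)^N P_N.
\]
This is the key identity: it shows that the partial sums of the (possibly only conditionally convergent) series on the right can be read off from the partial sums $S_{N-1}$ of $\widetilde{\Phi}$ plus an explicit correction term tending to $\pm P_\infty$.

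Finally I would let $N\to\infty$ along the even and odd subsequences. When $N$ is even, $S_{N-1}\to\widetilde{\Phi}_+$ and $(-1)^N P_N\to P_\infty$, giving $1-T_N\to 2\widetilde{\Phi}_+ +P_\infty$; when $N$ is odd, $S_{N-1}\to\widetilde{\Phi}_-$ and $(-1)^N P_N\to -P_\infty$, giving $1-T_N\to 2\widetilde{\Phi}_- -P_\infty$. The consistency relation $\widetilde{\Phi}_- -\widetilde{\Phi}_+=\lim_{N\text{ even}}(-1)^N P_N=P_\infty$ shows that the two subsequential limits coincide, so the series on the right-hand side genuinely converges (not merely in Ces\`aro sense) and equals the common value $2\widetilde{\Phi}_\pm(a_1,\dots;b_1,\dots;q)\pm P_\infty$, which is the desired identity.

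The only mildly delicate step is the second one, the Abel-style rearrangement: I expect this to be straightforward bookkeeping, but one must track the boundary terms carefully to obtain the clean identity $1-T_N=2S_{N-1}+(-1)^N P_N$, since it is precisely that boundary term $(-1)^N P_N$ which in the limit produces the $\pm P_\infty$ on the left-hand side and reconciles the otherwise divergent alternating series with its even/odd subsequential limits.
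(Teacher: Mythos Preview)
Your proof is correct and follows essentially the same telescoping idea as the paper's argument. The paper's proof (which is only a sketch referring back to the special cases of Theorems~\ref{Ethm1} and~\ref{Ethm2}) proceeds by writing two separate telescoping identities---one expressing the infinite product $P_\infty$ as $1$ minus a convergent series, and one pairing consecutive terms of the alternating ``strange'' series to obtain a convergent series---and then comparing the two. You instead package everything into a single Abel-summation identity $1-T_N=2S_{N-1}+(-1)^N P_N$, which is tidier: it makes the convergence of the right-hand side and the role of the boundary term $\pm P_\infty$ transparent in one stroke, rather than emerging from matching two separate computations.
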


From this identity we can fully generalize the previous corollaries.

\begin{corollary}
We have that $1/2$ times the right-hand side of Theorem \ref{Ethm3} is the Ces\`{a}ro sum of the ``strange'' function $\widetilde{\Phi}(a_1,...,a_r;b_1,...,b_s; q)$.
\end{corollary}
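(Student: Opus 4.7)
The plan is to deduce the Ces\`{a}ro sum identity directly from Theorem \ref{Ethm3} by averaging the two sign choices. Concretely, Theorem \ref{Ethm3} is really two equations packaged as one: writing $P(q):=(a_1q,\ldots,a_rq;q)_{\infty}/(b_1q,\ldots,b_sq;q)_{\infty}$ for the infinite product and $R(q)$ for the right-hand side of Theorem \ref{Ethm3}, the $+$ case reads
\[
2\widetilde{\Phi}_{+}(a_1,\ldots,a_r;b_1,\ldots,b_s;q) + P(q) = R(q),
\]
and the $-$ case reads
\[
2\widetilde{\Phi}_{-}(a_1,\ldots,a_r;b_1,\ldots,b_s;q) - P(q) = R(q).
\]
Adding these and dividing by $2$ immediately gives $\widetilde{\Phi}_{+} + \widetilde{\Phi}_{-} = R(q)$, so that
\[
\tfrac{1}{2}\bigl(\widetilde{\Phi}_{+}(a_1,\ldots,a_r;b_1,\ldots,b_s;q) + \widetilde{\Phi}_{-}(a_1,\ldots,a_r;b_1,\ldots,b_s;q)\bigr) = \tfrac{1}{2} R(q).
\]

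The remaining step is to identify $\tfrac{1}{2}(\widetilde{\Phi}_{+} + \widetilde{\Phi}_{-})$ with the Ces\`{a}ro sum of $\widetilde{\Phi}$. For this I would invoke the standard fact that if the odd partial sums $S_{2k+1}$ of a series converge to $S_+$ and the even partial sums $S_{2k}$ converge to $S_-$, then the arithmetic means $\tfrac{1}{N}\sum_{n=1}^{N} S_n$ converge to $(S_+ + S_-)/2$. This follows by splitting the Ces\`{a}ro average into its contributions from odd and even indices, each of which is a Ces\`{a}ro average of a convergent sequence and so converges to its own limit; the two halves contribute with equal weight in the limit, giving the mean.

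The main (minor) obstacle is justifying this last step cleanly: one must verify that both $\widetilde{\Phi}_{+}(q)$ and $\widetilde{\Phi}_{-}(q)$ actually exist as limits for $0<|q|<1$, which is exactly the content guaranteed by Theorem \ref{Ethm3} (since both $R(q)$ and $P(q)$ are well-defined there), and that the rearrangement of the Ces\`{a}ro average into odd-indexed and even-indexed blocks is legitimate --- a one-line estimate suffices. With these pieces in place, combining the displayed identity with the Ces\`{a}ro-average observation yields the corollary.
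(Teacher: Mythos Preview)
Your proposal is correct and follows essentially the same approach as the paper: the paper's proof simply invokes the fact (stated earlier in the text) that when the odd and even partial sums of a series converge to $S_+$ and $S_-$, the Ces\`{a}ro sum equals $\tfrac{1}{2}(S_+ + S_-)$, and this is exactly what you do after averaging the two sign choices in Theorem~\ref{Ethm3}. Your write-up is more explicit than the paper's one-line justification, but the content is the same.
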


The takeaway is that the $N$th partial sum of an alternating ``strange'' series oscillates asymptotically as $N\to \infty$ between $\frac{1}{2}(S(q)+(-1)^N P(q))$, where $S$ is an Eulerian infinite series and $P$ is an infinite product as given in Theorem \ref{Ethm3}. 
We recover Theorem \ref{Ethm1} from Theorem \ref{Ethm3} as the case $a_1=1,\  a_i=b_j=0$ for all $i>1,j\geq 1$. Theorem \ref{Ethm2} is the case $b_1=-1,\  a_i=b_j=0$ for all $i\geq1, j>1$. 

Considering these connections together with diverse connections made by Kontsevich's $F(q)$ to important objects of study \cite{BFR, BOPR,Zagier_Vassiliev}, it seems the ephemeral ``strange'' functions almost ``enter into mathematics as beautifully''\footnote{To redirect Ramanujan's words} as their 
convergent relatives, mock theta functions. We note that considerations of finiteness at roots of unity and renormalization phenomena studied in Chapter 8 apply to Theorem \ref{Ethm3} as well.

\section{Proofs of results}

In this section we quickly prove the preceding theorems, and justify the corollaries.

\begin{proof}[Proof of Theorem \ref{Ethm1}]
Using telescoping series to find for $|q|<1$ that
\begin{flalign*}
(q;q)_{\infty}
= 1-\sum_{n=0}^{\infty}(q;q)_n \left( 1 - (1-q^{n+1})\right)
= 1-\sum_{n=0}^{\infty}q^{n+1}(q;q)_n,
\end{flalign*}
and combining this functional equation with the right side of (\ref{Esigma}) above, easily gives
\begin{equation*}
\sigma(q)-(q;q)_{\infty}=2\sum_{n=0}^{\infty}q^{2n+1}(q;q)_{2n}.
\end{equation*}
On the other hand, manipulating symbols heuristically (for we are working with a divergent series $\widetilde{F}$) suggests we can rewrite 
\begin{flalign*} 
\widetilde{F}(q)=\sum_{n=0}^{\infty}\left( (q;q)_{2n}-(q;q)_{2n+1}\right)&=\sum_{n=0}^{\infty}(q;q)_{2n}\left(1- (1-q^{2n+1})\right)=\sum_{n=0}^{\infty}q^{2n+1}(q;q)_{2n},
\end{flalign*}
which is a rigorous statement if by convergence on the left we mean the limit as $N\to\infty$ of partial sums $\sum_{n=0}^{2N-1}(-1)^n (q;q)_n$. We can also choose the alternate coupling of summands to similar effect, e.g. considering here the partial sums $1+\sum_{n=1}^{N-1}[(q;q)_{2n}-$ $(q;q)_{2n-1}]-(q;q)_{2N-1}$ as $N\to \infty$. 
Combining the above considerations 
proves the 
theorem for $|q|<1$, which one finds to agree with computational examples.
\end{proof}

\begin{proof}[Proof of Theorem \ref{Ethm2}]
Following the 
formal steps that prove Theorem \ref{Ethm1} above, we can use 
\begin{equation*}
\frac{1}{(-q;q)_{\infty}}=1-\sum_{n=0}^{\infty}\frac{1}{(-q;q)_n}\left( 1 - \frac{1}{1+q^{n+1}}\right)=1-\sum_{n=1}^{\infty}\frac{q^n}{(-q;q)_n}
\end{equation*}
and rewrite the related ``strange'' series 
\begin{equation*}
\widetilde{\phi}(q)=\sum_{n=0}^{\infty} \frac{1}{(-q;q)_{2n}}\left( 1-\frac{1}{1+q^{2n+1}}\right)=\sum_{n=0}^{\infty} \frac{q^{2n+1}}{(-q;q)_{2n+1}},
\end{equation*}
which of course fails to converge for $0<|q|<1$ on the left-hand side 
but makes sense if we use the modified definition of convergence used above, 
%in Section \ref{ESect1}, 
to yield the identity in the theorem (which is, again, borne out by computational examples).
\end{proof}

\begin{proof}[Proof of Theorem \ref{Ethm3}]
Using the definitions of the polynomials $\alpha_r(X),\beta_s(X)$, then following the exact steps that yield Theorems \ref{Ethm1} and \ref{Ethm2}, i.e., manipulating and comparing telescoping-type series with the same modified definition of convergence, gives the theorem.
\end{proof}

\begin{proof}[Proof of Corollaries]
Clearly, for an alternating ``strange'' series in which the odd and even partial sums each approach a different limit, the average of these two limits will equal the Ces\`{a}ro sum of the series.
\end{proof}

\begin{remark}
It follows from Euler's continued fraction formula \cite{Euler} that 
alternating ``strange'' functions have 
representations such as 
\begin{equation*}
\widetilde{F}(q)=\frac{{1}}{1+\frac{1-q}{q+\frac{1-q^2}{q^2+\frac{1-q^3}{q^3+...}}}},\  \  \  \  \  \  \  \  \widetilde{\phi}(q) =  \frac{{1}}{1+\frac{1}{q+\frac{1+q}{q^2+\frac{1+q^2}{q^3+...}}}}.
\end{equation*}
These ``strange'' continued fractions diverge on $0<|q|<1$ with successive convergents equal to the corresponding partial sums of their series representations. Then we can substitute continued fractions for the Kontsevich-style summations in the theorems above using a similarly modified definition of convergence: we take 
the $\pm$ sign to be positive when we define the limit of the continued fraction to be the limit of the even convergents, and negative if instead we use odd convergents.
\end{remark}

\chapter{Notes on Chapter 8: Results from a computational study of $f(q)$}
\label{app:G}{\bf Based on joint work with Amanda Clemm}

\section{Cyclotomic-type structures at certain roots of unity}

Here we record some relations the author and Amanda Clemm observed computationally during a study at Emory University (September--December, 2013) of the mock theta function $f(q)$\footnote{Recall from \eqref{ch1f(q)def}} at roots of unity. 
%Following up on patterns we noticed in both numerical examples and algebraic formulas, we computed a list of identities indicative of strong algebraic structure connecting values of $f(q)$ at certain roots of unity, proved by direct calculation. We did not attempt algebraic proofs, which would be required to understand more general phenomena, but they are certain to follow from well-known facts about polynomials evaluated at roots of unity. %; then we had to put aside our investigation due to the workload of graduate studies and exams. 
%Still, these relations and structures---and the possibility they point to more general phenomena---are intriguing enough that we feel we should compile them. 
In our study, we programmed SageMath using the finite formula for $f(\zeta_m)$ given in Example \ref{ch8example2} and we looked for patterns in our numerics. We saw traces of cyclic group theory related to the values $f(\zeta_m^i)$ for odd $m$. The algebraic structure appears most transparently if we use the normalized version
\begin{equation}\label{Fnormalized}
\widetilde{f}(\zeta_m^i):=\frac{3}{4}f(\zeta_m^i)
\end{equation}
for $m$ an odd number, which is just the summation on the right-hand side of Example \ref{ch8example2}. %We see %then %by expanding the terms of this summation 
%that 
We note $\widetilde{f}(1)=1$. These evaluations of $\widetilde{f}$ enjoy surprisingly nice combinations. We observe computationally (without proof) that the coefficients of the cyclotomic-type polynomial
\begin{equation}\label{FFcyclotomic}
\widetilde{F}_m(X):=\prod_{\substack{1\leq i< m\\ \operatorname{gcd}(m,i)=1\\}} \left(X-\widetilde{f}(\zeta_m^i)\right)
\end{equation}
are integers; %Then %(we might think of $\widetilde{F}_m(X)$ as a ``mock cyclotomic'' polynomial). 
%by standard facts about coefficients \cite{DummitFoote} 
in other words, %our computations suggest
\begin{flalign}\label{Fcoeff1}
\sum_{\operatorname{gcd}(m,i)=1} \widetilde{f}(\zeta_m^i),\  \sum_{\substack{i\neq j\\ \operatorname{gcd}(m,i)=\operatorname{gcd}(m,j)=1\\}} \widetilde{f}(\zeta_m^i)\widetilde{f}(\zeta_m^j),\  \sum_{\substack{i\neq j \neq k\\ \operatorname{gcd}(m,i)=\operatorname{gcd}(m,j)=\operatorname{gcd}(m,k)=1\\}} \widetilde{f}(\zeta_m^i)\widetilde{f}(\zeta_m^j)\widetilde{f}(\zeta_m^k),
\end{flalign}
and so on up to 
\begin{equation}\label{Fcoeff2} \prod_{\substack{1\leq i < m \\ \operatorname{gcd}(m,i)=1\\}}\widetilde{f}(\zeta_m^i),\end{equation} are all integers. %In our study, we looked for more subtle relations.%, which is surprising given the usual divergence of mock theta functions at the unit circle. 
To simplify calculations with respect to the $\operatorname{gcd}$, 
%let us 
take $m=p$ a prime number. We observe computationally %(without proof, however) 
that for the first few primes $p$, the coefficients indicated in (\ref{Fcoeff1}), (\ref{Fcoeff2}) appear to be congruent to 1 modulo $p$, though we do not know if this holds for all primes. It also appears %One also observes %computationally %(without having proved this) 
that the $\widetilde{f}(\zeta_p^i)$ are cyclic of order $p$, modulo $p$: 
\begin{equation}\label{Ffcyclic}
\widetilde{f}(\zeta_p^i)^n\equiv \widetilde{f}(\zeta_p^i)^{n+pk}\  (\operatorname{mod} p)\  \text{for all}\  i,k,n\in\mathbb Z.
\end{equation}
%\begin{remark}
%Classical theorems from algebra \cite{DummitFoote} such as 
%\begin{equation}
%F(pq)\equiv F(q)^p\  (\operatorname{mod}\  p)
%\end{equation}
%for $F(q)$ a power series, lead to other facts in this direction.
%\end{remark}

Following up on these observations, we computed examples for $p=5$ and found many linear combinations of the forms $\widetilde{f}(\zeta_5^i)$ yield nice evaluations, % (so many we did not think to record them all at the time), 
such as this infinite system, which %we found by experimentation but 
is not hard to prove from facts about polynomials at roots of unity \cite{DummitFoote}:% from standard facts about polynomials (see \cite{DummitFoote}): %follow by applying ideas from polynomial theory to $\widetilde{F}_5(X)$; direct calculation .
\begin{flalign}\label{Fdiscrim}
\begin{split}
&\widetilde{f}(\zeta_5)^{\  }+\widetilde{f}(\zeta_5^2)^{\  }+\widetilde{f}(\zeta_5^3)^{\  }+\widetilde{f}(\zeta_5^4)^{\  }=4,\\
&\widetilde{f}(\zeta_5)^2+\widetilde{f}(\zeta_5^2)^2+\widetilde{f}(\zeta_5^3)^2+\widetilde{f}(\zeta_5^4)^2=4,\\
&\widetilde{f}(\zeta_5)^3+\widetilde{f}(\zeta_5^2)^3+\widetilde{f}(\zeta_5^3)^3+\widetilde{f}(\zeta_5^4)^3=-11,\\
&\widetilde{f}(\zeta_5)^4+\widetilde{f}(\zeta_5^2)^4+\widetilde{f}(\zeta_5^3)^4+\widetilde{f}(\zeta_5^4)^4=-76,...\\
%&\widetilde{f}(\zeta_5)^5+\widetilde{f}(\zeta_5^2)^5+\widetilde{f}(\zeta_5^3)^5+\widetilde{f}(\zeta_5^4)^5=-246,\\
%&\widetilde{f}(\zeta_5)^6+\widetilde{f}(\zeta_5^2)^6+\widetilde{f}(\zeta_5^3)^6+\widetilde{f}(\zeta_5^4)^6=-521,...\\
\end{split}
\end{flalign}
%In general, for $n\geq 1$ we have that $\widetilde{f}(\zeta_5)^n+\widetilde{f}(\zeta_5^2)^n+\widetilde{f}(\zeta_5^3)^n+\widetilde{f}(\zeta_5^4)^n$ is an integer.
%
%\begin{remark}
%We can prove identities like these with explicit numerics by using the following fact about polynomials in clever ways. For a degree-$(n-1)$ polynomial $g(x)=\sum_{j=0}^{n-1}a_j x^j$, we have of course that
%\begin{equation}
%G_m(x):=\sum_{i=0}^{m-1}g(x^i)=\sum_{j=0}^{n-1}\sum_{i=0}^{m-1} a_j x^{ij}=\sum_{j=0}^{n-1}a_j \frac{1-x^{jm}}{1-x^j}
%\end{equation}
%by geometric series. But it is an elementary fact that, letting $x=\zeta_m$ an order-$m$ root of unity, then $\sum_{0\leq i\leq m-1}\zeta_m^i=0$. Therefore $G_m(\zeta_m)=a_0 n$. 
%\end{remark}

More strikingly, we see these $\widetilde{f}(\zeta_5^i)$ involved in cyclotomic-like structures. Noting $\widetilde{f}(1)=1$, by direct computation we find this simple identity as the $m=5$ case of \eqref{Fcoeff2}:
%\begin{proposition}
%We have that
\begin{equation}\label{Fthm0}
\widetilde{f}(\zeta_5)\widetilde{f}(\zeta_5^2)\widetilde{f}(\zeta_5^3)\widetilde{f}(\zeta_5^4)=1.
\end{equation} 
%\end{proposition}

%Then we get the obvious relations
%\begin{equation}\label{cor0}
%\widetilde{f}(\zeta_5^i)\widetilde{f}(\zeta_5^j)=\frac{1}{\widetilde{f}(\zeta_5^k)\widetilde{f}(\zeta_5^l)},\  \  \widetilde{f}(\zeta_5^i)=\frac{1}{\widetilde{f}(\zeta_5^j)\widetilde{f}(\zeta_5^k)\widetilde{f}(\zeta_5^l)},  
%\end{equation} 
%where $i,j,k,l\in\{1,2,3,4\},\  i\neq j\neq k\neq l$. %, which are still surprising if we remember these $\widetilde{f}$ are limiting values of a highly enigmatic infinite series. 
%%Other nice identities arise using ideas from algebra
%Combining all the above formulas, and also using well-known facts from algebra \cite{DummitFoote} like 
%\begin{equation}
%1+\zeta_m+\zeta_m^2+\zeta_m^3+...+\zeta_m^{m-1}=0\  \text{for}\  m>1,
%\end{equation}
%will lead to further relations between these $\widetilde{f}(\zeta_5^i)$. %We feel certain that some combination of identities like these, possibly together with manipulation of (\ref{MTFfinite}), will yield the next theorem (which we have not, however, been able to prove algebraically nor to generalize). 
Direct calculation verifies further identities, which we did not prove formally.%, which are quite startling.

\begin{proposition}\label{Fthm1}
Certain products $\widetilde{f}(\zeta_5^i)\widetilde{f}(\zeta_5^j), i\neq j,$ are equal to roots of unity:
\begin{flalign*}
\begin{split}
\widetilde{f}(\zeta_5)\widetilde{f}(\zeta_5^3)&=\zeta_5,\\
\widetilde{f}(\zeta_5)\widetilde{f}(\zeta_5^2)&=\zeta_5^2,\\
\widetilde{f}(\zeta_5^3)\widetilde{f}(\zeta_5^4)&=\zeta_5^3,\\
\widetilde{f}(\zeta_5^2)\widetilde{f}(\zeta_5^4)&=\zeta_5^4.\\
\end{split}
\end{flalign*}
%There are not similar identities for $\widetilde{f}(\zeta_5^2)\widetilde{f}(\zeta_5^3)$ or $\widetilde{f}(\zeta_5)\widetilde{f}(\zeta_5^4)$ (which are reciprocals).
\end{proposition}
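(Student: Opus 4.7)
The plan is to reduce the four identities to a single one via Galois theory, then verify that one by direct computation in $\mathbb{Z}[\zeta_5]$. First I would use the finite evaluation of $f$ at odd-order roots of unity from Example~\ref{ch8example2}, together with the normalization $\widetilde{f}(\zeta_m):=(3/4)f(\zeta_m)$, to express
\[
\widetilde{f}(\zeta_5)\;=\;\sum_{n=1}^{5}(-1)^{n}(-\zeta_5^{-1};\zeta_5^{-1})_{n}
\]
as an explicit element of $\mathbb{Z}[\zeta_5]$. Computing the partial products $(-\zeta_5^{-1};\zeta_5^{-1})_n$ for $n=1,\ldots,5$ and reducing modulo the cyclotomic relation $1+\zeta_5+\zeta_5^2+\zeta_5^3+\zeta_5^4=0$ should yield a short integer combination of the form $\widetilde{f}(\zeta_5)=a+b\zeta_5^3+c\zeta_5^4$ with small $a,b,c\in\mathbb{Z}$.

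Next I would exploit the Galois action of $\mathrm{Gal}(\mathbb{Q}(\zeta_5)/\mathbb{Q})\cong(\mathbb{Z}/5\mathbb{Z})^{\times}$. Since the formula from the previous step has integer coefficients and $(-\zeta_5^{-1};\zeta_5^{-1})_n$ transforms under $\sigma_k\colon\zeta_5\mapsto\zeta_5^k$ into $(-\zeta_5^{-k};\zeta_5^{-k})_n$, we have $\sigma_k(\widetilde{f}(\zeta_5))=\widetilde{f}(\zeta_5^k)$ for $k\in\{2,3,4\}$. Applying $\sigma_k$ to any relation $\widetilde{f}(\zeta_5)\widetilde{f}(\zeta_5^j)=\zeta_5^{\ell}$ then produces $\widetilde{f}(\zeta_5^k)\widetilde{f}(\zeta_5^{jk})=\zeta_5^{k\ell}$. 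Thus the four claims in Proposition~\ref{Fthm1} form a single Galois orbit of the identity $\widetilde{f}(\zeta_5)\widetilde{f}(\zeta_5^3)=\zeta_5$: indeed $\sigma_2$ sends this to $\widetilde{f}(\zeta_5^2)\widetilde{f}(\zeta_5)=\zeta_5^2$, $\sigma_3$ sends it to $\widetilde{f}(\zeta_5^3)\widetilde{f}(\zeta_5^4)=\zeta_5^3$, and $\sigma_4$ sends it to $\widetilde{f}(\zeta_5^4)\widetilde{f}(\zeta_5^2)=\zeta_5^4$, matching the four statements on the nose. (The remaining two unordered pairs $\{1,4\}$ and $\{2,3\}$ form a separate Galois orbit of real-valued products that the proposition does not address.)

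It therefore suffices to verify the single identity $\widetilde{f}(\zeta_5)\widetilde{f}(\zeta_5^3)=\zeta_5$. Using the closed form for $\widetilde{f}(\zeta_5)$ from step one, together with $\widetilde{f}(\zeta_5^3)=\sigma_3(\widetilde{f}(\zeta_5))$ applied to that formula, I would expand the product in $\mathbb{Z}[\zeta_5]$, collect like powers of $\zeta_5$, and reduce using $\Phi_5(\zeta_5)=0$. The expected outcome is that all but one contribution cancel and the surviving term is exactly $\zeta_5$.

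The main obstacle will not be conceptual but combinatorial: one must compute the five $q$-Pochhammer partial products cleanly before taking the alternating sum, and the final expansion in step three generates many cross terms whose mutual cancellation is not evident in advance. A useful sanity check, which I would carry out in parallel, is to rederive the formula for $\widetilde{f}(\zeta_5)$ by grouping Fine's identity $f(q)=1-\sum_{n\geq 1}(-1)^{n}q^{n}/(-q;q)_{n}$ into blocks of five consecutive terms at $q=\zeta_5$ and summing the resulting geometric series of common ratio $-1/2$, which arises from $(-\zeta_5;\zeta_5)_{n+5}=2\cdot(-\zeta_5;\zeta_5)_n$ together with $\zeta_5^5=1$; once both derivations of $\widetilde{f}(\zeta_5)$ agree, the remaining verification is routine arithmetic in the ring of integers of $\mathbb{Q}(\zeta_5)$.
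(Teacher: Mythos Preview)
Your approach is correct and in fact more conceptual than what the paper does. The paper offers no formal proof at all: it states explicitly that ``direct calculation verifies further identities, which we did not prove formally'' and that the authors ``computed \ldots\ Proposition~\ref{Fthm1} directly from the formula in Example~\ref{ch8example2}'' using SageMath. In other words, the paper's justification is four separate symbolic computations, one per identity.

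Your Galois-theoretic reduction is a genuine improvement. Observing that $\sigma_k(\widetilde{f}(\zeta_5))=\widetilde{f}(\zeta_5^k)$ (immediate from the integrality of the coefficients in the finite formula of Example~\ref{ch8example2}) places all four identities in a single $(\mathbb{Z}/5\mathbb{Z})^{\times}$-orbit, so only $\widetilde{f}(\zeta_5)\widetilde{f}(\zeta_5^3)=\zeta_5$ needs to be checked by hand in $\mathbb{Z}[\zeta_5]$. This buys not just economy but also an explanation of \emph{why} the exponents on the right match the pairs on the left, something the paper's brute-force verification leaves opaque. Your parenthetical remark that the complementary pairs $\{1,4\}$ and $\{2,3\}$ form a separate orbit of real products (hence are not roots of unity and are omitted from the proposition) is also a nice structural observation absent from the paper.
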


At this point it is easy to derive any number of identities algebraically, e.g., %for example
\begin{equation*}
\widetilde{f}(\zeta_5)^3\widetilde{f}(\zeta_5^2)^2\widetilde{f}(\zeta_5^3)=1,\  \  \  \left(\widetilde{f}(\zeta_5)+\widetilde{f}(\zeta_5^4)\right)\left(\widetilde{f}(\zeta_5^2)+\widetilde{f}(\zeta_5^3)\right)=-1.
\end{equation*}
%and so on. Of course, we can rewrite these equations for $\widetilde{f}$ in terms of the mock theta function $f=3\slash 4\  \widetilde{f}$. For instance, by (\ref{normalized}) and (\ref{discrim}), for $n\geq 1$ we get identities of the shape
% \begin{equation}
%f(\zeta_5)^n+f(\zeta_5^2)^n+f(\zeta_5^3)^n+f(\zeta_5^4)^n=(4/3)^n \beta_n,
%\end{equation}
%where $\beta_n\in \mathbb Z$ denotes the right-hand side of the corresponding identity in (\ref{discrim}) (e.g. $\beta_1=4,\beta_2=4,\beta_3=-11$, etc.). 
%
%
%Then using (\ref{Fnormalized}) we arrive at the following nice relations.
%
%\begin{proposition}\label{Fthm2}
%For the mock theta function $f(q)$ at fifth-order roots of unity $\zeta_5^i$, we have
%\begin{flalign*}
%\begin{split}
%&9\slash 16\  f(\zeta_5)f(\zeta_5^3)=\zeta_5\\
%&9\slash 16\  f(\zeta_5)f(\zeta_5^2)=\zeta_5^2\\
%&9\slash 16\  f(\zeta_5^3)f(\zeta_5^4)= \zeta_5^3\\
%&9\slash 16\  f(\zeta_5^2)f(\zeta_5^4)= \zeta_5^4\\
%\end{split}
%\end{flalign*}
%and also
%\begin{equation*}
%f(\zeta_5)f(\zeta_5^2)f(\zeta_5^3)f(\zeta_5^4)=256/81.
%\end{equation*}
%\end{proposition}
%\  \  
From the preceding formulas and (\ref{Fnormalized}), we also derive a very tidy relation for $f(\zeta_5^i)$. %we arrive at an elegant identity. 

\begin{proposition}\label{Fthm3}
At fifth-order roots of unity, we have the symmetric relation
\begin{equation*}
\zeta_5^i f(\zeta_5^i)\  =\  \zeta_5^{-i} f(\zeta_5^{-i}).
\end{equation*}
\end{proposition}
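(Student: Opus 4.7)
The plan is to deduce Proposition \ref{Fthm3} directly from the four product identities collected in Proposition \ref{Fthm1}, without ever touching the finite formula from Example \ref{ch8example2} again. Since $\widetilde{f}(\zeta_5^i) = \tfrac{3}{4} f(\zeta_5^i)$ by \eqref{Fnormalized}, the constant $\tfrac{3}{4}$ cancels from both sides and the assertion is equivalent to
\[
\zeta_5^{i}\,\widetilde{f}(\zeta_5^{i}) \;=\; \zeta_5^{-i}\,\widetilde{f}(\zeta_5^{-i})
\]
for every integer $i$. The case $i\equiv 0\pmod 5$ is trivial since $\widetilde{f}(1)=1$, and the identity is symmetric under $i\mapsto -i$, so it suffices to verify the two substantive cases $i=1$ and $i=2$.

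For $i=1$, rewrite the desired equality as $\zeta_5\,\widetilde{f}(\zeta_5) = \zeta_5^{4}\,\widetilde{f}(\zeta_5^{4})$, i.e.\ $\widetilde{f}(\zeta_5)/\widetilde{f}(\zeta_5^4) = \zeta_5^{3}$. Dividing the identity $\widetilde{f}(\zeta_5)\widetilde{f}(\zeta_5^3) = \zeta_5$ by $\widetilde{f}(\zeta_5^3)\widetilde{f}(\zeta_5^4) = \zeta_5^{3}$ from Proposition \ref{Fthm1} yields exactly $\widetilde{f}(\zeta_5)/\widetilde{f}(\zeta_5^4) = \zeta_5/\zeta_5^{3} = \zeta_5^{-2} = \zeta_5^{3}$, as required. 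For $i=2$, we need $\widetilde{f}(\zeta_5^2)/\widetilde{f}(\zeta_5^3) = \zeta_5$, and dividing $\widetilde{f}(\zeta_5)\widetilde{f}(\zeta_5^2) = \zeta_5^{2}$ by $\widetilde{f}(\zeta_5)\widetilde{f}(\zeta_5^3) = \zeta_5$ gives this on the nose. The remaining cases $i=3,4$ are the $i=2,1$ cases read backward.

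The only real obstacle in this route is that Proposition \ref{Fthm1} is itself stated as a computational observation in the text (``direct calculation verifies''), so a fully rigorous proof of Proposition \ref{Fthm3} inherits that status. If one insists on a self-contained derivation, the main difficulty moves to establishing the four evaluations in Proposition \ref{Fthm1} from first principles --- plausibly via Example \ref{ch8example2}, which expresses $f(\zeta_5^i) = \tfrac{4}{3}\sum_{n=1}^{5}(-1)^n(-\zeta_5^{-i};\zeta_5^{-i})_n$ as a finite sum in $\mathbb{Z}[\zeta_5]$, followed by a direct Galois-theoretic comparison of the two sides of Proposition \ref{Fthm3} in $\mathbb{Q}(\zeta_5)$. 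That latter computation, while elementary, is where all the arithmetic actually lives.
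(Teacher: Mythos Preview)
Your argument is correct and matches the paper's approach exactly: the text introduces Proposition~\ref{Fthm3} with ``From the preceding formulas and \eqref{Fnormalized}, we also derive a very tidy relation,'' meaning it is obtained precisely by combining the normalization $\widetilde{f}=\tfrac{3}{4}f$ with the product identities of Proposition~\ref{Fthm1}, just as you do. Your caveat about the computational status of Proposition~\ref{Fthm1} is also in line with the paper, which explicitly acknowledges that those identities were verified by direct calculation rather than proved algebraically.
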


The empirical conjecture that \eqref{Fcoeff2} is an integer\footnote{In fact, computations suggest \eqref{Fcoeff1}, \eqref{Fcoeff2} may be integers even without conditions on the $\operatorname{gcd}$'s.} suggests an equation like \eqref{Fthm0} exists for every odd-order root of unity ($f(q)$ diverges at even-order roots of unity, thus the finite formula in Example \ref{ch8example2} does not represent its limit). 
%, possibly with some integer aside from 1 (and, by our conjecture above, that is $\equiv 1\  \text{modulo}\  p$ for $m=p$ prime). As further examples, for $m=6,7,8$ we compute \eqref{Fcoeff2}:
%$$
%\widetilde{f}(\zeta_6)\widetilde{f}(\zeta_6^5)=1,\  \  \  \  \  \  \widetilde{f}(\zeta_7)\widetilde{f}(\zeta_7^2)\widetilde{f}(\zeta_7^3)\widetilde{f}(\zeta_7^4)\widetilde{f}(\zeta_7^5)\widetilde{f}(\zeta_7^6)=337\equiv 1\  (\operatorname{mod}\  7)$$
%$$\widetilde{f}(\zeta_8)\widetilde{f}(\zeta_8^3)\widetilde{f}(\zeta_8^5)\widetilde{f}(\zeta_8^7)=9\equiv 1\  (\operatorname{mod}\  8).
%$$
%%, e.g.,
%% 
%%\begin{flalign}\label{Ffinaleqs}
%%...
%%\end{flalign} 
%%%\begin{remark}
%%%Theorem \ref{thm3} holds if we replace $f$ with $\widetilde{f}$ as the functions differ only by a constant. %Thus we can write $\widetilde{F}_5(X)=\prod_{1\leq i\leq 4} \left(X-\zeta_5^{2i}\widetilde{f}(\zeta_5^i)\right)$, which really does look ``mock cyclotomic''. 
%%\end{remark}
%
Now, %as noted above, 
we computed (\ref{Fthm0}) and Proposition \ref{Fthm1} directly from the formula in Example \ref{ch8example2}, letting $m$ be the prime $p=5$; we have not proved these by algebraic methods, so we don't have a clear intuition as to how the propositions generalize. One expects there to be analogs of Propositions \ref{Fthm1} and \ref{Fthm3} above (but perhaps more complicated) for $f(q)$ at other odd-order roots of unity $\zeta_m$, as presumably these propositions depend in the end on properties of Example \ref{ch8example2} and facts about polynomials at roots of unity, not on the choice of the order $m$. 

Are there cyclotomic-type relations like \eqref{Fthm0} and Propositions \ref{Fthm1} and \ref{Fthm3} for other mock theta functions --- or other $q$-hypergeometric series --- at roots of unity?%, in light of Lemma \ref{ch8lemma}?

			\clearpage %
\clearpage
\addcontentsline{toc}{chapter}{Bibliography}

% % % % % % % % % %
% % The bibliography % %
% % % % % % % % % %
\bibliography{references}
\bibliographystyle{alpha}

\end{document}